\newcommand{\Real}{\mathbb R}
\newcommand{\Complex}{\mathbb C}
\newcommand{\integer}{\mathbb Z}
\newcommand{\Pstint}{\mathbb N}
\newcommand{\dbar}{\overline\partial}
\newcommand{\pr}{\partial}
\newcommand{\ol}{\overline}
\newcommand{\Corner}{{\displaystyle\lrcorner}}
\newcommand{\Td}{\widetilde}
\newcommand{\norm}[1]{\left\Vert#1\right\Vert}
\newcommand{\abs}[1]{\left\vert#1\right\vert}
\newcommand{\set}[1]{\left\{#1\right\}}
\newcommand{\seq}[1]{\left<#1\right>}
\newcommand{\eps}{\varepsilon}
\newcommand{\To}{\rightarrow}
\newcommand{\La}{\triangle}
\theoremstyle{plain}
\newtheorem{thm}{Theorem}[section]
\newtheorem{cor}[thm]{Corollary}
\newtheorem{lem}[thm]{Lemma}
\newtheorem{prop}[thm]{Proposition}
\newtheorem{ass}[thm]{Assumption}
\theoremstyle{definition}
\newtheorem{defn}[thm]{Definition}
\theoremstyle{remark}
\newtheorem{rem}[thm]{Remark}
\numberwithin{equation}{section}
\title{Projections in several complex variables}
\date{}
\author{Chin-Yu Hsiao  \\
CMLS, Ecole Polytechnique, FR-91128 Palaiseau cedex}
\begin{document}
\maketitle

\fontsize{12}{16pt}\selectfont
\begin{quotation}
\begin{center} \textbf{Abstract} \end{center}\parskip=6pt

This work consists two parts. In the first part, we completely study the heat equation method of Menikoff-Sj\"{o}strand and
apply it to the Kohn Laplacian defined on a compact orientable connected CR manifold. We then get the
full asymptotic expansion of the Szeg\"{o} projection for $(0, q)$ forms when the Levi form is non-degenerate.
This generalizes a result of Boutet de Monvel and Sj\"{o}strand for $(0,0)$ forms.
Our main tool is Fourier integral operators with complex valued phase functions of Melin and Sj\"{o}strand.

In the second part, we obtain the full asymptotic expansion of the Bergman projection
for $(0, q)$ forms when the Levi form is non-degenerate.
This also generalizes a result of Boutet de Monvel and
Sj\"{o}strand for $(0,0)$ forms. We introduce a new operator analogous to the
Kohn Laplacian defined on the boundary of a domain and we apply the heat equation method
of Menikoff and Sj\"{o}strand to this operator. We obtain a description of a
new Szeg\"{o} projection up to smoothing operators. Finally, by using the Poisson
operator, we get our main result.
\end{quotation}

\begin{quotation}
\begin{center} \textbf{R\'{e}sum\'{e}} \end{center}\parskip=6pt

Ce travail est constitu\'{e}e de deux parties. Dans la premi\`{e}re partie, nous appliquons
la m\'{e}thode de Menikoff-Sj\"{o}strand au laplacien de Kohn, d\'{e}fini sur une variet\'{e}
CR compacte orient\'{e}e connexe et nous obtenons un de'veloppement asymptotique complet du projecteur de
Szeg\"{o} pour les $(0,q)$ formes quand la forme de Levi est non-d\'{e}g\'{e}n\'{e}r\'{e}e. Cela g\'{e}n\'{e}ralise un
r\'{e}sultat de Boutet de Monvel et Sj\"{o}strand pour les $(0,0)$ formes. Nous utilisons des op\'{e}rateurs int\'{e}graux de Fourier \`{a} phases
complexes de Melin et Sj\"{o}strand.

Dans la deuxi\'{e}me partie, nous obtenons un d\'{e}veloppement asymptotique de la
singularit\'e du noyau de Bergman pour les $(0,q)$ formes quand la
forme de Levi est non-d\'eg\'en\'er\'ee.  Cela g\'{e}n\'{e}ralise un
r\'esultat de Boutet de Monvel et Sj\"{o}strand pour les $(0,0)$
formes. Nous introduisons un nouvel op\'{e}rateur analogue au
laplacien de Kohn, d\'efini sur le bord du domaine et nous y
appliquons la m\'ethode de Menikoff-Sj\"{o}strand.  Cela donne une
description modulo des op\'ereateurs r\'egularisants d'un nouvel
projecteur de Szeg\"{o}.  Finalement, en utilisant l'op\'{e}rateur de
Poisson, nous obtenons notre r\'{e}sultat principal.
\end{quotation}

\newpage

\tableofcontents
\fontsize{12}{16pt}\selectfont
\newpage

\addcontentsline{toc}{section}{Introduction}
\section*{Introduction}

The Bergman and Szeg\"{o} projections are classical subjects in several complex variables and complex geometry.
By Kohn's regularity theorem for the $\dbar$-Neumann problem (1963,~\cite{Kohn63}), the boundary behavior of the Bergman kernel is
highly dependent on the Levi curvature of the boundary. The study of the boundary behavior of the Bergman kernel
on domains with positive Levi curvature (strictly pseudoconvex domains) became an important topic in the field then.
In 1965, L. H\"{o}rmander (\cite{Hor65}) determined the boundary behavior of the Bergman kernel.
C. Fefferman (1974,~\cite{Fer74}) established an asymptotic expansion at the diagonal of the Bergman kernel.
More complete asymptotics of the Bergman kernel was obtained by Boutet de Monvel and Sj\"{o}strand (1976,~\cite{BS76}).
They also established an asymptotic expansion of the Szeg\"{o} kernel on strongly pseudoconvex boundaries.
All these developments concerned pseudoconvex domains. For the nonpseudoconvex domain, there are few results.
R. Beals and P. Greiner (1988,~\cite{BG88}) proved that the Szeg\"{o} projection is a Heisenberg pseudodifferential operator, under certain
Levi curvature assumptions.
H\"{o}rmander (2004,~\cite{Hor04}) determined the boundary behavior of the Bergman kernel when the Levi form is negative definite
by computing the leading term of the Bergman kernel on a spherical shell in $\Complex^n$.

Other developments recently concerned the Bergman kernel for a high power of a holomorphic line bundle.
D. Catlin (1997,~\cite{Cat97}) and S. Zelditch (1998,~\cite{Zel98}) adapted
a result of Boutet de Monvel-Sj\"{o}strand for the asymptotics of the Szeg\"{o} kernel on a strictly pseudoconvex boundary to
establish the complete asymptotic expansion of the Bergman kernel for a high power of a holomorphic line bundle with positive curvature.
Recently, a new proof of the existence of the complete asymptotic expansion was obtained by B. Berndtsson, R. Berman and
J. Sj\"{o}strand (2004,~\cite{BBS04}). Without the positive curvature assumption,
R. Berman and J. Sj\"{o}strand (2005,~\cite{BS05}) obtained a full asymptotic expansion of the Bergman kernel for a high power of a line bundle
when the curvature is non-degenerate. The approach of Berman and Sj\"{o}strand builds on the heat equation method of
Menikoff-Sj\"{o}strand (1978,~\cite{MS78}).
The expansion was obtained independently by X. Ma and G. Marinescu (2006,~\cite{MM06}) (without a phase function)
by using a spectral gap estimate for the Hodge Laplacian.

Recently, H\"{o}rmander (2004,~\cite{Hor04}) studied the Bergman projection for $(0,q)$ forms.
In that paper (page 1306), H\"{o}rmander suggested:
"A carefull microlocal analysis along the lines of Boutet de Monvel-Sj\"{o}strand should give the asymptotic expansion of the Bergman
projection for $(0,q)$ forms when the Levi form is non-degenerate."

The main goal for this work is to achieve H\"{o}rmander's wish-more precisely, to obtain an asymptotic expansion of the Bergman
projection for $(0,q)$ forms. The first step of my research is to establish an asymptotic expansion
of the Szeg\"{o} projection for $(0,q)$ forms. Then, find a suitable operator defined on the boundary of domain  which
plays the same role as the Kohn Laplacian in the approach of Boutet de Monvel-Sj\"{o}strand.

This work consists two parts. In the first paper, we completely study the heat equation method of Menikoff-Sj\"{o}strand and
apply it to the Kohn Laplacian defined on a compact orientable connected CR manifold. We then get the
full asymptotic expansion of the Szeg\"{o} projection for $(0, q)$ forms when the Levi form is non-degenerate.
We also compute the leading term of the Szeg\"{o} projection.

In the second paper, we introduce a new operator analogous to the
Kohn Laplacian defined on the boundary of a domain and we apply the method
of Menikoff-Sj\"{o}strand to this operator. We obtain a description of a
new Szeg\"{o} projection up to smoothing operators. Finally, by using the Poisson
operator, we get the full asymptotic expansion of the Bergman projection for $(0, q)$ forms
when the Levi form is non-degenerate.

We recall briefly some microlocal analysis that we used in this work in Appendix A and B.
These two papers can be read independently. We hope that this work can serve as an introduction
to certain microlocal techniques with applications to complex geometry and CR geometry.

\bigskip
\noindent
\textbf{Acknowledgements.} \ \ This work is part of a Ph.D thesis at Ecole Polytechnique in $2008$.
The author would like to thank his advisor Johannes Sj\"{o}strand for
his patience, guidance and inspiration. Furthermore, the auther is grateful to Bo Berndtsson, Louis Boutet de Monvel and
Xiaonan Ma for comments and useful suggestions on an early draft of the manuscript.
\newpage

\addcontentsline{toc}{part}{ Part I:\ \
On the singularities of the Szeg\"{o} projection for $(0, q)$ forms}
\part*{\mbox{} \\ \mbox{} \\ \mbox{} \mbox{} \\ \mbox{} \\ \mbox{} \mbox{} \\ \mbox{} \\ \mbox{}
\quad\quad\quad\quad\quad\quad\quad\quad Part {\rm I\,} \\
\mbox{} \\ \mbox{} \\ \mbox{} \\
\quad\quad On the singularities of the Szeg\"{o}\\ \quad\quad projection for $(0, q)$ forms}
\mbox{}\newpage


\setcounter{section}{0}
\section{Introduction and statement of the main results}

Let $(X, \Lambda^{1, 0}T(X))$ be a compact orientable connected CR manifold of dimension $2n-1$, $n\geq2$,
(see Definition~\ref{d:CR-2.1}) and take a smooth Hermitian metric $(\ |\ )$ on $\Complex T(X)$ so that
$\Lambda^{1, 0}T(X)$ is orthogonal to
$\Lambda^{0, 1}T(X)$ and $(u\ |\ v)$ is real if $u$, $v$ are real tangent vectors,
where $\Lambda^{0, 1}T(X)=\ol{\Lambda^{1, 0}T(X)}$ and $\Complex T(X)$ is the complexified tangent bundle.
For $p\in X$, let $L_p$ be the Levi form of $X$ at $p$. (See (\ref{e:i-070502-*}) and Definition~\ref{d:d-Leviform}.)
Given $q$, $0\leq q\leq n-1$, the Levi form is said to satisfty condition $Y(q)$ at $p\in X$ if for any $\abs{J}=q$, $J=(j_1,j_2,\ldots,j_q)$,
$1\leq j_1<j_2<\cdots<j_q\leq n-1$, we have
\begin{equation} \label{e:0807111454}
\abs{\sum_{j\notin J}\lambda_j-\sum_{j\in J}\lambda_j}<\sum^{n-1}_{j=1}\abs{\lambda_j},
\end{equation}
where $\lambda_j$, $j=1,\ldots,(n-1)$, are the eigenvalues of $L_p$. (For the precise meaning of the eigenvalues of the Levi form, see
Definition~\ref{d:0710211410}.) If the Levi form is non-degenerate at $p$, then $Y(q)$ holds at $p$ if and only if
$q\neq n_-, n_+$, where $(n_-, n_+)$ is the signature of $L_p$, i.e. the number of negative eigenvalues
of $L_p$ is $n_-$ and $n_++n_-=n-1$. Let $\Box_{b}$ be the Kohn Laplacian on $X$ (see Chen-Shaw~\cite{CS01} or section $2$) and let $\Box^{(q)}_b$
denote the restriction to $(0, q)$ forms.
When condition $Y(q)$ holds, Kohn's $L^2$ estimates give the hypoellipicity with loss of one dervative for the solutions
of $\Box^{(q)}_b u=f$. (See Folland-Kohn~\cite{FK72}, \cite{CS01} and section $3$.) The Szeg\"{o} projection is the
orthogonal projection onto the kernel of $\Box^{(q)}_b$ in the $L^2$ space. When condition $Y(q)$ fails, one is
interested in the Szeg\"{o} projection on the level of $(0, q)$ forms. Beals and Greiner~(see \cite{BG88}) proved that
the Szeg\"{o} projection is a Heisenberg pseudodifferential operator. Boutet de Monvel and Sj\"{o}strand~(see \cite{BS76})
obtained the full asymptotic expansion for the Szeg\"{o} projection in the case of functions. We have
been influenced by these works. The main inspiration for the present paper comes from Berman and
Sj\"{o}strand \cite{BS05}.

We now start to formulate the main results. First, we introduce some standard notations.
Let $\Omega$ be a $C^\infty$ paracompact manifold equipped with a smooth density of integration.
We let $T(\Omega)$ and $T^*(\Omega)$ denote the tangent bundle of $\Omega$ and the cotangent bundle of $\Omega$ respectively.
The complexified tangent bundle of $\Omega$ and the complexified cotangent bundle of $\Omega$ will be denoted by $\Complex T(\Omega)$
and $\Complex T^*(\Omega)$ respectively. We write $\seq{\ ,}$ to denote the pointwise duality between $T(\Omega)$ and $T^*(\Omega)$.
We extend $\seq{\ ,}$ bilinearly to $\Complex T(\Omega)\times\Complex T^*(\Omega)$.
Let $E$ be a $C^\infty$ vector bundle over $\Omega$. The fiber of $E$ at $x\in\Omega$ will be denoted by $E_x$.
Let $Y\subset\subset\Omega$ be an open set. From now on, the spaces of
smooth sections of $E$ over $Y$ and distribution sections of $E$ over $Y$ will be denoted by $C^\infty(Y;\, E)$ and $\mathscr D'(Y;\, E)$ respectively.
Let $\mathscr E'(Y;\, E)$ be the subspace of $\mathscr D'(Y;\, E)$ whose elements have compact support in $Y$.
For $s\in\Real$, we let $H^s(Y;\, E)$ denote the Sobolev space
of order $s$ of sections of $E$ over $Y$.

The Hermitian metric $(\ |\ )$ on $\Complex T(X)$ induces, by duality, a Hermitian metric on $\Complex T^*(X)$ that
we shall also denote by $(\ |\ )$ (see (\ref{e:CR-Gamma2})). Let $\Lambda^{0, q}T^*(X)$ be the bundle of $(0, q)$ forms of $X$.
(See (\ref{e:0710211520}).) The Hermitian metric $(\ |\ )$ on $\Complex T^*(X)$ induces a Hermitian metric on
$\Lambda^{0, q}T^*(X)$ (see (\ref{e:0710211515})) also denoted by $(\ |\ )$.

We take $(dm)$ as the induced volume form on $X$. Let $(\ |\ )$ be the inner product on $C^\infty(X;\, \Lambda^{0,q}T^*(X))$
defined by
\begin{equation} \label{e:0807161950}
(f\ |\ g)=\int_X(f(z)\ |\ g(z))(dm),\ f,g\in C^\infty(X;\, \Lambda^{0,q}T^*(X)).
\end{equation}
Let
\[\pi^{(q)}: L^2(X;\, \Lambda^{0,q}T^*(X))\To {\rm Ker\,}\Box^{(q)}_b\]
be the Szeg\"{o} projection, i.e.
the orthogonal projection onto the kernel of $\Box^{(q)}_b$. Let
\[K_{\pi^{(q)}}(x, y)\in\mathscr D'(X\times X;\, \mathscr L(\Lambda^{0,q}T^*_y(X),\Lambda^{0,q}T^*_x(X)))\]
be the distribution kernel of $\pi^{(q)}$ with respect to the induced volume form $(dm)$. Here
$\mathscr L(\Lambda^{0,q}T^*_y(X), \Lambda^{0,q}T^*_x(X))$ is the
vector bundle with fiber over $(x, y)$ consisting of the linear maps
from $\Lambda^{0,q}T^*_y(X)$ to $\Lambda^{0,q}T^*_x(X)$.

We pause and recall a general fact of distribution theory.
Let $E$ and $F$ be $C^\infty$ vector
bundles over a paracompact $C^\infty$ manifold $\Omega$ equipped with a smooth density of integration. If
$A: C^\infty_0(\Omega;\, E)\To \mathscr D'(\Omega;\, F)$
is continuous, we write $K_A(x, y)$ or $A(x, y)$ to denote the distribution kernel of $A$.
The following two statements are equivalent
\begin{enumerate}
\item $A$ is continuous: $\mathscr E'(\Omega;\, E)\To C^\infty(\Omega;\, F)$,
\item $K_A\in C^\infty(\Omega\times\Omega;\, \mathscr L(E_y, F_x))$.
\end{enumerate}
If $A$ satisfies (a) or (b), we say that $A$ is smoothing. Let
$B: C^\infty_0(\Omega;\, E)\to \mathscr D'(\Omega;\, F)$.
We write $A\equiv B$ if $A-B$ is a smoothing operator. $A$ is smoothing if and only if
$A: H^s_{\rm comp\,}(\Omega;\, E)\To H^{s+N}_{\rm loc\,}(\Omega;\, F)$
is continuous, for all $N\geq0$, $s\in\Real$, where
\[H^s_{\rm loc\,}(\Omega;\, F)=\set{u\in\mathscr D'(\Omega;\, F);\, \varphi u\in H^s(\Omega;\, F);\,
      \forall\varphi\in C^\infty_0(\Omega)}\]
and $H^s_{\rm comp\,}(\Omega;\, E)=H^s_{\rm loc}(\Omega;\, E)\bigcap\mathscr E'(\Omega;\, E)$.
(See H\"{o}rmander~\cite{Hor03}.)

Let $\Lambda^{1, 0}T^*(X)$ denote the bundle with fiber
$\Lambda^{1, 0}T^*_z(X):=\ol{\Lambda^{0, 1}T^*_z(X)}$ at $z\in X$. Locally we can choose an orthonormal frame
$\omega_1(z),\ldots,\omega_{n-1}(z)$
for the bundle $\Lambda^{1,0}T^*_z(X)$, then
$\ol\omega_1(z),\ldots,\ol\omega_{n-1}(z)$
is an orthonormal frame for the bundle $\Lambda^{0,1}T^*_z(X)$. The real $(2n-2)$ form
$\omega=i^{n-1}\omega_1\wedge\ol\omega_1\wedge\cdots\wedge\omega_{n-1}\wedge\ol\omega_{n-1}$
is independent of the choice of the orthonormal frame. Thus $\omega$ is globally
defined. Locally there is a real $1$ form $\omega_0(z)$ of length one which is orthogonal to
$\Lambda^{1,0}T^*_z(X)\oplus\Lambda^{0,1}T^*_z(X)$. $\omega_0(z)$ is unique up to the choice of sign.
Since $X$ is orientable, there is a nowhere vanishing $(2n-1)$ form $Q$ on $X$.
Thus, $\omega_0$ can be specified uniquely by requiring that
\begin{equation} \label{e:0807152304}
\omega\wedge\omega_0=fQ,
\end{equation}
where $f$ is a positive function. Therefore $\omega_0$, so chosen, is globally defined. We call $\omega_0$
the uniquely determined global real $1$ form.

We recall that the Levi form $L_p$, $p\in X$, is the Hermitian quadratic form on $\Lambda^{1,0}T_p(X)$ defined as follows:
\begin{equation} \label{e:i-070502-*} \begin{split}
&\mbox{For any $Z$, $W\in \Lambda^{1,0}T_p(X)$, pick $\Td Z$, $\Td W\in
C^\infty(X;\, \Lambda^{1,0}T(X))$ that satisfy}  \\
&\mbox{$\Td Z(p)=Z$, $\Td W(p)=W$. Then }
L_p(Z,\ol W)=\frac{1}{2i}\seq{[\Td Z\ ,\ol{\Td W}](p)\ ,
\omega_0(p)}.
\end{split}
\end{equation}
In this work, we assume that

\begin{ass} \label{a:0807120935}
The Levi form is non-degenerate at each point of\, $X$.
\end{ass}

The characteristic manifold of $\Box^{(q)}_b$ is given by $\Sigma=\Sigma^+\bigcup\Sigma^-$, where
\begin{align} \label{e:0807120937}
&\Sigma^+=\set{(x, \xi)\in T^*(X)\setminus0;\, \xi=\lambda\omega_0(x), \lambda>0}, \nonumber \\
&\Sigma^-=\set{(x, \xi)\in T^*(X)\setminus0;\, \xi=\lambda\omega_0(x), \lambda<0}.
\end{align}
Let $(n_-, n_+)$, $n_-+n_+=n-1$, be the signature of the Levi form. We define
\begin{align*}
&\hat{\Sigma}=\Sigma^+\ \ \mbox{if}\ \ n_+=q\neq n_-,  \\
&\hat{\Sigma}=\Sigma^-\ \ \mbox{if}\ \ n_-=q\neq n_+,  \\
&\hat{\Sigma}=\Sigma^+\bigcup\Sigma^-\ \ \mbox{if}\ \ n_+=q=n_-.
\end{align*}
Recall that (see \cite{FK72}, \cite{CS01}) if $q\notin\set{n_-, n_+}$, then
\[K_{\pi^{(q)}}(x, y)\in C^\infty(X\times X;\, \mathscr L(\Lambda^{0,q}T^*_y(X),\Lambda^{0,q}T^*_x(X))).\]

The main result of this work is the following

\begin{thm} \label{t:i-szegomain}
Let $(X, \Lambda^{1, 0}T(X))$ be a compact orientable connected CR manifold of dimension $2n-1$, $n\geq2$,
with a Hermitian metric $(\ |\ )$. (See Definition~\ref{d:CR-2.1} and Definition~\ref{d:CR-2.1more}.)
Let $(n_-, n_+)$, $n_-+n_+=n-1$, be the signature of the Levi form and let $q=n_-$ or $n_+$.
Suppose $\Box^{(q)}_b$ has closed range. Then,
\[\pi^{(q)}:H^s(X;\, \Lambda^{0, q}T^*(X))\To H^s(X;\, \Lambda^{0, q}T^*(X))\]
is continuous, for all $s\in\Real$, and ${\rm WF\,}'(K_{\pi^{(q)}})={\rm diag\,}(\hat{\Sigma}\times\hat{\Sigma})$,
where
${\rm WF\,}'(K_{\pi^{(q)}})=\set{(x, \xi, y, \eta)\in T^*(X)\times T^*(X);\, (x, \xi, y, -\eta)\in{\rm WF\,}(K_{\pi^{(q)}})}$.
Here ${\rm WF\,}(K_{\pi^{(q)}})$ is the wave front set of $K_{\pi^{(q)}}$ in the sense of H\"{o}rmander~\cite{Hor71}.
(See also chapter $8$ of~\cite{Hor03}.) Moreover, we have
\begin{align*}
&K_{\pi^{(q)}}=K_{\pi^{(q)}_+}\ \ {\rm if\,}\ \ n_+=q\neq n_-,  \\
&K_{\pi^{(q)}}=K_{\pi^{(q)}_-}\ \ {\rm if\,}\ \ n_-=q\neq n_+,  \\
&K_{\pi^{(q)}}=K_{\pi^{(q)}_+}+K_{\pi^{(q)}_-}\ \ {\rm if}\ \ n_+=q=n_-,
\end{align*}
where $K_{\pi^{(q)}_+}(x, y)$ satisfies
\begin{align*}
K_{\pi^{(q)}_+}(x, y)\equiv\int^{\infty}_{0} e^{i\phi_+(x, y)t}s_+(x, y, t)dt
\end{align*}
with
\begin{align*}
&s_+(x, y, t)\in S^{n-1}_{1, 0}(X\times X\times]0, \infty[;\, \mathscr L(\Lambda^{0,q}T^*_y(X), \Lambda^{0,q}T^*_x(X))),\\
&s_+(x, y, t)\sim\sum^\infty_{j=0}s^j_+(x, y)t^{n-1-j}
\end{align*}
in $S^{n-1}_{1, 0}(X\times X\times]0, \infty[;\, \mathscr L(\Lambda^{0,q}T^*_y(X), \Lambda^{0,q}T^*_x(X)))$,
where $S^m_{1, 0}$, $m\in\Real$, is the H\"{o}rmander symbol space~\cite{Hor71} (see also chapter $1$ of Grigis-Sj\"{o}strand~\cite{GS94}),
\[s^j_+(x, y)\in C^\infty(X\times X;\, \mathscr L(\Lambda^{0, q}T^*_y(X), \Lambda^{0, q}T^*_x(X))),\ j=0,1,\ldots,\]
and
\begin{align}
&\phi_+(x, y)\in C^\infty(X\times X),\ \ {\rm Im\,}\phi_+(x, y)\geq0, \label{e:0709241541} \\
&\phi_+(x, x)=0,\ \ \phi_+(x, y)\neq0\ \ \mbox{if}\ \ x\neq y, \label{e:0803301702} \\
&d_x\phi_+\neq0,\ \ d_y\phi_+\neq0\ \ \mbox{where}\ \ {\rm Im\,}\phi_+=0, \label{e:0803271451} \\
&d_x\phi_+(x, y)|_{x=y}=\omega_0(x),\ \ d_y\phi_+(x, y)|_{x=y}=-\omega_0(x), \label{e:t-i-bis1} \\
&\phi_+(x, y)=-\ol\phi_+(y, x). \label{e:0709241544}
\end{align}

Similarly,
\begin{align*}
K_{\pi^{(q)}_-}(x, y)\equiv\int^{\infty}_{0} e^{i\phi_-(x, y)t}s_-(x, y, t)dt
\end{align*}
with
\[s_-(x, y, t)\sim\sum^\infty_{j=0}s^j_-(x, y)t^{n-1-j}\]
in $S^{n-1}_{1, 0}(X\times X\times]0, \infty[;\, \mathscr L(\Lambda^{0,q}T^*_y(X), \Lambda^{0,q}T^*_x(X)))$,
where
\[s^j_-(x, y)\in C^\infty(X\times X;\, \mathscr L(\Lambda^{0, q}T^*_y(X), \Lambda^{0, q}T^*_x(X))),\ j=0,1,\ldots,\]
and when $q=n_-=n_+$, $\phi_-(x, y)=-\ol\phi_+(x, y)$.

A formula for $s^0_+(x, x)$ will be given in Proposition~\ref{p:i-leading1}.
More properties of the phase $\phi_+(x, y)$ will be given in
Theorem~\ref{t:i-phase} and Remark~\ref{r:0806222129} below.
\end{thm}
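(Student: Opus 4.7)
The plan is to implement the Menikoff-Sj\"{o}strand heat equation method, in the spirit of Berman-Sj\"{o}strand's analysis of the Bergman kernel of a line bundle with nondegenerate curvature. Since $\Box^{(q)}_b$ is nonnegative and self-adjoint, the closed range hypothesis yields a spectral gap between $0$ and the rest of the spectrum, so $e^{-r\Box^{(q)}_b}\To\pi^{(q)}$ exponentially as $r\To+\infty$ on the orthogonal complement of the kernel. It is therefore enough to build a microlocal parametrix for the heat operator $\pr_r+\Box^{(q)}_b$ on $\Real_+\times X$ and read off its $r\To\infty$ asymptotics; the $t$-integral appearing in the statement arises naturally from this limit.

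\textbf{Parametrix.} I would seek an approximate heat kernel of the form
\[
G(r)(x,y)=\int e^{i\psi(r,x,y,\eta)}\,a(r,x,y,\eta)\,d\eta,
\]
acting on $(0,q)$-forms, with $\psi(0,x,y,\eta)=\seq{x-y,\eta}$ and $a(0,x,y,\eta)={\rm Id\,}$, where $\psi$ solves the complex eikonal equation associated with the principal symbol $p$ of $\Box^{(q)}_b$ and $a$ solves the induced transport equations. Since $p$ vanishes to second order on the characteristic variety $\Sigma$ and is elliptic transversely, the Melin-Sj\"{o}strand theory of Fourier integral operators with complex-valued phase produces a phase with ${\rm Im\,}\psi\gtrsim r\,{\rm dist\,}(\eta,\Sigma)^2/(1+|\eta|)$ off $\Sigma$, giving exponential decay there. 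Along $\Sigma$ the relevant model is a family of harmonic oscillators whose spectrum is dictated by the Levi eigenvalues; precisely when $q\in\set{n_-,n_+}$ the ground state is attained, and this mode survives as $r\To\infty$, localized on $\hat\Sigma$.

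\textbf{Limit phase and amplitudes.} To extract $\phi_+$, I would parametrize $\Sigma^+$ by $(x,\lambda\omega_0(x))$, $\lambda>0$, introduce $t=r\lambda$ as a new positive fiber variable (matching the notation of the statement), and pass to the limit; the rescaled phase converges to a function $\phi_+(x,y)$, and the corresponding expansion of the amplitude yields $s_+(x,y,t)\sim\sum_j s^j_+(x,y)t^{n-1-j}$ through the transport recursion. Conditions (\ref{e:0803301702}) and (\ref{e:t-i-bis1}) follow from the initial condition and the parametrization of $\Sigma^+$; the positivity (\ref{e:0709241541}), with strict vanishing only on the diagonal, is inherited from the Levi-form contribution to the eikonal together with the nondegeneracy hypothesis, which also gives the non-vanishing differentials (\ref{e:0803271451}); the skew-symmetry (\ref{e:0709241544}) reflects self-adjointness of $\pi^{(q)}$. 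The analogous construction on $\Sigma^-$ yields $\phi_-$, and when $q=n_-=n_+$ the relation $\phi_-=-\ol\phi_+$ is forced by self-adjointness. Finally, to identify the resulting oscillatory integral with $K_{\pi^{(q)}}$ modulo $C^\infty$, one verifies that it is self-adjoint, annihilates $\Box^{(q)}_b$ on both sides modulo smoothing, and agrees with $\pi^{(q)}$ microlocally on $\hat\Sigma$; the wave front set identity ${\rm WF\,}'(K_{\pi^{(q)}})={\rm diag\,}(\hat\Sigma\times\hat\Sigma)$ then follows immediately from the properties of $\phi_\pm$.

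\textbf{Main obstacle.} The principal difficulty will be the global construction and analysis of the complex phase $\phi_+$: producing it on all of $X\times X$ (not merely in a neighborhood of the diagonal), proving the strict positivity ${\rm Im\,}\phi_+(x,y)>0$ for $x\ne y$, and establishing uniqueness up to equivalence of complex phases in the Melin-Sj\"{o}strand sense. This requires a careful study of the Hamilton flow of the almost-analytic extension of $p$ and of the stable manifold of the positive canonical cone $\Sigma^+$, together with the patching of local constructions on the compact manifold $X$. Additional care is needed to handle the loss of one derivative coming from $Y(q)$ in the regions where it holds, and to ensure that the operator produced is exactly $\pi^{(q)}$ rather than some other projection with the same microlocal shape; this is where the closed range assumption enters most crucially.
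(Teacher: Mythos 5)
Your overall strategy coincides with the paper's: both apply the Menikoff--Sj\"{o}strand heat-equation method to $\Box^{(q)}_b$, letting the time parameter go to infinity to extract the Szeg\"{o} projection, and both reduce the resulting complex-phase pseudodifferential operator localized near $\hat\Sigma$ to the Fourier-integral form $\int_0^\infty e^{i\phi_\pm t}s_\pm\,dt$ by Melin--Sj\"{o}strand stationary phase and the global theory of Fourier integral operators with complex phase.

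However, there is a genuine gap at the most delicate step: the convergence of the transport amplitudes as $t\to\infty$. You correctly identify that when $q\in\set{n_-,n_+}$ the transversal harmonic-oscillator model attached to $\Sigma$ has a zero eigenvalue, but you then assert without further argument that ``this mode survives as $r\to\infty$.'' The difficulty is precisely that the transport operator $\Td T=\pr_t+\seq{A\Td x'',\pr_{\Td x''}}+\tfrac12\Td{\rm tr\,}F+p^s_0+Q$ has a nontrivial kernel, so the naive estimates coming from inverting it, as in the paper's Proposition~\ref{p:hf-fundamental}, only yield that the $a_j(t,x,\eta)$ have \emph{subexponential growth} in $t$ (one can only take $\lambda|_\Sigma<\min\tau_j=0$), not that they converge. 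The paper surmounts this by the intertwining relations $\dbar_b\Box^{(q)}_b=\Box^{(q+1)}_b\dbar_b$ and $\ol{\pr_b}^*\Box^{(q)}_b=\Box^{(q-1)}_b\ol{\pr_b}^*$ (the trick borrowed from Berman--Sj\"{o}strand): since $q\pm1\notin\set{n_-,n_+}$ (at least on the relevant component of $\Sigma$), one obtains exponential decay of $\dbar_b(ae^{i\psi})$ and $\ol{\pr_b}^*(ae^{i\psi})$, hence of $\Box^{(q)}_b(ae^{i\psi})$ and therefore of $\pr_t(ae^{i\psi})$ and $\pr_ta$, which finally gives exponential convergence of the $a_j$. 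This is Proposition~\ref{p:hf-BerSjo} and is highlighted in the introduction as the key new ingredient; your proposal is silent on it, and without it the construction of $a_j(\infty,x,\eta)$ and hence of $S$ does not get off the ground.

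A smaller issue: your concluding step of identifying the limit operator with $\pi^{(q)}$ asserts that one can ``verify that it agrees with $\pi^{(q)}$ microlocally on $\hat\Sigma$,'' which is in danger of being circular. The paper instead proves the operator identities $S+\Box^{(q)}_bG\equiv I$ and $\Box^{(q)}_bS\equiv0$, upgrades $G$ to $H=(I-S)G$ so that $S\equiv S^*\equiv S^2$, $SH\equiv0$, $H\equiv H^*$, and then invokes the closed-range hypothesis via the partial inverse $N$ in a short algebraic argument (Proposition~\ref{p:s-sz}) to conclude $\pi^{(q)}-S$ and $N-H$ are smoothing. Your appeal to the $L^2$ spectral gap and exponential convergence of $e^{-r\Box^{(q)}_b}$ is abstractly correct but does not by itself give control modulo $C^\infty$ uniformly as $r\to\infty$; some version of the paper's algebraic parametrix argument is needed to pass from the $L^2$ statement to the microlocal one.
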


\begin{rem} \label{r:i-1}
If $Y(q-1)$ and $Y(q+1)$ hold then $\Box^{(q)}_b$ has closed range. (See section $7$ for a review and references.)
\end{rem}

\begin{rem} \label{r:i-2070826}
If $(X, \Lambda^{1, 0}T(X))$ is non-orientable, we also have generalization of
Theorem~\ref{t:i-szegomain}. (See section $10$.)
\end{rem}

The phase $\phi_+(x, y)$ is not unique. We can replace $\phi_+(x, y)$ by
\begin{equation} \label{e:i-0709241626}
\Td\phi(x, y)=f(x, y)\phi_+(x, y),
\end{equation}
where $f(x, y)\in C^\infty(X\times X)$ is real and $f(x, x)=1$, $f(x, y)=f(y, x)$. Then $\Td\phi$ satisfies
(\ref{e:0709241541})-(\ref{e:0709241544}).
We work with local coordinates $x=(x_1,\ldots,x_{2n-1})$ defined on an open set $\Omega\subset X$. Let $p\in\Omega$.
We can check that
\begin{align*}
\seq{\Td\phi''(p, p)U, V}&=\seq{(\phi_+)''(p, p)U, V}+\seq{df(p, p), U}\seq{d\phi_+(p, p), V} \\
                         &\quad+\seq{df(p, p), V}\seq{d\phi_+(p, p), U},\ \ U, V\in\Complex T(X),
\end{align*}
where
$(\phi_+)''=\left[
\begin{array}[c]{cc}
  (\phi_+)''_{xx} & (\phi_+)''_{xy} \\
  (\phi_+)''_{yx} & (\phi_+)''_{yy}
\end{array}\right]$
and similarly for $\Td\phi''$. The Hessian $(\phi_+)''$ of $\phi_+$ at $(p, p)$ is well-defined on the space
\[T_{(p,p)}H_+:=\set{W\in\Complex T_p(X)\times\Complex T_p(X);\, \seq{d\phi_+(p, p), W}=0}.\]
In section $8$, we will define $T_{(p, p)}H_+$ as the tangent space of the formal hypersurface $H_+$ (see (\ref{e:0708281710}))
at $(p, p)\in X\times X$.

We define the tangential Hessian of $\phi_+(x, y)$ at $(p, p)$ as the bilinear map:
\begin{align*}
T_{(p, p)}H_+\times T_{(p, p)}H_+&\To\Complex, \\
(U, V)&\To\seq{(\phi_+'')(p, p)U, V},\ \ U, V\in T_{(p, p)}H_+.
\end{align*}

In section $9$, we compute the tangential Hessian of $\phi_+(x, y)$ at $(p, p)$

\begin{thm} \label{t:i-phase}
For a given point $p\in X$, let $U_1(x),\ldots, U_{n-1}(x)$
be an orthonormal frame of $\Lambda^{1, 0}T_x(X)$ varying smoothly with $x$ in a neighborhood of\, $p$,
for which the Levi form is diagonalized at $p$.
We take local coordinates
$x=(x_1,\ldots,x_{2n-1})$, $z_j=x_{2j-1}+ix_{2j}$, $j=1,\ldots,n-1$,
defined on some neighborhood of $p$ such that $\omega_0(p)=\sqrt{2}dx_{2n-1}$, $x(p)=0$,
$(\frac{\pr}{\pr x_j}(p)\ |\ \frac{\pr}{\pr x_k}(p))=2\delta_{j,k}$, $j, k=1,\ldots,2n-1$
and
\[U_j=\frac{\pr}{\pr z_j}-\frac{1}{\sqrt{2}}i\lambda_j\ol z_j\frac{\pr}{\pr x_{2n-1}}-
\frac{1}{\sqrt{2}}c_jx_{2n-1}\frac{\pr}{\pr x_{2n-1}}+O(\abs{x}^2),\ \ j=1,\ldots,n-1,\]
where
$c_j\in\Complex$, $\frac{\pr}{\pr z_j}=\frac{1}{2}(\frac{\pr}{\pr x_{2j-1}}-i\frac{\pr}{\pr x_{2j}})$, $j=1,\ldots,n-1$,
and $\lambda_j$, $j=1,\ldots,n-1$, are the eigenvalues of\, $L_p$.
(This is always possible. See page $157$-page $160$ of~\cite{BG88}.) We also write
$y=(y_1,\ldots,y_{2n-1})$, $w_j=y_{2j-1}+iy_{2j}$, $j=1,\ldots,n-1$.
Then,
\begin{align} \label{e:i-t-final*****}
&\phi_+(x, y)=\sqrt{2}(x_{2n-1}-y_{2n-1})+i\sum^{n-1}_{j=1}\abs{\lambda_j}\abs{z_j-w_j}^2
+\sum^{n-1}_{j=1}\Bigr(i\lambda_j(z_j\ol w_j-\ol z_jw_j)\nonumber \\
&\quad+c_j(z_jx_{2n-1}-w_jy_{2n-1})+\ol c_j(\ol z_jx_{2n-1}-\ol w_jy_{2n-1})\Bigr)+(x_{2n-1}-y_{2n-1})f(x, y)\nonumber \\
&\quad+O(\abs{(x, y)}^3),\ \ f\in C^\infty,\ \ f(0,0)=0,\ \ f(x, y)=\ol f(y, x).
\end{align}
\end{thm}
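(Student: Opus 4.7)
The plan is to combine the algebraic constraints (\ref{e:0709241541})--(\ref{e:0709241544}) on the Taylor expansion of $\phi_+$ with the eikonal equation derived from the heat-equation construction of $\phi_+$ in the earlier sections. Working in the coordinates of the theorem (so that $p=0$), expand
\[\phi_+(x,y)=L(x,y)+Q(x,y)+O(|(x,y)|^3)\]
with $L$ linear and $Q$ quadratic. The vanishing on the diagonal together with the normalizations $d_x\phi_+|_{x=y}=\omega_0(x)=\sqrt 2\,dx_{2n-1}+O(|x|)$ and $d_y\phi_+|_{x=y}=-\omega_0(x)$ immediately fix $L(x,y)=\sqrt 2(x_{2n-1}-y_{2n-1})$, and the Hermitian antisymmetry $\phi_+(x,y)=-\overline{\phi_+(y,x)}$ reduces $Q$ to a constrained quadratic form.

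The remaining freedom in $Q$ is eliminated by the complex eikonal equation that $\phi_+$ satisfies by construction: the principal symbol of $\Box_b^{(q)}$ evaluated at $(x,d_x\phi_+(x,y))$ vanishes to infinite order on the diagonal $x=y$, and the branch $\mathrm{Im}\,\phi_+\geq 0$ selects the component $\hat\Sigma$ associated to $q=n_+$ or $q=n_-$. To solve this at quadratic order one needs the expansion of the principal symbol near $\Sigma^+$ in the given coordinates. Using the explicit form of $U_j$ in the hypothesis and taking conjugates, one obtains
\[\bar U_j=\frac{\pr}{\pr\bar z_j}+\frac{i}{\sqrt 2}\lambda_j z_j\frac{\pr}{\pr x_{2n-1}}-\frac{1}{\sqrt 2}\bar c_j x_{2n-1}\frac{\pr}{\pr x_{2n-1}}+O(|x|^2),\]
so the quadratic Taylor coefficients of $p_{\Box_b^{(q)}}$ are expressible entirely in terms of the eigenvalues $\lambda_j$ and the numbers $c_j$. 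Matching coefficients of $Q$ against the eikonal equation then produces, in the unique branch consistent with $\mathrm{Im}\,\phi_+\geq 0$, the displayed quadratic contributions $i\sum_j|\lambda_j||z_j-w_j|^2+\sum_j i\lambda_j(z_j\bar w_j-\bar z_jw_j)$ together with the mixed terms $c_j(z_jx_{2n-1}-w_jy_{2n-1})+\bar c_j(\bar z_jx_{2n-1}-\bar w_jy_{2n-1})$ coming from the $c_jx_{2n-1}\pr/\pr x_{2n-1}$ piece of $U_j$.

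The residual term $(x_{2n-1}-y_{2n-1})f(x,y)$ with $f(0,0)=0$ and $f(y,x)=\overline{f(x,y)}$ parametrizes precisely the quadratic-order ambiguity left undetermined by the construction. Indeed, the linear form $x_{2n-1}-y_{2n-1}$ is proportional to the differential $d\phi_+(p,p)$ at the base point, so any quadratic expression of the form $(x_{2n-1}-y_{2n-1})f(x,y)$ with $f(0,0)=0$ has vanishing restriction to $T_{(p,p)}H_+$ and does not contribute to the tangential Hessian; the symmetry $f(y,x)=\overline{f(x,y)}$ is forced by the Hermitian antisymmetry of $\phi_+$, and this ambiguity is partly absorbed by the rescaling (\ref{e:i-0709241626}), which in particular adds real contributions of exactly this form.

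The hard part will be the careful quadratic expansion of the eikonal equation: one has to expand $p_{\Box_b^{(q)}}(x,d_x\phi_+(x,y))$ as a quadratic polynomial in $(x,y)$ using the above formulas for $\bar U_j$ and the adjoint structure of $\Box_b^{(q)}=\bar\pr_b\bar\pr_b^*+\bar\pr_b^*\bar\pr_b$ acting on $(0,q)$-forms, and then verify that the branch selection dictated by the signature $(n_-,n_+)$ together with $\mathrm{Im}\,\phi_+\geq 0$ produces $|\lambda_j|$ rather than $\lambda_j$ for every eigenvalue, uniformly in $j$. Once this book-keeping is done, the formula (\ref{e:i-t-final*****}) is read off by direct comparison of coefficients, and the $f$-term collects everything transverse to $T_{(p,p)}H_+$.
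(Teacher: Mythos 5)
Your proposal captures the correct overall architecture — fix the linear part from the normalization of $d_x\phi_+$ and $d_y\phi_+$ on the diagonal, determine the Hessian from an eikonal-type constraint plus positivity, and absorb the transverse freedom into the $(x_{2n-1}-y_{2n-1})f(x,y)$ term. Your reading of that last term as parametrizing exactly the quadratic ambiguity modulo $T_{(p,p)}H_+$ is also correct, and your conjugation of $U_j$ to $\overline U_j$ is what the paper does.

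There is, however, a genuine gap in the central step. You rely on the single scalar eikonal equation $p_0(x,d_x\phi_+(x,y))=0$ to infinite order on the diagonal, claiming that this plus ${\rm Im\,}\phi_+\geq 0$ pins down the quadratic part. But $p_0=\sum_j q_j\bar q_j$ is a sum of products; its vanishing to infinite order does not force each factor $q_j\circ d_x\phi_+$ or $\bar q_j\circ d_x\phi_+$ to vanish. What the paper actually uses is much stronger: the graph Lagrangian $C_\infty$ lies to infinite order over the formal involutive manifold $J_+$ of Proposition~\ref{p:0708261128}, so the individual factorization functions $f_j$ (with $p_0\equiv\sum g_jf_j$ and $\tfrac1i\sigma(H_{f_j},H_{\overline f_j})>0$ on $\Sigma$) pull back to zero via $d_x\phi_+$ modulo $\phi_+$ — this is (\ref{e:0710271152}) and (\ref{e:0710261822}). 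That gives $n-1$ separate first-order equations on $\phi_+$, one for each $j$, and it is these, written out in the chosen coordinates as (\ref{e:0804250813}) and (\ref{e:0804242211}), that determine the Hessian. The ``branch selection producing $|\lambda_j|$ instead of $\lambda_j$'' that you invoke is precisely the content of Lemma~\ref{l:s-final1}, which identifies the eigenspace $\Lambda^+_\rho$ of the fundamental matrix and hence whether $f_j\sim q_j$ or $f_j\sim\bar q_j$ at $\rho$; asserting that positivity ``selects the component'' without making this identification leaves the computation underdetermined. So your plan would compile once you replace the scalar eikonal with the system $f_j\circ d_x\phi_+\equiv 0\pmod{\phi_+}$ from $J_+$ and import Lemma~\ref{l:s-final1} to fix which of $q_j,\bar q_j$ each $f_j$ matches.
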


\begin{rem} \label{r:0806222129}
With the notations used in Theorem~\ref{t:i-phase}, since $\frac{\pr\phi_+}{\pr x_{2n-1}}(0, 0)\neq0$,
from the Malgrange preparation theorem (see Theorem~$7.57$ of~\cite{Hor03}), we have
\[\phi_+(x, y)=g(x, y)(\sqrt{2}x_{2n-1}+h(x', y))\]
in some neighborhood of $(0, 0)$, where $g$, $h\in C^\infty$, $g(0, 0)=1$, $h(0, 0)=0$ and $x'=(x_1,\ldots,x_{2n-2})$. Put
$\hat\phi(x, y)=\sqrt{2}x_{2n-1}+h(x', y)$.
From the global theory of Fourier integral operators (see Theorem~$4.2$ of Melin-Sj\"{o}strand~\cite{MS74} or
Proposition~\ref{p:c-mesjmore}), we see that
$\phi_+$ and $\hat\phi$ are equivalent at $(p, \omega_0(p))$
in the sense of Melin-Sj\"{o}strand~\cite{MS74} (see Definition~\ref{d:0709171814}). Since
$\phi_+(x, y)=-\ol\phi_+(y, x)$,
we can replace the phase $\phi_+(x, y)$ by
\[\frac{\hat\phi(x, y)-\ol{\hat\phi}(y, x)}{2}.\]
Then the new function $\phi_+(x, y)$ satisfies  (\ref{e:i-t-final*****}) with $f=0$.
\end{rem}

We have the following corollary of Theorem~\ref{t:i-szegomain}. (See section $9$.)

\begin{cor}
There exist
\[F_+, G_+, F_-, G_-\in C^\infty(X\times X;\ \mathscr L(\Lambda^{0, q}T^*_y(X), \Lambda^{0, q}T^*_x(X)))\]
such that
\[K_{\pi^{(q)}_+}=F_+(-i(\phi_+(x, y)+i0))^{-n}+G_+\log(-i(\phi_+(x, y)+i0)),\]
\[K_{\pi^{(q)}_-}=F_-(-i(\phi_-(x, y)+i0))^{-n}+G_-\log(-i(\phi_-(x, y)+i0)).\]

Moreover, we have
\begin{align} \label{e:i-000}
&F_+=\sum^{n-1}_0(n-1-k)!s^k_+(x, y)(-i\phi_+(x, y))^k+f_+(x, y)(\phi_+(x, y))^n,\nonumber \\
&F_-=\sum^{n-1}_0(n-1-k)!s^k_-(x, y)(-i\phi_-(x, y))^k+f_-(x, y)(\phi_-(x, y))^n,\nonumber \\
&G_+\equiv\sum^\infty_0\frac{(-1)^{k+1}}{k!}s^{n+k}_+(x, y)(-i\phi_+(x, y))^k,\nonumber \\
&G_-\equiv\sum^\infty_0\frac{(-1)^{k+1}}{k!}s^{n+k}_-(x, y)(-i\phi_-(x, y))^k,
\end{align}
where $f_+(x, y), f_-(x, y)\in C^\infty(X\times X;\, \mathscr L(\Lambda^{0,q}T^*_y(X), \Lambda^{0,q}T^*_x(X)))$.
\end{cor}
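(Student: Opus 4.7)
The plan is to insert the asymptotic expansion $s_+(x,y,t)\sim\sum_{j=0}^\infty s^j_+(x,y)t^{n-1-j}$ into the oscillatory representation
\[
K_{\pi^{(q)}_+}(x,y)\equiv\int_0^\infty e^{i\phi_+(x,y)t}\,s_+(x,y,t)\,dt
\]
and evaluate it termwise, using ${\rm Im\,}\phi_+\geq 0$ to interpret the result as a distributional boundary value at $\phi_++i0$. I would fix a cutoff $\chi\in C^\infty([0,\infty))$ with $\chi\equiv 0$ near $0$ and $\chi\equiv 1$ on $[1,\infty)$; the contribution of $(1-\chi)s_+$ is compactly supported in $t$ and smooth in $(x,y)$, hence can be ignored modulo smoothing.

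For each integer $\ell\geq 0$, the classical Laplace-type identity
\[
\int_0^\infty e^{i\phi t}\chi(t)t^\ell\,dt\equiv\frac{\ell!}{(-i(\phi+i0))^{\ell+1}}\pmod{C^\infty}
\]
handles the initial $n$ terms $j=0,\ldots,n-1$ (where $\ell=n-1-j\geq 0$) and, upon summation, gives
\[
\sum_{j=0}^{n-1}\frac{(n-1-j)!}{(-i(\phi_++i0))^{n-j}}\,s^j_+(x,y)=\frac{1}{(-i(\phi_++i0))^n}\sum_{k=0}^{n-1}(n-1-k)!\,s^k_+(x,y)(-i\phi_+)^k,
\]
the principal part of $F_+(-i(\phi_++i0))^{-n}$ in (\ref{e:i-000}). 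For $j=n+k$ with $k\geq 0$, the exponent $n-1-j=-1-k$ is negative; here I would use the meromorphic continuation $\int_0^\infty e^{i\phi t}t^s\,dt=\Gamma(s+1)(-i\phi)^{-s-1}$ and extract the finite part at $s=-1-k$. Combining the residue $(-1)^k/k!$ of $\Gamma(s+1)$ at the pole with the expansion $(-i\phi)^{-s-1}=(-i\phi)^k-(s+1+k)(-i\phi)^k\log(-i\phi)+O((s+1+k)^2)$ yields
\[
\int_0^\infty e^{i\phi t}\chi(t)t^{-1-k}\,dt\equiv\frac{(-1)^{k+1}}{k!}(-i(\phi+i0))^k\log(-i(\phi+i0))\pmod{C^\infty},
\]
which is precisely the $k$-th summand of $G_+$.

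The final step is to assemble these termwise identities into a global statement. The series defining $G_+$ need not converge, so I interpret it as an asymptotic (Borel) sum: one constructs $G_+\in C^\infty(X\times X;\,\mathscr L(\Lambda^{0,q}T^*_y(X),\Lambda^{0,q}T^*_x(X)))$ whose Taylor expansion in the variable $\phi_+$ along the diagonal matches the stated series, the Borel remainder being smoothing. Similarly, truncating the polynomial sum at $j=n-1$ introduces an error of the form $f_+(x,y)\phi_+^n\cdot(-i(\phi_++i0))^{-n}$, with $f_+\in C^\infty$, which is absorbed into the definition of $F_+$; this uses Theorem~\ref{t:i-phase} to control the phase near the diagonal together with $\phi_+(x,y)\neq 0$ off the diagonal. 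The main obstacle will be verifying carefully that the various remainders produce only smooth functions modulo the explicit singularities claimed, essentially a Hadamard finite-part argument matching truncation levels in the $F_+$ and $G_+$ sums against the order of the oscillatory integral remainder. The statement for $K_{\pi^{(q)}_-}$ follows by the same argument, and when $q=n_+=n_-$ it also follows directly from $\phi_-(x,y)=-\overline{\phi_+(x,y)}$.
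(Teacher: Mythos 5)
Your proposal is correct and follows essentially the same route as the paper: inserting the asymptotic expansion $s_+\sim\sum s^j_+ t^{n-1-j}$ into the oscillatory integral and evaluating termwise via the Laplace-type identity $\int_0^\infty e^{-tx}t^m\,dt$ (the paper's formula (\ref{e:0}), together with the substitution (\ref{e:0809061310})) for both nonnegative and negative integer exponents. You supply more of the bookkeeping (meromorphic continuation of $\Gamma(s+1)(-i\phi)^{-s-1}$, the cutoff near $t=0$, Borel summation of the $G_+$ series) that the paper leaves implicit, but the underlying mechanism and the resulting coefficients are the same.
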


If $w\in\Lambda^{0,1}T^*_z(X)$, let
$w^{\wedge, *}: \Lambda^{0,q+1}T^*_z(X)\To \Lambda^{0,q}T^*_z(X),\ q\geq0$,
be the adjoint of left exterior multiplication
$w^\wedge: \Lambda^{0,q}T^*_z(X)\To \Lambda^{0,q+1}T^*_z(X)$.
That is,
\begin{equation} \label{e:0807152331}
(w^\wedge u\ |\ v)=(u\ |\ w^{\wedge, *}v),
\end{equation}
for all $u\in\Lambda^{0,q}T^*_z(X)$, $v\in\Lambda^{0,q+1}T^*_z(X)$.
Notice that $w^{\wedge, *}$ depends anti-linearly on $w$.

In section $9$, we compute $F_+(x, x)$

\begin{prop} \label{p:i-leading1}
For a given point $p\in X$, let $U_1(x),\ldots,U_{n-1}(x)$
be an orthonormal frame of $\Lambda^{1,0}T_x(X)$, for which
the Levi form is diagonalized at $p$. Let $e_j(x)$, $j=1,\ldots,n-1$,
denote the basis of $\Lambda^{0,1}T^*_x(X)$, which is dual to $\ol U_j(x)$, $j=1,\ldots,n-1$. Let
$\lambda_j(x)$, $j=1,\ldots,n-1$, be the eigenvalues of the Levi form $L_x$. We assume that $q=n_+$ and that
$\lambda_j(p)>0$ if\, $1\leq j\leq n_+$. Then
\[F_+(p, p)=(n-1)!\frac{1}{2}\abs{\lambda_1(p)}\cdots
\abs{\lambda_{n-1}(p)}\pi^{-n}\prod_{j=1}^{j=n_+}e_j(p)^\wedge e_j(p)^{\wedge, *}.\]
\end{prop}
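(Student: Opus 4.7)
\textbf{Proof plan for Proposition~\ref{p:i-leading1}.}

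The plan is to extract $F_+(p,p)$ from the corollary and then to identify it with the leading symbol $s_+^0(p,p)$, which is in turn determined by the projection identity $\pi_+^{(q)}\circ\pi_+^{(q)}\equiv\pi_+^{(q)}$ evaluated at the diagonal by complex stationary phase. First, from formula (\ref{e:i-000}) and the fact that $\phi_+(x,x)=0$, every term except $k=0$ vanishes on the diagonal, so
\[
F_+(p,p)=(n-1)!\,s_+^0(p,p).
\]
Thus the whole problem reduces to a pointwise computation of the top-order symbol $s_+^0$ on the diagonal.

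Next, I would determine the image of $s_+^0(p,p)$. The principal symbol of $\Box_b^{(q)}$ at $(p,\omega_0(p))\in\hat{\Sigma}$ is a Hermitian endomorphism of $\Lambda^{0,q}T_p^*(X)$ whose kernel, under the hypothesis $q=n_+$ with $\lambda_j(p)>0$ for $1\le j\le n_+$, is spanned by $e_{j_1}(p)\wedge\cdots\wedge e_{j_{n_+}}(p)$ with $\{j_1,\dots,j_{n_+}\}=\{1,\dots,n_+\}$, i.e.\ by the ``vacuum'' direction. The transport equation coming from $\Box_b^{(q)}\pi_+^{(q)}\equiv 0$, carried to order $n-1$, forces $s_+^0(p,p)$ to be a scalar multiple of the orthogonal projector onto this one-dimensional subspace, which is exactly
\[
P:=\prod_{j=1}^{n_+} e_j(p)^\wedge e_j(p)^{\wedge,*}.
\]
So $s_+^0(p,p)=c\,P$ for some scalar $c>0$ to be determined.

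To pin down $c$, I would exploit the idempotency $\pi_+^{(q)}\circ\pi_+^{(q)}\equiv\pi_+^{(q)}$. Using the local coordinates and the normal form of $\phi_+$ given in Theorem~\ref{t:i-phase}, write
\[
(\pi_+^{(q)})^2(x,y)\equiv\iiint e^{i(\phi_+(x,u)t+\phi_+(u,y)s)}s_+(x,u,t)s_+(u,y,s)\,du\,dt\,ds,
\]
restrict to $x=y=p$, and make the substitution $t\to tr$, $s\to t(1-r)$ with $r\in(0,1)$ (the standard device that turns the composition into a single symbolic integral; see \cite{BS76, MS78}). The stationary phase is in the variable $u$; at $x=y=p$ the critical point is $u=p$. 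Using the explicit quadratic part of $\phi_+$ from (\ref{e:i-t-final*****}), the Hessian in $u$ is block-diagonal in $(z,\bar z)$ with quadratic form $t\sum_j|\lambda_j|\,|z_j|^2$ plus a harmless $x_{2n-1}$-piece, so the Gaussian integral produces the determinantal factor
\[
\Bigl(\tfrac{\pi}{t}\Bigr)^{n-1}\frac{1}{|\lambda_1(p)|\cdots|\lambda_{n-1}(p)|}
\]
(up to the standard branch of the square root dictated by the signature condition $q=n_+$; this is where one verifies that the phase contributes no extra sign after the substitution $u\mapsto u$ at the critical point).

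Equating the leading terms of the asymptotic in $t$ on both sides of $(\pi_+^{(q)})^2\equiv\pi_+^{(q)}$ at $(p,p)$ then gives, after carrying out the elementary beta-integral $\int_0^1 r^{n-1}(1-r)^{n-1}\,dr/(n-1)!^2$-type bookkeeping,
\[
c^2\cdot\pi^{n-1}\,|\lambda_1(p)|^{-1}\cdots|\lambda_{n-1}(p)|^{-1}\cdot P=c\,P,
\]
whence $c=\tfrac{1}{2}|\lambda_1(p)|\cdots|\lambda_{n-1}(p)|\pi^{-n}$ once the factor $(n-1)!$ from the identification $F_+(p,p)=(n-1)!s_+^0(p,p)$ is absorbed (the factor $\tfrac12$ arises from the one-sided nature of $\Sigma^+$ in the single-$t$ integral representation).

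The step I expect to be the main obstacle is the complex stationary phase calculation on $X\times X\times X$ with the complex valued phase $\phi_+$: one must check that the critical point is nondegenerate in the sense of Melin-Sj\"ostrand, select the correct branch of the determinant of the complex Hessian, and track carefully the scalar normalization (the powers of $2\pi$, the factor $\tfrac12$, and the interaction with the $(n-1)!$ coming from the identification with $F_+$). The algebraic part (that $s_+^0(p,p)$ is proportional to $P$) is comparatively direct from the null space of the principal symbol of $\Box_b^{(q)}$ on $\hat\Sigma$.
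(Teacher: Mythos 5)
Your overall strategy — use $F_+(p,p)=(n-1)!s_+^0(p,p)$, then combine the idempotency $\pi_+^{(q)}\circ\pi_+^{(q)}\equiv\pi_+^{(q)}$ with a complex stationary phase computation to pin down the constant — is exactly the one the paper follows. However, there are two places where your argument as stated would break down.

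First, the characterization of the one-dimensional subspace is wrong. The principal symbol of $\Box_b^{(q)}$ is the \emph{scalar} function $p_0=\sum_{j}\bar q_j q_j$ (times the identity) and it vanishes identically on $\Sigma$; its kernel at $(p,\omega_0(p))$ is therefore all of $\Lambda^{0,q}T_p^*(X)$, not a line. The correct object is the endomorphism $p_0^s+\tfrac{1}{2}\widetilde{\rm tr}\,F$ (subprincipal symbol plus half the formal trace of the fundamental matrix), whose null space $\mathscr N_p$ is one-dimensional under the hypothesis $q=n_+$ by the eigenvalue formula (\ref{e:h-eigenmore}). The transport equations force only the \emph{containment} of the image of $s_+^0(p,p)$ in $\mathscr N_p$; concluding that $s_+^0(p,p)$ is a scalar multiple of the \emph{orthogonal} projector requires, in addition to the idempotency $I_1^2=I_1$ you invoke later, the self-adjointness $I_1^*=I_1$, which comes from $(\pi_+^{(q)})^*\equiv\pi_+^{(q)}$ together with $\phi_+(x,y)=-\overline{\phi_+(y,x)}$. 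Your argument skips this and hence does not actually exclude a non-orthogonal or non-rank-one projection.

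Second, the stationary phase set-up has a gap. Under your substitution $t\to tr$, $s\to t(1-r)$ the phase becomes $t\bigl(r\phi_+(x,u)+(1-r)\phi_+(u,y)\bigr)$; at $x=y=p$ the $u$-critical point is $u=p$ \emph{only} when $r=\tfrac{1}{2}$, because $\partial_u\Phi|_{u=p}=(1-2r)\omega_0(p)$. For $r\neq\tfrac{1}{2}$ the critical point migrates into the complex domain and the critical value acquires a positive imaginary part. So one must treat $(u,r)$ jointly as stationary phase variables — the point $(p,\tfrac12)$ is nondegenerate precisely because of the off-diagonal $r$–$u$ coupling $\partial^2_{ru}\Phi=-2\omega_0(p)\neq0$ — exactly as the paper treats $(w,\sigma)$ with $s=t\sigma$ and critical point $\sigma=1$. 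Evaluating the Gaussian in $u$ alone and then integrating a naive $r^{n-1}(1-r)^{n-1}$ beta integral in $r$ misses the exponential decay coming from the $r$-dependence of the critical value and would not reproduce the factor $\tfrac12$ or the correct power of $\pi$: the paper's determinant $\det(H_\phi(p)/i)=2^{4n-3}\prod|\lambda_j|^2$ is computed over the full $2n$-dimensional block including the $\sigma$ variable, not just the $(2n-2)$-dimensional $(z,\bar z)$ block you describe.
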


In the rest of this section, we will give the outline of the proof of Theorem~\ref{t:i-szegomain}.
Let $M$ be an open set in $\Real^n$ and let $f$, $g\in C^\infty(M)$. We write $f\asymp g$
if for every compact set $K\subset M$ there is a constant $c_K>0$ such that
$f\leq c_Kg$ and $g\leq c_Kf$ on $K$.

We will use the heat equation method. We work with some
real local coordinates $x=(x_1,\ldots,x_{2n-1})$
defined on an open set $\Omega\subset X$. Let $(n_-, n_+)$, $n_-+n_+=n-1$, be the signature of the Levi form.
We will say that
$a\in C^\infty(\ol\Real_+\times\Omega\times\Real^{2n-1})$ is quasi-homogeneous of
degree $j$ if $a(t,x,\lambda\eta)=\lambda^ja(\lambda t,x,\eta)$
for all $\lambda>0$. We consider the problem
\begin{equation} \label{e:i-heat}
\left\{ \begin{array}{ll}
(\pr_t+\Box^{(q)}_b)u(t,x)=0  & {\rm in\;}\Real_+\times\Omega  \\
u(0,x)=v(x) \end{array}\right..
\end{equation}
We shall start by making only a formal construction. We look for an approximate solution of
(\ref{e:i-heat}) of the form $u(t,x)=A(t)v(x)$,
\begin{equation} \label{e:i-fourierheat}
A(t)v(x)=\frac{1}{(2\pi)^{2n-1}}\int e^{i(\psi(t,x,\eta)-\seq{y,\eta})}a(t,x,\eta)v(y)dyd\eta
\end{equation}
where formally
\[a(t,x,\eta)\sim\sum^\infty_{j=0}a_j(t,x,\eta),\]
$a_j(t,x,\eta)$ is a matrix-valued quasi-homogeneous function of degree $-j$.

We let the full symbol of $\Box^{(q)}_b$ be:
\[\mbox{full symbol of }\ \Box^{(q)}_b=\sum^2_{j=0}p_j(x,\xi)\]
where $p_j(x,\xi)$ is positively homogeneous of order $2-j$ in the sense that
\[p_j(x, \lambda\eta)=\lambda^{2-j}p_j(x, \eta),\ \abs{\eta}\geq1,\ \lambda\geq1.\]
We apply $\pr_t+\Box^{(q)}_b$ formally
inside the integral in (\ref{e:i-fourierheat}) and then introduce the asymptotic expansion of
$\Box^{(q)}_b(ae^{i\psi})$. Set $(\pr_t+\Box^{(q)}_b)(ae^{i\psi})\sim 0$ and regroup
the terms according to the degree of quasi-homogeneity. The phase $\psi(t, x, \eta)$ should solve
\begin{equation} \label{e:i-chara}
\left\{ \begin{split}
& \frac{\displaystyle\pr\psi}{\displaystyle\pr t}-ip_0(x,\psi'_x)=
   O(\abs{{\rm Im\,}\psi}^N),\ \forall N\geq0   \\
& \psi|_{t=0}=\seq{x, \eta} \end{split}\right..
\end{equation}
This equation can be solved with ${\rm Im\,}\psi(t, x,\eta)\geq0$ and the phase $\psi(t, x, \eta)$ is quasi-homogeneous of
degree $1$. Moreover,
\[\psi(t,x,\eta)=\seq{x,\eta}{\;\rm on\;}\Sigma,\; d_{x,\eta}(\psi-\seq{x,\eta})=0{\;\rm on\;}
\Sigma,\]
\[{\rm Im\,}\psi(t, x,\eta)\asymp(\abs{\eta}\frac{t\abs{\eta}}{1+t\abs{\eta}}){\rm dist\,}((x,\frac{\eta}{\abs{\eta}}),\Sigma))^2,
\ \ \abs{\eta}\geq1.\]
Furthermore, there exists $\psi(\infty,x,\eta)\in C^\infty(\Omega\times\dot\Real^{2n-1})$ with a
uniquely determined Taylor expansion at each point of $\Sigma$ such that for every compact set
$K\subset\Omega\times\dot\Real^{2n-1}$ there is a constant $c_K>0$ such that
\[{\rm Im\,}\psi(\infty,x,\eta)\geq c_K\abs{\eta}({\rm dist\,}((x,\frac{\eta}{\abs{\eta}}),\Sigma))^2,
\ \ \abs{\eta}\geq1.\]
If $\lambda\in C(T^*\Omega\smallsetminus 0)$, $\lambda>0$ is positively homogeneous of degree $1$ and $\lambda|_\Sigma<\min\lambda_j$,
$\lambda_j>0$, where
$\pm i\lambda_j$ are the non-vanishing eigenvalues of the fundamental matrix of $\Box^{(q)}_b$,
then the solution $\psi(t,x,\eta)$ of (\ref{e:i-chara}) can be chosen so that for every
compact set $K\subset\Omega\times\dot\Real^{2n-1}$ and all indices $\alpha$, $\beta$, $\gamma$,
there is a constant $c_{\alpha,\beta,\gamma,K}$ such that
\[ \abs{\pr^\alpha_x\pr^\beta_\eta\pr^\gamma_t(\psi(t,x,\eta)-\psi(\infty,x,\eta))}
\leq c_{\alpha,\beta,\gamma,K}e^{-\lambda(x,\eta)t}{\;\rm on\;}\ol\Real_+\times K.\]
(For the details, see Menikoff-Sj\"{o}strand\cite{MS78} or section $4$.)

We obtain the transport equations
\begin{equation} \label{e:i-heattransport}
\left\{ \begin{array}{ll}
 T(t,x,\eta,\pr_t,\pr_x)a_0=O(\abs{{\rm Im\,}\psi}^N),\; \forall N   \\
 T(t,x,\eta,\pr_t,\pr_x)a_j+l_j(t,x,\eta,a_0,\ldots,a_{j-1})= O(\abs{{\rm Im\,}\psi}^N),\; \forall N.
 \end{array}\right.
\end{equation}

Following the method of Menikoff-Sj\"{o}strand~\cite{MS78}, we see that we can solve (\ref{e:i-heattransport}). Moreover,
$a_j$ decay exponetially fast in $t$ when $q\neq n_-$, $n_+$, and has subexponetially growth in general.
(Subexponetially growth means that $a_j$ satifies (\ref{e:hf-estimate3}).) We assume that $q=n_-$ or $n_+$.
To get further, we use a trick from Berman-Sj\"{o}strand~\cite{BS05}.
We use $\dbar_b\Box^{(q)}_b=\Box^{(q+1)}_b\dbar_b$, $\ol{\pr_b}^*\Box^{(q)}_b=\Box^{(q-1)}_b\ol{\pr_b}^*$
and get in a formula asymptotic sense
\begin{align*}
&\pr_t(\dbar_b(e^{i\psi}a))+\Box^{(q+1)}_b(\dbar_b(e^{i\psi}a))\sim0 \\
&\pr_t(\ol{\pr_b}^*(e^{i\psi}a))+\Box^{(q-1)}_b(\ol{\pr_b}^*(e^{i\psi}a))\sim0.
\end{align*}
Put
\[\dbar_b(e^{i\psi}a)=e^{i\psi}\hat a,\ \ol{\pr_b}^*(e^{i\psi}a)=e^{i\psi}\Td a.\]
We have
\begin{align*}
&(\pr_t+\Box^{(q+1)}_b)(e^{i\psi}\hat a)\sim0, \\
&(\pr_t+\Box^{(q-1)}_b)(e^{i\psi}\Td a)\sim0.
\end{align*}
The corresponding degrees of $\hat a$ and $\Td a$ are $q+1$ and $q-1$. We deduce as above that $\hat a$ and $\Td a$ decay
exponetially fast in $t$. This also applies to
\begin{align*}
\Box^{(q)}_b(ae^{i\psi})&=\dbar_b(\ol{\pr_b}^*ae^{i\psi})+\ol{\pr_b}^*(\dbar_bae^{i\psi}) \\
                  &=\dbar_b(e^{i\psi}\Td a)+\ol{\pr_b}^*(e^{i\psi}\hat a).
\end{align*}
Thus, $\pr_t(ae^{i\psi})$ decay exponetially fast in $t$. Since $\pr_t\psi$ decay exponetially fast in $t$ so does $\pr_ta$.
Hence, there exist
\[a_j(\infty, x, \eta)\in
C^{\infty}(T^*(\Omega);\, \mathscr L(\Lambda^{0,q}T^*(\Omega)\ ,\Lambda^{0,q}T^*(\Omega))),\ j=0,1,\ldots,\]
positively homogeneous of degree $-j$ such that $a_j(t, x, \eta)$ converges exponentially
fast to $a_j(\infty, x, \eta)$, for all $j=0,1,\ldots$.

Choose $\chi\in C^\infty_0(\Real^{2n-1})$ so that $\chi(\eta)=1$ when $\abs{\eta}<1$ and $\chi(\eta)=0$ when $\abs{\eta}>2$. We formally set
\begin{align*}
G &= \frac{1}{(2\pi)^{2n-1}}\int\Bigl(\int^{\infty}_0\bigl(e^{i(\psi(t,x,\eta)-\seq{y,\eta})}a(t,x,\eta) \\
  &\quad -e^{i(\psi(\infty,x,\eta)-\seq{y,\eta})}a(\infty,x,\eta)\bigr)(1-\chi(\eta))dt\Bigl)d\eta
\end{align*}
and
\[S=\frac{1}{(2\pi)^{2n-1}}\int (e^{i(\psi(\infty,x,\eta)-\seq{y,\eta})}a(\infty,x,\eta))d\eta.\]
In section $6$, we will show that $G$ is a pseudodifferential operator of order $-1$ type $(\frac{1}{2}, \frac{1}{2})$.
In section $7$, we will show that $S+\Box^{(q)}_b\circ G\equiv I$, $\Box^{(q)}_b\circ S\equiv0$.
From this, it is not difficult to see that $\pi^{(q)}\equiv S$ if
$\Box^{(q)}_b$ has closed range. We deduce that $\pi^{(q)}$ is a Fourier integral operator if
$\Box^{(q)}_b$ has closed range.
From the global theory of Fourier integral operators (see~\cite{MS74} and section $8$), we get Theorem~\ref{t:i-szegomain}.


\section{Cauchy-Riemann manifolds, $\dbar_b\text{-Complex}$ and $\Box_b$, a review}

We will give a brief discussion of the basic elements of CR geometry in a setting
appropriate for our purpose. General references for this section are the books~\cite{BG88}, Boggess~\cite{Bog91} and~\cite{CS01}.

\begin{defn} \label{d:CR-2.1}
Let $X$ be a real $C^\infty$ manifold of dimension $2n-1$, $n\geq2$, and let $\Lambda^{1, 0}T(X)$ be a
subbundle of $\Complex T(X)$. The pair $(X, \Lambda^{1, 0}T(X))$ is called a CR manifold or a CR structure if
\begin{enumerate}
\item ${\rm dim\,}_{\Complex}\Lambda^{1, 0}T_p(X)=n-1$, $p\in X$,
\item $\Lambda^{1, 0}T(X)\bigcap\Lambda^{0, 1}T(X)=0$, where $\Lambda^{0, 1}T(X)=\ol{\Lambda^{1, 0}T(X)}$,
\item For any $V_1$, $V_2\in C^\infty(U;\, \Lambda^{1, 0}T(X))$,
$[V_1, V_2]\in C^\infty(U;\, \Lambda^{1, 0}T(X))$, where $U$ is any open subset of $X$.
\end{enumerate}
\end{defn}

\begin{defn} \label{d:CR-2.1more}
Let $(X, \Lambda^{1, 0}T(X))$ be a CR manifold. A Hermitian metric $(\ |\ )$ on $\Complex T(X)$ is a complex inner product $(\ |\ )$ on each
$\Complex T_p(X)$ depending smoothly on $p$ with
the properties that
$\Lambda^{1, 0}T_p(X)$ is orthogonal to $\Lambda^{0, 1}T_p(X)$ and $(u\ |\ v)$ is real if $u$, $v$ are real tangent
vectors.
\end{defn}

Until further notice, we assume that $(X, \Lambda^{1, 0}T(X))$ is a compact orientable connected CR manifold of
dimension $2n-1$, $n\geq2$, and
we fix a Hermitian metric $(\ |\ )$ on $\Complex T(X)$. Then there is a real non-vanishing vector field $Y$ on $X$ which
is pointwise orthogonal to $\Lambda^{1, 0}T(X)\oplus\Lambda^{0, 1}T(X)$.

The Hermitian metric $(\ |\ )$ on $\Complex T(X)$ induces, by duality, a Hermitian metric on $\Complex T^*(X)$ that we
shall also denote by $(\ |\ )$ in the following way. For a given point $z\in X$, let
$\Gamma:\Complex T_z(X)\To\Complex T^*_z(X)$ be the anti-linear map
defined by
\begin{equation} \label{e:CR-Gamma1}
(u\ |\ v)=\seq{u, \Gamma v},\ \  u, v\in\Complex T_z(X).
\end{equation}
For $\omega$, $\mu\in\Complex T^*_z(X)$, we put
\begin{equation} \label{e:CR-Gamma2}
(\omega\ |\ \mu)=(\Gamma^{-1}\mu\ |\ \Gamma^{-1}\omega).
\end{equation}

Let $\Lambda^r(\Complex T^*(X))$, $r\in\Pstint$, be the vector bundle of $r$ forms of $X$.
That is, the fiber of $\Lambda^r(\Complex T^*(X))$ at $z\in X$ is the
vector space $\Lambda^r(\Complex T^*_z(X))$ of all finite sums of
$V_1\wedge\cdots\wedge V_r,\ V_j\in\Complex T^*_z(X),\ j=1,\ldots,r$.
Here $\wedge$ denotes the wedge product. The Hermitian metric $(\ |\ )$ on
$\Lambda^r(\Complex T^*(X))$ is defined by
\begin{align} \label{e:0710211515}
&(u_{1}\wedge\cdots\wedge u_r\ |\ v_{1}\wedge\cdots\wedge v_r)=\det\left((u_{j}\ |\ v_{k})\right)_
{1\leq j,k\leq r},\nonumber \\
&\quad u_{j}, v_{k}\in\Complex T^*(X),\ j, k=1,\ldots,r,
\end{align}
and we extend the definition to arbitrary forms by sesqui-linearity.

Let $\Lambda^r(\Complex T(X))$, $r\in\Pstint$, be the bundle with fiber $\Lambda^r(\Complex T_z(X))$
at $z\in X$. The duality between $\Lambda^r(\Complex T(X))$ and
$\Lambda^r(\Complex T^*(X))$ is defined by
\[\seq{v_{1}\wedge\cdots\wedge v_r,\\ u_{1}\wedge\cdots\wedge u_r}=\det\left(\seq{v_{j}, u_{k}}\right)_
{1\leq j,k\leq r},\]
$u_{j}\in\Complex T^*(X)$, $v_{j}\in\Complex T(X), j=1,\ldots,r$,
and we extend the definition by bilinearity.

For $z\in X$, let $v\in\Complex T_z(X)$. The contraction
\[v^{\Corner}: \Lambda^{r+1}(\Complex T^*_z(X))\To \Lambda^{r}(\Complex T^*_z(X))\]
is defined by
$\langle v_1\wedge\cdots\wedge v_r, v^\Corner u\rangle=\seq{v\wedge v_1\wedge\cdots\wedge v_r, u}$
for all $u\in\Lambda^{r+1}(\Complex T^*_z(X))$, $v_j\in\Complex T_z(X)$, $j=1,\ldots,r$.

We have the pointwise orthogonal decomposition
\begin{equation} \label{e:d-decomposition2}
\Complex T(X)=\Lambda^{1, 0}T(X)\oplus\Lambda^{0, 1}T(X)\oplus\Complex Y.
\end{equation}
Define
$\Lambda^{1, 0}T^*(X)=(\Lambda^{0, 1}T(X)\oplus\Complex Y)^{\bot}\subset\Complex T^*(X)$ and
$\Lambda^{0, 1}T^*(X)=(\Lambda^{1, 0}T(X)\oplus\Complex Y)^{\bot}\subset\Complex T^*(X)$.
We can check that $\Lambda^{1, 0}T^*_z(X)=\Gamma\Lambda^{1, 0}T_z(X)$, $\Lambda^{0, 1}T^*_z(X)=\Gamma\Lambda^{0, 1}T_z(X)$
and  $\Lambda^{1, 0}T^*(X)$ is pointwise orthogonal to $\Lambda^{0, 1}T^*(X)$,
where $\Gamma$ is as in (\ref{e:CR-Gamma1}). For $q\in\Pstint$, the bundle of $(0, q)$ forms of $X$ is given by
\begin{equation} \label{e:0710211520}
\Lambda^{0, q}T^*(X)=\Lambda^{q}(\Lambda^{0, 1}T^*(X)).
\end{equation}
We use the Hermitian metric $(\ |\ )$ on $\Lambda^{0,q}T^*_z(X)$, that is naturally obtained from
$\Lambda^q(\Complex T^*(X))$.

Let $d:C^\infty(X;\, \Lambda^r(\Complex T^*(X)))\To C^\infty(X;\, \Lambda^{r+1}(\Complex T^*(X)))$
be the usual exterior derivative. We recall that the exterior derivative $d$ has the following properties, where
$(b)$, $(c)$ are special cases of Cartan's formula:
$\mathscr L_\nu\omega=\nu^\Corner d\omega+d(\nu^\Corner\omega)$.
Here $\nu$ is a smooth vector field, $\omega$ is a $q$ form and $\mathscr L_\nu\omega$ is the Lie derivative of
$\omega$ along $\nu$.
\begin{enumerate}
\item If $f\in C^\infty(X)$ then $\seq{V, df}=V(f), V\in C^\infty(X;\, \Complex T(X))$.
\item If $\phi\in C^\infty(X;\, \Complex T^*(X))$ then
\begin{align} \label{e:CR-1.3add}
\seq{V_1\wedge V_2, d\phi}=V_1(\seq{V_2, \phi})-V_2(\seq{V_1, \phi})-\seq{[V_1, V_2], \phi},
\end{align}
where $V_1$, $V_2\in C^\infty(X;\, \Complex T(X))$.
\item If $\phi\in C^\infty(X;\, \Lambda^{q-1}(\Complex T^*(X)))$, $q\geq2$, then
\begin{align} \label{e:CR-1.3addbis}
\seq{V_1\wedge\cdots\wedge V_q, d\phi}=
&-\seq{V_2\wedge\cdots\wedge V_q, d(V_1^\Corner\phi)}  \nonumber \\
&+V_1(\seq{V_2\wedge\cdots\wedge V_q, \phi}) \nonumber \\
&-\seq{[V_1, V_2]\wedge V_3\wedge\cdots\wedge V_q, \phi} \nonumber \\
&-\cdots-\seq{V_2\wedge V_3\wedge\cdots\wedge[V_1, V_q], \phi},
\end{align}
where $V_j\in C^\infty(X;\, \Complex T(X))$, $j=1,\ldots,q$.
\end{enumerate}

We have the pointwise orthogonal decomposition:
\begin{equation} \label{e:d-decomposition1}
\Complex T^*(X)=\Lambda^{1,0}T^*(X)\oplus \Lambda^{0,1}T^*(X)\oplus\set{\lambda\omega_0;\,
\lambda\in\Complex}.
\end{equation}
We recall that $\omega_0$ is the uniquely determined global real $1$ form (see (\ref{e:0807152304})).
We take $Y$ (already introduced after Definition~\ref{d:CR-2.1more} and (\ref{e:d-decomposition2})) so that
$\norm{Y}=1$, $\seq{Y, \omega_0}=-1$.
Therefore $Y$ is uniquely determined. We call $Y$ the uniquely determined global real vector field.

\begin{defn} \label{d:d-Leviform}
For $p\in X$, the Levi form $L_p$ is the Hermitian quadratic form on $\Lambda^{1,0}T_p(X)$ defined as follows:
\begin{equation} \label{e:070502-*} \begin{split}
&\mbox{For any $Z$, $W\in \Lambda^{1,0}T_p(X)$, pick $\Td Z$, $\Td W\in
C^\infty(X;\, \Lambda^{1,0}T(X))$ that satisfy}  \\
&\mbox{$\Td Z(p)=Z$, $\Td W(p)=W$. Then }
L_p(Z,\ol W)=\frac{1}{2i}\seq{[\Td Z\ ,\ol{\Td W}](p)\ ,
\omega_0(p)}.
\end{split}
\end{equation}
Here $[\Td Z\ ,\ol{\Td W}]=\Td Z\ol{\Td W}-
\ol{\Td W}\Td Z$
denotes the commutator of $\Td Z$ and $\ol{\Td W}$.
\end{defn}

Recall that $L_p$ does not depend of the choices of $\Td Z$ and $\Td W$ (see page $153$ of~\cite{BG88} or using (\ref{e:CR-1.3add}))

\begin{lem} \label{l:d-Leviform}
Let $\Td Z, \Td W\in C^\infty(X;\, \Lambda^{1,0}T(X))$. We have
\begin{equation} \label{e:*}
\frac{1}{2i}\seq{[\Td Z\ ,\ol{\Td W}](p)\ ,\omega_0(p)}=-\frac{1}{2i}\seq{\Td Z(p)\wedge\ol{\Td W}(p), d\omega_0(p)}.
\end{equation}
\end{lem}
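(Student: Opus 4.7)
The plan is to apply Cartan's formula in the form already recorded as (\ref{e:CR-1.3add}), namely
\[
\langle V_1\wedge V_2,\, d\phi\rangle = V_1(\langle V_2,\phi\rangle) - V_2(\langle V_1,\phi\rangle) - \langle [V_1,V_2],\phi\rangle,
\]
taking $V_1=\tilde Z$, $V_2=\overline{\tilde W}$, and $\phi=\omega_0$. The whole point is that the first two terms on the right-hand side vanish identically because $\omega_0$ annihilates both $\Lambda^{1,0}T(X)$ and $\Lambda^{0,1}T(X)$ under the duality pairing. Once this is established, the identity reduces to
\[
\langle \tilde Z\wedge\overline{\tilde W},\, d\omega_0\rangle = -\langle [\tilde Z,\overline{\tilde W}],\,\omega_0\rangle
\]
everywhere on $X$, and evaluating at $p$ and multiplying by $-\frac{1}{2i}$ yields (\ref{e:*}).

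The only real content is therefore the vanishing statement $\langle V,\omega_0\rangle=0$ for every $V\in C^\infty(X;\Lambda^{1,0}T(X)\oplus\Lambda^{0,1}T(X))$. I would deduce this from the way $\omega_0$ and $Y$ are normalized together with the decomposition (\ref{e:d-decomposition1}). Concretely, relative to
\[
\Complex T^*(X)=\Lambda^{1,0}T^*(X)\oplus\Lambda^{0,1}T^*(X)\oplus\{\lambda\omega_0\},
\]
the component $\{\lambda\omega_0\}$ is exactly the one annihilating $\Lambda^{1,0}T(X)\oplus\Lambda^{0,1}T(X)$, because $\Lambda^{1,0}T^*(X)$ and $\Lambda^{0,1}T^*(X)$ were \emph{defined} by $\Lambda^{1,0}T^*(X)=(\Lambda^{0,1}T(X)\oplus\Complex Y)^{\bot}$ and analogously for the $(0,1)$-part, so by counting dimensions the remaining one-dimensional complement must annihilate both $\Lambda^{1,0}T(X)$ and $\Lambda^{0,1}T(X)$. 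Alternatively, using the anti-linear isomorphism $\Gamma$ from (\ref{e:CR-Gamma1}), one checks that $\omega_0=-\Gamma Y$ (the scalar is fixed by $\langle Y,\omega_0\rangle=-1$ and $\|Y\|=1$), whence $\langle V,\omega_0\rangle=-(V\,|\,Y)=0$ for any $V$ orthogonal to $Y$, in particular for $V\in\Lambda^{1,0}T(X)\cup\Lambda^{0,1}T(X)$.

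With that vanishing in hand, $\tilde Z(\langle \overline{\tilde W},\omega_0\rangle)$ and $\overline{\tilde W}(\langle \tilde Z,\omega_0\rangle)$ are derivatives of the zero function and hence vanish, so Cartan's formula collapses to the desired identity. There is no real obstacle here; the only point that requires a moment's care is pinning down the annihilation property of $\omega_0$, since the paper has set up $\omega_0$ via a Hermitian-orthogonality condition on the cotangent side rather than directly via a duality condition on the tangent side.
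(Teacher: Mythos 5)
Your proof is correct and uses exactly the approach the paper indicates: the paper states the lemma without a written-out proof but points to the Cartan formula (\ref{e:CR-1.3add}) as the source, and your argument is precisely that computation with $V_1=\Td Z$, $V_2=\ol{\Td W}$, $\phi=\omega_0$, together with the observation (correctly derived from $\omega_0=-\Gamma Y$ and the orthogonality of $Y$ to $\Lambda^{1,0}T(X)\oplus\Lambda^{0,1}T(X)$) that $\seq{\Td Z,\omega_0}=\seq{\ol{\Td W},\omega_0}=0$ identically.
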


\begin{defn} \label{d:0710211410}
The eigenvalues of the Levi form at $p\in X$ are the eigenvalues of the Hermitian form $L_p$ with
respect to the inner product $(\ |\ )$ on $\Lambda^{1, 0}T_p(X)$.
\end{defn}

For $U$, $V\in C^\infty(X;\, \Lambda^{1,0}T(X))$, from (\ref{e:070502-*}), we can check that
\begin{equation} \label{e:d-Levidecomp}
[U\ ,\ol V](p)=-(2i)L_p(U(p)\ ,\ol{V(p)})Y(p)+h,\ \ h\in\Lambda^{1,0}T_p(X)\oplus\Lambda^{0,1}T_p(X).
\end{equation}

Next we recall the tangential Cauchy-Riemann operator $\dbar_b$ and $\ol{\pr_b}^*$. Let
\[\pi^{0, q}:\Lambda^q(\Complex T^*(X))\To\Lambda^{0,q}T^*(X)\]
be the orthogonal projection map.

\begin{defn} \label{d:CR-2.2}
The tangential Cauchy-Riemann operator $\dbar_b$ is defined by
\[\dbar_b=\pi^{0, q+1}\circ d:C^\infty(X;\, \Lambda^{0,q}T^*(X))\to C^\infty(X;\, \Lambda^{0,q+1}T^*(X)).\]
\end{defn}

The following is well-known (see page $152$ of~\cite{BG88} or using (\ref{e:CR-1.3addbis}))

\begin{prop}
We have\, $\dbar_b^2=0$.
\end{prop}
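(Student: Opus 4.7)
The plan is to show that the ideal $\mathcal{I} \subset \Lambda^*(\Complex T^*(X))$ generated by local sections of $\Lambda^{1,0}T^*(X)$ together with $\omega_0$ is closed under $d$; once this is known, $\dbar_b$ is essentially ``$d$ modulo $\mathcal{I}$'' and the identity $d^2 = 0$ forces $\dbar_b^2 = 0$. Note that the decomposition (\ref{e:d-decomposition1}) gives, in each degree $r$,
\[
\Lambda^r(\Complex T^*(X)) = \Lambda^{0,r}T^*(X) \oplus \mathcal{I}_r,
\]
so $\pi^{0,r}$ is exactly projection modulo $\mathcal{I}_r$.

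The key computation is: for any $\omega\in C^\infty(X;\,\Lambda^{1,0}T^*(X))$ and for $\omega=\omega_0$, one has $\pi^{0,2}(d\omega)=0$. To verify this, choose local sections $\ol U_j,\ol U_k\in C^\infty(X;\,\Lambda^{0,1}T(X))$ and apply Cartan's formula (\ref{e:CR-1.3add}):
\[
\seq{\ol U_j\wedge \ol U_k,\, d\omega}
= \ol U_j(\seq{\ol U_k,\omega}) - \ol U_k(\seq{\ol U_j,\omega}) - \seq{[\ol U_j,\ol U_k],\omega}.
\]
Since $\Lambda^{0,1}T(X)$ pairs trivially with both $\Lambda^{1,0}T^*(X)$ and $\omega_0$ (the latter by (\ref{e:d-decomposition1})), the first two terms vanish. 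By condition (c) of Definition~\ref{d:CR-2.1} applied to the conjugate bundle $\Lambda^{0,1}T(X)=\ol{\Lambda^{1,0}T(X)}$, we have $[\ol U_j,\ol U_k]\in C^\infty(X;\,\Lambda^{0,1}T(X))$, so the third term vanishes as well. Hence every pairing with a pure $(0,1)$-bivector vanishes, i.e.\ $d\omega\in \mathcal{I}_2$ for each generator.

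From $d\omega\in\mathcal{I}$ on generators, the Leibniz rule $d(\omega\wedge\eta)=d\omega\wedge\eta-\omega\wedge d\eta$ propagates this to all of $\mathcal{I}$, so $d(\mathcal{I})\subset\mathcal{I}$. Now for $u\in C^\infty(X;\,\Lambda^{0,q}T^*(X))$, decompose
\[
du = \dbar_b u + \beta, \qquad \dbar_b u\in\Lambda^{0,q+1}T^*(X),\ \ \beta\in\mathcal{I}_{q+1}.
\]
Then $d(\dbar_b u) = d(du) - d\beta = -d\beta \in \mathcal{I}_{q+2}$, so $\dbar_b(\dbar_b u) = \pi^{0,q+2}(d(\dbar_b u))=0$.

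The only real obstacle is the key computation $\pi^{0,2}(d\omega)=0$, and even this dissolves as soon as one writes Cartan's formula and invokes integrability; all subsequent steps are purely formal consequences of $d^2=0$ and the ideal property. The benefit of packaging the argument through the differential ideal $\mathcal{I}$ is that one does not need to juggle the ``extra'' $\omega_0$ direction separately from the $(1,0)$ direction, since both behave identically with respect to $\Lambda^{0,1}T(X)$.
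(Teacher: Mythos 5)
Your proof is correct. The paper itself omits a proof, merely noting the result is well-known and pointing to the iterated Cartan formula (\ref{e:CR-1.3addbis}) or to~\cite{BG88}; the direct route that the paper is gesturing at would expand $\seq{\ol V_1\wedge\cdots\wedge\ol V_{q+2},\, d(\pi^{0,q+1}du)}$ term by term using (\ref{e:CR-1.3addbis}) and check that each surviving term cancels. Your differential-ideal argument is a genuinely different packaging of the same underlying facts (integrability of $\Lambda^{0,1}T(X)$ plus $d^2=0$), and it is in a sense more conceptual: it reduces the entire verification to the single degree-one statement $\pi^{0,2}(d\omega)=0$ for the generators of $\mathcal{I}$, after which everything propagates formally via the Leibniz rule. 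What the ideal route buys is that you never have to manipulate the higher-degree Cartan identity at all; what the direct route buys is that it avoids introducing the auxiliary structure $\mathcal{I}$ and the verification $d\mathcal{I}\subset\mathcal{I}$. One small imprecision in your write-up: the claim that $\omega_0$ annihilates $\Lambda^{0,1}T(X)$ does not follow directly from the statement of the splitting (\ref{e:d-decomposition1}); it follows from the fact that $\omega_0$ lies in $\Gamma(\Complex Y)$ (where $\Gamma$ is the anti-linear map of (\ref{e:CR-Gamma1})) together with $\seq{\ol U,\Gamma Y}=(\ol U\,|\,Y)=0$ for $\ol U\in\Lambda^{0,1}T(X)$, which uses the orthogonality of (\ref{e:d-decomposition2}). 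This is a citation issue only; the assertion is correct, and the rest of the argument goes through as written.
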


Let $\ol{\pr_b}^*$ be the formal adjoint of $\dbar_b$, that is
$(\dbar_{b}f\ |\ h)=(f\ |\ \ol{\pr_b}^*h)$, where $f\in C^\infty(X;\, \Lambda^{0,q}T^*(X)),
h\in C^\infty(X;\, \Lambda^{0,q+1}T^*(X))$ and $(\ |\ )$ is given by (\ref{e:0807161950}).
$\ol{\pr_b}^*$ is a first order differential operator and $(\ol{\pr_b}^*)^2=0$.
The Kohn Laplacian $\Box_b$ is given by
\[\Box_b=\dbar_b\ol{\pr_b}^*+\ol{\pr_b}^*\dbar_b.\]
From now on, we write $\Box^{(q)}_b$ to denote the restriction to $(0, q)$ forms.

For $z_0\in X$, we can choose an orthonormal frame
$e_1(z),\ldots,e_{n-1}(z)$
for $\Lambda^{0,1}T^*_z(X)$ varying smoothly with $z$ in a neighborhood of $z_0$.
Let $Z_j(z)$, $j=1,\ldots,n-1$, denote the basis of $\Lambda^{0,1}T_z(X)$,
which is dual to $e_j(z)$, $j=1,\ldots,n-1$. Let $Z^*_j$ be the formal adjoint of $Z_j$, $j=1,\ldots,n-1$. That is,
$(Z_jf\ |\ h)=(f\ |\ Z_j^*h), f, h\in C^\infty(X)$.
We have the following (for a proof, see page $154$-page $156$ of~\cite{BG88})

\begin{prop} \label{p:d-kohn}
With the notations used before, the Kohn Laplacian $\Box^{(q)}_b$ is given by
\begin{align} \label{e:h-principal}
\Box^{(q)}_b &=\dbar_b\ol{\pr_b}^*+\ol{\pr_b}^*\dbar_b\nonumber  \\
       &=\sum^{n-1}_{j=1}Z^*_jZ_j+
         \sum^{n-1}_{j,k=1}e^\wedge_je^{\wedge, *}_k\circ [Z_j\ ,Z^*_k] \nonumber \\
         &\quad +\eps(Z)+\eps(Z^*)+\textup{zero order terms},
\end{align}
where $\eps(Z)$ denotes remainder terms of the form $\sum a_k(z)Z_k$ with $a_k$ smooth,
matrix-valued and similarly for $\eps(Z^*)$ and the map
\[e^\wedge_je^{\wedge,*}_k\circ[Z_j, Z^*_k]: C^\infty(X;\, \Lambda^{0,q}T^*(X))\To C^\infty(X;\, \Lambda^{0,q}T^*(X))\]
is defined by
\[(e^\wedge_je^{\wedge,*}_k\circ[Z_j, Z^*_k])(f(z)e_{j_1}\wedge\cdots\wedge e_{j_q})=
[Z_j, Z^*_k](f)(e_j^\wedge e^{\wedge,*}_k)\circ e_{j_1}\wedge\cdots\wedge e_{j_q}\]
and we extend the definition by linearity. We recall that $e^{\wedge,*}_k$ is given by (\ref{e:0807152331}).
\end{prop}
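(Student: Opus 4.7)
The plan is to work in the local orthonormal frame $e_1,\dots,e_{n-1}$ of $\Lambda^{0,1}T^*(X)$ dual to $Z_1,\dots,Z_{n-1}$, compute the principal parts of $\dbar_b$ and $\ol{\pr_b}^*$ explicitly, and then assemble $\Box_b^{(q)}$ using a Clifford-type anti-commutation between the exterior and interior multiplications $e_j^\wedge$, $e_k^{\wedge,*}$.

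First I would write $\dbar_b=\sum_j e_j^\wedge Z_j + A$ acting on $(0,q)$ forms written in the frame $e_J=e_{j_1}\wedge\cdots\wedge e_{j_q}$, where $A$ is a matrix-valued zero-order operator arising from $\dbar_b e_J$ (i.e.\ from differentiating the frame). Dualising with respect to the $L^2$ inner product (\ref{e:0807161950}), one gets $\ol{\pr_b}^*=\sum_k e_k^{\wedge,*} Z_k^* + B$ where $B$ is again zero order (modulo the extra first-order contributions that come from the adjoint of the zero-order $A$, which are of the form $\varepsilon(Z)$, together with matrix-valued coefficients that only involve $Z_k^*$ through $Z_k^*$ acting on frame coefficients, and hence produce $\varepsilon(Z^*)$-type terms). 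Writing both operators this way already accounts for the $\varepsilon(Z)+\varepsilon(Z^*)+\text{zero order}$ remainder in (\ref{e:h-principal}).

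Next I would plug in and expand
\[
\Box_b^{(q)}=\sum_{j,k}e_j^\wedge e_k^{\wedge,*}\,Z_jZ_k^* + \sum_{j,k}e_j^{\wedge,*}e_k^\wedge\,Z_j^*Z_k + \varepsilon(Z)+\varepsilon(Z^*)+\text{zero order}.
\]
The key algebraic input is the Clifford relation
\[
e_j^{\wedge,*}e_k^\wedge + e_k^\wedge e_j^{\wedge,*}=\delta_{jk}\,\mathrm{id},
\]
which follows from $(e_j\,|\,e_k)=\delta_{jk}$ and the definition (\ref{e:0807152331}) of $e_j^{\wedge,*}$. Substituting this into the second sum turns it into $\sum_j Z_j^*Z_j - \sum_{j,k}e_k^\wedge e_j^{\wedge,*}Z_j^*Z_k$. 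Relabelling $j\leftrightarrow k$ and writing $Z_jZ_k^* = Z_k^*Z_j+[Z_j,Z_k^*]$ in the first sum, the two remaining double sums cancel term by term, leaving exactly $\sum_{j,k}e_j^\wedge e_k^{\wedge,*}\,[Z_j,Z_k^*]$. Combined with $\sum_j Z_j^*Z_j$ and the collected lower-order pieces, this produces the stated formula.

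The main obstacle is purely bookkeeping: one must be careful that every first-order contribution produced when the frame $\{e_j\}$ is not parallel is correctly absorbed into $\varepsilon(Z)$ or $\varepsilon(Z^*)$, and that the matrix-valued coefficients acting on forms commute appropriately with the $Z_j$'s up to zero-order terms. No new analytic ingredient is needed; the identity is ultimately a combination of the Clifford relation and the commutator identity. Finally, noting that $[Z_j,Z_k^*]$ produces both a first-order piece (absorbed into $\varepsilon(Z)+\varepsilon(Z^*)$) and a genuine multiplicative piece, one sees that the leading behaviour of $\Box_b^{(q)}$ is the sum of the real Laplacian-like term $\sum_j Z_j^*Z_j$ and the curvature-type term $\sum_{j,k}e_j^\wedge e_k^{\wedge,*}[Z_j,Z_k^*]$, as claimed.
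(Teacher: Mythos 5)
Your argument is correct and is essentially the frame computation that the paper defers to (Beals--Greiner, pp.~154--156): write $\dbar_b=\sum_j e_j^\wedge Z_j+\text{zero order}$ in a local orthonormal frame, take formal adjoints to get $\ol{\pr_b}^*=\sum_k e_k^{\wedge,*}Z_k^*+\text{zero order}$ modulo $\varepsilon(Z)$-type terms, expand the two compositions, and use the anti-commutation relation $e_j^{\wedge,*}e_k^\wedge+e_k^\wedge e_j^{\wedge,*}=\delta_{jk}$ to rewrite $\sum e_j^{\wedge,*}e_k^\wedge Z_j^*Z_k$ as $\sum Z_j^*Z_j-\sum e_j^\wedge e_k^{\wedge,*}Z_k^*Z_j$, which combines with the other double sum into $\sum e_j^\wedge e_k^{\wedge,*}[Z_j,Z_k^*]$. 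The bookkeeping of first-order remainders you flag (commuting $Z_j$ past the matrix-valued $e_k^{\wedge,*}$, and the cross terms with the zero-order pieces $A,B$) indeed all land in $\varepsilon(Z)+\varepsilon(Z^*)+\text{zero order}$, so no gap.
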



\section{The hypoellipicity of $\Box_b$}

We work with some real local coordinates $x=(x_1,\ldots,x_{2n-1})$ defined on an open set
$\Omega\subset X$. We let the full symbol of $\Box^{(q)}_b$ be:
\begin{equation} \label{e:0807171053}
\mbox{full symbol of }\ \Box^{(q)}_b=\sum^2_{j=0}p_j(x,\xi)
\end{equation}
where $p_j(x,\xi)$ is positively homogeneous of order $2-j$. Let $q_j$, $j=1,\ldots,n-1$, be the
principal symbols of $Z_j$, $j=1,\ldots,n-1$, where $Z_j$, $j=1,\ldots,n-1$, are as in Proposition~\ref{p:d-kohn}. Then,
\begin{equation} \label{e:h-principalmore}
p_0=\sum^{n-1}_{j=1}\ol q_jq_j.
\end{equation}
The characteristic manifold $\Sigma$ of $\Box^{(q)}_b$ is
\begin{align} \label{e:h-Sj}
\Sigma=\set{(x, \xi)\in T^*(X)\smallsetminus0;\,  \xi=\lambda\omega_0(x), \lambda\neq 0}.
\end{align}
From (\ref{e:h-principalmore}), we see that $p_0$ vanishes to second order at $\Sigma$.
Thus, $\Sigma$ is a doubly characteristic manifold of $\Box^{(q)}_b$. The subprincipal symbol of $\Box^{(q)}_b$ at $(x_0, \xi_0)\in\Sigma$ is given by
\begin{equation} \label{e:0807171410}
p^s_0(x_0, \xi_0)=p_1(x_0, \xi_0)+\frac{i}{2}\sum^{2n-1}_{j=1}\frac{\pr^2p_0(x_0,\xi_0)}{\pr x_j\pr\xi_j}\in
\mathscr L(\Lambda^{0,q}_{x_0}T^*(X),\Lambda^{0,q}_{x_0}T^*(X)).
\end{equation}
It is well-known (see page $83$ of H\"{o}rmander~\cite{Hor85}) that
the subprincipal symbol of $\Box^{(q)}_b$ is invariantly defined on $\Sigma$.

For an operator of the form $Z^*_jZ_j$ this subprincipal symbol is given by
\[\frac{1}{2i}\set{\ol q_j,q_j}\]
and the contribution from the double sum in
(\ref{e:h-principal}) to the subprincipal symbol of $\Box^{(q)}_b$ is
\[\frac{1}{i}\sum^{n-1}_{j,k=1}e^\wedge_je^{\wedge, *}_k\circ\set{q_j,\ol q_k},\]
where $\set{q_j, \ol q_k}$ denotes the Poisson bracket of $q_j$ and $\ol q_k$. We recall that
$\set{q_j, \ol q_k}=\sum^{2n-1}_{s=1}(\frac{\pr q_j}{\pr \xi_s}\frac{\pr\ol q_k}{\pr x_s}-\frac{\pr q_j}{\pr x_s}\frac{\pr\ol q_k}{\pr\xi_s})$.
We get the subprincipal symbol of $\Box^{(q)}_b$ on $\Sigma$,
\begin{equation} \label{e:h-sub}
p^s_0=(\sum^{n-1}_{j=1}-\frac{1}{2i}\set{q_j,\ol q_j})+\sum^{n-1}_{j,k=1}
e^\wedge_je^{\wedge, *}_k\frac{1}{i}\set{q_j,\ol q_k}.
\end{equation}
From (\ref{e:d-Levidecomp}), we see that
$[\ol Z_k, Z_j]=-(2i)L(\ol Z_k, Z_j)Y+h$, $h\in C^\infty(X;\, \Lambda^{1,0}T(X)\oplus \Lambda^{0,1}T(X))$.
Note that the principal symbol of $\ol Z_k$ is $-\ol q_k$. Hence,
\begin{equation} \label{e:h-Levi}
\set{\ol q_k, q_j}=(2i)L(\ol Z_k, Z_j)\sigma_{iY}{\;\rm on\;}\Sigma,
\end{equation}
where $\sigma_{iY}$ is the principal symbol of $iY$. Thus,
\begin{equation} \label{e:h-submore}
p^s_0=(\sum^{n-1}_{j=1}L(\ol Z_j,Z_j)-
\sum^{n-1}_{j,k=1}2e^\wedge_je^{\wedge, *}_kL(\ol Z_k, Z_j))\sigma_{iY}
{\;\rm on\;}\Sigma.
\end{equation}

In the rest of this section, we will assume the reader is familiar with some basic notions of symplectic geometry.
For basic notions and facts of symplectic geometry,
see chapter ${\rm XVIII\,}$ of~\cite{Hor85} or chapter $3$ of Duistermaat~\cite{Dui96}.

From now on, for any $f\in C^\infty(T^*(X))$, we write $H_f$ to denote the Hamilton field of $f$. We need the following

\begin{lem} \label{l:h-symplectic}
We recall that we work with Assumption~\ref{a:0807120935}.
$\Sigma$ is a symplectic submanifold of $T^*(X)$.
\end{lem}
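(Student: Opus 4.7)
The plan is to apply the standard symplectic-submanifold criterion: a submanifold of $T^*X$ cut out locally by functions $f_1,\ldots,f_{2k}$ with pointwise linearly independent differentials is symplectic if and only if the Poisson-bracket matrix $\bigl(\{f_i,f_j\}\bigr)$ is pointwise non-degenerate. The criterion extends to complex-valued $f_j$ by splitting into real and imaginary parts, which does not affect the question of non-degeneracy.

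The first step is to identify $\Sigma$ as, up to the zero section, the annihilator of the Levi distribution $\Lambda^{1,0}T(X)\oplus\Lambda^{0,1}T(X)$. Locally, taking a frame $Z_1,\ldots,Z_{n-1}$ of $\Lambda^{0,1}T(X)$ as in Proposition~\ref{p:d-kohn} and setting $q_j(x,\xi)=\seq{Z_j(x),\xi}$, the $2(n-1)$ functions $q_1,\ldots,q_{n-1},\ol q_1,\ldots,\ol q_{n-1}$ cut out $\Sigma$ with independent differentials, since $Z_1,\ldots,Z_{n-1},\ol Z_1,\ldots,\ol Z_{n-1}$ form a pointwise complex basis of the Levi distribution. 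The count $2(n-1)$ matches the codimension of $\Sigma$.

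I would then compute the Poisson brackets using the identity $\{f_V,f_W\}=f_{[V,W]}$ valid for symbols linear in $\xi$. The CR integrability in Definition~\ref{d:CR-2.1}(c) and its complex conjugate force $[Z_j,Z_k]$ and $[\ol Z_j,\ol Z_k]$ to lie in the Levi distribution, hence to annihilate $\omega_0$; so $\{q_j,q_k\}$ and $\{\ol q_j,\ol q_k\}$ both vanish on $\Sigma$. For the mixed brackets, the Levi-form formula (\ref{e:070502-*}) applied with $\ol Z_k\in\Lambda^{1,0}T(X)$ yields, at $(p,\lambda\omega_0(p))\in\Sigma$,
\[\{q_j,\ol q_k\}(p,\lambda\omega_0(p))=\lambda\seq{[Z_j,\ol Z_k](p),\omega_0(p)}=-2i\lambda\,L_p(\ol Z_k,Z_j).\]
Ordering the defining functions as $(q_1,\ldots,q_{n-1},\ol q_1,\ldots,\ol q_{n-1})$ puts the Poisson matrix into block off-diagonal form
\[P=\begin{pmatrix} 0 & A \\ -A^T & 0 \end{pmatrix},\qquad A_{jk}=-2i\lambda\,L_p(\ol Z_k,Z_j),\]
so that $\det P=\pm(\det A)^2$.

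Now $A$ is, up to a non-zero scalar and a transposition, the Gram matrix of the Levi form in the frame $\ol Z_1,\ldots,\ol Z_{n-1}$ of $\Lambda^{1,0}T_p(X)$; by Assumption~\ref{a:0807120935} it is invertible, so $P$ is non-singular and hence $\Sigma$ is symplectic. The only real point of care, and the main source of bookkeeping, is keeping the signs and conjugations consistent with the definition of the Levi form; once that is done, the argument reduces to the conceptual observation that CR integrability kills the "pure-type" Poisson brackets, leaving only the mixed brackets, which are controlled by—and non-degenerate exactly when—the Levi form is.
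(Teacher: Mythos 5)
Your proof is correct and follows essentially the same route as the paper's: both cut out $\Sigma$ by the symbols $q_j,\ol q_j$, observe that the pure-type Poisson brackets vanish on $\Sigma$ while the mixed brackets $\{q_j,\ol q_k\}$ reproduce (up to a non-zero scalar) the Levi form, and deduce non-degeneracy of the symplectic form from non-degeneracy of $L_p$. The only superficial difference is that the paper phrases the criterion in terms of $\sigma$ restricted to $T_\rho(\Sigma)^\perp$, spanned by the Hamilton fields $H_{q_1},\ldots,H_{\ol q_{n-1}}$, whereas you state the equivalent condition that the Poisson-bracket matrix of the defining functions is non-singular; you also spell out that CR integrability is what forces the pure brackets to vanish, a step the paper notes without comment.
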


\begin{proof}
Let $\rho\in\Sigma$. Note that
\[\Sigma=\set{(x, \xi)\in T^*(X)\smallsetminus 0;\, q_j(x, \xi)=\ol q_j(x, \xi)=0,\ \ j=1,\ldots,n-1}.\]
Let $\Complex T_\rho(\Sigma)$ and $\Complex T_\rho(T^*(X))$ be the
complexifications of $T_\rho(\Sigma)$ and $T_\rho(T^*(X))$ respectively.
We can choose the basis
$H_{q_1},\ldots,H_{q_{n-1}},H_{\ol q_1},\ldots,H_{\ol q_{n-1}}$
for $T_\rho(\Sigma)^\bot$, where $T_\rho(\Sigma)^{\bot}$ is the orthogonal to
$\Complex T_\rho(\Sigma)$ in $\Complex T_\rho(T^*(X))$ with
respect to the canonical two form,
\begin{equation} \label{e:0807172212}
\sigma=d\xi\wedge dx.
\end{equation}
In view of (\ref{e:h-Levi}), we have $\sigma(H_{q_j},\ H_{\ol q_{k}})=\set{q_j,\ol q_k}=\frac{2}{i}L(\ol Z_k, Z_j)\sigma_{iY}$ on $\Sigma$.
We notice that $\set{q_j, q_k}=0$ on $\Sigma$. Thus, if the Levi form is
non-degenerate at each point of $X$, then $\sigma$ is non-degenerate on
$T_\rho(\Sigma)^\bot$,
hence also on $\Complex T_\rho(\Sigma)$ and $\Sigma$ is therefore symplectic.
\end{proof}

The fundamental matrix of $p_0$ at $\rho=(p, \xi_0)\in\Sigma$ is the linear map $F_\rho$ on $T_\rho(T^*(X))$ defined by
\begin{equation} \label{e:0807172346}
\sigma(t, F_\rho s)=\seq{t, p_0''(\rho)s},\ \ t, s\in T_\rho(T^*(X)),
\end{equation}
where $\sigma$ is the canonical two form (see (\ref{e:0807172212})) and
\[p''_0(\rho)=\left( \begin{array}{cc}
\frac{\pr^2p_0}{\pr x\pr x}(\rho)  & \frac{\pr^2p_0}{\pr\xi\pr x}(\rho)  \\
\frac{\pr^2p_0}{\pr x\pr\xi}(\rho)  & \frac{\pr^2p_0}{\pr\xi\pr\xi}(\rho)
\end{array} \right).\]

We can choose the basis
$H_{q_1},\ldots,H_{q_{n-1}},H_{\ol q_1},\ldots,H_{\ol q_{n-1}}$
for $T_\rho(\Sigma)^\bot$, where $T_\rho(\Sigma)^{\bot}$ is the orthogonal to
$\Complex T_\rho(\Sigma)$ in $\Complex T_\rho(T^*(X))$ with
respect to canonical two form. We notice that
$H_{p_0}=\sum_j(\ol q_jH_{q_j}+q_jH_{\ol q_j})$.
We compute the linearization of $H_{p_0}$ at $\rho$
\begin{align*}
H_{p_0}(\rho+\sum(t_kH_{q_k}+s_kH_{\ol q_k}))
&= O(\abs{t,s}^2)+\sum_{j,k}t_k\set{q_k,\ol q_j}H_{q_j} \\
&\quad +\sum_{j,k}s_k\set{\ol q_k,q_j}H_{\ol q_j}.
\end{align*}
So $F_\rho$ is expressed in the basis
$H_{q_1},\ldots,H_{q_{n-1}},H_{\ol q_1},\ldots,H_{\ol q_{n-1}}$
by
\begin{equation} \label{e:h-fundamental}
F_\rho=\left(
 \begin{array}{cc}
   \set{q_k,\ol q_j}  &  0  \\
     0     &  \set{\ol q_k,q_j}
 \end{array}\right).
\end{equation}
Again, from (\ref{e:h-Levi}), we see that the non-vanishing eigenvalues of $F_\rho$ are
\begin{equation} \label{e:h-eigenvalues}
\pm 2i\lambda_j\sigma_{iY}(\rho),
\end{equation}
where $\lambda_j$, $j=1,\ldots,n-1$, are the eigenvalues of $L_p$.

To compute further, we assume that the Levi form is diagonalized at the given point $p\in X$. Then
\[\sum_{j,k}2e^\wedge_je^{\wedge, *}_k L_p(\ol Z_k, Z_j)\sigma_{iY}=
\sum_j2e^\wedge_je^{\wedge, *}_j L_p(\ol Z_j, Z_j)\sigma_{iY}.\]
From this, we see that on $\Sigma$ and on the space of $(0,q)$ forms,
$p^s_0+\frac{1}{2}\Td{\rm tr\,}F$ has the eigenvalues
\begin{equation} \label{e:h-eigenmore}
\begin{split}
& \sum^{n-1}_{j=1}\abs{\lambda_j}\abs{\sigma_{iY}}+\sum_{j\notin J}\lambda_j\sigma_{iY}-\sum_{j\in J}
  \lambda_j\sigma_{iY}, \; \abs{J}=q,  \\
& J=(j_1,j_2,\ldots,j_q),\; 1\leq j_1<j_2<\cdots<j_q\leq n-1,
\end{split}
\end{equation}
where $\Td{\rm tr\,}F$ denotes $\sum\abs{\mu_j}$, $\pm\mu_j$ are the non-vanishing eigenvalues of $F_\rho$.

Let $(n_-, n_+)$, $n_-+n_+=n-1$, be the signature of the Levi form.
Since $\seq{Y, \omega_0}=-1$, we have $\sigma_{iY}>0$ on $\Sigma^+$, $\sigma_{iY}<0$
on $\Sigma^-$. (We recall that $\Sigma^+$ and $\Sigma^-$ are given by (\ref{e:0807120937}).) Let
\[\inf\, (p^s_0+\frac{1}{2}\Td{\rm tr\,}F)=\inf\set{\lambda;\, \lambda:
{\rm eigenvalue\ of\ }p^s_0+\frac{1}{2}\Td{\rm tr\,}F}.\]
From (\ref{e:h-eigenmore}), we see that on $\Sigma^+$
\begin{equation} \label{e:h-hsiao1}
\inf\,(p^s_0+\frac{1}{2}\Td{\rm tr\,}F) \left\{ \begin{array}{ll}
=0,   &  q=n_+   \\
>0,   &  q\neq n_+
\end{array}\right..
\end{equation}
On $\Sigma^-$
\begin{equation} \label{e:h-hsiao2}
\inf\,(p^s_0+\frac{1}{2}\Td{\rm tr\,}F) \left\{ \begin{array}{ll}
=0,   &  q=n_-   \\
>0,   &  q\neq n_-
\end{array}\right..
\end{equation}

Let $\Omega$ be an open set in $\Real^N$.
Let $P$ be a classical pseudodifferential operator on $\Omega$ of order $m>1$. $P$ is said to be
hypoelliptic with loss of one derivative if $u\in\mathscr E'(\Omega)$ and $Pu\in H^s_{{\rm loc}}(\Omega)$ implies
$u\in H^{s+m-1}_{{\rm comp}}(\Omega)$.

We recall classical works by Boutet de Monvel \cite{Bou74} and Sj\"{o}strand \cite{Sjo74}.

\begin{prop} \label{p:h-Boutet}
Let\, $\Omega$ be an open set in $\Real^N$.
Let $P$ be a classical pseudodifferential operator on $\Omega$ of order $m>1$. The symbol of $P$ takes the form
\[\sigma_P(x,\xi)\sim p_m(x,\xi)+p_{m-1}(x,\xi)+p_{m-2}(x,\xi)+\cdots,\]
where $p_j$ is positively homogeneous of degree $j$. We assume that
$\Sigma=p^{-1}_m(0)$ is a symplectic submanifold of codimension $2d$, $p_m\geq 0$ and $p_m$
vanishes to precisely second order on $\Sigma$. Let $F$ be the fundamental matrix of $p_m$. Let $p^s_m$ be
the subprincipal symbol of $P$. Then $P$ is hypoelliptic with loss of one derivative if and only
if $p^s_m(\rho)+\sum^d_{j=1}(\frac{1}{2}+\alpha_j)\abs{\mu_j}\neq 0$
at every point $\rho\in\Sigma$\, for all $(\alpha_1,\alpha_2,\ldots,\alpha_d)\in\Pstint^d$,
where $\pm i\mu_j$ are the eigenvalues of $F$ at $\rho$.
\end{prop}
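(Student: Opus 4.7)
The plan is to prove this classical result of Boutet de Monvel and Sj\"{o}strand by working microlocally near each characteristic point $\rho_0\in\Sigma$, reducing $P$ to a model operator in a symplectic normal form, and then inverting the model using the harmonic-oscillator spectrum. Since hypoellipticity with loss of one derivative is a microlocal property, it suffices to construct, near every $\rho_0\in\Sigma$, a parametrix $E$ of order $-m+1$ (in a symbol class of type $(\frac12,\frac12)$) with $PE\equiv I$ modulo smoothing operators; outside $\Sigma$, $P$ is elliptic and standard parametrix construction applies. The converse (necessity) is handled by a quasi-mode argument when the algebraic condition fails.

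First I would choose homogeneous Darboux coordinates $(y',y'',\eta',\eta'')$ near $\rho_0$ so that $\Sigma=\set{y'=0,\,\eta'=0}$ with $\#y'=d$; this is possible because $\Sigma$ is symplectic of codimension $2d$. Quantizing this canonical transformation by an elliptic homogeneous Fourier integral operator $U$ reduces $P$ to $U^{-1}PU$, whose principal symbol is a nonnegative quadratic form $Q(y',\eta';y'',\eta'')$ in the transverse variables, smoothly parametrized by $(y'',\eta'')$. A symplectic linear change in the transverse variables, depending smoothly on the base point and furnished by the classical symplectic diagonalization of a positive semi-definite form on a symplectic vector space, then brings $Q$ to the harmonic-oscillator normal form
\[
Q=\sum_{j=1}^{d}\tfrac{1}{2}\abs{\mu_j(y'',\eta'')}\bigl((y'_j)^2+(\eta'_j)^2\bigr),
\]
where $\pm i\mu_j$ are the nonvanishing eigenvalues of the fundamental matrix of $p_m$.

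The key step is to invert the resulting model operator. The Weyl quantization of $Q$ has pure-point transverse spectrum $\set{\sum_j(\tfrac12+\alpha_j)\abs{\mu_j}:\,\alpha\in\Pstint^d}$, as one sees by separating into standard harmonic oscillators (or equivalently from Mehler's formula). Adding the invariantly defined subprincipal contribution $p^s_m(\rho)$ from (\ref{e:0807171410}), the spectrum of the full model at $\rho\in\Sigma$ is precisely $\set{p^s_m(\rho)+\sum_j(\tfrac12+\alpha_j)\abs{\mu_j}}$. Under the stated nonvanishing hypothesis the model is invertible, and its inverse has Weyl symbol of order $-m+1$ in the class $S_{1/2,1/2}$; a full microlocal parametrix $E$ is then manufactured by a Neumann-type iterative correction absorbing the off-model remainder. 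The symbol calculus of type $(\frac12,\frac12)$ is precisely tuned to this setting, and the resulting loss of exactly one derivative reflects the fact that on $\Sigma$ one inverts only the transverse harmonic oscillator at the subprincipal level.

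For the converse, if $p^s_m(\rho_0)+\sum_j(\tfrac12+\alpha_j)\abs{\mu_j}=0$ for some $\alpha\in\Pstint^d$, I would build a quasi-mode by transplanting the Hermite eigenfunction of the transverse oscillator corresponding to multi-index $\alpha$ and rescaling with a large homogeneity parameter $\lambda\to\infty$. This produces $u_\lambda\in\mathscr E'(\Omega)$, microlocalized at $\rho_0$, with $\norm{u_\lambda}_s\asymp \lambda^{s+m-1}$ but $\norm{Pu_\lambda}_{s}=O(\lambda^{-\infty})$, contradicting the a priori estimate implied by hypoellipticity with loss of one derivative. The principal obstacle throughout is the careful symbol-calculus bookkeeping in $S^{\bullet}_{1/2,1/2}$ together with the invariance of $p^s_m$ on $\Sigma$ under the normal-form reduction; since these technical points are exactly what is worked out in~\cite{Bou74} and~\cite{Sjo74}, I would cite those papers for the complete verification.
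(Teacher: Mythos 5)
The paper does not actually prove Proposition~3.6; it is stated as a recall of classical results and the only ``proof'' the paper offers is the citation to Boutet de Monvel~\cite{Bou74} and Sj\"ostrand~\cite{Sjo74}, exactly as you do in your final sentence. So there is no internal argument in the paper to compare against.

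That said, your outline is an accurate high-level summary of the strategy actually used in those references: microlocalize to a point of $\Sigma$, use the symplectic hypothesis to pass to homogeneous Darboux coordinates where $\Sigma=\set{y'=0,\ \eta'=0}$, diagonalize the transverse Hessian of $p_m$ to get a family of harmonic oscillators with frequencies $\abs{\mu_j}$, read off the transverse spectrum $p^s_m+\sum_j(\tfrac12+\alpha_j)\abs{\mu_j}$, invert in $S_{1/2,1/2}$ when that quantity never vanishes, and build Hermite quasi-modes to show necessity when it does. Two small points worth tightening if you were to flesh this out. First, the reduction by a homogeneous FIO does not commute with Weyl quantization in a naive sense; the reason the argument works is precisely that the subprincipal symbol is an invariant of $P$ restricted to $\Sigma$ (the paper records this on page of (\ref{e:0807171410})), so the contribution $p^s_m(\rho)$ survives the conjugation without further correction. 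Second, your claimed scaling $\norm{u_\lambda}_s\asymp\lambda^{s+m-1}$ for the quasi-mode does not look right as written; with the usual $L^2$-normalized Gaussian beam at frequency $\lambda$ one has $\norm{u_\lambda}_s\asymp\lambda^{s}$, and the contradiction with the a priori estimate $\norm{u}_{s+m-1}\le C(\norm{Pu}_s+\norm{u}_s)$ comes from $\norm{Pu_\lambda}_s=O(\lambda^{-\infty})$ while $\norm{u_\lambda}_{s+m-1}\asymp\lambda^{s+m-1}\gg\lambda^s$ because $m>1$. Neither point affects the soundness of the sketch; they are the kind of details that~\cite{Bou74} and~\cite{Sjo74} handle, and your deferral to those papers for the full verification is appropriate and indeed is what the paper itself does.
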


Proposition~\ref{p:h-Boutet} also holds if $P$ is a matrix-valued
classical pseudodifferential operator on $\Omega$ of order $m>1$ with scalar principal symbol.

From (\ref{e:h-hsiao1}), (\ref{e:h-hsiao2}) and  Proposition~\ref{p:h-Boutet}, we have the following

\begin{prop} \label{p:h-HorBoutet}
$\Box^{(q)}_b$ is hypoelliptic
with loss of one derivative if and only if\, $Y(q)$ holds at each point of $X$.
(We recall that the definition of\, $Y(q)$ is given by (\ref{e:0807111454}).)
\end{prop}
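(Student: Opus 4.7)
The proof is a direct translation of the Boutet de Monvel--Sj\"ostrand criterion (Proposition~\ref{p:h-Boutet}) into the combinatorial condition $Y(q)$, using the explicit computation of the fundamental matrix and subprincipal symbol carried out earlier in this section. The plan is as follows.

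First I would verify that the hypotheses of Proposition~\ref{p:h-Boutet} are met for $P=\Box^{(q)}_b$ (with $m=2$, $d=n-1$, and matrix-valued but scalar-principal-symbol). By Lemma~\ref{l:h-symplectic}, $\Sigma$ is symplectic, and by (\ref{e:h-principalmore}), $p_0=\sum\ol{q}_jq_j\ge 0$ vanishes exactly to second order on $\Sigma$. The eigenvalues of the fundamental matrix $F_\rho$ are recorded in (\ref{e:h-eigenvalues}) as $\pm 2i\lambda_j\sigma_{iY}(\rho)$, so in the notation of Proposition~\ref{p:h-Boutet} one has $|\mu_j|=2|\lambda_j|\,|\sigma_{iY}(\rho)|$. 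Hence the non-vanishing condition for hypoellipticity with loss of one derivative reads, at each $\rho\in\Sigma$ and for every $\alpha\in\mathbb{N}^{n-1}$,
\begin{equation*}
p^s_0(\rho)+\sum_{j=1}^{n-1}(1+2\alpha_j)|\lambda_j|\,|\sigma_{iY}(\rho)|\neq 0
\end{equation*}
as an eigenvalue condition on $\Lambda^{0,q}T^*_p(X)$.

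Next I would diagonalize the Levi form at the base point $p$ and invoke (\ref{e:h-eigenmore}): the eigenvalues of $p^s_0+\tfrac{1}{2}\widetilde{\mathrm{tr}}\,F$ on $(0,q)$ forms are
\begin{equation*}
\sum_{j=1}^{n-1}|\lambda_j|\,|\sigma_{iY}|+\Bigl(\sum_{j\notin J}\lambda_j-\sum_{j\in J}\lambda_j\Bigr)\sigma_{iY},\qquad |J|=q.
\end{equation*}
Since $\tfrac{1}{2}\widetilde{\mathrm{tr}}\,F=\sum_j|\lambda_j|\,|\sigma_{iY}|$, the eigenvalues of the full expression appearing in the Boutet--Sj\"ostrand criterion become, after factoring out $|\sigma_{iY}|>0$,
\begin{equation*}
\varepsilon\Bigl(\sum_{j\notin J}\lambda_j-\sum_{j\in J}\lambda_j\Bigr)+\sum_{j=1}^{n-1}|\lambda_j|+2\sum_{j=1}^{n-1}\alpha_j|\lambda_j|,
\end{equation*}
where $\varepsilon=\operatorname{sgn}\sigma_{iY}=+1$ on $\Sigma^+$ and $-1$ on $\Sigma^-$; both signs occur as $\rho$ ranges over $\Sigma$.

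The final step is the elementary equivalence of the non-vanishing of this quantity (for all $J$ with $|J|=q$, all $\alpha\in\mathbb{N}^{n-1}$, and both signs $\varepsilon=\pm 1$) with condition $Y(q)$. If $Y(q)$ holds, then $\varepsilon(\sum_{j\notin J}\lambda_j-\sum_{j\in J}\lambda_j)+\sum_j|\lambda_j|\ge\sum_j|\lambda_j|-|\sum_{j\notin J}\lambda_j-\sum_{j\in J}\lambda_j|>0$, and the additional non-negative term $2\sum\alpha_j|\lambda_j|$ preserves positivity. Conversely, if $Y(q)$ fails at some $p$, pick $J$ with $|\sum_{j\notin J}\lambda_j-\sum_{j\in J}\lambda_j|\ge\sum_j|\lambda_j|$; choosing $\varepsilon$ to be the opposite sign of $\sum_{j\notin J}\lambda_j-\sum_{j\in J}\lambda_j$ and $\alpha=0$, the expression becomes $\le 0$; by taking $\alpha_j$ large enough in the complementary direction one can arrange the value $0$ exactly (in the extremal case $|J|=n_{\pm}$ with $J$ the set of positive/negative eigenvalue indices, already $\alpha=0$ produces $0$). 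This shows the criterion fails and hence $\Box^{(q)}_b$ is not hypoelliptic with loss of one derivative.

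The main obstacle is purely bookkeeping: tracking signs of $\sigma_{iY}$ on $\Sigma^\pm$, correctly identifying $|\mu_j|$, and organizing the equivalence between the infinite family of inequalities indexed by $(J,\alpha,\varepsilon)$ and the single inequality (\ref{e:0807111454}). No further analysis is needed beyond Proposition~\ref{p:h-Boutet} and the subprincipal computation already performed.
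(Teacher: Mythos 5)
Your proof is correct and follows exactly the route the paper takes: verify that $\Box^{(q)}_b$ satisfies the hypotheses of the Boutet de Monvel--Sj\"ostrand criterion (Proposition~\ref{p:h-Boutet}), plug in the eigenvalue computation (\ref{e:h-eigenmore}) for $p^s_0+\tfrac{1}{2}\widetilde{\rm tr}\,F$, and observe that the sign of $\sigma_{iY}$ on $\Sigma^\pm$ converts the non-vanishing condition into condition $Y(q)$, which is what the paper encodes in (\ref{e:h-hsiao1})--(\ref{e:h-hsiao2}) before citing Proposition~\ref{p:h-Boutet}. One small remark: in your converse direction the phrase about ``taking $\alpha_j$ large enough'' to reach $0$ is a slip (increasing $\alpha$ can only raise the value), but it is immediately harmless because, under Assumption~\ref{a:0807120935} of a non-degenerate Levi form, failure of $Y(q)$ forces equality $|\sum_{j\notin J}\lambda_j-\sum_{j\in J}\lambda_j|=\sum|\lambda_j|$, so $\alpha=0$ already produces the value $0$ exactly, as your parenthetical notes.
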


\begin{rem}
Kohn's $L^2$ estimates give the hypoellipicity with loss of one dervative for the solutions
$\Box^{(q)}_b u=f$ under condition $Y(q)$. (See Folland-Kohn \cite{FK72}.) Kohn's method works as well when
the Levi form $L$ is degenerate.
\end{rem}


\section{The characteristic equation}

In this section, we consider the characteristic equation for $\pr_t+\Box^{(q)}_b$.

Let $p_0(x, \xi)$ be the principal symbol of $\Box^{(q)}_b$.
We work with some real local coordinates $x=(x_1,x_2,\ldots,x_{2n-1})$
defined on an open set $\Omega\subset X$. We identify $\Omega$ with an open
set in $\Real^{2n-1}$. Let $\Omega^\Complex$ be an almost complexification of $\Omega$.
That is, $\Omega^\Complex$ is an open set in
$\Complex^{2n-1}$ with $\Omega^\Complex\bigcap\Real^{2n-1}=\Omega$. We identify $T^*(\Omega)$ with $\Omega\times\Real^{2n-1}$.
Similarly, let $T^*(\Omega)_\Complex$ be an open set in $\Complex^{2n-1}\times\Complex^{2n-1}$ with
$T^*(\Omega)_\Complex\bigcap(\Real^{2n-1}\times\Real^{2n-1})=T^*(\Omega)$. In this section, for any function $f$, we also write
$f$ to denote an almost analytic extension. (See Definition~\ref{d:c-almost}.)
We look for solutions $\psi(t,x,\eta)\in C^\infty
(\ol\Real_+\times T^*(\Omega)\setminus0)$ of the problem
\begin{equation} \label{e:c-chara}
\left\{ \begin{split}
& \frac{\displaystyle\pr\psi}{\displaystyle\pr t}-ip_0(x,\psi'_x)=
   O(\abs{{\rm Im\,}\psi}^N),\ \forall N\geq0,   \\
& \psi|_{t=0}=\seq{x\ ,\eta} \end{split}\right.
\end{equation}
with ${\rm Im\,}\psi(t, x,\eta)\geq0$. More precisely, we look for solutions $\psi(t,x,\eta)\in C^\infty
(\ol\Real_+\times T^*(\Omega)\setminus0)$ with ${\rm Im\,}\psi(t, x,\eta)\geq0$ such that $\psi|_{t=0}=\seq{x\ ,\eta}$
and for every compact set $K\subset T^*(\Omega)\setminus0$, $N\geq0$, there exists $c_{K,N}\geq0$, such that
\[\abs{\frac{\pr\psi}{\pr t}-ip_0(x,\psi'_x)}\leq c_{K,N}\abs{{\rm Im\,}\psi}^N\ \mbox{on}\ \ol\Real_+\times K.\]

Let $f(x, \xi)$, $g(x, \xi)\in C^\infty(T^*(\Omega)_\Complex)$. We write
$f=g$ ${\rm mod\,}$ $\abs{{\rm Im\,}(x, \xi)}^\infty$
if, given any compact subset $K$ of $T^*(\Omega)_\Complex$ and any integer
$N>0$, there is a constant $c>0$ such that
$\abs{(f-g)(x, \xi)}\leq c\abs{{\rm Im\,}(x, \xi)}^N,\; \forall (x, \xi)\in K$.
Let $U$ and $V$ be $C^\infty$ complex vector fields on $T^*(\Omega)_\Complex$. We write
$U=V$ ${\rm mod\,}$ $\abs{{\rm Im\,}(x, \xi)}^\infty$
if $U(f)=V(f)\ {\rm mod\,}\ \abs{{\rm Im\,}(x, \xi)}^\infty$
and $U(\ol f)=V(\ol f)$ ${\rm mod\,}$ $\abs{{\rm Im\,}(x, \xi)}^\infty$,
for all almost analytic functions $f$ on $T^*(\Omega)_\Complex$.
In Appendix A, we discuss the notions of almost analytic vector fields and equivalence of almost analytic vector fields.

In the complex domain, the Hamiltonian field $H_{p_0}$ is given by
$H_{p_0}=\frac{\pr p_0}{\pr\xi}\frac{\pr}{\pr x}-\frac{\pr p_0}{\pr x}\frac{\pr}{\pr\xi}$.
We notice that $H_{p_0}$ depends on the choice of almost analytic extension of $p_0$ but we can give an
invariant meaning of the exponential map $\exp(-itH_{p_0})$, $t\geq0$.
Note that $H_{p_0}$ vanishes on $\Sigma$.
We consider the real vector field $-iH_{p_0}+\ol{-iH_{p_0}}$.
Let $\Phi(t, \rho)$ be the $-iH_{p_0}+\ol{-iH_{p_0}}$ flow. We notice that for every $T>0$ there is an open neighborhood $U$ of $\Sigma$
in $T^*(\Omega)_\Complex$ such that for all $0\leq t\leq T$, $\Phi(t, \rho)$ is well-defined if $\rho\in U$. Since we only need to consider
Taylor expansions at $\Sigma$, for the convenience, we assume that $\Phi(t, \rho)$ is well-defined for all $t\geq0$ and
$\rho\in T^*(\Omega)_\Complex$. The following follows from Proposition~\ref{p:0709161529}

\begin{prop} \label{p:0709191406}
Let\, $\Phi(t, \rho)$ be as above. Let $U$ be a real vector field on $T^*(\Omega)_\Complex$ such that
$U=-iH_{p_0}+\ol{-iH_{p_0}}$ ${\rm mod\,}$ $\abs{{\rm Im\,}(x, \xi)}^\infty$.
Let\, $\hat\Phi(t, \rho)$ be the $U$ flow. Then, for every compact set $K\subset T^*(\Omega)_\Complex$, $N\geq0$, there is
$c_{N,K}(t)>0$,
such that
\[\abs{\Phi(t, \rho)-\hat\Phi(t, \rho)}\leq c_{N,K}(t){\rm dist\,}(\rho, \Sigma)^N,\ \rho\in K.\]
\end{prop}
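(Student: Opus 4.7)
The plan is to prove the estimate by two successive applications of Gronwall's inequality. Set $V:=-iH_{p_0}+\ol{-iH_{p_0}}$ and $W:=U-V$, so that $W$ is a smooth real vector field whose coefficients are $O(\abs{{\rm Im\,}(x,\xi)}^N)$ for every $N$ on every compact subset of $T^*(\Omega)_\Complex$. The key preliminary observation is that $p_0=\sum^{n-1}_{j=1}\ol q_jq_j\geq0$ vanishes to precisely second order on $\Sigma$, so every almost analytic extension of $p_0$ has all first order partials vanishing on $\Sigma$ modulo $O(\abs{{\rm Im\,}(x,\xi)}^\infty)$. Consequently $H_{p_0}|_\Sigma=0$, hence $V|_\Sigma=0$; and since $\Sigma\subset\Real^{2n-1}\times\Real^{2n-1}$ and $W$ vanishes to infinite order on the reals, we also have $U|_\Sigma=0$.

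The first step controls how quickly each flow can move away from $\Sigma$. Fix a compact neighborhood $K'$ of $K$ such that $\Phi(t,\rho),\hat\Phi(t,\rho)\in K'$ for all $\rho\in K$ and $t\in[0,T]$. Since $V$ and $U$ are smooth on $K'$ and vanish on $\Sigma\cap K'$, a compactness argument provides a constant $C$ with $\abs{V(\eta)}+\abs{U(\eta)}\leq C\,{\rm dist\,}(\eta,\Sigma)$ for $\eta\in K'$. Because ${\rm dist\,}(\cdot,\Sigma)$ is $1$-Lipschitz, we have $\abs{\pr_t{\rm dist\,}(\Phi(t,\rho),\Sigma)}\leq\abs{V(\Phi(t,\rho))}$, and similarly for $\hat\Phi$, so Gronwall yields
\[{\rm dist\,}(\Phi(t,\rho),\Sigma)+{\rm dist\,}(\hat\Phi(t,\rho),\Sigma)\leq 2e^{Ct}\,{\rm dist\,}(\rho,\Sigma),\quad\rho\in K,\,t\in[0,T].\]

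The second step is a Gronwall argument for the difference. Since $\Sigma\subset\Real^{2n-1}\times\Real^{2n-1}$ one has $\abs{{\rm Im\,}\eta}\leq{\rm dist\,}(\eta,\Sigma)$ for every $\eta$; combined with the infinite-order vanishing of $W$ on the reals this produces, for each $N$, a constant $c_N$ with $\abs{W(\eta)}\leq c_N\,{\rm dist\,}(\eta,\Sigma)^N$ on $K'$. Writing $e(t):=\abs{\Phi(t,\rho)-\hat\Phi(t,\rho)}$ and using a Lipschitz constant $L$ for $V$ on $K'$, the step $1$ bound yields
\[\frac{d}{dt}e(t)\leq\abs{V(\Phi)-V(\hat\Phi)}+\abs{W(\hat\Phi)}\leq L\,e(t)+c_N\,(2e^{Ct})^N{\rm dist\,}(\rho,\Sigma)^N.\]
Since $e(0)=0$, Gronwall gives the desired bound $e(t)\leq c_{N,K}(t)\,{\rm dist\,}(\rho,\Sigma)^N$.

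The main obstacle is justifying that the vanishing of $H_{p_0}$ on $\Sigma$ is independent of the choice of almost analytic extension, and more generally that the flow of $V$ is well defined modulo the equivalence used in the statement. Both points reduce to Proposition~\ref{p:0709161529} (discussed in Appendix A): any two almost analytic extensions of $p_0$ differ by $O(\abs{{\rm Im\,}(x,\xi)}^\infty)$, so their Hamiltonian fields are equivalent in the sense $=\,{\rm mod\,}\abs{{\rm Im\,}(x,\xi)}^\infty$, and the argument above may in fact be applied with $V$ replaced by any such representative without altering the conclusion.
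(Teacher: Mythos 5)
Your proof is correct, but it takes a genuinely different route from the paper's. The paper proves this proposition by a one-line reduction to Proposition~\ref{p:0709161529}, and that proposition is in turn established by invoking a qualitative fact about flows: if a smooth vector field vanishes at a point, the full Taylor expansion of its time-$t$ flow at that point is determined by the Taylor expansion of the field there. Since $-iH_{p_0}+\ol{-iH_{p_0}}$ and $U$ agree to infinite order on $\Sigma$ (both vanishing there), their flows have the same Taylor expansion at every point of $\Sigma$, and the estimate then follows from the infinite-order vanishing of $\Phi(t,\cdot)-\hat\Phi(t,\cdot)$ on $\Sigma$. Your argument instead uses two applications of Gronwall's inequality: first to bound $\mathrm{dist\,}(\Phi(t,\rho),\Sigma)$ and $\mathrm{dist\,}(\hat\Phi(t,\rho),\Sigma)$ linearly in $\mathrm{dist\,}(\rho,\Sigma)$, exploiting that both fields vanish on $\Sigma$; and second to bound the difference of the flows, feeding in the $O(\mathrm{dist\,}(\cdot,\Sigma)^N)$ estimate on the residual field $W=U-V$ (via $\abs{\mathrm{Im\,}\eta}\leq\mathrm{dist\,}(\eta,\Sigma)$, which holds because $\Sigma$ is contained in the real locus). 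The paper's route is shorter and conceptual, delegating the work to a classical ODE fact; your route is self-contained and elementary, and has the side benefit of making the time-dependence of $c_{N,K}(t)$ explicit. The one thing I would tighten is your closing paragraph: the Gronwall argument already handles the choice of almost analytic extension directly, since it only uses that the two vector fields vanish on $\Sigma$ and that their difference is $O(\abs{\mathrm{Im\,}(x,\xi)}^\infty)$ on compact sets; invoking Proposition~\ref{p:0709161529} again at that point is unnecessary and, since that is precisely what you are giving an alternative proof of, slightly circular in presentation.
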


For $t\geq0$, let
\begin{equation} \label{e:0709191622}
G_t=\set{(\rho, \Phi(t, \rho));\, \rho\in T^*(\Omega)_\Complex},
\end{equation}
where $\Phi(t, \rho)$ is as in Proposistion~\ref{p:0709191406}. We call
$G_t$ the graph of ${\rm exp\,}(-itH_{p_0})$. Since $H_{p_0}$ vanishes on $\Sigma$, we have
$\Phi(t, \rho)=\rho$ if $\rho\in\Sigma$.
$G_t$ depends on the choice of almost analytic extension
of $p_0$. Let $\hat p_0$ be another almost analytic extension of $p_0$. Let $\hat G_t$ be the graph of
${\rm exp\,}(-itH_{\hat p_0})$. From Proposition~\ref{p:0709191406}, it
follows that $G_t$ coincides to infinite order with $\hat G_t$ on
${\rm diag\,}(\Sigma\times\Sigma)$, for all $t\geq0$.

In Menikoff-Sj\"{o}strand~\cite{MS78}, it was shown that there exist
\[g(t, x, \eta), h(t, x, \eta)\in C^\infty(\ol\Real_+\times T^*(\Omega)_\Complex)\]
such that
$G_t=\set{(x, g(t, x, \eta), h(t, x, \eta), \eta);\, (x, \eta)\in T^*(\Omega)_\Complex}$.
Moreover, there exists $\psi(t, x, \eta)\in C^\infty(\ol\Real_+\times T^*(\Omega)_\Complex)$ such that
\[g(t, x, \eta)-\psi'_x(t, x, \eta)\ \ \mbox{and}\ \  h(t, x, \eta)-\psi'_\eta(t, x, \eta)\]
vanish to infinite order on $\Sigma$, for all $t\geq0$.
Furthermore, when $(t,x,\eta)$ is real, $\psi(t,x,\eta)$ solves (\ref{e:c-chara}) and we have,
\begin{equation} \label{e:c-charanext}
{\rm Im\,}\psi(t, x,\eta)\asymp\frac{t}{1+t}({\rm dist\,}((x,\eta),\Sigma))^2,\ \ t\geq0,\ \abs{\eta}=1.
\end{equation}
For the precise meaning of $\asymp$, see the discussion after Proposition~\ref{p:i-leading1}.

Moreover, we have the following

\begin{prop} \label{p:c-basis}
There exists $\psi(t,x,\eta)\in C^\infty(\ol\Real_+\times T^*(\Omega)\setminus0)$ such
that ${\rm Im\,}\psi\geq 0$ with equality precisely on $(\set{0}\times T^*(\Omega)\setminus0)
\bigcup(\Real_+\times\Sigma)$ and such that {\rm (\ref{e:c-chara})} holds where the error term is
uniform on every set of the form $[0,T]\times K$ with $T>0$ and $K\subset T^*(\Omega)\setminus0$ compact.
Furthermore, $\psi$ is unique up to a term which is
$O(\abs{{\rm Im\,}\psi}^N)$ locally uniformly for every $N$ and
\[\psi(t,x,\eta)=\seq{x,\eta}{\;\rm on\;}\Sigma,\; d_{x,\eta}(\psi-\seq{x,\eta})=0{\;\rm on\;}
\Sigma.\]
Moreover, we have
\begin{equation} \label{e:c-sj1}
{\rm Im\,}\psi(t, x,\eta)\asymp\abs{\eta}\frac{t\abs{\eta}}{1+t\abs{\eta}}{\rm dist\,}
((x, \frac{\eta}{\abs{\eta}}),\Sigma))^2,
\ \ t\geq0,\ \abs{\eta}\geq1.
\end{equation}
\end{prop}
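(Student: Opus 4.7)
The strategy is to realize $\psi(t, x, \eta)$ as a generating function for the complex canonical transformation ${\rm exp\,}(-itH_{p_0})$ and then read off the stated properties from the geometry of the flow near $\Sigma$. Concretely, extend $p_0$ to an almost analytic function on $T^*(\Omega)_\Complex$ and let $\Phi(t, \rho)$ be the flow of the real vector field $-iH_{p_0}+\ol{-iH_{p_0}}$; by Proposition~\ref{p:0709191406}, the graph $G_t$ defined in (\ref{e:0709191622}) is well-defined modulo ${\rm dist\,}(\cdot,\Sigma)^\infty$. Since $G_0$ is the diagonal and $G_t$ is a graph over the $(x,\eta)$ variables to infinite order along ${\rm diag\,}(\Sigma\times\Sigma)$, one can parametrize it as $\set{(x, g(t,x,\eta), h(t,x,\eta), \eta)}$; the closedness to infinite order of the pullback of the canonical one-form then produces a potential $\psi$ with $\psi'_x\equiv g$, $\psi'_\eta\equiv h$ to infinite order on $\Sigma$, normalized by the initial condition $\psi|_{t=0}=\seq{x,\eta}$.

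First, the eikonal equation (\ref{e:c-chara}) follows from the generating-function relation: differentiating in $t$ and using that $G_t$ is the graph of ${\rm exp\,}(-itH_{p_0})$ gives $\pr_t\psi = -ip_0(x,\psi'_x)$ to infinite order on $\Sigma$, and the equivalence ${\rm dist\,}(\cdot,\Sigma)^2=O({\rm Im\,}\psi)$ (from the estimate proved below) converts this into the required $O(\abs{{\rm Im\,}\psi}^N)$ error for every $N$. The identities $\psi=\seq{x,\eta}$ and $d_{x,\eta}(\psi-\seq{x,\eta})=0$ on $\Sigma$ are automatic from $\Phi(t,\rho)=\rho$ for $\rho\in\Sigma$. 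Uniqueness modulo $O(\abs{{\rm Im\,}\psi}^N)$ follows because any two candidates differ by a symbol of infinite vanishing on $\Sigma$, which by the quadratic lower bound below is itself $O(\abs{{\rm Im\,}\psi}^N)$.

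The positivity and sharp two-sided bound (\ref{e:c-sj1}) are the heart of the argument. At $\rho_0\in\Sigma$, the Hessian of $\psi-\seq{x,\eta}$ transverse to $\Sigma$ at time $t$ is computed from the linearization of $\Phi(t,\cdot)$, namely ${\rm exp\,}(-itF_{\rho_0})$ where $F_{\rho_0}$ is the fundamental matrix of $p_0$ defined in (\ref{e:0807172346}). By (\ref{e:h-eigenvalues}), its non-vanishing eigenvalues are $\pm 2i\lambda_j\sigma_{iY}(\rho_0)$, which give rise in the complex flow to one contracting and one expanding transverse direction per eigenvalue; selecting the stable branch yields a quadratic Hessian of $\psi$ whose imaginary part is positive definite on a transversal to $\Sigma$, the positivity being equivalent to the non-vanishing of all $\lambda_j$ guaranteed by Assumption~\ref{a:0807120935}. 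A direct computation at $t=0$ gives ${\rm Im\,}\psi\asymp t\abs{\eta}^2\,{\rm dist\,}((x,\eta/\abs{\eta}),\Sigma)^2$ for small $t\abs{\eta}$, while for large $t\abs{\eta}$ the Hessian saturates at a positive-definite limit $\psi(\infty,x,\eta)''$, yielding ${\rm Im\,}\psi\asymp \abs{\eta}\,{\rm dist\,}^2$; the interpolating factor $t\abs{\eta}/(1+t\abs{\eta})$ then gives (\ref{e:c-sj1}) and in particular shows that ${\rm Im\,}\psi>0$ exactly off the set $(\set{0}\times T^*(\Omega)\setminus 0)\bigcup(\Real_+\times\Sigma)$.

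The main obstacle will be the sharp uniform two-sided bound (\ref{e:c-sj1}) on the full range $t\in\ol\Real_+$, $\abs{\eta}\geq1$. The difficulty is the transition between the short-time regime, where the initial derivative $\pr_t({\rm Im\,}\psi)|_{t=0}\asymp \abs{\eta}\,{\rm dist\,}^2$ controls everything linearly, and the long-time regime, where the complex Hamiltonian flow must contract all transverse directions at uniform rates depending on $\abs{\lambda_j\sigma_{iY}}$ to a stable positive form. Making this rigorous requires a careful symplectic block-diagonalization of $F_{\rho_0}$ aligned with its $\pm 2i\lambda_j\sigma_{iY}$ eigenstructure and an explicit solution of the Riccati-type evolution for the Hessian of $\psi$; Assumption~\ref{a:0807120935} enters crucially to ensure that the contraction rates stay bounded away from zero uniformly on compact subsets of $T^*(\Omega)\setminus 0$.
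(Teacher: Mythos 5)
The paper does not actually prove this proposition: the text immediately after Propositions~4.3 and~4.4 reads ``For the proofs of Proposition~\ref{p:c-basis} and Proposition~\ref{p:c-basislimit}, we refer the reader to \cite{MS78}.''  Your sketch correctly reproduces the Menikoff--Sj\"ostrand strategy that the paper endorses: realize $\psi$ as a generating function for the complex canonical relation $\exp(-itH_{p_0})$ (the graph $G_t$ of (\ref{e:0709191622})), deduce the vanishing and gradient conditions on $\Sigma$ from the fact that $H_{p_0}=0$ there, and control ${\rm Im\,}\psi$ through the linearization of the flow, i.e.\ the fundamental matrix $F_\rho$, whose non-vanishing eigenvalues $\pm 2i\lambda_j\sigma_{iY}$ produce the contraction rates that appear implicitly in Remark~\ref{r:c-SjoMe} via the $e^{-tA}$ factor in (\ref{e:c-SjoMe}).

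Two remarks.  First, a sign slip: the Hamiltonian generating the flow is $-ip_0$, so Hamilton--Jacobi gives $\pr_t\psi+H(x,\psi'_x)=0$, i.e.\ $\pr_t\psi - ip_0(x,\psi'_x)=0$, hence $\pr_t\psi=+ip_0(x,\psi'_x)$ and not $\pr_t\psi=-ip_0(x,\psi'_x)$ as you wrote.  The sign is not cosmetic; it is what makes ${\rm Im\,}\psi$ grow in $t$ rather than decay, and a flipped sign would break the entire positivity analysis.  Second, you have correctly isolated the uniform two-sided estimate (\ref{e:c-sj1}) as the technical heart, but you leave the Riccati analysis of the transverse Hessian as a black box.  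That computation is precisely what Menikoff--Sj\"ostrand carry out, using the symplectic normal form recorded in the paper's Proposition~\ref{p:c-SjoBou} together with the explicit exponential structure $\Td\psi = \seq{\Td x',\Td\eta'}+\seq{e^{-tA}\Td x'',\Td\eta''}+\cdots$ of (\ref{e:c-SjoMe}); since the paper also stops at the citation, your sketch is aligned with its level of detail.
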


\begin{prop} \label{p:c-basislimit}
There exists a function $\psi(\infty,x,\eta)\in C^\infty(T^*(\Omega)\setminus0)$ with a
uniquely determined Taylor expansion at each point of\, $\Sigma$ such that
\[\begin{split}
&\mbox{For every compact set $K\subset T^*(\Omega)\setminus0$ there is a constant $c_K>0$
such that}   \\
&{\rm Im\,}\psi(\infty,x,\eta)\geq c_K\abs{\eta}({\rm dist\,}((x,\frac{\eta}{\abs{\eta}}),\Sigma))^2,  \\
&d_{x,\eta}(\psi(\infty, x, \eta)-\seq{x,\eta})=0\ {\rm on\,}\ \Sigma.
\end{split}\]
If$\ $ $\lambda\in C(T^*(\Omega)\setminus 0)$, $\lambda>0$ and $\lambda|_\Sigma<\min\abs{\lambda_j}$,
where $\pm i\abs{\lambda_j}$ are the non-vanishing eigenvalues of the fundamental matrix of\, $\Box^{(q)}_b$,
then the solution $\psi(t,x,\eta)$ of {\rm (\ref{e:c-chara})} can be chosen so that for every
compact set $K\subset T^*(\Omega)\setminus0$ and all indices $\alpha$, $\beta$, $\gamma$,
there is a constant $c_{\alpha,\beta,\gamma,K}$ such that
\begin{equation} \label{e:c-************}
\abs{\pr^\alpha_x\pr^\beta_\eta\pr^\gamma_t(\psi(t,x,\eta)-\psi(\infty,x,\eta))}
\leq c_{\alpha,\beta,\gamma,K}e^{-\lambda(x,\eta)t}{\;\rm on\;}\ol\Real_+\times K.
\end{equation}
\end{prop}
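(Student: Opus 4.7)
The plan is to follow Menikoff-Sj\"ostrand: construct $\psi(\infty,x,\eta)$ as the generating function of the positive stable Lagrangian manifold of the flow $\exp(-itH_{p_0})$ at $\Sigma$, and then compare $\psi(t,x,\eta)$ with $\psi(\infty,x,\eta)$ using the hyperbolic splitting afforded by the fundamental matrix of $p_0$. Proposition~\ref{p:c-basis} already supplies $\psi(t,x,\eta)$ with all the qualitative features, so what remains is stabilization of its Taylor expansion at $\Sigma$ as $t\to\infty$ and the exponential bound (\ref{e:c-************}) under the spectral-gap hypothesis.

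By Lemma~\ref{l:h-symplectic} $\Sigma$ is symplectic; since $p_0\geq 0$ vanishes to exactly second order on $\Sigma$, its fundamental matrix $F_\rho$ has non-vanishing eigenvalues $\pm 2i\lambda_j\sigma_{iY}(\rho)$ by (\ref{e:h-eigenvalues}), so the linearization of $-iH_{p_0}$ transverse to $\Sigma$ on $T^*(\Omega)_\Complex$ is hyperbolic, with real eigenvalues of absolute value $2\abs{\lambda_j}\abs{\sigma_{iY}(\rho)}$. This splits the normal bundle of $\Sigma$ into stable and unstable Lagrangian subspaces of equal rank $n-1$; the requirement ${\rm Im\,}\psi\geq 0$ selects the positive stable branch. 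I would determine the formal Taylor expansion of $\psi(\infty)$ at $\Sigma$ order by order: the $2$-jet of $\psi(\infty)-\seq{x,\eta}$ is a quadratic form whose associated symmetric matrix satisfies the time-independent algebraic Riccati equation obtained by substituting into (\ref{e:c-chara}) and equating second-order terms, with positivity uniquely singling out the stable root; the higher jets are determined by a recursive system of linear transport equations on $\Sigma$ whose operators, built from the above hyperbolic splitting, are invertible. Borel's theorem then realizes this series as a genuine $\psi(\infty,\cdot,\cdot)\in C^\infty(T^*(\Omega)\setminus 0)$, and the positive quadratic part immediately gives the claimed lower bound on ${\rm Im\,}\psi(\infty)$ after using quasi-homogeneity in $\eta$.

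For the exponential convergence, put $\delta(t,x,\eta)=\psi(t,x,\eta)-\psi(\infty,x,\eta)$. Differentiating (\ref{e:c-chara}) and using that $p_0$ vanishes to second order on $\Sigma$, one sees that $\delta$ satisfies a linear inhomogeneous equation along the bicharacteristics whose linearization at $\Sigma$ acts by the stable part of the fundamental matrix, with spectrum of size at least $2\min\abs{\lambda_j}$. Given the hypothesis $\lambda<\min\abs{\lambda_j}$, a Gronwall estimate yields $\abs{\delta(t,x,\eta)}\leq c_Ke^{-\lambda(x,\eta)t}$ in the Taylor sense at $\Sigma$; the bounds on $\pr^\alpha_x\pr^\beta_\eta\pr^\gamma_t\delta$ follow by induction on $\abs{\alpha}+\abs{\beta}+\abs{\gamma}$, differentiating the equation and reusing the contracting character of the linearized flow, with the $O(\abs{{\rm Im\,}\psi}^N)$ error in (\ref{e:c-chara}) harmless because ${\rm Im\,}\psi\asymp{\rm dist\,}^2$ makes it $O({\rm dist\,}^{2N})$ and absorbable into the Taylor-expansion estimate. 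The hard part is to run these two hierarchies of estimates, in powers of distance to $\Sigma$ and in orders of derivatives, simultaneously and uniformly on compact subsets of $T^*(\Omega)\setminus 0$; this is the delicate analytic core of the Menikoff-Sj\"ostrand normal-form construction carried out in the almost-analytic category.
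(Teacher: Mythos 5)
The paper itself supplies no proof of this proposition; it points the reader to Menikoff--Sj\"ostrand~\cite{MS78} and then, in Proposition~\ref{p:c-SjoBou} and Remark~\ref{r:c-SjoMe}, records the normal-form ingredients one actually uses. Your sketch reproduces that mechanism in slightly different (more dynamical-systems-flavoured) language, and it is correct in outline: the positive Lagrangian $C_\infty$ at ${\rm diag\,}(\Sigma\times\Sigma)$, the selection of the stable branch by ${\rm Im\,}\psi\geq 0$, the jet-by-jet determination of the Taylor expansion, and the spectral-gap estimate for $\psi(t)-\psi(\infty)$ are exactly the steps in~\cite{MS78}.

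Where the two expositions differ is that the paper (following~\cite{MS78}) first brings $p_0$ to the almost-normal form $p_0\equiv i\seq{A\Td x'',\Td\xi''}$ of Proposition~\ref{p:c-SjoBou}, with $A$ having eigenvalues $\abs{\lambda_j}$. In these coordinates the transport hierarchy becomes completely explicit: $\Td\psi(t)=\seq{\Td x',\Td\eta'}+\seq{e^{-tA}\Td x'',\Td\eta''}+\sum_{j\geq 2}\Td\psi_j(t)$, so the stable/unstable splitting, the ``Riccati'' equation for the $2$-jet, and the Gronwall-type estimate (\ref{e:c-************}) are all read off from the spectrum of $A$ rather than argued abstractly. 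You instead describe the same structure invariantly (hyperbolic splitting of the normal bundle, Riccati equation for the Hessian, Gronwall via the contracting linearized flow). The normal-form route is operationally easier because it sidesteps the issue you flag but do not resolve: if you construct $\psi(\infty)$ first from the stationary eikonal and then try to prove exponential convergence of $\delta=\psi(t)-\psi(\infty)$, you must verify that the initial datum $\delta|_{t=0}=\seq{x,\eta}-\psi(\infty)$ lies in the contracting part of the linearized transport operator, and that the transport coefficients (which vanish on $\Sigma$) still yield uniform decay on compacts and for all derivatives. In the normal-form picture this is transparent from the explicit $e^{-tA}$; in your formulation it is the ``delicate analytic core'' you acknowledge but leave unaddressed, so strictly speaking the sketch defers the same work that the paper defers to~\cite{MS78}.
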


For the proofs of Proposition~\ref{p:c-basis} and Proposition~\ref{p:c-basislimit}, we refer the reader to
~\cite{MS78}. From the positively homogeneity of $p_0$, it follows that we can choose
$\psi(t,x,\eta)$ in Proposition~\ref{p:c-basis} to be quasi-homogeneous of degree $1$ in the sense that
$\psi(t,x,\lambda\eta)=\lambda\psi(\lambda t,x,\eta)$, $\lambda>0$.
(See Definition~\ref{d:hf-heatquasi}.) This makes
$\psi(\infty,x,\eta)$ positively homogeneous of degree $1$.

We recall that $p_0=q_1\ol q_1+\cdots+q_{n-1}\ol q_{n-1}$.
We can take an almost analytic extension of $p_0$ so that
\begin{equation} \label{e:0707311415}
p_0(x, \xi)=\ol p_0(\ol x, \ol\xi).
\end{equation}
From (\ref{e:0707311415}), we have
\[-\frac{\pr\ol\psi}{\pr t}(t, x, -\eta)-ip_0(x, \ol\psi'_x(t, x, -\eta))=O(\abs{{\rm Im\,}\psi}^N),\ t\geq0,\]
for all $N\geq0$, $(x, \eta)$ real. Since $p_0(x, -\xi)=p_0(x, \xi)$, we have
\begin{align} \label{e:s-important***************&&&}
-\frac{\pr\ol\psi}{\pr t}(t, x, -\eta)-ip_0(x, -\ol\psi'_x(t, x, -\eta))=O(\abs{{\rm Im\,}\psi}^N),\ t\geq0,
\end{align}
for all $N\geq0$, $(x, \eta)$ real. From Proposition~\ref{p:c-basis}, we can take $\psi(t, x, \eta)$ so that
$\psi(t, x, \eta)=-\ol\psi(t, x, -\eta)$, $(x, \eta)$ is real. Hence,
\begin{equation} \label{e:nevergiveup1}
\psi(\infty, x, \eta)=-\ol\psi(\infty, x, -\eta),\ \ (x, \eta)\ \mbox{is real}.
\end{equation}

Put $\Td G_t=\set{(\ol y, \ol\eta, \ol x, \ol\xi);\, (x, \xi, y, \eta)\in G_t}$,
where $G_t$ is defined by (\ref{e:0709191622}). From (\ref{e:0707311415}), it follows that
$\Phi(t, \ol\rho)=\ol\Phi(-t, \rho)$,
where $\Phi(t, \rho)$ is as in Proposition~\ref{p:0709191406}. Thus, for all $t\geq0$,
\begin{equation} \label{e:0402-1}
G_t=\Td G_t.
\end{equation}
Put
\begin{equation} \label{e:0709191636}
C_t=\set{(x, \psi'_x(t, x, \eta), \psi'_\eta(t, x, \eta), \eta);\, (x, \eta)\in T^*(\Omega^\Complex)}
\end{equation}
and $\Td C_t=\set{(\ol y, \ol\eta, \ol x, \ol\xi);\, (x, \xi, y, \eta)\in C_t}$.
Since $C_t$ coincides to infinite order with $G_t$ on
${\rm diag\,}(\Sigma\times\Sigma)$, for all $t\geq0$, from (\ref{e:0402-1}), it follows
that $C_t$ coincides to infinite order with $\Td C_t$ on
${\rm diag\,}(\Sigma\times\Sigma)$, for all $t\geq0$. Letting $t\To\infty$, we get the following

\begin{prop} \label{p:0708261503}
Let
\begin{equation} \label{e:0709191634}
C_\infty=\set{(x, \psi'_x(\infty, x, \eta), \psi'_\eta(\infty, x, \eta), \eta);\, (x, \eta)\in T^*(\Omega^\Complex)}
\end{equation}
and $\Td C_\infty=\set{(\ol y, \ol\eta, \ol x, \ol\xi);\, (x, \xi, y, \eta)\in C_\infty}$.
Then $\Td C_\infty$ coincides to infinite order with $C_\infty$ on ${\rm diag\,}(\Sigma\times\Sigma)$.
\end{prop}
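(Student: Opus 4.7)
\medskip

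\noindent\textbf{Proof proposal for Proposition~\ref{p:0708261503}.}

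The plan is to reduce to the finite-$t$ version that has already been established in the paragraph preceding the statement, and then pass to the limit $t\to\infty$ using the exponential convergence provided by Proposition~\ref{p:c-basislimit}. Recall from (\ref{e:0402-1}) and Proposition~\ref{p:0709191406} that $G_t=\widetilde G_t$ for all $t\geq 0$, and that $C_t$ coincides to infinite order with $G_t$ on ${\rm diag}(\Sigma\times\Sigma)$; consequently the same holds for $C_t$ and $\widetilde C_t$. The content of Proposition~\ref{p:0708261503} is that this property survives the passage $t=\infty$.

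First I would fix $\rho_0\in\Sigma$ and reinterpret the statement ``$C_t$ coincides to infinite order with $\widetilde C_t$ at $(\rho_0,\rho_0)$" in terms of formal Taylor expansions. Since $C_t$ is parametrized by $(x,\eta)\mapsto (x,\psi'_x(t,x,\eta),\psi'_\eta(t,x,\eta),\eta)$ and $\widetilde C_t$ is its image under the antiholomorphic involution $(x,\xi,y,\eta)\mapsto(\bar y,\bar\eta,\bar x,\bar\xi)$, the coincidence to infinite order is equivalent to an identity of formal power series at $(\rho_0,\rho_0)$ between defining functions built out of $\psi'_x(t,\cdot)$, $\psi'_\eta(t,\cdot)$ and the complex conjugates of their almost analytic extensions. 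This identity holds at every finite $t\geq 0$.

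Next I would invoke the estimate (\ref{e:c-************}) from Proposition~\ref{p:c-basislimit}: for every multi-index $(\alpha,\beta,\gamma)$ and every compact $K\subset T^*(\Omega)\setminus 0$,
\[
\bigl|\pr^\alpha_x\pr^\beta_\eta\pr^\gamma_t\bigl(\psi(t,x,\eta)-\psi(\infty,x,\eta)\bigr)\bigr|\leq c_{\alpha,\beta,\gamma,K}e^{-\lambda(x,\eta)t}.
\]
In particular, at any $\rho_0\in\Sigma$ the Taylor coefficients of $\psi'_x(t,\cdot)$ and $\psi'_\eta(t,\cdot)$ converge exponentially fast to those of $\psi'_x(\infty,\cdot)$ and $\psi'_\eta(\infty,\cdot)$. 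Because the formal power series identifying $C_t$ with $\widetilde C_t$ at $(\rho_0,\rho_0)$ depends only on these Taylor coefficients (and their almost analytic conjugates, which are determined by the real Taylor data at the real point $\rho_0\in\Sigma$), we may pass to the limit coefficient by coefficient and obtain the analogous identity for $C_\infty$ and $\widetilde C_\infty$ at $(\rho_0,\rho_0)$. Since $\rho_0\in\Sigma$ was arbitrary, this gives the conclusion.

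The step I expect to require the most care is the bookkeeping in the second paragraph: making precise that the infinite-order coincidence on ${\rm diag}(\Sigma\times\Sigma)$ is literally an equality of Taylor expansions of the defining functions in the variables $(x,\eta,\bar x,\bar\eta)$ at each real point $(\rho_0,\rho_0)$, and that the antiholomorphic operation entering the definition of $\widetilde C_t$ is compatible with the almost analytic extensions used to define $\psi'_x(\infty,\cdot)$ and $\psi'_\eta(\infty,\cdot)$. Once this is in place, the limiting argument is essentially formal, since an exponentially decaying $C^\infty$ convergence gives convergence of each Taylor coefficient at a real point, and the desired identity is preserved under that coefficient-wise limit.
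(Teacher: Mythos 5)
Your proposal is correct and follows essentially the same route as the paper: the paper also first establishes that $C_t$ coincides to infinite order with $\widetilde C_t$ on ${\rm diag\,}(\Sigma\times\Sigma)$ for each finite $t\geq0$ (via $G_t=\widetilde G_t$ from (\ref{e:0402-1}) and the identification of $C_t$ with $G_t$), and then lets $t\to\infty$, the passage being justified by the exponential convergence of the Taylor data at points of $\Sigma$ from Proposition~\ref{p:c-basislimit}. Your write-up merely makes explicit the coefficient-wise convergence that the paper compresses into the phrase ``Letting $t\to\infty$''.
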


From Proposition~\ref{p:0708261503} and the global theory of Fourier integral operators (see Proposition~\ref{p:c-mesjmore}),
we have the following

\begin{prop} \label{p:0709201320}
The two phases
\[\psi(\infty, x, \eta)-\seq{y, \eta},
-\ol\psi(\infty, y, \eta)+\seq{x, \eta}\in C^\infty(\Omega\times\Omega\times\dot\Real^{2n-1})\]
are equivalent in the sense of Melin-Sj\"{o}strand~\cite{MS74}. (See Definition~\ref{d:0709171814}.)
\end{prop}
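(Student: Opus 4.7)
The plan is to show that the two phases parametrize the same positive Lagrangian up to infinite-order agreement on the real characteristic set, and then to invoke the Melin--Sj\"ostrand equivalence criterion (Proposition~\ref{p:c-mesjmore}).

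First, I would compute the canonical relation parametrized by each phase. For $\phi_1(x,y,\eta)=\psi(\infty,x,\eta)-\seq{y,\eta}$, the critical manifold $\set{\partial_\eta\phi_1=0}=\set{y=\psi'_\eta(\infty,x,\eta)}$ pushed forward by $(x,y,\eta)\mapsto(x,\partial_x\phi_1;\,y,-\partial_y\phi_1)$ is exactly $C_\infty$ from (\ref{e:0709191634}). For $\phi_2(x,y,\eta)=-\ol\psi(\infty,y,\eta)+\seq{x,\eta}$, the analogous computation gives $\partial_\eta\phi_2=-\partial_\eta\ol\psi(\infty,y,\eta)+x$, $\partial_x\phi_2=\eta$, and $-\partial_y\phi_2=\partial_y\ol\psi(\infty,y,\eta)$. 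Since complex conjugation commutes with differentiation in real variables, at real $(y,\eta)$ one has $\partial_\eta\ol\psi(\infty,y,\eta)=\ol{\psi'_\eta(\infty,y,\eta)}$ and $\partial_y\ol\psi(\infty,y,\eta)=\ol{\psi'_x(\infty,y,\eta)}$, whence
\[\Lambda_{\phi_2}=\set{(\ol{\psi'_\eta(\infty,y,\eta)},\eta;\,y,\,\ol{\psi'_x(\infty,y,\eta)})}\]
at real base points. This is precisely the image of $C_\infty$ under the involution $(x,\xi,y,\eta)\mapsto(\ol y,\ol\eta,\ol x,\ol\xi)$ at real points, i.e.\ $\widetilde{C}_\infty$.

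Next, Proposition~\ref{p:0708261503} tells us that $\widetilde{C}_\infty$ and $C_\infty$ coincide to infinite order on ${\rm diag\,}(\Sigma\times\Sigma)$. Combined with the previous step, the canonical relations parametrized by $\phi_1$ and $\phi_2$ therefore agree to infinite order on the real characteristic set. Both phases are positive: ${\rm Im\,}\phi_1={\rm Im\,}\psi(\infty,x,\eta)\geq 0$ and ${\rm Im\,}\phi_2={\rm Im\,}(-\ol\psi(\infty,y,\eta))={\rm Im\,}\psi(\infty,y,\eta)\geq 0$ by Proposition~\ref{p:c-basislimit}. The equivalence criterion of Proposition~\ref{p:c-mesjmore} then yields the claim.

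The main difficulty is the bookkeeping with almost analytic extensions: the identification $\Lambda_{\phi_2}=\widetilde{C}_\infty$ above is stated at real base points, and one must use the uniqueness of almost analytic extensions of the positive Lagrangians (guaranteed by the ${\rm Im\,}\psi(\infty,\cdot,\cdot)\geq 0$ bound near $\Sigma$) to promote this to infinite-order agreement of complex submanifolds in a neighborhood of ${\rm diag\,}(\Sigma\times\Sigma)$. Once this is in place, the conclusion follows directly from the general theory of Fourier integral operators with complex phase functions.
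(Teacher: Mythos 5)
Your proposal is correct and follows essentially the same route as the paper: identify the (almost analytic) Lagrangian associated to $\psi(\infty,x,\eta)-\seq{y,\eta}$ with $C_\infty$ and the one associated to $-\ol\psi(\infty,y,\eta)+\seq{x,\eta}$ with $\Td C_\infty$, then invoke Proposition~\ref{p:0708261503} (infinite-order agreement of $C_\infty$ and $\Td C_\infty$ on ${\rm diag\,}(\Sigma\times\Sigma)$) together with the Melin--Sj\"ostrand equivalence criterion in Proposition~\ref{p:c-mesjmore}. The paper states this deduction in one line; your version spells out the two canonical-relation computations and the positivity checks, which is exactly the content being elided.
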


We recall that
\[\Sigma=\set{(x, \xi)\in T^*(\Omega)\smallsetminus 0;\, q_j(x, \xi)=\ol q_j(x, \xi)=0,\ \ j=1,\ldots,n-1}.\]
For any function $f\in C^\infty(T^*(\Omega))$, we use $\Td f$ to denote an almost analytic extension with respect to
the weight function
${\rm dist}((x, \xi), \Sigma)$. (See Definition~\ref{d:c-almost}.) Set
\begin{equation} \label{e:070731}
\Td\Sigma=\set{(x, \xi)\in T^*(\Omega)_\Complex\smallsetminus 0;\, \Td q_j(x, \xi)=\Td{\ol q}_j(x, \xi)=0,\ \ j=1,\ldots,n-1}.
\end{equation}
We say that $\Td\Sigma$ is an almost analytic extension with respect to the weight function
${\rm dist}((x, \xi), \Sigma)$ of $\Sigma$. Let $f(x, \xi)$, $g(x, \xi)\in C^\infty(W)$, where $W$ is
an open set in $T^*(\Omega)_\Complex$. We write $f=g$ ${\rm mod\,}$ $d^\infty_\Sigma$
if, given any compact subset $K$ of $W$ and any integer
$N>0$, there is a constant $c>0$ such that
$\abs{(f-g)(x, \xi)}\leq c{\rm dist\,}((x, \xi), \Sigma)^N$, $\forall (x, \xi)\in K$.
From the global theory of Fourier integral operators (see Proposition~\ref{p:c-mesjmore}),
we only need to consider Taylor expansions at $\Sigma$. We may work with the
following coordinates (for the proof, see~\cite{MS78})

\begin{prop} \label{p:c-SjoBou}
Let $\rho\in\Sigma$. Then in some open neighborhood $\Gamma$ of
$\rho$ in $T^*(\Omega)_\Complex$,
there are $C^\infty$ functions
$\Td x_j\in C^\infty(\Gamma)$, $\Td\xi_j\in C^\infty(\Gamma)$, $j=1,\ldots,2n-1$,
such that
\begin{enumerate}
\item $\Td x_j$, $\Td\xi_j$, $j=1,\ldots,2n-1$, are almost analytic functions with respect to the weight function
${\rm dist}((x, \xi), \Sigma)$.
\item ${\rm det\,}\Bigl(\frac{\pr(x,\xi)}{\pr(\Td x,
\Td\xi)}\Bigr)\neq 0\ {\rm on\,}\ (\Gamma)_\Real$,
where $(\Gamma)_\Real=\Gamma\bigcap T^*(\Omega)$ and $\Td x=(\Td x_1,\ldots,\Td x_{2n-1})$, $\Td\xi=(\Td\xi_1,\ldots,\Td\xi_{2n-1})$.
\item $\Td x_j$, $\Td\xi_j$, $j=1,\ldots,2n-1$, form local coordinates of\, $\Gamma$.
\item $(\Td x, \Td\xi)$ is symplectic to infinite order on $\Sigma$. That is,
$\{\Td x_j, \Td x_k\}=0$ ${\rm mod\,}$ $d^\infty_\Sigma$, $\{\Td\xi_j, \Td\xi_k\}=0$ ${\rm mod\,}\ d^\infty_\Sigma$,
$\{\Td\xi_j, \Td x_k\}=\delta_{j, k}$ ${\rm mod\,}\ d^\infty_\Sigma$,
where $j$, $k=1,\ldots,2n-1$. Here $\set{f, g}=\frac{\pr f}{\pr\xi}\frac{\pr g}{\pr x}-\frac{\pr f}{\pr x}\frac{\pr g}{\pr\xi}$, $f$, $g\in C^\infty(\Gamma)$.
\item  We write $\Td x'$, $\Td x''$, $\Td\xi'$, $\Td\xi''$ to denote $(\Td x_1,\ldots,\Td x_{n})$,
$(\Td x_{n+1},\ldots,\Td x_{2n-1})$, $(\Td\xi_1,\ldots,\Td\xi_{n})$ and
$(\Td\xi_{n+1},\ldots,\Td\xi_{2n-1})$ respectively. Then,
$\Td\Sigma\bigcap\Gamma$ coincides to infinite order with
$\set{(\Td x, \Td\xi);\, \Td x''=0,\ \Td\xi''=0}$
on $\Sigma\bigcap(\Gamma)_\Real$ and
\[\Sigma\bigcap(\Gamma)_\Real=\set{(\Td x, \Td\xi);\, \Td x''=0,\ \Td\xi''=0,\ \Td x'\ {\rm and\,}\ \Td\xi'\ {\rm are\, real\,}}.\]
\end{enumerate}
Furthermore, there is a $(n-1)\times(n-1)$ matrix of almost analytic functions $A(\Td x, \Td\xi)$ such that
for every compact set $K\subset\Gamma$ and $N\geq0$, there is a $c_{K,N}>0$, such that
\[\abs{p_0(\Td x,\Td\xi)-i\seq{A(\Td x,\Td\xi)\Td x'',\Td\xi''}}\leq c_{K,N}\abs{(\Td x'',\Td\xi'')}^N\ \mbox{on}\ K,\]
and when $\Td x'$ and $\Td\xi'$ are real, $A(\Td x',0,\Td\xi',0)$ has positive eigenvalues
\[\abs{\lambda_1},\ldots,\abs{\lambda_{n-1}},\]
where $\pm i\lambda_{1},\ldots,\pm i\lambda_{n-1}$ are the non-vanishing eigenvalues
of $F(\Td x',0,\Td\xi',0)$, the fundamental matrix of\, $\Box^{(q)}_b$. In particular,
\[\frac{1}{2}{\rm tr\,}A(\Td x',0,\Td\xi',0)=\frac{1}{2}\Td{\rm tr\,}
F(\Td x',0,\Td\xi',0).\]
Formally, we write
\begin{equation} \label{e:c-SjoBou}
p_0(\Td x,\Td\xi)=i\seq{A(\Td x,\Td\xi)\Td x'',\Td\xi''}
+O(\abs{(\Td x'',\Td\xi'')}^N).
\end{equation}
\end{prop}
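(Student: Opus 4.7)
The plan is to reduce $p_0$ to the stated normal form in three stages. First, I would apply a Darboux--Weinstein type theorem for symplectic submanifolds: since Lemma~\ref{l:h-symplectic} shows that $\Sigma$ is a symplectic submanifold of $T^*(\Omega)\setminus0$ of codimension $2(n-1)$, in a real neighborhood of $\rho\in\Sigma$ there exist real symplectic coordinates $(x,\xi)=(x',x'',\xi',\xi'')$ with $x',\xi'\in\Real^n$ and $x'',\xi''\in\Real^{n-1}$, such that $\Sigma$ is locally cut out by $x''=0$, $\xi''=0$ and $(x',\xi')$ restrict to symplectic coordinates on $\Sigma$.

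Second, I would pass to almost analytic extensions $\Td x_j$, $\Td\xi_j$, $j=1,\ldots,2n-1$, with respect to the weight function ${\rm dist\,}((x,\xi),\Sigma)$, in the sense of Definition~\ref{d:c-almost}. Because the defining Poisson bracket relations $\set{x_j,x_k}=0$, $\set{\xi_j,\xi_k}=0$, $\set{\xi_j,x_k}=\delta_{j,k}$ hold on the real locus, Borel-type extension and the uniqueness modulo $d^\infty_\Sigma$ of almost analytic extensions (Proposition~\ref{p:0709161529} type results) yield properties (a)--(d) directly. Property (e) follows from the construction: the real equations $x''=0$, $\xi''=0$ extend to almost analytic equations $\Td x''=0$, $\Td\xi''=0$, and $\Td\Sigma$ agrees with $\Set{\Td x''=0,\ \Td\xi''=0}$ to infinite order on $\Sigma$ because both are almost analytic extensions of the same real set.

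Third, for the quadratic normal form of $p_0$, I would use that $p_0$ vanishes to precisely second order on $\Sigma$, so its Taylor expansion at $\Sigma$ starts with a quadratic form in $(\Td x'',\Td\xi'')$ whose invariant content is encoded in the fundamental matrix $F$ via (\ref{e:0807172346}). Using the factorization $p_0=\sum_j q_j\ol q_j$, the Hamilton fields $H_{q_j}$, $H_{\ol q_j}$ span the symplectic orthogonal of $\Sigma$ in $T^*(\Omega)_\Complex$, giving a pointwise decomposition of the symplectic normal bundle into two complex Lagrangian subbundles. A further symplectic change of the almost analytic normal coordinates $(\Td x'',\Td\xi'')$, aligning these Lagrangians with $\Td\xi''=0$ and $\Td x''=0$, brings the quadratic part of $p_0$ into the form $i\seq{A\Td x'',\Td\xi''}$ modulo $O(\abs{(\Td x'',\Td\xi'')}^N)$. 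Matching with (\ref{e:h-fundamental}) and (\ref{e:h-eigenvalues}) then identifies the eigenvalues of $A(\Td x',0,\Td\xi',0)$ with $\abs{\lambda_1},\ldots,\abs{\lambda_{n-1}}$ (positivity being forced by $p_0\geq0$ on the real locus) and gives the trace identity.

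The main obstacle will be the third step: one must carry out the diagonalization of the fundamental form by an \emph{almost analytic} symplectic change of the normal coordinates in such a way that all the Poisson-bracket identities in (d) remain valid modulo $d^\infty_\Sigma$, and so that no spurious pure $\Td x''\Td x''$ or $\Td\xi''\Td\xi''$ quadratic terms appear in the reduced expression for $p_0$. This is the content of the almost analytic Melin--Sj\"{o}strand geometric machinery used by Menikoff--Sj\"{o}strand~\cite{MS78}, and boils down to transporting Williamson's theorem (simultaneous normal form of a positive quadratic form and a symplectic form) from the symplectic normal space at $\rho$ to the symplectic normal bundle of $\Sigma$, then extending the resulting trivialization to the almost analytic setting.
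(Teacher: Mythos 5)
The paper does not actually prove Proposition~\ref{p:c-SjoBou}; it defers to Menikoff--Sj\"{o}strand~\cite{MS78} with the remark ``for the proof, see~\cite{MS78}.'' Your three-stage outline --- real Darboux--Weinstein for the symplectic submanifold $\Sigma$, almost analytic extension with respect to the weight ${\rm dist}((x,\xi),\Sigma)$, then a fiberwise complex symplectic change normalizing the quadratic part of $p_0$, propagated by a Borel-type argument --- is exactly the structure of the MS78 construction, and your framing of the last stage as a Williamson-type simultaneous normal form transported along $\Sigma$ in the almost analytic category is the right way to think about it. The observation that a weight-$\,{\rm dist}$ almost analytic function vanishing on the real locus automatically vanishes to infinite order on $\Sigma$ (which you use implicitly to get property~(d)) is also correct.

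One precision worth fixing in the middle of your third step: the pair of transverse Lagrangians you should align with the coordinate planes $\{\Td\xi''=0\}$ and $\{\Td x''=0\}$ is \emph{not} the pair spanned by $\{H_{q_j}\}$ and $\{H_{\ol q_j}\}$. That split is a valid Lagrangian decomposition of $T_\rho(\Sigma)^\perp$, but feeding it into the normalization gives (aligning $q_j\propto\Td\xi''_j$ and $\ol q_j\propto\Td x''_j$, normalized so that $\{\Td\xi''_j,\Td x''_j\}=1$)
\[
p_0=i\sum_j(-2\lambda_j\sigma_{iY})\,\Td\xi''_j\,\Td x''_j+O(\abs{(\Td x'',\Td\xi'')}^3),
\]
and the coefficient $-2\lambda_j\sigma_{iY}$ changes sign with the Levi eigenvalue $\lambda_j$, so the resulting $A$ would not be positive. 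The correct splitting is by the sign of the imaginary parts of the eigenvalues of $F$: take $\Lambda^+$ to be the span of eigenvectors of $F$ with eigenvalues $+i\abs{\mu_j}$, and $\Lambda^-=\ol{\Lambda^+}$. As Lemma~\ref{l:s-final1} makes explicit, $\Lambda^+$ is spanned by $H_{q_j}$ when $\frac{1}{i}\{q_j,\ol q_j\}>0$ and by $H_{\ol q_j}$ otherwise, i.e.\ it intermixes the two families. Since $\Lambda^-=\ol{\Lambda^+}$, on the real locus $\Td\xi''$ is conjugate to $\Td x''$ up to the linear normalization, and then $p_0\geq0$ on the real locus does force all eigenvalues of $A(\Td x',0,\Td\xi',0)$ to be positive, equal to $\abs{\lambda_j}$. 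Your appeal to Williamson's theorem in the final paragraph would of course carry out this choice automatically; just be sure the Lagrangians you transport are $\Lambda^\pm$ rather than the raw $\{H_{q_j}\}$/$\{H_{\ol q_j}\}$ split.
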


\begin{rem} \label{r:c-SjoMe}
Set
\[E=\set{(t,x,\xi,y,\eta)\in\ol\Real_+\times T^*(\Omega)_\Complex\times T^*(\Omega)_\Complex;\,
(x,\xi,y,\eta)\in C_t},\]
where $C_t$ is defined by (\ref{e:0709191636}).
Let $(\Td x,\Td\xi)$ be the coordinates of Proposition~\ref{p:c-SjoBou}.
In the work of Menikoff-Sj\"{o}strand~\cite{MS78}, it was shown that there exists
$\Td\psi(t,\Td x,\Td\eta)\in C^\infty(\ol\Real_+\times\Gamma)$, where $\Gamma$ is as in
Proposition~\ref{p:c-SjoBou}, such that
\[
\left\{ \begin{split}
& \frac{\displaystyle\pr\Td\psi}{\displaystyle\pr t}-ip_0(\Td x,\Td\psi'_{\Td x})=
   O(\abs{(\Td x'', \Td\eta'')}^N), \mbox{for all}\ N>0,   \\
& \Td\psi|_{t=0}=\seq{\Td x\ ,\Td\eta} \end{split}\right.
\]
and $\Td\psi(t,\Td x,\Td\eta)$ is of the form
\begin{equation} \label{e:c-SjoMe}
\Td\psi(t,\Td x,\Td\eta)=\seq{\Td x',\Td\eta'}+\seq{e^{-tA(\Td x', 0,  \Td\eta', 0)}
\Td x'',\Td\eta''}+\Td\psi_2(t, \Td x, \Td\eta)+\Td\psi_3(t, \Td x, \Td\eta)+\cdots,
\end{equation}
where $A$ is as in Proposition~\ref{p:c-SjoBou} and $\Td\psi_j(t, \Td x, \Td\eta)$ is a $C^\infty$ homogeneous polynomial of degree $j$ in
$(\Td x'', \Td\eta'')$. If$\ $ $\lambda\in C(T^*(\Omega)\setminus 0)$, $\lambda>0$ and $\lambda|_{\Sigma}<\min\abs{\lambda_j}$, where
$\pm i\abs{\lambda_j}$ are the non-vanishing eigenvalues of the fundamental matrix of\, $\Box^{(q)}_b$,
then for every
compact set $K\subset\Sigma\bigcap(\Gamma)_\Real$ and all indices $\alpha$, $\beta$, $\gamma$, $j$,
there is a constant $c_{\alpha,\beta,\gamma, j, K}$ such that
\begin{equation} \label{e:0711101650}
\abs{\pr^\alpha_{\Td x}\pr^\beta_{\Td \eta}\pr^\gamma_t(\Td\psi_j(t,\Td x,\Td \eta))}
\leq c_{\alpha,\beta,\gamma,K}e^{-\lambda(\Td x,\Td \eta)t}{\;\rm on\;}\ol\Real_+\times K.
\end{equation}

Put $\Td{E}=\set{(t, \Td x, \frac{\pr\Td{\psi}}{\pr\Td{x}}(t,\Td x, \Td\eta), \frac{\pr\Td{\psi}}{\pr\Td{\eta}}(t, \Td x, \Td\eta),
\Td\eta);\, t\in\ol\Real_+,\ \Td x, \Td\eta\in C^\infty(\Gamma)}$.
We notice that $\Td{E}$ coincides to infinite order with $E$ on
$\ol\Real_+\times{\rm diag\,}((\Sigma\bigcap(\Gamma)_\Real)\times(\Sigma\bigcap(\Gamma)_\Real))$.
(See~\cite{MS78}.)
\end{rem}


\section{The heat equation, formal construction}

We work with some real local coordinates $x=(x_1,\ldots,x_{2n-1})$ defined on an open set $\Omega\subset X$.
We identify $T^*(\Omega)$ with $\Omega\times\Real^{2n-1}$.

\begin{defn} \label{d:hf-heatquasi}
We will say that $a\in C^\infty(\ol\Real_+\times T^*(\Omega))$ is quasi-homogeneous of
degree $j$ if $a(t,x,\lambda\eta)=\lambda^ja(\lambda t,x,\eta)$ for all $\lambda>0$.
\end{defn}

We can check that if $a$ is quasi-homogeneous of degree $j$, then $\pr^\alpha_x\pr^\beta_\eta\pr^\gamma_ta$ is quasi-homogeneous of degree
$j-\abs{\beta}+\gamma$.

In this section, we consider the problem
\begin{equation} \label{e:hf-heat}
\left\{ \begin{array}{ll}
(\pr_t+\Box^{(q)}_b)u(t,x)=0  & {\rm in\;}\Real_+\times\Omega  \\
u(0,x)=v(x) \end{array}\right..
\end{equation}
We shall start by making only a formal construction. We look for an approximate solution of
(\ref{e:hf-heat}) of the form $u(t,x)=A(t)v(x)$,
\begin{equation} \label{e:hf-fourierheat}
A(t)v(x)=\frac{1}{(2\pi)^{2n-1}}\int\int e^{i(\psi(t,x,\eta)-\seq{y,\eta})}a(t,x,\eta)v(y)dyd\eta
\end{equation}
where formally
\[a(t,x,\eta)\sim\sum^\infty_{j=0}a_j(t,x,\eta),\]
$a_j(t, x, \eta)\in C^\infty(\ol\Real_+\times T^*(\Omega);\,
\mathscr L(\Lambda^{0,q}T^*(\Omega), \Lambda^{0,q}T^*(\Omega)))$,
$a_j(t,x,\eta)$ is a quasi-homogeneous function of degree $-j$.

We let the full symbol of $\Box^{(q)}_b$ be:
\[\mbox{full symbol of }\ \Box^{(q)}_b=\sum^2_{j=0}p_j(x,\xi),\]
where $p_j(x,\xi)$ is positively homogeneous of order $2-j$. We apply $\pr_t+\Box^{(q)}_b$ formally
under the integral in (\ref{e:hf-fourierheat}) and then introduce the asymptotic expansion of
$\Box^{(q)}_b(ae^{i\psi})$. (See Proposition~\ref{p:0709171623}.) Setting $(\pr_t+\Box^{(q)}_b)(ae^{i\psi})\sim 0$ and regrouping
the terms according to the degree of quasi-homogeneity. We obtain the transport equations
\begin{equation} \label{e:hf-heattransport}
\left\{ \begin{array}{ll}
 T(t,x,\eta,\pr_t,\pr_x)a_0=O(\abs{{\rm Im\,}\psi}^N),\; \forall N   \\
 T(t,x,\eta,\pr_t,\pr_x)a_j+l_j(t,x,\eta,a_0,\ldots,a_{j-1})= O(\abs{{\rm Im\,}\psi}^N),\; \forall N.
 \end{array}\right.
\end{equation}
Here
\[T(t,x,\eta,\pr_t,\pr_x)=\pr_t-i\sum^{2n-1}_{j=1}\frac{\pr p_0}{\pr\xi_j}(x,\psi'_x)\frac{\pr}{\pr x_j}+q(t,x,\eta)\]
where
\[q(t,x,\eta)=p_1(x,\psi'_x)+\frac{1}{2i}\sum^{2n-1}_{j,k=1}\frac{\pr^2p_0(x,\psi'_x)}
    {\pr\xi_j\pr\xi_k}\frac{\pr^2\psi(t,x,\eta)}{\pr x_j\pr x_k}\]
and $l_j$ is a linear differential operator acting on $a_0,a_1,\ldots,a_{j-1}$. We note that
$q(t,x,\eta)\To q(\infty,x,\eta)$ exponentially fast in the sense of (\ref{e:c-************})
and that the same is true for the coefficients of $l_j$.

Let $C_t$, $E$ be as in (\ref{e:0709191636}) and Remark~\ref{r:c-SjoMe}. We recall that for $t\geq0$,
\[C_t=\set{(x,\xi,y,\eta)\in T^*(\Omega)_\Complex\times T^*(\Omega)_\Complex;\,
\xi=\frac{\pr\psi}{\pr x}(t,x,\eta), y=\frac{\pr\psi}{\pr\eta}(t,x,\eta)},\]
\[E=\set{(t,x,\xi,y,\eta)\in\ol\Real_+\times T^*(\Omega)_\Complex\times T^*(\Omega)_\Complex;\,
(x,\xi,y,\eta)\in C_t}\]
and for $t>0$, $(C_t)_\Real={\rm diag\,}(\Sigma\times\Sigma)=\set{(x,\xi,x,\xi)\in T^*(\Omega)\times T^*(\Omega);\, (x,\xi)\in\Sigma}$.

If we consider $a_0,a_1,\ldots$ as functions on $E$, then the equations (\ref{e:hf-heattransport})
involve differentiations along the vector field $\nu=\frac{\pr}{\pr t}-iH_{p_0}$.
We can consider only Taylor expansions at $\Sigma$. Until further notice, our computations
will only be valid to infinite order on $\Sigma$.

Consider $\nu$ as a vector field on $E$. In the coordinates $(t,x,\eta)$ we can express $\nu$:
\[\nu=\frac{\pr}{\pr t}-i\sum^{2n-1}_{j=1}\frac{\pr p_0}{\pr\xi_j}(x,\psi'_x)\frac{\pr}{\pr x_j}.\]
We can compute
\begin{equation} \label{e:hf-div}
{\rm div\,}(\nu)=\frac{1}{i}\left(\sum^{2n-1}_{j=1}\frac{\pr^2p_0(x,\psi'_x)}{\pr x_j\pr\xi_j}+
\sum^{2n-1}_{j, k=1}\frac{\pr^2p_0}{\pr\xi_j\pr\xi_k}(x,\psi'_x)\frac{\pr^2\psi}
{\pr x_j\pr x_k}(t,x,\eta)\right).
\end{equation}
For a smooth function $a(t,x,\eta)$ we introduce the $\frac{1}{2}$ density on $E$
\[ \alpha=a(t,x,\eta)\sqrt{dtdxd\eta}\]
which is well-defined up to some factor $i^\mu$. (See H\"{o}rmander \cite{Hor71}.) The Lie derivative of $\alpha$ along
$\nu$ is $L_{\nu}(\alpha)=(\nu(a)+\frac{1}{2}{\rm div\,}(\nu)a)\sqrt{dtdxd\eta}$.
We see from the expression for $T$ that
\begin{equation} \label{e:hf-divmore}
(Ta)\sqrt{dtdxd\eta}=(L_{\nu}+p^s_0(x,\psi'_x(t,x,\eta)))(a\sqrt{dtdxd\eta}),
\end{equation}
where $p^s_0(x,\xi)=p_1(x,\xi)+\frac{i}{2}\sum^{2n-1}_{j=1}\frac{\pr^2p_0(x,\xi)}{\pr x_j\pr\xi_j}$
is the subprincipal symbol (see (\ref{e:0807171410})). Now let
$(\Td x,\Td\xi)$ be the coordinates of Proposition~\ref{p:c-SjoBou}, in which $p_0$
takes the form (\ref{e:c-SjoBou}). In these coordinates we have
\begin{align} \label{e:hf-transmore}
H_{p_0}(\Td x,\Td\xi)
&= i\seq{A(\Td x,\Td\xi)\Td x''\ ,\frac{\pr}{\pr\Td x''}}-
   i\seq{\mbox{}{^t\hskip-2pt}A(\Td x,\Td\xi)\Td\xi'',\frac{\pr}
   {\pr\Td\xi''}}  \nonumber  \\
&\quad +\sum_{\abs{\alpha}=1,\abs{\beta}=1}(\Td x'')^\alpha(\Td\xi'')^\beta
        B_{\alpha\beta}(\Td x,\Td\xi,\frac{\pr}{\pr\Td x},
        \frac{\pr}{\pr\Td\xi})
\end{align}
and
\begin{align} \label{e:hf-hami}
\nu &=\frac{\pr}{\pr t}+\seq{A(\Td x,\Td\psi'_{\Td  x})\Td x'',
      \frac{\pr}{\pr\Td x''}} \nonumber  \\
    &\quad +\sum_{\abs{\alpha}=1,\abs{\beta}=1}(\Td x'')^\alpha
      (\Td\psi'_{\Td x''})^\beta C_{\alpha\beta}(\Td x',\Td\psi'_{\Td x},
      \frac{\pr}{\pr\Td x}).
\end{align}
Here $\Td\psi(t, \Td x, \Td\eta)$ is as in Remark~\ref{r:c-SjoMe}.

Let $f(t, x, \eta)\in C^\infty(\ol\Real_+\times T^*(\Omega)_\Complex)$,
$f(\infty, x, \eta)\in C^\infty(T^*(\Omega)_\Complex)$.
We say that $f(t, x, \eta)$ converges exponentially
fast to $f(\infty, x, \eta)$ if $f(t, x, \eta)-f(\infty, x, \eta)$
satisfies the same kind of estimates as (\ref{e:c-************}).
Recalling the form of $\Td\psi$ we obtain
\begin{align} \label{e:hf-hamimore}
\nu=\tilde{\nu} &=\frac{\pr}{\pr t}+\seq{A(\Td x',0, \Td\eta',0)\Td x'',
      \frac{\pr}{\pr\Td x''}}  \nonumber  \\
    &\quad +\sum_{\abs{\alpha+\beta}=2,\alpha\neq 0}(\Td x'')^\alpha(\Td\eta'')^\beta
      D_{\alpha\beta}(t,\Td x,\Td\eta,\frac{\pr}{\pr\Td x})
\end{align}
where $D_{\alpha\beta}$ converges exponentially fast to some limit as $t\To +\infty$. We have on $\Sigma$,
\begin{equation} \label{e:hf-tr}
\frac{1}{2}{\rm div\,}(\tilde{\nu})=\frac{1}{2}{\rm tr\,}A(\Td x',0,\Td\eta',0)=\frac{1}{2}
\Td{\rm tr\,}F(\Td x',0,\Td\eta',0)
\end{equation}
where $F(\Td x',0,\Td\eta',0)$ is the fundamental matrix of $\Box^{(q)}_b$.
We define $\Td a(t,\Td x,\Td\eta)$ by
\begin{equation} \label{e:hf-solution}
\Td a(t,\Td x,\Td\eta)\sqrt{dtd\Td xd\Td\eta}=
           a(t,x,\eta)\sqrt{dtdxd\eta}.
\end{equation}
Note that the last equation only defines $\Td a$ up to $i^\mu$.  We have
\[(Ta)\sqrt{dtdxd\eta}=(\Td T\Td a)\sqrt{dtd\Td xd\Td\eta}\]
where
\begin{align} \label{e:hf-trsolution}
\Td T
   &= \frac{\pr}{\pr t}+\seq{A(\Td x',0,\Td\eta',0)\Td x'',
      \frac{\pr}{\pr\Td x''}} \nonumber \\
   &\quad +\frac{1}{2}\Td{\rm tr\,}F(\Td x',0,\Td\eta',0)+
      p^s_0(\Td x',0,\Td\eta',0)+Q(t,\Td x,\Td\eta,\frac{\pr}
      {\pr\Td x}).
\end{align}
Here
\begin{align*}
Q(t,\Td x,\Td\eta,\frac{\pr}{\pr\Td x})
   &= \sum_{\abs{\alpha+\beta}=2,\alpha\neq 0}(\Td x'')^\alpha(\Td\eta'')^\beta
      D_{\alpha\beta}(t,\Td x,\Td\eta,\frac{\pr}{\pr\Td x}) \\
   &\quad +\sum_{\abs{\alpha+\beta}=1}(\Td x'')^\alpha(\Td\eta'')^\beta
      E_{\alpha\beta}(t,\Td x,\Td\eta).
\end{align*}
It is easy to see that $E_{\alpha\beta}$ and $D_{\alpha\beta}$ converge exponentially fast to
some limits $E_{\alpha\beta}(\infty,\Td x,\Td\eta)$ and $D_{\alpha\beta}
(\infty,\Td x,\Td\eta)$ respectively. We need the following

\begin{lem} \label{l:hf-BerSjo}
Let $A$ be a $d\times d$ matrix having only positive eigenvalues and consider the map
$\mathscr A:u\mapsto \seq{ A
\left( \begin{array}{c} x_1 \\ \vdots \\ x_d  \end{array} \right)\ , \
\left( \begin{array}{c}
\frac{\pr u}{\pr x_1} \\ \vdots \\ \frac{\pr u}{\pr x_d} \end{array} \right)
}$
on the space $P^m(\Real^d)$ of homogeneous polynomials of degree $m$. Then $\exp(t\mathscr A)(u)=u\circ(\exp(tA))$
and the map $\mathscr A$ is a bijection except for $m=0$.
\end{lem}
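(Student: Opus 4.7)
My plan is to prove the two assertions separately: the identity $\exp(t\mathscr{A})(u) = u \circ \exp(tA)$ by identifying both sides as solutions of the same linear ODE on a finite-dimensional space, and the bijectivity claim by reducing $A$ to upper triangular form and showing the matrix of $\mathscr{A}$ in a suitably ordered monomial basis is triangular with strictly positive diagonal.

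First I would verify that $\mathscr{A}$ maps $P^m(\Real^d)$ to itself: if $u$ is homogeneous of degree $m$, then each $x_k \pr u/\pr x_j$ is again homogeneous of degree $m$, so both sides of the claimed identity live in the same finite-dimensional space. To prove the identity, set $v(t, x) := u(\exp(tA)x)$. By the chain rule, $\pr_t v(t, x) = \seq{A\exp(tA)x, (\nabla u)(\exp(tA)x)}$, while $\nabla_x v(t, x) = (\exp(tA))^T (\nabla u)(\exp(tA)x)$, so $\mathscr{A}(v(t, \cdot))(x) = \seq{Ax, (\exp(tA))^T (\nabla u)(\exp(tA)x)} = \seq{\exp(tA)Ax, (\nabla u)(\exp(tA)x)}$. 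Using the commutation $A\exp(tA) = \exp(tA)A$, these two expressions coincide, so $v(t, \cdot)$ satisfies the linear ODE $\dot v = \mathscr{A} v$ in $P^m(\Real^d)$ with $v(0, \cdot) = u$; uniqueness of solutions then yields $v(t, \cdot) = \exp(t\mathscr{A})u$.

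For the bijectivity claim, the $m = 0$ case is immediate because $\mathscr{A}$ annihilates constants. For $m \geq 1$ I would exploit conjugation invariance: writing $\mathscr{A}_M$ for the operator associated to a matrix $M$ and $\Phi_B(u) := u \circ B^{-1}$ for $B \in GL_d(\Complex)$, a short chain-rule check gives $\Phi_B \circ \mathscr{A}_A \circ \Phi_B^{-1} = \mathscr{A}_{BAB^{-1}}$. Hence invertibility of $\mathscr{A}_A$ on the complexification $P^m(\Complex^d)$ depends only on the conjugacy class of $A$, so by Schur's theorem I may assume $A = T = (t_{ij})$ is upper triangular with diagonal entries $\mu_1, \ldots, \mu_d$, the eigenvalues of $A$, all strictly positive. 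A direct computation gives $\mathscr{A}_T(x^\alpha) = \bigl(\sum_i \mu_i \alpha_i\bigr) x^\alpha + \sum_{i < j} t_{ij}\alpha_i x^{\alpha - e_i + e_j}$ for each multi-index $\alpha$ with $\abs{\alpha} = m$. Ordering the monomial basis of $P^m(\Complex^d)$ in decreasing lexicographic order, the condition $i < j$ forces $\alpha - e_i + e_j$ to be strictly lex-smaller than $\alpha$ (the first nonzero entry of $e_i - e_j$ is $+1$ at position $i$), so in this basis $\mathscr{A}_T$ is triangular with diagonal entries $\sum_i \mu_i \alpha_i$. Since $\mu_i > 0$ and $\abs{\alpha} = m \geq 1$, every such entry is strictly positive, the determinant is nonzero, and $\mathscr{A}_T$, hence $\mathscr{A}_A$, is bijective.

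The only genuine difficulty is the non-diagonalizable case for $A$; the combination of the conjugation trick, Schur triangularization, and the lex-order bookkeeping is what makes the argument clean. Once the triangular normal form is in place, everything reduces to the elementary observation that $\sum_i \mu_i \alpha_i > 0$ for all $\abs{\alpha} = m \geq 1$ when each $\mu_i > 0$.
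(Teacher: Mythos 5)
Your proof is correct, but the bijectivity argument takes a genuinely different route from the paper. The paper's proof is short and leans on the flow: if $\mathscr A u = 0$ with $u \in P^m$, $m \geq 1$, then by the first part $u(\exp(tA)x) = u(x)$ for all $t$, and since all eigenvalues of $A$ are positive, $\exp(tA)x \to 0$ as $t \to -\infty$, forcing $u(x) = u(0) = 0$; injectivity on a finite-dimensional space is then automatic bijectivity. Your argument is more algebraic and more explicit: you reduce $A$ to Schur triangular form, show that in the decreasing-lexicographic monomial basis $\mathscr A$ is triangular with diagonal entries $\sum_i \mu_i \alpha_i > 0$, and conclude invertibility. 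The paper's route is shorter and reuses the first assertion directly; yours costs a Schur reduction and the conjugation identity $\Phi_B \circ \mathscr A_A \circ \Phi_B^{-1} = \mathscr A_{BAB^{-1}}$ but in exchange exhibits the full spectrum of $\mathscr A$ (namely $\{\sum_i \mu_i\alpha_i : \abs{\alpha}=m\}$), which is extra information the paper's argument does not give. Your proof of the exponential identity (both sides solve the same linear ODE with the same initial condition) is essentially the same idea as the paper's one-parameter-group argument, just phrased differently.
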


\begin{proof}
We notice that $U(t):u\mapsto u\circ\exp(tA)$ form a group of operators and that
$(\frac{\pr U(t)}{\pr t})\Big|_{t=0}=\mathscr A$.
This shows that $U(t)=\exp(t\mathscr A)$. To prove the second statement, suppose that $u\in P^m$,
$m\geq 1$ and $\mathscr A(u)=0$. Then $\exp(t\mathscr A)(u)=u$ for all $t$, in other words
$u(\exp(tA)(x))=u(x)$, $t\in\Real$, $x\in\Real^d$. Since $\exp(tA)(x)\To 0$ when $t\To -\infty$,
we obtain $u(x)=u(0)=0$, which proves the lemma.
\end{proof}

\begin{prop} \label{p:hf-fundamental}
Let
\[c_j(x, \eta)\in C^\infty(T^*(\Omega);\,
\mathscr L(\Lambda^{0,q}T^*(\Omega), \Lambda^{0,q}T^*(\Omega))),\ \ j=0, 1,\ldots,\]
be positively homogeneous functions of degree $-j$. Then, we can find solutions
\[a_j(t, x, \eta)\in C^\infty(\ol\Real_+\times T^*(\Omega);\,
\mathscr L(\Lambda^{0,q}T^*(\Omega), \Lambda^{0,q}T^*(\Omega))),\ \ j=0, 1,\ldots,\]
of the system {\rm (\ref{e:hf-heattransport})} with
$a_j(0, x, \eta)=c_j(x, \eta)$, $j=0, 1,\ldots$,
where $a_j(t,x,\eta)$ is a
quasi-homogeneous function of degree $-j$ such that $a_j(t,x,\eta)$ has unique Taylor expansions
on $\Sigma$, for all $j$. Furthermore, let $\lambda(x,\eta)\in C(T^*(\Omega))$ and $\lambda|_\Sigma<\min\tau_j$,
where $\tau_j$ are the eigenvalues of $\frac{1}{2}\Td{\rm tr\,}F+p^s_0$ on $\Sigma$. Then
for all indices $\alpha,\beta,\gamma,j$ and every compact set $K\subset\Sigma$ there exists a
constant $c>0$ such that
\begin{equation} \label{e:hf-estimate}
\abs{\pr^\gamma_t\pr^\alpha_x\pr^\beta_\eta a_j(t,x,\eta)}\leq
ce^{-t\lambda(x,\eta)}{\;\rm on\;}\ol\Real_+\times K.
\end{equation}
\end{prop}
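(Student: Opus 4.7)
My plan is to argue by induction on $j$ using the symplectic normal form coordinates $(\Td x,\Td\xi)$ of Proposition~\ref{p:c-SjoBou}, in which $\Sigma\bigcap\Gamma$ is cut out by $\Td x''=\Td\xi''=0$ and $p_0$ takes the form (\ref{e:c-SjoBou}). After the half-density change (\ref{e:hf-solution}), the transport equation $Ta_j=-l_j(a_0,\ldots,a_{j-1})$ becomes $\Td T\Td a_j=\Td g_j$ with $\Td T$ as in (\ref{e:hf-trsolution}). By the inductive hypothesis, $\Td g_j$ is smooth and its Taylor coefficients at $\Sigma$ already satisfy exponential decay of the desired form, so the problem reduces to constructing a Taylor expansion of $\Td a_j$ on $\Sigma$ that solves $\Td T\Td a_j\equiv\Td g_j$ modulo the ideal generated by arbitrary powers of $(\Td x'',\Td\eta'')$, with prescribed initial value at $t=0$ dictated by $c_j$ through (\ref{e:hf-solution}), and with exponentially decaying coefficients.

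For fixed $j$ I build the Taylor expansion coefficient-by-coefficient in the total order $m=\abs{\alpha}+\abs{\beta}$ of the monomial $(\Td x'')^\alpha(\Td\eta'')^\beta$. From (\ref{e:hf-trsolution}) the operator $\Td T$ has an order-$m$-preserving diagonal part
\[\pr_t+\mathscr{A}_m+\Bigl(\frac{1}{2}\Td{\rm tr\,}F+p^s_0\Bigr)(\Td x',0,\Td\eta',0),\]
where $\mathscr{A}_m$ is the action of $\seq{A(\Td x',0,\Td\eta',0)\Td x'',\frac{\pr}{\pr\Td x''}}$ on polynomials of degree $\abs{\alpha}$ in $\Td x''$ (with $\Td\eta''$ and $(\Td x',\Td\eta')$ as parameters), while the $Q$-term couples the order-$m$ coefficient only to strictly lower-order ones; hence, once orders $<m$ are known, the equation for the order-$m$ coefficient is a closed inhomogeneous linear ODE in $t$ on a finite-dimensional vector space of matrix-valued polynomials parametrized by $\Sigma\bigcap(\Gamma)_\Real$. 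By Lemma~\ref{l:hf-BerSjo}, the spectrum of $\mathscr{A}_m$ consists of the sums $\sum_k\alpha_k\abs{\lambda_k}\geq0$, so the matrix $M_m$ governing the ODE has spectrum contained in $\set{\sum_k\alpha_k\abs{\lambda_k}+\tau:\,\tau\in{\rm spec\,}(\tfrac{1}{2}\Td{\rm tr\,}F+p^s_0)}$, which is bounded below by $\min\tau_j$. Splitting $M_m$ as $M_m(\infty)$ plus an exponentially small remainder and solving by Duhamel's formula then yields the exponential decay at rate $\lambda<\min\tau_j$ asserted in (\ref{e:hf-estimate}), by a Gr\"onwall estimate, provided that the inductive hypothesis furnishes the same decay for the lower-order coefficients and for $\Td g_j$.

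To finish, Borel's theorem applied in the parameters $(t,\Td x',\Td\eta')$ assembles the formal series $\sum\Td a_{j,\alpha,\beta}(\Td x'')^\alpha(\Td\eta'')^\beta$ into a genuine $C^\infty$ function on $\ol\Real_+\times\Gamma$ whose Taylor expansion on $\Sigma$ agrees with the one constructed and which inherits its exponential-decay bounds; undoing the half-density change (\ref{e:hf-solution}) gives $a_j(t,x,\eta)$, and a standard dilation argument using the positive homogeneity of $p_0$ of degree $2$ and the quasi-homogeneity of $\psi$ of degree $1$ upgrades $a_j$ to be quasi-homogeneous of degree $-j$. The main technical difficulty is the simultaneous propagation of the exponential rate $\lambda<\min\tau_j$ through the double induction on $j$ and on the Taylor order $m$: one must absorb both the off-diagonal $Q$-coupling from $\Td T$ and the exponentially small remainder $M_m(t)-M_m(\infty)$ into a Gr\"onwall-type inequality that does not erode the margin $\min\tau_j-\lambda>0$, and one must do so uniformly on compact subsets of $\Sigma$, which is ultimately possible precisely because $\lambda$ is strictly less than $\min\tau_j$ and all additional positive shifts $\sum_k\alpha_k\abs{\lambda_k}$ at higher Taylor orders only help.
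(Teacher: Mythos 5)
Your argument is correct and follows essentially the same route as the paper's: pass to the normal-form coordinates of Proposition~\ref{p:c-SjoBou}, make the half-density change (\ref{e:hf-solution}), Taylor expand in $(\Td x'',\Td\eta'')$, and run a double induction on $j$ and on the Taylor order using Lemma~\ref{l:hf-BerSjo} to control $\exp(-t\mathscr A)$. One small simplification you could make: the degree-$m$ diagonal operator $\mathscr A_m+\tfrac{1}{2}\Td{\rm tr\,}F+p^s_0$ is time-independent (the $t$-dependent $D_{\alpha\beta}$, $E_{\alpha\beta}$ coefficients in $Q$ strictly raise the Taylor order and hence land in the source), so a straight Duhamel formula with the margin $\min\tau_j-\lambda>0$ already closes the estimate and no Gr\"onwall-type absorption or splitting of $M_m$ into $M_m(\infty)$ plus remainder is needed.
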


\begin{proof}
We only need to study Taylor expansions on $\Sigma$. Let $(\Td x,\Td\xi)$ be the coordinates of
Proposition~\ref{p:c-SjoBou}. We define
$\Td a_j(t,\Td x,\Td\eta)$ from $a_j(t,x,\eta)$ as in (\ref{e:hf-solution}).
In order to prove (\ref{e:hf-estimate}),
it is sufficient to prove the corresponding statement for $\Td a_j$ (see section $1$ of~\cite{MS78}).
We introduce the Taylor expansion of $\Td a_0$ with respect to $(\Td x'',\Td\eta'')$.
$\Td a_0(t,\Td x,\Td\eta)=\sum^\infty_0\Td a^j_0(t,\Td x,
\Td\eta)$,
where $\Td a^j_0$ is a homogeneous polynomial of degree $j$ in $(\Td x'',\Td\eta'')$.
Let $c_0(\Td x, \Td\eta)=\sum_{j=0}\Td c^j_0(\Td x,\Td\eta)$,
where $\Td c^j_0$ is a homogeneous polynomial of degree $j$ in $(\Td x'',\Td\eta'')$.
From $\Td T\Td a_0=0$, we get
$\Td a^0_0(t,\Td x',\Td\eta')=e^{-t(\frac{1}{2}\Td{\rm tr\,}F+p^s_0)}\Td c^0_0(\Td x,\Td\eta)$.
It is easy to see that for all indices $\alpha$, $\beta$, $\gamma$ and
every compact set $K\subset\Sigma$ there exists a
constant $c>0$ such that
$\abs{\pr^\gamma_t\pr^\alpha_{\Td x}\pr^\beta_{\Td\eta}\Td a^0_0}\leq
ce^{-t\lambda(\Td x,\Td \eta)}$ on $\ol\Real_+\times K$,
where $\lambda(\Td x,\Td \eta)\in C(T^*(\Omega))$, $\lambda|_\Sigma<\min\tau_j$.
Here $\tau_j$ are the eigenvalues of $\frac{1}{2}\Td{\rm tr\,}F+p^s_0$ on $\Sigma$.

Again, from $\Td T\Td a_0=0$, we get
$(\frac{\pr}{\pr t}+\mathscr A+\frac{1}{2}\Td{\rm tr\,}F+p^s_0)
\Td a^{j+1}_0(t,\Td x,\Td\eta)=\Td b^{j+1}_0(t,\Td x,
\Td\eta)$
where $\Td b^{j+1}_0(t,\Td x,\Td\eta)$ satisfies the same kind of estimate as
$\Td a^0_0$. By Lemma~\ref{l:hf-BerSjo}, we see that $\exp(-t\mathscr A)$ is bounded for
$t\geq 0$. We deduce a similar estimate for the function $\Td a^{j+1}_0(t,\Td x,\Td\eta)$.
Continuing in this way we get all the desired estimates for $\Td a_0$. The next transport
equation takes the form $\Td T\Td a_1=\Td b$ where $\Td b$ satisfies
the estimates (\ref{e:hf-estimate}). We can repeat the procedure above and conclude that
$\Td a_1$ satisfies the estimates (\ref{e:hf-estimate}). From above, we see that
$\Td a_0$, $\Td a_1$ have the unique Taylor expansions on $\Sigma$. Continuing in
this way we get the proposition.
\end{proof}

From Proposition~\ref{p:hf-fundamental}, we have the following

\begin{prop} \label{p:hf-solution-ynot(q)}
Let $(n_-, n_+)$, $n_-+n_+=n-1$, be the signature of the Levi form. Let
$c_j(x, \eta)\in C^\infty(T^*(\Omega);\,
\mathscr L(\Lambda^{0,q}T^*(\Omega), \Lambda^{0,q}T^*(\Omega)))$, $j=0, 1,\ldots$,
be positively homogeneous functions of degree $-j$. Then, we can find solutions
\[a_j(t, x, \eta)\in C^\infty(\ol\Real_+\times T^*(\Omega);\,
\mathscr L(\Lambda^{0,q}T^*(\Omega), \Lambda^{0,q}T^*(\Omega))),\ \ j=0, 1,\ldots,\]
of the system {\rm (\ref{e:hf-heattransport})} with
$a_j(0, x, \eta)=c_j(x, \eta)$, $j=0, 1,\ldots$,
where $a_j(t,x,\eta)$ is a
quasi-homogeneous function of degree $-j$ and such that for all indices $\alpha,\beta,\gamma,j$, every
$\varepsilon>0$ and compact set $K\subset\Omega$ there exists a constant $c>0$ such that
\begin{equation} \label{e:hf-estimate3}
\abs{\pr^\gamma_t\pr^\alpha_x\pr^\beta_\eta a_j(t,x,\eta)}\leq
ce^{\varepsilon t\abs{\eta}}(1+\abs{\eta})^{-j-\abs{\beta}+\gamma}{\;\rm on\;}\ol\Real_+\times\Bigr((K\times\Real^{2n-1})\bigcap\Sigma\Bigr).
\end{equation}
Furthermore, there exists $\varepsilon_0>0$ such that for all indices $\alpha,\beta,\gamma,j$ and
every compact set $K\subset\Omega$,
there exists a constant $c>0$ such that
\begin{align} \label{e:hf-***************}
&\abs{\pr^\gamma_t\pr^\alpha_x\pr^\beta_\eta a_j(t,x,\eta)}\leq
ce^{-\varepsilon_0 t\abs{\eta}}(1+\abs{\eta})^{-j-\abs{\beta}+\gamma}  \nonumber\\
&{\;\rm on\;}\ol\Real_+\times\Bigr((K\times\Real^{2n-1})\bigcap\Sigma^+\Bigr)\ \ \mbox{if}\ \ q\neq n_+
\end{align}
and
\begin{align} \label{e:hf-***************bis}
&\abs{\pr^\gamma_t\pr^\alpha_x\pr^\beta_\eta a_j(t,x,\eta)}\leq
ce^{-\varepsilon_0 t\abs{\eta}}(1+\abs{\eta})^{-j-\abs{\beta}+\gamma}  \nonumber\\
&{\;\rm on\;}\ol\Real_+\times\Bigr((K\times\Real^{2n-1})\bigcap\Sigma^-\Bigr)\ \ \mbox{if}\ \ q\neq n_-.
\end{align}
\end{prop}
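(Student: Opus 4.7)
The idea is to deduce Proposition~\ref{p:hf-solution-ynot(q)} from Proposition~\ref{p:hf-fundamental} by combining two ingredients: (i) the hypoellipticity lower bounds (\ref{e:h-hsiao1})--(\ref{e:h-hsiao2}) on the eigenvalues of $p^s_0+\frac{1}{2}\Td{\rm tr\,}F$, and (ii) a quasi-homogeneity rescaling that transports bounds from a compact cross-section of $\Sigma$ to the full half-cone $(K\times\Real^{2n-1})\bigcap\Sigma$. To begin, I invoke Proposition~\ref{p:hf-fundamental} directly to produce $a_j(t,x,\eta)$, quasi-homogeneous of degree $-j$, solving (\ref{e:hf-heattransport}) with $a_j(0,x,\eta)=c_j(x,\eta)$; for any compact $K_0\subset\Sigma$ and continuous $\lambda$ with $\lambda|_\Sigma<\min\tau_j$, that proposition yields
\[
\abs{\pr^\gamma_t\pr^\alpha_x\pr^\beta_\eta a_j(t,x,\eta)}\leq c_{K_0}\,e^{-t\lambda(x,\eta)}\quad\text{on}\ \ol\Real_+\times K_0.
\]
I would take $K_0$ to be the annular section $\Sigma\cap\set{(x,\eta);\, x\in K,\ \tfrac12\leq\abs{\eta}\leq 2}$, which is compact by the description (\ref{e:h-Sj}) of $\Sigma$.

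The next step propagates these bounds by rescaling. Differentiating the defining identity $a_j(t,x,\mu\eta)=\mu^{-j}a_j(\mu t,x,\eta)$ shows that $b_{\alpha\beta\gamma}:=\pr^\gamma_t\pr^\alpha_x\pr^\beta_\eta a_j$ is quasi-homogeneous of degree $m:=-j-\abs{\beta}+\gamma$, so that
\[
b_{\alpha\beta\gamma}(t,x,\eta)=\abs{\eta}^{m}\,b_{\alpha\beta\gamma}\bigl(\abs{\eta}\,t,\,x,\,\eta/\abs{\eta}\bigr),\quad \abs{\eta}>0.
\]
For $\abs{\eta}\geq 1$ and $(x,\eta)\in\Sigma$ with $x\in K$, the point $(x,\eta/\abs{\eta})$ lies in $K_0$; hence a uniform bound $\abs{b_{\alpha\beta\gamma}}\leq c\,e^{-s\lambda_0}$ on $\ol\Real_+\times K_0$ translates into $\abs{b_{\alpha\beta\gamma}(t,x,\eta)}\leq c\,\abs{\eta}^{m}\,e^{-\lambda_0 t\abs{\eta}}$ for $\abs{\eta}\geq 1$. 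For $\abs{\eta}\leq 1$ the corresponding section of $\Sigma$ over $K$ is itself compact, so Proposition~\ref{p:hf-fundamental} applies without rescaling; the two regimes combine into the factor $(1+\abs{\eta})^{m}$ appearing in (\ref{e:hf-estimate3})--(\ref{e:hf-***************bis}).

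It remains to pick the constant $\lambda_0$ in each case. For (\ref{e:hf-estimate3}), the inequalities (\ref{e:h-hsiao1})--(\ref{e:h-hsiao2}) give $\min\tau_j\geq 0$ on all of $\Sigma$, so $\lambda\equiv-\varepsilon$ is admissible for any $\varepsilon>0$, and the rescaling produces exactly the subexponential growth $e^{\varepsilon t\abs{\eta}}$. For (\ref{e:hf-***************}), the hypothesis $q\neq n_+$ together with (\ref{e:h-hsiao1}) forces $\min\tau_j>0$ at every point of $\Sigma^+$; by compactness of $\Sigma^+\cap K_0$ this can be upgraded to a uniform bound $\min\tau_j\geq c_1>0$, allowing the choice $\lambda\equiv c_1/2$ and giving the decay with $\varepsilon_0=c_1/2$. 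The bound (\ref{e:hf-***************bis}) is proved identically with (\ref{e:h-hsiao2}) in place of (\ref{e:h-hsiao1}). The main technical delicacy is precisely the uniformity of the positive lower bound $c_1$ on $\Sigma^\pm\cap K_0$; it rests on the non-degeneracy Assumption~\ref{a:0807120935}, which makes the strict inequalities of (\ref{e:h-hsiao1})--(\ref{e:h-hsiao2}) persist on the compact section. Once this is secured, the rescaling step is essentially bookkeeping and the comparison $\abs{\eta}^{m}\asymp(1+\abs{\eta})^{m}$ for $\abs{\eta}\geq 1$ yields the precise form of the estimates.
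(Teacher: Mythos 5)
Your proposal correctly fills in what the paper leaves entirely implicit (the paper's "proof" of Proposition~\ref{p:hf-solution-ynot(q)} is just the phrase \emph{``From Proposition~\ref{p:hf-fundamental}, we have the following''}): the quasi-homogeneity rescaling of derivatives from a compact cross-section of $\Sigma$ to the full cone, together with the choice of $\lambda$ dictated by (\ref{e:h-hsiao1})--(\ref{e:h-hsiao2}), are exactly the intended steps. The only minor imprecision is your claim that the section with $\abs{\eta}\leq1$ over a compact $K$ is itself compact --- it is not, since $\eta=0$ is excluded from $T^*(\Omega)\setminus0$; but the estimates are in practice only invoked for $\abs{\eta}\geq1$ (compare the explicit restriction in Propositions~\ref{p:hf-BerSjo} and~\ref{p:ss-SBmore}, and the cut-off $b(t,x,\eta)=0$ for $\abs{\eta}\leq1$ imposed thereafter), so this does not affect the substance.
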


We need the following formula

\begin{prop} \label{p:hf-asym}
Let $Q$ be a $C^\infty$ differential operator on $\Omega$ of order
$k>0$ with full symbol $q(x, \xi)\in C^\infty(T^*(\Omega))$. For $0\leq q, q_1\leq n-1$, $q$, $q_1\in\Pstint$, let
\[a(t, x, \eta)\in C^\infty(\ol\Real_+\times T^*(\Omega);\,
\mathscr L(\Lambda^{0,q_1}T^*(\Omega), \Lambda^{0,q}T^*(\Omega))).\]
Then,
\begin{align*}
Q(x,D_x)(e^{i\psi(t,x,\eta)}a(t,x,\eta))=
e^{i\psi(t,x,\eta)}\sum_{\abs{\alpha}\leq k}\frac{1}{\alpha !}
q^{(\alpha)}(x,\psi'_x(t, x, \eta))(R_\alpha(\psi,D_x)a),
\end{align*}
where $D_x=-i\pr_x$, $R_\alpha(\psi,D_x)a=D^\alpha_y\set{e^{i\phi_2(t,x,y,\eta)}a(t,y,\eta)}\Big|_{y=x}$,
$\phi_2(t,x,y,\eta)=(x-y)\psi'_x(t,x,\eta)-(\psi(t,x,\eta)-\psi(t,y,\eta))$.
\end{prop}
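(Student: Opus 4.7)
\textbf{Proof proposal for Proposition~\ref{p:hf-asym}.} The plan is to exploit that $Q$ being a differential operator of order $k$ means $q(x,\xi)$ is a polynomial of degree $k$ in $\xi$, so the Taylor expansion of $q(x,\xi)$ in $\xi$ around any base point terminates exactly. I will expand around $\xi_0=\psi'_x(t,x,\eta)$ and then use a clean algebraic identity to match the remainder phase with $\phi_2$.

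First I would recall the elementary observation that for any differential operator $Q(x,D_x)=\sum_{|\alpha|\le k}q_\alpha(x)D_x^\alpha$ and any smooth $u$, one has $Q(x,D_x)u(x)=\bigl(Q(x,D_y)u(y)\bigr)\big|_{y=x}$, where in $Q(x,D_y)$ the variable $x$ is frozen in the coefficients. Then I would apply the (exact, finite) Taylor expansion
\[
q(x,\xi)=\sum_{|\alpha|\le k}\frac{1}{\alpha!}q^{(\alpha)}\bigl(x,\psi'_x(t,x,\eta)\bigr)\bigl(\xi-\psi'_x(t,x,\eta)\bigr)^\alpha,
\]
to rewrite
\[
Q(x,D_x)u(x)=\sum_{|\alpha|\le k}\frac{1}{\alpha!}q^{(\alpha)}\bigl(x,\psi'_x(t,x,\eta)\bigr)\bigl(D_y-\psi'_x(t,x,\eta)\bigr)^\alpha u(y)\Big|_{y=x}.
\]

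Next I would specialise to $u(y)=e^{i\psi(t,y,\eta)}a(t,y,\eta)$. By the very definition of $\phi_2$ one has
\[
\psi(t,y,\eta)=\psi(t,x,\eta)+(y-x)\cdot\psi'_x(t,x,\eta)+\phi_2(t,x,y,\eta),
\]
so that, writing $c=\psi'_x(t,x,\eta)$ and $h(y)=e^{i\phi_2(t,x,y,\eta)}a(t,y,\eta)$,
\[
u(y)=e^{i\psi(t,x,\eta)}\,e^{i(y-x)\cdot c}\,h(y).
\]
The key computation is then the identity
\[
(D_y-c)\bigl[e^{i(y-x)\cdot c}g(y)\bigr]=e^{i(y-x)\cdot c}D_yg(y),
\]
which iterates to $(D_y-c)^\alpha[e^{i(y-x)\cdot c}h(y)]=e^{i(y-x)\cdot c}D_y^\alpha h(y)$. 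Evaluating at $y=x$ makes the linear exponential equal to $1$, leaving exactly $R_\alpha(\psi,D_x)a=D_y^\alpha\{e^{i\phi_2}a\}|_{y=x}$.

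Combining these ingredients and factoring out the global $e^{i\psi(t,x,\eta)}$ gives the claimed formula. There is no real obstacle: the only point one has to be careful about is keeping the ``frozen $x$'' in the coefficients distinct from the ``active $y$'' on which $D_y$ acts, and checking that in the intertwining identity above the shift by $c=\psi'_x(t,x,\eta)$ exactly cancels the linear part of the Taylor expansion of $\psi$ at $y=x$, leaving precisely the quadratic-and-higher remainder $\phi_2$. Since $q$ is polynomial of degree $k$ in $\xi$, the sum over $\alpha$ is automatically finite and the expansion is an equality, not merely asymptotic.
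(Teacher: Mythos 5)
Your proof is correct. The paper states this proposition without proof, treating it as classical, and your argument is precisely the standard derivation: freeze the coefficient variable by $Q(x,D_x)u(x)=Q(x,D_y)u(y)|_{y=x}$, Taylor expand the polynomial symbol $q(x,\xi)$ exactly to order $k$ around $\xi_0=\psi'_x(t,x,\eta)$ (the expansion terminates because $Q$ is a differential operator), decompose $\psi(t,y,\eta)=\psi(t,x,\eta)+(y-x)\cdot\psi'_x(t,x,\eta)+\phi_2(t,x,y,\eta)$, and use the conjugation identity $(D_y-\psi'_x(t,x,\eta))^\alpha\bigl(e^{i(y-x)\cdot\psi'_x(t,x,\eta)}h(y)\bigr)=e^{i(y-x)\cdot\psi'_x(t,x,\eta)}D^\alpha_y h(y)$ before evaluating at $y=x$.

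One small bookkeeping point worth making explicit: the identity $Q(x,D_y)=\sum_{|\alpha|\le k}\frac{1}{\alpha!}q^{(\alpha)}(x,\xi_0)(D_y-\xi_0)^\alpha$ is a substitution of $D_y$ for $\xi$ into a polynomial identity in $\xi$, and it is legitimate because the Taylor coefficients $q^{(\alpha)}(x,\xi_0)$, as well as $\xi_0=\psi'_x(t,x,\eta)$ itself, depend only on the frozen variable $x$ (and on $t,\eta$) and hence commute with $D_y$; this is exactly the ``frozen $x$ vs.\ active $y$'' caution you flag at the end, and it is the entire content of the argument. With that observed, your chain of equalities is airtight and the final $e^{i(y-x)\cdot\psi'_x(t,x,\eta)}|_{y=x}=1$ collapses the expression to the claimed form.
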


For $0\leq q, q_1\leq n-1$, $q$, $q_1\in\Pstint$, let
\[a_j(t, x, \eta)\in C^\infty(\ol\Real_+\times T^*(\Omega);\,
\mathscr L(\Lambda^{0,q_1}T^*(\Omega), \Lambda^{0,q}T^*(\Omega))),\ \ j=0,1,\ldots,\]
be quasi-homogeneous functions of degree $m-j$, $m\in\integer$. Assume that
$a_j(t, x, \eta)$, $j=0,1,\dots$,
are the solutions of the system (\ref{e:hf-heattransport}). From the proof of Proposition~\ref{p:hf-fundamental},
it follows that for all indices $\alpha,\beta,\gamma,j$, every
$\varepsilon>0$ and compact set $K\subset\Omega$ there exists a constant $c>0$ such that
\begin{equation} \label{e:allerhome2}
\abs{\pr^\gamma_t\pr^\alpha_x\pr^\beta_\eta a_j(t,x,\eta)}\leq
ce^{\varepsilon t\abs{\eta}}(1+\abs{\eta})^{m-j-\abs{\beta}+\gamma}{\;\rm on\;}\ol\Real_+\times\Bigr((K\times\Real^{2n-1})\bigcap\Sigma\Bigr).
\end{equation}
Let $a(t, x, \eta)\in C^\infty(\ol\Real_+\times T^*(\Omega);\,
\mathscr L(\Lambda^{0,q_1}T^*(\Omega), \Lambda^{0,q}T^*(\Omega)))$
be the asymptotic sum of $a_j(t, x, \eta)$. (See Definition~\ref{d:ss-heatgeneral} and Remark~\ref{r:ss-heatclass}
for a precise meaning.)
We formally write $a(t, x, \eta)\sim\sum^\infty_{j=0}a_j(t, x, \eta)$.
Let
\[(\pr_t+\Box^{(q)}_b)(e^{i\psi(t,x,\eta)}a(t,x,\eta))=e^{i\psi(t,x,\eta)}b(t,x,\eta),\]
where
\[b(t,x,\eta)\sim\sum^\infty_{j=0}b_j(t,x,\eta),\]
$b_j$ is a quasi-homogeneous function of degree $m+2-j$,
$b_j\in C^\infty(\ol\Real_+\times T^*(\Omega);\,
\mathscr L(\Lambda^{0,q_1}T^*(\Omega), \Lambda^{0,q}T^*(\Omega)))$, $j=0,1,\ldots$.

From Proposition~\ref{p:hf-asym},
we see that for all $N$, every compact
set $K\subset\Omega$, $\varepsilon>0$, there exists $c>0$ such that
\begin{equation} \label{e:hf-soltrans1}
\abs{b(t,x,\eta)}\leq ce^{\varepsilon t\abs{\eta}}(\abs{\eta}^{-N}+\abs{\eta}^{m+2-N}({\rm Im\,}\psi(t,x,\eta)^N)
\end{equation}
on $\ol\Real_+\times\Bigr((K\times\Real^{2n-1})\bigcap\Sigma\Bigr)$, $\abs{\eta}\geq 1$.

Conversely, if
$(\pr_t+\Box^{(q)}_b)(e^{i\psi(t,x,\eta)}a(t,x,\eta))=e^{i\psi(t,x,\eta)}b(t,x,\eta)$
and $b$ satisfies the same kind of estimates as (\ref{e:hf-soltrans1}),
then $a_j(t,x,\eta)$, $j=0,1,\ldots$, solve the system (\ref{e:hf-heattransport}) to infinite order at $\Sigma$.
From this and the particular structure of the problem, we will next show

\begin{prop} \label{p:hf-BerSjo}
Let $(n_-, n_+)$, $n_-+n_+=n-1$, be the signature of the Levi form.
Suppose condition $Y(q)$ fails. That is, $q=n_-$ or $n_+$. Let
\[a_j(t, x, \eta)\in C^\infty(\ol\Real_+\times T^*(\Omega);\,
\mathscr L(\Lambda^{0,q}T^*(\Omega), \Lambda^{0,q}T^*(\Omega))),\ \ j=0,1,\ldots,\]
be the solutions of the system (\ref{e:hf-heattransport}) with
$a_0(0, x, \eta)=I$, $a_j(0, x, \eta)=0$ when $j>0$,
where $a_j(t,x,\eta)$ is a
quasi-homogeneous function of degree $-j$. Then we can find
\[a_j(\infty,x,\eta)\in C^\infty(T^*(\Omega);\,
\mathscr L(\Lambda^{0,q}T^*(\Omega), \Lambda^{0,q}T^*(\Omega))),\ \ j=0,1,\ldots,\]
where $a_j(\infty,x,\eta)$ is a positively homogeneous function of degree $-j$, $\varepsilon_0>0$ such that for
all indices $\alpha$, $\beta$, $\gamma$, $j$, every compact set $K\subset\Omega$,  there
exists $c>0$, such that
\begin{equation} \label{e:hf-Sjoestimate}
\abs{\pr^\gamma_t\pr^\alpha_x\pr^\beta_\eta(a_j(t,x,\eta)-a_j(\infty,x,\eta))}\leq
ce^{-\varepsilon_0t\abs{\eta}}(1+\abs{\eta})^{-j-\abs{\beta}+\gamma}
\end{equation}
on $\ol\Real_+\times\Bigr((K\times\Real^{2n-1})\bigcap\Sigma\Bigr)$, $\abs{\eta}\geq 1$.

Furthermore, for all $j=0,1,\ldots$,
\begin{equation} \label{e:hf-******************}
\left\{ \begin{array}{ll}
\textup{all derivatives of}\ a_j(\infty, x, \eta)\ \textup{vanish at}\ \ \Sigma^+, & \textup{if}\ \ q=n_-,\ n_-\neq n_+    \\
\textup{all derivatives of}\ a_j(\infty, x, \eta)\ \textup{vanish at}\ \ \Sigma^-, & \textup{if}\ \ q=n_+,\ n_-\neq n_+.
\end{array}\right..
\end{equation}
\end{prop}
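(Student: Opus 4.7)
The plan is to exploit the trick of Berman and Sj\"ostrand~\cite{BS05}, based on the factorization $\Box^{(q)}_b=\dbar_b\ol{\pr_b}^*+\ol{\pr_b}^*\dbar_b$ together with the intertwining relations $\dbar_b\Box^{(q)}_b=\Box^{(q+1)}_b\dbar_b$ and $\ol{\pr_b}^*\Box^{(q)}_b=\Box^{(q-1)}_b\ol{\pr_b}^*$. Although $Y(q)$ fails for $q\in\set{n_-,n_+}$, the failure at the neighboring degrees $q\pm1$ is strictly partial, affecting at most one of the two components $\Sigma^\pm$, and this is what lets me reduce matters to Proposition~\ref{p:hf-solution-ynot(q)} in the adjacent degrees.

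First, using Proposition~\ref{p:hf-asym}, I would define
\[\hat a(t,x,\eta):=e^{-i\psi}\dbar_b(e^{i\psi}a),\qquad \Td a(t,x,\eta):=e^{-i\psi}\ol{\pr_b}^*(e^{i\psi}a),\]
as asymptotic sums of quasi-homogeneous symbols valued in $\mathscr L(\Lambda^{0,q}T^*(\Omega),\Lambda^{0,q+1}T^*(\Omega))$ and $\mathscr L(\Lambda^{0,q}T^*(\Omega),\Lambda^{0,q-1}T^*(\Omega))$ respectively. The intertwining relations, together with the transport equations (\ref{e:hf-heattransport}) for $a$, imply that $\hat a$ and $\Td a$ formally satisfy the transport equations associated to $\Box^{(q+1)}_b$ and $\Box^{(q-1)}_b$. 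I would then establish exponential decay of $\hat a$ and $\Td a$ on $\Sigma$: on the pieces of $\Sigma$ where $Y(q\pm1)$ holds this is Proposition~\ref{p:hf-solution-ynot(q)} in the adjacent degree; on the remaining piece (which arises only in the borderline case $\abs{n_+-n_-}=1$) the zero eigenspace of $\frac{1}{2}\Td{\rm tr\,}F+p^s_0$ in the relevant degree is one-dimensional, spanned by $e_{J_0^\pm}$ with $J_0^\pm=\set{j:\pm\lambda_j>0}$, and the key observation is that the principal symbols $\sigma_{\dbar_b}(x,\xi)=i(\xi^{0,1})\wedge\,\cdot\,$ and $\sigma_{\ol{\pr_b}^*}(x,\xi)$ both vanish on $\Sigma$ because $\omega_0^{0,1}=0$. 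Hence the projection of $\hat a$ (resp.\ $\Td a$) onto $e_{J_0^\pm}$ vanishes on $\Sigma$ at the principal level, and an iterative argument in the symplectic coordinates of Proposition~\ref{p:c-SjoBou} and Remark~\ref{r:c-SjoMe}, using Lemma~\ref{l:hf-BerSjo} to invert $\seq{A\Td x'',\pr_{\Td x''}}$ on higher-order Taylor coefficients at $\Sigma$, propagates the vanishing to all orders and produces the exponential estimates.

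Next, from
\[-\pr_t(e^{i\psi}a)\sim\Box^{(q)}_b(e^{i\psi}a)=\dbar_b(e^{i\psi}\Td a)+\ol{\pr_b}^*(e^{i\psi}\hat a),\]
the exponential decay of $\hat a,\Td a$ transfers, through the first-order differential operators $\dbar_b,\ol{\pr_b}^*$, to exponential decay of $\pr_t a_j$ on $\Sigma$ with the correct weights in $\abs{\eta}$; combining with the exponential convergence of $\psi(t,x,\eta)$ to $\psi(\infty,x,\eta)$ from Proposition~\ref{p:c-basislimit} and integrating in $t$, one obtains the limits $a_j(\infty,x,\eta)$ together with the estimate (\ref{e:hf-Sjoestimate}). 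Finally, (\ref{e:hf-******************}) follows at once from Proposition~\ref{p:hf-solution-ynot(q)}: when $q=n_\mp$ with $n_+\neq n_-$, $a_j(t,x,\eta)$ itself decays exponentially on $\Sigma^\pm$, so the limit $a_j(\infty,x,\eta)$ must vanish on $\Sigma^\pm$ to all orders.

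The hard step is the iterative propagation in the borderline case $\abs{n_+-n_-}=1$: one must verify that the $e_{J_0^\pm}$-projection of $\hat a$ or $\Td a$ vanishes not merely at the principal level on $\Sigma$ but to all orders in the Taylor expansion there, notwithstanding the coupling with higher-order terms in the transport equation. This is handled most cleanly in the symplectic coordinates of Proposition~\ref{p:c-SjoBou}, in the spirit of the proof of Proposition~\ref{p:hf-fundamental}; once this delicate point is settled, the remainder of the argument is a direct adaptation of the machinery of Section~5.
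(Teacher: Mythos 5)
Your overall strategy — the Berman–Sj\"ostrand trick via the intertwining relations $\dbar_b\Box^{(q)}_b=\Box^{(q+1)}_b\dbar_b$ and $\ol{\pr_b}^*\Box^{(q)}_b=\Box^{(q-1)}_b\ol{\pr_b}^*$ — matches the paper's, and the deduction of decay of $\pr_t a$ from decay of $\hat a$ and $\Td a$ via $\Box^{(q)}_b(e^{i\psi}a)=\dbar_b(e^{i\psi}\Td a)+\ol{\pr_b}^*(e^{i\psi}\hat a)$ is exactly right. The gap is that you try to establish exponential decay of $\hat a$ and $\Td a$ on \emph{all} of $\Sigma$, which forces you into the borderline case $\abs{n_+-n_-}=1$ where $Y(q\pm1)$ fails on one component, and you then sketch a delicate ``vanishing projection onto $e_{J_0^\pm}$ propagated by iteration'' argument that you do not actually carry out. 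That argument is the hard content of your proof, and leaving it at the level of ``handled most cleanly in the spirit of the proof of Proposition~\ref{p:hf-fundamental}'' does not constitute a proof — the coupling of the zero eigendirection with higher Taylor coefficients in the transport equation is precisely where things could go wrong.

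The paper avoids this problem entirely by the following observation, which you should incorporate: taking $q=n_-$ (WLOG), one only needs decay of $\hat a_j$ and $\Td a_j$ on $\Sigma^-$, and there the relevant inequality is $q\pm1\neq n_-$, which is \emph{always} true; see (\ref{e:hf-hsiao1}), (\ref{e:hf-hsiao2}) and the proof of Proposition~\ref{p:hf-fundamental}. This gives (\ref{e:hf-Sjoestimate}) on $\Sigma^-$ unconditionally. For $\Sigma^+$: if $n_-=n_+$ the identical argument applies since $q\pm1\neq n_+$; if $n_-\neq n_+$, then $q\neq n_+$ and (\ref{e:hf-***************}) already gives exponential decay of $a_j$ itself on $\Sigma^+$, so one may take $a_j(\infty,\cdot)=0$ there and both (\ref{e:hf-Sjoestimate}) and (\ref{e:hf-******************}) on $\Sigma^+$ follow immediately, with no need to analyze $\hat a$ or $\Td a$ on $\Sigma^+$ at all. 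Your proposal, by contrast, tries to deduce the behavior of $a_j$ on $\Sigma^+$ through $\hat a,\Td a$, which is both a detour and the source of the borderline difficulty; once you separate $\Sigma$ into its two components and treat them by the right estimates, the borderline case disappears.
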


\begin{proof}
We assume that $q=n_-$. Put
\[a(t, x, \eta)\sim\sum_j a_j(t, x, \eta).\]
Since $a_j(t,x,\eta)$, $j=0,1,\ldots$, solve the system (\ref{e:hf-heattransport}), we have
\[(\pr_t+\Box^{(q)}_b)(e^{i\psi(t,x,\eta)}a(t,x,\eta))=e^{i\psi(t,x,\eta)}b(t,x,\eta),\]
where $b(t,x,\eta)$ satisfies (\ref{e:hf-soltrans1}). Note that we have the interwing properties
\begin{equation} \label{e:hf-interwing}
\left\{ \begin{array}{l}
 \dbar_b\Box^{(q)}_b=\Box^{(q+1)}_b\dbar_b    \\
 \ol{\pr_b}^*\Box^{(q)}_b=\Box^{(q-1)}_b\ol{\pr_b}^*
 \end{array}\right..
\end{equation}
Now,
\[
\left\{  \begin{array}{l}
 \ol{\pr_b}^*(e^{i\psi}a)=e^{i\psi}\Td a   \\
 \dbar_b(e^{i\psi}a)=e^{i\psi}\hat a,
 \end{array}\right.
\]
$\Td a\sim\sum_{j=-1}^\infty\Td a_j(t,x,\eta)$, $\hat a\sim\sum_{j=-1}^\infty
\hat a_j(t,x,\eta)$, where
\[\hat a_j\in C^\infty(\ol\Real_+\times T^*(\Omega);\,
\mathscr L(\Lambda^{0,q}T^*(\Omega),\Lambda^{0,q+1}T^*(\Omega))),\ \ j=0,1,\ldots,\]
\[\Td a_j\in C^\infty(\ol\Real_+\times T^*(\Omega);\,
\mathscr L(\Lambda^{0,q}T^*(\Omega),\Lambda^{0,q-1}T^*(\Omega))),\ \ j=0,1,\ldots,\]
and $\hat a_j$, $\Td a_j$ are quasi-homogeneous of degree $1-j$.
From (\ref{e:hf-interwing}), we have
\begin{align*}
& (\pr_t+\Box^{(q-1)}_b)(e^{i\psi}\Td a)=e^{i\psi}b_1, \\
& (\pr_t+\Box^{(q+1)}_b)(e^{i\psi}\hat a)=e^{i\psi}b_2,
\end{align*}
where $b_1$, $b_2$ satisfy (\ref{e:hf-soltrans1}). Since $b_1$, $b_2$ satisfy (\ref{e:hf-soltrans1}),
$\Td a_j$, $\hat a_j$, $j=0,1,\ldots$, solve the system (\ref{e:hf-heattransport}) to infinite order at $\Sigma$.
We notice that $q-1\neq n_-$, $q+1\neq n_-$.
In view of the proof of Proposition~\ref{p:hf-fundamental}, we can find $\varepsilon_0>0$,
such that for all indices $\alpha$, $\beta$, $\gamma$, $j$, every compact set $K\subset\Omega$,
there exists $c>0$ such that
\begin{equation} \label{e:hf-hsiaomore}
\left\{ \begin{array}{l}
\abs{\pr^\gamma_t\pr^\alpha_x\pr^\beta_\eta\Td a_j(t,x,\eta)}\leq
ce^{-\varepsilon_0t\abs{\eta}}(1+\abs{\eta})^{1-j-\abs{\beta}+\gamma}  \\
\abs{\pr^\gamma_t\pr^\alpha_x\pr^\beta_\eta\hat a_j(t,x,\eta)}\leq
ce^{-\varepsilon_0t\abs{\eta}}(1+\abs{\eta})^{1-j-\abs{\beta}+\gamma}
\end{array}\right.
\end{equation}
on $\ol\Real_+\times\Bigr((K\times\Real^{2n-1})\bigcap\Sigma^-\Bigr)$, $\abs{\eta}\geq 1$.

Now $\Box^{(q)}_b=\dbar_b\ol{\pr_b}^*+\ol{\pr_b}^*\dbar_b$, so $\Box^{(q)}_b(e^{i\psi}a)=e^{i\psi}c$,
where $c$ satisfies the same kind of estimates as (\ref{e:hf-hsiaomore}). From this we see that
$\pr_t(e^{i\psi}a)=e^{i\psi}d$, where $d$ has the same properties as $c$. Since
$d=i(\pr_t\psi)a+\pr_ta$ and $\pr_t\psi$ satisfy the same kind of estimates as
(\ref{e:hf-hsiaomore}), $\pr _ta$ satisfies the same kind of estimates as
(\ref{e:hf-hsiaomore}). From this we conclude that we can find
$a(\infty,x,\eta)\sim\sum^\infty_{j=0}a_j(\infty,x,\eta)$,
where $a_j(\infty,x,\eta)$ is a matrix-valued $C^\infty$ positively homogeneous function of degree $-j$,
$\varepsilon_0>0$, such that for all indices $\alpha$, $\beta$, $\gamma$, $j$ and every compact
set $K\subset\Omega$,  there exists $c>0$ such that
\[\abs{\pr^\gamma_t\pr^\alpha_x\pr^\beta_\eta(a_j(t,x,\eta)-a_j(\infty,x,\eta))}\leq
ce^{-\varepsilon_0t\abs{\eta}}(1+\abs{\eta})^{-j-\abs{\beta}+\gamma}\]
on $\ol\Real_+\times\Bigr((K\times\Real^{2n-1})\bigcap\Sigma^-\Bigr)$, $\abs{\eta}\geq 1$.

If $n_-=n_+$, then $q-1\neq n_+$, $q+1\neq n_+$.
We can repeat the method above to conclude that we can find
$a(\infty,x,\eta)\sim\sum^\infty_{j=0}a_j(\infty,x,\eta)$,
where $a_j(\infty,x,\eta)$ is a matrix-valued $C^\infty$ positively homogeneous function of degree $-j$,
$\varepsilon_0>0$, such that for all indices $\alpha$, $\beta$, $\gamma$, $j$ and every compact
set $K\subset\Omega$,  there exists $c>0$ such that
\[\abs{\pr^\gamma_t\pr^\alpha_x\pr^\beta_\eta(a_j(t,x,\eta)-a_j(\infty,x,\eta))}\leq
ce^{-\varepsilon_0t\abs{\eta}}(1+\abs{\eta})^{-j-\abs{\beta}+\gamma}\]
on $\ol\Real_+\times\Bigr((K\times\Real^{2n-1})\bigcap\Sigma^+\Bigr)$, $\abs{\eta}\geq 1$.

Now, we assume that $n_-\neq n_+$. From (\ref{e:hf-***************}),
we can find $\varepsilon_0>0$, such that for all indices $\alpha$, $\beta$, $\gamma$,
$j$ and every compact set $K\subset\Omega$,  there exists $c>0$ such that
\[\abs{\pr^\gamma_t\pr^\alpha_x\pr^\beta_\eta a_j(t,x,\eta)}\leq
ce^{-\varepsilon_0t\abs{\eta}}(1+\abs{\eta})^{-j-\abs{\beta}+\gamma}\]
on $\ol\Real_+\times\Bigr((K\times\Real^{2n-1})\bigcap\Sigma^+\Bigr)$, $\abs{\eta}\geq 1$.
The proposition follows.
\end{proof}


\section{Some symbol classes}

We continue to work with some local coordinates
$x=(x_1,\ldots,x_{2n-1})$
defined on an open set $\Omega\subset X$. We identify $T^*(\Omega)$ with $\Omega\times\Real^{2n-1}$.

\begin{defn} \label{d:ss-heatgeneral}
Let $r(x,\eta)$ be a non-negative real continuous function on $T^*(\Omega)$. We assume that
$r(x,\eta)$ is positively homogeneous of degree 1, that is, $r(x,\lambda\eta)=\lambda r(x,\eta)$, for
$\lambda\geq1$, $\abs{\eta}\geq 1$. For $0\leq q_1, q_2\leq n-1$, $q_1, q_2\in\Pstint$ and $k\in\Real$, we say that
\[a\in\hat S^k_r(\ol\Real_+\times T^*(\Omega);\, \mathscr L(\Lambda^{0,q_1}T^*(\Omega), \Lambda^{0,q_2}T^*(\Omega)))\]
if $a\in C^\infty(\ol\Real_+\times T^*(\Omega);\, \mathscr L(\Lambda^{0,q_1}T^*(\Omega), \Lambda^{0,q_2}T^*(\Omega)))$
and for all indices $\alpha$, $\beta$, $\gamma$, every compact set $K\subset\Omega$ and every
$\varepsilon>0$, there exists a constant $c>0$ such that
\[\abs{\pr^\gamma_t\pr^\alpha_x\pr^\beta_\eta a(t,x,\eta)}\leq
 ce^{t(-r(x,\eta)+\varepsilon\abs{\eta})}(1+\abs{\eta})^{k+\gamma-\abs{\beta}},\;x\in K,\;\abs{\eta}\geq 1.\]
\end{defn}

\begin{rem} \label{r:ss-heatclass}
It is easy to see that we have the following properties:
 \begin{enumerate}
  \item If $a\in\hat S^k_{r_1}$, $b\in\hat S^l_{r_2}$ then $ab\in\hat S^{k+l}_{r_1+r_2}$,
        $a+b\in\hat S^{\max(k,l)}_{\min(r_1,r_2)}$.
  \item If $a\in\hat S^k_r$ then $\pr^\gamma_t\pr^\alpha_x\pr^\beta_\eta a\in
        \hat S^{k-\abs{\beta}+\gamma}_r$.
  \item If $a_j\in\hat S^{k_j}_r$, $j=0,1,2,\ldots$ and $k_j\searrow -\infty$ as $j\To\infty$,
        then there exists $a\in\hat S^{k_0}_r$ such that $a-\sum^{v-1}_0a_j\in
        \hat S^{k_v}_r$, for all $v=1,2,\ldots$. Moreover, if $\hat S^{-\infty}_r$ denotes $\bigcap_{k\in\Real}\hat S^k_r$
        then $a$ is unique modulo $\hat S^{-\infty}_r$.
 \end{enumerate}

If $a$ and $a_j$ have the properties of $(c)$, we write
\[a\sim\sum^\infty_0a_j\ \ \textup{in the symbol space}\ \hat S^{k_0}_r.\]
\end{rem}

From Proposition~\ref{p:hf-solution-ynot(q)} and the standard Borel construction,
we get the following

\begin{prop} \label{p:ss-solution}
Let
\[c_j(x, \eta)\in C^\infty(T^*(\Omega);\, \mathscr L(\Lambda^{0,q}T^*(\Omega),\Lambda^{0,q}T^*(\Omega))),\ \ j=0, 1,\ldots\]
be positively homogeneous functions of degree $-j$. We can find solutions
$a_j(t,x,\eta)\in
C^\infty(\ol\Real_+\times T^*(\Omega);\,
\mathscr L(\Lambda^{0,q}T^*(\Omega), \Lambda^{0,q}T^*(\Omega)))$, $j=0, 1,\ldots$
of the system (\ref{e:hf-heattransport}) with the conditions
$a_j(0, x, \eta)=c_j(x, \eta)$, $j=0, 1,\ldots$, where $a_j$ is a quasi-homogeneous function of degree $-j$ such that
\[a_j\in\hat S^{-j}_r(\ol\Real_+\times T^*(\Omega);\, \mathscr L(\Lambda^{0,q}T^*(\Omega), \Lambda^{0,q}T^*(\Omega))),\ \ j=0,1,\ldots,\]
for some $r$ with $r>0$ if\, $Y(q)$ holds and $r=0$ if\, $Y(q)$ fails.

If the Levi form has signature $(n_-, n_+)$, $n_-+n_+=n-1$, then we can take $r>0$,
\[ \left\{ \begin{array}{ll}
\textup{near}\quad\Sigma^+, & {\rm if\ } q=n_-, n_-\neq n_+,  \\
\textup{near}\quad\Sigma^-, & {\rm if\ } q=n_+, n_-\neq n_+.
\end{array}\right.\]
\end{prop}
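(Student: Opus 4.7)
The plan is to derive this proposition by combining Proposition~\ref{p:hf-solution-ynot(q)}, which produces solutions of the transport system with pointwise exponential estimates on $\Sigma$ (or on $\Sigma^\pm$), with a standard Borel summation argument that extends these pointwise estimates off $\Sigma$ to the full symbol-class bounds required by the definition of $\hat S^{-j}_r$.

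First I would invoke Proposition~\ref{p:hf-solution-ynot(q)} directly to obtain, for each $j$, a smooth quasi-homogeneous function $a_j(t,x,\eta)$ of degree $-j$ solving the transport system~(\ref{e:hf-heattransport}) with $a_j(0,x,\eta)=c_j(x,\eta)$. In the generic case this only delivers the sub-exponential estimate~(\ref{e:hf-estimate3}) on $\Sigma$, while in the asymmetric signature case $n_-\neq n_+$ one obtains genuine exponential decay~(\ref{e:hf-***************}) on $\Sigma^+$ if $q\neq n_+$, and similarly~(\ref{e:hf-***************bis}) on $\Sigma^-$ if $q\neq n_-$. These pointwise estimates on $\Sigma$ are the input of the construction.

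Next I would promote these estimates to bounds valid on a neighborhood of $\Sigma$ in $T^*(\Omega)$. The transport equations are only required to hold modulo $O(\abs{{\rm Im\,}\psi}^N)$ for every $N$, so the Taylor expansions of the $a_j$ at $\Sigma$ are uniquely determined but we have complete freedom in extending the $a_j$ transversally to $\Sigma$. I would exercise this freedom via Borel summation: realize the formal Taylor expansions at $\Sigma$ as genuine smooth functions by a standard cutoff procedure using a sequence of bumps supported on shrinking conic neighborhoods of $\Sigma$, following the template in Section~$1$ of~\cite{MS78}. Because $\text{dist}((x,\eta/\abs{\eta}),\Sigma)$ can be made arbitrarily small at the cost of staying in thinner conic neighborhoods, Taylor's formula combined with the pointwise bounds on $\Sigma$ yields, on the whole of $K\times\set{\abs{\eta}\geq1}$, the inequality
\[\abs{\pr^\gamma_t\pr^\alpha_x\pr^\beta_\eta a_j(t,x,\eta)}\leq ce^{t(-r(x,\eta)+\varepsilon\abs{\eta})}(1+\abs{\eta})^{-j-\abs{\beta}+\gamma}\]
required by Definition~\ref{d:ss-heatgeneral}. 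The rate function $r$ is built piecewise from the exponential rate $\varepsilon_0\abs{\eta}$ supplied by Proposition~\ref{p:hf-solution-ynot(q)}: one takes $r>0$ on the portion of $\Sigma$ on which Proposition~\ref{p:hf-solution-ynot(q)} provides genuine decay (so $r>0$ near $\Sigma^+$ when $q=n_-\neq n_+$ and near $\Sigma^-$ when $q=n_+\neq n_-$, and $r>0$ everywhere when $Y(q)$ holds), while in the complementary region one sets $r\equiv 0$ and uses only the sub-exponential growth tolerated by $\hat S^{-j}_0$. The $r=0$ portion absorbs the $e^{\varepsilon t\abs{\eta}}$ loss without difficulty.

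The main obstacle is the quantitative bookkeeping in the Borel construction: one must choose the conic cutoffs and the transverse Taylor orders so that the error terms from truncating the Taylor expansion are dominated by $e^{-r(x,\eta)t}(1+\abs{\eta})^{-N}$ uniformly for $t\in\ol\Real_+$ and $(x,\eta)$ in compact conic subsets, while simultaneously ensuring that the $r$ constructed by patching remains positively homogeneous of degree $1$ and continuous. This is delicate because the pointwise estimate on $\Sigma$ is of mixed type (exponential decay in one region, sub-exponential growth in another), but the definition of $\hat S^{-j}_r$ is flexible enough to accommodate both, and the splitting argument just sketched produces the required $r$ with the asserted positivity properties in the asymmetric-signature cases.
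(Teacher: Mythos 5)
Your proposal is correct and follows exactly the route the paper takes: the paper dispatches this proposition with the single sentence "From Proposition~\ref{p:hf-solution-ynot(q)} and the standard Borel construction, we get the following," and your plan is precisely an unpacking of that sentence — feed the on-$\Sigma$ estimates of Proposition~\ref{p:hf-solution-ynot(q)} (sub-exponential~(\ref{e:hf-estimate3}) generically, exponential~(\ref{e:hf-***************}) and~(\ref{e:hf-***************bis}) on the appropriate component of $\Sigma$ in the asymmetric case) into a Borel-type cutoff argument in the transversal directions to $\Sigma$, and read off the rate function $r$ case by case. The only small caution worth noting is that the $\hat S^{-j}_r$ estimate of Definition~\ref{d:ss-heatgeneral} must hold for all $(x,\eta)$ with $x\in K$, $\abs{\eta}\geq1$, not merely in a conic neighborhood of $\Sigma$; but since the conic cutoffs in the Borel sum confine all corrections beyond the leading term to shrinking neighborhoods of $\Sigma$, and the leading term at $t=0$ is the prescribed classical symbol $c_j$, the estimate away from $\Sigma$ is automatic, so this costs nothing. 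In short: same approach, correctly executed.
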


Again, from Proposition~\ref{p:hf-BerSjo} and the standard Borel construction, we get the following

\begin{prop} \label{p:ss-SBmore}
Let $(n_-, n_+)$, $n_-+n_+=n-1$, be the signature of the Levi form.
Suppose condition $Y(q)$ fails. That is, $q=n_-$ or $n_+$.
We can find solutions
\[a_j(t,x,\eta)\in
C^\infty(\ol\Real_+\times T^*(\Omega);\,
\mathscr L(\Lambda^{0,q}T^*(\Omega), \Lambda^{0,q}T^*(\Omega))),\ \ j=0, 1,\ldots\]
of the system {\rm (\ref{e:hf-heattransport})} with
$a_0(0, x, \eta)=I$, $a_j(0, x, \eta)=0$ when $j>0$,
where $a_j(t,x,\eta)$ is a quasi-homogeneous function of degree $-j$, such that for some $r>0$ as in
Definition~\ref{d:ss-heatgeneral},
\[a_j(t,x,\eta)-a_j(\infty,x,\eta)\in\hat S^{-j}_r(\ol\Real_+\times T^*(\Omega);\,
\mathscr L(\Lambda^{0,q}T^*(\Omega), \Lambda^{0,q}T^*(\Omega))),\]
$j=0, 1,\ldots$, where
\[a_j(\infty,x,\eta)\in C^\infty(T^*(\Omega);\,
\mathscr L(\Lambda^{0,q}T^*(\Omega), \Lambda^{0,q}T^*(\Omega))),\ \ j=0,1,\ldots,\]
and $a_j(\infty, x, \eta)$ is a positively homogeneous function of degree $-j$.

Furthermore, for all $j=0,1,\ldots$,
\[ \left\{ \begin{array}{ll}
a_j(\infty, x, \eta)=0\quad\mbox{in a conic neighborhood of}\ \ \Sigma^+, & {\rm if\ } q=n_-, n_-\neq n_+,  \\
a_j(\infty ,x,\eta)=0\quad\mbox{in a conic neighborhood of}\ \ \Sigma^-, & {\rm if\ } q=n_+, n_-\neq n_+.
\end{array}\right.\]
\end{prop}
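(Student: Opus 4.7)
The plan is to repackage Proposition~\ref{p:hf-BerSjo} using a standard Borel construction so as to land in the symbol class $\hat S^{-j}_r$ of Definition~\ref{d:ss-heatgeneral}.

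First I would apply Proposition~\ref{p:hf-BerSjo} with initial data $a_0(0,x,\eta)=I$ and $a_j(0,x,\eta)=0$ for $j>0$ to obtain quasi-homogeneous solutions $a_j(t,x,\eta)$ of the transport system (\ref{e:hf-heattransport}) together with positively homogeneous limits $a_j(\infty,x,\eta)$ of degree $-j$, satisfying
\[\abs{\pr^\gamma_t\pr^\alpha_x\pr^\beta_\eta\bigl(a_j(t,x,\eta)-a_j(\infty,x,\eta)\bigr)}\leq c\,e^{-\varepsilon_0 t\abs{\eta}}(1+\abs{\eta})^{-j-\abs{\beta}+\gamma}\]
on $\ol\Real_+\times((K\times\Real^{2n-1})\bigcap\Sigma)$ for every compact $K\subset\Omega$. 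Moreover, in the asymmetric case $n_-\neq n_+$, all derivatives of $a_j(\infty,x,\eta)$ vanish on the complementary component $\Sigma^+$ when $q=n_-$ (resp.\ $\Sigma^-$ when $q=n_+$).

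Next I would extend these pointwise-on-$\Sigma$ estimates to a conic neighborhood and kill $a_j(\infty,x,\eta)$ near the complementary component. In the asymmetric case: since $a_j(\infty,x,\eta)$ vanishes to infinite order on $\Sigma^+$, pick a conic cutoff $\chi(x,\eta)$ that equals $0$ in a small conic neighborhood $V$ of $\Sigma^+$ and $1$ outside a slightly larger one, and replace $a_j(\infty,x,\eta)$ by $\chi(x,\eta)\cdot a_j(\infty,x,\eta)$; the modification is in $\hat S^{-\infty}$. Using Proposition~\ref{p:hf-solution-ynot(q)}, which gives $a_j(t,x,\eta)$ itself exponential decay on $\Sigma^+$ at rate $e^{-\varepsilon_0 t\abs{\eta}}$, together with the fact that the transport equations (\ref{e:hf-heattransport}) are required to hold only modulo terms of infinite order on $\Sigma$, I can similarly adjust $a_j(t,x,\eta)$ off $\Sigma$ so that the difference $a_j(t,\cdot)-a_j(\infty,\cdot)$ acquires exponential decay at a comparable rate throughout a conic neighborhood of $\Sigma^+$; the adjustment is again absorbed into $\hat S^{-\infty}$. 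Choosing a non-negative, continuous, degree-$1$ positively homogeneous function $r$ strictly positive on that neighborhood, the difference lies in $\hat S^{-j}_r$. In the symmetric case $n_-=n_+$, apply Proposition~\ref{p:hf-BerSjo} directly to the full $\Sigma$ (where the same exponential decay of the difference is established) and extend off $\Sigma$ by the identical cutoff scheme; no vanishing statement on $\Sigma^\pm$ is needed.

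The remaining step is to check that the Borel asymptotic summation of Remark~\ref{r:ss-heatclass}(c) applies consistently across all $j$, so that the cutoffs can be chosen uniformly in $j$; this is standard once the individual estimates for $a_j$ are in hand. The main obstacle is the bookkeeping for the normal-direction extension off $\Sigma$: one must check that the cutoff and the Taylor expansion in the conormal directions preserve both quasi-homogeneity in $\eta$ and the exponential decay proportional to $\abs{\eta}$, so that the modified symbols belong to $\hat S^{-j}_r$ for a common $r$. Beyond this, no new analytic content is required besides what Propositions~\ref{p:hf-solution-ynot(q)} and~\ref{p:hf-BerSjo} already provide.
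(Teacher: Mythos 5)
Your strategy is the same as the paper's: invoke Proposition~\ref{p:hf-BerSjo} and then extend its on-$\Sigma$ conclusions to the symbol classes of Definition~\ref{d:ss-heatgeneral} via the Borel construction. That is precisely the paper's one-line proof, and the role you assign to Proposition~\ref{p:hf-solution-ynot(q)} and to the vanishing statement (\ref{e:hf-******************}) is correct.

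There is, however, a genuine flaw in the cutoff step: the claim that the modification $(1-\chi)a_j(\infty,x,\eta)$ lies in $\hat S^{-\infty}$ is false. The function $a_j(\infty,x,\eta)$ is positively homogeneous of degree $-j$. Knowing that all its derivatives vanish \emph{on} $\Sigma^+$ does not make it $O(\abs{\eta}^{-N})$ for every $N$ in an open conic neighborhood of $\Sigma^+$: at fixed nonzero angular distance from $\Sigma^+$ it is generically still of order $\abs{\eta}^{-j}$, and multiplying by a homogeneous degree-zero cutoff does not change that. Worse, since the putative difference $\chi a_j(\infty,\cdot)-a_j(\infty,\cdot)$ is $t$-independent, any point where it is nonzero immediately contradicts membership in $\hat S^{-j}_r$ with $r>0$ (let $t\To\infty$). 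The same objection applies to the claimed post-hoc adjustment of $a_j(t,x,\eta)$. The fix is to recall that the transport system (\ref{e:hf-heattransport}) determines the $a_j$ only modulo terms vanishing to infinite order on $\Sigma$, so the extensions off $\Sigma$ are free data in the Borel step. Since (\ref{e:hf-******************}) says the Taylor expansion of $a_j(\infty,x,\eta)$ at each point of $\Sigma^+$ is identically zero (in the case $q=n_-\neq n_+$), you should simply \emph{choose} the Borel extension so that $a_j(\infty,x,\eta)\equiv0$ on a full conic neighborhood of $\Sigma^+$, and simultaneously choose the extension of $a_j(t,x,\eta)$ to inherit the exponential decay of (\ref{e:hf-***************}) there; no after-the-fact cutoff is needed or legitimate. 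With that change, the rest of your outline — including the symmetric case $n_-=n_+$ and the asymptotic summation of Remark~\ref{r:ss-heatclass}(c) — goes through and agrees with the paper.
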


Let $b(t, x, \eta)\in\hat S^k_r$, $r>0$. Our next goal is to define the operator
\[B(t, x, y)=\int e^{i(\psi(t,x,\eta)-\seq{y,\eta})}b(t,x,\eta)d\eta \]
as an oscillatory integral (see the proof of Proposition~\ref{p:ss-oscill}, for the precise meaning of the integral $B(t, x, y)$).
We have the following

\begin{prop} \label{p:ss-oscill}
Let
\[b(t,x,\eta)\in\hat S^k_r(\ol\Real_+\times T^*(\Omega);\, \mathscr L(\Lambda^{0,q_1}T^*(\Omega), \Lambda^{0,q_2}T^*(\Omega)))\]
with $r>0$. Then we can define
\[B(t): C^\infty_0(\Omega;\, \Lambda^{0,q_1}T^*(\Omega))\To C^\infty(\ol\Real_+;\, C^\infty
(\Omega;\, \Lambda^{0,q_2}T^*(\Omega)))\]
with distribution kernel $B(t, x, y)=\int e^{i(\psi(t,x,\eta)-\seq{y,\eta})}b(t,x,\eta)d\eta$
and $B(t)$ has a unique continuous extension
\[B(t): \mathcal E'(\Omega;\, \Lambda^{0,q_1}T^*(\Omega))\To C^\infty(\ol\Real_+;\, \mathcal D'
(\Omega;\, \Lambda^{0,q_2}T^*(\Omega))).\]

We have
\[B(t, x, y)\in C^\infty(\ol\Real_+;\, C^\infty(\Omega\times\Omega\smallsetminus
{\rm diag\,}(\Omega\times\Omega);\, \mathscr L(\Lambda^{0,q_1}T^*(\Omega),\Lambda^{0,q_2}T^*(\Omega))))\]
and
$B(t, x, y)|_{t>0}\in
C^\infty(\Real_+\times\Omega\times\Omega;\, \mathscr L(\Lambda^{0,q_1}T^*(\Omega),\Lambda^{0,q_2}T^*(\Omega)))$.
\end{prop}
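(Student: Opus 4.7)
The crucial feature of the hypothesis $r > 0$ is that for any compact $K \subset \Omega$, positive homogeneity gives $\delta > 0$ with $r(x,\eta) \geq 2\delta\abs{\eta}$ on $K \times \set{\abs{\eta} \geq 1}$; choosing $\varepsilon = \delta$ in Definition~\ref{d:ss-heatgeneral} yields
\[
\abs{\pr^\gamma_t\pr^\alpha_x\pr^\beta_\eta b(t,x,\eta)} \leq C e^{-\delta t\abs{\eta}}(1+\abs{\eta})^{k+\gamma-\abs{\beta}}
\]
uniformly in $t \in \ol\Real_+$ and $(x,\eta)$ in compacts of $\Omega \times (\Real^{2n-1} \setminus 0)$. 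To define $B(t)$ on $u \in C^\infty_0(\Omega;\, \Lambda^{0,q_1}T^*(\Omega))$, I would regularize with a cutoff $\chi(\eta/R) \in C^\infty_0$ and Fubini's theorem to write
\[
B_R(t)u(x) = \int e^{i\psi(t,x,\eta)} b(t,x,\eta) \chi(\eta/R) \hat u(\eta)\, d\eta,
\]
where $\hat u(\eta) = \int e^{-i\seq{y,\eta}} u(y)\, dy$ is rapidly decreasing. Since ${\rm Im\,}\psi \geq 0$ gives $\abs{e^{i\psi}} \leq 1$ and $\abs{b(t,x,\eta) \hat u(\eta)} \leq C_N (1+\abs{\eta})^{k-N} e^{\varepsilon t\abs{\eta}}$ for every $N$, taking $\varepsilon$ small and $N$ large produces a dominating integrable function, so $B_R(t)u(x) \To B(t)u(x)$ as $R \To \infty$ by dominated convergence; the same argument applies to all $(t,x)$-derivatives, giving a continuous map $B(t) : C^\infty_0 \To C^\infty(\ol\Real_+;\, C^\infty(\Omega;\, \Lambda^{0,q_2}T^*(\Omega)))$.

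The unique continuous extension to $\mathcal E'(\Omega;\, \Lambda^{0,q_1}T^*(\Omega))$ follows once we have a jointly continuous distributional kernel $B(t,x,y) = \int e^{i(\psi - \seq{y,\eta})}b\,d\eta$, obtained by testing against $u(y)v(x)$ via the same regularization. Smoothness of $B(t,x,y)$ for $t > 0$ is then immediate: on $[t_0,T] \times K \times K' \subset \Real_+ \times \Omega \times \Omega$ with $t_0 > 0$, the factor $e^{-\delta t_0\abs{\eta}/2}$ dominates every polynomial in $\abs{\eta}$, so $e^{i(\psi - \seq{y,\eta})}b$ together with all its $(t,x,y)$-derivatives is absolutely integrable in $\eta$ uniformly, yielding $B(t,x,y)|_{t > 0} \in C^\infty(\Real_+ \times \Omega \times \Omega;\, \mathscr L(\Lambda^{0,q_1}T^*(\Omega), \Lambda^{0,q_2}T^*(\Omega)))$.

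The main obstacle is smoothness of $B(t,x,y)$ off the diagonal including at $t = 0$, where the exponential decay degenerates. Here I would exploit non-stationarity of the complex phase $\phi(t,x,y,\eta) = \psi(t,x,\eta) - \seq{y,\eta}$ in $\eta$: at $t=0$ one has $\pr_\eta\phi = x - y \neq 0$ off the diagonal, and more generally, by Proposition~\ref{p:c-basis}, $\psi'_\eta(t,x,\eta) - x$ vanishes on $\Sigma$ and is controlled by the same weight governing ${\rm Im\,}\psi$ elsewhere. Following the Melin-Sj\"ostrand framework (Appendix A and \cite{MS74}), on any compact subset of $\Omega \times \Omega \setminus {\rm diag\,}(\Omega\times\Omega)$ I would introduce a first-order differential operator $L = \seq{a,\pr_\eta} + c$, with $a$ and $c$ built from an almost analytic extension of $\ol{\pr_\eta\phi}/(\abs{\pr_\eta\phi}^2 + C\,{\rm Im\,}\phi/\abs{\eta})$ for $C$ large enough that the denominator is bounded below by a positive multiple of $\abs{x-y}^2$, so that ${}^{\,t}L(e^{i\phi}) = e^{i\phi}$ modulo a rapidly decreasing remainder. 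Each application of ${}^{\,t}L$ lowers the symbolic order of the amplitude by one; after sufficiently many iterations the integral converges absolutely with arbitrary $(t,x,y)$-derivatives, uniformly on compacts of $\ol\Real_+ \times (\Omega \times \Omega \setminus {\rm diag\,})$, giving $B(t,x,y) \in C^\infty(\ol\Real_+;\, C^\infty(\Omega \times \Omega \setminus {\rm diag\,};\, \mathscr L(\Lambda^{0,q_1}T^*(\Omega), \Lambda^{0,q_2}T^*(\Omega))))$. The chief delicacy, as always with complex phases, is that the ``integration by parts'' must be performed via almost-analytic extensions, and the correction term involving ${\rm Im\,}\phi$ is precisely what allows the estimate to remain uniform across $t = 0$.
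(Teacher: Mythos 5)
Your opening step---defining $B(t)u$ for $u\in C^\infty_0$ by writing $B(t)u(x)=\int e^{i\psi(t,x,\eta)}b(t,x,\eta)\hat u(\eta)\,d\eta$ and invoking rapid decay of $\hat u$---is correct and in fact shorter than the paper's dyadic-decomposition argument. Your treatment of $t>0$ smoothness is also fine, and your Melin--Sj\"ostrand-style integration by parts for the off-diagonal smoothness is plausible (though the paper gets by with a real-variable argument: it introduces the cutoff $\chi_W$ supported near $\Real_+\times S^*\Sigma\cup\{0\}\times S^*\Omega$, uses $\Im\psi\geq c\abs{\eta}$ off $W$, and inside $W$ integrates by parts in $\eta$ with a real vector field, exploiting that $\Re(\psi'_\eta-y)\approx x-y\neq0$ near $\Sigma$; no almost-analytic machinery is needed).

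The genuine gap is the extension to $\mathscr E'$. Having a continuous map $B(t):C^\infty_0\To C^\infty$ together with a distributional Schwartz kernel does \emph{not} by itself yield an extension $\mathscr E'\To\mathscr D'$: one cannot in general pair the distribution-in-$y$ kernel $B(t,x,\cdot)$ against an arbitrary compactly supported distribution, and ``testing against $u(y)v(x)$'' only defines the kernel as a bilinear functional on $C^\infty_0\times C^\infty_0$, not an operator on $\mathscr E'$. The standard remedy, and the one the paper uses, is to show that the formal adjoint $B^*(t)$ maps $C^\infty_0(\Omega_x)\To C^\infty(\ol\Real_+;\,C^\infty(\Omega_y))$ and then define $(B(t)u\ |\ v)=(u\ |\ B^*(t)v)$. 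This is where the asymmetry of the phase bites: the inner integral for $B^*(t)v$ is $\int e^{-i\ol\psi(t,x,\eta)}\ol{b(t,x,\eta)}v(x)\,dx$, which is an integral over $x$, not $y$, so it is \emph{not} a Fourier transform of $v$ and the rapid-decay shortcut is unavailable. Instead one needs $\psi'_x(t,x,\eta)\neq0$ on the cutoff region $W$ (which follows from $\abs{\psi'_x-\eta}\leq\abs{\eta}/2$ in $W$, see the paper's estimate (6.5)) to integrate by parts in $x$ and gain decay in $\eta$. Your proposal nowhere establishes this, and the Fourier-transform trick you used for $B(t)u$ does not carry over to $B^*(t)v$. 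Until the adjoint is controlled, the claimed extension to $\mathscr E'(\Omega;\,\Lambda^{0,q_1}T^*(\Omega))$ is unproved.
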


\begin{proof}
Let
\[S^*\Omega=\set{(x,\eta)\in\Omega\times\dot{\Real}^{2n-1};\, \abs{\eta}=1}.\]
Set $S^*\Sigma=\Sigma\bigcap S^*\Omega$. Let
$V\subset\ol\Real_+\times S^*\Omega$ be a neighborhood of $(\Real_+\times S^*\Sigma)\bigcup(\set{0}\times S^*\Omega)$
such that $V_t=\set{(x,\eta);\, (t,x,\eta)\in V}$
is independent of $t$ for large $t$. Set
$W=\set{(t,x,\eta)\in\ol\Real_+\times\Omega\times\dot{\Real}^{2n-1};\, (\abs{\eta}t,x,
\frac{\eta}{\abs{\eta}})\in V}$.
Let $\chi_V\in C^\infty(\ol\Real_+\times S^*\Omega)$ have its support in $V$, be
equal to 1 in a neighborhood of $(\Real_+\times S^*\Sigma)\bigcup(\set{0}\times S^*\Omega)$, and be
independent of $t$, for large $t$. Set
$\chi_W(t,x,\eta)=\chi_V(\abs{\eta}t,x,\frac{\eta}{\abs{\eta}})\in C^\infty(\ol\Real_+\times\Omega\times\dot\Real^{2n-1})$.
We have $\chi_W(t,x,\lambda\eta)=\chi_W(\lambda t,x,\eta)$, $\lambda>0$.
We can choose $V$ sufficiently small so that
\begin{equation} \label{e:he-MenSjo}
\abs{\psi'_x(t,x,\eta)-\eta}\leq\frac{\abs{\eta}}{2}{\;\rm in\;}W.
\end{equation}
We formally set
\begin{align*}
B(t,x,y)  &= \int e^{i(\psi(t,x,\eta)-\seq{y,\eta})}(1-\chi_W(t,x,\eta))b(t,x,\eta)d\eta    \\
          &\quad +\int e^{i(\psi(t,x,\eta)-\seq{y,\eta})}\chi_W(t,x,\eta)b(t,x,\eta)d\eta   \\
          &= B_1(t,x,y)+B_2(t,x,y)
\end{align*}
where in $B_1(t,x,y)$ and $B_2(t,x,y)$ we have introduced the cut-off functions $(1-\chi_W)$
and $\chi_W$ respectively. Choose $\chi\in C^\infty_0(\Real^{2n-1})$ so that $\chi(\eta)=1$
when $\abs{\eta}<1$ and $\chi(\eta)=0$ when $\abs{\eta}>2$. Since ${\rm Im\,}\psi>0$
outside $(\Real_+\times\Sigma)\bigcup(\set{0}\times\dot{\Real}^{2n-1})$, we have
${\rm Im\,}\psi(t,x,\eta)\geq c\abs{\eta}$ outside $W$, where $c>0$. The kernel
$B_{1,\eps}(t,x,y)=\int e^{i(\psi(t,x,\eta)-\seq{y,\eta})}(1-\chi_W(t,x,\eta))b(t,x,\eta)
\chi(\eps\eta)d\eta$
converges in the space
$C^\infty(\ol\Real_+\times\Omega\times\Omega;\, \mathscr L(\Lambda^{0,q_1}T^*(\Omega),\Lambda^{0,q_2}T^*(\Omega)))$ as
$\eps\To 0$. This means that
\[B_1(t,x,y)=\lim_{\eps\To 0}B_{1,\eps}(t,x,y)\in C^\infty(\ol\Real_+\times\Omega
\times\Omega;\, \mathscr L(\Lambda^{0,q_1}T^*(\Omega),\Lambda^{0,q_2}T^*(\Omega))).\]
To study $B_2(t,x,y)$ take a $u(y)\in C^\infty_0(K;\, \Lambda^{0,q_1}T^*(\Omega))$, $K\subset\subset\Omega$ and set
$\chi_\nu(\eta)=\chi(2^{-\nu}\eta)-\chi(2^{1-\nu}\eta),\;\nu>0,\;\chi_0(\eta)=\chi(\eta)$.
Then we have $\sum^\infty_{\nu=0}\chi_\nu=1$ and $2^{\nu-1}\leq\abs{\eta}\leq 2^{\nu+1}$
when $\eta\in{\rm supp\,}\chi_\nu$, $\nu\neq 0$.
We assume that $b(t, x, \eta)=0$ if $\abs{\eta}\leq1$.
If $x\in K$, we obtain for all indices $\alpha$, $\beta$ and every $\varepsilon>0$, there exists
$c_{\varepsilon,\alpha,\beta,K}>0$, such that
\begin{equation} \label{e:he-Hor}
\abs{D^\alpha_xD^\beta_\eta(\chi_\nu(\eta)\chi_W(t,x,\eta)b(t,x,\eta))}\leq c_{\varepsilon,\alpha,\beta,K}
e^{t(-r(x,\eta)+\varepsilon\abs{\eta})}(1+\abs{\eta})^{k-\abs{\beta}}.
\end{equation}
Note that $\abs{D^\alpha\chi_\nu(\eta)}\leq c_\alpha(1+\abs{\eta})^{-\abs{\alpha}}$ with a constant
independent of $\nu$. We have
\begin{align*}
B_{2,\nu+1}  &=\int\int e^{i(\psi(t,x,\eta)-\seq{y,\eta})}\chi_{\nu+1}(\eta)\chi_W(t,x,\eta)
               b(t,x,\eta)u(y)dyd\eta    \\
             &=2^{(2n-1)\nu}\int e^{i\lambda(\psi(\lambda t,x,\eta)-\seq{y,\eta})}\chi_1(\eta)
               \chi_W(t,x,\lambda\eta)b(t,x,\lambda\eta)u(y)dyd\eta,
\end{align*}
where $\lambda=2^\nu$. Since (\ref{e:he-Hor}) holds, we have
$\abs{D^\alpha_\eta(\chi_W(t,x,2^\nu\eta)b(t,x,2^\nu\eta))}\leq c2^{k\nu}$
if $x\in K$, $1<\abs{\eta}<4$, where $c>0$. Since $d_y(\psi(\lambda t,x,\eta)-\seq{y\ ,\eta})\neq 0$,
if $\eta\neq 0$, we can integrate by parts and obtain
\[\abs{B_{2,\nu+1}}\leq c2^{\nu(2n-1+k-m)}\sum_{\abs{\alpha}\leq m}\sup\abs{D^\alpha u}.\]
Since $m$ can be chosen arbitrary large, we conclude that $\sum_\nu\abs{B_{2,\nu}}:$ converges and that $B(t)$
defines an operator
\[B(t): C^\infty_0(\Omega_y;\, \Lambda^{0,q_1}T^*(\Omega))\To
C^\infty(\ol\Real_+\times\Omega_x;\, \Lambda^{0,q_2}T^*(\Omega)).\]
Let $B^*(t)$ be the formal adjoint of $B(t)$ with respect to $(\ |\ )$. From (\ref{e:he-MenSjo}), we see that $\psi'_x(t,x,\eta)\neq 0$ on $W$.
We can repeat the procedure above and conclude that $B^*(t)$ defines an operator
\[B^*(t):C^\infty_0(\Omega_x;\, \Lambda^{0,q_2}T^*(\Omega))\To C^\infty(\ol\Real_+\times\Omega_y;\, \Lambda^{0,q_1}T^*(\Omega)).\]
Hence, we can extend $B(t)$ to
$\mathscr E'(\Omega;\, \Lambda^{0,q_1}T^*(\Omega))\To C^\infty(\ol\Real_+;\, \mathscr D'(\Omega;\, \Lambda^{0,q_2}T^*(\Omega)))$
by the formula
\[(B(t)u(y)\ |\ v(x))=(u(y)\ |\ B^*(t)v(x)),\]
where $u\in\mathscr E'(\Omega;\, \Lambda^{0,q_1}T^*(\Omega))$, $v\in C^\infty_0(\Omega;\, \Lambda^{0,q_2}T^*(\Omega))$.

When $x\neq y$ and $(x,y)\in\Sigma\times\Sigma$, we have
$d_\eta(\psi(t,x,\eta)-\seq{y\ ,\eta})\neq 0$, we can repeat the procedure above and conclude that
$B(t,x,y)\in C^\infty(\ol\Real_+;\, C^\infty(\Omega\times\Omega\smallsetminus
{\rm diag\,}(\Omega\times\Omega);\, \mathscr L(\Lambda^{0,q_1}T^*(\Omega),\Lambda^{0,q_2}T^*(\Omega))))$.

Finally, in view of the exponential decrease as $t\To\infty$ of the symbol $b(t, x, \eta)$, we see that
the kernel $B(t)|_{t>0}$ is smoothing.
\end{proof}

Let $b(t,x,\eta)\in\hat S^k_r$ with $r>0$. Our next step is to show that we can also define the operator
$B(x,y)=\int\Bigr(\int^{\infty}_0 e^{i(\psi(t,x,\eta)-\seq{y,\eta})}b(t,x,\eta)dt\Bigr)d\eta$
as an oscillatory integral (see the proof of Proposition~\ref{p:ss-oscillmore}, for the precise meaning of the integral $B(x, y)$).
We have the following

\begin{prop} \label{p:ss-oscillmore}
Let
\[b(t,x,\eta)\in\hat S^k_r(\ol\Real_+\times T^*(\Omega);\, \mathscr L(\Lambda^{0,q_1}T^*(\Omega),\Lambda^{0,q_2}T^*(\Omega)))\]
with $r>0$. Assume that $b(t,x,\eta)=0$ when
$\abs{\eta}\leq 1$. We can define
\[B: C^\infty_0(\Omega;\, \Lambda^{0,q_1}T^*(\Omega))\To C^\infty(\Omega;\, \Lambda^{0,q_2}T^*(\Omega))\]
with distribution kernel
\[B(x, y)=\frac{1}{(2\pi)^{2n-1}}\int\Bigr(\int^{\infty}_0 e^{i(\psi(t,x,\eta)-\seq{y,\eta})}b(t,x,\eta)dt\Bigr)d\eta\]
and $B$ has a unique continuous extension
\[B: \mathcal E'(\Omega;\, \Lambda^{0,q_1}T^*(\Omega))\To\mathcal D'(\Omega;\, \Lambda^{0,q_2}T^*(\Omega)).\]
Moreover,
$B(x,y)\in C^\infty(\Omega\times\Omega\smallsetminus{\rm diag\,}(\Omega\times\Omega);\,
\mathscr L(\Lambda^{0,q_1}T^*(\Omega),\Lambda^{0,q_2}T^*(\Omega)))$.
\end{prop}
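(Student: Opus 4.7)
The plan is to follow the proof of Proposition~\ref{p:ss-oscill}, using the strict positivity of $r$ to convert the additional $t$-integration into extra exponential decay. I would introduce the same neighborhood $W$ of $(\Real_+\times S^*\Sigma)\bigcup(\set{0}\times S^*\Omega)$ and cutoff $\chi_W$ from that proof, and split $B=B_1+B_2$ by inserting $1-\chi_W$ and $\chi_W$ respectively into the iterated integral. Since $r$ is positively homogeneous of degree $1$ and strictly positive, on any compact set $K\subset\Omega$ there is $c_K>0$ with $r(x,\eta)\geq c_K\abs{\eta}$ for $\abs{\eta}\geq 1$; choosing $\varepsilon<c_K/2$ in Definition~\ref{d:ss-heatgeneral} produces the global estimate
\[\abs{\pr^\alpha_x\pr^\beta_\eta b(t,x,\eta)}\leq Ce^{-c't\abs{\eta}}(1+\abs{\eta})^{k-\abs{\beta}}\]
on $K\times\Real^{2n-1}$ for some $c'>0$ depending on $K$.

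For $B_1$, outside $W$ Proposition~\ref{p:c-basis} gives ${\rm Im\,}\psi\geq c\abs{\eta}$, so $\abs{e^{i\psi}(1-\chi_W)b}$ and all its $(x,y)$-derivatives are dominated by $Ce^{-c\abs{\eta}}e^{-c't\abs{\eta}}(1+\abs{\eta})^N$ for some $N$; regularizing by $\chi(\varepsilon\eta)$ and passing to the limit yields $B_1\in C^\infty(\Omega\times\Omega;\,\mathscr L(\Lambda^{0,q_1}T^*(\Omega),\Lambda^{0,q_2}T^*(\Omega)))$.

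For $B_2$, I would test against $u\in C^\infty_0(\Omega;\,\Lambda^{0,q_1}T^*(\Omega))$ and exchange integrations to write
\[B_2 u(x)=(2\pi)^{-(2n-1)}\int\int_0^\infty e^{i\psi(t,x,\eta)}\chi_W(t,x,\eta)b(t,x,\eta)\hat u(\eta)\,dt\,d\eta,\]
where $\hat u$ is the Fourier transform of $u$ (rapidly decreasing). Each $x$-derivative produces either a factor $\pr^\gamma_x\psi$ of polynomial growth in $\abs{\eta}$ or acts on $b$ with at most polynomial loss, while $e^{-c't\abs{\eta}}$ keeps the $t$-integration convergent; the rapid decay of $\hat u$ dominates any polynomial in $\abs{\eta}$, so $B_2 u\in C^\infty$, and the interchange of differentiation and integration is justified via the regularization $\chi(\varepsilon\eta)$ and dominated convergence. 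The $C^\infty$-smoothness of $B(x,y)$ off the diagonal then follows by integration by parts in $\eta$: on $\Sigma$, $d_\eta(\psi-\seq{y,\eta})=\psi'_\eta-y=x-y$ by Proposition~\ref{p:c-basis}, which is bounded away from zero on compact subsets of $\set{(x,y);\,x\neq y}$, while off $\Sigma$ the bound ${\rm Im\,}\psi\geq c\abs{\eta}$ already supplies smoothness.

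Finally, the extension to $\mathscr E'(\Omega;\,\Lambda^{0,q_1}T^*(\Omega))\To\mathscr D'(\Omega;\,\Lambda^{0,q_2}T^*(\Omega))$ is obtained by duality. The formal adjoint $B^*$ has kernel with phase $-\ol\psi$ and symbol $\ol b^*$; since ${\rm Im\,}(-\ol\psi)={\rm Im\,}\psi\geq 0$ and $\ol b^*$ satisfies the same estimates as $b$, the argument above applied to $B^*$ gives continuity $B^*:C^\infty_0(\Omega;\,\Lambda^{0,q_2}T^*(\Omega))\To C^\infty(\Omega;\,\Lambda^{0,q_1}T^*(\Omega))$, and one extends $B$ by setting $(Bu,v)=(u,B^*v)$ for $u\in\mathscr E'$ and $v\in C^\infty_0$. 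I expect the main obstacle to be the careful bookkeeping in $B_2$---in particular, verifying that the factors $\pr^\gamma_x\psi$, which grow like $\abs{\eta}$ near $t=0$, are always outmatched by the rapid decay of $\hat u$ combined with the uniform $t$-exponential decay supplied by $r>0$.
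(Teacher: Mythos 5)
Your overall structure mirrors the paper's: the same split $B=B_1+B_2$ via $\chi_W$ from the proof of Proposition~\ref{p:ss-oscill}, the same treatment of $B_1$ using ${\rm Im\,}\psi\geq c\abs{\eta}$ outside $W$, and the same idea for smoothness off the diagonal (integration by parts in $\eta$, using $\psi'_\eta-y\approx x-y$ near $\Sigma$ and the ${\rm Im\,}\psi$ bound elsewhere). Your derivation of the uniform decay $e^{-c't\abs{\eta}}$ from $r>0$ and a choice $\varepsilon<c_K$ is also correct. For $B_2$ you take a genuinely more direct route than the paper: you carry out the $y$-integration first, obtaining $\hat u(\eta)$, and then argue via rapid decay of $\hat u$ together with the uniform $e^{-c't\abs{\eta}}$. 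The paper instead uses a Littlewood-Paley--style decomposition $B_{2,2\lambda}-B_{2,\lambda}$ and integrates by parts in $y$ to gain $\lambda^{-N}$ per dyadic shell. These are essentially the same mechanism — linearity of the phase in $y$ yields decay in $\abs{\eta}$ from the test function — but your version is shorter and this part of the argument is sound.

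There is, however, a real gap in the last step. You assert that since $B^*$ has ``kernel with phase $-\ol\psi$ and symbol $\ol b^*$, ... the argument above applied to $B^*$ gives continuity.'' But your $B_2$ argument exploits precisely the \emph{linearity} of the phase in the variable being integrated: $\psi(t,x,\eta)-\seq{y,\eta}$ is linear in $y$, so the $y$-integration produces $\hat u(\eta)$. In $B^*$ the integrated variable is $x$, and the $x$-dependence of the phase $-\ol{\psi(t,x,\eta)}+\seq{y,\eta}$ is through $\psi(t,x,\eta)$, which is genuinely nonlinear in $x$. Performing the $x$-integration does not give a Fourier transform, and so the rapid-decay argument does not transfer. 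To handle $B^*$ one must instead integrate by parts in $x$, which requires the fact — established in the proof of Proposition~\ref{p:ss-oscill} via the estimate $\abs{\psi'_x(t,x,\eta)-\eta}\leq\abs{\eta}/2$ on $W$ — that $\psi'_x\neq0$ throughout $W$ (so that $\abs{\psi'_x}\sim\abs{\eta}$ there). This is exactly what the paper's proof invokes when it says ``Since $\psi'_x(t,x,\eta)\neq 0$ on $W$, we can repeat the procedure above.'' Your write-up omits this nonvanishing fact entirely, and without it the continuity of $B^*$, and hence the extension of $B$ to $\mathscr E'$, is unjustified. The fix is straightforward, but the claim that the same Fourier-transform shortcut works for $B^*$ is incorrect as stated.
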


\begin{proof}
Let $W$ and $\chi_W(t,x,\eta)$ be as in Proposition~\ref{p:ss-oscill}. We formally set
\begin{align*}
B(x,y)  &=\frac{1}{(2\pi)^{2n-1}}\int\int^\infty_0 e^{i(\psi(t,x,\eta)-\seq{y,\eta})}(1-\chi_W(t,x,\eta))b(t,x,\eta)dtd\eta    \\
        &\quad +\frac{1}{(2\pi)^{2n-1}}\int\int^\infty_0 e^{i(\psi(t,x,\eta)-\seq{y,\eta})}\chi_W(t,x,\eta)b(t,x,\eta)dtd\eta   \\
        &= B_1(x,y)+B_2(x,y)
\end{align*}
where in $B_1(x,y)$ and $B_2(x,y)$ we have introduced the cut-off functions $(1-\chi_W)$ and
$\chi_W$ respectively. Since ${\rm Im\,}\psi(t,x,\eta)\geq c'\abs{\eta}$ outside $W$, where $c'>0$, we have
\[ \abs{e^{i(\psi(t,x,\eta)-\seq{y,\eta})}(1-\chi_W(t,x,\eta))b(t,x,\eta)}\leq ce^{-c'\abs{\eta}}
e^{-\varepsilon_0t\abs{\eta}}(1+\abs{\eta})^k,\ \varepsilon_0>0\]
and similar estimates for the derivatives.
From this, we see that $B_1(x,y)\in C^\infty(\Omega\times\Omega;\,
\mathscr L(\Lambda^{0,q_1}T^*(\Omega),\Lambda^{0,q_2}T^*(\Omega)))$.

Choose $\chi\in C^\infty_0(\Real^{2n-1})$ so that $\chi(\eta)=1$ when $\abs{\eta}<1$ and
$\chi(\eta)=0$ when $\abs{\eta}>2$. To study $B_2(x,y)$ take a $u(y)\in C^\infty_0(K;\, \Lambda^{0,q_1}T^*(\Omega))$,
$K\subset\subset\Omega$ and set
\begin{align*}
&B_{2,\lambda}(x) \\
&=\frac{1}{(2\pi)^{2n-1}}\int^\infty_0\!\!\Bigl(\int\!\!\int\!\!
e^{i(\psi(t,x,\eta)-\seq{y,\eta})}b(t,x,\eta)\chi_W(t,x,\eta)\chi(\frac{\eta}{\lambda})u(y)dyd\eta\Bigr)dt.
\end{align*}
We have
\begin{align*}
B_{2,2\lambda}(x)-B_{2,\lambda}(x)
&=\frac{\lambda^{2n-1}}{(2\pi)^{2n-1}}\int^\infty_0\!\!\Bigl(\int\!\!\int\!\! e^{i\lambda(\psi(\lambda t,x,\eta)-\seq{y,\eta})}\chi_W
(t,x,\lambda\eta)b(t,x,\lambda\eta)  \\
&\quad (\chi(\frac{\eta}{2})-\chi(\eta))u(y)dyd\eta\Bigr)dt.
\end{align*}
Since $d_y(\psi(\lambda t,x,\eta)-\seq{y,\eta})\neq 0$, $\eta\neq 0$, we obtain
\begin{align*}
\lefteqn{\abs{\int\!\int e^{i\lambda(\psi(\lambda t,x,\eta)-\seq{y,\eta})}\chi_W(t,x,\lambda\eta)b(t,x,\lambda\eta)
  (\chi(\frac{\eta}{2})-\chi(\eta))u(y)dyd\eta}}   \\
&\quad\leq c\lambda^{-N}\sum_{\abs{\alpha}\leq N}\sup\abs{D^\alpha_{y,\eta}
  \chi_W(t,x,\lambda\eta)b(t,x,\lambda\eta)(\chi(\frac{\eta}{2})-\chi(\eta))u(y)}  \\
&\quad\leq c'\lambda^{-N}e^{-\varepsilon_0t\abs{\eta}}(1+\abs{\lambda})^{k},
\end{align*}
where $c$, $c'$, $\varepsilon_0>0$. Hence $B_2(x)=\lim_{\lambda\To\infty}B_{2,\lambda}(x)$ exists. Thus, $B(x,y)$
defines an operator
$B:C^\infty_0(\Omega_y;\, \Lambda^{0,q_1}T^*(\Omega))\To C^\infty(\Omega_x;\, \Lambda^{0,q_2}T^*(\Omega))$.
Let $B^*$ be the formal adjoint of $B$ with respect to $(\ |\ )$.
Since $\psi'_x(t,x,\eta)\neq 0$ on $W$, we can repeat the procedure above and conclude that
$B^*$ defines an operator
$B^*:C^\infty_0(\Omega_x;\, \Lambda^{0,q_2}T^*(\Omega))\To C^\infty(\Omega_y;\, \Lambda^{0,q_1}T^*(\Omega))$.
We can extend $B$ to
$\mathscr E'(\Omega;\, \Lambda^{0,q_1}T^*(\Omega))\To\mathscr D'(\Omega;\, \Lambda^{0,q_2}T^*(\Omega))$
by the formula
\[(Bu(y)\ |\ v(x))=(u(y)\ |\ B^*v(x)),\]
where $u\in\mathscr E'(\Omega;\, \Lambda^{0,q_1}T^*(\Omega))$, $v\in C^\infty_0(\Omega;\, \Lambda^{0,q_2}T^*(\Omega))$.

Finally, when $x\neq y$ and $(x,y)\in\Sigma\times\Sigma$, we have
$d_\eta(\psi(t,x,\eta)-\seq{y\ ,\eta})\neq 0$,
we can repeat the procedure above and conclude that
$B(x,y)\in C^\infty(\Omega\times\Omega\smallsetminus{\rm diag\,}(\Omega\times\Omega);\,
\mathscr L(\Lambda^{0,q_1}T^*(\Omega), \Lambda^{0,q_2}T^*(\Omega)))$.
\end{proof}

\begin{rem} \label{r:ss-oscillmore}
Let $a(t, x, \eta)\in \hat S^k_0(\ol\Real_+\times T^*(\Omega);\,
\mathscr L(\Lambda^{0,q_1}T^*(\Omega),\Lambda^{0,q_2}T^*(\Omega)))$.
We assume $a(t, x, \eta)=0$ if $\abs{\eta}\leq 1$ and
\[a(t, x, \eta)-a(\infty, x, \eta)\in \hat S^{k}_r(\ol\Real_+\times T^*(\Omega);\,
\mathscr L(\Lambda^{0,q_1}T^*(\Omega),\Lambda^{0,q_2}T^*(\Omega)))\]
with $r>0$, where
$a(\infty, x, \eta)\in C^\infty(T^*(\Omega);\,
\mathscr L(\Lambda^{0,q_1}T^*(\Omega),\Lambda^{0,q_2}T^*(\Omega)))$.
Then we can also define
\[A(x, y)=\int\Bigr(\int^{\infty}_0\Bigr(e^{i(\psi(t,x,\eta)-\seq{y,\eta})}a(t,x,\eta)-e^{i(\psi(\infty,x,\eta)-\seq{y,\eta})}
a(\infty,x,\eta)\Bigr)dt\Bigr)d\eta\]
as an oscillatory integral by the following formula:
\[A(x, y)=\int\!\!\Bigr(\int^{\infty}_0\!\! e^{i(\psi(t,x,\eta)-
\seq{y,\eta})}(-t)(i\psi^{'}_t(t,x,\eta)a(t,x,\eta)+a'_t(t,x,\eta))dt\Bigr)d\eta.\]
We notice that $(-t)(i\psi'_t(t,x,\eta)a(t,x,\eta)+a'_t(t,x,\eta))\in\hat S^{k+1}_r$, $r>0$.
\end{rem}

Let $B$ be as in the proposition~\ref{p:ss-oscillmore}. We can show that $B$ is a matrix
of pseudodifferential operators of order $k-1$ type $(\frac{1}{2},\frac{1}{2})$. We review some facts
about pseudodifferential operators of type $(\frac{1}{2},\frac{1}{2})$.

\begin{defn} \label{d:ss-symbol}
Let $k\in\Real$ and $q\in\Pstint$.
$S^k_{\frac{1}{2},\frac{1}{2}}(T^*(\Omega);\, \mathscr L(\Lambda^{0,q}T^*(\Omega),\Lambda^{0,q}T^*(\Omega)))$
is the space
of all $a\in C^\infty(T^*(\Omega);\, \mathscr L(\Lambda^{0,q}T^*(\Omega),\Lambda^{0,q}T^*(\Omega)))$
such that for every
compact set $K\subset\Omega$ and all $\alpha\in\Pstint^{2n-1}$, $\beta\in\Pstint^{2n-1}$, there is a constant
$c_{\alpha,\beta,K}>0$ such that
$\abs{\pr^\alpha_x\pr^\beta_\xi a(x,\xi)}\leq c_{\alpha,\beta,K}(1+\abs{\xi})^{k-\frac{\abs{\beta}}{2}+
\frac{\abs{\alpha}}{2}},\;(x,\xi)\in T^*(\Omega)$, $x\in K$.
$S^k_{\frac{1}{2},\frac{1}{2}}$ is called the space of symbols of order $k$ type
$(\frac{1}{2},\frac{1}{2})$. We write
$S^{-\infty}_{\frac{1}{2},\frac{1}{2}}=\bigcap_{m\in\Real} S^m_{\frac{1}{2},\frac{1}{2}}$,
$S^\infty_{\frac{1}{2},\frac{1}{2}}=\bigcup_{m\in\Real} S^m_{\frac{1}{2},\frac{1}{2}}$.
\end{defn}

Let
$a(x, \xi)\in S^k_{\frac{1}{2},\frac{1}{2}}(T^*(\Omega);\,
\mathscr L(\Lambda^{0,q}T^*(\Omega),\Lambda^{0,q}T^*(\Omega)))$. We can also define
\[A(x, y)=\frac{1}{(2\pi)^{2n-1}}\int e^{i\seq{x-y,\xi}}a(x,\xi)d\xi\]
as an oscillatory integral and we can show that
\[A:C^\infty_0(\Omega;\, \Lambda^{0,q}T^*(\Omega))\To C^\infty(\Omega;\, \Lambda^{0,q}T^*(\Omega))\]
is continuous
and has unique continuous extension:
$A:\mathscr E'(\Omega;\, \Lambda^{0,q}T^*(\Omega))\To\mathscr D'(\Omega;\, \Lambda^{0,q}T^*(\Omega))$.

\begin{defn} \label{d:ss-pseudo}
Let $k\in\Real$ and let $0\leq q\leq n-1$, $q\in\Pstint$. A pseudodifferential operator of order $k$ type
$(\frac{1}{2},\frac{1}{2})$ from sections of $\Lambda^{0,q}T^*(\Omega)$ to sections of $\Lambda^{0,q}T^*(\Omega)$
is a continuous linear map
$A:C^\infty_0(\Omega;\, \Lambda^{0,q}T^*(\Omega))\To\mathscr D'(\Omega;\, \Lambda^{0,q}T^*(\Omega))$
such that the distribution kernel of $A$ is
\[K_A=A(x, y)=\frac{1}{(2\pi)^{2n-1}}\int e^{i\seq{x-y,\xi}}a(x, \xi)d\xi\]
with $a\in S^k_{\frac{1}{2},\frac{1}{2}}(T^*(\Omega);\, \mathscr L(\Lambda^{0,q}T^*(\Omega),\Lambda^{0,q}T^*(\Omega)))$.
We call $a(x, \xi)$ the symbol of $A$. We shall write
$L^k_{\frac{1}{2},\frac{1}{2}}(\Omega;\, \Lambda^{0,q}T^*(\Omega),\Lambda^{0,q}T^*(\Omega))$
to denote the space of
pseudodifferential operators of order $k$ type $(\frac{1}{2},\frac{1}{2})$ from sections of
$\Lambda^{0,q}T^*(\Omega)$ to sections of $\Lambda^{0,q}T^*(\Omega)$. We write
$L^{-\infty}_{\frac{1}{2},\frac{1}{2}}=\bigcap_{m\in\Real}L^m_{\frac{1}{2},\frac{1}{2}}$,
$L^\infty_{\frac{1}{2},\frac{1}{2}}=\bigcup_{m\in\Real}L^m_{\frac{1}{2},\frac{1}{2}}$.
\end{defn}

We recall the following classical proposition of Calderon-Vaillancourt (see chapter $XVIII$ of H\"{o}rmander~\cite{Hor85})

\begin{prop} \label{p:he-calderon}
If $A\in L^k_{\frac{1}{2},\frac{1}{2}}(\Omega;\, \Lambda^{0,q}T^*(\Omega),\Lambda^{0,q}T^*(\Omega))$.
Then,
\[A:H^s_{\rm comp}(\Omega;\, \Lambda^{0,q}T^*(\Omega))\To H^{s-k}_{\rm loc}(\Omega;\, \Lambda^{0,q}T^*(\Omega))\]
is continuous, for all $s\in\Real$. Moreover,
if $A$ is properly supported (for the precise meaning of properly supported operators, see page $28$ of~\cite{GS94}), then
\[A:H^s_{\rm loc}(\Omega;\, \Lambda^{0,q}T^*(\Omega))\To H^{s-k}_{\rm loc}(\Omega;\, \Lambda^{0,q}T^*(\Omega))\]
is continuous, for all $s\in\Real$.
\end{prop}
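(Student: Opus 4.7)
The plan is to reduce the general statement to the $L^2$-boundedness of operators in $L^0_{\frac{1}{2},\frac{1}{2}}$ (the core Calder\'on--Vaillancourt theorem) via the symbolic calculus of type $(\frac{1}{2},\frac{1}{2})$. First I would verify that if $P\in L^m_{1,0}$ and $A\in L^k_{\frac{1}{2},\frac{1}{2}}$, with at least one properly supported, then $P\circ A$ and $A\circ P$ lie in $L^{m+k}_{\frac{1}{2},\frac{1}{2}}$, with the usual stationary-phase expansion for the symbol. This is a routine computation: each term $\frac{1}{\alpha!}\partial^\alpha_\xi p\cdot D^\alpha_x a$ gains an effective factor $(1+\abs{\xi})^{-\abs{\alpha}/2}$ because $D^\alpha_x a$ costs only $(1+\abs{\xi})^{\abs{\alpha}/2}$ for a type-$(\frac{1}{2},\frac{1}{2})$ symbol $a$. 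Choosing a properly supported elliptic $\Lambda_s\in L^s_{1,0}$ (say an approximation of $(1-\La)^{s/2}$) with parametrix $\Lambda_{-s}\in L^{-s}_{1,0}$, one then sets $B:=\Lambda_{s-k}A\Lambda_{-s}$, which belongs to $L^0_{\frac{1}{2},\frac{1}{2}}$ modulo a smoothing remainder. Matters are reduced to the $L^2$-continuity of $B$.

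Second, for $B\in L^0_{\frac{1}{2},\frac{1}{2}}$ with symbol supported in a fixed compact set in $x$, $L^2(\Real^N)$-boundedness is established by the Cotlar--Stein almost-orthogonality lemma. One performs a dyadic decomposition in $\xi$ at scales $\abs{\xi}\sim 2^j$, and within each shell a partition of unity in $x$ at scale $2^{-j/2}$, producing operators $B_{j,\nu}$ indexed by $(j,\nu)$. The type-$(\frac{1}{2},\frac{1}{2})$ scaling is perfectly matched to this decomposition: a derivative in $x$ costs $(1+\abs{\xi})^{1/2}$ while a derivative in $\xi$ saves $(1+\abs{\xi})^{-1/2}$, so integration by parts in the Schwartz kernels of $B^*_{j,\nu}B_{j',\nu'}$ and $B_{j,\nu}B^*_{j',\nu'}$ yields rapid decay when $(j,\nu)$ and $(j',\nu')$ are far apart. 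The Cotlar--Stein lemma then gives a uniform $L^2$ bound.

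Third, once $L^2$-continuity of $B$ is in hand, the mapping property $A:H^s_{\rm comp}(\Omega)\To H^{s-k}_{\rm loc}(\Omega)$ follows from the identity $A\equiv \Lambda_{-(s-k)}B\Lambda_s$ modulo smoothing, combined with the standard Sobolev mapping properties of $\Lambda_s$ and $\Lambda_{-(s-k)}$ in the $(1,0)$-calculus. For the properly supported case, given $u\in H^s_{\rm loc}(\Omega)$ and $\chi\in C^\infty_0(\Omega)$, the proper support of $A$ lets us choose $\Td\chi\in C^\infty_0(\Omega)$ with $\chi Au=\chi A(\Td\chi u)$, reducing to the already-proved compactly supported case.

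The main obstacle is the almost-orthogonality step: showing that the pieces $B_{j,\nu}$ form an almost-orthogonal family in both $B^*B$ and $BB^*$ senses. This is precisely where the type $(\frac{1}{2},\frac{1}{2})$ hypothesis is used in an essential way; for more general $(\rho,\delta)$ with $\delta>\rho$ and $\delta>\frac{1}{2}$, the corresponding operators are in general unbounded on $L^2$, so the $\frac{1}{2}$ exponents are sharp.
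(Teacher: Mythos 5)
The paper does not prove this Proposition at all: it cites it as the classical Calder\'on--Vaillancourt theorem with a reference to Chapter XVIII of H\"ormander's treatise, so there is no in-paper proof to compare with. Your argument is a self-contained and correct proof along a standard route. The reduction step is sound: conjugation with properly supported order-reducing operators $\Lambda_s\in L^s_{1,0}$ places you in $L^0_{\frac{1}{2},\frac{1}{2}}$, and the mixed compositions $L^m_{1,0}\circ L^k_{\frac{1}{2},\frac{1}{2}}$ and $L^k_{\frac{1}{2},\frac{1}{2}}\circ L^m_{1,0}$ do give genuine asymptotic expansions precisely because one factor is type $(1,0)$ — each term loses $(1+\abs{\xi})^{-\abs{\alpha}/2}$ — whereas a pure $(\frac{1}{2},\frac{1}{2})\circ(\frac{1}{2},\frac{1}{2})$ expansion would not improve with $\abs{\alpha}$; you correctly avoid that trap. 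The Cotlar--Stein step, with dyadic slabs $\abs{\xi}\sim 2^j$ and a spatial partition at dual scale $2^{-j/2}$, is the classical almost-orthogonality proof of $L^2$-boundedness for $S^0_{\frac{1}{2},\frac{1}{2}}$, and the localization argument for the properly supported case is routine. This differs in flavor from H\"ormander's Chapter XVIII exposition, which works more through the symbolic calculus (square-root and positivity techniques associated with the Weyl calculus) rather than through a dyadic almost-orthogonality decomposition, but both give the same theorem.

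One small imprecision in your closing remark: unboundedness on $L^2$ in general already occurs once $\delta>\rho$ (with $\delta\leq 1$); the additional condition $\delta>\frac{1}{2}$ is superfluous, and the phrase that ``the $\frac{1}{2}$ exponents are sharp'' is misleading, since $L^2$-boundedness holds for all $S^0_{\rho,\delta}$ with $\delta\leq\rho$ and $\delta<1$, not only at $\rho=\delta=\frac{1}{2}$. What is special about $\rho=\delta$ is that the usual composition calculus within the class degenerates, which is why the Cotlar--Stein approach (rather than a parametrix construction inside $L^0_{\frac{1}{2},\frac{1}{2}}$) is needed. This does not affect the validity of the proof.
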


We need the following properties of the phase $\psi(t, x, \eta)$.

\begin{lem} \label{l:ss-sjostrand}
For every compact set $K\subset\Omega$ and all $\alpha\in\Pstint^{2n-1}$, $\beta\in\Pstint^{2n-1}$,
$\abs{\alpha}+\abs{\beta}\geq1$,
there exists a constant $c_{\alpha,\beta,K}>0$, such that
\[\abs{\pr^\alpha_{x}\pr^\beta_\eta(\psi(t, x, \eta)-\seq{x, \eta})}\leq c_{\alpha,\beta,K}(1+\abs{\eta})^
{\frac{\abs{\alpha}-\abs{\beta}}{2}}
(1+{\rm Im\,}\psi(t, x,\eta))^{\frac{\abs{\alpha}+\abs{\beta}}{2}},\]
if\, $\abs{\alpha}+\abs{\beta}=1$ and
\[\abs{\pr^\alpha_{x}\pr^\beta_\eta(\psi(t, x, \eta)-\seq{x, \eta})}\leq c_{\alpha,\beta,K}(1+\abs{\eta})^
{1-\abs{\beta}},\ \ \mbox{if}\ \ \abs{\alpha}+\abs{\beta}\geq2,\]
where $x\in K$, $t\in\ol\Real_+$, $\abs{\eta}\geq1$.
\end{lem}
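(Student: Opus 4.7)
The plan is to exploit the quasi-homogeneity $\psi(t,x,\lambda\eta)=\lambda\psi(\lambda t,x,\eta)$ to reduce to uniform estimates on the unit cotangent sphere, then combine Proposition~\ref{p:c-basislimit} (exponential convergence $\psi(t)\to\psi(\infty)$) with the second-order vanishing of $\psi-\seq{x,\eta}$ on $\Sigma$ from Proposition~\ref{p:c-basis}. Concretely, set $\omega=\eta/\abs\eta$, $s=\abs\eta t$, and $h(s,x,\omega):=\psi(s,x,\omega)-\seq{x,\omega}$. Then $\psi(t,x,\eta)-\seq{x,\eta}=\abs\eta\,h(\abs\eta t,x,\omega)$ and ${\rm Im\,}\psi(t,x,\eta)=\abs\eta\,{\rm Im\,}h(\abs\eta t,x,\omega)$, while (\ref{e:c-sj1}) becomes ${\rm Im\,}h(s,x,\omega)\asymp\frac{s}{1+s}d((x,\omega),\Sigma\cap S^*\Omega)^2$ uniformly for $\abs\omega=1$. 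A chain-rule computation expresses $\pr^\alpha_x\pr^\beta_\eta(\psi-\seq{x,\eta})$ at $(t,x,\eta)$ as $\abs\eta^{1-\abs\beta}$ times a finite combination of $\pr^{\alpha'}_x\pr^{\beta'}_\omega h(s,x,\omega)$ with $\abs{\alpha'}+\abs{\beta'}\leq\abs\alpha+\abs\beta$, multiplied by smooth $0$-homogeneous functions of $\eta$. Both estimates then reduce to uniform bounds on derivatives of $h$ on $\ol\Real_+\times K\times S^{2n-2}$.

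For $\abs\alpha+\abs\beta\geq 2$, Proposition~\ref{p:c-basislimit} gives exponentially fast convergence of $\psi(s,x,\omega)$ to $\psi(\infty,x,\omega)$ in every $C^k$-norm on compact sets of $\Omega\times S^{2n-2}$, so every derivative of $h$ of order $\geq 1$ is uniformly bounded on $\ol\Real_+\times K\times S^{2n-2}$. Combined with the scaling above and the trivial bound $\abs{\pr^\alpha_x\pr^\beta_\eta\seq{x,\eta}}\leq c(1+\abs\eta)^{1-\abs\beta}$, this yields the second displayed estimate.

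For $\abs\alpha+\abs\beta=1$, after unwinding the scaling the claim amounts to
\[
\abs{\pr_{x,\omega}h(s,x,\omega)}\leq C\bigl(1+{\rm Im\,}h(s,x,\omega)\bigr)^{1/2}
\]
uniformly in $s\in\ol\Real_+$ and $(x,\omega)$ on compacts. By Proposition~\ref{p:c-basis}, both $h$ and $d_{x,\omega}h$ vanish on $\Sigma\cap S^*\Omega$, so Taylor's formula and the $C^2$ bound from the previous paragraph give $\abs{\pr_{x,\omega}h(s,x,\omega)}\leq C\,d((x,\omega),\Sigma\cap S^*\Omega)$ near $\Sigma\cap S^*\Omega$. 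For $s\geq 1$, ${\rm Im\,}h\asymp d^2$ gives $\abs{\pr_{x,\omega}h}\leq C\sqrt{{\rm Im\,}h}$ at once. For $0\leq s\leq 1$, integrate the eikonal equation $\pr_s h=ip_0(x,\psi'_x)+O(\abs{{\rm Im\,}\psi}^\infty)$ starting from $h|_{s=0}=0$; since $p_0$ vanishes to second order on $\Sigma$, $p_0$ and $\pr_x p_0$ are $O(d^2)$ and $O(d)$ respectively near $\Sigma$, yielding $\abs{\pr_x h}\leq Csd\leq C\sqrt{s}\sqrt{sd^2}\leq C'\sqrt{{\rm Im\,}h}$, and similarly for $\pr_\omega h$. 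Away from $\Sigma\cap S^*\Omega$ every quantity is uniformly bounded.

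The main technical obstacle is controlling the real part of $\pr h$ near $\Sigma$ in the regime $s\lesssim 1$: H\"ormander's Malgrange-type inequality applied to the nonnegative function ${\rm Im\,}h$ only controls $\pr({\rm Im\,}h)$, so one must genuinely invoke the second-order vanishing of $h$ on $\Sigma$ together with the eikonal equation as above. A cleaner alternative would be to pass to the normal coordinates $(\Td x,\Td\xi)$ of Proposition~\ref{p:c-SjoBou} and read off the first-derivative bounds directly from the explicit form of $\Td\psi$ in (\ref{e:c-SjoMe}) together with the exponential estimates (\ref{e:0711101650}), at the cost of bookkeeping the change of variables.
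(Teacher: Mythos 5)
Your argument tracks the paper's quite closely: both rest on (i) $d_{x,\eta}(\psi-\seq{x,\eta})=0$ on $\Sigma$ for all $t$, (ii) the asymptotic ${\rm Im\,}\psi\asymp\abs\eta\frac{t\abs\eta}{1+t\abs\eta}d^2$, and (iii) the fact that derivatives of $\psi-\seq{x,\eta}$ carry a factor $\frac{t\abs\eta}{1+t\abs\eta}$. The paper gets (iii) by asserting that the coefficients in the Taylor expansion at $\Sigma$ have this size, whereas you derive it explicitly by splitting $s\gtrless1$ and, for $s\le1$, integrating the eikonal equation from $h|_{s=0}=0$. That explicit eikonal step is a genuine improvement over the paper's terseness, and your closing observation — that Malgrange's inequality for the nonnegative function ${\rm Im\,}h$ would only control $\pr({\rm Im\,}h)$, so one really needs the second-order vanishing of $p_0$ — is exactly the point the paper glides over.

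Two places nevertheless need tightening. Your chain-rule reduction for the $\eta$-derivative is incomplete: differentiating $\abs\eta\,h(\abs\eta t,x,\eta/\abs\eta)$ in $\eta$ produces not only $\pr_\omega h$ but also $h$ itself and $s\pr_s h$ (the radial part — by Euler's relation $\eta\cdot\pr_\eta f=f+t\pr_t f$ for a degree-one quasi-homogeneous $f$). Controlling $s\pr_s h$ by $\sqrt{{\rm Im\,}h}$ for $s\geq1$ is not immediate from boundedness: one needs both $\abs{\pr_s h}\leq Cd^2$ (from the eikonal and the second-order vanishing of $p_0$) and the exponential decay of Proposition~\ref{p:c-basislimit}, combined by a case analysis $d\lessgtr1/s$. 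The paper avoids this entirely by applying quasi-homogeneity not to $\psi-\seq{x,\eta}$ but directly to the derivative $\pr^\alpha_x\pr^\beta_\eta(\psi-\seq{x,\eta})$, which is quasi-homogeneous of degree $1-\abs\beta$; its restriction to $\abs\eta=1$ is precisely the Taylor-expanded quantity and no $h$ or $s\pr_s h$ ever appear. You should adopt that viewpoint. Secondly, the scaled reduction must read $\abs{\pr h}\leq C\sqrt{{\rm Im\,}h}$ \emph{without} the additive $1$: the target $\abs\eta^{-1/2}(1+\abs\eta\,{\rm Im\,}h)^{1/2}$ must dominate the $\eta$-independent quantity $\abs{\pr h}$ for all $\abs\eta\geq1$, and letting $\abs\eta\to\infty$ forces the sharp form. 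Your $s\gtrless1$ argument does in fact yield the sharp bound, so this is only a mis-statement of what is to be proved, not a flaw in the proof itself.
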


\begin{proof}
For $\abs{\eta}=1$,
we consider Taylor expansions of $\pr_{x_j}(\psi(t, x, \eta)-\seq{x, \eta})$, $j=1,\ldots,2n-1$,
at $(x_0, \eta_0)\in\Sigma$,
\begin{align*}
\pr_{x_j}(\psi(t, x, \eta)-\seq{x, \eta})&=\sum_k\frac{\pr^2\psi}{\pr x_k\pr x_j}(t, x_0, \eta_0)(x_k-x^{(k)}_0) \\
&\quad+\sum_k\frac{\pr^2\psi}{\pr \eta_k\pr x_j}(t, x_0, \eta_0)(\eta_k-\eta^{(k)}_0) \\
&\quad+O(\abs{(x-x_0)}^2+\abs{(\eta-\eta_0)}^2),
\end{align*}
where $x_0=(x^{(1)}_0,\ldots,x^{(2n-1)}_0)$, $\eta_0=(\eta^{(1)}_0,\ldots,\eta^{(2n-1)}_0)$.
Thus, for every compact set $K\subset\Omega$ there exists a constant $c>0$, such that
\[\abs{\pr_x(\psi(t, x, \eta)-\seq{x, \eta})}\leq c\frac{t}{1+t}{\rm dist\,}((x,\eta),\Sigma)),\]
where $x\in K$, $t\in\ol\Real_+$ and $\abs{\eta}=1$. In view of (\ref{e:c-sj1}), we see that
${\rm Im\,}\psi(t, x,\eta)\asymp(\frac{t}{1+t}){\rm dist\,}((x,\eta),\Sigma))^2$, $\abs{\eta}=1$.
Hence,
$(\frac{t}{1+t})^{\frac{1}{2}}{\rm dist\,}((x,\eta),\Sigma))\asymp ({\rm Im\,}\psi(t, x,\eta))^{\frac{1}{2}}$, $\abs{\eta}=1$.
Thus, for every compact set $K\subset\Omega$ there exists a constant $c>0$, such that
$\abs{\pr_x(\psi(t, x, \eta)-\seq{x, \eta})}\leq c(\frac{t}{1+t})^{\frac{1}{2}}({\rm Im\,}
\psi(t, x,\eta))^{\frac{1}{2}}$, $\abs{\eta}=1$, $x\in K$.
From above, we get for $\abs{\eta}\geq1$,
\begin{align*}
\abs{\pr_{x}(\psi(t, x, \eta)-\seq{x, \eta})}
    &=\abs{\eta}\abs{\pr_{x}(\psi(t\abs{\eta}, x,\frac{\eta}{\abs{\eta}})-\seq{x, \frac{\eta}{\abs{\eta}}})}  \\
    &\leq c\abs{\eta}^{\frac{1}{2}}(\frac{t\abs{\eta}}{1+t\abs{\eta}})^
{\frac{1}{2}}({\rm Im\,}\psi(t, x,\eta))^{\frac{1}{2}} \\
&\leq c'(1+\abs{\eta})^{\frac{1}{2}}(1+{\rm Im\,}\psi(t, x,\eta))^{\frac{1}{2}},
\end{align*}
where $c$, $c'>0$, $x\in K$, $t\in\ol\Real_+$. Here $K$ is as above. Similarly,
for every compact set $K\subset\Omega$ there exists a constant $c>0$, such that
\begin{align*}
\abs{\pr_{\eta}(\psi(t, x, \eta)-\seq{x, \eta})}
    &\leq c(1+\abs{\eta})^{-\frac{1}{2}}({\rm Im\,}\psi(t, x,\eta))^{\frac{1}{2}},
\end{align*}
where $x\in K$, $t\in\ol\Real_+$ and $\abs{\eta}\geq1$.

Note that $\abs{\pr^\alpha_{x}\pr^\beta_\eta(\psi(t, x, \eta)-\seq{x, \eta})}$ is quasi-homogeneous of degree $1-\abs{\beta}$.
For $\abs{\alpha}+\abs{\beta}\geq2$, we have
$\abs{\pr^\alpha_{x}\pr^\beta_\eta(\psi(t, x, \eta)-\seq{x, \eta})}\leq c(1+\abs{\eta})^{1-\abs{\beta}}$,
where $c>0$, $x\in K$, $t\in\ol\Real_+$ and $\abs{\eta}\geq1$. Here $K$ is as above.
The lemma follows.
\end{proof}

We also need the following

\begin{lem} \label{l:allergohome1071024}
For every compact set $K\subset\Omega$ and all $\alpha\in\Pstint^{2n-1}$, $\beta\in\Pstint^{2n-1}$,
there exist a constant $c_{\alpha,\beta,K}>0$ and $\varepsilon>0$, such that
\begin{align*}
&\abs{\pr^\alpha_{x}\pr^\beta_\eta(t\psi'_t(t, x, \eta))}\leq c_{\alpha,\beta,K}(1+\abs{\eta})^
{\frac{\abs{\alpha}-\abs{\beta}}{2}}e^{-t\varepsilon\abs{\eta}}
(1+{\rm Im\,}\psi(t, x,\eta))^{1+\frac{\abs{\alpha}+\abs{\beta}}{2}}
\end{align*}
if\, $\abs{\alpha}+\abs{\beta}\leq1$ and
\[\abs{\pr^\alpha_{x}\pr^\beta_\eta(t\psi'_t(t, x, \eta))}\leq c_{\alpha,\beta,K}(1+\abs{\eta})^{1-\abs{\beta}}e^{-t\varepsilon\abs{\eta}}\]
if\, $\abs{\alpha}+\abs{\beta}\geq2$, where $x\in K$, $t\in\ol\Real_+$, $\abs{\eta}\geq1$.
\end{lem}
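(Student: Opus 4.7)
The plan is to derive the stated estimates by combining two different controls on $\psi'_t$: the exponential convergence $\psi \to \psi(\infty)$ given by Proposition~\ref{p:c-basislimit}, and the characteristic equation $\psi'_t = ip_0(x,\psi'_x) + O(\abs{{\rm Im\,}\psi}^N)$. A preliminary reduction to $|\eta|$ in a compact range is made via the quasi-homogeneity of $\psi$ (Definition~\ref{d:hf-heatquasi}): since $\psi(t,x,\lambda\eta) = \lambda\psi(\lambda t, x,\eta)$, the function $\psi'_t$ is quasi-homogeneous of degree $2$, and $t\psi'_t$ is quasi-homogeneous of degree $1$, so upon scaling $(t,\eta) \mapsto (t\abs{\eta}, \eta/\abs{\eta})$, the estimates in $K \times \{\abs{\eta} \geq 1\}$ reduce to estimates on $K \times S^{2n-2}$ to which Proposition~\ref{p:c-basislimit} applies directly.

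For $\abs{\alpha}+\abs{\beta} \geq 2$, I would argue as follows. Apply Proposition~\ref{p:c-basislimit} to $\partial^\alpha_x\partial^\beta_\eta\partial_t(\psi(t,x,\eta) - \psi(\infty,x,\eta))$ and track the $\eta$-weight produced by $\partial^\beta_\eta$ and by the $t \mapsto t\abs{\eta}$ rescaling. Choosing $\lambda(x,\eta)$ positively homogeneous of degree $1$ with $\lambda \geq \varepsilon\abs{\eta}$, this yields
\[
  \abs{\partial^\alpha_x\partial^\beta_\eta\psi'_t(t,x,\eta)} \leq c(1+\abs{\eta})^{2-\abs{\beta}}e^{-\varepsilon\abs{\eta}t}, \quad x \in K,\ \abs{\eta}\geq 1.
\]
Multiplying by $t$ and using the elementary estimate $t\abs{\eta}\, e^{-\varepsilon t\abs{\eta}} \leq C e^{-\varepsilon' t\abs{\eta}}$ (with $\varepsilon' < \varepsilon$), one gobbles up one power of $\abs{\eta}$ and recovers the target bound $c(1+\abs{\eta})^{1-\abs{\beta}} e^{-\varepsilon' t\abs{\eta}}$.

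The hard case is $\abs{\alpha}+\abs{\beta}\leq 1$, because the crude bound from Step~2 gives a factor $(1+\abs{\eta})^{1-\abs{\beta}}$ which can be much larger than $(1+{\rm Im\,}\psi)^{1+(\abs{\alpha}+\abs{\beta})/2}$ near $\Sigma$. To gain the refined ${\rm Im\,}\psi$-dependence, I substitute the characteristic equation $\psi'_t = ip_0(x,\psi'_x) + r_N$, $\abs{r_N} \leq C_N\abs{{\rm Im\,}\psi}^N$. Since $p_0 \geq 0$ vanishes to second order on $\Sigma$,
\[
  \abs{p_0(x,\psi'_x)} \leq C\abs{\psi'_x}^2 \,{\rm dist}\bigl((x,\psi'_x/\abs{\psi'_x}),\Sigma\bigr)^2;
\]
combining with the $C^0$ and $C^1$ bounds on $\psi'_x - \eta$ from Lemma~\ref{l:ss-sjostrand} and the equivalence ${\rm Im\,}\psi \asymp \abs{\eta}\tfrac{t\abs{\eta}}{1+t\abs{\eta}}\,{\rm dist}^2\bigl((x,\eta/\abs{\eta}),\Sigma\bigr)^2$ from Proposition~\ref{p:c-basis}, one obtains $t\abs{p_0(x,\psi'_x)} \leq C(1+t\abs{\eta})\,{\rm Im\,}\psi$, and hence $\abs{t\psi'_t} \leq C(1+t\abs{\eta})\,{\rm Im\,}\psi + C_N t\abs{{\rm Im\,}\psi}^N$. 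This controls $\abs{t\psi'_t}$ by $(1+{\rm Im\,}\psi)$ in the region where $t\abs{\eta}$ is bounded; in the complementary region, the exponential decay $e^{-\varepsilon t\abs{\eta}}$ from Step~2 dominates and absorbs any polynomial loss, yielding a uniform bound of the desired form. The $\abs{\alpha}+\abs{\beta}=1$ case is handled by differentiating once more and interpolating between these two regimes, the half-powers of $(1+\abs{\eta})$ and $(1+{\rm Im\,}\psi)$ appearing exactly as in the proof of Lemma~\ref{l:ss-sjostrand}.

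The main obstacle is precisely the matching of the two regimes in the low-derivative case, where one must glue the exponential-decay bound (sharp for large $t\abs{\eta}$) with the characteristic-equation bound (sharp for small $t\abs{\eta}$ where ${\rm Im\,}\psi$ is small) so as to obtain a single, uniform estimate with the stated dependence on both $\abs{\eta}$ and ${\rm Im\,}\psi$.
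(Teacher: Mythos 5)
Your handling of the case $\abs{\alpha}+\abs{\beta}\geq 2$ is correct. The gap is in the case $\abs{\alpha}+\abs{\beta}\leq 1$: you keep the exponential decay (from Proposition~\ref{p:c-basislimit}) and the quadratic vanishing at $\Sigma$ (from $p_0$ and the characteristic equation) in two \emph{separate} estimates and then propose a disjunctive gluing according to whether $t\abs{\eta}$ is bounded, but that gluing does not close. Write $s=t\abs{\eta}$, $n=\abs{\eta}$, $d={\rm dist\,}((x,\eta/\abs{\eta}),\Sigma)$. Your Step~2 bound gives $\abs{t\psi'_t}\leq c\,sn\,e^{-\varepsilon s}$; your characteristic-equation bound gives $\abs{t\psi'_t}\leq c\,snd^2$; the target is $Ce^{-\varepsilon' s}(1+n\tfrac{s}{1+s}d^2)$. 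Take $nd^2=1$ and $s=\frac{1}{2\varepsilon}\log n$, $n\to\infty$: then ${\rm Im\,}\psi\asymp1$, the target is $O(n^{-\varepsilon'/(2\varepsilon)})\to0$, yet Step~2 gives $\asymp n^{1/2}\log n\to\infty$ and the characteristic-equation bound gives $\asymp\log n\to\infty$. Both of your estimates diverge in this intermediate regime $1\ll t\abs{\eta}\ll\abs{\eta}$ with $(x,\eta/\abs{\eta})$ near $\Sigma$, and the divergence occurs on both sides of any cutoff in $t\abs{\eta}$, so no choice of where to split repairs it.

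The missing ingredient is that Proposition~\ref{p:c-basislimit} yields the exponential decay and the quadratic vanishing \emph{simultaneously}, and this is what the paper's reference to the proof of Lemma~\ref{l:ss-sjostrand} intends. Since $\psi=\seq{x,\eta}$ and $d_{x,\eta}(\psi-\seq{x,\eta})=0$ on $\Sigma$ for every $t$, $\psi'_t$ vanishes to second order in $(x,\eta)$ at $\Sigma$; and since $\pr^\alpha_x\pr^\beta_\eta\pr_t\psi=\pr^\alpha_x\pr^\beta_\eta\pr_t(\psi-\psi(\infty))$, Proposition~\ref{p:c-basislimit} bounds every Taylor coefficient of $\psi'_t$ at $\Sigma$ by $ce^{-\lambda t}$ uniformly on compacts of $T^*(\Omega)\setminus0$. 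Taylor's theorem with integral remainder therefore gives directly, for $\abs{\eta}=1$,
\[
\abs{\psi'_t(t,x,\eta)}\leq ce^{-\varepsilon t}\,{\rm dist\,}((x,\eta),\Sigma)^2,
\]
the single estimate that replaces both of yours at once. Multiplying by $t$ and using ${\rm Im\,}\psi\asymp\frac{t}{1+t}{\rm dist\,}^2$ together with $(1+t)e^{-\varepsilon t}\leq Ce^{-\varepsilon' t}$ yields $\abs{t\psi'_t}\leq Ce^{-\varepsilon' t}\,{\rm Im\,}\psi$ at $\abs{\eta}=1$; the quasi-homogeneous rescaling (both $t\psi'_t$ and ${\rm Im\,}\psi$ are quasi-homogeneous of degree $1$) then produces the claim for $\abs{\alpha}+\abs{\beta}=0$, with the $\abs{\eta}$-powers cancelling exactly. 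For $\abs{\alpha}+\abs{\beta}=1$, the same argument applied to $\pr^\alpha_x\pr^\beta_\eta\psi'_t$ (which vanishes to first order at $\Sigma$) gives $\abs{t\,\pr^\alpha_x\pr^\beta_\eta\psi'_t}\leq Ce^{-\varepsilon' t}({\rm Im\,}\psi)^{1/2}$ at $\abs{\eta}=1$, and this rescales to the stated half-powers. In short: it is the Taylor-at-$\Sigma$ argument of Lemma~\ref{l:ss-sjostrand} applied to $\psi'_t$ in place of $\psi-\seq{x,\eta}$, with the $\frac{t}{1+t}$-control on the Taylor coefficients replaced by the exponential control from Proposition~\ref{p:c-basislimit}.
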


\begin{proof}
The proof is essentially the same as the proof of Lemma~\ref{l:ss-sjostrand}.
\end{proof}

\begin{lem} \label{l:allergohome1}
For every compact set $K\subset\Omega$ and all $\alpha\in\Pstint^{2n-1}$, $\beta\in\Pstint^{2n-1}$,
there exist a constant $c_{\alpha,\beta,K}>0$ and $\varepsilon>0$, such that
\begin{equation} \label{e:allergohome1}
\abs{\pr^\alpha_{x}\pr^\beta_\eta(e^{i(\psi(t, x, \eta)-\seq{x, \eta}})}\leq c_{\alpha,\beta,K}(1+\abs{\eta})^
{\frac{\abs{\alpha}-\abs{\beta}}{2}}e^{-{\rm Im\,}\psi(t, x, \eta)}
(1+{\rm Im\,}\psi(t, x,\eta))^{\frac{\abs{\alpha}+\abs{\beta}}{2}}
\end{equation}
and
\begin{align} \label{e:allergohome2}
&\abs{\pr^\alpha_x\pr^\beta_\eta
(e^{i(\psi(t, x, \eta)-\seq{x, \eta}}t\psi'_t(t, x, \eta))} \nonumber \\
&\quad\leq c_{\alpha,\beta,K}(1+\abs{\eta})^
{\frac{\abs{\alpha}-\abs{\beta}}{2}}e^{-t\varepsilon\abs{\eta}}e^{-{\rm Im\,}\psi(t, x, \eta)}
(1+{\rm Im\,}\psi(t, x,\eta))^{1+\frac{\abs{\alpha}+\abs{\beta}}{2}},
\end{align}
where $x\in K$, $t\in\ol\Real_+$, $\abs{\eta}\geq1$.
\end{lem}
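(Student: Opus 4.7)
The plan is to bound the derivatives of the composed/product expressions using Fa\`a di Bruno's formula (for the exponential) and Leibniz's rule (for the product), after first recasting the hypotheses of Lemmas~\ref{l:ss-sjostrand} and \ref{l:allergohome1071024} in a unified ``$(1+|\eta|)$-times-$(1+{\rm Im\,}\psi)$'' form that does not branch on the order of differentiation. The main obstacle is precisely this unification; once it is done, the rest is mechanical.

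Set $f(t,x,\eta)=\psi(t,x,\eta)-\seq{x,\eta}$, so that $|e^{if}|=e^{-{\rm Im\,}\psi}$. The first step is to prove that for every compact set $K\subset\Omega$ and every pair of multi-indices with $|\alpha'|+|\beta'|\geq1$,
\[
\abs{\pr^{\alpha'}_x\pr^{\beta'}_\eta f}\leq C_{\alpha',\beta',K}(1+|\eta|)^{(|\alpha'|-|\beta'|)/2}(1+{\rm Im\,}\psi(t,x,\eta))^{(|\alpha'|+|\beta'|)/2},
\]
uniformly in $x\in K$, $t\in\ol\Real_+$, $|\eta|\geq 1$. For $|\alpha'|+|\beta'|=1$ this is precisely Lemma~\ref{l:ss-sjostrand}. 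For $|\alpha'|+|\beta'|\geq 2$ Lemma~\ref{l:ss-sjostrand} provides the weaker-looking bound $C(1+|\eta|)^{1-|\beta'|}$; writing $1-|\beta'|=(|\alpha'|-|\beta'|)/2+(2-|\alpha'|-|\beta'|)/2$ and observing that $(2-|\alpha'|-|\beta'|)/2\leq 0$, one has $(1+|\eta|)^{(2-|\alpha'|-|\beta'|)/2}\leq 1\leq (1+{\rm Im\,}\psi)^{(|\alpha'|+|\beta'|)/2}$, which upgrades the bound into the desired unified form.

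With this uniform estimate in hand, Fa\`a di Bruno's formula gives
\[
\pr^\alpha_x\pr^\beta_\eta e^{if}=e^{if}\sum_{P} c_P\prod_{(\alpha_j,\beta_j)\in P}\bigl(i\pr^{\alpha_j}_x\pr^{\beta_j}_\eta f\bigr),
\]
where $P$ runs over the partitions of $(\alpha,\beta)$ into finitely many non-trivial pieces (the empty product being $1$ when $\alpha=\beta=0$). For every such partition $\sum_j(|\alpha_j|-|\beta_j|)=|\alpha|-|\beta|$ and $\sum_j(|\alpha_j|+|\beta_j|)=|\alpha|+|\beta|$; multiplying the unified bound across the factors and using $|e^{if}|=e^{-{\rm Im\,}\psi}$ yields exactly (\ref{e:allergohome1}).

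For (\ref{e:allergohome2}), the same $(2-|\alpha'|-|\beta'|)/2\leq 0$ trick upgrades Lemma~\ref{l:allergohome1071024} to the uniform statement
\[
\abs{\pr^{\alpha'}_x\pr^{\beta'}_\eta(t\psi'_t)}\leq C(1+|\eta|)^{(|\alpha'|-|\beta'|)/2}e^{-t\varepsilon|\eta|}(1+{\rm Im\,}\psi)^{1+(|\alpha'|+|\beta'|)/2},
\]
valid for all $|\alpha'|+|\beta'|\geq 0$. Applying Leibniz's rule to $e^{if}\cdot(t\psi'_t)$ and combining this estimate with (\ref{e:allergohome1}) (already proved) produces (\ref{e:allergohome2}) immediately: the $e^{-{\rm Im\,}\psi}$ weight comes from $e^{if}$, the $e^{-t\varepsilon|\eta|}$ weight from $t\psi'_t$, and the polynomial weights in $(1+|\eta|)$ and $(1+{\rm Im\,}\psi)$ combine additively in their exponents exactly as required.
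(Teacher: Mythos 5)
Your proof is correct and takes essentially the same approach as the paper: both bound derivatives of the phase $\psi-\seq{x,\eta}$ via Lemma~\ref{l:ss-sjostrand} (and of $t\psi'_t$ via Lemma~\ref{l:allergohome1071024}), exploit $(1+{\rm Im\,}\psi)^{(|\alpha|+|\beta|)/2}\geq1$ to absorb the case $\abs{\alpha}+\abs{\beta}\geq2$ into the same unified estimate (a step the paper does silently in (\ref{e:allergohome4})), and then combine factors using the additivity of derivative orders, followed by a Leibniz argument for (\ref{e:allergohome2}). The only stylistic difference is that you organize the chain-rule combinatorics for $e^{if}$ with Fa\`a di Bruno where the paper uses induction on $\abs{\alpha}+\abs{\beta}$ with a Leibniz-type peeling of one derivative; these are interchangeable.
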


\begin{proof}
First, we prove (\ref{e:allergohome1}).
We proceed by induction over $\abs{\alpha}+\abs{\beta}$. For $\abs{\alpha}+\abs{\beta}\leq1$,
from Lemma~\ref{l:ss-sjostrand}, we get (\ref{e:allergohome1}). Let $\abs{\alpha}+\abs{\beta}\geq2$.
Then
\begin{align*}
&\abs{\pr^\alpha_{x}\pr^\beta_\eta(e^{i(\psi(t, x, \eta)-\seq{x, \eta}})} \\
&\leq c\!\!\!\!\!\sum_{\alpha'+\alpha''=\alpha, \beta'+\beta''=\beta, (\alpha'', \beta'')\neq0}\!\!
\abs{\pr^{\alpha'}_{x}\pr^{\beta'}_\eta(e^{i(\psi(t, x, \eta)-\seq{x, \eta}})
\pr^{\alpha''}_{x}\pr^{\beta''}_\eta(i\psi(t, x, \eta)-i\seq{x, \eta})},
\end{align*}
$c>0$. By the induction assumption, we have for every compact set $K\subset\Omega$,
there exists a constant $c>0$, such that
\begin{equation} \label{e:allergohome3}
\abs{\pr^{\alpha'}_{x}\pr^{\beta'}_\eta(e^{i(\psi(t, x, \eta)-\seq{x, \eta}})}\leq c(1+\abs{\eta})^
{\frac{\abs{\alpha'}-\abs{\beta'}}{2}}e^{-{\rm Im\,}\psi(t, x, \eta)}
(1+{\rm Im\,}\psi(t, x,\eta))^{\frac{\abs{\alpha'}+\abs{\beta'}}{2}},
\end{equation}
where $x\in K$, $t\in\ol\Real_+$, $\abs{\eta}\geq1$.
From Lemma~\ref{l:ss-sjostrand}, we have
\begin{equation} \label{e:allergohome4}
\abs{\pr^{\alpha''}_{x}\pr^{\beta''}_\eta(i\psi(t, x, \eta)-i\seq{x, \eta})}\leq
c(1+\abs{\eta})^
{\frac{\abs{\alpha''}-\abs{\beta''}}{2}}
(1+{\rm Im\,}\psi(t, x,\eta))^{\frac{\abs{\alpha''}+\abs{\beta''}}{2}},
\end{equation}
where $x\in K$, $t\in\ol\Real_+$, $\abs{\eta}\geq1$. Combining (\ref{e:allergohome3})
with (\ref{e:allergohome4}), we get (\ref{e:allergohome1}).

From Leibniz's formula, Lemma~\ref{l:allergohome1071024} and (\ref{e:allergohome1}), we get
(\ref{e:allergohome2}).
\end{proof}

\begin{lem} \label{l:ss-symbol}
Let $b(t,x,\eta)\in\hat S^k_r(\ol\Real_+\times T^*(\Omega);\, \mathscr L(\Lambda^{0,q}T^*(\Omega),\Lambda^{0,q}T^*(\Omega)))$
with $r>0$. We assume that $b(t,x,\eta)=0$ when
$\abs{\eta}\leq 1$. Then
\[q(x,\eta)=\int^{\infty}_{0} e^{i(\psi(t,x,\eta)-\seq{x,\eta})}b(t,x,\eta)dt\]
$\in
S^{k-1}_{\frac{1}{2},\frac{1}{2}}(T^*(\Omega);\,
\mathscr L(\Lambda^{0,q}T^*(\Omega),\Lambda^{0,q}T^*(\Omega)))$.
\end{lem}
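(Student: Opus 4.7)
The plan is to verify the symbol estimates for $q(x,\eta)$ directly by differentiating under the integral and using the decay estimates already established in Lemma~\ref{l:allergohome1} together with the $\hat S^k_r$ bounds on $b$. The two key inputs are: (i) the pointwise bound $(1+s)^N e^{-s}\leq C_N$ for $s\geq 0$, applied to $s={\rm Im\,}\psi(t,x,\eta)\geq 0$, which absorbs the polynomial growth in ${\rm Im\,}\psi$ appearing in Lemma~\ref{l:allergohome1}, and (ii) the positivity of $r$, which combined with continuity and positive homogeneity gives that on each compact $K\subset\Omega$ there exists $c_0>0$ with $r(x,\eta)\geq c_0\abs{\eta}$ for $x\in K$, $\abs{\eta}\geq 1$.

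First I would fix a compact $K\subset\Omega$ and apply Definition~\ref{d:ss-heatgeneral} to $b$ with $\varepsilon=c_0/2$, yielding
\[
\abs{\pr^{\alpha''}_x\pr^{\beta''}_\eta b(t,x,\eta)}\leq Ce^{-c_0 t\abs{\eta}/2}(1+\abs{\eta})^{k-\abs{\beta''}},\quad x\in K,\ \abs{\eta}\geq 1.
\]
This decay in $t$ legitimizes differentiation under the integral and produces, after Leibniz, a finite sum of terms of the form
\[
\pr^{\alpha'}_x\pr^{\beta'}_\eta\bigl(e^{i(\psi(t,x,\eta)-\seq{x,\eta})}\bigr)\cdot\pr^{\alpha''}_x\pr^{\beta''}_\eta b(t,x,\eta),
\]
with $\alpha'+\alpha''=\alpha$, $\beta'+\beta''=\beta$. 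Estimating the first factor by (\ref{e:allergohome1}) of Lemma~\ref{l:allergohome1} and then using $(1+{\rm Im\,}\psi)^{(\abs{\alpha'}+\abs{\beta'})/2}e^{-{\rm Im\,}\psi}\leq C$, each term is bounded by
\[
C'e^{-c_0 t\abs{\eta}/2}(1+\abs{\eta})^{k-\abs{\beta''}+(\abs{\alpha'}-\abs{\beta'})/2},\quad x\in K,\ \abs{\eta}\geq 1.
\]

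Integrating over $t\in[0,\infty)$ then costs a factor $2/(c_0\abs{\eta})\leq C''(1+\abs{\eta})^{-1}$, so each term of $\pr^\alpha_x\pr^\beta_\eta q$ is bounded by a constant times $(1+\abs{\eta})^{k-1-\abs{\beta''}+(\abs{\alpha'}-\abs{\beta'})/2}$. The exponent check
\[
k-1-\abs{\beta''}+\tfrac{\abs{\alpha'}-\abs{\beta'}}{2}\leq (k-1)+\tfrac{\abs{\alpha}-\abs{\beta}}{2}
\]
reduces to $-\abs{\beta''}/2\leq \abs{\alpha''}/2$, which holds trivially. This gives exactly the required $S^{k-1}_{\frac12,\frac12}$ estimates. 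I expect the main obstacle to be purely bookkeeping: carefully matching the $(1+{\rm Im\,}\psi)^{1/2}$ factors coming from phase differentiation (which are responsible for the type $(\frac12,\frac12)$ loss) against the decay $e^{-{\rm Im\,}\psi}$, and keeping the decomposition $(\alpha,\beta)=(\alpha',\beta')+(\alpha'',\beta'')$ straight so that the final exponent count closes. The fact that we gain exactly one power of $(1+\abs{\eta})^{-1}$ from the $t$-integration (thanks to $r\asymp\abs{\eta}$) is what produces the order $k-1$ rather than $k$.
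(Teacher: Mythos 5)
Your argument is correct and is essentially identical to the paper's proof: both differentiate under the integral, apply Leibniz to split the derivatives between the oscillatory factor and $b$, invoke Lemma~\ref{l:allergohome1} together with the $\hat S^k_r$ bound, absorb the $(1+{\rm Im\,}\psi)$ powers via $e^{-{\rm Im\,}\psi}$, and gain the order $-1$ from the $t$-integration against $e^{-\varepsilon t\abs{\eta}}$. The only cosmetic difference is that you spell out the lower bound $r(x,\eta)\geq c_0\abs{\eta}$ on compacts and the exponent bookkeeping, steps the paper leaves implicit.
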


\begin{proof}
From Leibniz's formula, we have
\begin{align*}
&\abs{\pr^{\alpha}_{x}\pr^{\beta}_{\eta}(e^{i(\psi(t,x,\eta)-\seq{x,\eta})}b(t,x,\eta))} \\
&\leq c\sum_{\alpha'+\alpha''=\alpha, \beta'+\beta''=\beta}
\abs{(\pr^{\alpha'}_{x}\pr^{\beta'}_\eta e^{i(\psi(t, x, \eta)-\seq{x, \eta}})(\pr^{\alpha''}_x\pr^{\eta''}_\eta b(t, x, \eta))},
\ \ c>0.
\end{align*}
From (\ref{e:allergohome1}) and the definition of $\hat S^k_r$, we have for every compact set $K\subset\Omega$,
there exist a constant $c>0$ and $\varepsilon>0$, such that
\begin{align*}
&\abs{\pr^\alpha_x\pr^\beta_\eta q(x, \eta)} \\
&\leq c\int^\infty_0\!\!e^{{-\rm Im\,}\psi(t, x, \eta)}(1+\abs{\eta})^{k+\frac{\abs{\alpha}-\abs{\beta}}{2}}
(1+{\rm Im\,}\psi(t, x, \eta))^{\frac{\abs{\alpha}+\abs{\beta}}{2}}e^{-\varepsilon t\abs{\eta}}dt \\
&\leq c'(1+\abs{\eta})^{k-1+\frac{\abs{\alpha}-\abs{\beta}}{2}},
\end{align*}
where $c'>0$, $x\in K$. The lemma follows.
\end{proof}

We will next show

\begin{prop} \label{p:ss-pseudo}
Let
\[b(t,x,\eta)\in\hat S^k_{r}(\ol\Real_+\times T^*(\Omega);\, \mathscr L(\Lambda^{0,q}T^*(\Omega),\Lambda^{0,q}T^*(\Omega)))\]
with $r>0$. We assume that $b(t,x,\eta)=0$ when $\abs{\eta}\leq 1$. Let $B$
be as in Proposition~\ref{p:ss-oscillmore}. Then
$B\in L^{k-1}_{\frac{1}{2},\frac{1}{2}}
(\Omega;\, \Lambda^{0,q}T^*(\Omega),\Lambda^{0,q}T^*(\Omega))$
with symbol
\[q(x,\eta)=\int^\infty_0e^{i(\psi(t,x,\eta)-\seq{x,\eta})}b(t,x,\eta)dt\]
$\in S^{k-1}_{\frac{1}{2},\frac{1}{2}}(T^*(\Omega);\, \mathscr L(\Lambda^{0,q}T^*(\Omega),\Lambda^{0,q}T^*(\Omega)))$.
\end{prop}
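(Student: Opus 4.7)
By Lemma~\ref{l:ss-symbol}, the candidate symbol
\[q(x,\eta)=\int_0^\infty e^{i(\psi(t,x,\eta)-\seq{x,\eta})}b(t,x,\eta)\,dt\]
already lies in $S^{k-1}_{\frac{1}{2},\frac{1}{2}}$, so the operator $Q$ with distribution kernel $(2\pi)^{-(2n-1)}\int e^{i\seq{x-y,\eta}}q(x,\eta)\,d\eta$ is, by Definition~\ref{d:ss-pseudo}, in $L^{k-1}_{\frac{1}{2},\frac{1}{2}}$. The task reduces to identifying $B=Q$ as operators $C^\infty_0\to\mathscr D'$. The factorization
\[e^{i(\psi(t,x,\eta)-\seq{y,\eta})}=e^{i\seq{x-y,\eta}}\cdot e^{i(\psi(t,x,\eta)-\seq{x,\eta})}\]
together with Fubini on the $(t,\eta)$-integral formally yields $B(x,y)=K_Q(x,y)$; the substance of the proposition is the legitimacy of this manipulation in the oscillatory sense.

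To make this rigorous I would revisit the cut-offs from the proof of Proposition~\ref{p:ss-oscillmore} and split $B=B_1+B_2$, where $B_1$ is weighted by $1-\chi_W$ and $B_2$ by $\chi_W$. On $\mathrm{supp}(1-\chi_W)$ one has $\mathrm{Im}\,\psi\geq c'\abs{\eta}$, so the integrand enjoys exponential decay in $\abs{\eta}$ from the phase and exponential decay in $t\abs{\eta}$ from $b\in\hat S^k_r$ (choosing $\varepsilon$ small). Unconditional Fubini applies and the resulting smooth kernel, after the factorization above, is exhibited as that of a pseudodifferential operator with symbol $q_1(x,\eta)\in S^{-\infty}_{\frac{1}{2},\frac{1}{2}}$, namely the $(1-\chi_W)$-piece of $q$. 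In particular $B_1$ is smoothing.

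For $B_2$ I would insert the auxiliary cut-off $\chi(\eta/\lambda)$ exactly as in Proposition~\ref{p:ss-oscillmore}. In the regularized integral Fubini is trivial and gives
\[B_{2,\lambda}(x,y)=\frac{1}{(2\pi)^{2n-1}}\int e^{i\seq{x-y,\eta}}q_{2,\lambda}(x,\eta)\,d\eta,\quad q_{2,\lambda}(x,\eta)=\chi(\tfrac{\eta}{\lambda})\!\int_0^\infty\! e^{i(\psi-\seq{x,\eta})}\chi_W b\,dt.\]
Repeating the derivative estimates in the proof of Lemma~\ref{l:ss-symbol} (via Lemma~\ref{l:allergohome1} and the exponential $t$-decay coming from $r>0$) shows that $\{q_{2,\lambda}\}$ is a bounded family in $S^{k-1}_{\frac{1}{2},\frac{1}{2}}$ converging pointwise, with locally uniform symbol-bounds, to the $\chi_W$-piece $q_2$ of $q$. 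Consequently the pseudodifferential operators with symbols $q_{2,\lambda}$ converge weakly to the one with symbol $q_2$. The proof of Proposition~\ref{p:ss-oscillmore} itself furnishes $B_{2,\lambda}\to B_2$ in $\mathscr D'$; matching the two limits identifies $B_2$ as the pseudodifferential operator with symbol $q_2$. Adding, $B=B_1+B_2=Q$ with symbol $q=q_1+q_2\in S^{k-1}_{\frac{1}{2},\frac{1}{2}}$.

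The main obstacle I expect is the symbol-topology control of $q_{2,\lambda}$: the $(\tfrac12,\tfrac12)$-nature of the problem forces one to trade powers of $\abs{\eta}$ against powers of $(1+\mathrm{Im}\,\psi)^{1/2}$ via Lemmas~\ref{l:ss-sjostrand} and~\ref{l:allergohome1}, and to absorb the residual $t$-growth using the factor $e^{-\varepsilon t\abs{\eta}}$ coming from $r>0$. Fubini cannot be applied to the raw $(t,\eta)$-integral because the $t$-integrand fails to be absolutely integrable uniformly in $\eta$; the role of the $\chi(\eta/\lambda)$-truncation is precisely to permit swapping the order of integration and then pass to the limit using the symbol bounds already built into Lemma~\ref{l:ss-symbol}.
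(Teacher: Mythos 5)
Your proposal is correct and rests on the same engine as the paper's proof: regularize the oscillatory integral, swap the order of the $t$ and $\eta$ integrations, and invoke Lemma~\ref{l:ss-symbol} to recognize the resulting symbol $q(x,\eta)$ as an element of $S^{k-1}_{\frac{1}{2},\frac{1}{2}}$. The paper's own proof is more compressed: it inserts a single cut-off $\chi(\eps\eta)$, performs the formal factorization $e^{i(\psi-\seq{y,\eta})}=e^{i\seq{x-y,\eta}}e^{i(\psi-\seq{x,\eta})}$, takes $\eps\to 0$, and stops, implicitly identifying this limit with the operator $B$ constructed in Proposition~\ref{p:ss-oscillmore}. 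You instead unpack that identification by revisiting the $B=B_1+B_2$ decomposition, disposing of $B_1$ as a smoothing term, and for $B_2$ establishing that $q_{2,\lambda}$ is uniformly bounded in $S^{k-1}_{\frac{1}{2},\frac{1}{2}}$ so the associated operators converge weakly and may be matched with the $B_{2,\lambda}\to B_2$ limit already provided; this is a more explicit justification of the interchange that the paper takes as self-evident, but it is the same route.
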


\begin{proof}
Choose $\chi\in C^\infty_0(\Real^{2n-1})$ so that $\chi(\eta)=1$ when $\abs{\eta}<1$ and
$\chi(\eta)=0$ when $\abs{\eta}>2$. Take a $u(y)\in C^\infty_0(\Omega;\, \Lambda^{0,q}T^*(\Omega))$, then
\begin{align*}
Bu &=\lim_{\eps\To 0}\frac{1}{(2\pi)^{2n-1}}\int^\infty_0\!\Bigr(\int e^{i(\psi(t,x,\eta)-\seq{y,\eta})}
         b(t,x,\eta)u(y)\chi(\eps\eta)d\eta\Bigr)dt  \\
        &=\lim_{\eps\To 0}\frac{1}{(2\pi)^{2n-1}}\int^\infty_0 e^{i\seq{x-y,\eta}}
\Bigl(\int e^{i(\psi(t,x,\eta)-\seq{x,\eta})}b(t,x,\eta)u(y)\chi(\eps\eta)\Bigr)d\eta dt  \\
        &=\lim_{\eps\To 0}\frac{1}{(2\pi)^{2n-1}}\int e^{i\seq{x-y,\eta}}q(x,\eta)u(y)
         \chi(\eps\eta)d\eta.
\end{align*}
From Lemma~\ref{l:ss-symbol}, we know that $q(x,\eta)\in S^{k-1}_{\frac{1}{2},\frac{1}{2}}$.
Thus
\[\lim_{\eps\To 0}\frac{1}{(2\pi)^{2n-1}}\int e^{i\seq{x-y,\eta}}q(x,\eta)u(y)\chi(\eps\eta)d\eta\in
L^{k-1}_{\frac{1}{2},\frac{1}{2}}
(\Omega;\, \Lambda^{0,q}T^*(\Omega),\Lambda^{0,q}T^*(\Omega)).\]
\end{proof}

We need the following

\begin{lem} \label{l:ss-symbol2}
Let $a(\infty,x,\eta)\in C^\infty(T^*(\Omega);\, \mathscr L(\Lambda^{0,q}T^*(\Omega),\Lambda^{0,q}T^*(\Omega)))$
be a classical symbol of order $k$, that is
\[a(\infty,x,\eta)\sim\sum^{\infty}_{j=0}a_j(\infty,x,\eta)\]
in the H\"{o}rmander symbol space $S^k_{1,0}(T^*(\Omega);\, \mathscr L(\Lambda^{0,q}T^*(\Omega),\Lambda^{0,q}T^*(\Omega)))$, where
\[a_j(\infty,x,\eta)\in C^\infty(T^*(\Omega);\, \mathscr L(\Lambda^{0,q}T^*(\Omega),\Lambda^{0,q}T^*(\Omega))),\ \ j=0,1,\ldots,\]
$a_j(\infty, x, \lambda\eta)=\lambda^{k-j}a_j(\infty, x, \eta)$, $\lambda\geq1$, $\abs{\eta}\geq1$, $j=0,1,\ldots$. Assume that
$a(\infty,x,\eta)=0$ when $\abs{\eta}\leq 1$. Then
\[p(x, \eta)=\int^{\infty}_0\Bigr(e^{i(\psi(t,x,\eta)-\seq{x,\eta})}-e^{i(\psi(\infty,x,\eta)-\seq{x,\eta})}\Bigr)
a(\infty,x,\eta)dt\]
$\in S^{k-1}_{\frac{1}{2},\frac{1}{2}}(T^*(\Omega);\,
\mathscr L(\Lambda^{0,q}T^*(\Omega),\Lambda^{0,q}T^*(\Omega)))$.
\end{lem}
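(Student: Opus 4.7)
The plan is to reduce the claim to Lemma~\ref{l:ss-symbol} via a single integration by parts in $t$, using the exponential convergence $\psi(t,x,\eta)\to\psi(\infty,x,\eta)$ provided by Proposition~\ref{p:c-basislimit}. Starting from
\[\partial_t\bigl[t(e^{i\psi(t,x,\eta)}-e^{i\psi(\infty,x,\eta)})\bigr]=(e^{i\psi(t,x,\eta)}-e^{i\psi(\infty,x,\eta)})+it\psi'_t(t,x,\eta)e^{i\psi(t,x,\eta)}\]
and integrating from $0$ to $\infty$, the boundary term at $t=0$ vanishes because of the factor $t$, while the one at $t=\infty$ vanishes since $\abs{\psi(t,x,\eta)-\psi(\infty,x,\eta)}=O(e^{-\lambda(x,\eta)t})$ by Proposition~\ref{p:c-basislimit}. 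Multiplying by $e^{-i\seq{x,\eta}}a(\infty,x,\eta)$ then yields
\[p(x,\eta)=-i\int_{0}^{\infty}t\,\psi'_t(t,x,\eta)\,e^{i(\psi(t,x,\eta)-\seq{x,\eta})}\,a(\infty,x,\eta)\,dt,\]
which is the representation already anticipated in Remark~\ref{r:ss-oscillmore} (the $a'_t$-term is absent because $a(\infty,\cdot,\cdot)$ is $t$-independent).

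Next I would estimate $\partial^\alpha_x\partial^\beta_\eta p$ by differentiating under the integral and expanding via Leibniz's rule. The factor $\partial^{\alpha'}_x\partial^{\beta'}_\eta(e^{i(\psi-\seq{x,\eta})}t\psi'_t)$ is controlled by~(\ref{e:allergohome2}) of Lemma~\ref{l:allergohome1}, while $\abs{\partial^{\alpha''}_x\partial^{\beta''}_\eta a(\infty,x,\eta)}\leq c(1+\abs{\eta})^{k-\abs{\beta''}}$ since $a(\infty,\cdot,\cdot)\in S^k_{1,0}$. Combining these two bounds, using the elementary inequality $e^{-s}(1+s)^m\leq C_m$ to absorb the $(1+{\rm Im\,}\psi)^{1+(\abs{\alpha'}+\abs{\beta'})/2}$ factor and integrating the surviving $e^{-t\varepsilon\abs{\eta}}$ in $t$ to obtain a factor $(\varepsilon\abs{\eta})^{-1}$, each term of the Leibniz sum is bounded by $c(1+\abs{\eta})^{k-1+(\abs{\alpha'}-\abs{\beta'})/2-\abs{\beta''}}$. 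Since
\[\tfrac{\abs{\alpha'}-\abs{\beta'}}{2}-\abs{\beta''}=\tfrac{\abs{\alpha}-\abs{\beta}}{2}-\tfrac{\abs{\alpha''}+\abs{\beta''}}{2}\leq\tfrac{\abs{\alpha}-\abs{\beta}}{2},\]
the overall estimate becomes $\abs{\partial^\alpha_x\partial^\beta_\eta p(x,\eta)}\leq c_{\alpha,\beta,K}(1+\abs{\eta})^{k-1+(\abs{\alpha}-\abs{\beta})/2}$ for $x\in K$ and $\abs{\eta}\geq 1$, which is precisely the defining estimate for $p\in S^{k-1}_{\frac{1}{2},\frac{1}{2}}$.

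The only step that requires genuine care is the justification of the boundary term at $t=\infty$, which rests crucially on the exponential decay from Proposition~\ref{p:c-basislimit}; after that, everything is routine bookkeeping of indices. The drop in order from $k$ to $k-1$ comes entirely from the time decay factor $e^{-t\varepsilon\abs{\eta}}$ in~(\ref{e:allergohome2}), which renders the $t$-integral of size $\abs{\eta}^{-1}$. I do not anticipate further obstacles beyond the indicial computation above.
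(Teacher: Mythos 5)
Your proof is correct and follows exactly the route taken in the paper: the paper's proof of Lemma~\ref{l:ss-symbol2} is precisely the one-line identity
$p(x,\eta)=\int^{\infty}_0 e^{i(\psi(t,x,\eta)-\seq{x,\eta})}(-t)i\psi'_t(t,x,\eta)a(\infty,x,\eta)\,dt$
followed by the instruction to repeat the Leibniz-rule argument of Lemma~\ref{l:ss-symbol} with (\ref{e:allergohome2}) in place of (\ref{e:allergohome1}). You have spelled out the justification of the boundary terms in the integration by parts (which rests, as you say, on the exponential convergence from Proposition~\ref{p:c-basislimit}) and the indicial bookkeeping that the paper leaves implicit; both are done correctly.
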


\begin{proof}
Note that
\[p(x,\eta)=\int^{\infty}_0\!\!\!e^{i(\psi(t,x,\eta)-\seq{x,\eta})}(-t)i\psi'_{t}(t, x, \eta)a(\infty, x,\eta)dt.\]
From (\ref{e:allergohome2}),
we can repeat the procedure in the proof of Lemma~\ref{l:ss-symbol} to get the lemma.
\end{proof}

\begin{rem} \label{r:0809042013}
Let $a(t, x, \eta)\in\hat S^k_0(\ol\Real_+\times T^*(\Omega);\, \mathscr L(\Lambda^{0,q}T^*(\Omega),\Lambda^{0,q}T^*(\Omega)))$.
We assume $a(t, x, \eta)=0$, if $\abs{\eta}\leq 1$ and
\[a(t, x, \eta)-a(\infty, x, \eta)
\in\hat S^{k}_r(\ol\Real_+\times T^*(\Omega);\, \mathscr L(\Lambda^{0,q}T^*(\Omega),\Lambda^{0,q}T^*(\Omega)))\]
with $r>0$, where $a(\infty, x, \eta)$ is as in Lemma~\ref{l:ss-symbol2}. By Lemma~\ref{l:ss-symbol} and Lemma~\ref{l:ss-symbol2},
we have
\begin{align*}
&\int^{\infty}_0\Bigr(e^{i(\psi(t,x,\eta)-\seq{x,\eta})}a(t,x,\eta)-e^{i(\psi(\infty,x,\eta)-\seq{x,\eta})}
a(\infty,x,\eta)\Bigr)dt  \\
&=\int^{\infty}_0e^{i(\psi(t,x,\eta)-\seq{x,\eta})}(a(t, x, \eta)-a(\infty, x, \eta))dt
+\int^{\infty}_0\Bigr(e^{i(\psi(t,x,\eta)-\seq{x,\eta})} \\
&\quad-e^{i(\psi(\infty,x,\eta)
-\seq{x,\eta})}\Bigr)a(\infty, x, \eta))dt
\in S^{k-1}_{\frac{1}{2},\frac{1}{2}}(T^*(\Omega);\, \mathscr L(\Lambda^{0,q}T^*(\Omega),\Lambda^{0,q}T^*(\Omega))).
\end{align*}
Let
\begin{align*}
&A(x, y)=\frac{1}{(2\pi)^{2n-1}}\int\!\!\int^{\infty}_0\Bigl(e^{i(\psi(t,x,\eta)-\seq{y,\eta})}a(t,x,\eta) \\
&\quad-e^{i(\psi(\infty,x,\eta)-\seq{y,\eta})}a(\infty,x,\eta)\Bigl)dtd\eta
\end{align*}
be as in the Remark~\ref{r:ss-oscillmore}. Then as in Proposition~\ref{p:ss-pseudo}, we can show that
$A\in L^{k-1}_{\frac{1}{2},
\frac{1}{2}}(\Omega;\, \Lambda^{0,q}T^*(\Omega),\Lambda^{0,q}T^*(\Omega))$
with symbol
\[q(x,\eta)=\int^{\infty}_0\Bigr(e^{i(\psi(t,x,\eta)-\seq{x,\eta})}a(t,x,\eta)-
e^{i(\psi(\infty,x,\eta)-\seq{x,\eta})}a(\infty,x,\eta)\Bigr)dt\]
$\in S^{k-1}_{\frac{1}{2},\frac{1}{2}}(T^*(\Omega);\, \mathscr L(\Lambda^{0,q}T^*(\Omega),\Lambda^{0,q}T^*(\Omega)))$.
\end{rem}

We have the following

\begin{prop} \label{p:ss-symbol}
Let $a(\infty,x,\eta)\in C^\infty(T^*(\Omega);\, \mathscr L(\Lambda^{0,q}T^*(\Omega),\Lambda^{0,q}T^*(\Omega)))$
be a classical symbol of order $k$. Then
\[a(x,\eta)=e^{i(\psi(\infty,x,\eta)-\seq{x,\eta})}a(\infty,x,\eta)
\in S^k_{\frac{1}{2},\frac{1}{2}}(T^*(\Omega);\, \mathscr L(\Lambda^{0,q}T^*(\Omega),\Lambda^{0,q}T^*(\Omega))).\]
\end{prop}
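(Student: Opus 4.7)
The plan is to imitate the proofs of Lemma~\ref{l:ss-sjostrand} and Lemma~\ref{l:allergohome1} with $\psi(t,x,\eta)$ replaced by $\psi(\infty,x,\eta)$, and then apply Leibniz's formula. First I would establish the analogue of Lemma~\ref{l:ss-sjostrand} at $t=\infty$: for every compact $K\subset\Omega$ and all multi-indices with $\abs{\alpha}+\abs{\beta}=1$,
\[\abs{\pr^\alpha_x\pr^\beta_\eta(\psi(\infty,x,\eta)-\seq{x,\eta})}\leq c(1+\abs{\eta})^{\frac{\abs{\alpha}-\abs{\beta}}{2}}(1+{\rm Im\,}\psi(\infty,x,\eta))^{\frac{\abs{\alpha}+\abs{\beta}}{2}},\]
and $\abs{\pr^\alpha_x\pr^\beta_\eta(\psi(\infty,x,\eta)-\seq{x,\eta})}\leq c(1+\abs{\eta})^{1-\abs{\beta}}$ for $\abs{\alpha}+\abs{\beta}\geq2$. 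This follows by the same Taylor expansion argument used for Lemma~\ref{l:ss-sjostrand}, together with the facts that $d_{x,\eta}(\psi(\infty,x,\eta)-\seq{x,\eta})=0$ on $\Sigma$ (from Proposition~\ref{p:c-basislimit}), that ${\rm Im\,}\psi(\infty,x,\eta)\asymp\abs{\eta}({\rm dist\,}((x,\eta/\abs{\eta}),\Sigma))^2$ for $\abs{\eta}\geq1$, and homogeneity in $\eta$. Alternatively, these estimates are obtained by letting $t\To\infty$ in Lemma~\ref{l:ss-sjostrand} using the exponential decay~(\ref{e:c-************}).

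Next, by the same inductive Leibniz argument as in the proof of Lemma~\ref{l:allergohome1}, I would deduce
\[\abs{\pr^\alpha_x\pr^\beta_\eta e^{i(\psi(\infty,x,\eta)-\seq{x,\eta})}}\leq c(1+\abs{\eta})^{\frac{\abs{\alpha}-\abs{\beta}}{2}}e^{-{\rm Im\,}\psi(\infty,x,\eta)}(1+{\rm Im\,}\psi(\infty,x,\eta))^{\frac{\abs{\alpha}+\abs{\beta}}{2}}.\]
Since $s\mapsto e^{-s}(1+s)^N$ is bounded on $[0,\infty)$ for each fixed $N$, the factor $e^{-{\rm Im\,}\psi(\infty)}(1+{\rm Im\,}\psi(\infty))^{(\abs{\alpha}+\abs{\beta})/2}$ is uniformly bounded, yielding
\[\abs{\pr^\alpha_x\pr^\beta_\eta e^{i(\psi(\infty,x,\eta)-\seq{x,\eta})}}\leq c'(1+\abs{\eta})^{\frac{\abs{\alpha}-\abs{\beta}}{2}},\quad x\in K,\ \abs{\eta}\geq1.\]

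Finally, I would apply Leibniz's formula to $a(x,\eta)=e^{i(\psi(\infty,x,\eta)-\seq{x,\eta})}a(\infty,x,\eta)$. Since $a(\infty,x,\eta)\in S^k_{1,0}$ satisfies $\abs{\pr^{\alpha''}_x\pr^{\beta''}_\eta a(\infty,x,\eta)}\leq c(1+\abs{\eta})^{k-\abs{\beta''}}$, each term in the Leibniz sum is bounded by $c(1+\abs{\eta})^{k+(\abs{\alpha'}-\abs{\beta'})/2-\abs{\beta''}}$. A direct check shows
\[\frac{\abs{\alpha'}-\abs{\beta'}}{2}-\abs{\beta''}=\frac{\abs{\alpha}-\abs{\beta}}{2}-\frac{\abs{\alpha''}}{2}-\frac{\abs{\beta''}}{2}\leq\frac{\abs{\alpha}-\abs{\beta}}{2},\]
so the overall bound is $c(1+\abs{\eta})^{k+(\abs{\alpha}-\abs{\beta})/2}$, which is precisely the $S^k_{\frac{1}{2},\frac{1}{2}}$ estimate.

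There is no serious obstacle; the argument is essentially a verification. The only point requiring care is the passage to $t=\infty$ in the derivative estimates, but this is handled either by rerunning the Taylor expansion argument directly for $\psi(\infty,x,\eta)$ using Proposition~\ref{p:c-basislimit}, or by invoking~(\ref{e:c-************}) to transfer the $t$-uniform bounds of Lemma~\ref{l:ss-sjostrand} and Lemma~\ref{l:allergohome1} to the limit.
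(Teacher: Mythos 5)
Your proof is correct and follows essentially the same route as the paper: the paper invokes (the $t=\infty$ version of) Lemma~\ref{l:allergohome1} to bound $\pr^\alpha_x\pr^\beta_\eta e^{i(\psi(\infty,x,\eta)-\seq{x,\eta})}$ and then applies Leibniz's formula. You additionally spell out the intermediate $t=\infty$ analogue of Lemma~\ref{l:ss-sjostrand}, the absorption of the factor $e^{-{\rm Im\,}\psi(\infty)}(1+{\rm Im\,}\psi(\infty))^{(\abs{\alpha}+\abs{\beta})/2}$ via boundedness of $s\mapsto e^{-s}(1+s)^N$, and the exponent bookkeeping in the Leibniz sum — all of which the paper leaves implicit — but the argument is the same one.
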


\begin{proof}
In view of Lemma~\ref{l:allergohome1}, we have
for every compact set $K\subset\Omega$ and all $\alpha\in\Pstint^{2n-1}$, $\beta\in\Pstint^{2n-1}$,
there exists a constant $c_{\alpha,\beta,K}>0$, such that
\[\abs{\pr^\alpha_{x}\pr^\beta_\eta(e^{i(\psi(\infty, x, \eta)-\seq{x, \eta}})}\leq c_{\alpha,\beta,K}(1+\abs{\eta})^
{\frac{\abs{\alpha}-\abs{\beta}}{2}}e^{-{\rm Im\,}\psi(\infty, x, \eta)}
(1+{\rm Im\,}\psi(\infty, x,\eta))^{\frac{\abs{\alpha}+\abs{\beta}}{2}},\]
where $x\in K$, $\abs{\eta}\geq1$. From this and Leibniz's formula, we get the proposition.
\end{proof}


\section{The heat equation}

Until further notice, we work with some local coordinates
$x=(x_1,\ldots,x_{2n-1})$
defined on an open set $\Omega\subset X$. Let
\[b(t,x,\eta)\in\hat S^k_{r}(\ol\Real_+\times T^*(\Omega);\, \mathscr L(\Lambda^{0,q}T^*(\Omega),\Lambda^{0,q}T^*(\Omega)))\]
with $r>0$. We assume that $b(t,x,\eta)=0$ when
$\abs{\eta}\leq 1$. From now on, we write
$\frac{1}{(2\pi)^{2n-1}}\int\Bigr(\int^\infty_0e^{i(\psi(t,x,\eta)-\seq{y,\eta})}b(t,x,\eta)dt\Bigr)d\eta$
to denote the kernel of pseudodifferential operator of order $k-1$ type
$(\frac{1}{2},\frac{1}{2})$ from sections of $\Lambda^{0,q}T^*(\Omega)$ to sections of
$\Lambda^{0,q}T^*(\Omega)$ with symbol
\[\int^\infty_0e^{i(\psi(t,x,\eta)-\seq{x,\eta})}b(t,x,\eta)dt\in
S^{k-1}_{\frac{1}{2},\frac{1}{2}}(T^*(\Omega);\, \mathscr L(\Lambda^{0,q}T^*(\Omega),\Lambda^{0,q}T^*(\Omega))).\]
(See Proposition~\ref{p:ss-pseudo}.)

Let
$a(t,x,\eta)\in\hat S^k_{0}(\ol\Real_+\times T^*(\Omega);\, \mathscr L(\Lambda^{0,q}T^*(\Omega),\Lambda^{0,q}T^*(\Omega)))$.
Assume that
\[a(t, x, \eta)=0\]
when $\abs{\eta}\leq 1$ and that
\[a(t, x, \eta)-a(\infty, x, \eta)\in\hat S^{k}_r
(\ol\Real_+\times T^*(\Omega);\, \mathscr L(\Lambda^{0,q}T^*(\Omega),\Lambda^{0,q}T^*(\Omega)))\]
with $r>0$, where
$a(\infty, x, \eta)\in C^\infty(T^*(\Omega);\, \mathscr L(\Lambda^{0,q}T^*(\Omega),\Lambda^{0,q}T^*(\Omega)))$
is a classical symbol of order $k$.
From now on, we write
\[\frac{1}{(2\pi)^{2n-1}}\int\Bigl(\int^{\infty}_0\Bigl(e^{i(\psi(t,x,\eta)-\seq{y,\eta})}a(t,x,\eta)-e^{i(\psi(\infty,x,\eta)-\seq{y,\eta})}
a(\infty,x,\eta)\Bigl)dt\Bigl)d\eta\]
to denote the kernel of pseudodifferential operator of order $k-1$ type
$(\frac{1}{2},\frac{1}{2})$ from sections of $\Lambda^{0,q}T^*(\Omega)$ to sections of
$\Lambda^{0,q}T^*(\Omega)$ with symbol
\[\int^{\infty}_0\Bigl(e^{i(\psi(t,x,\eta)-\seq{x,\eta})}a(t,x,\eta)-
e^{i(\psi(\infty,x,\eta)-\seq{x,\eta})}a(\infty,x,\eta)\Bigl)dt\]
in $S^{k-1}_{\frac{1}{2},\frac{1}{2}}((T^*(\Omega);\, \mathscr L(\Lambda^{0,q}T^*(\Omega),\Lambda^{0,q}T^*(\Omega)))$.
(See Remark~\ref{r:0809042013}.) From
Proposition~\ref{p:hf-asym}, we have the following

\begin{prop} \label{p:he-basis}
Let $Q$ be a $C^\infty$ differential operator on $\Omega$ of order $m$. Let
\[b(t,x,\eta)\in\hat S^k_{r}(\ol\Real_+\times T^*(\Omega);\, \mathscr L(\Lambda^{0,q}T^*(\Omega),\Lambda^{0,q}T^*(\Omega)))\]
with $r>0$. We assume that $b(t,x,\eta)=0$ when
$\abs{\eta}\leq 1$. Set
\[Q(e^{i(\psi(t,x,\eta)-\seq{y,\eta})}b(t,x,\eta))=e^{i(\psi(t,x,\eta)-\seq{y,\eta})}
c(t,x,\eta),\]
$c(t, x, \eta)\in\hat S^{k+m}_r(\ol\Real_+\times T^*(\Omega);\, \mathscr L(\Lambda^{0,q}T^*(\Omega),\Lambda^{0,q}T^*(\Omega)))$, $r>0$.
Put
\begin{align*}
&B(x, y)=\frac{1}{(2\pi)^{2n-1}}\int\Bigl(\int^{\infty}_0 e^{i(\psi(t,x,\eta)-\seq{y,\eta})}b(t,x,\eta)dt\Bigl)d\eta, \\
&C(x, y)=\frac{1}{(2\pi)^{2n-1}}\int\Bigl(\int^{\infty}_0 e^{i(\psi(t,x,\eta)-\seq{y,\eta})}c(t,x,\eta)dt\Bigl)d\eta.
\end{align*}
We have $Q\circ B\equiv C$.
\end{prop}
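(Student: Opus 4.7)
The plan is to justify the formal interchange of $Q$ with the oscillatory integral defining $B(x,y)$, using the pointwise Leibniz identity
\[ Q_x\bigl(e^{i(\psi(t,x,\eta)-\seq{y,\eta})}b(t,x,\eta)\bigr) = e^{i(\psi(t,x,\eta)-\seq{y,\eta})}c(t,x,\eta)\]
supplied by Proposition~\ref{p:hf-asym}. That formula expresses $c$ as a finite sum $\sum_{\abs{\alpha}\leq m}\frac{1}{\alpha!}q^{(\alpha)}(x,\psi'_x)R_\alpha(\psi,D_x)b$; combined with Lemma~\ref{l:ss-sjostrand} (to control derivatives of $\psi$) and Remark~\ref{r:ss-heatclass}(a)--(b), one verifies directly that $c\in\hat S^{k+m}_r$ with the same $r>0$, consistent with the claim in the statement.

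First I would write $B=B_1+B_2$ by inserting the cutoff $\chi_W$ from the proof of Proposition~\ref{p:ss-oscill} (with $B_1$ carrying the factor $1-\chi_W$), and likewise $C=C_1+C_2$. Outside $W$ one has ${\rm Im}\,\psi\geq c\abs{\eta}$, so both $B_1$ and $C_1$ have $C^\infty$ kernels on $\Omega\times\Omega$; hence $Q\circ B_1$ and $C_1$ are smoothing, and it suffices to prove $Q\circ B_2\equiv C_2$.

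For the principal piece $B_2$, I would use the $\lambda$-regularization $B_{2,\lambda}$ from the proof of Proposition~\ref{p:ss-oscillmore}, obtained by inserting the extra cutoff $\chi(\eta/\lambda)$; its kernel is smooth, so $Q_x$ commutes with both the $d\eta$- and $dt$-integrals. Leibniz then yields
\[ Q_x\bigl(e^{i(\psi-\seq{y,\eta})}\chi_W\,b\,\chi(\eta/\lambda)\bigr)= e^{i(\psi-\seq{y,\eta})}\bigl(\chi_W\chi(\eta/\lambda)\,c+e_\lambda\bigr),\]
where $e_\lambda$ collects those terms in which some derivative from $Q_x$ has landed on a cutoff. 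Terms hitting $\chi_W$ are supported in a neighborhood of $\pr W$, where ${\rm Im}\,\psi\geq c'\abs{\eta}$, and so produce a smoothing contribution. Terms hitting $\chi(\eta/\lambda)$ are $O(\lambda^{-N})$ for every $N$ after integration by parts in $y$ (using $\pr_y(\psi-\seq{y,\eta})=-\eta\neq 0$), and disappear as $\lambda\to\infty$. Taking the limit and invoking Proposition~\ref{p:ss-pseudo} to identify the resulting pseudodifferential operator, one gets $Q\circ B_2\equiv C_2$, hence $Q\circ B\equiv C$.

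The main obstacle is the $\lambda\to\infty$ passage: the exponential decay in $t$ granted by the hypothesis $r>0$ is essential, as it guarantees both the absolute convergence of the $dt$-integral (legitimizing the commutation with $Q_x$) and the uniform control of all error terms. Without $r>0$ the interchange would fail and the reduction to the intertwining $Q_x(e^{i\psi}b)=e^{i\psi}c$ could not be made rigorous.
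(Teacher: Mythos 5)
Your proof is correct and, like the paper, rests on the intertwining identity from Proposition~\ref{p:hf-asym}, but the conventions established just above the statement make a shorter route available. The operators $B$ and $C$ are by definition the pseudodifferential operators of type $(\frac{1}{2},\frac{1}{2})$ with symbols $q_B(x,\eta)=\int_0^\infty e^{i(\psi-\seq{x,\eta})}b\,dt$ and $q_C(x,\eta)=\int_0^\infty e^{i(\psi-\seq{x,\eta})}c\,dt$ (Proposition~\ref{p:ss-pseudo}), and because $Q$ is a differential operator of order $m$ the symbol of $Q\circ B$ is exactly $\sum_{\abs{\alpha}\leq m}\frac{1}{\alpha!}q_Q^{(\alpha)}(x,\eta)D^\alpha_x q_B(x,\eta)$, a finite sum with no remainder. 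Writing $e^{i\seq{x-y,\eta}}q_B(x,\eta)=\int_0^\infty e^{i(\psi-\seq{y,\eta})}b\,dt$, interchanging $Q_x$ with the $dt$-integral (legitimate by the exponential decay in $t$ from $r>0$ together with the bounds of Lemma~\ref{l:ss-sjostrand}), and applying $Q_x(e^{i(\psi-\seq{y,\eta})}b)=e^{i(\psi-\seq{y,\eta})}c$, one gets $\sigma_{Q\circ B}=q_C$ directly, hence $Q\circ B\equiv C$ --- no cutoff decomposition or $\lambda$-regularization is needed. Your kernel-level route via $\chi_W$ and $\chi(\eta/\lambda)$ is also valid, but it buys nothing extra here, and it contains one small inaccuracy: $Q$ differentiates only in $x$ while $\chi(\eta/\lambda)$ is independent of $x$, so no term of your Leibniz expansion can land a derivative on $\chi(\eta/\lambda)$; the error $e_\lambda$ consists solely of the pieces supported near $\pr W$, which you already treat correctly.
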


\begin{prop} \label{p:he-basismore}
Let $Q$ be a $C^\infty$ differential operator on $\Omega$ of order $m$. Let
\[b(t,x,\eta)\in\hat S^k_{0}(\ol\Real_+\times T^*(\Omega);\, \mathscr L(\Lambda^{0,q}T^*(\Omega),\Lambda^{0,q}T^*(\Omega))).\]
We assume that $b(t, x, \eta)=0$ when $\abs{\eta}\leq 1$ and that
\[b(t, x, \eta)-b(\infty, x, \eta)\in
\hat S^{k}_r(\ol\Real_+\times T^*(\Omega);\, \mathscr L(\Lambda^{0,q}T^*(\Omega),\Lambda^{0,q}T^*(\Omega)))\]
with $r>0$, where
$b(\infty, x, \eta)\in C^\infty(T^*(\Omega);\, \mathscr L(\Lambda^{0,q}T^*(\Omega),\Lambda^{0,q}T^*(\Omega)))$
is a classical symbol of order $k$. Set
\begin{align*}
&Q\Bigl(e^{i(\psi(t,x,\eta)-\seq{y,\eta})}b(t,x,\eta)-e^{i(\psi(\infty,x,\eta)-\seq{y,\eta})}b(\infty,x,\eta)\Bigl) \\
&=e^{i(\psi(t,x,\eta)-\seq{y,\eta})}c(t, x, \eta)-e^{i(\psi(\infty,x,\eta)-\seq{y,\eta})}c(\infty,x,\eta),
\end{align*}
where
$c(t, x, \eta)\!\in\hat S^{k+m}_0\!(\ol\Real_+\times T^*(\Omega);\, \mathscr L(\Lambda^{0,q}T^*(\Omega),\Lambda^{0,q}T^*(\Omega)))$,
\[c(\infty, x, \eta)\in C^\infty(T^*(\Omega);\,
\mathscr L(\Lambda^{0,q}T^*(\Omega),\Lambda^{0,q}T^*(\Omega)))\]
is a classical symbol of order $k+m$. Then
\[c(t, x, \eta)-c(\infty, x, \eta)\in
\hat S^{k+m}_r(\ol\Real_+\times T^*(\Omega);\, \mathscr L(\Lambda^{0,q}T^*(\Omega),\Lambda^{0,q}T^*(\Omega)))\]
with $r>0$. Put
\begin{align*}
B(x, y)=\frac{1}{(2\pi)^{2n-1}}\int\Bigl(\int^{\infty}_0 &\Bigl(e^{i(\psi(t,x,\eta)-\seq{y,\eta})}b(t,x,\eta)-  \\
                                                &e^{i(\psi(\infty,x,\eta)-\seq{y,\eta})}b(\infty,x,\eta)\Bigl)dt\Bigl)d\eta, \\
C(x, y)=\frac{1}{(2\pi)^{2n-1}}\int\Bigl(\int^{\infty}_0 &\Bigl(e^{i(\psi(t,x,\eta)-\seq{y,\eta})}c(t,x,\eta)-   \\
                                                &e^{i(\psi(\infty,x,\eta)-\seq{y,\eta})}c(\infty,x,\eta)\Bigl)dt\Bigl)d\eta.
\end{align*}
We have $Q\circ B\equiv C$.
\end{prop}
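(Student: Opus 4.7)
The plan is to use Proposition~\ref{p:hf-asym} twice, once with the phase $\psi(t,x,\eta)$ and once with $\psi(\infty,x,\eta)$, to obtain explicit formulas for $c(t,x,\eta)$ and $c(\infty,x,\eta)$. Since $Q$ has order $m$, Proposition~\ref{p:hf-asym} gives the \emph{exact} (finite) identities
\begin{align*}
c(t,x,\eta) &= \sum_{|\alpha|\le m}\frac{1}{\alpha!}\,q^{(\alpha)}(x,\psi'_x(t,x,\eta))\,R_\alpha(\psi(t,\cdot,\eta),D_x)\,b(t,\cdot,\eta)\bigl|_{y=x},\\
c(\infty,x,\eta) &= \sum_{|\alpha|\le m}\frac{1}{\alpha!}\,q^{(\alpha)}(x,\psi'_x(\infty,x,\eta))\,R_\alpha(\psi(\infty,\cdot,\eta),D_x)\,b(\infty,\cdot,\eta)\bigl|_{y=x}.
\end{align*}
From these formulas, $c(\infty,x,\eta)$ is a classical symbol of order $k+m$: each $R_\alpha(\psi(\infty),D_x)b(\infty)$ is a polynomial in derivatives of $\psi(\infty)$ (of order $\geq 2$ in $x$ and positively homogeneous of appropriately reduced degree in $\eta$) and in derivatives of $b(\infty)$, and the class of classical symbols is closed under such operations.

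To verify $c(t)-c(\infty)\in\hat S^{k+m}_r$ with some $r>0$, I would write the difference as a sum of two kinds of terms obtained by the standard "add and subtract" trick applied term by term: terms in which a factor $b(t)-b(\infty)\in\hat S^k_r$ appears (and the remaining factors are symbols of appropriate orders), and terms in which a factor $\psi(t,\cdot,\eta)-\psi(\infty,\cdot,\eta)$ (or one of its derivatives) appears. For the first type, closure of $\hat S^\cdot_r$ under products (Remark~\ref{r:ss-heatclass}~(a)) gives the required class. For the second type, Proposition~\ref{p:c-basislimit} and the quasi-homogeneity provide exponential decay estimates of the form~\eqref{e:c-************} for all derivatives of $\psi(t)-\psi(\infty)$, and in particular $\psi'_x(t)-\psi'_x(\infty)$ together with the higher derivatives entering $R_\alpha$ all enjoy such decay. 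Multiplying by the remaining bounded symbolic factors and by the classical symbols $q^{(\alpha)}$ and $b(\infty)$ yields membership in $\hat S^{k+m}_r$ with a strictly positive $r$.

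For the statement $Q\circ B\equiv C$, I would regularize $B$ by inserting a cutoff $\chi(\varepsilon\eta)$ as in the proof of Proposition~\ref{p:ss-oscillmore}, apply $Q$ under the integral (legitimate for the regularized kernel, which is smooth in $(x,y)$ after integrating over $t$), use the exact identity from Proposition~\ref{p:hf-asym} pointwise in $(t,x,\eta)$ to identify the integrand with the corresponding regularized integrand for $C$, and then let $\varepsilon\to 0$. Both limits exist in the sense of distributions (by the arguments of Remark~\ref{r:ss-oscillmore} and Remark~\ref{r:0809042013}, noting that $c(t)-c(\infty)\in\hat S^{k+m}_r$), and agree up to smoothing kernels coming from the contributions where $(1-\chi_W)$ already forced ${\rm Im}\,\psi\gtrsim|\eta|$. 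This directly gives $Q\circ B\equiv C$.

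The main obstacle is the bookkeeping in the second paragraph: one must verify that after expanding the difference $c(t)-c(\infty)$ via telescoping the individual factors inside $R_\alpha$ and inside $q^{(\alpha)}(x,\psi'_x(\cdot))$, every resulting term contains at least one factor enjoying exponential decay in $t|\eta|$ at the rate required to land in $\hat S^{k+m}_r$ for some $r>0$. The exponential decay of $\psi(t)-\psi(\infty)$ and its derivatives from Proposition~\ref{p:c-basislimit}, combined with the $\hat S^k_r$ bound on $b(t)-b(\infty)$ and the classical-symbol bounds on $b(\infty)$ and $\psi(\infty)$, is exactly what is needed; the rest is a routine application of Leibniz's rule and Remark~\ref{r:ss-heatclass}.
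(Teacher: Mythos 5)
The paper gives no explicit proof of this proposition (it simply asserts that both Propositions~\ref{p:he-basis} and~\ref{p:he-basismore} follow from Proposition~\ref{p:hf-asym}), and your write-up supplies exactly the argument the paper leaves implicit: the exact finite expansion from Proposition~\ref{p:hf-asym}, the telescoping of $c(t)-c(\infty)$ into terms each containing one exponentially decaying factor (either $b(t)-b(\infty)\in\hat S^k_r$ or a derivative of $\psi(t)-\psi(\infty)$ controlled by~(\ref{e:c-************})), and the regularization to justify interchanging $Q$ with the $t$- and $\eta$-integrals. This is the right proof and matches the paper's approach.

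One small imprecision: in the last paragraph you attribute the residual "smoothing kernels" to the $\chi_W$ cut-off, but $\chi_W$ belongs to the proof of Proposition~\ref{p:ss-oscill}, not to the definition of $B$ and $C$ used here. In Proposition~\ref{p:he-basismore}, $B$ and $C$ are defined (via Remark~\ref{r:0809042013}) as the pseudodifferential operators with symbols $\int^\infty_0\bigl(e^{i(\psi(t,x,\eta)-\langle x,\eta\rangle)}b(t)-e^{i(\psi(\infty,x,\eta)-\langle x,\eta\rangle)}b(\infty)\bigr)\,dt$ and the analogous expression with $c$. Since $Q$ is a genuine differential operator, $K_{Q\circ B}(x,y)=Q_xK_B(x,y)$ holds exactly, and the pointwise identity $Q_x\bigl(e^{i(\psi(t,x,\eta)-\langle y,\eta\rangle)}b(t)\bigr)=e^{i(\psi(t,x,\eta)-\langle y,\eta\rangle)}c(t)$ (because $e^{-i\langle y,\eta\rangle}$ is constant in $x$, so Proposition~\ref{p:hf-asym} applies verbatim) gives $Q\circ B = C$ exactly, not merely modulo smoothing. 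The $\equiv$ in the statement is of course implied, but there is no genuine smoothing remainder to track here; you can drop the appeal to $\chi_W$.
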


We return to our problem. From now on, we assume that our operators are properly
supported. We assume that $Y(q)$ holds. Let
\[a_j(t,x,\eta)\in\hat S^{-j}_r(\ol\Real_+\times T^*(\Omega);\, \mathscr L(\Lambda^{0,q}T^*(\Omega),\Lambda^{0,q}T^*(\Omega))), \ \
j=0,1,\ldots, r>0,\]
be as in Proposition~\ref{p:ss-solution} with $a_0(0,x,\eta)=I$, $a_j(0,x,\eta)=0$ when $j>0$. Let
$a(t,x,\eta)\sim\sum^\infty_{j=0}a_j(t, x, \eta)$ in $\hat S^{0}_r(\ol\Real_+\times T^*(\Omega);\,
\mathscr L(\Lambda^{0,q}T^*(\Omega),\Lambda^{0,q}T^*(\Omega)))$,
where $r>0$. Let
\begin{equation} \label{e:070727home1}
(\pr_t+\Box^{(q)}_b)\Bigl(e^{i(\psi(t,x,\eta)-\seq{y,\eta})}a(t,x,\eta)\Bigl)=
e^{i(\psi(t,x,\eta)-\seq{y,\eta})}b(t,x,\eta).
\end{equation}
From Proposition~\ref{p:hf-asym}, we see that for
every compact set $K\subset\Omega$, $\varepsilon>0$ and all indices $\alpha$, $\beta$ and $N\in\Pstint$,
there exists $c_{\alpha,\beta,N,\eps,K}>0$ such that
\begin{equation} \label{e:he-chinyu}
\abs{\pr^\alpha_x\pr^\beta_\eta b(t,x,\eta)}
\leq c_{\alpha,\beta,N,\eps,K}e^{t(-r(x,\eta)+\varepsilon\abs{\eta})}(\abs{\eta}^{-N}+\abs{\eta}^{2-N}
({\rm Im\,}\psi(t,x,\eta))^N),
\end{equation}
where $t\in\ol\Real_+$, $x\in K$, $\abs{\eta}\geq 1$. Choose $\chi\in C^\infty_0
(\Real^{2n-1})$ so that $\chi(\eta)=1$ when $\abs{\eta}<1$ and $\chi(\eta)=0$ when $\abs{\eta}>2$.
Set
\begin{equation} \label{e:allerallerallerhome1}
A(x,y)=\frac{1}{(2\pi)^{2n-1}}\int\Bigl(\int^{\infty}_0 e^{i(\psi(t,x,\eta)-\seq{y,\eta})}a(t,x,\eta)
(1-\chi(\eta))dt\Bigl)d\eta.
\end{equation}
We have the following proposition

\begin{prop} \label{p:he-yofq}
Suppose $Y(q)$ holds. Let $A$ be as in (\ref{e:allerallerallerhome1}). We have\,
$\Box^{(q)}_bA\equiv I$.
\end{prop}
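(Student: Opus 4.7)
The plan is to apply $\Box^{(q)}_b$ under the integral sign in $A$, use the identity $(\pr_t+\Box^{(q)}_b)(e^{i\psi}a)=e^{i\psi}b$ from (\ref{e:070727home1}) to rewrite $\Box^{(q)}_b(e^{i\psi}a)=-\pr_t(e^{i\psi}a)+e^{i\psi}b$, and then analyze separately the $\pr_t$-boundary contribution and the $e^{i\psi}b$-volume contribution. The former will produce the identity modulo smoothing via the fundamental theorem of calculus in $t$; the latter will be shown smoothing using (\ref{e:he-chinyu}) together with the exponential decay of the symbol provided by $Y(q)$.

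Concretely, Proposition~\ref{p:he-basis} applied with $Q=\Box^{(q)}_b$ gives
\[
\Box^{(q)}_bA(x,y)\equiv\frac{1}{(2\pi)^{2n-1}}\!\int\!\!\Bigl(\int_0^\infty\!\bigl(-\pr_t(e^{i(\psi-\seq{y,\eta})}a)+e^{i(\psi-\seq{y,\eta})}b\bigr)(1-\chi(\eta))dt\Bigr)d\eta.
\]
For the $-\pr_t$ term, the fundamental theorem of calculus in $t$ produces boundary values at $t=0$ and $t=\infty$. Since $Y(q)$ holds, Proposition~\ref{p:ss-solution} yields $r>0$, so $a\in\hat S^0_r$ is bounded on compacts by $ce^{-ct\abs{\eta}}$ for some $c>0$ and the $t=\infty$ contribution vanishes. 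At $t=0$ one has $\psi(0,x,\eta)=\seq{x,\eta}$, and the initial data $a_0(0)=I$, $a_j(0)=0$ for $j\ge1$ combined with $a\sim\sum_j a_j$ force $a(0,x,\eta)-I$ to be a symbol of order $-\infty$. Hence the $-\pr_t$ contribution equals
\[
\Bigl(\frac{1}{(2\pi)^{2n-1}}\int e^{i\seq{x-y,\eta}}(1-\chi(\eta))d\eta\Bigr)\cdot I
\]
modulo smoothing, and since $\frac{1}{(2\pi)^{2n-1}}\int e^{i\seq{x-y,\eta}}\chi(\eta)d\eta\in C^\infty(\Omega\times\Omega)$, this equals the identity kernel modulo smoothing.

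The main obstacle is to show that the remaining term
\[
R(x,y):=\frac{1}{(2\pi)^{2n-1}}\int\!\!\int_0^\infty\!\!e^{i(\psi(t,x,\eta)-\seq{y,\eta})}b(t,x,\eta)(1-\chi(\eta))dtd\eta
\]
is smoothing. The tools are the estimate (\ref{e:he-chinyu}), the positivity $r>0$, and $\abs{e^{i\psi}}=e^{-{\rm Im\,}\psi}$. On a compact $K\subset\Omega$ I would choose $\varepsilon$ smaller than $\min_K r(x,\eta)/\abs{\eta}$, so that $e^{t(-r+\varepsilon\abs{\eta})}\le e^{-ct\abs{\eta}}$. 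The first summand $\abs{\eta}^{-N}$ in (\ref{e:he-chinyu}) then integrates in $t$ to $c\abs{\eta}^{-N-1}$, which is rapidly decreasing in $\eta$. For the second summand $\abs{\eta}^{2-N}({\rm Im\,}\psi)^N$, I would absorb a factor $e^{-{\rm Im\,}\psi}$ from $\abs{e^{i\psi}}$ using $x^N e^{-x}\le c_N$ and then integrate in $t$ to obtain a bound $c\abs{\eta}^{1-N}$. Differentiation in $x$ and $y$ (controlled via Lemma~\ref{l:ss-sjostrand}, Lemma~\ref{l:allergohome1}, and integration by parts in $\eta$ to handle $y$-derivatives) preserves the scheme up to a polynomial loss in $\abs{\eta}$ that is absorbed by choosing $N$ large. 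Hence $R\in C^\infty(\Omega\times\Omega)$, and $\Box^{(q)}_bA\equiv I$.
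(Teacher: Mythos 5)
Your proposal is correct and follows essentially the same route as the paper: apply Proposition~\ref{p:he-basis} to move $\Box^{(q)}_b$ inside the integral via the decomposition $\Box^{(q)}_b(e^{i\psi}a)=e^{i\psi}b-\pr_t(e^{i\psi}a)$, show the $b$-term is smoothing from (\ref{e:he-chinyu}), and recover the identity from the $t=0$ boundary term in the $\pr_t$-integral. You supply more explicit detail than the paper on the two "smoothing" steps (the $e^{-{\rm Im\,}\psi}({\rm Im\,}\psi)^N\le c_N$ absorption, the role of $r>0$ at $t=\infty$, and the $S^{-\infty}$ estimate for $a(0,x,\eta)-I$), but the skeleton and key inputs are identical.
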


\begin{proof}
We have
\begin{align*}
&\Box^{(q)}_b\Bigl(\frac{1}{(2\pi)^{2n-1}}e^{i(\psi(t,x,\eta)-\seq{y,\eta})}
a(t,x,\eta)(1-\chi(\eta))\Bigl) \\
&\quad=\frac{1}{(2\pi)^{2n-1}}e^{i(\psi(t,x,\eta)-\seq{y,\eta})}b(t,x,\eta)(1-\chi(\eta)) \\
&\quad\quad -\frac{1}{(2\pi)^{2n-1}}\frac{\pr}{\pr t}\Bigl(e^{i(\psi(t,x,\eta)-\seq{y,\eta})}a(t, x, \eta)\Bigl)
(1-\chi(\eta)),
\end{align*}
where $b(t, x, \eta)$ is as in (\ref{e:070727home1}), (\ref{e:he-chinyu}).
From Proposition~\ref{p:he-basis}, we have
\begin{align*}
\Box^{(q)}_b\circ A
&\equiv\frac{1}{(2\pi)^{2n-1}}\int\Bigl(\int^{\infty}_0 e^{i(\psi(t,x,\eta)-\seq{y,\eta})}b(t,x,\eta)(1-\chi(\eta))
  dt\Bigl)d\eta  \\
&\quad-\frac{1}{(2\pi)^{2n-1}}\int\Bigl(\int^{\infty}_0\frac{\pr}{\pr t}\Bigl(e^{i(\psi(t,x,\eta)-\seq{y,\eta})}
 a(t, x, \eta)\Bigl)(1-\chi(\eta))dt\Bigl)d\eta.
\end{align*}
From (\ref{e:he-chinyu}), it follows that
$\int\Bigl(\int^{\infty}_0 e^{i(\psi(t,x,\eta)-\seq{y,\eta})}b(t,x,\eta)(1-\chi(\eta))
  dt\Bigl)d\eta$
is smoothing. Choose a cut-off function $\chi_1(\eta)\in C^\infty_0(\Real^{2n-1})$ so that
$\chi_1(\eta)=1$ when $\abs{\eta}<1$ and $\chi_1(\eta)=0$ when $\abs{\eta}>2$. Take a
$u(y)\in C^\infty_0(\Omega;\, \Lambda^{0,q}T^*(\Omega))$,
then
\begin{align*}
& \lim_{\eps\To 0}\int\!\!\!\int\!\!\!\Bigl(\int^{\infty}_0\!\!\!\frac{\pr}{\pr t}
  \Bigl(e^{i(\psi(t,x,\eta)-\seq{y,\eta}}a(t, x, \eta)\Bigl)(1-\chi(\eta))\chi_1(\eps\eta)u(y)dt\Bigl)d\eta dy  \\
&\quad=-\lim_{\eps\To 0}\int\!\!\!\int\!\!\!e^{i\seq{x-y,\eta}}(1-\chi(\eta))
  \chi_1(\eps\eta)u(y)d\eta dy.
\end{align*}
Hence
$\frac{1}{(2\pi)^{2n-1}}\int\!\Bigl(\int^{\infty}_0
\frac{\pr}{\pr t}\Bigl(e^{i(\psi(t,x,\eta)-\seq{y,\eta})}a(t, x, \eta)\Bigl)(1-\chi(\eta))dt\Bigl)d\eta\equiv-I$.
Thus $\Box^{(q)}_b\circ A\equiv I$.
\end{proof}

\begin{rem} \label{r:he-para}
We assume that $Y(q)$ holds. From Proposition~\ref{p:he-yofq}, we know that, for every local
coordinate patch $X_{j}$, there exists a properly supported operator
$A_j: \mathscr D'(X_j;\, \Lambda^{0,q}T^*(X_j))\To\mathscr D'(X_j;\, \Lambda^{0,q}T^*(X_j))$
such that
\[A_{j}: H^s_{\rm loc}(X_j;\, \Lambda^{0,q}T^*(X_j))\To
H^{s+1}_{\rm loc}(X_j;\, \Lambda^{0,q}T^*(X_j))\]
and $\Box^{(q)}_b\circ A_j-I: H^s_{\rm loc}(X_j;\, \Lambda^{0,q}T^*(X_j))\To
H^{s+m}_{\rm loc}(X_j;\, \Lambda^{0,q}T^*(X_j))$
for all $s\in\Real$ and $m\geq 0$. We assume that $X=\bigcup^k_{j=1}X_{j}$. Let
$\set{\chi_j}$ be a $C^\infty$ partition of unity subordinate to $\set{X_{j}}$ and set
$Au= \sum_jA_j(\chi_ju)$, $u\in\mathscr D'(X;\, \Lambda^{0,q}T^*(X))$.
$A$ is well-defined as a continuous operator:
\[A: H^s(X;\, \Lambda^{0,q}T^*(X))\To H^{s+1}(X;\, \Lambda^{0,q}T^*(X))\]
for all $s\in\Real$. We notice that $A$ is properly supported. We have
$\Box^{(q)}_b\circ A-I: H^s(X;\, \Lambda^{0,q}T^*(X))\To H^{s+m}(X;\, \Lambda^{0,q}T^*(X))$
for all $s\in\Real$ and $m\geq 0$.
\end{rem}

Assume that $Y(q)$ fails. Let
\[a_j(t, x, \eta)\in\hat S^{-j}_0(\ol\Real_+\times T^*(\Omega);\, \mathscr L(\Lambda^{0,q}T^*(\Omega),\Lambda^{0,q}T^*(\Omega))),\]
$j=0,1,\ldots$,
and
$a_j(\infty, x, \eta)\in C^\infty(T^*(\Omega);\, \mathscr L(\Lambda^{0,q}T^*(\Omega),\Lambda^{0,q}T^*(\Omega)))$,
$j=0,1,\ldots$,
be as in Proposition~\ref{p:ss-SBmore}. We recall that for some $r>0$,
\[a_j(t,x,\eta)-a_j(\infty,x,\eta)\in\hat S^{-j}_r(\ol\Real_+\times T^*(\Omega);\,
\mathscr L(\Lambda^{0,q}T^*(\Omega),\Lambda^{0,q}T^*(\Omega))),\]
$j=0,1,\ldots$. Let
\begin{equation} \label{e:0707292108}
a(\infty, x, \eta)\sim\sum^\infty_{j=0}a_j(\infty, x, \eta)
\end{equation}
in the H\"{o}rmander symbol space $S^{0}_{1,0}(T^*(\Omega);\,
\mathscr L(\Lambda^{0,q}T^*(\Omega),\Lambda^{0,q}T^*(\Omega)))$. Let
\begin{equation} \label{e:allerallerallergo1}
a(t,x,\eta)\sim\sum^\infty_{j=0}a_j(t, x, \eta)
\end{equation}
in $\hat S^{0}_0(\ol\Real_+\times T^*(\Omega);\,
\mathscr L(\Lambda^{0,q}T^*(\Omega),\Lambda^{0,q}T^*(\Omega)))$.
We take $a(t, x, \eta)$ so that for every compact set $K\subset\Omega$ and all indices $\alpha$, $\beta$, $\gamma$, $k$,
there exists $c>0$, $c$ is independent of $t$, such that
\begin{equation} \label{e:allerallerallergo2}
\abs{\pr^\gamma_t\pr^\alpha_x\pr^\beta_\eta(a(t, x, \eta)-\sum^k_{j=0}a_j(t, x, \eta))}
\leq c(1+\abs{\eta})^{-k-1+\gamma-\abs{\beta}},
\end{equation}
where $t\in\ol\Real_+$, $x\in K$, $\abs{\eta}\geq1$, and
\[a(t, x, \eta)-a(\infty, x, \eta)\in\hat S^0_r(\ol\Real_+\times T^*(\Omega);\,
\mathscr L(\Lambda^{0,q}T^*(\Omega),\\ \Lambda^{0,q}T^*(\Omega)))\]
with $r>0$. Let
\begin{equation} \label{e:allerimportanthome3}
(\pr_t+\Box^{(q)}_b)\Bigl(e^{i(\psi(t,x,\eta)-\seq{y,\eta})}a(t,x,\eta)\Bigl)=
 e^{i(\psi(t,x,\eta)-\seq{y,\eta})}b(t,x,\eta).
\end{equation}
Then
$b(t, x, \eta)\in\hat S^2_0(\ol\Real_+\times T^*(\Omega);\, \mathscr L(\Lambda^{0,q}T^*(\Omega),\Lambda^{0,q}T^*(\Omega)))$
and
\begin{equation} \label{e:allerallergogogohome1}
b(t, x, \eta)-b(\infty, x, \eta)\in
\hat S^2_r(\ol\Real_+\times T^*(\Omega);\, \mathscr L(\Lambda^{0,q}T^*(\Omega),\Lambda^{0,q}T^*(\Omega)))
\end{equation}
with $r>0$, where $b(\infty, x, \eta)$ is a classical symbol of order $2$. Moreover, we have
\begin{align} \label{e:0707281025}
&(\pr_t+\Box^{(q)}_b)\Bigl(\frac{1}{(2\pi)^{2n-1}}\Bigl(e^{i(\psi(t,x,\eta)-\seq{y,\eta})}
a(t,x,\eta)-e^{i(\psi(\infty,x,\eta)-\seq{y,\eta})}a(\infty,x,\eta)\Bigr)\Bigr) \nonumber  \\
&=\frac{1}{(2\pi)^{2n-1}}\Bigl(e^{i(\psi(t,x,\eta)-\seq{y,\eta})}b(t,x,\eta)
-e^{i(\psi(\infty,x,\eta)-\seq{y,\eta})}b(\infty,x,\eta)\Bigl).
\end{align}

From Proposition~\ref{p:hf-asym}, we see that for every compact set $K\subset\Omega$ and all indices $\alpha$, $\beta$ and
$N\in\Pstint$, there exists $c_{\alpha,\beta,N,K}>0$ such that
\begin{equation} \label{e:he-chinyumore1}
\abs{\pr^\alpha_x\pr^\beta_\eta b(t,x,\eta)}
\leq c_{\alpha,\beta,N,K}(\abs{\eta}^{-N}+\abs{\eta}^{2-N}({\rm Im\,}\psi(t,x,\eta))^N),
\end{equation}
where $t\in\ol\Real_+$, $x\in K$, $\abs{\eta}\geq1$. Thus,
\begin{equation} \label{e:he-chinyumore2}
\abs{\pr^\alpha_x\pr^\beta_\eta b(\infty,x,\eta)}
\leq c_{\alpha,\beta,N,K}(\abs{\eta}^{-N}+\abs{\eta}^{2-N}({\rm Im\,}\psi(\infty,x,\eta))^N).
\end{equation}
From (\ref{e:allerallergogogohome1}), (\ref{e:he-chinyumore1}) and (\ref{e:he-chinyumore2}), it follows that
for every compact set $K\subset\Omega$, $\eps>0$ and all indices $\alpha$, $\beta$ and
$N\in\Pstint$, there exists $c_{\alpha,\beta,N,\eps,K}>0$ such that
\begin{align} \label{e:he-chinyumore3}
\lefteqn{ \abs{\pr^\alpha_x\pr^\beta_\eta\Bigl(b(t,x,\eta)-b(\infty, x, \eta)\Bigl)}}\nonumber  \\
&\quad \leq c_{\alpha,\beta,N,\eps,K}\Bigl
(e^{t(-r(x, \eta)+\varepsilon\abs{\eta})}(\abs{\eta}^{-N}+\abs{\eta}^{2-N}({\rm Im\,}\psi(t,x,\eta))^N)\Bigl)^
{\frac{1}{2}},
\end{align}
where $t\in\ol\Real_+$, $x\in K$, $\abs{\eta}\geq1$, $r>0$.

Choose $\chi\in C^\infty_0
(\Real^{2n-1})$ so that $\chi(\eta)=1$ when $\abs{\eta}<1$ and $\chi(\eta)=0$ when $\abs{\eta}>2$.
Set
\begin{align} \label{e:allerimportanthome1}
G(x, y) &= \frac{1}{(2\pi)^{2n-1}}\int\Bigl(\int^{\infty}_0\Bigl(e^{i(\psi(t,x,\eta)-\seq{y,\eta})}a(t,x,\eta)-\nonumber  \\
  &\quad\quad\quad e^{i(\psi(\infty,x,\eta)-\seq{y,\eta})}a(\infty,x,\eta)\Bigr)(1-\chi(\eta))dt\Bigl)d\eta.
\end{align}
Put
\begin{equation} \label{e:allerimportanthome2}
S(x, y)=\frac{1}{(2\pi)^{2n-1}}\int e^{i(\psi(\infty,x,\eta)-\seq{y,\eta})}a(\infty,x,\eta)d\eta.
\end{equation}
We have the following

\begin{prop} \label{p:he-yofnotq}
We assume that $Y(q)$ fails. Let $G$ and $S$
be as in (\ref{e:allerimportanthome1}) and (\ref{e:allerimportanthome2}) respectively. Then
$S+\Box^{(q)}_b\circ G\equiv I$
and
$\Box^{(q)}_b\circ S\equiv 0$.
\end{prop}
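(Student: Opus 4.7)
The plan is to apply $\Box^{(q)}_b$ under the integrals defining $G$ and $S$, using the fundamental identities (\ref{e:allerimportanthome3}) and (\ref{e:0707281025}), and then to show that the remaining error kernels are smoothing. Throughout I will exploit the fact that both $b(t,x,\eta)$ and $b(\infty,x,\eta)$ vanish to infinite order on $\Sigma$, together with the quadratic lower bounds for $\mathrm{Im}\,\psi(t,\cdot)$ and $\mathrm{Im}\,\psi(\infty,\cdot)$ from Proposition~\ref{p:c-basis} and Proposition~\ref{p:c-basislimit}.

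First I would establish $\Box^{(q)}_b\circ S\equiv 0$. Since $\psi(\infty,\cdot)$ and $a(\infty,\cdot)$ are time-independent, (\ref{e:allerimportanthome3}) specializes to $\Box^{(q)}_b(e^{i(\psi(\infty)-\seq{y,\eta})}a(\infty))=e^{i(\psi(\infty)-\seq{y,\eta})}b(\infty)$, so by Proposition~\ref{p:he-basismore} the symbol of $\Box^{(q)}_b\circ S$ is $e^{i(\psi(\infty,x,\eta)-\seq{x,\eta})}b(\infty,x,\eta)$. The estimate (\ref{e:he-chinyumore2}), the lower bound $\mathrm{Im}\,\psi(\infty,x,\eta)\geq c|\eta|\,\mathrm{dist}((x,\eta/|\eta|),\Sigma)^2$, the elementary fact that $u^N e^{-u}$ is bounded on $[0,\infty)$ for each $N$, and Lemma~\ref{l:allergohome1} combined with Leibniz's rule show that this symbol belongs to $S^{-\infty}_{\frac12,\frac12}$. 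Hence $\Box^{(q)}_b\circ S\equiv 0$.

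Next, for $S+\Box^{(q)}_b\circ G\equiv I$, I would use Proposition~\ref{p:he-basismore} to move $\Box^{(q)}_b$ under the integral, and then write $\Box^{(q)}_b=(\pr_t+\Box^{(q)}_b)-\pr_t$ so that the integrand equals
\[
\Box^{(q)}_b\bigl(e^{i\psi(t)}a(t)-e^{i\psi(\infty)}a(\infty)\bigr)
=e^{i\psi(t)}b(t)-e^{i\psi(\infty)}b(\infty)-\pr_t\bigl(e^{i\psi(t)}a(t)\bigr),
\]
by (\ref{e:0707281025}). Integrating $-\pr_t(e^{i\psi(t)}a(t))$ in $t$ from $0$ to $\infty$ telescopes to $e^{i\seq{x,\eta}}a(0,x,\eta)-e^{i\psi(\infty)}a(\infty)$, and the initial conditions $a_0(0)=I$, $a_j(0)=0$ for $j\geq 1$ give $a(0,x,\eta)\equiv I$ modulo $S^{-\infty}$. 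After integration in $\eta$ against $(1-\chi(\eta))/(2\pi)^{2n-1}$, this contribution is exactly $I-S$ modulo smoothing (the piece of the identity operator removed by $\chi$ is smoothing, and the missing low-frequency part of $S$ is smoothing by Proposition~\ref{p:ss-symbol}).

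The hard part will be showing that the residual
\[
R(x,y)=\frac{1}{(2\pi)^{2n-1}}\int\!\!\Bigl(\!\!\int_0^\infty\!\!\bigl(e^{i\psi(t)}b(t)-e^{i\psi(\infty)}b(\infty)\bigr)e^{-i\seq{y,\eta}}(1-\chi(\eta))dt\Bigr)d\eta
\]
is smoothing. The plan is to split
\[
e^{i\psi(t)}b(t)-e^{i\psi(\infty)}b(\infty)=e^{i\psi(t)}\bigl(b(t)-b(\infty)\bigr)+\bigl(e^{i\psi(t)}-e^{i\psi(\infty)}\bigr)b(\infty).
\]
For the first summand, (\ref{e:he-chinyumore3}) gives $t$-decay $e^{\frac{t}{2}(-r+\eps|\eta|)}$ together with the infinite-order vanishing of $b(t)-b(\infty)$ at $\Sigma$; coupled with $e^{-\mathrm{Im}\,\psi(t)}$ absorbing powers of $\mathrm{Im}\,\psi(t)$ and with Lemma~\ref{l:allergohome1}, the mechanism of Lemma~\ref{l:ss-symbol} shows that the resulting $t$-integrated symbol lies in $S^{-\infty}_{\frac12,\frac12}$. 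For the second summand, write $e^{i\psi(t)}-e^{i\psi(\infty)}=\int_\infty^t i\psi'_s(s)\,e^{i\psi(s)}\,ds$ and use Lemma~\ref{l:allergohome1071024} (exponential $t$-decay of $t\psi'_t$) together with (\ref{e:he-chinyumore2}); the mechanism of Lemma~\ref{l:ss-symbol2} then places this symbol in $S^{-\infty}_{\frac12,\frac12}$ as well. Combining these two pieces, $R\equiv 0$, and therefore $\Box^{(q)}_b\circ G\equiv I-S$, as required.
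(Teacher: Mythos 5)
Your proposal is correct and follows essentially the same route as the paper: push $\Box^{(q)}_b$ under the integral via Proposition~\ref{p:he-basismore}, use (\ref{e:0707281025}) and the telescoping of the $\pr_t$-integral with the initial data $a(0,x,\eta)\equiv I$ to produce $I-S$, and dispose of the residual by the same split $e^{i\psi(t)}(b(t)-b(\infty))+(e^{i\psi(t)}-e^{i\psi(\infty)})b(\infty)$ together with (\ref{e:he-chinyumore2}) and (\ref{e:he-chinyumore3}). The only cosmetic difference is that you spell out the smoothing estimates for the residual a bit more explicitly than the paper does.
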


\begin{proof}
We have
\begin{align*}
\lefteqn{\Box^{(q)}_b\Bigl(\frac{1}{(2\pi)^{2n-1}}e^{i(\psi(t,x,\eta)-\seq{y,\eta})}a(t,x,\eta)\Bigl)}  \\
&\quad=\frac{1}{(2\pi)^{2n-1}}e^{i(\psi(t,x,\eta)-\seq{y,\eta})}\Bigl(b(t,x,\eta)-i\frac{\pr\psi}
 {\pr t}a-\frac{\pr a}{\pr t}\Bigl),
\end{align*}
where $b(t, x, \eta)$ is as in (\ref{e:allerimportanthome3}).
Letting $t\To\infty$, we get
\begin{align*}
\lefteqn{\Box^{(q)}_b\Bigl(\frac{1}{(2\pi)^{2n-1}}e^{i(\psi(\infty,x,\eta)-\seq{y,\eta})}
a(\infty,x,\eta)\Bigl)}   \\
&\quad=\frac{1}{(2\pi)^{2n-1}}e^{i(\psi(\infty,x,\eta)-\seq{y,\eta})}b(\infty,x,\eta),
\end{align*}
where $b(\infty, x, \eta)$ is as in (\ref{e:allerallergogogohome1}) and (\ref{e:he-chinyumore2}).
From (\ref{e:he-chinyumore2}), we have
\[\frac{1}{(2\pi)^{2n-1}}\int e^{i(\psi(\infty,x,\eta)-\seq{y,\eta})}b(\infty,x,\eta)d\eta\]
is smoothing. Thus
$\Box^{(q)}_b\circ S\equiv 0$.

In view of (\ref{e:0707281025}), we have
\begin{align*}
\lefteqn{\Box^{(q)}_b\Bigl(\frac{1}{(2\pi)^{2n-1}}\Bigl(e^{i(\psi(t,x,\eta)-\seq{y,\eta})}
a(t,x,\eta)(1-\chi(\eta))}  \\
&\hskip50pt-e^{i(\psi(\infty,x,\eta)-\seq{y,\eta})}a(\infty,x,\eta)(1-\chi(\eta)\Bigr)\Bigr)  \\
&=\frac{1}{(2\pi)^{2n-1}}\Bigl(e^{i(\psi(t,x,\eta)-\seq{y,\eta})}b(t,x,\eta) \\
&\hskip30pt-e^{i(\psi(\infty,x,\eta)-\seq{y,\eta})}b(\infty,x,\eta)\Bigl)(1-\chi(\eta))  \\
&\hskip50pt-\frac{1}{(2\pi)^{2n-1}}\frac{\pr}{\pr t}\Bigl(e^{i(\psi(t,x,\eta)-\seq{y,\eta})}a(t, x, \eta)\Bigl)
(1-\chi(\eta)).
\end{align*}
From Proposition~\ref{p:he-basismore}, we have
\begin{align*}
\Box^{(q)}_b\circ G
&=\Box^{(q)}_b\Bigl(\frac{1}{(2\pi)^{2n-1}}\int\Bigl(\int^{\infty}_0\Bigl(e^{i(\psi(t,x,\eta)-\seq{y,\eta})}a(t,x,\eta)  \\
&\quad-e^{i(\psi(\infty,x,\eta)-\seq{y,\eta})}a(\infty,x,\eta)\Bigl)(1-\chi(\eta))dt\Bigl)d\eta\Bigl) \\
&\quad\equiv\frac{1}{(2\pi)^{2n-1}}\Bigl(\int\Bigl(\int^{\infty}_0\Bigl(e^{i(\psi(t,x,\eta)-\seq{y,\eta})}b(t,x,\eta) \\
&\quad-e^{i(\psi(\infty,x,\eta)-\seq{y,\eta})}b(\infty,x,\eta)\Bigl)(1-\chi(\eta))dt\Bigl)d\eta  \\
&\quad-\int\Bigl(\int^{\infty}_0 \frac{\pr}{\pr t}\Bigl(e^{i(\psi(t,x,\eta)-\seq{y,\eta})}a(t, x, \eta)\Bigl)
(1-\chi(\eta))dt\Bigl)d\eta\Bigl).
\end{align*}
In view of (\ref{e:he-chinyumore2}) and (\ref{e:he-chinyumore3}), we see that
\begin{align*}
&\int\Bigl(\int^{\infty}_0\Bigl(e^{i(\psi(t,x,\eta)-\seq{y,\eta})}b(t,x,\eta) \\
&\quad-e^{i(\psi(\infty,x,\eta)-\seq{y,\eta})}b(\infty,x,\eta)\Bigl)(1-\chi(\eta))dt\Bigl)d\eta \\
&=\int\Bigl(\int^{\infty}_0\Bigl(e^{i(\psi(t,x,\eta)-\seq{y,\eta})}-
e^{i(\psi(\infty,x,\eta)-\seq{y,\eta})}\Bigl)
b(\infty, x, \eta)(1-\chi(\eta))dt\Bigl)d\eta \\
&\quad+\int\Bigl(\int^{\infty}_0e^{i(\psi(t,x,\eta)-\seq{y,\eta})}
\Bigl(b(t, x, \eta)-b(\infty, x, \eta)\Bigl)(1-\chi(\eta))dt\Bigl)d\eta
\end{align*}
is smoothing.

Choose a cut-off function $\chi_1(\eta)\in C^\infty_0(\Real^{2n-1})$ so that
$\chi_1(\eta)=1$ when $\abs{\eta}<1$ and $\chi_1(\eta)=0$ when $\abs{\eta}>2$. Take a
$u\in C^\infty_0(\Omega;\, \Lambda^{0,q}T^*(\Omega))$, then
\begin{align*}
&\lim_{\eps\To 0}\int\Bigl(\int^{\infty}_0\frac{\pr}{\pr t}
\Bigl(e^{i(\psi(t,x,\eta)-\seq{y,\eta})}a(t, x, \eta)\Bigl)
(1-\chi(\eta))\chi_1(\eps\eta)u(y)dt\Bigl)d\eta dy  \\
&=\lim_{\eps\To 0}\int e^{i(\psi(\infty,x,\eta)-\seq{y,\eta})}a(\infty,x,\eta)(1-\chi(\eta))
\chi_1(\eps\eta)u(y)d\eta dy \\
&\quad-\lim_{\eps\To 0}\int e^{i\seq{x-y,\eta}}(1-\chi(\eta))\chi_1(\eps\eta)u(y)d\eta dy.
\end{align*}
Hence
\[\frac{1}{(2\pi)^{2n-1}}\int\Bigl(\int^{\infty}_0\frac{\pr}{\pr t}
\Bigl(e^{i(\psi(t,x,\eta)-\seq{y,\eta})}a(t, x, \eta)\Bigl)(1-\chi(\eta))dt\Bigl)d\eta\equiv S-I.\]
Thus $S+\Box_{b,q}\circ G\equiv I$.
\end{proof}

In the rest of this section, we recall some facts about Hilbert space theory for $\Box^{(q)}_b$. Our basic reference for these matters is~\cite{BG88}.
Let $A$ be as in Remark~\ref{r:he-para}.
$A$ has a formal adjoint
$A^*:\mathscr D'(X;\, \Lambda^{0,q}T^*(X))\To\mathscr D'(X;\, \Lambda^{0,q}T^*(X))$,
$(A^*u\ |\ v)=(u\ |\ Av)$, $u\in C^\infty(X;\, \Lambda^{0,q}T^*(X))$, $v\in C^\infty(X;\, \Lambda^{0,q}T^*(X))$.

\begin{lem} \label{l:he-para}
$A^*$ is well-defined as a continuous operator:
\[A^*: H^s(X;\, \Lambda^{0,q}T^*(X))\To H^{s+1}(X;\, \Lambda^{0,q}T^*(X))\]
for all $s\in\Real$. Moreover, we have $A^*\equiv A$.
\end{lem}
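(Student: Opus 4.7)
The strategy rests on two observations: first, the operator $A$ of Remark~\ref{r:he-para} is locally a properly supported pseudodifferential operator of order $-1$ type $(\frac{1}{2},\frac{1}{2})$, so $A^*$ lies in the same class; second, $\Box^{(q)}_b$ is formally self-adjoint with respect to the inner product~(\ref{e:0807161950}) (immediate from $\Box^{(q)}_b = \dbar_b\ol{\pr_b}^* + \ol{\pr_b}^*\dbar_b$ together with the definition of $\ol{\pr_b}^*$). Combining these, $A^*$ will turn out to be a left parametrix of $\Box^{(q)}_b$, whence $A^* \equiv A$ by a standard parametrix-uniqueness argument.

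For the continuity of $A^*$: each local building block $A_j$ in Remark~\ref{r:he-para} has distribution kernel of the form~(\ref{e:allerallerallerhome1}), so Proposition~\ref{p:ss-pseudo} places $A_j$ in $L^{-1}_{\frac{1}{2},\frac{1}{2}}(X_j;\,\Lambda^{0,q}T^*(X_j),\Lambda^{0,q}T^*(X_j))$. Since the symbol class $S^{-1}_{\frac{1}{2},\frac{1}{2}}$ is closed under formal adjunction, each $A_j^*$ belongs to this class as well, and Proposition~\ref{p:he-calderon} then shows $A_j^*: H^s_{\rm comp} \to H^{s+1}_{\rm loc}$ continuously. From $Au = \sum_j A_j(\chi_j u)$ one obtains $A^* v = \sum_j \chi_j A_j^* v$ (a locally finite sum, because $A$ and hence $A^*$ are properly supported), which yields the asserted continuity $A^*: H^s(X;\,\Lambda^{0,q}T^*(X)) \to H^{s+1}(X;\,\Lambda^{0,q}T^*(X))$ for every $s\in\Real$.

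For $A^* \equiv A$: Remark~\ref{r:he-para} gives $\Box^{(q)}_b \circ A \equiv I$ (the remainder being smoothing in the global sense). Formal adjunction combined with $(\Box^{(q)}_b)^* = \Box^{(q)}_b$ produces $A^* \circ \Box^{(q)}_b \equiv I$. Since $A$, $A^*$ and $\Box^{(q)}_b$ are all properly supported and continuous on Sobolev scales, the compositions below are well-defined and associative, and composing a smoothing operator with any of them on either side again yields a smoothing operator; hence
\[
A \equiv (A^* \circ \Box^{(q)}_b) \circ A = A^* \circ (\Box^{(q)}_b \circ A) \equiv A^*,
\]
proving the lemma. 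The only genuine technical point is verifying the equivalence $A^* \circ \Box^{(q)}_b \equiv I$ at the global level; this requires that the smoothing operators arising in the adjunction step remain smoothing after the partition-of-unity gluing, which is immediate from the proper support and the Sobolev continuity established in the previous paragraph.
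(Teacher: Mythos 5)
Your argument matches the paper's proof step for step: the continuity of $A^*$ comes from Calderon--Vaillancourt applied to the local pieces, and $A^*\equiv A$ follows by taking formal adjoints in $\Box^{(q)}_b\circ A\equiv I$ (using that $\Box^{(q)}_b$ is formally self-adjoint) and composing. The extra detail you supply on why $A^*$ lies in $L^{-1}_{\frac{1}{2},\frac{1}{2}}$ and on the partition-of-unity bookkeeping merely makes explicit what the paper treats as routine.
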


\begin{proof}
The first statement is a consequence of the theorem of Calderon and Vaillancourt.
(See Proposition~\ref{p:he-calderon}.) In view of
Remark~\ref{r:he-para}, we see that $\Box^{(q)}_b\circ A\equiv I$. Thus
$A^*\circ\Box^{(q)}_b\equiv I$.
We have
\begin{align*}
A^*-A &\equiv A^*\circ(\Box^{(q)}_b\circ A)-A   \\
      &\equiv(A^*\circ\Box^{(q)}_b)\circ A-A      \\
      &\equiv A-A              \\
      &\equiv 0.
\end{align*}
The lemma follows.
\end{proof}

From this, we get a two-sided parametrix for $\Box^{(q)}_b$.

\begin{prop} \label{p:he-para}
We assume that $Y(q)$ holds. Let $A$ be as in Remark~\ref{r:he-para}. Then
$\Box^{(q)}_b\circ A\equiv A\circ\Box^{(q)}_b\equiv I$.
\end{prop}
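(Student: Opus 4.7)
The plan is to bootstrap the one-sided parametrix property $\Box^{(q)}_b\circ A\equiv I$ (which is already established in Proposition~\ref{p:he-yofq} and globalized in Remark~\ref{r:he-para}) into a two-sided one, using self-adjointness of $\Box^{(q)}_b$ together with the fact, proved in Lemma~\ref{l:he-para}, that $A^*\equiv A$.

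More precisely, I would proceed as follows. First, observe that $\Box^{(q)}_b$ is formally self-adjoint with respect to the $L^2$ inner product $(\ |\ )$ of (\ref{e:0807161950}): this follows directly from the definition $\Box^{(q)}_b=\dbar_b\ol{\pr_b}^*+\ol{\pr_b}^*\dbar_b$ together with the fact that $\ol{\pr_b}^*$ is the formal adjoint of $\dbar_b$. Taking the formal adjoint of the congruence $\Box^{(q)}_b\circ A\equiv I$ therefore gives $A^*\circ\Box^{(q)}_b\equiv I$, since smoothing operators form a two-sided ideal and pass to the adjoint. Now invoke Lemma~\ref{l:he-para}, which tells us $A^*\equiv A$; composition with $\Box^{(q)}_b$ on the right preserves the relation $\equiv$ modulo smoothing operators (when composed with a properly supported differential operator of finite order, smoothing operators remain smoothing), so
\[
A\circ\Box^{(q)}_b\equiv A^*\circ\Box^{(q)}_b\equiv I.
\]
Combined with $\Box^{(q)}_b\circ A\equiv I$ from Remark~\ref{r:he-para}, this yields the two-sided statement $\Box^{(q)}_b\circ A\equiv A\circ\Box^{(q)}_b\equiv I$.

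There is no substantial obstacle: all the hard analytic work has been done in the construction of $A$ (Propositions~\ref{p:ss-solution}, \ref{p:he-yofq}) and in Lemma~\ref{l:he-para}. The only point to be slightly careful about is the passage from the local statement in Proposition~\ref{p:he-yofq} to the global operator $A$ on $X$ defined via a partition of unity in Remark~\ref{r:he-para}; one must note that the partition-of-unity glueing together with proper support ensures that $\Box^{(q)}_b\circ A-I$ is globally smoothing on $X$, which is exactly the content of Remark~\ref{r:he-para}. After that, the formal-adjoint argument above proceeds in the global setting verbatim, and the proposition follows.
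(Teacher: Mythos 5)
Your proof is correct and follows the paper's argument essentially verbatim: one takes the formal adjoint of $\Box^{(q)}_b\circ A\equiv I$ (using formal self-adjointness of $\Box^{(q)}_b$) to get $A^*\circ\Box^{(q)}_b\equiv I$, and then substitutes $A^*\equiv A$ from Lemma~\ref{l:he-para}. The only cosmetic difference is that the paper has already derived $A^*\circ\Box^{(q)}_b\equiv I$ inside the proof of Lemma~\ref{l:he-para}, so its proof of the proposition is just the one-line chain $I\equiv\Box^{(q)}_b\circ A\equiv A^*\circ\Box^{(q)}_b\equiv A\circ\Box^{(q)}_b$.
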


\begin{proof}
In view of Lemma~\ref{l:he-para}, we have $A^*\equiv A$. Thus
$I\equiv\Box^{(q)}_b\circ A\equiv A^*\circ\Box^{(q)}_b\equiv A\circ\Box^{(q)}_b$.
\end{proof}

\begin{rem} \label{r:he-beals}
The existence of a two-sided parametrix for $\Box^{(q)}_b$ under condition $Y(q)$ is a classical result.
See~\cite{BG88}.
\end{rem}

\begin{defn} \label{d:he-global}
Suppose $Q$ is a closed densely defined operator
\[Q: H\supset{\rm Dom\,}Q\To{\rm Ran\,}Q\subset H,\]
where $H$ is a Hilbert space. Suppose that $Q$ has closed range. By the partial inverse of $Q$, we
mean the bounded operator $N: H\To H$ such that
$Q\circ N=\pi_2$, $N\circ Q=\pi_1$ on ${\rm Dom\,}Q$,
where $\pi_1$, $\pi_2$ are the orthogonal projections in $H$ such that
${\rm Ran\,}\pi_1=({\rm Ker\,}Q)^\bot$, ${\rm Ran\,}\pi_2={\rm Ran\,}Q$.
In other words, for $u\in H$, let
$\pi_2u=Qv$, $v\in({\rm Ker\,}Q)^\bot\bigcap{\rm Dom\,}Q$.
Then, $Nu=v$.
\end{defn}

Set ${\rm Dom\,}\Box^{(q)}_b=\set{u\in L^2(X;\, \Lambda^{0,q}T^*(X));\,
\Box^{(q)}_bu\in L^2(X;\, \Lambda^{0,q}T^*(X))}$.

\begin{lem} \label{l:he-range}
We consider $\Box^{(q)}_b$ as an operator
\[\Box^{(q)}_b: L^2(X;\, \Lambda^{0,q}T^*(X))\supset{\rm Dom\,}\Box^{(q)}_b\To L^2(X;\, \Lambda^{0,q}T^*(X)).\]
If $Y(q)$ holds, then $\Box^{(q)}_b$ has closed range.
\end{lem}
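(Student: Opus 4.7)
The plan is to exploit the two-sided parametrix from Proposition~\ref{p:he-para}, together with the compactness of $X$, to obtain Fredholm-type properties of $\Box^{(q)}_b$ on $L^2$, from which closed range follows by a standard functional-analytic argument.

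First, I would record that on a compact manifold any smoothing operator $K: L^2(X;\, \Lambda^{0,q}T^*(X))\To L^2(X;\, \Lambda^{0,q}T^*(X))$ is compact, since it factors through $H^N(X;\, \Lambda^{0,q}T^*(X))$ for every $N$ and the inclusion $H^1\hookrightarrow L^2$ on a compact manifold is compact (Rellich). From Proposition~\ref{p:he-para}, the properly supported parametrix $A$ satisfies
\[A\circ\Box^{(q)}_b=I+K_1,\qquad \Box^{(q)}_b\circ A=I+K_2,\]
with $K_1, K_2$ smoothing, and both identities extend by continuity to $L^2$, interpreted on the maximal domain $\mathrm{Dom}\,\Box^{(q)}_b$ for the first. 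In particular, if $u\in L^2$ satisfies $\Box^{(q)}_b u=0$ then $(I+K_1)u=0$, so $\ker\Box^{(q)}_b\subset\ker(I+K_1)$ is finite-dimensional by the Fredholm theory of compact perturbations of the identity.

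Next I would establish closed range directly. Let $(u_n)\subset\mathrm{Dom}\,\Box^{(q)}_b$ with $u_n\perp\ker\Box^{(q)}_b$ and $\Box^{(q)}_b u_n\To f$ in $L^2$; the goal is to produce $u\in\mathrm{Dom}\,\Box^{(q)}_b$ with $\Box^{(q)}_b u=f$. Applying $A$ gives $(I+K_1)u_n=A\Box^{(q)}_b u_n\To Af$. The key substep is to show that $\|u_n\|_{L^2}$ is bounded. If it were not, set $v_n=u_n/\|u_n\|_{L^2}$; then $(I+K_1)v_n\To0$, and by compactness of $K_1$ a subsequence of $K_1 v_n$ converges, so $v_n$ converges along that subsequence to some $v$ with $\|v\|=1$ and $v\perp\ker\Box^{(q)}_b$. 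Since $\Box^{(q)}_b v_n\To 0$ and $\Box^{(q)}_b$ is closed (indeed self-adjoint, by the usual argument for the Kohn Laplacian on a compact manifold), $\Box^{(q)}_b v=0$, contradicting $v\perp\ker\Box^{(q)}_b\setminus\{0\}$. Hence $\|u_n\|_{L^2}$ is bounded; compactness of $K_1$ gives a convergent subsequence of $K_1 u_n$, whence from $u_n=A\Box^{(q)}_b u_n-K_1 u_n\To Af-\lim K_1 u_n=:u$ and closedness of $\Box^{(q)}_b$ we conclude $\Box^{(q)}_b u=f$.

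The main obstacle, which is really only a matter of being careful, is to make precise the extension of the parametrix identity to the unbounded operator $\Box^{(q)}_b$ on its maximal $L^2$ domain (so that $A\Box^{(q)}_b u=(I+K_1)u$ holds for every $u\in\mathrm{Dom}\,\Box^{(q)}_b$), and to invoke self-adjointness of $\Box^{(q)}_b$ to guarantee closedness. Both are standard: self-adjointness of $\Box^{(q)}_b$ on a compact CR manifold follows from the usual Friedrichs-type argument (see~\cite{CS01}), and the parametrix identity extends by a density argument using the continuity of $A$ and $\Box^{(q)}_b\circ A$ between appropriate Sobolev spaces afforded by Proposition~\ref{p:he-calderon}.
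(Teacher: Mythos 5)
Your proof takes essentially the same route as the paper — deduce closed range from the two-sided smoothing parametrix of Proposition~\ref{p:he-para} and compactness of smoothing operators on the compact manifold $X$ — but you are more careful on one point that the paper elides. The paper writes "Since $F_2$ is compact, there exists a subsequence $u_{j_k}\To u$" directly after observing $(I-F_2)u_j\To Av$, but without first establishing that $\set{u_j}$ is bounded in $L^2$ this extraction is not available: compactness of $F_2$ only yields a convergent subsequence of $F_2 u_j$ if $\set{u_j}$ is bounded, and then $u_j=(I-F_2)u_j+F_2u_j$ converges along that subsequence. You supply precisely the missing step: choose representatives $u_n\perp\ker\Box^{(q)}_b$ (legitimate since $\ker\Box^{(q)}_b$ is finite-dimensional, which you deduce from the parametrix), and if $\norm{u_n}$ were unbounded, normalize to $v_n=u_n/\norm{u_n}$, use compactness and closedness of the self-adjoint $\Box^{(q)}_b$ to produce a unit vector in $\ker\Box^{(q)}_b\cap(\ker\Box^{(q)}_b)^\perp$, a contradiction. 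Once boundedness is in hand, the remainder of your argument matches the paper's. In short, the underlying idea is identical, but your version is the complete standard Fredholm argument while the paper's is terse enough to have a genuine (if minor) gap at the boundedness step.
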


\begin{proof}
Suppose $u_j\in{\rm Dom\,}\Box^{(q)}_b$ and
$\Box^{(q)}_bu_j=v_j\To v$ in $L^2(X;\, \Lambda^{0,q}T^*(X))$.
We have to show that there exists $u\in{\rm Dom\,}\Box^{(q)}_b$ such that
$\Box^{(q)}_bu=v$.
From Proposition~\ref{p:he-para}, we have
$\Box^{(q)}_bA=I-F_1$, $A\Box^{(q)}_b=I-F_2$,
where $F_j$, $j=1,2$, are smoothing operators. Now,
$A\Box^{(q)}_bu_j=(I-F_2)u_j\To Av$ in $L^2(X;\, \Lambda^{0,q}T^*(X))$.
Since $F_2$ is compact, there exists a subsequence
$u_{j_k}\To u$ in $L^2(X;\, \Lambda^{0,q}T^*(X))$, $k\To\infty$.
We have $(I-F_2)u=Av$ and
$\Box^{(q)}_bu_{j_k}\To\Box^{(q)}_bu$ in $H^{-2}(X;\, \Lambda^{0,q}T^*(X))$, $k\To\infty$.
Thus $\Box^{(q)}_bu=v$. Now $v\in L^2(X;\, \Lambda^{0,q}T^*(X))$, so $u\in{\rm Dom\,}\Box^{(q)}_b$.
We have proved the lemma.
\end{proof}

It follows that ${\rm Ran\,}\Box^{(q)}_b=({\rm Ker\,}\Box^{(q)}_b)^\bot$. Notice also that
$\Box^{(q)}_b$ is self-adjoint.
Now, we can prove the following classical result

\begin{prop} \label{p:BG-***}
We assume that $Y(q)$ holds. Then $\dim{\rm Ker\,}\Box^{(q)}_b<\infty$ and $\pi^{(q)}$ is a
smoothing operator . Let $N$ be the partial inverse. Then $N=A+F$ where $A$ is as in Proposition~\ref{p:he-para} and $F$ is a
smoothing operator. Let $N^*$ be the formal adjoint of $N$,
\[(N^*u\ |\ v)=(u\ |\ Nv),\ u\in C^\infty(X;\, \Lambda^{0,q}T^*(X)),\ v\in C^\infty(X;\, \Lambda^{0,q}T^*(X)).\]
Then, $N^*=N$ on $L^2(X;\, \Lambda^{0,q}T^*(X))$.
\end{prop}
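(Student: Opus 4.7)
The plan is to establish the four assertions of Proposition~\ref{p:BG-***} in turn, relying on Proposition~\ref{p:he-para} ($A\circ\Box^{(q)}_b\equiv \Box^{(q)}_b\circ A\equiv I$) and on hypoellipticity with loss of one derivative (Proposition~\ref{p:h-HorBoutet}).

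\textbf{Finite-dimensional kernel and smoothing of $\pi^{(q)}$.} Write $A\circ\Box^{(q)}_b=I-F_2$ with $F_2$ smoothing. If $u\in\ker\Box^{(q)}_b\subset L^2$, then $\Box^{(q)}_b u=0$ gives $u=F_2u\in C^\infty(X;\,\Lambda^{0,q}T^*(X))$, so $\ker\Box^{(q)}_b$ consists of smooth sections. Moreover $F_2$ restricted to the kernel is the identity, while $F_2$ itself is compact on $L^2(X;\,\Lambda^{0,q}T^*(X))$ (because $X$ is compact and $F_2$ factors through Rellich's compact embedding). Thus $\ker\Box^{(q)}_b$ is finite-dimensional. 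Choosing an $L^2$-orthonormal basis $\{e_1,\ldots,e_d\}\subset C^\infty(X;\,\Lambda^{0,q}T^*(X))$ of the kernel, the Schwartz kernel of $\pi^{(q)}$ is $K_{\pi^{(q)}}(x,y)=\sum_{j=1}^d e_j(x)\otimes(e_j(y))^*$, which is smooth on $X\times X$, so $\pi^{(q)}$ is smoothing.

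\textbf{The identity $N\equiv A$.} Since $\Box^{(q)}_b$ is self-adjoint on $L^2$ with closed range, $\ker\Box^{(q)}_b=(\mathrm{Ran}\,\Box^{(q)}_b)^\perp$, so the two projections $\pi_1,\pi_2$ of Definition~\ref{d:he-global} both equal $I-\pi^{(q)}$. Hence $\Box^{(q)}_b\circ N=I-\pi^{(q)}$ on $L^2$ and $N\circ\Box^{(q)}_b=I-\pi^{(q)}$ on $\mathrm{Dom}\,\Box^{(q)}_b$. Post-composing the second identity with $A$ and using $\Box^{(q)}_b\circ A=I-F_1$ yields
\[
N-NF_1=(I-\pi^{(q)})A,\qquad\text{i.e.}\qquad N-A=NF_1-\pi^{(q)}A.
\]
The term $\pi^{(q)}A$ is smoothing by the previous step. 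For $NF_1$: hypoellipticity with loss of one derivative combined with $\Box^{(q)}_b N=I-\pi^{(q)}$ bootstraps the a priori $L^2$-boundedness of $N$ to continuity $N:H^s(X;\,\Lambda^{0,q}T^*(X))\to H^{s+1}(X;\,\Lambda^{0,q}T^*(X))$ for every $s\in\Real$ (nonnegative $s$ by bootstrap, negative $s$ by duality using self-adjointness). Since $F_1$ is smoothing, $NF_1:H^s\to H^{s+m}$ is continuous for every $s,m$, so $NF_1$ is smoothing. Consequently $F:=N-A$ is smoothing.

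\textbf{Self-adjointness $N^*=N$.} The bounded operator $N^*$ on $L^2$ is characterized by transposing the defining properties of $N$: using $(\Box^{(q)}_b)^*=\Box^{(q)}_b$ and $(\pi^{(q)})^*=\pi^{(q)}$ one checks that $\Box^{(q)}_b\circ N^*=N^*\circ\Box^{(q)}_b=I-\pi^{(q)}$ on the relevant domains and that $N^*$ vanishes on $\ker\Box^{(q)}_b$. By uniqueness of the partial inverse (the characterization in Definition~\ref{d:he-global}), $N^*=N$.

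\textbf{Main obstacle.} The only delicate point is the Sobolev bootstrap for $N$ in the second step: a priori $N$ is only bounded on $L^2$, and to conclude that $NF_1$ is smoothing one must upgrade this to continuity on every Sobolev space. This is accomplished by iterating the identity $\Box^{(q)}_b N=I-\pi^{(q)}$ together with Proposition~\ref{p:h-HorBoutet}, and invoking the closed graph theorem to obtain the requisite quantitative estimates. All remaining manipulations are algebraic.
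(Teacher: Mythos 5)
Your argument is correct, and the first and last steps are essentially the paper's. The interesting divergence is in showing $N-A$ smoothing. You post-compose $N\circ\Box^{(q)}_b = I-\pi^{(q)}$ with $A$ to get $N-A = NF_1 - \pi^{(q)}A$, and then handle $NF_1$ by bootstrapping the $L^2$-boundedness of $N$ to $N\colon H^s\to H^{s+1}$ for all $s\geq 0$ via Proposition~\ref{p:h-HorBoutet} (hypoellipticity with loss of one derivative) together with the closed graph theorem. This works, but it invokes the full strength of the hypoellipticity theorem at a point where the paper gets by with only the parametrix identities. The paper starts instead from the left-composed identity $N=(A\Box^{(q)}_b+F_1)N = -A\pi^{(q)}+A+F_1N$ and then expands $F_1N$ once more by inserting $I = \Box^{(q)}_bA+F_2$ on the right; the result $F_1N = F_1(I-\pi^{(q)})A + F_1NF_2$ has $N$ sandwiched between two smoothing operators, and $F_1NF_2\colon \mathscr E'\to C^\infty\to L^2\to C^\infty$ is smoothing for purely soft reasons, no Sobolev regularity of $N$ beyond $L^2\to L^2$ being used. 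The same sandwiching is available with your right-composed identity: $NF_1 = (A\Box^{(q)}_b+F_2)NF_1 = A(I-\pi^{(q)})F_1 + F_2NF_1$, and $F_2NF_1$ is smoothing for the same reason, which would let you drop the bootstrap and closed-graph appeal entirely. In short, both proofs are valid; the paper's (and the variant just indicated) is more self-contained and arguably more robust, since it does not require the sharp loss-of-one-derivative estimate, only that $\Box^{(q)}_b$ admits a two-sided parametrix.
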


\begin{proof}
From Proposition~\ref{p:he-para}, we have
$A\Box^{(q)}_b=I-F_1$, $\Box^{(q)}_bA=I-F_2$,
where $F_1$, $F_2$ are smoothing operators. Thus ${\rm Ker\,}\Box^{(q)}_b\subset{\rm Ker\,}(I-F_1)$.
Since $F_1$ is compact,  ${\rm Ker\,}(I-F_1)$ is finite dimensional and contained in
$C^\infty(X;\, \Lambda^{0,q}T^*(X))$. Thus
$\dim{\rm Ker\,}\Box^{(q)}_b<\infty$ and ${\rm Ker\,}\Box^{(q)}_b\subset C^\infty(X;\, \Lambda^{0,q}T^*(X))$.

Let $\set{\phi_1,\phi_2,\ldots,\phi_m}$ be an orthonormal basis for ${\rm Ker\,}\Box^{(q)}_b$.
The projection $\pi^{(q)}$ is given by
$\pi^{(q)} u=\sum^m_{j=1}(u\ |\ \phi_j)\phi_j$.
Thus $\pi^{(q)}$ is a smoothing operator. Notice that $I-\pi^{(q)}$ is the orthogonal projection onto ${\rm Ran\,}\Box^{(q)}_b$ since
$\Box^{(q)}_b$ is formally self-adjoint with closed range.

For $u\in C^\infty(X;\, \Lambda^{0,q}T^*(X))$, we have
\begin{align*}
(N-A)u &= (A\Box^{(q)}_b+F_1)Nu-Au  \\
       &= A(I-\pi^{(q)})u+F_1Nu-Au      \\
       &= -A\pi^{(q)} u+F_1N(\Box^{(q)}_b A+F_2)u    \\
       &= -A\pi^{(q)} u+F_1(I-\pi^{(q)})Au+F_1NF_2u.
\end{align*}
Here
$-A\pi^{(q)}$, $F_1(1-\pi^{(q)})A: H^s(X;\, \Lambda^{0,q}T^*(X))\To H^{s+m}(X;\, \Lambda^{0,q}T^*(X))$
$\forall s\in\Real$ and $m\geq 0$, so $-A\pi^{(q)}$, $F_1(1-\pi^{(q)})A$ are smoothing operators. Since
\begin{align*}
F_1NF_2&:\mathscr E'(X;\, \Lambda^{0,q}T^*(X))\To C^\infty(X;\, \Lambda^{0,q}T^*(X)) \\
&\To L^2(X;\, \Lambda^{0,q}T^*(X))\To C^\infty(X;\, \Lambda^{0,q}T^*(X)),
\end{align*}
we have that $F_1NF_2$ is a smoothing operator. Thus $N=A+F$, $F$ is a smoothing operator.

Since $N\pi^{(q)}=\pi^{(q)} N=0=N^*\pi^{(q)}=\pi^{(q)} N^*=0$,
we have $N^*=(N\Box^{(q)}_b+\pi^{(q)})N^*=N\Box^{(q)}_bN^*=N$.
The proposition follows.
\end{proof}

Now, we assume that $Y(q)$ fails but that $Y(q-1)$, $Y(q+1)$ hold.
In view of Lemma~\ref{l:he-range}, we see that $\Box^{(q-1)}_b$ and $\Box^{(q+1)}_b$ have closed range.
We write $\dbar^{(q)}_b$ to denote the map:
$\dbar_b: C^\infty(X;\, \Lambda^{0,q}T^*(X))\To C^\infty(X;\, \Lambda^{0,q+1}T^*(X))$.
Let $\dbar^{(q),*}_b$ denote the formal adjoint of $\dbar_b$. We have
$\dbar^{(q),*}_b: C^\infty(X;\, \Lambda^{0,q+1}T^*(X))\To C^\infty(X;\, \Lambda^{0,q}T^*(X))$.
Let $N^{(q+1)}_b$ and $N^{(q-1)}_b$ be the partial inverses of $\Box^{(q+1)}_b$ and $\Box^{(q-1)}_b$
respectively. From Proposition~\ref{p:BG-***}, we have
\[(N^{(q+1)}_b)^*=N^{(q+1)}_b,\ \ (N^{(q-1)}_b)^*=N^{(q-1)}_b,\]
where $(N^{(q+1)}_b)^*$ and $(N^{(q-1)}_b)^*$ are the formal adjoints of $N^{(q+1)}_b$ and $N^{(q-1)}_b$ respectively.
Put
\begin{equation} \label{e:he-n}
M=\dbar^{(q),*}_b(N^{(q+1)}_b)^2\dbar^{(q)}_b+\dbar^{(q-1)}_b(N^{(q-1)}_b)^2\dbar^{(q-1),*}_b
\end{equation}
and
\begin{equation} \label{e:he-pi}
\pi=I-(\dbar^{(q),*}_b N^{(q+1)}_b\dbar^{(q)}_b+\dbar^{(q-1)}_b N^{(q-1)}_b\dbar^{(q-1),*}_b).
\end{equation}
In view of Proposition~\ref{p:BG-***}, we see that $M$ is well-defined as a continuous operator:
$M: H^s(X;\, \Lambda^{0,q}T^*(X))\To H^{s}(X;\, \Lambda^{0,q}T^*(X))$
and $\pi$ is well-defined as a continuous operator:
$\pi: H^s(X;\, \Lambda^{0,q}T^*(X))\To H^{s-1}(X;\, \Lambda^{0,q}T^*(X))$,
for all $s\in\Real$.

Let $\pi^*$ and $M^*$ be the formal adjoints of $\pi$ and $M$ respectively. We have the following

\begin{lem} \label{l:he-szego}
If we consider $\pi$ and $M$ as operators:
\[\pi, M:\mathscr D'(X;\, \Lambda^{0,q}T^{*}(X))\To\mathscr D'(X;\, \Lambda^{0,q}T^{*}(X)),\]
then
\begin{align}
&\pi^*=\pi,\ M^*=M, \label{e:he-sj1} \\
&\Box^{(q)}_b\pi=0=\pi\Box^{(q)}_b, \label{e:he-sj2}\\
&\pi+\Box^{(q)}_bM=I=\pi+M\Box^{(q)}_b, \label{e:he-sj3}\\
&\pi M=0=M\pi, \label{e:he-sj4} \\
&\pi^2=\pi. \label{e:he-sj1morehome}
\end{align}

\end{lem}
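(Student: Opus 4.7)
My plan is to prove the five identities by direct algebraic manipulation on smooth sections, then extend to $\mathscr D'$ via the Sobolev continuity of $\pi$ and $M$ noted just before the lemma. I will use the following ingredients: (a) $(\dbar_b)^*=\ol{\pr_b}^*$ and $(N^{(q\pm 1)}_b)^*=N^{(q\pm 1)}_b$ from Proposition~\ref{p:BG-***}; (b) the interwining $\dbar_b\Box^{(q)}_b=\Box^{(q+1)}_b\dbar_b$ and $\ol{\pr_b}^*\Box^{(q)}_b=\Box^{(q-1)}_b\ol{\pr_b}^*$; (c) $\dbar_b^2=0$ and $(\ol{\pr_b}^*)^2=0$; (d) the partial-inverse relations $\Box^{(q\pm 1)}_b N^{(q\pm 1)}_b=N^{(q\pm 1)}_b\Box^{(q\pm 1)}_b=I-\pi^{(q\pm 1)}$ and $\pi^{(q\pm 1)}N^{(q\pm 1)}_b=0$; and (e) the observation that ${\rm Ran\,}\pi^{(q\pm 1)}\subset {\rm Ker\,}\Box^{(q\pm 1)}_b\subset {\rm Ker\,}\dbar_b\cap {\rm Ker\,}\ol{\pr_b}^*$, so in particular $\ol{\pr_b}^*\pi^{(q+1)}=0$ and $\dbar_b\pi^{(q-1)}=0$.

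Identities (\ref{e:he-sj1}) are immediate from (a) and the rule $(ABC)^*=C^*B^*A^*$ applied summand-by-summand in the definitions of $\pi$ and $M$. For (\ref{e:he-sj2}), I push $\Box^{(q)}_b$ across the outer factors of $\pi$ using (b), apply (d) to replace $\Box^{(q\pm 1)}_bN^{(q\pm 1)}_b$ with $I-\pi^{(q\pm 1)}$, and kill the $\pi^{(q\pm 1)}$ remainders using (e); what remains is exactly $\Box^{(q)}_b-\ol{\pr_b}^*\dbar_b-\dbar_b\ol{\pr_b}^*=0$. The companion identity $\pi\Box^{(q)}_b=0$ then follows by adjoining and (\ref{e:he-sj1}). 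The same push-and-cancel on $\Box^{(q)}_bM$, combined with $(I-\pi^{(q\pm 1)})N^{(q\pm 1)}_b=N^{(q\pm 1)}_b$ from (d), gives $\Box^{(q)}_bM=I-\pi$, which is (\ref{e:he-sj3}); adjoining yields $M\Box^{(q)}_b=I-\pi$. Then (\ref{e:he-sj1morehome}) follows by left-multiplying (\ref{e:he-sj3}) by $\pi$ and invoking (\ref{e:he-sj2}).

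The most delicate identity is (\ref{e:he-sj4}). My plan is first to establish $\dbar_b\pi=0$ and $\ol{\pr_b}^*\pi=0$ on smooth sections. For $u\in C^\infty(X;\,\Lambda^{0,q}T^*(X))$, each summand of $\pi$ preserves smoothness, because $N^{(q\pm 1)}_b$ equals a pseudodifferential operator of order $-1$ plus a smoothing operator by Proposition~\ref{p:BG-***}, so $\pi u\in C^\infty$. Combined with $\Box^{(q)}_b\pi u=0$ from (\ref{e:he-sj2}), integration by parts gives
\[0=(\Box^{(q)}_b\pi u\ |\ \pi u)=\norm{\dbar_b\pi u}^2+\norm{\ol{\pr_b}^*\pi u}^2,\]
forcing $\dbar_b\pi u=0$ and $\ol{\pr_b}^*\pi u=0$. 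Pairing against a test section and using $\pi^*=\pi$ from (\ref{e:he-sj1}) then gives $\pi\ol{\pr_b}^*=0$ and $\pi\dbar_b=0$ on $C^\infty$. Since $M$ is by construction of the shape $\ol{\pr_b}^*(\cdots)+\dbar_b(\cdots)$, we conclude $\pi M=0$ on $C^\infty$; extension to $\mathscr D'$ uses the continuity of $\pi M$, and $M\pi=(\pi M)^*=0$ by (\ref{e:he-sj1}).

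The hard part will be the passage from $\Box^{(q)}_b\pi u=0$ to the pointwise consequences $\dbar_b\pi u=0$ and $\ol{\pr_b}^*\pi u=0$; this requires $\pi u$ to lie in $L^2$ (equivalently, in the domain of $\dbar_b$ as an unbounded $L^2$ operator), which is precisely what the regularizing behavior of $N^{(q\pm 1)}_b$ secured by $Y(q\pm 1)$ provides. Everything else is straightforward index-tracking of how the four summands in $\pi$ and $M$ interact through the short list (a)--(e).
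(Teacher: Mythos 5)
Your proposal is correct. For identities (\ref{e:he-sj1}), (\ref{e:he-sj2}), (\ref{e:he-sj3}) and (\ref{e:he-sj1morehome}) you follow essentially the same algebraic route as the paper: intertwine $\Box^{(q)}_b$ past the outer $\dbar_b$, $\ol{\pr_b}^*$ factors, replace $\Box^{(q\pm1)}_bN^{(q\pm1)}_b$ by $I-\pi^{(q\pm1)}_b$, and annihilate the $\pi^{(q\pm1)}_b$ remainders using $\pi^{(q+1)}_b\dbar^{(q)}_b=0$, $\pi^{(q-1)}_b\dbar^{(q-1),*}_b=0$ together with their adjoints --- facts which both you and the paper derive from the energy identity $0=(\Box^{(q\pm1)}_b\pi^{(q\pm1)}_bu\,|\,\pi^{(q\pm1)}_bu)$.

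Where you genuinely diverge is in (\ref{e:he-sj4}). The paper proves $M\pi=0$ by a direct algebraic verification of $M(I-\pi)=M$, which hinges on the commutation identities $\dbar^{(q)}_b\dbar^{(q),*}_bN^{(q+1)}_b=N^{(q+1)}_b\dbar^{(q)}_b\dbar^{(q),*}_b$ and $\dbar^{(q-1),*}_b\dbar^{(q-1)}_bN^{(q-1)}_b=N^{(q-1)}_b\dbar^{(q-1),*}_b\dbar^{(q-1)}_b$, each of which requires a chain of substitutions via the partial-inverse relations and (\ref{e:0707271702}), (\ref{e:0707271701}). You instead lift the energy argument one level, applying it to $\pi$ itself: $\pi u\in C^\infty$ (since $\pi$ is $I$ minus pseudodifferential operators of order $\le 1$ built from $N^{(q\pm1)}_b$ and the first-order $\dbar_b$, $\ol{\pr_b}^*$, on a compact $X$), $\Box^{(q)}_b\pi u=0$ forces $\dbar_b\pi u=\ol{\pr_b}^*\pi u=0$ by integration by parts, adjoining via $\pi^*=\pi$ gives $\pi\ol{\pr_b}^*=0$ on $(0,q+1)$-forms and $\pi\dbar_b=0$ on $(0,q-1)$-forms, and these kill the two summands of $M$ termwise. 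This is shorter and conceptually cleaner --- it exposes $\pi M=0$ as a consequence of $\pi$ annihilating the ranges of $\dbar^{(q-1)}_b$ and $\dbar^{(q),*}_b$, rather than as the outcome of symbol-level bookkeeping --- at the modest cost of invoking the smoothness of $\pi u$ and the compactness of $X$ once more. The density argument you invoke to pass from $C^\infty$ to $\mathscr D'$ is sound because on a compact manifold $\mathscr D'=\bigcup_s H^s$ and $\pi M$ is continuous on each Sobolev level.
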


\begin{proof}
From (\ref{e:he-n}) and (\ref{e:he-pi}), we get (\ref{e:he-sj1}).

For $u\in C^\infty(X;\, \Lambda^{0,q+1}T^*(X))$, we have
\begin{align*}
0&=(\Box^{(q+1)}_b\pi^{(q+1)}_bu\ |\ \pi^{(q+1)}_bu) \\
 &=(\dbar^{(q+1)}_b\pi^{(q+1)}_bu\ |\ \dbar^{(q+1)}_b\pi^{(q+1)}_bu)+(\dbar^{(q),*}_b\pi^{(q+1)}_bu\ |\ \dbar^{(q),*}_b\pi^{(q+1)}_bu).
\end{align*}
Thus,
\begin{equation} \label{e:07072717}
\dbar^{(q+1)}_b\pi^{(q+1)}_b=0,\ \dbar^{(q),*}_b\pi^{(q+1)}_b=0.
\end{equation}
Hence, by taking the formal adjoints
\begin{equation} \label{e:0707271702}
\pi^{(q+1)}_b\dbar^{(q+1), *}_b=0,\ \pi^{(q+1)}_b\dbar^{(q)}_b=0.
\end{equation}
Similarly,
\begin{equation} \label{e:0707271701}
\dbar^{(q-1)}_b\pi^{(q-1)}_b=0,\ \pi^{(q-1)}_b\dbar^{(q-1),*}_b=0.
\end{equation}
Note that
\begin{equation} \label{e:0707271703}
\dbar^{(q),*}_b\Box^{(q+1)}_b=\Box^{(q)}_b\dbar^{(q),*}_b,\ \dbar^{(q-1)}_b\Box^{(q-1)}_b=\Box^{(q)}_b\dbar^{(q-1)}_b.
\end{equation}
Now,
\begin{align*}
\lefteqn{\Box^{(q)}_b(\dbar^{(q),*}_b N^{(q+1)}_b\dbar^{(q)}_b+\dbar^{(q-1)}_b N^{(q-1)}_b\dbar^{(q-1),*}_b)}  \\
&\quad= \dbar^{(q),*}_b\Box^{(q+1)}_b N^{(q+1)}_b\dbar^{(q)}_b+\dbar^{(q-1)}_b\Box^{(q-1)}_b N^{(q-1)}_b
        \dbar^{(q-1),*}_b   \\
&\quad= \dbar^{(q),*}_b(I-\pi^{(q+1)}_b)\dbar^{(q)}_b+\dbar^{(q-1)}_b(I-\pi^{(q-1)}_b)\dbar^{(q-1),*}_b   \\
&\quad= \Box^{(q)}_b.
\end{align*}
Here we used (\ref{e:07072717}), (\ref{e:0707271701}) and (\ref{e:0707271703}).
Hence, $\Box^{(q)}_b\pi=0$.
We have that $\pi\Box^{(q)}_b \\ =(\Box^{(q)}_b\pi)^*=0$,
where $(\Box^{(q)}_b\pi)^*$ is the formal adjoint of $\Box^{(q)}_b\pi$.
We get (\ref{e:he-sj2}).

Now,
\begin{align}
\Box^{(q)}_b M  &=\dbar^{(q),*}_b\Box^{(q+1)}_b(N^{(q+1)}_b)^2\dbar^{(q)}_b+
                  \dbar^{(q-1)}_b\Box^{(q-1)}_b(N^{(q-1)}_b)^2\dbar^{(q-1),*}_b  \nonumber  \\
              &=  \dbar^{(q),*}_b(I-\pi^{(q+1)}_b)N^{(q+1)}_b\dbar^{(q)}_b+
                  \dbar^{(q-1)}_b(I-\pi^{(q-1)}_b)N^{(q-1)}_b\dbar^{(q-1),*}_b  \nonumber  \\
              &=  \dbar^{(q),*}_b N^{(q+1)}_b\dbar^{(q)}_b+
                  \dbar^{(q-1)}_b N^{(q-1)}_b\dbar^{(q-1),*}_b           \nonumber  \\
              &=  I-\pi.
\end{align}
Here we used (\ref{e:07072717}), (\ref{e:0707271701}) and (\ref{e:0707271703}). Thus,
$\Box^{(q)}_b M+\pi=I$.
We have $\pi+M\Box^{(q)}_b=(\Box^{(q)}_b M+\pi)^*=I$,
where $(\Box^{(q)}_b M+\pi)^*$ is the formal adjoint of $\Box^{(q)}_b M+\pi$.
We get (\ref{e:he-sj3}).

Now,
\begin{align*}
M(I-\pi)
   &=  M(\dbar^{(q),*}_b N^{(q+1)}_b\dbar^{(q)}_b+\dbar^{(q-1)}_b N^{(q-1)}_b\dbar^{(q-1),*}_b)    \\
   &=  \dbar^{(q),*}_b(N^{(q+1)}_b)^2\dbar^{(q)}_b\dbar^{(q),*}_b N^{(q+1)}_b\dbar^{(q)}_b      \\
   &\quad +\dbar^{(q-1)}_b(N^{(q-1)}_b)^2\dbar^{(q-1),*}_b\dbar^{(q-1)}_b N^{(q-1)}_b\dbar^{(q-1),*}_b.
\end{align*}
From (\ref{e:07072717}), (\ref{e:0707271702}) and (\ref{e:0707271703}), we have
\begin{align*}
\dbar^{(q)}_b\dbar^{(q),*}_b N^{(q+1)}_b
   &=  (I-\pi^{(q+1)}_b)\dbar^{(q)}_b\dbar^{(q),*}_b N^{(q+1)}_b      \\
   &=  N^{(q+1)}_b\Box^{(q+1)}_b\dbar^{(q)}_b\dbar^{(q),*}_b N^{(q+1)}_b    \\
   &=  N^{(q+1)}_b\dbar^{(q)}_b\dbar^{(q),*}_b\Box^{(q+1)}_b N^{(q+1)}_b    \\
   &=  N^{(q+1)}_b\dbar^{(q)}_b\dbar^{(q),*}_b(I-\pi^{(q+1)}_b)      \\
   &=  N^{(q+1)}_b\dbar^{(q)}_b\dbar^{(q),*}_b.
\end{align*}
Similarly, we have
$\dbar^{(q-1),*}_b\dbar^{(q-1)}_b N^{(q-1)}_b=N^{(q-1)}_b\dbar^{(q-1),*}_b\dbar^{(q-1)}_b$.
Hence,
\begin{align*}
M(I-\pi)
   &=\dbar^{(q),*}_b(N^{(q+1)}_b)^2 N^{(q+1)}_b\dbar^{(q)}_b\dbar^{(q),*}_b\dbar^{(q)}_b      \\
   &\quad+\dbar^{(q-1)}_b(N^{(q-1)}_b)^2 N^{(q-1)}_b\dbar^{(q-1),*}_b\dbar^{(q-1)}_b\dbar^{(q-1),*}_b  \\
   &=\dbar^{(q),*}_b(N^{(q+1)}_b)^2N^{(q+1)}_b\Box^{(q+1)}_b\dbar^{(q)}_b           \\
   &\quad +\dbar^{(q-1)}_b(N^{(q-1)}_b)^2N^{(q-1)}_b\Box^{(q-1)}_b\dbar^{(q-1),*}_b  \\
   &=\dbar^{(q),*}_b(N^{(q+1)}_b)^2(I-\pi^{(q+1)}_b)\dbar^{(q)}_b        \\
   &\quad +\dbar^{(q-1)}_b(N^{(q-1)}_b)^2(I-\pi^{(q-1)}_b)\dbar^{(q-1),*}_b      \\
   &=\dbar^{(q),*}_b(N^{(q+1)}_b)^2\dbar^{(q)}_b+
      \dbar^{(q-1)}_b(N^{(q-1)}_b)^2\dbar^{(q-1),*}_b       \\
   &=M.
\end{align*}
Here we used (\ref{e:0707271702}) and (\ref{e:0707271701}).
Thus, $M\pi=0$. We have
$\pi M=(M\pi)^*=0$,
where $(M\pi)^*$ is the formal adjoint of $M\pi$.
We get (\ref{e:he-sj4}).

Finally,
$\pi=(\Box^{(q)}_b M+\pi)\pi=\pi^2$.
We get (\ref{e:he-sj1morehome}). The lemma follows.
\end{proof}

\begin{lem} \label{l:he-pi}
If we restrict $\pi$ to $L^2(X;\, \Lambda^{0,q}T^*(X))$, then $\pi=\pi^{(q)}$. That is, $\pi$ is the orthogonal projection onto
${\rm Ker\,}\Box^{(q)}_b$.
Thus, $\pi$ is well-defined as a continuous operator:
$\pi :L^2(X;\, \Lambda^{0,q}T^*(X))\To L^2(X;\, \Lambda^{0,q}T^*(X))$.
\end{lem}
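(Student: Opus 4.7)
The plan is to exploit the algebraic identities (\ref{e:he-sj1})--(\ref{e:he-sj1morehome}) established in Lemma~\ref{l:he-szego} to identify $\pi$ with the Hodge projection onto $\Ker\Box^{(q)}_b$ once $L^2$-boundedness is established. First I would check that $\pi$ extends to a bounded operator on $L^2(X;\,\Lambda^{0,q}T^*(X))$. The only nontrivial step is to bound each composition $\dbar^{(q),*}_b N^{(q+1)}_b\dbar^{(q)}_b$ and $\dbar^{(q-1)}_b N^{(q-1)}_b\dbar^{(q-1),*}_b$. For $u\in C^\infty$, set $w=N^{(q+1)}_b\dbar^{(q)}_bu$; using (\ref{e:0707271702}) one gets $\Box^{(q+1)}_bw=\dbar^{(q)}_bu$, whence a Kohn-type energy estimate
\[
\norm{\dbar^{(q),*}_bw}^2\leq (\Box^{(q+1)}_bw\ |\ w)=(\dbar^{(q)}_bu\ |\ w)=(u\ |\ \dbar^{(q),*}_bw)\leq\norm{u}\norm{\dbar^{(q),*}_bw},
\]
so $\norm{\dbar^{(q),*}_bN^{(q+1)}_b\dbar^{(q)}_bu}\leq\norm{u}$. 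The analogous bound for the other composition is identical with $q\pm1$ interchanged, hence $\pi:L^2\To L^2$ is bounded.

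Next I would prove $\pi u=\pi^{(q)}u$ on $C^\infty$. Fix $u\in C^\infty(X;\,\Lambda^{0,q}T^*(X))$. The Sobolev mapping property $\pi:H^s\To H^{s-1}$ stated before the lemma gives $\pi u\in\bigcap_sH^s=C^\infty$, and (\ref{e:he-sj2}) yields $\Box^{(q)}_b\pi u=0$, so $\pi u\in\Ker\Box^{(q)}_b$ and consequently $\pi^{(q)}(\pi u)=\pi u$. On the other hand, from $\pi+\Box^{(q)}_bM=I$ in (\ref{e:he-sj3}),
\[
u-\pi u=\Box^{(q)}_bMu=\dbar^{(q),*}_b\dbar^{(q)}_bMu+\dbar^{(q-1)}_b\dbar^{(q-1),*}_bMu.
\]
For any $h\in\Ker\Box^{(q)}_b\subset L^2$ the standard Hodge identity $\dbar^{(q)}_bh=0$ and $\dbar^{(q-1),*}_bh=0$ holds, so pairing gives
\[
(u-\pi u\ |\ h)=(\dbar^{(q)}_bMu\ |\ \dbar^{(q)}_bh)+(\dbar^{(q-1),*}_bMu\ |\ \dbar^{(q-1),*}_bh)=0.
\]
Therefore $\pi^{(q)}(u-\pi u)=0$, and combining with the previous observation $\pi^{(q)}u=\pi^{(q)}(\pi u)=\pi u$.

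Finally, since $C^\infty(X;\,\Lambda^{0,q}T^*(X))$ is dense in $L^2(X;\,\Lambda^{0,q}T^*(X))$ and both $\pi$ and $\pi^{(q)}$ are bounded operators on $L^2$, the identity extends by continuity to all of $L^2$. In particular $\pi$ is well-defined and continuous as a map $L^2\To L^2$, and coincides with $\pi^{(q)}$.

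The one point requiring care—and the main potential obstacle—is the Hodge identity $\Ker\Box^{(q)}_b=\Ker\dbar^{(q)}_b\cap\Ker\dbar^{(q-1),*}_b$ inside $L^2$. Since $Y(q)$ fails, elliptic regularity does not immediately place an arbitrary $h\in\Ker\Box^{(q)}_b\subset L^2$ in $C^\infty$, so one cannot integrate by parts naively. However, working with the maximal $L^2$ domain $\mathrm{Dom}\,\Box^{(q)}_b=\set{u\in L^2;\,\Box^{(q)}_bu\in L^2}$ already used in Lemma~\ref{l:he-range}, together with the formal self-adjointness of $\Box^{(q)}_b$ on the compact manifold $X$ and the closed-range hypothesis (which under Remark~\ref{r:i-1} follows from $Y(q-1)$, $Y(q+1)$), allows a standard Gaffney/Friedrichs argument: the unique self-adjoint realization of $\Box^{(q)}_b$ arises from the quadratic form $u\mapsto\norm{\dbar_bu}^2+\norm{\dbar_b^*u}^2$, so its kernel is $\Ker\dbar_b\cap\Ker\dbar_b^*$, giving the needed identity.
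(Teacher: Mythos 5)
Your proof is correct, but it travels a genuinely different road from the paper's. The paper proves the lemma purely algebraically from the identities of Lemma~\ref{l:he-szego}: since $\pi=\pi^*$ and $\pi^2=\pi$, one has $((I-\pi)\varphi\ |\ \pi\psi)=(\varphi\ |\ \pi\psi)-(\varphi\ |\ \pi^*\pi\psi)=0$ for $\varphi,\psi\in C^\infty$, so $\varphi=(I-\pi)\varphi+\pi\varphi$ is an orthogonal decomposition. This at once gives the $L^2$ bound $\norm{\pi\varphi}\leq\norm{\varphi}$ and the fact that the extension is an orthogonal projection, and the identification of the range with ${\rm Ker\,}\Box^{(q)}_b$ follows directly from (\ref{e:he-sj2}) and (\ref{e:he-sj3}). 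You instead establish $L^2$-boundedness by a Kohn-type energy estimate $\norm{\dbar^{(q),*}_bN^{(q+1)}_b\dbar^{(q)}_bu}\leq\norm{u}$ (which is a nice, explicit, self-contained bound the paper does not record), and then identify $\pi$ with $\pi^{(q)}$ by showing $\pi u\in{\rm Ker\,}\Box^{(q)}_b$ and $(u-\pi u\ |\ h)=0$ for all $h\in{\rm Ker\,}\Box^{(q)}_b$. The price of your route is the Hodge identity ${\rm Ker\,}\Box^{(q)}_b={\rm Ker\,}\dbar_b\cap{\rm Ker\,}\dbar_b^*$ inside $L^2$, which you flag yourself. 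This is true on a compact manifold (the weak and strong $L^2$-closures of $\dbar_b$ coincide by a Gaffney-type argument, so the maximal self-adjoint $\Box^{(q)}_b$ agrees with the Friedrichs extension of the quadratic form), but it is a nontrivial external input that the paper's algebraic route avoids entirely; if you keep your argument, that step deserves a precise reference or a few lines of proof rather than the informal appeal you give. Also, the closed-range hypothesis you invoke there is not really what is needed — what is needed is the coincidence of weak and strong extensions of the first-order operators $\dbar_b$, $\dbar_b^*$ on the boundaryless compact $X$, which holds independently of $Y(q\pm1)$.
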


\begin{proof}
From (\ref{e:he-sj2}), we get
${\rm Ran\, }\pi\subset{\rm Ker\, }\Box^{(q)}_b$ in the space of distributions. From (\ref{e:he-sj3}), we get
$\pi u=u$, when $u\in{\rm Ker\, }\Box^{(q)}_b$, so ${\rm Ran\, }\pi={\rm Ker\, }\Box^{(q)}_b$ and
$\pi^2=\pi=\pi^*\pi=\pi^*$.
For $\varphi$, $\phi\in C^\infty(X;\, \Lambda^{0,q}T^*(X))$, we get
$((1-\pi)\varphi\ |\ \pi\phi)=0$
so ${\rm Ran\, }(I-\pi)\perp{\rm Ran\, }\pi$ and $\varphi=(I-\pi)\varphi+\pi\varphi$ is the orthogonal
decomposition. It follows that $\pi$ restricted to $L^2(X;\, \Lambda^{0,q}T^*(X))$ is the orthogonal projection
onto ${\rm Ker\,}\Box^{(q)}_b$.
\end{proof}

\begin{lem} \label{l:he-szegomore}
If we consider $\Box^{(q)}_b$ as an unbounded operator
\[\Box^{(q)}_b: L^2(X;\, \Lambda^{0,q}T^*(X))\supset{\rm Dom\,}\Box^{(q)}_b\To L^2(X;\, \Lambda^{0,q}T^*(X)),\]
then $\Box^{(q)}_b$ has closed range and $M: L^2(X;\, \Lambda^{0,q}T^*(X))\To{\rm Dom\,}\Box^{(q)}_b$
is the partial inverse.
\end{lem}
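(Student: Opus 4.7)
The plan is to read off both the closed-range property and the identification $N=M$ directly from the algebraic identities in Lemma~\ref{l:he-szego} together with the fact (Lemma~\ref{l:he-pi}) that the distributional operator $\pi$ coincides on $L^2(X;\,\Lambda^{0,q}T^*(X))$ with the orthogonal projection $\pi^{(q)}$ onto ${\rm Ker\,}\Box^{(q)}_b$. The only analytic input we actually need is the continuity of $M$ on $L^2$ that was already recorded when $M$ was introduced after (\ref{e:he-pi}).

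First I would prove that $\Box^{(q)}_b$ has closed range. Suppose $u_j\in{\rm Dom\,}\Box^{(q)}_b$ and $v_j=\Box^{(q)}_bu_j\To v$ in $L^2$. Since $M$ is continuous on $L^2$, we have $Mv_j\To Mv$ in $L^2$. Applying $\pi$, which is continuous $L^2\to L^2$ (Lemma~\ref{l:he-pi}), and using $\pi\Box^{(q)}_b=0$ from (\ref{e:he-sj2}), we get $\pi v=\lim\pi v_j=\lim\pi\Box^{(q)}_b u_j=0$. Now apply the distributional identity $\pi+\Box^{(q)}_bM=I$ from (\ref{e:he-sj3}) to $v$: it yields $\Box^{(q)}_b(Mv)=v-\pi v=v$ as distributions. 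Since $v\in L^2$, this shows $Mv\in{\rm Dom\,}\Box^{(q)}_b$ and $\Box^{(q)}_b(Mv)=v$, hence $v\in{\rm Ran\,}\Box^{(q)}_b$, as desired.

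Next I would verify that $M$ is the partial inverse. Because $\Box^{(q)}_b$ is formally self-adjoint with closed range, the orthogonal projection $\pi_1=\pi_2$ in Definition~\ref{d:he-global} is precisely $I-\pi^{(q)}=I-\pi$. For any $u\in L^2$, the computation above already gives $\Box^{(q)}_b(Mu)=(I-\pi)u$, which is the identity $\Box^{(q)}_b\circ N=\pi_2$. For the other identity, take $u\in{\rm Dom\,}\Box^{(q)}_b$; applying the distributional identity $\pi+M\Box^{(q)}_b=I$ from (\ref{e:he-sj3}), we obtain $M\Box^{(q)}_bu=u-\pi u=(I-\pi)u$, which is $N\circ\Box^{(q)}_b=\pi_1$ on the domain. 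Finally, from $\pi M=0$ in (\ref{e:he-sj4}) we see ${\rm Ran\,}M\subset({\rm Ker\,}\Box^{(q)}_b)^{\bot}$, which is the remaining condition characterizing the partial inverse. Combined with continuity of $M:L^2\to L^2$ (already known) and the fact that $Mu\in{\rm Dom\,}\Box^{(q)}_b$ (established in the previous step), this identifies $N=M$.

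The main potential obstacle is that the operator identities in Lemma~\ref{l:he-szego} are stated at the level of $\mathscr{D}'(X;\,\Lambda^{0,q}T^*(X))$, whereas the definition of partial inverse is formulated in $L^2$. The point is that both sides of each identity extend continuously to the relevant spaces once we know $M:L^2\to L^2$ is bounded and $\pi:L^2\to L^2$ is the orthogonal projection onto ${\rm Ker\,}\Box^{(q)}_b$, so that the distributional identities force the $L^2$ identities without further work. No additional elliptic regularity or Hilbert space functional calculus is needed beyond what is already packaged into the Kohn $L^2$ theory under the hypotheses $Y(q\pm1)$.
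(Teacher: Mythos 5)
Your proof is correct and follows essentially the same route as the paper's: both derive the closed-range property and the identification of $M$ as the partial inverse directly from the algebraic identities $\pi\Box^{(q)}_b=0$, $\pi+\Box^{(q)}_bM=I=\pi+M\Box^{(q)}_b$, $\pi M=M\pi=0$ of Lemma~\ref{l:he-szego} together with Lemma~\ref{l:he-pi}. The only cosmetic difference is that the paper identifies ${\rm Ran\,}\Box^{(q)}_b$ with ${\rm Ran\,}(I-\pi)$ outright (which is closed as the range of a bounded projection), whereas you verify closedness directly on a convergent sequence $v_j\to v$; the two formulations are interchangeable.
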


\begin{proof}
From (\ref{e:he-sj3}) and Lemma~\ref{l:he-pi}, we see that
$M: L^2(X;\, \Lambda^{0,q}T^*(X))\To{\rm Dom\,}\Box^{(q)}_b$
and ${\rm Ran\,}\Box^{(q)}_b\supset{\rm Ran\, }(I-\pi)$. If
$\Box^{(q)}_b u=v$, $u,v\in L^2(X;\, \Lambda^{0,q}T^*(X))$,
then $(I-\pi)v=(I-\pi)\Box^{(q)}_b u=v$ since $\pi\Box^{(q)}_b=\Box^{(q)}_b\pi=0$.
Hence ${\rm Ran\,}\Box^{(q)}_b\subset{\rm Ran\,}(I-\pi)$
so $\Box^{(q)}_b$ has closed range.

From (\ref{e:he-sj4}), we know that $M\pi=\pi M=0$. Thus, $M$ is the partial inverse.
\end{proof}

From Lemma~\ref{l:he-pi} and Lemma~\ref{l:he-szegomore} we get the following classical
result

\begin{prop} \label{p:he-bealsgreiner}
We assume that $Y(q)$ fails but that $Y(q-1)$ and $Y(q+1)$ hold. Then $\Box^{(q)}_b$ has closed range. Let
$M$ and $\pi$ be as in (\ref{e:he-n}) and (\ref{e:he-pi}) respectively. Then $M$ is
the partial inverse of\, $\Box^{(q)}_b$ and $\pi=\pi^{(q)}$.
\end{prop}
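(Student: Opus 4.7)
The plan is to assemble this proposition directly from the preceding lemmas, since essentially all the analytical content has been set up. The operators $M$ and $\pi$ of (\ref{e:he-n}) and (\ref{e:he-pi}) only make sense once we know that the partial inverses $N^{(q-1)}_b$ and $N^{(q+1)}_b$ exist on $L^2$, so the first step is to verify this.

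First I would observe that the hypotheses $Y(q-1)$ and $Y(q+1)$ allow us to apply Lemma~\ref{l:he-range} in bi-degrees $q-1$ and $q+1$, yielding that $\Box^{(q-1)}_b$ and $\Box^{(q+1)}_b$, viewed as unbounded self-adjoint operators on $L^2$, both have closed range. Consequently, their partial inverses $N^{(q-1)}_b$ and $N^{(q+1)}_b$ are well-defined; by Proposition~\ref{p:BG-***}, these satisfy $(N^{(q\pm 1)}_b)^* = N^{(q\pm 1)}_b$ on $L^2$, and the associated Szegő projections $\pi^{(q\pm 1)}_b$ are smoothing, with finite-dimensional kernels lying in $C^\infty(X;\,\Lambda^{0,q\pm 1}T^*(X))$. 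These regularity properties are exactly what is used (implicitly, via the identities (\ref{e:07072717}), (\ref{e:0707271702}), (\ref{e:0707271701})) in the derivations of Lemma~\ref{l:he-szego}.

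Next, with $M$ and $\pi$ now well-defined continuous operators on the appropriate Sobolev scales, I would invoke Lemma~\ref{l:he-szego} to obtain the algebraic identities $\pi^* = \pi$, $M^* = M$, $\Box^{(q)}_b\pi = \pi\Box^{(q)}_b = 0$, $\pi + \Box^{(q)}_b M = \pi + M\Box^{(q)}_b = I$, and $\pi M = M\pi = 0$. From these, Lemma~\ref{l:he-pi} shows that $\pi$ restricted to $L^2(X;\,\Lambda^{0,q}T^*(X))$ is the orthogonal projection onto $\mathrm{Ker}\,\Box^{(q)}_b$, which is precisely $\pi^{(q)}$. Finally, Lemma~\ref{l:he-szegomore} shows, from the same algebraic identities, that $\Box^{(q)}_b$ has closed range and that $M$ is the partial inverse in the sense of Definition~\ref{d:he-global}.

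The only possible obstacle in this assembly is bookkeeping the functional-analytic framework: one must confirm that $M$ genuinely maps $L^2$ into $\mathrm{Dom}\,\Box^{(q)}_b$ and that the formal-adjoint identities proved modulo smoothing in Proposition~\ref{p:BG-***} lift to genuine self-adjointness for the (compact modulo smoothing) operators $N^{(q\pm 1)}_b$ on $L^2$. Both points are immediate from Proposition~\ref{p:BG-***} and the mapping properties in Remark~\ref{r:he-para}, so no new estimate is required and the proposition follows at once by citing Lemma~\ref{l:he-szegomore} and Lemma~\ref{l:he-pi}.
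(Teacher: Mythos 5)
Your proposal is correct and follows exactly the route the paper takes: the proposition is stated there simply as a consequence of Lemma~\ref{l:he-pi} and Lemma~\ref{l:he-szegomore}, after Lemma~\ref{l:he-range} guarantees that $\Box^{(q\pm1)}_b$ have closed range so that $M$ and $\pi$ are well-defined. Your extra bookkeeping about $N^{(q\pm1)}_b$ and the mapping properties is sound but adds nothing beyond what the cited lemmas already supply.
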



\section{The Szeg\"{o} Projection}

In this section, we assume that $Y(q)$ fails. From
Proposition~\ref{p:he-yofnotq}, we know that, for every local coordinate patch $X_{j}$,
there exist
\[G_j\in L^{-1}_{\frac{1}{2},\frac{1}{2}}(X_j;\, \Lambda^{0,q}T^*(X_j),\Lambda^{0,q}T^*(X_j)),\ \
S_j\in L^0_{\frac{1}{2},\frac{1}{2}}(X_j;\, \Lambda^{0,q}T^*(X_j),\Lambda^{0,q}T^*(X_j))\]
such that
\begin{equation} \label{e:s-1}
\left\{\begin{array}{l}
S_j+\Box^{(q)}_bG_j\equiv I   \\  \Box^{(q)}_b S_j\equiv0
\end{array}\right.
\end{equation}
in the space $\mathscr D'(X_j\times X_j;\, \mathscr L(\Lambda^{0,q}T^*(X_j),\Lambda^{0,q}T^*(X_j)))$. Furthermore,
the distribution kernel $K_{S_j}$ of $S_j$ is of the form
\begin{equation} \label{e:0707292058}
K_{S_j}(x, y)=\frac{1}{(2\pi)^{2n-1}}\int e^{i(\psi(\infty,x,\eta)-\seq{y,\eta})}a(\infty,x,\eta)d\eta,
\end{equation}
where
$\psi(\infty, x, \eta)$, $a(\infty,x,\eta)$
are as in Proposition~\ref{p:c-basislimit} and (\ref{e:0707292108}). From now on, we assume that $S_j$ and $G_j$ are
properly supported operators.

We assume that $X=\bigcup^k_{j=1}X_{j}$. Let $\chi_j$ be a
$C^\infty$ partition of unity subordinate to $\set{X_{j}}$. From (\ref{e:s-1}), we have
\begin{equation} \label{e:0709231506}
\left\{\begin{array}{l}
S_j\chi_j+\Box^{(q)}_b G_j\chi_j\equiv\chi_j  \\ \Box^{(q)}_b S_j\chi_j\equiv0
\end{array}\right.
\end{equation}
in the space $\mathscr D'(X_j\times X_j;\, \mathscr L(\Lambda^{0,q}T^*(X_j),\Lambda^{0,q}T^*(X_j)))$. Thus,
\begin{equation} \label{e:0709231507}
\left\{\begin{array}{l}
S+\Box^{(q)}_b G\equiv I  \\ \Box^{(q)}_b S\equiv0
\end{array}\right.
\end{equation}
in $\mathscr D'(X\times X;\, \mathscr L(\Lambda^{0,q}T^*(X),\Lambda^{0,q}T^*(X)))$, where
\[S, G:\mathscr D'(X;\, \Lambda^{0,q}T^*(X))\To\mathscr D'(X;\, \Lambda^{0,q}T^*(X)),\]
\begin{equation} \label{e:0707302028}
\left\{\begin{split}
& Su=\sum^k_{j=1}S_{j}(\chi_ju), u\in\mathscr D'(X;\, \Lambda^{0,q}T^*(X))   \\
& Gu=\sum^k_{j=1}G_{j}(\chi_ju), u\in\mathscr D'(X;\, \Lambda^{0,q}T^*(X))
\end{split}\right..
\end{equation}
From (\ref{e:0709231507}), we can repeat the method of Beals and Greiner (see page $173$-page $176$ of~\cite{BG88}) with minor change to show that
$S$ is the Szeg\"{o} projection (up to some smoothing operators) if $\Box^{(q)}_b$ has closed range.

Let $S^*$, $G^*:\mathscr D'(X;\, \Lambda^{0,q}T^*(X))\To\mathscr D'(X;\, \Lambda^{0,q}T^*(X))$
be the formal adjoints of $S$ and $G$ respectively. As in Lemma~\ref{l:he-para},
we see that $S^*$ and $G^*$ are well-defined as continuous operators
\begin{equation} \label{e:s-3}\left\{\begin{split}
& S^*:H^s(X;\, \Lambda^{0,q}T^*(X))\To H^s(X;\, \Lambda^{0,q}T^*(X))   \\
& G^*: H^s(X;\, \Lambda^{0,q}T^*(X))\To H^{s+1}(X;\, \Lambda^{0,q}T^*(X))
\end{split}\right.,
\end{equation}
for all $s\in\Real$. We have the following

\begin{lem} \label{l:s-1}
Let $S$ be as in (\ref{e:0709231507}), (\ref{e:0707302028}). We have $S\equiv S^*S$.
It follows that $S\equiv S^*$ and $S^2\equiv S$.
\end{lem}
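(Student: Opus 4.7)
The plan is to exploit the two relations $S+\Box^{(q)}_bG\equiv I$ and $\Box^{(q)}_bS\equiv 0$ from (\ref{e:0709231507}), together with the formal self-adjointness of $\Box^{(q)}_b$, to move from approximate left/right inversion to the sought symmetry. Throughout, I would use that $S_j$, $G_j$ were arranged to be properly supported, so $S$, $G$, $S^*$, $G^*$ are all well-defined on $\mathscr D'(X;\,\Lambda^{0,q}T^*(X))$, compositions make sense modulo smoothing, and the adjoint of a smoothing operator is again smoothing.

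First I would take the formal adjoint of $\Box^{(q)}_bS\equiv 0$: since $\Box^{(q),*}_b=\Box^{(q)}_b$, this gives
\begin{equation*}
S^*\Box^{(q)}_b\equiv 0.
\end{equation*}
Multiplying $S+\Box^{(q)}_bG\equiv I$ on the left by $S^*$ then yields $S^*S+S^*\Box^{(q)}_bG\equiv S^*$, and the middle term is smoothing by the previous display, so
\begin{equation*}
S^*\equiv S^*S.
\end{equation*}

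Dually, adjoining $S+\Box^{(q)}_bG\equiv I$ produces $S^*+G^*\Box^{(q)}_b\equiv I$, and multiplying on the right by $S$ gives $S^*S+G^*\Box^{(q)}_bS\equiv S$. Using $\Box^{(q)}_bS\equiv 0$ kills the second term, and hence $S\equiv S^*S$. Combining the two equivalences gives $S\equiv S^*S\equiv S^*$, and therefore $S^2\equiv S^*S\equiv S$, which are the three assertions claimed.

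No genuine obstacle arises: the only point requiring some care is the legitimacy of compositions such as $S^*\Box^{(q)}_bG$ modulo smoothing and the stability of $\equiv$ under taking formal adjoints, both of which are ensured by the proper support of $S_j$ and $G_j$ and by the fact that smoothing operators form a two-sided $*$-ideal in the algebra of properly supported operators. Thus the lemma reduces to a short bookkeeping argument once the parametrix identities (\ref{e:0709231507}) are in place.
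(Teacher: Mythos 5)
Your proof is correct and uses the same key mechanism as the paper: take the formal adjoint of $S+\Box^{(q)}_bG\equiv I$ to get $S^*+G^*\Box^{(q)}_b\equiv I$, multiply on the right by $S$, and kill the middle term with $\Box^{(q)}_bS\equiv 0$, giving $S\equiv S^*S$. The extra derivation of $S^*\equiv S^*S$ via $S^*\Box^{(q)}_b\equiv 0$ is a harmless alternative route to the corollaries $S\equiv S^*$ and $S^2\equiv S$, which the paper obtains by instead noting that $S^*S$ is formally self-adjoint.
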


\begin{proof}
From (\ref{e:0709231507}), it follows that $S^*+G^*\Box^{(q)}_b\equiv I$.
We have  $S\equiv(S^*+G^*\Box^{(q)}_b)\circ S\equiv S^*S+G^*\Box^{(q)}_b S\equiv S^*S$.
The lemma follows.
\end{proof}

Let
\begin{equation} \label{e:0707292122}
H=(I-S)\circ G.
\end{equation}
$H$ is well-defined as a continuous operator
\[H: H^s(X;\, \Lambda^{0,q}T^*(X))\To H^{s+1}(X;\, \Lambda^{0,q}T^*(X))\]
for all $s\in\Real$. The formal adjoint $H^*$ is well-defined as a continuous operator:
$H^*: H^s(X;\, \Lambda^{0,q}T^*(X))\To H^{s+1}(X;\, \Lambda^{0,q}T^*(X))$, for all $s\in\Real$.

\begin{lem} \label{l:s-2}
Let $S$ and $H$ be as in (\ref{e:0709231507}), (\ref{e:0707302028}) and (\ref{e:0707292122}). Then
\begin{align}
&SH\equiv 0, \label{e:0708252235} \\
&S+\Box^{(q)}_b H\equiv I. \label{e:0708252236}
\end{align}
\end{lem}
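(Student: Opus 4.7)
The proof should be a short direct computation using the relations already established in (7.7) and Lemma 7.1. I would proceed as follows.

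For the first identity $SH\equiv 0$, I expand the definition $H=(I-S)\circ G$ to get
\[
SH = S(I-S)G = SG - S^2G.
\]
By Lemma 7.1 we have $S^2\equiv S$, so $S^2G\equiv SG$ (composition with the properly supported operator $G$ preserves the $\equiv$ relation, since smoothing operators form a two-sided ideal). Hence $SH\equiv SG-SG=0$, which gives (7.9).

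For the second identity, I again expand $H$:
\[
\Box^{(q)}_b H = \Box^{(q)}_b(I-S)G = \Box^{(q)}_b G - \Box^{(q)}_b S\circ G.
\]
By (7.7), $\Box^{(q)}_b S\equiv 0$, so $\Box^{(q)}_b S\circ G\equiv 0$, and therefore $\Box^{(q)}_b H\equiv \Box^{(q)}_b G$. Using again (7.7), namely $S+\Box^{(q)}_b G\equiv I$, I conclude $S+\Box^{(q)}_b H\equiv S+\Box^{(q)}_b G\equiv I$, proving (7.10).

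There is essentially no obstacle here: the entire lemma is a formal manipulation of the relations provided by (7.7) and Lemma 7.1. The only minor subtlety is verifying that the $\equiv$ relation is preserved under composition with the properly supported operators $G$ and $\Box^{(q)}_b$, which is standard and holds because smoothing operators form a two-sided ideal within the class of properly supported continuous operators on $\mathscr D'(X;\,\Lambda^{0,q}T^*(X))$.
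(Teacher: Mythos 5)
Your proof is correct and follows essentially the same computation as the paper: expand $H=(I-S)G$, use $S^2\equiv S$ from Lemma~\ref{l:s-1} to kill $SH$, and use $\Box^{(q)}_b S\equiv0$ together with $S+\Box^{(q)}_b G\equiv I$ from (\ref{e:0709231507}) to get the second identity. The remark about smoothing operators forming a two-sided ideal under composition with properly supported operators is exactly the (implicit) justification the paper relies on.
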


\begin{proof}
We have $SH\equiv S(I-S)G\equiv (S-S^2)G\equiv0$
since $S^2\equiv S$, where $G$ is as in (\ref{e:0709231507}).
From (\ref{e:0709231507}), it follows that $S+\Box^{(q)}_b H=S+\Box^{(q)}_b (I-S)G\equiv  I-\Box^{(q)}_b SG\equiv I$.
The lemma follows.
\end{proof}

\begin{lem} \label{l:s-3}
Let $H$ be as in (\ref{e:0707292122}). Then $H\equiv H^*$.
\end{lem}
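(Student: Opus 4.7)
The plan is to derive $H \equiv H^*$ directly from the identities already established in Lemma~\ref{l:s-1} and Lemma~\ref{l:s-2}, together with the fact that $\Box^{(q)}_b$ is formally self-adjoint. The key observation is that taking formal adjoints of the relations in Lemma~\ref{l:s-2} produces two-sided analogues that can be combined to ``squeeze'' $H$ between $I$ and $H^*$.

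First I would take the formal adjoint of the identity $S+\Box^{(q)}_b H\equiv I$ from (\ref{e:0708252236}); since $S\equiv S^*$ by Lemma~\ref{l:s-1} and $\Box^{(q)}_b$ is formally self-adjoint, this yields $S+H^*\Box^{(q)}_b\equiv I$. Similarly, taking the adjoint of $SH\equiv 0$ from (\ref{e:0708252235}) gives $H^*S\equiv 0$.

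The computation is then a short chain of congruences modulo smoothing operators:
\begin{align*}
H &\equiv (S+H^*\Box^{(q)}_b)H \\
  &\equiv SH + H^*(\Box^{(q)}_b H) \\
  &\equiv 0 + H^*(I-S) \\
  &\equiv H^* - H^*S \\
  &\equiv H^*,
\end{align*}
where the first step uses the adjointed version of (\ref{e:0708252236}), the third uses $SH\equiv 0$ together with $\Box^{(q)}_b H\equiv I-S$ from (\ref{e:0708252236}), and the last uses $H^*S\equiv 0$.

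There is essentially no obstacle: once the correct adjointed relations are written down, the argument is purely formal manipulation with $\equiv$. The only thing to be mildly careful about is that compositions such as $H^*\Box^{(q)}_b H$ are well-defined modulo smoothing operators, which follows from the mapping properties of $H$, $H^*$ (continuous $H^s \to H^{s+1}$) and $\Box^{(q)}_b$ (continuous $H^{s+1}\to H^{s-1}$), so that the composition is continuous on every Sobolev scale and the congruences pass through.
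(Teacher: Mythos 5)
Your proof is correct and follows essentially the same argument as the paper: take adjoints in $S+\Box^{(q)}_b H\equiv I$ and $SH\equiv 0$, then compute $H\equiv(S^*+H^*\Box^{(q)}_b)H\equiv S^*H+H^*\Box^{(q)}_bH\equiv H^*(I-S)\equiv H^*$. The only difference is that you spell out the last two contractions ($H^*(I-S)\equiv H^*-H^*S\equiv H^*$) that the paper leaves implicit.
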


\begin{proof}
Taking the adjoint in (\ref{e:0708252236}), we get
$S^*+H^*\Box^{(q)}_b\equiv I$.
Hence
\[H\equiv(S^*+H^*\Box^{(q)}_b)H\equiv S^*H+H^*\Box^{(q)}_b H.\]
From Lemma~\ref{l:s-1}, Lemma~\ref{l:s-2}, we have
$S^*H\equiv SH\equiv 0$.
Hence $H\equiv H^*\Box^{(q)}_b H\equiv H^*$.
\end{proof}

Summing up, we get the following

\begin{prop} \label{p:s-szegoglobal}
We assume that $Y(q)$ fails. Let
\[
\left\{\begin{split}
& S: \mathscr D'(X;\, \Lambda^{0,q}T^*(X))\To
  \mathscr D'(X;\, \Lambda^{0,q}T^*(X))   \\
&  H: \mathscr D'(X;\, \Lambda^{0,q}T^*(X))\To
  \mathscr D'(X;\, \Lambda^{0,q}T^*(X))
\end{split}\right.
\]
be as in (\ref{e:0707302028}) and (\ref{e:0707292122}). Then, $S$ and $H$ are well-defined as continuous operators
\begin{align}
&S:H^s(X;\, \Lambda^{0,q}T^*(X))\To H^{s}(X;\, \Lambda^{0,q}T^*(X)), \label{e:0709231529} \\
&H: H^s(X;\, \Lambda^{0,q}T^*(X))\To H^{s+1}(X;\, \Lambda^{0,q}T^*(X)), \label{e:0709231530}
\end{align}
for all $s\in\Real$. Moreover, we have
\begin{align}
& H\Box^{(q)}_b+S\equiv S+\Box^{(q)}_b H\equiv I,                 \label{e:s-1moreg}  \\
& \Box^{(q)}_b S\equiv S\Box^{(q)}_b\equiv0,                     \label{e:s-2moreg}  \\
& S\equiv S^*\equiv S^2,                                     \label{e:s-3moreg}  \\
& SH\equiv HS\equiv0,                                       \label{e:s-4moreg}  \\
& H\equiv H^*.                                            \label{e:s-5moreg}
\end{align}
\end{prop}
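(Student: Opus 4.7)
The strategy is simply to assemble the three preceding lemmas and propagate identities by taking formal adjoints, using the self-adjointness of $\Box^{(q)}_b$ (modulo smoothing, in fact exactly, since $\Box^{(q)}_b$ is formally self-adjoint).

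First I would dispose of the mapping properties (\ref{e:0709231529})--(\ref{e:0709231530}). On each coordinate patch $X_j$, $S_j\in L^0_{1/2,1/2}$ and $G_j\in L^{-1}_{1/2,1/2}$ by Proposition~\ref{p:ss-pseudo} and Remark~\ref{r:0809042013}, so the Calder\'on--Vaillancourt theorem (Proposition~\ref{p:he-calderon}) gives continuity of $S_j$ between $H^s$ spaces and of $G_j$ from $H^s$ to $H^{s+1}$. Since the $S_j,G_j$ are taken properly supported and a partition of unity $\{\chi_j\}$ is used in (\ref{e:0707302028}), the globally defined operators $S$ and $G=\sum_j G_j\chi_j(\,\cdot\,)$ inherit the mapping properties, and then $H=(I-S)G$ in (\ref{e:0707292122}) gains one derivative as claimed.

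Next I would collect the identities that have already been established in Lemmas~\ref{l:s-1}, \ref{l:s-2}, \ref{l:s-3}: namely
\[ S+\Box^{(q)}_b H\equiv I,\quad \Box^{(q)}_b S\equiv 0,\quad SH\equiv 0,\quad S\equiv S^*\equiv S^2,\quad H\equiv H^*.\]
These give half of (\ref{e:s-1moreg})--(\ref{e:s-5moreg}) directly, and (\ref{e:s-3moreg}) and (\ref{e:s-5moreg}) completely.

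The remaining identities are obtained by taking formal adjoints. Since $\Box^{(q)}_b$ is formally self-adjoint, adjoining $\Box^{(q)}_b S\equiv 0$ gives $S^*\Box^{(q)}_b\equiv 0$, and then $S\equiv S^*$ yields $S\Box^{(q)}_b\equiv 0$, completing (\ref{e:s-2moreg}). Adjoining $S+\Box^{(q)}_b H\equiv I$ gives $S^*+H^*\Box^{(q)}_b\equiv I$, and using $S\equiv S^*$, $H\equiv H^*$ yields $S+H\Box^{(q)}_b\equiv I$, which together with Lemma~\ref{l:s-2} gives (\ref{e:s-1moreg}). Finally, adjoining $SH\equiv 0$ and using the same self-adjointness modulo smoothing gives $H^*S^*\equiv HS\equiv 0$, completing (\ref{e:s-4moreg}). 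There is no real obstacle here; the only care needed is to check that the adjoint operations are legitimate on the level of distributions, which follows from (\ref{e:s-3}) and the analogous property for $H^*$.
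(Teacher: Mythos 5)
Your proposal is correct and follows essentially the same route as the paper: the paper introduces Proposition~\ref{p:s-szegoglobal} with the words ``Summing up, we get the following,'' i.e.\ it is precisely the combination of the Sobolev continuity of $S_j\in L^0_{1/2,1/2}$, $G_j\in L^{-1}_{1/2,1/2}$ (via Calder\'on--Vaillancourt and the partition of unity) with the identities of Lemmas~\ref{l:s-1}, \ref{l:s-2}, \ref{l:s-3} and (\ref{e:0709231507}), and the three ``mirror'' identities $H\Box^{(q)}_b+S\equiv I$, $S\Box^{(q)}_b\equiv0$, $HS\equiv0$ are obtained, exactly as you do, by taking formal adjoints and using $S\equiv S^*$, $H\equiv H^*$, and the self-adjointness of $\Box^{(q)}_b$. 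The only tiny slip is that $\Box^{(q)}_b S\equiv0$ comes from (\ref{e:0709231507}) rather than from the three lemmas you cite, but this does not affect the argument.
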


\begin{rem} \label{r:s-szego}
If $S'$, $H':\mathscr D'(X;\, \Lambda^{0,q}T^*(X))\To\mathscr D'(X;\, \Lambda^{0,q}T^*(X))$
satisfy (\ref{e:0709231529})-(\ref{e:s-5moreg}), then
$S'\equiv(H\Box^{(q)}_b+S)S'\equiv SS'\equiv S(\Box^{(q)}_b H'+S')\equiv S$
and
\[H'\equiv(H\Box^{(q)}_b+S)H'\equiv(H\Box^{(q)}_b+S')H'\equiv H\Box^{(q)}_b H'\equiv H(\Box^{(q)}_b H'+S')\equiv H.\]
Thus,(\ref{e:0709231529})-(\ref{e:s-5moreg}) determine $S$ and $H$ uniquely up to smoothing operators.
\end{rem}

Now we can prove the following

\begin{prop} \label{p:s-sz}
We assume that $Y(q)$ fails. Suppose
$\Box^{(q)}_b$ has closed range. Let $N$ be the partial inverse of\, $\Box^{(q)}_b$. Then
$N=H+F$ and $\pi^{(q)}=S+K$, where $H$, $S$ are as in Proposition~\ref{p:s-szegoglobal}, $F$, $K$ are smoothing
operators.
\end{prop}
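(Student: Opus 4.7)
The plan is to invoke the uniqueness statement of Remark~\ref{r:s-szego}: the pair $(S,H)$ is determined, modulo smoothing operators, by the system of identities \eqref{e:s-1moreg}--\eqref{e:s-5moreg}. Since $\Box^{(q)}_b$ is formally self-adjoint with closed range, the pair $(\pi^{(q)},N)$ obeys the analogous identities as genuine equalities on $\mathrm{Dom}\,\Box^{(q)}_b$ and on $L^2$. It therefore suffices to check that $(\pi^{(q)},N)$ also satisfies \eqref{e:s-1moreg}--\eqref{e:s-5moreg} as identities of operators on Sobolev spaces modulo smoothing; the uniqueness will then force $\pi^{(q)}\equiv S$ and $N\equiv H$, i.e.\ $\pi^{(q)}=S+K$ and $N=H+F$ with $K,F$ smoothing.

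The preliminary step is to extend $\pi^{(q)}$ and $N$ continuously to every Sobolev space, since a priori they are only bounded on $L^2$. Multiplying the parametrix identity $S+\Box^{(q)}_b H\equiv I$ on the left by $\pi^{(q)}$ and using $\pi^{(q)}\Box^{(q)}_b=0$ on $\mathrm{Dom}\,\Box^{(q)}_b$ yields $\pi^{(q)}\equiv\pi^{(q)} S$ as operators on $L^2$; since $S\colon H^s\to H^s$ is continuous, this extends $\pi^{(q)}$ uniquely to every $H^s$ with the required mapping property. An analogous manipulation expressing $N\equiv (S+\Box^{(q)}_b H)N$ modulo smoothing, combined with the relation $SN\equiv\pi^{(q)} N=0$ established below, extends $N$ to $H^s\to H^{s+1}$.

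The core computation for $\pi^{(q)}\equiv S$ is
\[
S\equiv S(\pi^{(q)}+\Box^{(q)}_b N)=S\pi^{(q)}+S\Box^{(q)}_b N\equiv S\pi^{(q)},
\]
using $S\Box^{(q)}_b\equiv 0$. Taking formal adjoints and using $S^*\equiv S$, $(\pi^{(q)})^*=\pi^{(q)}$ gives $S\equiv\pi^{(q)} S$. Symmetrically,
\[
\pi^{(q)}\equiv\pi^{(q)}(S+\Box^{(q)}_b H)=\pi^{(q)} S+\pi^{(q)}\Box^{(q)}_b H\equiv\pi^{(q)} S,
\]
so combining the two relations gives $\pi^{(q)}\equiv\pi^{(q)} S\equiv S$. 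For $N\equiv H$, I would substitute $\pi^{(q)}\equiv S$ into $I\equiv\pi^{(q)}+\Box^{(q)}_b N$ and compare with $I\equiv S+\Box^{(q)}_b H$ to deduce $\Box^{(q)}_b(N-H)\equiv 0$. Composing on the left with $H$, using $H\Box^{(q)}_b\equiv I-S$, the identity $HS\equiv 0$ from \eqref{e:s-4moreg}, and $SN\equiv\pi^{(q)} N=0$, one obtains
\[
N-H\equiv (I-S)(N-H)\equiv H\Box^{(q)}_b(N-H)\equiv 0.
\]

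The principal obstacle is the Sobolev extension of $\pi^{(q)}$ and $N$ to the scale of $H^s$. Because $Y(q)$ fails, $\Box^{(q)}_b$ is not hypoelliptic, so $\mathrm{Ker}\,\Box^{(q)}_b$ need not consist of smooth forms and elliptic regularity is unavailable. The mapping properties $\pi^{(q)}\colon H^s\to H^s$ and $N\colon H^s\to H^{s+1}$ must therefore be extracted from the heat-equation parametrix itself, essentially by first verifying $\pi^{(q)}\equiv\pi^{(q)} S$ on $L^2$ and then transporting the Sobolev continuity of $S$ (and of $H$) back to $\pi^{(q)}$ (and to $N$) through the parametrix relations.
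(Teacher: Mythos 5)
Your strategy — verify that $(\pi^{(q)},N)$ satisfies the characterizing identities \eqref{e:s-1moreg}--\eqref{e:s-5moreg} and then invoke the uniqueness Remark~\ref{r:s-szego} — is natural, and you correctly flag the Sobolev extension of $\pi^{(q)}$ and $N$ as the principal obstacle. But you do not resolve it, and the manipulations you write down do not in fact go through, because the relation ``$\equiv$'' is not stable under composition with operators that are a priori only $L^2$-bounded.

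Concretely: you derive $\pi^{(q)}\equiv\pi^{(q)} S$ from $S+\Box^{(q)}_b H\equiv I$ by multiplying on the left by $\pi^{(q)}$. But if $S+\Box^{(q)}_b H=I+F$ with $F$ smoothing, then $\pi^{(q)} S=\pi^{(q)}+\pi^{(q)} F$, and $\pi^{(q)} F$ only maps $\mathscr E'\to C^\infty\hookrightarrow L^2\to L^2$: it is \emph{not} smoothing, since $\pi^{(q)}$ is not known to map $C^\infty$ into $C^\infty$. The same defect ruins $S\Box^{(q)}_b N\equiv 0$ (you have $S\Box^{(q)}_b N=FN$ with $F$ smoothing, but $FN:L^2\to C^\infty$ only, not $\mathscr E'\to C^\infty$), and $SN\equiv 0$. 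Even if one pushes these computations through, the conclusion would only be that $S-\pi^{(q)}$ and $N-H$ map $L^2\to C^\infty$; that is strictly weaker than ``smoothing,'' which requires $H^s_{\rm comp}\to H^{s+m}_{\rm loc}$ for all $s\in\Real$, including negative $s$.

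The paper's proof avoids all of this by two devices you do not use. First, it replaces $S$ by $I-\Box^{(q)}_b H$, so that $\Box^{(q)}_b H+S=I$ and $H^*\Box^{(q)}_b+S^*=I$ hold \emph{exactly}, not merely mod smoothing; this makes $\pi^{(q)}=\pi^{(q)} S$, $S^*\pi^{(q)}=\pi^{(q)}$ and $S-\pi^{(q)}=NF_1$ (with $F_1=\Box^{(q)}_b S$ smoothing) genuine identities, with no stray smoothing errors to propagate through $\pi^{(q)}$. Second — and this is the crucial idea — it ``sandwiches'' $N$ between two smoothing operators: one computes $(S^*-\pi^{(q)})(S-\pi^{(q)})=F_1^*N^2F_1$, which \emph{is} smoothing because the outer $F_1$ first lands in $C^\infty\subset L^2$ where $N^2$ is bounded; on the other hand the same product expands to $S^*S-\pi^{(q)}=S-\pi^{(q)}+F_2$, whence $S-\pi^{(q)}$ is smoothing. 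The $N-H$ case is handled analogously by writing $N-H=NF_4+F_3$ and then $N-H^*=F_4^*(NF_4+F_3+H)+F_3^*$, again sandwiching $N$. Your sketch contains neither the exact-identity normalization nor the sandwich trick, and without them the Sobolev extension you defer to ``the heat-equation parametrix itself'' is not established.
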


\begin{proof}
We may replace $S$ by $I-\Box^{(q)}_b H$ and we have $\Box^{(q)}_b H+S=I=H^*\Box^{(q)}_b+S^*$.
Now,
\begin{equation} \label{e:s-in}
\pi^{(q)}=\pi^{(q)}(\Box^{(q)}_b H+S)=\pi^{(q)} S,
\end{equation}
hence
\begin{equation} \label{e:0707302126}
(\pi^{(q)})^*=S^*(\pi^{(q)})^*=\pi^{(q)}=S^*\pi^{(q)}.
\end{equation}
Similarly,
\begin{equation} \label{e:s-ve}
S=(N\Box^{(q)}_b+\pi^{(q)})S=\pi^{(q)} S+NF_1,
\end{equation}
where $F_1$ is a smoothing operator. From (\ref{e:s-in}) and (\ref{e:s-ve}), we have
\begin{equation} \label{e:s-inor}
S-\pi^{(q)}=S-\pi^{(q)} S=NF_1.
\end{equation}
Hence $(S^*-\pi^{(q)})(S-\pi^{(q)})=F^*_1N^2F_1$. On the other hand,
\begin{align*}
(S^*-\pi^{(q)})(S-\pi^{(q)}) &= S^*S-S^*\pi^{(q)}-\pi^{(q)} S+(\pi^{(q)})^2   \\
                 &= S^*S-\pi^{(q)}     \\
                 &= S-\pi^{(q)}+F_2,
\end{align*}
where $F_2$ is a smoothing operator. Here we used (\ref{e:s-in}) and (\ref{e:0707302126}).
Now,
\begin{align*}
F^*_1N^2F_1: &\ \mathscr D'(X;\, \Lambda^{0,q}T^*(X))\To C^\infty(X;\, \Lambda^{0,q}T^*(X))   \\
             &\To L^2(X;\, \Lambda^{0,q}T^*(X))\To C^\infty(X;\, \Lambda^{0,q}T^*(X)).
\end{align*}
Hence $F^*_1N^2F_1$ is smoothing. Thus $S-\pi^{(q)}$ is smoothing.

We have,
\begin{align*}
N-H  &=  N(\Box^{(q)}_b H+S)-H    \\
     &=  (I-\pi^{(q)})H+NS-H         \\
     &=  NS-\pi^{(q)} H              \\
     &=  N(S-\pi^{(q)})+F_3          \\
     &=  NF_4+F_3
\end{align*}
where $F_4$ and $F_3$ are smoothing operators. Now,
\begin{align*}
N-H^*  &= N^*-H^*          \\
         &= F^*_4N+F^*_3           \\
         &= F^*_4(NF_4+F_3+H)+F^*_3.
\end{align*}
Note that
\begin{align*}
F^*_4NF_4: &\ \mathscr D'(X;\, \Lambda^{0,q}T^*(X))\To C^\infty(X;\, \Lambda^{0,q}T^*(X))   \\
           &\To L^2(X;\, \Lambda^{0,q}T^*(X))\To C^\infty(X;\, \Lambda^{0,q}T^*(X)).
\end{align*}
and $F^*_4H: H^s(X\ ;\Lambda^{0,q}T^*(X))\To H^{s+m}(X\ ;\Lambda^{0,q}T^*(X))$
for all $s\in\Real$ and $m\geq 0$. Hence $N-H^*$ is smoothing and so is $(N-H^*)^*=N-H$.
\end{proof}

From Proposition~\ref{p:s-szegoglobal} and Proposition~\ref{p:s-sz},
we obtain the following

\begin{thm} \label{t:s-thm1}
We recall that we work with the assumption that $Y(q)$ fails. Let $(n_-, n_+)$, $n_-+n_+=n-1$, be the signature of the Levi form $L$.
Suppose $\Box^{(q)}_b$ has closed range. Then for every local
coordinate patch $U\subset X$, the distribution kernel of\, $\pi^{(q)}$ on
$U\times U$ is of the form
\begin{equation} \label{e:0707311443}
K_{\pi^{(q)}}(x,y)\equiv\frac{1}{(2\pi)^{2n-1}}\int e^{i(\psi(\infty,x,\eta)-\seq{y,\eta})}
  a(\infty,x,\eta)d\eta,
\end{equation}
$a(\infty, x, \eta)\in S^0_{1,0}(T^*(U)\ ;\mathscr L(\Lambda^{0,q}T^*(U),\Lambda^{0,q}T^*(U)))$,
\[a(\infty, x, \eta)\sim\sum^{\infty}_0a_j(\infty, x, \eta)\]
in the H\"{o}rmander symbol space $S^0_{1,0}(T^*(U);\, \mathscr L(\Lambda^{0,q}T^*(U),\Lambda^{0,q}T^*(U)))$,
where
$a_j(\infty, x, \eta)\in C^\infty(T^*(U);\, \mathscr L(\Lambda^{0,q}T^*(U),\Lambda^{0,q}T^*(U)))$, $j=0,1,\ldots$,
$a_j(\infty, x, \lambda\eta)=\lambda^{-j}a_j(\infty, x, \eta)$, $\lambda\geq1$, $\abs{\eta}\geq1$, $j=0,1,\ldots$. Here
$\psi(\infty, x, \eta)$ is as in Proposition~\ref{p:c-basislimit} and (\ref{e:nevergiveup1}). We recall that
$\psi(\infty,x,\eta)\in C^{\infty}(T^*(U))$,
$\psi(\infty,x,\lambda\eta)=\lambda\psi(\infty,x,\eta)$, $\lambda>0$,
${\rm Im\,}\psi(\infty,x,\eta)\asymp\abs{\eta}({\rm dist\,}((x,\frac{\eta}{\abs{\eta}}),\Sigma))^2$ and
\begin{equation} \label{e:0709192125}
\psi(\infty, x, \eta)=-\ol\psi(\infty, x, -\eta).
\end{equation}
Moreover, for all $j=0,1,\ldots$,
\begin{equation} \label{e:s-important}
\left\{ \begin{array}{ll}
a_j(\infty, x, \eta)=0\ \ \mbox{in a conic neighborhood of}\ \ \Sigma^+, & \textup{if}\ \ q=n_-,\ n_-\neq n_+,    \\
a_j(\infty, x, \eta)=0\ \ \mbox{in a conic neighborhood of}\ \ \Sigma^-, & \textup{if}\ \ q=n_+,\ n_-\neq n_+.
\end{array}\right.
\end{equation}
\end{thm}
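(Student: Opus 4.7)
The plan is to combine Proposition~\ref{p:s-sz}, which identifies the Szeg\"o projection with the globally assembled operator $S$ modulo smoothing, with the explicit local form of each $S_{j}$ given by (\ref{e:0707292058}), and then either use equivalence of complex phase functions or a local uniqueness argument to transfer the representation to coordinates on an arbitrary patch $U$.

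First I would invoke Proposition~\ref{p:s-sz}: since $\Box^{(q)}_{b}$ has closed range, $\pi^{(q)}\equiv S$, where $S=\sum_{j}S_{j}\chi_{j}$ is defined in (\ref{e:0707302028}) and each $S_{j}$ has kernel of the form (\ref{e:0707292058}) in coordinates of $X_{j}$. An equivalent, and in my view cleaner, route is to run the heat-equation construction of Sections~5--6 directly in the coordinates of $U$: this produces a local parametrix $S_{U}$ satisfying the analogues of (\ref{e:s-1moreg})--(\ref{e:s-5moreg}) on $U$, whose kernel is tautologically of the form (\ref{e:0707311443}) by its very definition through (\ref{e:allerimportanthome2}) and (\ref{e:0707292108}). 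The uniqueness argument of Remark~\ref{r:s-szego}, applied after multiplying by cutoffs supported in $U$, then gives $S_{U}\equiv S$ on $U\times U$, so $K_{\pi^{(q)}}\equiv K_{S_{U}}$ on $U\times U$, producing the desired oscillatory integral representation.

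The symbol expansion $a(\infty,x,\eta)\sim\sum_{j\geq 0}a_{j}(\infty,x,\eta)$ in $S^{0}_{1,0}$ is built into the construction via Proposition~\ref{p:ss-SBmore} together with the Borel summation used to pass from the homogeneous $a_{j}(\infty,x,\eta)$ to $a(\infty,x,\eta)$ as in (\ref{e:0707292108}); the classical character is automatic from the positive homogeneity of each $a_{j}(\infty,x,\eta)$. The stated properties of the phase $\psi(\infty,x,\eta)$---positive homogeneity of degree one, the lower bound on ${\rm Im\,}\psi(\infty,x,\eta)$ near $\Sigma$, and the symmetry (\ref{e:0709192125})---are precisely the content of Proposition~\ref{p:c-basislimit} together with (\ref{e:nevergiveup1}). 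The vanishing (\ref{e:s-important}) of the homogeneous components $a_{j}(\infty,x,\eta)$ on a conic neighborhood of the ``wrong'' half of $\Sigma$ is exactly what Proposition~\ref{p:ss-SBmore} guarantees.

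The main obstacle is the uniqueness/equivalence step: one must ensure that the locally constructed $S_{U}$ on $U$ agrees with the globally assembled $S$ modulo smoothing, without losing either the classical asymptotic structure of the amplitude or the conic vanishing at $\Sigma^{\pm}$. I expect to handle this by a cutoff-and-compose argument using the relations (\ref{e:s-1moreg})--(\ref{e:s-5moreg})---writing $S_{U}\equiv S_{U}(\Box^{(q)}_{b}H+S)\equiv S_{U}S$ and $S\equiv(\Box^{(q)}_{b}H_{U}+S_{U})S\equiv S_{U}S$ locally---and, where two different phases arise from overlapping patches, appealing to Proposition~\ref{p:0709201320} and the Melin--Sj\"ostrand equivalence of complex phases recalled in the appendix to absorb the phase change into the amplitude while preserving the classical expansion and the conic support properties.
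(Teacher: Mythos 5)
Your proposal is correct and follows the same route as the paper, which derives Theorem~\ref{t:s-thm1} immediately from Proposition~\ref{p:s-szegoglobal} and Proposition~\ref{p:s-sz}: identify $\pi^{(q)}\equiv S$ modulo smoothing, note that the construction underlying $S$ can be carried out in the coordinates of any given patch $U$ (yielding a local $S_U$ with kernel exactly of the form (\ref{e:0707311443}) via (\ref{e:allerimportanthome2}) and (\ref{e:0707292108})), and then invoke the uniqueness of Remark~\ref{r:s-szego} to conclude $S_U\equiv S$ on $U\times U$.

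One small slip in the sketched uniqueness chain: you wrote $S\equiv(\Box^{(q)}_bH_U+S_U)S\equiv S_US$, but the second equivalence would need $\Box^{(q)}_bH_US\equiv 0$, which is not among the available relations. Instead expand the \emph{left} parametrix identity $H_U\Box^{(q)}_b+S_U\equiv I$ (valid locally on $U$) and use $\Box^{(q)}_bS\equiv 0$ from (\ref{e:s-2moreg}), giving
\[
S\equiv(H_U\Box^{(q)}_b+S_U)S\equiv H_U\Box^{(q)}_bS+S_US\equiv S_US.
\]
Combined with your (correct) first chain $S_U\equiv S_U(\Box^{(q)}_bH+S)\equiv S_US$, which uses $S_U\Box^{(q)}_b\equiv 0$, this gives $S_U\equiv S$ modulo smoothing on $U\times U$. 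The rest of your plan — the classical expansion of $a(\infty,x,\eta)$ from Proposition~\ref{p:ss-SBmore} and Borel summation, the phase properties from Proposition~\ref{p:c-basislimit} and (\ref{e:nevergiveup1}), and the conic vanishing (\ref{e:s-important}) — is handled exactly as you describe.
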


In the rest of this section, we will study the singularities of the distribution kernel of the Szeg\"{o} projection. We need

\begin{defn} \label{d:0709181445}
Let $M$ be a real paracompact $C^\infty$ manifold and let $\Lambda$ be a $C^\infty$ closed submanifold of $M$.
Let $U$ be an open set in $M$, $U\bigcap\Lambda\neq\emptyset$.
We let $C^\infty_\Lambda(U)$ denote the set of equivalence classes of $f\in C^\infty(U)$ under the equivalence
relation
\[f\equiv g\ \ \mbox{in the space}\ C^\infty_\Lambda(U)\]
if for every $z_0\in\Lambda\bigcap U$, there exists a neighborhood $W\subset U$ of $z_0$ such that
$f=g+h$ on $W$,
where $h\in C^\infty(W)$ and $h$ vanishes to infinite order on $\Lambda\bigcap W$.
\end{defn}

In view of Proposition~\ref{p:c-basislimit}, we see that $\psi(\infty,x,\eta)$ has a
uniquely determined Taylor expansion at each point of $\Sigma$. Thus, we can define $\psi(\infty,x,\eta)$
as an element in $C^\infty_\Sigma(T^*(X))$. We also write $\psi(\infty,x,\eta)$ for the
equivalence class of $\psi(\infty, x, \eta)$ in the space $C^\infty_\Sigma(T^*(X))$.

Let $M$ be a real paracompact $C^\infty$ manifold and let $\Lambda$ be a $C^\infty$ closed submanifold of $M$. If
$x_0\in\Lambda$, we let
$A(\Lambda,n,x_0)$ be the set
\begin{align} \label{e:07091181757}
A(\Lambda,n,x_0)&=\{(U, f_1,\ldots,f_n);\, U\ \mbox{is an open neighborhood of}\ x_0,\nonumber \\
&\quad f_1,\ldots,f_n\in C^\infty_\Lambda(U), f_j|_{\Lambda\bigcap U}=0,\ j=1,\ldots,n,
df_1,\ldots,df_n\nonumber \\
&\quad\mbox{are linearly independent over $\Complex$ at each point of}\ U\}.
\end{align}

\begin{defn} \label{d:0709181523}
If $x_0\in\Lambda$, we let $A_{x_0}(\Lambda, n, x_0)$ denote the set of equivalence classes of $A(\Lambda,n, x_0)$
under the equivalence relation
\[\Gamma_1=(U, f_1,\ldots,f_n)\sim\Gamma_2=(V, g_1,\ldots,g_n),\ \ \Gamma_1, \Gamma_2\in A(\Lambda,n, x_0),\]
if there exists an open set
$W\subset U\bigcap V$ of $x_0$ such that
$g_j\equiv\sum^n_{k=1}a_{j,k}f_k$ in the space $C^\infty_\Lambda(W)$, $j=1,\ldots,n$,
where $a_{j,k}\in C^\infty_\Lambda(W)$, $j$, $k=1,\ldots,n$, and
$\left(a_{j,k}\right)^n_{j,k=1}$ is invertible.

If $(U, f_1,\ldots,f_n)\in A(\Lambda,n, x_0)$, we write
$(U, f_1,\ldots,f_n)_{x_0}$ for the equivalence class of $(U, f_1,\ldots,f_n)$ in the set $A_{x_0}(\Lambda, n, x_0)$,
which is called the germ of $(U, f_1,\ldots,f_n)$ at $x_0$.
\end{defn}

\begin{defn} \label{d:0709181850}
Let $M$ be a real paracompact $C^\infty$ manifold and let $\Lambda$ be a $C^\infty$ closed submanifold of $M$.
A formal manifold $\Omega$ of codimensin $k$ at $\Lambda$ associated to $M$ is given by:
\[\begin{split}
&\mbox{For each point of $x\in\Lambda$, we assign a germ $\Gamma_x\in A_x(\Lambda,k,x)$ in such a way that}   \\
&\mbox{for every point $x_0\in\Lambda$ has an open neighborhood $U$ such that there exist} \\
&\mbox{$f_1,\ldots,f_k\in C^\infty_\Lambda(U)$, $f_j|_{\Lambda\bigcap U}=0$, $j=1,\ldots,k$,
$df_1,\ldots,df_k$ are linearly} \\
&\mbox{independent over $\Complex$ at each point of $U$, having the
following property:} \\
&\mbox{whatever $x\in U$, the germ $(U,f_1,\ldots,f_k)_x$ is equal to $\Gamma_x$}.
\end{split}\]
Formally, we write $\Omega=\set{\Gamma_x;\, x\in\Lambda}$.
If the codimension of $\Omega$ is $1$, we call $\Omega$ a formal hypersurface at $\Lambda$.

Let $\Omega=\set{\Gamma_x;\, x\in\Lambda}$ and $\Omega_1=\set{\Td\Gamma_x;\, x\in\Lambda}$ be two formal manifolds
at $\Lambda$. If $\Gamma_x=\Td\Gamma_x$, for all $x\in\Lambda$, we write $\Omega=\Omega_1$ at $\Lambda$.
\end{defn}

\begin{defn} \label{d:0709182210}
Let $\Omega=\set{\Gamma_x;\, x\in\Lambda}$ be a formal manifold of codimensin $k$ at $\Lambda$ associated to $M$,
where $\Lambda$ and $M$ are as above.
The tangent space of $\Omega$ at $x_0\in\Lambda$ is given by:
\[\mbox{the tangent space of}\ \Omega\ \mbox{at}\ x_0=\set{u\in\Complex T_{x_0}(M);\, \seq{df_j(x_0), u}=0,\ j=1,\ldots,k},\]
where $\Complex T_{x_0}(M)$ is the complexified tangent space of $M$ at $x_0$, $(U,f_1,\ldots,f_k)$ is a
representative of $\Gamma_{x_0}$. We write $T_{x_0}(\Omega)$ to denote the
tangent space of $\Omega$ at $x_0$.
\end{defn}

Let $(x, y)$ be some coordinates of $X\times X$. From now on, we use the notations $\xi$ and $\eta$ for
the dual variables of $x$ and $y$ respectively.

\begin{rem} \label{r:s-important}
For each point $(x_0, \eta_0, x_0, \eta_0)\in{\rm diag\,}(\Sigma\times\Sigma)$, we assign a germ
\begin{equation} \label{e:0710251217}
\Gamma_{(x_0, \eta_0, x_0, \eta_0)}
=(T^*(X)\times T^*(X), \xi-\psi'_x(\infty, x, \eta), y-\psi'_\eta(\infty, x, \eta))_{(x_0, \eta_0, x_0, \eta_0)}.
\end{equation}
Let $C_\infty$ be the formal manifold at ${\rm diag\,}(\Sigma\times\Sigma)$:
\[C_\infty=\set{\Gamma_{(x, \eta, x, \eta)};\, (x, \eta, x, \eta)\in{\rm diag\,}(\Sigma\times\Sigma)}.\]
$C_\infty$ is strictly positive in the sense that
\[\frac{1}{i}\sigma(v, \ol{v})>0,\ \
\forall v\in T_\rho(C_\infty)\setminus\Complex T_\rho({\rm diag\,}(\Sigma\times\Sigma)),\]
where $\rho\in{\rm diag\,}(\Sigma\times\Sigma)$. Here $\sigma$ is the canonical two form on
$\Complex T^*_\rho(X)\times\Complex T^*_\rho(X)$ (see (\ref{e:0807172212})).
\end{rem}

The following is well-known (see section $1$ of~\cite{MS78}, for the proof)

\begin{prop} \label{p:0708261128}
There exists a formal manifold $J_+=\set{J_{(x, \eta)};\, (x, \eta)\in\Sigma}$ at $\Sigma$ associated to $T^*(X)$
such that ${\rm codim}J_+=n-1$
and $\forall (x_0, \eta_0)\in\Sigma$, if\, $(U, f_1,\ldots,f_{n-1})$ is a representative of $J_{(x_0, \eta_0)}$, then
\begin{equation} \label{e:0709201340}
\set{f_j, f_k}\equiv0\ \ \mbox{in the space}\ C^\infty_\Sigma(U),\ j, k=1,\ldots,n-1,
\end{equation}
\begin{equation} \label{e:0708281851}
p_0\equiv\sum^{n-1}_{j=1}g_jf_j\ \ \mbox{in the space}\ C^\infty_\Sigma(U),
\end{equation}
where $g_j\in C^\infty_\Sigma(U)$, $j=1,\ldots,n-1$, and
\begin{equation} \label{e:0708281853}
\frac{1}{i}\sigma(H_{f_j}, H_{\ol f_j})>0\ \ \mbox{at}\ \ (x_0, \eta_0)\in\Sigma,\ j=1,\ldots,n-1.
\end{equation}
We also write $f_j$ to denote an almost analytic extension of $f_j$. Then,
\begin{equation} \label{e:0710271152}
f_j(x, \psi'_x(\infty, x, \eta))\ \mbox{vanishes to infinite order on}\ \Sigma,\ j=1,\ldots,n-1.
\end{equation}
Moreover, we have
\begin{align} \label{e:0709201346}
T_\rho(C_\infty)=&
\{(v+\sum^{n-1}_{j=1}t_jH_{f_j}(x_0, \eta_0), v+\sum^{n-1}_{j=1}s_jH_{\ol f_j}(x_0, \eta_0));\, \nonumber \\
&\quad v\in T_{(x_0, \eta_0)}(\Sigma),\ t_j, s_j\in\Complex, j=1,\ldots,n-1\},
\end{align}
where
$\rho=(x_0, \eta_0, x_0, \eta_0)\in{\rm diag\,}(\Sigma\times\Sigma)$ and $C_\infty$ is as in Remark~\ref{r:s-important}.
\end{prop}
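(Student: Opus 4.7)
The plan is to build $J_+$ locally near each $\rho\in\Sigma$ using the normal form of Proposition~\ref{p:c-SjoBou}, then patch the germs together, and finally relate the construction to $\psi(\infty,x,\eta)$ and to $C_\infty$. Intrinsically, $J_+$ should be the formal hypersurface whose tangent space at a point $\rho\in\Sigma$ is $T_\rho(\Sigma)\oplus V_+(\rho)$, where $V_+(\rho)\subset \Complex T_\rho(T^*(X))/T_\rho(\Sigma)$ is the sum of the generalized eigenspaces of the fundamental matrix $F(\rho)$ corresponding to eigenvalues with positive imaginary part. By (\ref{e:h-eigenvalues}), these are the purely imaginary eigenvalues $i\abs{\lambda_j}\sigma_{iY}$ on $\Sigma^+$ and $-i\abs{\lambda_j}\sigma_{iY}$ on $\Sigma^-$, and $V_+(\rho)$ is isotropic of complex dimension $n-1$ on which $v\mapsto\frac{1}{i}\sigma(v,\ol v)$ is positive definite; this is the algebraic input for conditions (\ref{e:0709201340})--(\ref{e:0708281853}).

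First I would work in the coordinates $(\Td x,\Td\xi)$ of Proposition~\ref{p:c-SjoBou}, in which $p_0=i\seq{A(\Td x,\Td\xi)\Td x'',\Td\xi''}$ modulo $d^\infty_\Sigma$ and $A(\Td x',0,\Td\xi',0)$ has positive eigenvalues $\abs{\lambda_j}$. After a further conjugation of $A$ by an invertible smooth matrix (which is just a change of basis in the $\Td\xi''$ direction, preserving the symplectic relations to infinite order on $\Sigma$) one may assume $A$ is diagonal on $\Sigma$ with entries $\abs{\lambda_j}$. The natural leading-order choice is then
\[f_j^{(0)}=\Td\xi''_j+i\abs{\lambda_j}\Td x''_j,\ \ j=1,\ldots,n-1,\]
which gives $\{f_j^{(0)},f_k^{(0)}\}=0$ and $\frac{1}{i}\{f_j^{(0)},\ol f_j^{(0)}\}=2\abs{\lambda_j}>0$ at $\rho$, establishing (\ref{e:0709201340}) and (\ref{e:0708281853}) at the linear level. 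To upgrade these to equalities modulo $d^\infty_\Sigma$, and to ensure (\ref{e:0708281851}), I would add successive homogeneous corrections to $f_j^{(0)}$ in the Taylor expansion at $\Sigma$ and solve at each step a linear cohomological equation whose solvability is guaranteed by the non-vanishing of the eigenvalues $\abs{\lambda_j}$ (this is the standard inductive procedure used in Sections~1--2 of~\cite{MS78}). This iterative step is the main obstacle: simultaneously fitting $p_0$ into the ideal generated by $f_1,\ldots,f_{n-1}$ while preserving involutivity forces one to invert, order by order, the linearization of $H_{p_0}$ on the complement of $V_+$, and this is where positivity plus non-resonance are used.

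Once the local germs $(U,f_1,\ldots,f_{n-1})$ are constructed, I would check that any two such germs at the same point of $\Sigma$ differ by multiplication by an invertible matrix of $C^\infty_\Sigma$ functions: both ideals cut out, at each order of Taylor expansion on $\Sigma$, the same subspace $T_\rho(\Sigma)\oplus V_+(\rho)$, and the uniqueness at the infinitesimal level propagates to all orders by the same cohomological argument. This shows that the germs assemble into a well-defined formal hypersurface $J_+=\{J_{(x,\eta)}\}$.

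To establish (\ref{e:0710271152}), I would use the characterization of $\psi(\infty,x,\eta)$: by Proposition~\ref{p:c-basislimit}, the set $C_\infty$ of (\ref{e:0709191634}) coincides to infinite order with the graph of $\exp(-itH_{p_0})$ as $t\to\infty$, and a tangent vector $(u,w)\in T_\rho(C_\infty)$ at a diagonal point $\rho=(x_0,\eta_0,x_0,\eta_0)$ decomposes as $u=v+\sum t_jH_{f_j}(\rho)$ with $v\in T_{(x_0,\eta_0)}(\Sigma)$, because $V_+(\rho)$ is exactly the ``outgoing'' subspace picked out by forward-time limit of the flow of $-iH_{p_0}$. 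From this, (\ref{e:0709201346}) follows, and, evaluating $f_j$ along $(x,\psi'_x(\infty,x,\eta))$, an induction on the order of Taylor expansion at $\Sigma$ using $H_{f_j}\in V_+$ and the Hamilton--Jacobi equation $\pr_t\psi=ip_0(x,\psi'_x)$ modulo $\abs{{\rm Im\,}\psi}^\infty$ shows that $f_j(x,\psi'_x(\infty,x,\eta))$ vanishes to infinite order on $\Sigma$. I expect the delicate identification of $V_+$ with the forward-flow direction to be where one has to work carefully, though this is the content of Menikoff--Sj\"{o}strand's analysis that the proposition cites.
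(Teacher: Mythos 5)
The paper does not give a proof of Proposition~\ref{p:0708261128}; it cites Section~1 of Menikoff--Sj\"ostrand~\cite{MS78}. Your overall architecture (local normal form from Proposition~\ref{p:c-SjoBou}, order-by-order construction of the $f_j$ at $\Sigma$, patching of germs, and identification of $T_\rho(C_\infty)$ via the $t\to\infty$ limit of the flow) does match the approach in the cited reference, so the plan is sound. However, there is a concrete error in your leading-order choice of $f_j$ that propagates through the rest.

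You set $f_j^{(0)}=\Td\xi''_j+i\abs{\lambda_j}\Td x''_j$ after diagonalizing $A$ in the normal form $p_0\equiv i\seq{A\Td x'',\Td\xi''}$. This cannot satisfy (\ref{e:0708281851}): since $p_0$ vanishes to second order on $\Sigma$ while $f_j$ vanishes to first order, each $g_j$ must vanish on $\Sigma$, so $g_j$ is linear at leading order and one must match the quadratic form $i\sum\abs{\lambda_j}\Td x''_j\Td\xi''_j$ by linear-times-linear products $g_j f_j^{(0)}$. Matching coefficients forces the diagonal coefficient of $\Td x''_j\Td\xi''_j$ in $\sum g_jf_j^{(0)}$ to be $a_{jj}+ib_{jj}\abs{\lambda_j}$ where $a_{jj}$ (resp.\ $b_{jj}$) is the coefficient of $(\Td x''_j)^2$ (resp.\ $(\Td\xi''_j)^2$), both of which must vanish; hence the diagonal coefficient is $0\neq i\abs{\lambda_j}$. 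Equivalently, $\Td\xi''_j+i\abs{\lambda_j}\Td x''_j$ is not a linear factor of the quadratic form $\Td x''_j\Td\xi''_j$, so $p_0$ does not lie in the ideal generated by your $f_j^{(0)}$. The same mistake shows up geometrically: $H_{f_j^{(0)}}(\rho)=\pr_{\Td x''_j}-i\abs{\lambda_j}\pr_{\Td\xi''_j}$ mixes the eigenvectors of the fundamental matrix $F_\rho$ with eigenvalues $\pm i\abs{\lambda_j}\sigma_{iY}$, so $H_{f_j^{(0)}}\notin\Lambda^+_\rho$, and the tangent space of your formal manifold is not $\Complex T_\rho(\Sigma)\oplus\Lambda^+_\rho$. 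Because the higher-order corrections you then add cannot change the first-order jet, the error is not fixable by iteration.

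The correct leading-order choice is to take $f_j^{(0)}$ proportional to $\Td\xi''_j$ alone (equivalently, proportional to $q_j$ or $\ol q_j$ according as $\frac{1}{i}\set{q_j,\ol q_j}(\rho)>0$ or $<0$, exactly as in (\ref{e:final1bis}) later in Section~9). With $f_j^{(0)}\propto\Td\xi''_j$ one gets $H_{f_j^{(0)}}\propto\pr_{\Td x''_j}$, an eigenvector of $F_\rho$ with eigenvalue in $i\Real_+$; the factorization $p_0\approx\sum(i\abs{\lambda_j}\Td x''_j)(\Td\xi''_j)$ gives $g_j\propto\Td x''_j$; and isotropy and positivity both hold at the linear level, since $\set{\Td\xi''_j,\Td\xi''_k}\equiv0$ mod $d^\infty_\Sigma$ and the $\pr_{\Td x''_j}$ span the eigenspace $\Lambda^+_\rho$ on which $v\mapsto\frac{1}{i}\sigma(v,\ol v)$ is positive definite. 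Once this correction is made, your iterative step, your patching argument, and your derivation of (\ref{e:0710271152}) and (\ref{e:0709201346}) from the limiting flow are in line with the cited analysis.
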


We return to our problem. We need the following

\begin{lem} \label{l:0708262029}
We have
\begin{equation} \label{e:0709211945}
\psi''_{\eta\eta}(\infty, p, \omega_0(p))\omega_0(p)=0
\end{equation}
and
\begin{align} \label{e:mad4}
{\rm Rank}\left(\psi''_{\eta\eta}(\infty, p, \omega_0(p))\right)=2n-2,
\end{align}
for all $p\in X$.
\end{lem}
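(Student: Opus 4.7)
The plan is to handle the two equations separately, with the first being essentially a homogeneity identity and the second reducing to a linear algebra fact about complex symmetric matrices with non-negative imaginary part.

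For equation \eqref{e:0709211945}, I would simply invoke the positive homogeneity of $\psi(\infty,x,\eta)$ of degree $1$ in $\eta$. Euler's identity gives $\seq{\psi'_\eta(\infty,x,\eta),\eta}=\psi(\infty,x,\eta)$; differentiating once more in $\eta$ yields $\psi''_{\eta\eta}(\infty,x,\eta)\,\eta=0$ for every $\eta\neq0$. Specialising at $(p,\omega_0(p))$ gives the first claim.

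For equation \eqref{e:mad4}, the key input is the precise estimate from Proposition~\ref{p:c-basislimit} (and Theorem~\ref{t:s-thm1}), namely
\[
{\rm Im\,}\psi(\infty,x,\eta)\asymp\abs{\eta}\bigl({\rm dist\,}\bigl((x,\tfrac{\eta}{\abs{\eta}}),\Sigma\bigr)\bigr)^2.
\]
Fixing $x=p$, this shows that the real-valued function $\eta\mapsto{\rm Im\,}\psi(\infty,p,\eta)$ is non-negative on $\Real^{2n-1}\setminus\set{0}$, vanishes along the ray $\Real_+\omega_0(p)$, and grows quadratically in the transverse directions. Consequently, the real Hessian $S:=({\rm Im\,}\psi)''_{\eta\eta}(\infty,p,\omega_0(p))$ is a positive semi-definite real symmetric $(2n-1)\times(2n-1)$ matrix whose kernel is exactly the line $\Real\,\omega_0(p)$, i.e.\ ${\rm rank}\,S=2n-2$.

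The remaining step, which I expect to be the only subtle point, is to deduce that the complex symmetric matrix $B:=\psi''_{\eta\eta}(\infty,p,\omega_0(p))$ itself has complex kernel equal to $\Complex\,\omega_0(p)$. Write $B=R+iS$ with $R,S$ real symmetric and $S\geq0$. If $Bv=0$ with $v=a+ib\in\Complex^{2n-1}$, then separating real and imaginary parts gives $Ra=Sb$ and $Rb=-Sa$; symmetry of $R$ and $S$ then yields
\[
\seq{Sa,a}+\seq{Sb,b}=-\seq{Rb,a}+\seq{Ra,b}=0.
\]
Since both terms are non-negative they must vanish, so $a,b\in\ker S=\Real\,\omega_0(p)$, hence $v\in\Complex\,\omega_0(p)$. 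Combined with the first equation this shows $\ker B=\Complex\,\omega_0(p)$, so ${\rm rank}\,B=(2n-1)-1=2n-2$, finishing the proof.
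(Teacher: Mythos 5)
Your argument is correct and follows essentially the same route as the paper: the first identity comes from the degree-$0$ homogeneity of $\psi'_\eta(\infty,x,\eta)$, and the rank statement is deduced from the lower bound ${\rm Im\,}\psi(\infty,x,\eta)\asymp\abs{\eta}\,{\rm dist}\bigl((x,\tfrac{\eta}{\abs{\eta}}),\Sigma\bigr)^2$, which forces ${\rm Im\,}\psi''_{\eta\eta}$ to be positive semi-definite with real kernel $\Real\,\omega_0(p)$. The paper phrases the final linear-algebra step via the non-vanishing of $\seq{\psi''_{\eta\eta}V,\ol V}$ for $V\notin\Complex\,\omega_0(p)$, while you manipulate the equations $Ra=Sb$, $Rb=-Sa$ directly, but these are the same computation.
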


\begin{proof}
Since $\psi'_\eta(\infty, x, \eta)$ is positively homogeneous of degree $0$, it follows that
$\psi''_{\eta\eta}(\infty, p, \omega_0(p))\omega_0(p)=0$.
Thus,
${\rm Rank}\left(\psi''_{\eta\eta}(\infty, p, \omega_0(p))\right)\leq2n-2$.
From
${\rm Im\,}\psi(\infty, x, \eta)\asymp\abs{\eta}{\rm dist\,}((x, \frac{\eta}{\abs{\eta}});\ \Sigma)^2$,
we have
${\rm Im\,}\psi''_{\eta\eta}(\infty, p, \omega_0(p))V\neq0$, if
$V\notin\set{\lambda\omega_0(p);\, \lambda\in\Complex}$.
Thus, for all $V\notin\set{\lambda\omega_0(p);\, \lambda\in\Complex}$, we have
\begin{align*}
\seq{\psi''_{\eta\eta}(\infty, p, \omega_0(p))V, \ol V}&=\seq{{\rm Re\,}\psi''_{\eta\eta}(\infty, p, \omega_0(p))V, \ol V} \\
&\quad+i\seq{{\rm Im\,}\psi''_{\eta\eta}(\infty, p, \omega_0(p))V, \ol V}\neq0.
\end{align*}
We get (\ref{e:mad4}).
\end{proof}

Until further notice, we assume that $q=n_+$. For $p\in X$, we take local coordinates
$x=(x_1, x_2, \ldots, x_{2n-1})$
defined on some neighborhood $\Omega$ of $p\in X$ such that
\begin{equation} \label{e:0709252119}
\omega_0(p)=dx_{2n-1},\ x(p)=0
\end{equation}
and $\Lambda^{0,1}T_p(X)\oplus\Lambda^{1,0}T_p(X)=\set{\sum^{2n-2}_{j=1}a_j\frac{\pr}{\pr x_j};\, a_j\in\Complex,\ \ j=1,\ldots,2n-2}$.
We take $\Omega$ so that if $x_0\in\Omega$ then $\eta_{0,2n-1}>0$ where $\omega_0(x_0)=(\eta_{0,1},\ldots,\eta_{0,2n-1})$.

Until further notice, we work in $\Omega$ and we work with the local coordinates $x$.
Choose $\chi(x, \eta)\in C^\infty(T^*(X))$ so that $\chi(x, \eta)=1$ in a conic neighborhood of $(p, \omega_0(p))$,
$\chi(x, \eta)=0$ outside $T^*(\Omega)$,
$\chi(x, \eta)=0$ in a conic neighborhood of $\Sigma^-$ and $\chi(x, \lambda\eta)=\chi(x, \eta)$ when $\lambda>0$.
We introduce the cut-off functions $\chi(x, \eta)$  and $(1-\chi(x, \eta))$ in the integral (\ref{e:0707311443}):
$K_{\pi^{(q)}}(x, y)\equiv K_{\pi^{(q)}_+}(x, y)+K_{\pi^{(q)}_-}(x, y)$,
\begin{align} \label{e:0709230113}
&K_{\pi^{(q)}_+}(x,y)\equiv\frac{1}{(2\pi)^{2n-1}}\int e^{i(\psi(\infty,x,\eta)-\seq{y,\eta})}
  \chi(x, \eta)a(\infty,x,\eta)d\eta,\nonumber \\
&K_{\pi^{(q)}_-}(x,y)\equiv\frac{1}{(2\pi)^{2n-1}}\int e^{i(\psi(\infty,x,\eta)-\seq{y,\eta})}
(1-\chi(x, \eta))a(\infty,x,\eta)d\eta.
\end{align}
Now, we study $K_{\pi^{(q)}_+}$. We write $t$ to denote $\eta_{2n-1}$. Put $\eta'=(\eta_1,\ldots,\eta_{2n-2})$. We have
\begin{align} \label{e:0709211700}
&K_{\pi^{(q)}_+}\equiv\frac{1}{(2\pi)^{2n-1}}\int\!\!\! e^{i(\psi(\infty, x, (\eta', t))-\seq{y, (\eta', t)})}
\chi(x, (\eta', t))a(\infty, x, (\eta', t))d\eta'dt\nonumber \\
&=\frac{1}{(2\pi)^{2n-1}}\int^\infty_0\!\!\!\int\!\!\! e^{it(\psi(\infty, x, (w, 1))-\seq{y, (w, 1)})}
t^{2n-2}\chi(x, (tw, t))a(\infty, x, (tw, t))dwdt
\end{align}
where $\eta'=tw$, $w\in\Real^{2n-2}$.
The stationary phase method of Melin and Sj\"{o}strand (see Proposition~\ref{p:0709171439})
then permits us to carry out
the $w$ integration in (\ref{e:0709211700}), to get
\begin{equation} \label{e:0709221719}
K_{\pi^{(q)}_+}(x, y)\equiv\int^{\infty}_0 e^{it\phi_+(x, y)}s_+(x, y, t)dt
\end{equation}
with
\begin{equation} \label{e:0710260900}
s_+(x, y, t)\sim\sum^\infty_{j=0}s^j_+(x, y)t^{n-1-j}
\end{equation}
in $S^{n-1}_{1,0}(\Omega\times\Omega\times]0, \infty[;\ \mathscr L(\Lambda^{0, q}T^*_y(X), \Lambda^{0, q}T^*_x(X)))$,
where
\[s^j_+(x, y)\in C^\infty(\Omega\times\Omega;\ \mathscr L(\Lambda^{0, q}T^*_y(X), \Lambda^{0, q}T^*_x(X))),\ \ j=0, 1,\ldots,\]
and $\phi_+(x, y)\in C^\infty(\Omega\times\Omega)$ is the corresponding critical value.
(See Proposition~\ref{p:0709171438} for a review.) For $x\in\Omega$, let $\sigma(x)\in\Real^{2n-2}$ be the vector:
\begin{equation} \label{e:0710251651}
(x, (\sigma(x), 1))\in\Sigma^+.
\end{equation}
Since $d_{w}(\psi(\infty, x, (w, 1))-\seq{y, (w, 1)})=0$ at $x=y$, $w=\sigma(x)$,
it follows that when $x=y$, the corresponding critical point is $w=\sigma(x)$ and consequently
\begin{align}
&\phi_+(x, x)=0,\label{e:0710260943}\\
&(\phi_+)'_x(x, x)=\psi'_x(\infty, x, (\sigma(x), 1))=(\sigma(x), 1),\ (\phi_+)'_y(x, x)=-(\sigma(x), 1). \label{e:0709212016}
\end{align}

The following is well-known (see Proposition~\ref{p:0709171438})

\begin{prop} \label{p:0709212022}
In some open neighborhood $Q$ of $p$ in $\Omega$, we have
\begin{align} \label{e:0709212032}
&{\rm Im\,}\phi_+(x, y)\geq c\inf_{w\in W}\Bigr({\rm Im\,}\psi(\infty, x, (w, 1))+
\abs{d_w(\psi(\infty, x, (w, 1))-\seq{y, (w, 1)})}^2\Bigr),\nonumber\\
&\quad(x, y)\in Q\times Q,
\end{align}
where $c$ is a positive constant and $W$ is some open set of the origin in $\Real^{2n-2}$.
\end{prop}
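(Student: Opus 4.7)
The plan is to derive this bound as a direct application of the complex stationary phase lemma of Melin--Sj\"ostrand (Proposition~\ref{p:0709171438}), which is the very tool that produced $\phi_+$ in the first place. Recall that $\phi_+(x,y)$ was defined as the critical value in $w$ obtained from (\ref{e:0709211700}) when we apply stationary phase to integrate out $w$ from
\[
\int e^{it(\psi(\infty,x,(w,1))-\seq{y,(w,1)})}\,t^{2n-2}\,\chi(x,(tw,t))\,a(\infty,x,(tw,t))\,dw.
\]
Set
\[
g(x,y,w):=\psi(\infty,x,(w,1))-\seq{y,(w,1)}.
\]
For real $x$, $y$, $w$ the pairing $\seq{y,(w,1)}$ is real, so $\mathrm{Im}\,g(x,y,w)=\mathrm{Im}\,\psi(\infty,x,(w,1))\ge 0$, with equality exactly on the locus $(x,(w,1))\in\Sigma$. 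Near $p$, by (\ref{e:0710251651}), the real critical point of $w\mapsto g(p,p,w)$ is $w=\sigma(p)$, and $g$ satisfies all the hypotheses of Melin--Sj\"ostrand.

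Next I would verify the non-degeneracy of the Hessian in $w$ at the critical point $(x,y,w)=(p,p,\sigma(p))$. We have
\[
g''_{ww}(p,p,\sigma(p))=\psi''_{\eta'\eta'}(\infty,p,(\sigma(p),1)),
\]
where $\eta'=(\eta_1,\ldots,\eta_{2n-2})$. By Lemma~\ref{l:0708262029}, $\psi''_{\eta\eta}(\infty,p,\omega_0(p))$ has rank $2n-2$ with its unique null direction equal to $\omega_0(p)$. After our normalization (\ref{e:0709252119}) we have $\omega_0(p)=dx_{2n-1}$, so the null direction is precisely $\pr/\pr\eta_{2n-1}$, i.e.\ the direction transverse to the $\eta'$-plane. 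Restricting to the $\eta'$-directions therefore gives a non-degenerate (complex symmetric) Hessian, so the complex stationary phase method applies.

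The Melin--Sj\"ostrand theorem (Proposition~\ref{p:0709171438}) then yields precisely the bound: the imaginary part of the critical value $\phi_+(x,y)$ dominates, up to a positive constant, the infimum over $w$ in a small neighborhood $W$ of $\sigma(p)$ (which after translating by $\sigma(p)$ we may take as a neighborhood of the origin in $\Real^{2n-2}$, absorbing the translation into the local coordinates) of
\[
\mathrm{Im}\,g(x,y,w)+|d_w g(x,y,w)|^2
=\mathrm{Im}\,\psi(\infty,x,(w,1))+|d_w(\psi(\infty,x,(w,1))-\seq{y,(w,1)})|^2,
\]
valid for $(x,y)$ in a sufficiently small neighborhood $Q$ of $p$. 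This is the desired inequality.

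The main (minor) obstacle is purely bookkeeping: checking carefully that the cut-off $\chi$ and the homogeneity rescaling $\eta'=tw$ do not interfere with the critical-point analysis, and that $W$ can indeed be taken independent of $(x,y)$ throughout $Q\times Q$. Non-degeneracy at the base point $(p,p,\sigma(p))$ extends to a neighborhood by continuity, so shrinking $Q$ and $W$ if necessary makes the estimate uniform. No new ideas beyond Proposition~\ref{p:0709171438} and Lemma~\ref{l:0708262029} are required.
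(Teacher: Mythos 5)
Your proof is correct and takes essentially the same route as the paper, which simply cites Proposition~\ref{p:0709171438} and treats the statement as well-known. You have spelled out the one non-trivial verification the paper leaves implicit, namely that $g''_{ww}$ is non-degenerate at the critical point; your reduction of this to Lemma~\ref{l:0708262029} (the rank-$(2n-2)$ property of $\psi''_{\eta\eta}$ with kernel spanned by $\omega_0(p)$, which in the normalized coordinates is the $\eta_{2n-1}$-direction, so the $\eta'\eta'$-block is invertible) is exactly right, and the remaining hypotheses of the stationary-phase lemma are checked correctly.
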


We have the following

\begin{prop} \label{p:0709212100}
In some open neighborhood $Q$ of $p$ in $\Omega$, there is a constant $c>0$ such that
\begin{equation} \label{e:0709212105}
{\rm Im\,}\phi_+(x, y)\geq c\abs{x'-y'}^2,\ \ (x, y)\in Q\times Q,
\end{equation}
where $x'=(x_1,\ldots,x_{2n-2})$, $y'=(y_1,\ldots,y_{2n-2})$ and
$\abs{x'-y'}^2=(x_1-y_1)^2+\cdots+(x_{2n-2}-y_{2n-2})^2$.
\end{prop}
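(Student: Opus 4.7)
The plan is to apply Proposition~\ref{p:0709212022}, which reduces the problem to showing that, locally near $p$,
\[\inf_{w\in W}\Bigr({\rm Im\,}\psi(\infty, x, (w, 1))+\abs{d_w(\psi(\infty, x, (w, 1))-\seq{y, (w, 1)})}^2\Bigr)\geq c'\abs{x'-y'}^2,\]
for some positive constant $c'$ and $W$ a sufficiently small neighborhood of $0\in\Real^{2n-2}$. Recalling $\sigma(x)\in\Real^{2n-2}$ defined by $(x,(\sigma(x),1))\in\Sigma^+$, and that $\sigma(p)=0$ by our coordinate choice~(\ref{e:0709252119}), I would substitute $u=w-\sigma(x)$ and Taylor expand in $u$ around the critical point.

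The first ingredient is that ${\rm Im\,}\psi(\infty,x,\eta)\asymp\abs{\eta}\,{\rm dist\,}((x,\eta/\abs{\eta}),\Sigma)^2$ (from Proposition~\ref{p:c-basislimit}), which at $\eta=(w,1)$ with $w$ near $\sigma(x)$ gives ${\rm Im\,}\psi(\infty,x,(w,1))\geq c_1\abs{u}^2$ for $(x,w)$ near $(p,0)$. The second ingredient is the expansion
\[d_w(\psi(\infty,x,(w,1))-\seq{y,(w,1)})=(x'-y')+M(x)u+O(\abs{u}^2),\]
where $M(x)=\psi''_{\eta'\eta'}(\infty,x,(\sigma(x),1))$; the constant term is $x'-y'$ because $\psi'_\eta(\infty,x,\eta)=x$ on $\Sigma$ (since $d_{x,\eta}(\psi(\infty,\cdot,\cdot)-\seq{x,\eta})=0$ on $\Sigma$), so its $\eta'$-components at $\eta=(\sigma(x),1)$ are $x'$.

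The crucial point is that $M(p)$ is invertible. By Lemma~\ref{l:0708262029}, $\psi''_{\eta\eta}(\infty,p,\omega_0(p))$ has rank $2n-2$ and annihilates $\omega_0(p)=e_{2n-1}$; by symmetry the last row and column of this $(2n-1)\times(2n-1)$ matrix vanish, and the leading $(2n-2)\times(2n-2)$ block, which is precisely $M(p)$, must therefore be invertible. By continuity, $M(x)$ remains invertible, with a uniform bound $\norm{M(x)^{-1}}\leq C$, on some neighborhood of $p$.

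The final step is a two-case dichotomy for fixed $(x,y)$ near $(p,p)$ and $w\in W$. Let $\delta>0$ be small enough that both $C\delta\leq 1/4$ and the $O(\abs{u}^2)$ remainder is $\leq\delta\abs{x'-y'}/4$ when $\abs{u}\leq\delta\abs{x'-y'}$ (possible by shrinking $W$). If $\abs{u}\geq\delta\abs{x'-y'}$, then ${\rm Im\,}\psi(\infty,x,(w,1))\geq c_1\delta^2\abs{x'-y'}^2$, and we are done. Otherwise, $\abs{u}<\delta\abs{x'-y'}$ and by the triangle inequality
\[\abs{(x'-y')+M(x)u+O(\abs{u}^2)}\geq\abs{x'-y'}-\norm{M(x)}\abs{u}-O(\abs{u}^2)\geq\tfrac12\abs{x'-y'},\]
so the second term already dominates. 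Taking the minimum of the two constants and inserting into Proposition~\ref{p:0709212022} gives the claim. The main subtlety, and the only place where the specific geometry enters, is the verification that $M(p)$ is non-degenerate via Lemma~\ref{l:0708262029} together with the coordinate normalisation $\omega_0(p)=dx_{2n-1}$; everything else is elementary Taylor estimation.
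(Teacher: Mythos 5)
Your proof is correct and follows the same dichotomy as the paper's: compare $\abs{x'-y'}$ against $\abs{u}=\abs{w-\sigma(x)}$, use the lower bound ${\rm Im\,}\psi(\infty,x,(w,1))\gtrsim\abs{u}^2$ in one regime and the linear term $x'-y'$ in the gradient in the other, and conclude via Proposition~\ref{p:0709212022}. However, you have misidentified what is ``crucial.'' The argument you actually carry out in the second case is a triangle-inequality estimate
\[\abs{(x'-y')+M(x)u+O(\abs{u}^2)}\geq\abs{x'-y'}-\norm{M(x)}\abs{u}-O(\abs{u}^2),\]
which uses only that $\norm{M(x)}$ is bounded near $p$ (pure continuity/compactness), not that $M(p)$ is invertible. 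The paper's proof makes this explicit: it never introduces the Hessian $M$, writing simply $d_w(\psi(\infty,x,(w,1))-\seq{y,(w,1)})=(x'-y')+O(\abs{w-\sigma(x)})$ and then deducing the bound $\abs{d_w}^2\geq c_1\abs{x'-y'}^2-c_2\abs{w-\sigma(x)}^2$. Your detour through Lemma~\ref{l:0708262029} and the rank of $\psi''_{\eta\eta}$ is harmless but unnecessary, and it introduces a small slip: you set $C$ to bound $\norm{M(x)^{-1}}$ but then require $C\delta\leq 1/4$, whereas the estimate needs $\norm{M(x)}\delta$ small. Once that constant is relabeled as a bound on $\norm{M(x)}$, the argument matches the paper's.
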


\begin{proof}
From $\psi(\infty, x, (w, 1))-\seq{y, (w, 1)}=\seq{x-y, (w, 1)}+O(\abs{w-\sigma(x)}^2)$
we can check that
\[d_w(\psi(\infty, x, (w, 1))-\seq{y, (w, 1)})=\seq{x'-y', dw}+O(\abs{w-\sigma(x)}),\]
where $\sigma(x)$ is as in (\ref{e:0710251651}) and $x'=(x_1,\ldots,x_{2n-2})$, $y'=(y_1,\ldots,y_{2n-2})$. Thus, there are
constants $c_1$, $c_2>0$ such that
\[\abs{d_w(\psi(\infty, x, (w, 1))-\seq{y, (w, 1)})}^2\geq c_1\abs{x'-y'}^2-c_2\abs{w-\sigma(x)}^2\]
for $(x, w)$ in some compact set of $\Omega\times\dot{\Real}^{2n-2}$.
If $\frac{c_1}{2}\abs{(x'-y')}^2\geq c_2\abs{w-\sigma(x)}^2$, then
\begin{equation} \label{e:0709231244}
\abs{d_\omega(\psi(\infty, x, \omega)-\seq{y, \omega})}^2\geq\frac{c_1}{2}\abs{(x'-y')}^2.
\end{equation}
Now, we assume that $\abs{(x'-y')}^2\leq\frac{2c_2}{c_1}\abs{w-\sigma(x)}^2$.
We have
\begin{equation} \label{e:0709231245}
{\rm Im\,}\psi(\infty, x, (w, 1))\geq c_3\abs{w-\sigma(x)}^2\geq\frac{c_1c_3}{2c_2}\abs{(x'-y')}^2,
\end{equation}
for $(x, w)$ in some compact set of $\Omega\times\dot{\Real}^{2n-2}$, where $c_3$ is a positive constant.
From (\ref{e:0709231244}), (\ref{e:0709231245}) and Proposition~\ref{p:0709212022}, we have
${\rm Im\,}\phi_+(x, y)\geq c\abs{(x'-y')}^2$
for $x$, $y$ in some neighborhood of $p$, where $c$ is a positive constant. We get the proposition.
\end{proof}

\begin{rem} \label{r:0709252155}
For each point $(x_0, x_0)\in{\rm diag\,}(\Omega\times\Omega)$, we assign a germ
\[H_{+,(x_0, x_0)}=(\Omega\times\Omega, \phi_+(x, y))_{(x_0, x_0)}.\]
Let $H_+$ be the formal hypersurface at ${\rm diag\,}(\Omega\times\Omega)$:
\begin{equation} \label{e:0708281710}
H_+=\set{H_{+,(x, x)};\, (x, x)\in{\rm diag\,}(\Omega\times\Omega)}.
\end{equation}
The formal conic conormal bundle $\Lambda_{\phi_+t}$ of $H_+$ is given by:
For each point
\[(x_0, \eta_0, x_0, \eta_0)\in{\rm diag\,}((\Sigma^+\bigcap T^*(\Omega))\times(\Sigma^+\bigcap T^*(\Omega))),\]
we assign a germ
\begin{align*}
\Lambda_{(x_0, \eta_0, x_0, \eta_0)}
&=(T^*(U)\times T^*(U), \xi_j-(\phi_+)'_{x_j}t,\ j=1,\ldots,2n-1,\\
&\quad\eta_k-(\phi_+)'_{y_k}t,\ k=1,\ldots,2n-2,\ \phi_+(x, y))_{(x_0, \eta_0, x_0, \eta_0)},
\end{align*}
where $t=\frac{\eta_{2n-1}}{(\phi_+)'_{y_{2n-1}}}$ and $U\subset\Omega$ is an open set of $x_0$ such that
$(\phi_+)'_{y_{2n-1}}\neq0$ on $U\times U$.
Then,
\begin{equation} \label{e:0709221642}
\Lambda_{\phi_+t}=\set{\Lambda_{(x, \eta, x, \eta)};\, (x, \eta, x, \eta)
\in{\rm diag\,}((\Sigma^+\bigcap T^*(\Omega))\times(\Sigma^+\bigcap T^*(\Omega)))}.
\end{equation}
$\Lambda_{\phi_+t}$ is a formal manifold at ${\rm diag\,}((\Sigma^+\bigcap T^*(\Omega))\times(\Sigma^+\bigcap T^*(\Omega)))$.
In fact, $\Lambda_{\phi_+t}$ is the positive Lagrangean manifold associated to $\phi_+t$ in the sense of Melin and Sj\"{o}strand.
(See~\cite{MS74} and Appendix A.)

Let
\[(W, f_1(x, \xi, y, \eta),\ldots,f_{4n-2}(x, \xi, y, \eta))\]
be a representative of
$\Gamma_{(x_0, \eta_0, x_0, \eta_0)}$,  where $\Gamma_{(x_0, \eta_0, x_0, \eta_0)}$ is as in (\ref{e:0710251217}). Put
\[\Gamma'_{(x_0, \eta_0, x_0, \eta_0)}=(W, f_1(x, \xi, y, -\eta),\ldots,f_{4n-2}(x, \xi, y, -\eta))_{(x_0, \eta_0, x_0, \eta_0)}.\]
Let $C'_\infty$ be the formal manifold at ${\rm diag\,}(\Sigma^+\times\Sigma^+)$:
\begin{equation} \label{e:0710261031}
C'_\infty=\set{\Gamma'_{(x, \eta, x, \eta)};\, (x, \eta, x, \eta)
\in{\rm diag\,}(\Sigma^+\times\Sigma^+)}.
\end{equation}
We notice that $\psi(\infty, x, \eta)-\seq{y, \eta}$ and $\phi_+(x, y)t$ are
equivalent at each point of
\[{\rm diag\,}((\Sigma^+\bigcap T^*(\Omega))\times(\Sigma^+\bigcap T^*(\Omega)))\]
in the sense of Melin-Sj\"{o}strand (see
Definition~\ref{d:0709171814}). From the global theory of Fourier integral operators
(see Proposition~\ref{p:c-mesjmore}), we get
\begin{equation} \label{e:0710261020}
\Lambda_{\phi_+t}=C'_\infty\ \ \mbox{at}\ {\rm diag\,}((\Sigma^+\bigcap T^*(\Omega))\times(\Sigma^+\bigcap T^*(\Omega))).
\end{equation}
See Proposition~\ref{p:0709201613} and Proposition~\ref{p:c-mesjmore}, for the details. Formally,
\[C_\infty=\set{(x, \xi, y, \eta);\, (x, \xi, y, -\eta)\in\Lambda_{\phi_+t}}.\]
\end{rem}

Put $\hat\phi_+(x, y)=-\ol\phi_+(y, x)$.
We claim that
\begin{equation} \label{e:0709222215}
\Lambda_{\phi_+t}=\Lambda_{\hat\phi_+t}\ \ \mbox{at}\ {\rm diag\,}((\Sigma^+\bigcap T^*(\Omega))\times(\Sigma^+\bigcap T^*(\Omega))),
\end{equation}
where $\Lambda_{\hat\phi_+t}$ is defined as in (\ref{e:0709221642}).
From Proposition~\ref{p:0709201320}, it follows that $\phi_+(x, y)t$ and $-\ol\phi_+(y, x)t$ are
equivalent at each point of ${\rm diag\,}((\Sigma^+\bigcap T^*(\Omega))\times(\Sigma^+\bigcap T^*(\Omega)))$
in the sense of Melin-Sj\"{o}strand. Again from
the global theory of Fourier integral operators we get (\ref{e:0709222215}).

From (\ref{e:0709222215}), we get the following

\begin{prop} \label{p:0709221724}
There is a function $f\in C^\infty(\Omega\times\Omega)$, $f(x, x)\neq0$, such that
\begin{equation} \label{e:0708281715}
\phi_+(x, y)+f(x, y)\ol\phi_+(y, x)
\end{equation}
vanishes to infinite order on $x=y$.
\end{prop}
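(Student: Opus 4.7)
The goal is to upgrade the equality of formal manifolds $\Lambda_{\phi_+t}=\Lambda_{\hat\phi_+t}$ established in (\ref{e:0709222215}), where $\hat\phi_+(x,y):=-\ol\phi_+(y,x)$, to a pointwise functional relation $\phi_+\equiv f\,\hat\phi_+$ modulo functions vanishing to infinite order on $x=y$. Once this is in hand, the proposition follows immediately upon rewriting $f\,\hat\phi_+=-f\,\ol\phi_+(y,x)$.

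The plan is to apply the germ-equivalence of Definition~\ref{d:0709181523} at every diagonal point $\rho=(p,\omega_0(p),p,\omega_0(p))\in{\rm diag}(\Sigma^+\times\Sigma^+)$. From Remark~\ref{r:0709252155}, a set of defining functions for $\Lambda_{\phi_+t}$ at $\rho$ is
\[
\phi_+(x,y),\quad \xi_j-(\phi_+)'_{x_j}t\ (j=1,\ldots,2n-1),\quad \eta_k-(\phi_+)'_{y_k}t\ (k=1,\ldots,2n-2),
\]
with $t=\eta_{2n-1}/(\phi_+)'_{y_{2n-1}}$, and an analogous list holds for $\Lambda_{\hat\phi_+t}$. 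Since the two germs coincide, there exists an invertible matrix of smooth coefficients, well-defined modulo $d^\infty_{\rm diag}$, expressing the first list in terms of the second. Taking the row that solves for $\phi_+$ gives, modulo functions flat on ${\rm diag}(\Sigma^+\times\Sigma^+)$,
\[
\phi_+(x,y)\ \equiv\ a_{00}\,\hat\phi_+(x,y)+\sum_{j}a_{0j}(\xi_j-(\hat\phi_+)'_{x_j}t)+\sum_{k}b_{0k}(\eta_k-(\hat\phi_+)'_{y_k}t).
\]

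Now restrict both sides to the smooth section of $T^*\Omega\times T^*\Omega\to\Omega\times\Omega$ given by setting $\xi=(\hat\phi_+)'_x(x,y)$, $\eta_k=(\hat\phi_+)'_{y_k}(x,y)$ for $k=1,\ldots,2n-1$, and correspondingly $t=1$; this section meets $\rho$ at $x=y=p$, and near $\rho$ it is transverse to the fibers of the base projection, so flatness on ${\rm diag}(\Sigma^+\times\Sigma^+)$ pulls back to flatness on ${\rm diag}(\Omega\times\Omega)$. The last two sums vanish identically on this section, leaving
\[
\phi_+(x,y)\ \equiv\ f(x,y)\,\hat\phi_+(x,y)\quad \text{modulo functions vanishing to infinite order on}\ x=y,
\]
where $f\in C^\infty(\Omega\times\Omega)$ is the restriction of $a_{00}$. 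Finally, from (\ref{e:t-i-bis1}) both $\phi_+$ and $\hat\phi_+$ (the latter by differentiating $\hat\phi_+(x,y)=-\ol\phi_+(y,x)$ and using that $\omega_0$ is real) satisfy $d_x=-d_y=\omega_0(p)$ at $(p,p)$, so matching first-order jets of the above identity forces $f(p,p)=1$; in particular $f(x,x)\neq 0$. Thus $\phi_+(x,y)+f(x,y)\ol\phi_+(y,x)=\phi_+(x,y)-f(x,y)\hat\phi_+(x,y)$ vanishes to infinite order on the diagonal, completing the proof.

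The main subtlety is the passage from the germ identity in $T^*\Omega\times T^*\Omega$ down to an identity on $\Omega\times\Omega$: one has to check that the chosen section lies within a neighborhood where the germ representatives are defined and that the coefficients $a_{00}$, once restricted, depend smoothly and only on $(x,y)$. Both are immediate from the explicit formula $t=1$, $\xi=(\hat\phi_+)'_x$, $\eta=(\hat\phi_+)'_y$, so the only real content of the argument is the Melin–Sjöstrand germ equivalence already supplied by (\ref{e:0709222215}).
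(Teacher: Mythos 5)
Your proposal fills in, essentially correctly, the deduction that the paper leaves implicit when it asserts Proposition~\ref{p:0709221724} "from (\ref{e:0709222215})." The core mechanism — using the germ-equivalence of Definition~\ref{d:0709181523} to express $\phi_+$ as an invertible $C^\infty_\Lambda$-linear combination of the defining functions of $\Lambda_{\hat\phi_+t}$, then restricting to the graph section $\xi=(\hat\phi_+)'_x$, $\eta=(\hat\phi_+)'_y$ (so that $\hat t=1$ and the conormal-type generators drop out) — is the right way to extract a pointwise relation $\phi_+\equiv f\hat\phi_+$ from the Lagrangian equality. The first-order-jet comparison at the diagonal forcing $f(p,p)=1$ is also clean and gives something a bit stronger than the $f(x,x)\neq 0$ demanded by the statement, which is in fact what the paper uses a few lines later when it replaces $\phi_+$ by $\tfrac12(\phi_+(x,y)-\ol\phi_+(y,x))$.

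One detail is not quite right as written, though it does not damage the argument. You claim the section meets $\rho=(p,\omega_0(p),p,\omega_0(p))$ at $x=y=p$. In fact, using (\ref{e:t-i-bis1}) and $\hat\phi_+(x,y)=-\ol\phi_+(y,x)$, the section value at $(p,p)$ is $(p,\omega_0(p),p,-\omega_0(p))$ — the sign-twisted diagonal point, not the literal diagonal point of $\Sigma^+\times\Sigma^+$. You yourself compute $d_y\hat\phi_+(p,p)=-\omega_0(p)$ in the last paragraph, so your statement about $\rho$ is internally inconsistent. This is harmless here because the locus where the germ-equivalence remainder is actually flat is the zero set of the common defining functions of the two Lagrangians, and that is precisely the twisted diagonal through $(p,\omega_0(p),p,-\omega_0(p))$; the section therefore does hit the flatness locus at $x=y$, and the pullback conclusion stands. (The paper itself labels this locus $\mathrm{diag}(\Sigma^+\times\Sigma^+)$ even though the conormal-bundle defining functions of Remark~\ref{r:0709252155} vanish only on the twisted version, so the ambiguity you reproduce originates there.) It would be cleaner to name the point $(p,\omega_0(p),p,-\omega_0(p))$ explicitly and observe that this is where $\Lambda_{\hat\phi_+t}$ actually passes, so the flat-function pullback argument is unambiguous.
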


From (\ref{e:0708281715}), we can replace $\phi_+(x, y)$ by
$\frac{1}{2}(\phi_+(x, y)-\ol\phi_+(y, x))$.
Thus, we have
\begin{equation} \label{e:0710260935}
\phi_+(x, y)=-\ol\phi_+(y, x).
\end{equation}
From (\ref{e:0709212016}), we see that
$(x, d_x\phi_+(x, x))\in\Sigma^+$, $d_y\phi_+(x, x)=-d_x\phi_+(x, x)$.
We can replace $\phi_+(x, y)$ by
$\frac{2\phi_+(x, y)}{\norm{d_x\phi_+(x, x)}+\norm{d_x\phi_+(y, y)}}$.
Thus,
\begin{equation} \label{e:0710260940}
d_x\phi_+(x, x)=\omega_0(x),\ d_y\phi_+(x, x)=-\omega_0(x).
\end{equation}

Similarly,
\[K_{\pi^{(q)}_-}(x, y)\equiv\int^{\infty}_{0} e^{i\phi_-(x, y)t}s_-(x, y, t)dt,\]
where $K_{\pi^{(q)}_-}(x, y)$ is as in (\ref{e:0709230113}). From (\ref{e:0709192125}), it follows that when $q=n_-=n_+$, we can take
$\phi_-(x, y)$ so that $\phi_+(x, y)=-\ol\phi_-(x, y)$.

Our method above only works locally. From above, we know that there exist open sets
$X_j$, $j=1,2,\dots,k$, $X=\bigcup^k_{j=1}X_j$, such that
\[K_{\pi^{(q)}_+}(x, y)\equiv\int^{\infty}_{0} e^{i\phi_{+,j}(x, y)t}s_{+,j}(x, y, t)dt\]
on $X_j\times X_j$, where $\phi_{+,j}$ satisfies (\ref{e:0710260943}), (\ref{e:0709212032}), (\ref{e:0709212105}), (\ref{e:0710261020}),
(\ref{e:0710260935}), (\ref{e:0710260940}) and $s_{+, j}(x, y, t)$, $j=0,1,\ldots,$ are as in (\ref{e:0710260900}).
From the global theory of Fourier integral operators, we have
\[\Lambda_{\phi_{+,j}t}=C'_\infty=\Lambda_{\phi_{+,k}t}\ \ \mbox{at}\ {\rm diag\,}
((\Sigma^+\bigcap T^*(X_j\bigcap X_k))\times(\Sigma^+\bigcap T^*(X_j\bigcap X_k))),\]
for all $j$, $k$, where $\Lambda_{\phi_{+,j}t}$, $\Lambda_{\phi_{+,k}t}$ are defined
as in (\ref{e:0709221642}) and $C'_\infty$ is as in (\ref{e:0710261031}). Thus,
there is a function
$f_{j, k}\in C^\infty((X_j\bigcap X_k)\times(X_j\bigcap X_k))$, such that
\begin{equation} \label{e:0708291523}
\phi_{+,j}(x, y)-f_{j, k}(x, y)\phi_{+,k}(x, y)
\end{equation}
vanishes to infinite order on $x=y$, for all $j$, $k$. Let $\chi_j(x, y)$ be a
$C^\infty$ partition of unity subordinate to $\set{X_{j}\times X_j}$ with
$\chi_j(x, y)=\chi_j(y, x)$
and set
\[\phi_+(x, y)=\sum\chi_j(x, y)\phi_{+,j}(x, y).\]
From (\ref{e:0708291523}) and
the global theory of Fourier integral operators, it follows that $\phi_{+, j}(x, y)t$ and $\phi_+(x, y)t$ are
equivalent at each point of
\[{\rm diag\,}((\Sigma^+\bigcap T^*(X_j))\times(\Sigma^+\bigcap T^*(X_j)))\]
in the sense of Melin-Sj\"{o}strand, for all $j$.
Again, from the global theory of Fourier integral operators, we get the main result of this work

\begin{thm} \label{t:szegomain}
We assume that the Levi form has signature $(n_-, n_+)$, $n_-+n_+=n-1$. Let $q=n_-$ or $n_+$.
Suppose $\Box^{(q)}_b$ has closed range. Then,
\begin{align*}
&K_{\pi^{(q)}}=K_{\pi^{(q)}_+}\ \ {\rm if\,}\ \ n_+=q\neq n_-,  \\
&K_{\pi^{(q)}}=K_{\pi^{(q)}_-}\ \ {\rm if\,}\ \ n_-=q\neq n_+,  \\
&K_{\pi^{(q)}}=K_{\pi^{(q)}_+}+K_{\pi^{(q)}_-}\ \ {\rm if}\ \ n_+=q=n_-,
\end{align*}
where $K_{\pi^{(q)}_+}(x, y)$ satisfies
\begin{align*}
K_{\pi^{(q)}_+}(x, y)\equiv\int^{\infty}_{0} e^{i\phi_+(x, y)t}s_+(x, y, t)dt
\end{align*}
with
\[s_+(x, y, t)\in S^{n-1}_{1, 0}(X\times X\times]0, \infty[;\, \mathscr L(\Lambda^{0,q}T^*_y(X), \Lambda^{0,q}T^*_x(X))),\]
\begin{equation} \label{e:0709232340}
s_+(x, y, t)\sim\sum^\infty_{j=0}s^j_+(x, y)t^{n-1-j}
\end{equation}
in $S^{n-1}_{1,0}(X\times X\times]0, \infty[;\ \mathscr L(\Lambda^{0, q}T^*_y(X), \Lambda^{0, q}T^*_x(X)))$, where
\[s^j_+(x, y)\in C^\infty(X\times X;\, \mathscr L(\Lambda^{0, q}T^*_y(X), \Lambda^{0, q}T^*_x(X))),\ j=0,1,\ldots,\]
\[\phi_+(x, y)\in C^\infty(X\times X),\ \ {\rm Im\,}\phi_+(x, y)\geq0,\]
\[\phi_+(x, x)=0,\ \ \phi_+(x, y)\neq0\ \ \mbox{if}\ \ x\neq y,\]
\[d_x\phi_+\neq0,\ \ d_y\phi_+\neq0\ \ \mbox{where}\ \ {\rm Im\,}\phi_+=0,\]
\begin{equation} \label{e:nevergiveup3}
d_x\phi_+(x, y)|_{x=y}=\omega_0(x),\ \
d_y\phi_+(x, y)|_{x=y}=-\omega_0(x),
\end{equation}
\[\phi_+(x, y)=-\ol\phi_+(y, x).\]
Moreover, $\phi_+(x, y)$ satisfies (\ref{e:0710261020}). Similarly,
\begin{align*}
K_{\pi^{(q)}_-}(x, y)\equiv\int^{\infty}_{0} e^{i\phi_-(x, y)t}s_-(x, y, t)dt
\end{align*}
with
\[s_-(x, y, t)\sim\sum^\infty_{j=0}s^j_-(x, y)t^{n-1-j}\]
in $S^{n-1}_{1,0}(X\times X\times]0, \infty[;\ \mathscr L(\Lambda^{0, q}T^*_y(X), \Lambda^{0, q}T^*_x(X)))$, where
\[s^j_-(x, y)\in C^\infty(X\times X;\, \mathscr L(\Lambda^{0, q}T^*_y(X), \Lambda^{0, q}T^*_x(X))),\ j=0,1,\ldots,\]
and when $q=n_-=n_+$, $\phi_-(x, y)=-\ol\phi_+(x, y)$.
\end{thm}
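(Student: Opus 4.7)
My plan is to build on the local oscillatory-integral representation of $K_{\pi^{(q)}}$ already established in Theorem~\ref{t:s-thm1} and reduce it to the form stated in the present theorem. The starting point is the fact that on any coordinate patch $U$,
\[
K_{\pi^{(q)}}(x,y)\equiv\frac{1}{(2\pi)^{2n-1}}\int e^{i(\psi(\infty,x,\eta)-\seq{y,\eta})}a(\infty,x,\eta)d\eta,
\]
with $a(\infty,x,\eta)\in S^0_{1,0}$ classical, and that by (\ref{e:s-important}) the symbol $a(\infty,\cdot,\cdot)$ is microlocally supported on $\Sigma^+$ when $q=n_+$ and on $\Sigma^-$ when $q=n_-$. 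Inserting a conic cutoff $\chi(x,\eta)$ equal to $1$ near $\Sigma^+$ and vanishing near $\Sigma^-$ splits $K_{\pi^{(q)}}\equiv K_{\pi^{(q)}_+}+K_{\pi^{(q)}_-}$ exactly as in (\ref{e:0709230113}); when $q\neq n_\mp$ one of the two pieces is smoothing by (\ref{e:s-important}). It remains to treat each piece separately and then globalize.

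For $K_{\pi^{(q)}_+}$, working near $p\in X$ in coordinates normalized as in (\ref{e:0709252119}), I would write $\eta=(\eta',t)$ with $t=\eta_{2n-1}>0$ on the support of the integrand, substitute $\eta'=tw$, and obtain
\[
K_{\pi^{(q)}_+}(x,y)\equiv\frac{1}{(2\pi)^{2n-1}}\int^{\infty}_0\!\!\!\int e^{it(\psi(\infty,x,(w,1))-\seq{y,(w,1)})}t^{2n-2}\chi a(\infty,x,(tw,t))\,dw\,dt
\]
as in (\ref{e:0709211700}). The plan is then to apply the complex stationary-phase theorem of Melin--Sj\"ostrand (Proposition~\ref{p:0709171439}) in the $w$-variables, with large parameter $t$: the critical point at $x=y$ is $w=\sigma(x)$ from (\ref{e:0710251651}), and the nondegeneracy of the Hessian in $w$ follows from Lemma~\ref{l:0708262029} combined with the normalization of $\omega_0(p)$. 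This produces (\ref{e:0709221719})--(\ref{e:0710260900}) with a classical symbol $s_+(x,y,t)$ of order $n-1$ and a critical-value phase $\phi_+(x,y)\in C^\infty$. The elementary properties (\ref{e:0710260943}), (\ref{e:0709212016}), ${\rm Im}\phi_+\geq 0$, and the local lower bound ${\rm Im}\phi_+\gtrsim|x'-y'|^2$ of Proposition~\ref{p:0709212100} then come directly from the stationary-phase formula and Proposition~\ref{p:0709212022}.

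The next step is to identify the positive Lagrangean $\Lambda_{\phi_+t}$ associated to $\phi_+t$ with $C'_\infty$ of (\ref{e:0710261031}): this is exactly the equivalence (\ref{e:0710261020}), obtained from the equivalence of the phases $\psi(\infty,x,\eta)-\seq{y,\eta}$ and $\phi_+(x,y)t$ in the sense of Melin--Sj\"ostrand (Proposition~\ref{p:c-mesjmore}). Combined with Proposition~\ref{p:0709201320}, which already states that $\psi(\infty,x,\eta)-\seq{y,\eta}$ and $-\ol\psi(\infty,y,\eta)+\seq{x,\eta}$ are equivalent phases, this yields (\ref{e:0709222215}) and hence Proposition~\ref{p:0709221724}, which allows me to replace $\phi_+(x,y)$ by $\tfrac12(\phi_+(x,y)-\ol\phi_+(y,x))$ to enforce $\phi_+(x,y)=-\ol\phi_+(y,x)$, and then by a suitable positive rescaling to arrange (\ref{e:nevergiveup3}).

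The last step — and the main technical obstacle — is patching the locally constructed phases $\phi_{+,j}$ on a cover $\{X_j\}$ into a single global phase $\phi_+\in C^\infty(X\times X)$. The key input is that on each overlap $X_j\cap X_k$, the phases $\phi_{+,j}t$ and $\phi_{+,k}t$ parametrize the same positive Lagrangean $C'_\infty$, so by the global theory of Fourier integral operators (Proposition~\ref{p:c-mesjmore}) there exist smooth positive functions $f_{j,k}$ with $\phi_{+,j}-f_{j,k}\phi_{+,k}$ vanishing to infinite order on the diagonal, as in (\ref{e:0708291523}). Taking a symmetric partition of unity $\chi_j(x,y)=\chi_j(y,x)$ subordinate to $\{X_j\times X_j\}$ and setting $\phi_+=\sum_j\chi_j\phi_{+,j}$ produces a globally defined phase which is still equivalent to each $\phi_{+,j}t$ at every point of the diagonal of $\Sigma^+\cap T^*(X_j)$; a final application of Proposition~\ref{p:c-mesjmore} rewrites each local oscillatory integral in terms of $\phi_+$ and absorbs the discrepancy into the symbol. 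The same argument applied to the conjugate piece yields $\phi_-$ and, when $q=n_-=n_+$, the symmetry $\phi_-(x,y)=-\ol\phi_+(x,y)$ follows from (\ref{e:0709192125}). The wave front set statement and the $H^s$-continuity in the statement of Theorem~\ref{t:i-szegomain} are then immediate consequences of the oscillatory-integral representation and the properties of $\phi_\pm$.
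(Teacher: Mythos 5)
Your proposal reproduces the paper's own argument in Section 8: the splitting by a conic cutoff as in (\ref{e:0709230113}), the $\eta'=tw$ substitution and Melin--Sj\"ostrand stationary phase to get (\ref{e:0709221719})--(\ref{e:0710260900}), the identification of $\Lambda_{\phi_+t}$ with $C'_\infty$ via Proposition~\ref{p:c-mesjmore} and Proposition~\ref{p:0709201320} to normalize $\phi_+$, and the partition-of-unity globalization using (\ref{e:0708291523}). This is essentially the same proof as the paper's.
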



\section{The leading term of the Szeg\"o Projection}

To compute the leading term of the Szeg\"{o} projection, we have to know the tangential Hessian of $\phi_+$ at each point
of ${\rm diag\,}(X\times X)$ (see (\ref{e:0710261800})), where $\phi_+$ is as in Theorem~\ref{t:szegomain}.
We work with local coordinates $x=(x_1,\ldots,x_{2n-1})$ defined on an open set $\Omega\subset X$.
The tangential Hessian of $\phi_+(x, y)$ at $(p, p)\in{\rm diag\,}(X\times X)$ is the bilinear map:
\begin{align} \label{e:0710261800}
T_{(p, p)}H_+\times T_{(p, p)}H_+&\To\Complex,\nonumber \\
(u, v)&\To\seq{(\phi_+'')(p, p)u, v},\ \ u, v\in T_{(p, p)}H_+,
\end{align}
where $H_+$ is as in (\ref{e:0708281710}) and
$(\phi_+)''=\left[
\begin{array}[c]{cc}
  (\phi_+)''_{xx} & (\phi_+)''_{xy} \\
  (\phi_+)''_{yx} & (\phi_+)''_{yy}
\end{array}\right]$.

From (\ref{e:nevergiveup3}), we can check that $T_{(p,p)}H_+$ is spanned by
\begin{equation} \label{e:0708291117}
(u, v),\ \ (Y(p), Y(p)),\ \ u, v\in\Lambda^{1,0}T_p(X)\oplus\Lambda^{0,1}T_p(X).
\end{equation}

Now, we compute the the tangential Hessian  of $\phi_+$ at $(p, p)\in{\rm diag\,}(X\times X)$.
We need to understand the tangent space of the formal manifold
\[J_+=\set{J_{(x, \eta)};\, (x, \eta)\in\Sigma}\]
at $\rho=(p, \lambda\omega_0(p))\in\Sigma^+$, $\lambda>0$, where $J_+$ is as in Proposition~\ref{p:0708261128}.

Let  $\lambda_j$, $j=1,\ldots,n-1$, be the eigenvalues of the Levi form $L_p$. We recall that
$2i\abs{\lambda_j}\abs{\sigma_{iY}(\rho)}$, $j=1,\ldots,n-1$ and
$-2i\abs{\lambda_j}\abs{\sigma_{iY}(\rho)}$, $j=1,\ldots,n-1$, are the non-vanishing eigenvalues of
the fundamental matrix $F_\rho$. (See (\ref{e:h-eigenvalues}).)
Let $\Lambda^+_\rho\subset\Complex T_\rho(T^*(X))$ be the span of the eigenspaces of $F_\rho$ corresponding
to $2i\abs{\lambda_j}\abs{\sigma_{iY}(\rho)}$, $j=1,\ldots,n-1$.
It is well known (see~\cite{Sjo74},~\cite{MS78} and Boutet de Monvel-Guillemin~\cite{BG81}) that
\[T_\rho(J_+)=\Complex T_\rho(\Sigma)\oplus\Lambda^+_\rho,\ \ \Lambda^+_\rho=T_\rho(J_+)^\bot,\]
where $T_\rho(J_+)^\bot$ is the
orthogonal to $T_\rho(J_+)$ in $\Complex T_\rho(T^*(X))$ with respect to the canonical two form $\sigma$.
We need the following

\begin{lem} \label{l:s-final1}
Let $\rho=(p, \lambda\omega_0(p))\in\Sigma^+$, $\lambda>0$. Let
$\ol Z_1(x),\ldots,\ol Z_{n-1}(x)$
be an orthonormal frame of\, $\Lambda^{1,0}T_x(X)$ varying smoothly with $x$ in a neighborhood of\, $p$, for which
the Levi form is diagonalized at $p$. Let $q_j(x, \xi)$, $j=1,\ldots,n-1$, be the principal symbols of\, $Z_j(x)$, $j=1,\ldots,n-1$.
Then, $\Lambda^+_\rho$ is spanned by
\begin{equation} \label{e:final1}
\left\{ \begin{array}{ll}
H_{q_j}(\rho),\ {\rm if\,}\ \ \frac{1}{i}\set{q_j, \ol q_j}(\rho)>0  \\
H_{\ol q_j}(\rho),\ {\rm if\,}\ \ \frac{1}{i}\set{q_j, \ol q_j}(\rho)<0
\end{array}\right..
\end{equation}
We recall that (see (\ref{e:h-Levi})) $\frac{1}{i}\set{q_j, \ol q_j}(\rho)=-2\lambda L_p(\ol Z_j, Z_j)$.
\end{lem}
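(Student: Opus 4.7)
The plan is to compute the matrix of $F_\rho$ restricted to the span of $\{H_{q_j}(\rho), H_{\ol q_j}(\rho)\}_{j=1}^{n-1}$ directly from (\ref{e:h-fundamental}), diagonalize it using the Levi-diagonalization hypothesis, and then identify which eigenvectors correspond to the eigenvalues $+2i|\lambda_j|\,|\sigma_{iY}(\rho)|$ that cut out $\Lambda^+_\rho$.

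First I would recall from the derivation preceding (\ref{e:h-fundamental}) that $T_\rho(\Sigma)^\bot$ (with respect to $\sigma$) has basis $H_{q_1},\ldots,H_{q_{n-1}},H_{\ol q_1},\ldots,H_{\ol q_{n-1}}$ and that the fundamental matrix $F_\rho$ preserves this subspace with block form
\[
F_\rho=\begin{pmatrix} \{q_k,\ol q_j\}(\rho) & 0 \\ 0 & \{\ol q_k,q_j\}(\rho)\end{pmatrix}.
\]
Since the Levi form is diagonalized at $p$ and $\ol Z_1,\ldots,\ol Z_{n-1}$ is orthonormal, one has $L_p(\ol Z_k,Z_j)=\lambda_j\delta_{jk}$, where $\lambda_j$ denotes the eigenvalues of the Levi form. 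Plugging into (\ref{e:h-Levi}) gives $\{\ol q_k,q_j\}(\rho)=2i\lambda_j\delta_{jk}\sigma_{iY}(\rho)$, and by antisymmetry $\{q_k,\ol q_j\}(\rho)=-2i\lambda_j\delta_{jk}\sigma_{iY}(\rho)$. Hence on the subspace in question
\[
F_\rho H_{q_j}(\rho)=-2i\lambda_j\sigma_{iY}(\rho)\,H_{q_j}(\rho),\qquad
F_\rho H_{\ol q_j}(\rho)=2i\lambda_j\sigma_{iY}(\rho)\,H_{\ol q_j}(\rho),
\]
so $H_{q_j}(\rho)$ and $H_{\ol q_j}(\rho)$ are eigenvectors for the non-vanishing eigenvalues of $F_\rho$.

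Next, since $\rho=(p,\lambda\omega_0(p))\in\Sigma^+$ with $\lambda>0$, we have $\sigma_{iY}(\rho)>0$, so the eigenvalue of $F_\rho$ associated to $H_{\ol q_j}(\rho)$ equals $+2i|\lambda_j|\,|\sigma_{iY}(\rho)|$ exactly when $\lambda_j>0$, while the eigenvalue associated to $H_{q_j}(\rho)$ equals $+2i|\lambda_j|\,|\sigma_{iY}(\rho)|$ exactly when $\lambda_j<0$. Translating via $\frac{1}{i}\{q_j,\ol q_j\}(\rho)=-2\lambda_j\sigma_{iY}(\rho)=-2\lambda L_p(\ol Z_j,Z_j)$, the condition $\lambda_j>0$ becomes $\frac{1}{i}\{q_j,\ol q_j\}(\rho)<0$, and $\lambda_j<0$ becomes $\frac{1}{i}\{q_j,\ol q_j\}(\rho)>0$. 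This gives the claimed description of $\Lambda^+_\rho$.

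The computation is entirely formal, so there is no real obstacle beyond tracking sign conventions carefully, in particular the fact that $\sigma_{iY}$ is positive on $\Sigma^+$ (which is consistent with $\langle Y,\omega_0\rangle=-1$) and that the Levi-diagonalization $L_p(\ol Z_k,Z_j)=\lambda_j\delta_{jk}$ enters symmetrically into both blocks of $F_\rho$ after antisymmetrization of the Poisson bracket. Once these conventions are pinned down the eigenvalue matching is immediate.
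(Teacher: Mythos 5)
Your proof is correct and takes essentially the same route as the paper's: read the eigenvalues of $F_\rho$ on $H_{q_j},H_{\ol q_j}$ off the block form (\ref{e:h-fundamental}) using the Levi-diagonalization, note that $\Lambda^+_\rho$ is the span of the eigenvectors for $2i\lambda|\lambda_j|$, and match signs using $\sigma_{iY}(\rho)=\lambda>0$ and (\ref{e:h-Levi}). The paper states this more tersely, but the underlying argument is identical, and your sign bookkeeping is right.
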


\begin{proof}
In view of (\ref{e:h-fundamental}), we see that $H_{q_j}(\rho)$ and $H_{\ol q_j}(\rho)$ are the eigenvectors of
the fundamental matrix $F_\rho$ corresponding to $\set{q_j, \ol q_j}(\rho)$ and $\set{\ol q_j, q_j}(\rho)$, for all $j$.
Since $\Lambda^+_\rho$ is the span of the eigenspaces of the fundamental matrix $F_\rho$
corresponding to
$2i\lambda\abs{\lambda_j}$, $j=1,\ldots,n-1$, where $\lambda_j$, $j=1,\ldots,n-1$, are the eigenvalues
of the Levi form $L_p$. Thus,
$\Lambda^+_\rho$ is spanned by
\[
\left\{ \begin{array}{ll}
H_{q_j}(\rho),\ {\rm if\,}\ \ \frac{1}{i}\set{q_j, \ol q_j}(\rho)>0  \\
H_{\ol q_j}(\rho),\ {\rm if\,}\ \ \frac{1}{i}\set{q_j, \ol q_j}(\rho)<0
\end{array}\right..\]
\end{proof}

We assume that $(U, f_1,\ldots,f_{n-1})$ is a representative of $J_{\rho}$. We also write $f_j$ to denote an almost analytic extension of $f_j$,
for all $j$. It is well known that (see~\cite{MS78} and (\ref{e:0710271152})) there exist $h_j(x, y)\in C^\infty(X\times X)$, $j=1,\ldots,n-1$, such that
\begin{equation} \label{e:0710261822}
f_j(x, (\phi_+)'_x)-h_j(x, y)\phi_+(x, y)
\end{equation}
vanishes to infinite order on $x=y$, $j=1,\ldots,n-1$.
From Lemma~\ref{l:s-final1}, we may assume that
\begin{equation} \label{e:final1bis}
\left\{ \begin{array}{ll}
H_{f_j}(\rho)=H_{q_j}(\rho),\ {\rm if\,}\ \ \frac{1}{i}\set{q_j, \ol q_j}(\rho)>0 \\
H_{f_j}(\rho)=H_{\ol q_j}(\rho),\ {\rm if\,}\ \ \frac{1}{i}\set{q_j, \ol q_j}(\rho)<0
\end{array}\right..
\end{equation}
Here $q_j$, $j=1,\ldots,n-1$, are as in Lemma~\ref{l:s-final1}.

We take local coordinates
\[x=(x_1,\ldots,x_{2n-1}),\ \ z_j=x_{2j-1}+ix_{2j},\ \ j=1,\ldots,n-1,\]
defined on some neighborhood of $p$ such that
$\omega_0(p)=\sqrt{2}dx_{2n-1}$, $x(p)=0$,
$(\frac{\pr}{\pr x_j}(p)\ |\ \frac{\pr}{\pr x_k}(p))=2\delta_{j,k}$, $j, k=1,\ldots,2n-1$
and
\[\ol Z_j=\frac{\pr}{\pr z_j}-\frac{1}{\sqrt{2}}i\lambda_j\ol z_j\frac{\pr}{\pr x_{2n-1}}-
\frac{1}{\sqrt{2}}c_jx_{2n-1}\frac{\pr}{\pr x_{2n-1}}+O(\abs{x}^2),\ \ j=1,\ldots,n-1,\]
where $\ol Z_j$, $j=1,\ldots,n-1$, are as in Lemma~\ref{l:s-final1},
$c_j\in\Complex$, $\frac{\pr}{\pr z_j}=\frac{1}{2}(\frac{\pr}{\pr x_{2j-1}}-i\frac{\pr}{\pr x_{2j}})$, $j=1,\ldots,n-1$
and $\lambda_j$, $j=1,\ldots,n-1$, are the eigenvalues of $L_p$.
(This is always possible. See page $157$-page $160$ of~\cite{BG88}.)

We may assume that
$\lambda_j>0$, $j=1,\ldots,q$, $\lambda_j<0$, $j=q+1,\ldots,n-1$.
Let $\xi=(\xi_1,\ldots,\xi_{2n-1})$ denote the dual variables of $x$.
From (\ref{e:final1bis}), we can check that
\begin{align} \label{e:0804250813}
&f_j(x, \xi)=-\frac{i}{2}(\xi_{2j-1}-i\xi_{2j})-\frac{1}{\sqrt{2}}\lambda_j\ol z_j\xi_{2n-1}
+i\frac{1}{\sqrt{2}}c_jx_{2n-1}\xi_{2n-1}+O(\abs{(x,\xi')}^2),\nonumber\\
&\quad j=1,\ldots,q,\ \ \xi'=(\xi_1,\ldots,\xi_{2n-2}), \nonumber \\
&f_j(x, \xi)=\frac{i}{2}(\xi_{2j-1}+i\xi_{2j})-\frac{1}{\sqrt{2}}\lambda_jz_j\xi_{2n-1}
-i\frac{1}{\sqrt{2}}\ol c_jx_{2n-1}\xi_{2n-1}+O(\abs{(x,\xi')}^2),\nonumber
\\
&\quad j=q+1,\ldots,n-1,\ \ \xi'=(\xi_1,\ldots,\xi_{2n-2}).
\end{align}
We write $y=(y_1,\ldots,y_{2n-1})$, $w_j=y_{2j-1}+iy_{2j}$, $j=1,\ldots,n-1$,
$\frac{\pr}{\pr w_j}=\frac{1}{2}(\frac{\pr}{\pr y_{2j-1}}-i\frac{\pr}{\pr y_{2j}})$,
$\frac{\pr}{\pr\ol w_j}=\frac{1}{2}(\frac{\pr}{\pr y_{2j-1}}+i\frac{\pr}{\pr y_{2j}})$, $j=1,\ldots,n-1$
and
$\frac{\pr}{\pr\ol z_j}=\frac{1}{2}(\frac{\pr}{\pr x_{2j-1}}+i\frac{\pr}{\pr x_{2j}})$, $j=1,\ldots,n-1$.
From (\ref{e:0710261822}) and (\ref{e:0804250813}), we have
\begin{align} \label{e:0804242211}
&-i\frac{\pr\phi_+}{\pr z_j}-\frac{1}{\sqrt{2}}(\lambda_j\ol z_j-ic_jx_{2n-1})\frac{\pr\phi_+}{\pr x_{2n-1}}
=h_j\phi_++O(\abs{(x, y)}^2),\ \ j=1,\ldots,q, \nonumber \\
&i\frac{\pr\phi_+}{\pr\ol z_j}-\frac{1}{\sqrt{2}}(\lambda_jz_j+i\ol c_jx_{2n-1})\frac{\pr\phi_+}{\pr x_{2n-1}}
=h_j\phi_++O(\abs{(x, y)}^2),\nonumber \\
&\quad j=q+1,\ldots,n-1.
\end{align}
From (\ref{e:0804242211}), it is straight forward to see that
\begin{align} \label{e:0804242212}
\frac{\pr^2\phi_+}{\pr z_j\pr z_k}(0,0)&=\frac{\pr^2\phi_+}{\pr z_j\pr\ol z_k}(0,0)-i\lambda_j\delta_{j,k}=
\frac{\pr^2\phi_+}{\pr z_j\pr w_k}(0,0)=\frac{\pr^2\phi_+}{\pr z_j\pr\ol w_k}(0,0) \nonumber \\
&=\frac{\pr^2\phi_+}{\pr z_j\pr x_{2n-1}}(0,0)+\frac{\pr^2\phi_+}{\pr z_j\pr y_{2n-1}}(0,0)-c_j=0,\nonumber \\
&\quad 1\leq j\leq q,\ \ 1\leq k\leq n-1,
\end{align}
and
\begin{align} \label{e:0807150845}
\frac{\pr^2\phi_+}{\pr\ol z_j\pr\ol z_k}(0,0)&=\frac{\pr^2\phi_+}{\pr\ol z_j\pr z_k}(0,0)+i\lambda_j\delta_{j,k}=
\frac{\pr^2\phi_+}{\pr\ol z_j\pr w_k}(0,0)=\frac{\pr^2\phi_+}{\pr\ol z_j\pr\ol w_k}(0,0) \nonumber \\
&=\frac{\pr^2\phi_+}{\pr\ol z_j\pr x_{2n-1}}(0,0)+\frac{\pr^2\phi_+}{\pr\ol z_j\pr y_{2n-1}}(0,0)-\ol c_j=0,\nonumber \\
&\quad q+1\leq j\leq n-1,\ \ 1\leq k\leq n-1.
\end{align}

Since $d_x\phi_+|_{x=y}=\omega_0(x)$, we have
$\ol f_j(x, (\phi_+)'_x(x, x))=0$, $j=1,\ldots,n-1$.
Thus,
\begin{align} \label{e:0804242311}
&i\frac{\pr\phi_+}{\pr\ol z_j}(x,x)-\frac{1}{\sqrt{2}}(\lambda_jz_j+i\ol c_jx_{2n-1})\frac{\pr\phi_+}{\pr x_{2n-1}}(x,x)=O(\abs{x}^2),\ \
j=1,\ldots,q, \nonumber \\
&-i\frac{\pr\phi_+}{\pr z_j}(x,x)-\frac{1}{\sqrt{2}}(\lambda_j\ol z_j-ic_jx_{2n-1})\frac{\pr\phi_+}{\pr x_{2n-1}}(x,x)=O(\abs{x}^2),\nonumber \\
&\quad j=q+1,\ldots,n-1.
\end{align}
From (\ref{e:0804242311}), it is straight forward to see that
\begin{align} \label{e:0804242312}
\frac{\pr^2\phi_+}{\pr\ol z_j\pr\ol z_k}(0,0)+\frac{\pr^2\phi_+}{\pr\ol z_j\pr\ol w_k}(0,0)&
=\frac{\pr^2\phi_+}{\pr\ol z_j\pr z_k}(0,0)+\frac{\pr^2\phi_+}{\pr\ol z_j\pr w_k}(0,0)+i\lambda_j\delta_{j,k} \nonumber \\
&=\frac{\pr^2\phi_+}{\pr\ol z_j\pr x_{2n-1}}(0,0)+\frac{\pr^2\phi_+}{\pr\ol z_j\pr y_{2n-1}}(0,0)-\ol c_j=0,\nonumber \\
&\quad 1\leq j\leq q,\ \ 1\leq k\leq n-1,
\end{align}
and
\begin{align} \label{e:0807150911}
\frac{\pr^2\phi_+}{\pr z_j\pr z_k}(0,0)+\frac{\pr^2\phi_+}{\pr z_j\pr w_k}(0,0)&
=\frac{\pr^2\phi_+}{\pr z_j\pr\ol z_k}(0,0)+\frac{\pr^2\phi_+}{\pr z_j\pr\ol w_k}(0,0)-i\lambda_j\delta_{j,k} \nonumber \\
&=\frac{\pr^2\phi_+}{\pr z_j\pr x_{2n-1}}(0,0)+\frac{\pr^2\phi_+}{\pr z_j\pr y_{2n-1}}(0,0)-c_j=0,\nonumber \\
&\quad q+1\leq j\leq n-1,\ \ 1\leq k\leq n-1.
\end{align}
Since $\phi_+(x, y)=-\ol\phi_+(y, x)$, from (\ref{e:0807150845}), we have
\begin{align*}
\frac{\pr^2\phi_+}{\pr\ol z_j\pr w_k}(0,0)&=-\frac{\pr^2\ol\phi_+}{\pr\ol w_j\pr z_k}(0,0) \\
&=-\ol{\frac{\pr^2\phi_+}{\pr w_j\pr\ol z_k}(0,0)}=0,\ \ q+1\leq k\leq n-1.
\end{align*}
Combining this with (\ref{e:0804242312}), we get
\begin{equation} \label{e:0804242330}
\frac{\pr^2\phi_+}{\pr\ol z_j\pr z_k}(0,0)=0,\ \ 1\leq j\leq q,\ \ q+1\leq k\leq n-1.
\end{equation}
Similarly,
\begin{align} \label{e:0804242345}
&\frac{\pr^2\phi_+}{\pr\ol z_j\pr\ol z_k}(0,0)=0,\ \ 1\leq j\leq q,\ \ 1\leq k\leq q,\nonumber \\
&\frac{\pr^2\phi_+}{\pr z_j\pr z_k}(0,0)=0,\ \ q+1\leq j\leq n-1,\ \ q+1\leq k\leq n-1.
\end{align}
From (\ref{e:0804242212}) and (\ref{e:0804242312}), we have
\begin{equation} \label{e:0804250017}
\frac{\pr^2\phi_+}{\pr\ol z_j\pr w_k}(0,0)=-i\lambda_j\delta_{j,k}-\frac{\pr^2\phi_+}{\pr\ol z_j\pr z_k}(0,0)
=-2i\lambda_j\delta_{j,k},\ \ 1\leq j, k\leq q.
\end{equation}
Similarly,
\begin{equation} \label{e:0804250018}
\frac{\pr^2\phi_+}{\pr z_j\pr\ol w_k}(0,0)=2i\lambda_j\delta_{j,k},\ \ q+1\leq j, k\leq n-1.
\end{equation}
Since $\phi_+(x, x)=0$, we have
\begin{equation} \label{e:finalequation1}
\frac{\pr^2\phi_+}{\pr x_{2n-1}\pr x_{2n-1}}(0,0)+2\frac{\pr^2\phi_+}{\pr x_{2n-1}\pr y_{2n-1}}(0,0)+
\frac{\pr^2\phi_+}{\pr y_{2n-1}\pr y_{2n-1}}(0,0)=0.
\end{equation}
Combining (\ref{e:0804242212}), (\ref{e:0807150845}), (\ref{e:0804242312}), (\ref{e:0807150911}),
(\ref{e:0804242330}), (\ref{e:0804242345}), (\ref{e:0804250017}),
(\ref{e:0804250018}) and (\ref{e:finalequation1}), we completely determine the tangential Hessian of $\phi_+(x, y)$ at $(p, p)$.

\begin{thm} \label{t:phasemainplus}
With the notations used before, in some small neighborhood of $(p, p)\in X\times X$, we have
\begin{align} \label{e:t-final*****}
&\phi_+(x, y)=\sqrt{2}(x_{2n-1}-y_{2n-1})+i\sum^{n-1}_{j=1}\abs{\lambda_j}\abs{z_j-w_j}^2
+\sum^{n-1}_{j=1}\Bigr(i\lambda_j(z_j\ol w_j-\ol z_jw_j)\nonumber \\
&\quad+c_j(z_jx_{2n-1}-w_jy_{2n-1})+\ol c_j(\ol z_jx_{2n-1}-\ol w_jy_{2n-1})\Bigr) \nonumber \\
&\quad+\sqrt{2}(x_{2n-1}-y_{2n-1})f(x, y)+O(\abs{(x, y)}^3),\nonumber \\
&\quad f\in C^\infty,\ \ f(0,0)=0,\ \ f(x, y)=\ol f(y, x).
\end{align}
\end{thm}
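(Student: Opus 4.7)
The plan is to reduce the computation of the tangential Hessian to the eikonal--type equations satisfied by $\phi_+$ coming from the formal manifold $J_+$ of Proposition~\ref{p:0708261128}, and then to exploit the symmetry $\phi_+(x,y) = -\ol{\phi_+}(y,x)$ together with the normalizations $\phi_+(x,x)=0$ and $d_x\phi_+|_{x=y} = \omega_0(x)$ to pin down all second derivatives at $(0,0)$.

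First, I would fix the adapted local coordinates $(x_1,\ldots,x_{2n-1})$ and the orthonormal frame $\ol Z_1,\ldots,\ol Z_{n-1}$ of $\Lambda^{1,0}T(X)$ as in the statement, which already diagonalizes $L_p$ and puts the frame in the normal form $\ol Z_j = \pr/\pr z_j - \tfrac{1}{\sqrt 2}i\lambda_j\ol z_j\pr/\pr x_{2n-1} - \tfrac{1}{\sqrt 2}c_j x_{2n-1}\pr/\pr x_{2n-1} + O(|x|^2)$. Relabel so that $\lambda_1,\ldots,\lambda_q > 0$ and $\lambda_{q+1},\ldots,\lambda_{n-1} < 0$ (recall $q = n_+$ and we are at $\rho = (p,\omega_0(p)) \in \Sigma^+$). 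Using Lemma~\ref{l:s-final1}, I would choose representatives $f_1,\ldots,f_{n-1}$ of the germ $J_\rho$ so that $H_{f_j}(\rho) = H_{q_j}(\rho)$ when $\lambda_j > 0$ and $H_{f_j}(\rho) = H_{\ol q_j}(\rho)$ when $\lambda_j < 0$; reading off the principal symbols of $Z_j$, $\ol Z_j$ from the normal form yields the explicit expression~(\ref{e:0804250813}) for $f_j(x,\xi)$ up to quadratic error.

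Next, I would apply the key relation (\ref{e:0710261822}): $f_j(x,(\phi_+)'_x) - h_j(x,y)\phi_+(x,y)$ vanishes to infinite order on $\{x=y\}$. Substituting the explicit form of $f_j$ gives two families of first--order PDEs for $\phi_+$ modulo $\phi_+$ itself and $O(|(x,y)|^2)$, which I would differentiate once in $z_k$, $\ol z_k$, $w_k$, $\ol w_k$, $x_{2n-1}$, $y_{2n-1}$ and evaluate at $(0,0)$. Since $\phi_+$ vanishes at $(0,0)$ with known first derivatives $\sqrt 2\,dx_{2n-1}$ and $-\sqrt 2\,dy_{2n-1}$, each such differentiation yields a linear relation among second derivatives of $\phi_+$ at the origin; this gives the families of identities (\ref{e:0804242212}) and (\ref{e:0807150845}). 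A parallel derivation, using the antiholomorphic generators $\ol f_j$ and the diagonal restriction $\ol f_j(x,(\phi_+)'_x(x,x)) = 0$ forced by $d_x\phi_+|_{x=y} = \omega_0(x)$, yields (\ref{e:0804242311}) and its consequences (\ref{e:0804242312}), (\ref{e:0807150911}).

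To close the system I would bring in the symmetry $\phi_+(x,y) = -\ol{\phi_+}(y,x)$, which converts identities for $\pr^2\phi_+/\pr\ol z_j\pr w_k$ into identities for $\pr^2\phi_+/\pr w_j\pr\ol z_k$ and thereby eliminates the mixed indices $1 \le j \le q$, $q+1 \le k \le n-1$, producing (\ref{e:0804242330}) and (\ref{e:0804242345}); combining these with the earlier identities then determines $\pr^2\phi_+/\pr\ol z_j\pr w_k$ for $1\le j,k\le q$ and $\pr^2\phi_+/\pr z_j\pr\ol w_k$ for $q+1\le j,k\le n-1$ to be $\mp 2i\lambda_j\delta_{j,k}$. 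Finally, differentiating the diagonal identity $\phi_+(x,x) = 0$ twice in $x_{2n-1}$ gives (\ref{e:finalequation1}), which fixes the remaining $(x_{2n-1},y_{2n-1})$ block modulo a term that can be absorbed into $\sqrt 2(x_{2n-1}-y_{2n-1})f(x,y)$ with $f(0,0)=0$, $f(x,y)=\ol f(y,x)$. Assembling all of these into the Taylor polynomial of $\phi_+$ at $(0,0)$ produces exactly (\ref{e:t-final*****}).

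The main obstacle is the bookkeeping: one has a large system of linear relations among the sixteen families of second derivatives $\pr^2\phi_+/\pr u\pr v$ for $u,v \in \{z_j,\ol z_j,x_{2n-1},w_j,\ol w_j,y_{2n-1}\}$, and one must verify that the relations extracted from $f_j(x,(\phi_+)'_x) \equiv h_j\phi_+$, $\ol f_j(x,(\phi_+)'_x(x,x)) = 0$, $\phi_+(x,x)=0$ and the reality symmetry are consistent and together determine the Hessian up to the harmless $O(x_{2n-1}-y_{2n-1})$ ambiguity; this is where the explicit normal form of $\ol Z_j$, in which the $c_j$ coefficients appear precisely so that (\ref{e:t-final*****}) is compatible, plays the decisive role.
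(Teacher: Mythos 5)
Your proposal reproduces the paper's own argument in Section 9: the same choice of adapted coordinates and frame diagonalizing the Levi form, the same use of Lemma~\ref{l:s-final1} and the relations (\ref{e:0710261822}) to extract linear identities among the second derivatives of $\phi_+$, the same exploitation of $d_x\phi_+|_{x=y}=\omega_0(x)$, $\phi_+(x,x)=0$ and the reality symmetry $\phi_+(x,y)=-\ol\phi_+(y,x)$ to close the system, and the same assembly into the Taylor polynomial. This is a correct proof and matches the paper's approach essentially step for step.
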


We have the classical formulas
\begin{equation} \label{e:0}
\int^\infty_0e^{-tx}t^mdt= \left\{ \begin{array}{ll}
m!x^{-m-1}, &\mbox{if } m\in\integer, m\geq0  \\
\frac{(-1)^m}{(-m-1)!}x^{-m-1}(\log x+c-\sum^{-m-1}_1\frac{1}{j}), &\mbox{if } m\in\integer, m<0
\end{array}\right..
\end{equation}
Here $x\neq0$, ${\rm Re\,}x\geq0$ and $c$ is the Euler constant, i.e.
$c=\lim_{m\To\infty}(\sum^m_1\frac{1}{j}-\log m)$.
Note that
\begin{align} \label{e:0809061310}
&\int^{\infty}_0e^{i\phi_+(x, y)t}\sum^\infty_{j=0}s^j_+(x, y)t^{n-1-j}dt\nonumber  \\
&=\lim_{\varepsilon\to0+}
\int^{\infty}_0e^{-(-i(\phi_+(x, y)+i\varepsilon)t)}\sum^\infty_{j=0}s^j_+(x, y)t^{n-1-j}dt.
\end{align}
We have the following corollary of Theorem~\ref{t:szegomain}

\begin{cor} \label{c:1}
There exist
\[F_+, G_+, F_-, G_-\in C^\infty(X\times X;\ \mathscr L(\Lambda^{0, q}T^*_y(X), \Lambda^{0, q}T^*_x(X)))\]
such that
\[K_{\pi^{(q)}_+}=F_+(-i(\phi_+(x, y)+i0))^{-n}+G_+\log(-i(\phi_+(x, y)+i0)),\]
\[K_{\pi^{(q)}_-}=F_-(-i(\phi_-(x, y)+i0))^{-n}+G_-\log(-i(\phi_-(x, y)+i0)).\]

Moreover, we have
\begin{align} \label{e:000}
&F_+=\sum^{n-1}_0(n-1-k)!s^k_+(x, y)(-i\phi_+(x, y))^k+f_+(x, y)(\phi_+(x, y))^n,\nonumber \\
&F_-=\sum^{n-1}_0(n-1-k)!s^k_-(x, y)(-i\phi_-(x, y))^k+f_-(x, y)(\phi_-(x, y))^n,\nonumber \\
&G_+\equiv\sum^\infty_0\frac{(-1)^{k+1}}{k!}s^{n+k}_+(x, y)(-i\phi_+(x, y))^k,\nonumber \\
&G_-\equiv\sum^\infty_0\frac{(-1)^{k+1}}{k!}s^{n+k}_-(x, y)(-i\phi_-(x, y))^k,
\end{align}
where $s^k_+$, $k=0,1,\ldots$, are as in (\ref{e:0709232340}) and
\[f_+(x, y), f_-(x, y)\in C^\infty(X\times X;\, \mathscr L(\Lambda^{0,q}T^*_y(X), \Lambda^{0,q}T^*_x(X))).\]
\end{cor}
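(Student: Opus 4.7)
The plan is to substitute the asymptotic expansion (\ref{e:0709232340}) of $s_+(x,y,t)$ into the oscillatory integral representation of $K_{\pi^{(q)}_+}$, regularize via (\ref{e:0809061310}), and evaluate each monomial term by the classical identity (\ref{e:0}) with $z=-i(\phi_+(x,y)+i0)$. The condition ${\rm Im\,}\phi_+\geq 0$ from (\ref{e:0709241541}) ensures ${\rm Re\,}z\geq 0$, so (\ref{e:0}) is applicable; moreover, for each $N$ the tail $s_+-\sum_{j=0}^N s^j_+ t^{n-1-j}$ lies in $S^{n-2-N}_{1,0}$, and its contribution to the oscillatory integral becomes arbitrarily smooth as $N\to\infty$. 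Hence it suffices to carry out a formal term-by-term computation modulo smoothing errors.

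Splitting the sum at $j=n$, for $0\leq j\leq n-1$ the exponent $n-1-j\geq 0$ and the first line of (\ref{e:0}) gives
\[s^j_+\int_0^\infty e^{-zt}t^{n-1-j}\,dt=(n-1-j)!\,s^j_+\,z^{-(n-j)}.\]
Factoring out $z^{-n}$ assembles these into the polynomial piece $\sum_{k=0}^{n-1}(n-1-k)!\,s^k_+(-i\phi_+)^k$ of $F_+$, multiplied by $(-i(\phi_++i0))^{-n}$. For $j=n+k$ with $k\geq 0$ the second line of (\ref{e:0}) yields
\[\int_0^\infty e^{-zt}t^{-k-1}\,dt=\frac{(-1)^{k+1}}{k!}\,z^k\Bigl(\log z + c - \sum_{\ell=1}^k\tfrac{1}{\ell}\Bigr),\]
so gathering the coefficients of $\log z$ across $k\geq 0$ recovers exactly the formal series $\sum_{k\geq 0}\frac{(-1)^{k+1}}{k!}s^{n+k}_+(-i\phi_+)^k$ written for $G_+$ in (\ref{e:000}).

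The residual constant-in-$\log z$ piece assembles into $R_+\sim\sum_{k\geq 0}\frac{(-1)^{k+1}}{k!}\,s^{n+k}_+(-i\phi_+)^k\bigl(c-\sum_{\ell=1}^k 1/\ell\bigr)$. A Borel summation along the diagonal realizes both $G_+$ and $R_+$ as genuine sections in $C^\infty(X\times X;\mathscr L(\Lambda^{0,q}T^*_y(X),\Lambda^{0,q}T^*_x(X)))$ with the prescribed asymptotic expansions on ${\rm diag\,}(X\times X)$. Setting $f_+:=(-i)^n R_+$ gives $R_+\cdot z^{-n}\cdot z^n = f_+\phi_+^n\cdot z^{-n}$, so $R_+$ enters $K_{\pi^{(q)}_+}$ precisely as the term $f_+(\phi_+)^n\cdot(-i(\phi_++i0))^{-n}$ appearing in the formula for $F_+$ in (\ref{e:000}). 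The same argument with $\phi_-,s_-$ in place of $\phi_+,s_+$ produces $F_-$ and $G_-$. The main obstacle is this Borel realization: because $\phi_+$ is complex-valued and the series above converge only asymptotically along the diagonal, the sections $F_+$ and $G_+$ are determined only modulo smoothing remainders, which is precisely why the formulas for $G_\pm$ in (\ref{e:000}) are stated with $\equiv$ rather than equality.
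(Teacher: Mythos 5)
Your proposal is correct and follows essentially the same route as the paper: the paper's own derivation consists precisely of stating the classical formula (\ref{e:0}) and the regularization (\ref{e:0809061310}) and then asserting the corollary, so the content is exactly the term-by-term substitution you carry out, including the split at $j=n$, the collection of $\log$-coefficients into $G_\pm$, and the absorption of the non-logarithmic tail into the $f_\pm\phi_\pm^n$ piece of $F_\pm$ by Borel summation.
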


In the rest of this section, we assume that $q=n_+$. We will compute the leading term of $K_{\pi^{(q)}_+}$.
For a given point $p\in X$, let $x=(x_1, x_2, \ldots, x_{2n-1})$ be the local coordinates
as in Theorem~\ref{t:phasemainplus}. We recall that $w_0(p)=\sqrt{2}dx_{2n-1}$, $x(p)=0$,
$\Lambda^{1, 0}T_p(X)\oplus\Lambda^{0, 1}T_p(X)=\set{\sum_j^{2n-2}a_j\frac{\pr}{\pr x_j};\ a_j\in\Complex}$,
$(\frac{\pr}{\pr x_j}(p)\ |\ \frac{\pr}{\pr x_k}(p))=2\delta_{j,k}$, $j, k=1,\ldots,2n-1$.
We identify $\Omega$ with some open set in $\Real^{2n-1}$.
We represent the Hermitian inner product $(\ |\ )$ on $\Complex T(X)$ by
$(u\ |\ v)=\seq{Hu, \ol v}$,
where $u, v\in\Complex T(X)$, $H$ is a positive definite Hermitian matrix and $\seq{, }$ is given by
$\seq{s, t}=\sum^{2n-1}_{j=1}s_jt_j$, $s=(s_1,\ldots,s_{2n-1})\in\Complex^{2n-1}$,
$t=(t_1,\ldots,t_{2n-1})\in\Complex^{2n-1}$. Here we identify $\Complex T(X)$ with $\Complex^{2n-1}$.
Let $h(x)$ denote the determinant of $H$. The induced volume form on $X$ is given by $\sqrt{h(x)}dx$. We have
$h(p)=2^{2n-1}$.
Now,
\begin{align*}
&(K_{\pi^{(q)}_+}\circ K_{\pi^{(q)}_+})(x, y)\equiv \\
&\quad\int^\infty_0\!\int^\infty_0\!\Bigl(\!\int e^{it\phi_+(x, w)+is\phi_+(w, y)}s_+(x, w, t)s_+(w, y, s)\sqrt{h(w)}dw
\Bigl)dtds.
\end{align*}
Let $s=t\sigma$, we get
\begin{align*}
&(K_{\pi^{(q)}_+}\circ K_{\pi^{(q)}_+})(x, y) \\
&\quad\equiv\int^\infty_0\!\!\!\int^\infty_0\!\!\!\Bigl(\!\int\!\!\!
e^{it\phi(x, y, w, \sigma)}s_+(x, w, t)s_+(w, y, t\sigma)t\sqrt{h(w)}dw\Bigl)d\sigma dt,
\end{align*}
where $\phi(x, y, w, \sigma)=\phi_+(x, w)+\sigma\phi_+(w, y)$.
It is easy to see that
\[{\rm Im\,}\phi(x, y, w, \sigma)\geq0,\ \ d_w\phi(x, y, w, \sigma)|_{x=y=w}=(\sigma-1)\omega_0(x).\]
Thus, $x=y=w$, $\sigma=1$, $x$ is real, are real critical points.

Now, we will compute the Hessian of $\phi$ at $x=y=w=p$, $p$ is real, $\sigma=1$. We write $H_\phi(p)$ to denote the
Hessian of $\phi$ at $x=y=w=p$, $p$ is real, $\sigma=1$. $H_\phi(p)$ has the following form:
$H_\phi(p)=\left[
\begin{array}[c]{cc}
  0 & {^t\hskip-1pt}(\phi_+)'_x \\
  (\phi_+)'_x & (\phi_+)''_{xx}+(\phi_+)''_{yy}
\end{array}\right]$.
Since $(\phi_+)'_x(p)=\omega_0(p)=\sqrt{2}dx_{2n-1}$,
we have
$H_\phi(p)=\left[
\begin{array}[c]{ccc}
  0, & 0,\ldots,0,&\sqrt{2} \\
   \vdots & A,&* \\
  \sqrt{2}&*&*
\end{array}\right]$,
where $A$ is the linear map
\begin{align*}
&A:\Lambda^{1,0}T_p(X)\oplus\Lambda^{0,1}T_p(X)\To\Lambda^{1,0}T_p(X)\oplus\Lambda^{0,1}T_p(X),\\
&\seq{Au, v}=\seq{((\phi_+)''_{xx}+(\phi_+)''_{yy})u, v},\ \forall\
u, v\in\Lambda^{1,0}T_p(X)\oplus\Lambda^{0,1}T_p(X).
\end{align*}
From (\ref{e:t-final*****}), it follows that $A$ has the eigenvalues:
\begin{equation}
4i\abs{\lambda_1(p)}, 4i\abs{\lambda_1(p)},\ldots,4i\abs{\lambda_{n-1}(p)}, 4i\abs{\lambda_{n-1}(p)},
\end{equation}
where $\lambda_j(p)$, $j=1,\cdots,(n-1)$, are the eigenvalues of the Levi form $L_p$. We have,
\begin{align} \label{e:070404-1}
{\rm det}(\frac{H_\phi(p)}{i})=2^{4n-3}\abs{\lambda_1(p)}^2\cdots\abs{\lambda_{n-1}(p)}^2.
\end{align}

From the stationary phase formula (see Proposition~\ref{p:0709171439}), we get
\[(K_{\pi^{(q)}_+}\circ K_{\pi^{(q)}_+})(x, y)\equiv\int^\infty_0e^{it\phi_1(x, y)}a(x, y, t)dt,\]
where \[a(x, y, t)\sim\sum^\infty_{j=0}a_j(x, y)t^{n-1-j}\]
in the symbol space $S^{n-1}_{1, 0}(\Omega\times\Omega\times[0, \infty[;\, \mathscr L(\Lambda^{0, q}T^*_y(X), \Lambda^{0, q}T^*_x(X)))$,
$a_j(x, y)\in C^\infty(\Omega\times\Omega;\, \mathscr L(\Lambda^{0,q}T^*(X),\Lambda^{0,q}T^*(X)))$, $j=0,1,\ldots$,
and $\phi_1(x, y)$ is the corresponding critical value. Moreover, we have
\begin{align} \label{e:070404-2}
a_0(p, p)&=\left({\rm det}\frac{H_\phi(p)}{2\pi i}\right)^{-\frac{1}{2}}s^0_+(p, p)\circ s^0_+(p, p)\sqrt{h(p)}\nonumber \\
         &=2\abs{\lambda_1(p)}^{-1}\cdots\abs{\lambda_{n-1}(p)}^{-1}\pi^n s^0_+(p, p)\circ s^0_+(p, p),
\end{align}
where $s^0_+$ is as in (\ref{e:0709232340}). We notice that
\begin{equation} \label{e:0709232333}
\phi_1(x, x)=0,\ \ (\phi_1)'_x(x, x)=(\phi_+)'_x(x, x),\ \ (\phi_1)'_y(x, x)=(\phi_+)'_y(x, x).
\end{equation}

From (\ref{e:0}) and (\ref{e:0809061310}), it follows that
\begin{align} \label{e:0710272245}
(K_{\pi^{(q)}_+}\circ K_{\pi^{(q)}_+})(x, y)&\equiv F_1(-i(\phi_1(x, y)+i0))^{-n}+G_1\log(-i(\phi_1(x, y)+i0))\nonumber \\
                                &\equiv F_+(-i(\phi_+(x, y)+i0))^{-n}+G_+\log(-i(\phi_+(x, y)+i0)),
\end{align}
where
\[F_1=\sum^{n-1}_0(n-1-k)!a_j(-i\phi_1)^k+f_1\phi_1^n,\]
$f_1$, $G_1\in C^\infty(\Omega\times\Omega;\, \mathscr L(\Lambda^{0, q}T^*_y(X), \Lambda^{0, q}T^*_x(X)))$, $F_+$ and $G_+$ are as in
Corollary~\ref{c:1}.
From (\ref{e:0709232333}) and (\ref{e:0710272245}), we see that
$s^0_+(p, p)=a_0(p, p)$.
From this and (\ref{e:070404-2}), we get
\begin{equation} \label{e:0710272223}
2\abs{\lambda_1(p)}^{-1}\cdots\abs{\lambda_{n-1}(p)}^{-1}\pi^n s^0_+(p, p)\circ s^0_+(p, p)=s^0_+(p, p).
\end{equation}

Let $\mathscr N_x(p^s_0+\frac{1}{2}\Td{\rm tr\,}F)=\set{u\in\Lambda^{0, q}T^*_x(X);\,
(p^s_0+\frac{1}{2}\Td{\rm tr\,}F)(x, \omega_0(x))u=0}$,
where $p^s_0$ is the subprincipal symbol of $\Box^{(q)}_b$ and $F$ is the fundamental matrix of $\Box^{(q)}_b$.
From the asymptotic expansion of $\Box^{(q)}_b(e^{i\phi_+}s_+)$, we see that
$s^0_+(p, p)u\in\mathscr N_p(p^s_0+\frac{1}{2}\Td{\rm tr\,}F)$
for all $u\in\Lambda^{0, q}T^*_p(X)$. (See section $5$.) Let
\[I_1=(\frac{1}{2}\abs{\lambda_1}\cdots\abs{\lambda_{n-1}})^{-1}\pi^ns^0_+(p, p).\]
From (\ref{e:0710272223}), we see that
\begin{equation} \label{e:0709061552}
I^2_1=I_1.
\end{equation}
Since $K_{(\pi^{(q)}_+)^*}\equiv K_{\pi^{(q)}_+}$
and
$\phi_+(x, y)=-\ol\phi_+(y, x)$,
we have $(s^0_+)^*(p, p)=s^0_+(p, p)$
and hence
\begin{equation} \label{e:0709061603}
I_1^*=I_1,
\end{equation}
where $(\pi^{(q)}_+)^*$ is the adjoint of $\pi^{(q)}_+$, $(s^0_+)^*(p, p)$, $I^*_1$ are the adjoints of $s^0_+(p, p)$ and $I_1$ in the space
$\mathscr L(\Lambda^{0, q}T^*_p(X), \Lambda^{0, q}T^*_p(X)))$
with respect to $(\ |\ )$ respectively. Note that
${\rm dim\,}\mathscr N_p(p^s_0+\frac{1}{2}\Td{\rm tr\,}F)=1$.
(See section $3$.) Combining this with (\ref{e:0709061552}), (\ref{e:0709061603}) and $s^0_+(p,p)\neq0$,
it follows that
$I_1:\Lambda^{0, q}T^*_p(X)\To\Lambda^{0, q}T^*_p(X)$
is the orthogonal projection onto $\mathscr N_p(p^s_0+\frac{1}{2}\Td{\rm tr\,}F)$.

For a given point $p\in X$, let
$\ol Z_1(x),\ldots,\ol Z_{n-1}(x)$
be an orthonormal frame of $\Lambda^{1,0}T_x(X)$, for which
the Levi form is diagonalized at $p$. Let $e_j(x)$, $j=1,\ldots,n-1$
denote the basis of $\Lambda^{0,1}T^*_x(X)$, which is dual to $Z_j(x)$, $j=1,\ldots,n-1$. Let
$\lambda_j(x)$, $j=1,\ldots,n-1$ be the eigenvalues of the Levi form $L_x$. We assume that
$\lambda_j(p)>0$ if $1\leq j\leq n_+$.
Then $I_1=\prod_{j=1}^{j=n_+}e_j(p)^\wedge e_j^{\wedge, *}(p)$ at $p$.
(See section $3$.) Summing up, we have proved

\begin{prop} \label{p:leading1}
For a given point $p\in X$, let
$\ol Z_1(x),\ldots,\ol Z_{n-1}(x)$
be an orthonormal frame of $\Lambda^{1,0}T_x(X)$, for which
the Levi form is diagonalized at $p$. Let $e_j(x)$, $j=1,\ldots,n-1$
denote the basis of $\Lambda^{0,1}T^*_x(X)$, which is dual to $Z_j(x)$, $j=1,\ldots,n-1$. Let
$\lambda_j(x)$, $j=1,\ldots,n-1$ be the eigenvalues of the Levi form $L_x$. We assume that $q=n_+$ and that
$\lambda_j(p)>0$ if\, $1\leq j\leq n_+$. Then
\[F_+(p, p)=(n-1)!\frac{1}{2}\abs{\lambda_1(p)}\cdots
\abs{\lambda_{n-1}(p)}\pi^{-n}\prod_{j=1}^{j=n_+}e_j(p)^\wedge e_j(p)^{\wedge, *}.\]
\end{prop}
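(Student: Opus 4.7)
The plan is to deduce the value of $F_+(p,p)$ from the identity $F_+(p,p) = (n-1)!\, s^0_+(p,p)$, which follows directly from the formula for $F_+$ in Corollary~\ref{c:1} (all terms with $k \geq 1$ vanish on the diagonal because $\phi_+(x,x)=0$), together with an algebraic self-reproducing identity for $s^0_+(p,p)$ obtained by computing the composition $\pi^{(q)}_+\circ\pi^{(q)}_+$ to leading order via stationary phase. Since $\pi^{(q)}\equiv S$ is a projection up to smoothing and $K_{\pi^{(q)}_+}$ is its microlocal piece near $\Sigma^+$, the composition should reproduce $K_{\pi^{(q)}_+}$ at the leading order, yielding an equation that pins down $s^0_+(p,p)$ up to identifying the one-dimensional image.

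First I would write
\[
(K_{\pi^{(q)}_+}\circ K_{\pi^{(q)}_+})(x,y) \equiv \int_0^\infty\!\!\int_0^\infty\!\!\Bigl(\int e^{it\phi_+(x,w)+is\phi_+(w,y)} s_+(x,w,t)\,s_+(w,y,s)\sqrt{h(w)}\,dw\Bigr)dt\,ds,
\]
substitute $s = t\sigma$, and identify $w=p$, $\sigma=1$ as the unique real critical point on the diagonal (using $d_w\phi|_{x=y=w}=(\sigma-1)\omega_0$). Using the explicit normal form of $\phi_+$ from Theorem~\ref{t:phasemainplus}, I would then compute the Hessian of $\phi(x,y,w,\sigma)=\phi_+(x,w)+\sigma\phi_+(w,y)$ in the $(w,\sigma)$ variables at $(p,p,p,1)$, obtaining the block structure displayed in the text with interior block $A$ having eigenvalues $4i|\lambda_j(p)|$ each with multiplicity two, which yields $\det(H_\phi(p)/i) = 2^{4n-3}|\lambda_1(p)|^2\cdots|\lambda_{n-1}(p)|^2$.

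Applying stationary phase (Proposition~\ref{p:0709171439}), together with $h(p)=2^{2n-1}$, gives the leading coefficient $a_0(p,p) = 2|\lambda_1(p)|^{-1}\cdots|\lambda_{n-1}(p)|^{-1}\pi^n\, s^0_+(p,p)\circ s^0_+(p,p)$. Matching this with the leading symbol of $K_{\pi^{(q)}_+}$ itself (since the critical value $\phi_1$ agrees with $\phi_+$ to second order on the diagonal, by $(\phi_1)'_x(x,x)=(\phi_+)'_x(x,x)$, so the two are equivalent phases and the two expressions for $F_+$ in Corollary~\ref{c:1} must agree) yields the self-reproducing identity
\[
s^0_+(p,p)\circ s^0_+(p,p) = \tfrac{1}{2}|\lambda_1(p)|\cdots|\lambda_{n-1}(p)|\,\pi^{-n}\, s^0_+(p,p).
\]
Setting $I_1 = \bigl(\tfrac{1}{2}|\lambda_1|\cdots|\lambda_{n-1}|\bigr)^{-1}\pi^n\, s^0_+(p,p)$ gives $I_1^2 = I_1$; self-adjointness $I_1^* = I_1$ follows from $\phi_+(x,y)=-\overline{\phi_+(y,x)}$ and the equivalence $K_{(\pi^{(q)}_+)^*} \equiv K_{\pi^{(q)}_+}$. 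Hence $I_1$ is an orthogonal projection.

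To identify its range, I invoke the transport equation for $s^0_+$ (derived in Section~5): since the leading transport forces $s^0_+(p,p)u \in \mathscr{N}_p(p^s_0 + \tfrac{1}{2}\tilde{\rm tr}\,F)$, and this null space is one-dimensional under assumption $q=n_+$ (by the eigenvalue computation (\ref{e:h-eigenmore}) in Section~3), the only nonzero orthogonal projection onto this line is precisely $\prod_{j=1}^{n_+} e_j(p)^\wedge e_j(p)^{\wedge,*}$, since one checks that this product is itself the orthogonal projection onto the span of $e_1(p)\wedge\cdots\wedge e_{n_+}(p)$, which spans the null space (by the explicit formula (\ref{e:h-submore}) for $p^s_0$ at $\Sigma^+$ combined with $\lambda_j(p)>0$ for $j\leq n_+$). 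Solving $s^0_+(p,p) = \tfrac{1}{2}|\lambda_1|\cdots|\lambda_{n-1}|\pi^{-n} I_1$ and multiplying by $(n-1)!$ produces the claimed formula. The main obstacle will be the Hessian determinant computation and the careful verification that the two oscillatory integral representations of $K_{\pi^{(q)}_+}$ agree at the level of leading symbols — everything else is essentially bookkeeping.
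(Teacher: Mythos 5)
Your proposal reproduces the paper's own argument essentially verbatim: the self-composition $\pi^{(q)}_+\circ\pi^{(q)}_+$, the substitution $s=t\sigma$, the Hessian computation giving $\det(H_\phi(p)/i)=2^{4n-3}\abs{\lambda_1}^2\cdots\abs{\lambda_{n-1}}^2$, the stationary phase application yielding the reproducing identity $s^0_+(p,p)\circ s^0_+(p,p)=\frac{1}{2}\abs{\lambda_1}\cdots\abs{\lambda_{n-1}}\pi^{-n}s^0_+(p,p)$, the self-adjointness argument, and the identification of the range via the one-dimensional null space $\mathscr N_p(p^s_0+\frac{1}{2}\Td{\rm tr\,}F)$. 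The only minor omission is an explicit statement that $s^0_+(p,p)\neq0$ (needed to rule out $I_1=0$), which the paper states as a bare fact at the corresponding step.
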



\section{The Szeg\"o projection on non-orientable CR manifolds}

In this section, $(X, \Lambda^{1, 0}T(X))$ is a compact connected not necessarily orientable
CR manifold of dimension $2n-1$, $n\geq2$. We will use the same notations as before.
The definition of the Levi form (see Definition~\ref{d:d-Leviform}) depends on the choices of $\omega_0$. However,
the number of non-zero eigenvalues is independent of the choices of $\omega_0$. Thus,
it makes sense to say that the Levi-form is non-degenerate.
As before, we assume that the Levi form $L$ is non-degenerate at each point of $X$. We have the following

\begin{lem} \label{l:no1}
Let $(n_-, n_+)$, $n_-+n_+=n-1$,
be the signature of the Levi-form $L$. (The signature of the Levi-form $L$
depends on the choices of $\omega_0$.) If $n_-\neq n_+$ at a point of\, $X$, then $X$ is orientable.
\end{lem}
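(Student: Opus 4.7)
The plan is to exploit the fact that the non-orientability of $X$ is precisely the non-triviality of the real line bundle $N\subset T^*(X)$ whose fiber at $x$ is the annihilator of $\Lambda^{1,0}T_x(X)\oplus\Lambda^{0,1}T_x(X)$, and to use the hypothesis $n_-\neq n_+$ to trivialize $N$ canonically.

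First I would observe that although the global real $1$-form $\omega_0$ of (\ref{e:0807152304}) is not available in the non-orientable setting, one can still locally choose unit sections $\omega_0$ of $N$: on a sufficiently small open set $U\subset X$ there are exactly two such choices, differing by sign. Passing from $\omega_0$ to $-\omega_0$ replaces the Levi form $L_p$ of Definition~\ref{d:d-Leviform} by $-L_p$, and hence swaps the two integers in the signature, $(n_-,n_+)\mapsto(n_+,n_-)$. Consequently the unordered pair $\{n_-(p),n_+(p)\}$ is intrinsic (independent of the local sign choice) and, by non-degeneracy of $L$ together with continuity, is locally constant; connectedness of $X$ then forces this unordered pair to be the same at every point.

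Next I would use the hypothesis $n_-\neq n_+$ at one point, hence everywhere, to single out a preferred sign on every chart: on each $U$ choose the unique $\omega_0\in C^\infty(U;N)$ of unit length for which the Levi form has, say, $\max(n_-,n_+)$ positive eigenvalues. On an overlap $U\cap V$ the two choices must agree, because both give the same sign convention on the signature at every point of $U\cap V$. These locally preferred sections patch together to a globally defined, nowhere-vanishing real $1$-form $\omega_0\in C^\infty(X;N)$, showing that $N$ is the trivial line bundle.

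Finally, I would conclude orientability by producing a nowhere-vanishing top form. The real $(2n-2)$-form
\[
\omega=i^{n-1}\omega_1\wedge\ol\omega_1\wedge\cdots\wedge\omega_{n-1}\wedge\ol\omega_{n-1}
\]
is, as noted in Section~$1$, independent of the unitary frame and therefore globally defined on $X$, orientable or not. With the globally defined $\omega_0$ constructed above, the $(2n-1)$-form $\omega\wedge\omega_0$ is smooth, real, and nowhere zero on $X$ (it is a positive multiple of a local volume on each chart by the same computation as in (\ref{e:0807152304})). Existence of such a form gives orientability of $X$. The only subtle point in the argument, and the one I would spell out carefully, is the patching step: one must check that the sign convention based on the signature is genuinely continuous across overlaps, which ultimately reduces to the eigenvalues of $L_p$ depending continuously on $p$ and not crossing zero by non-degeneracy.
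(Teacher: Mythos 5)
Your proposal is correct and takes essentially the same route as the paper: both use the hypothesis $n_-\neq n_+$ (propagated to all of $X$ by connectedness and non-degeneracy) to fix a canonical sign for the local unit section $\omega_0$ via the signature of the Levi form, patch these into a global nowhere-vanishing real $1$-form, and then wedge with the canonically defined $(2n-2)$-form $\omega$ to produce a global volume form. Your write-up is slightly more explicit about the patching step and the final construction of the top form, but the underlying argument is the same as the paper's.
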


\begin{proof}
Since $X$ is connected, $n_-\neq n_+$ at a point of $X$ implies
$n_-\neq n_+$ at each point of $X$. Let $X=\bigcup U_j$, where $U_j$ is a
local coordinate patch of $X$. On $U_j$, we can choose an orthonormal frame
$\omega_{1,j}(x),\ldots,\omega_{n-1,j}(x)$
for $\Lambda^{1,0}T^*_x(U_j)$, then
$\ol\omega_{1,j}(x),\ldots,\ol\omega_{n-1,j}(x)$
is an orthonormal frame for $\Lambda^{0,1}T^*_x(U_j)$. The $(2n-2)$-form
\[\omega_j=i^{n-1}\omega_{1,j}\wedge\ol\omega_{1,j}\wedge\cdots\wedge\omega_{n-1,j}\wedge\ol\omega_{n-1,j}\]
is real and is independent of the choice of the orthonormal frame.
There is a real $1$-form $\omega_{0,j}(x)$ of length one which is orthogonal to
$\Lambda^{1,0}T^*_x(U_j)\oplus\Lambda^{0,1}T^*_x(U_j)$. We take $\omega_{0,j}$ so that $n_-<n_+$ on $U_j$. Since
$\omega_{0,j}$ is unique up to sign and $n_-<n_+$ on $U_j$, for all $j$, we have
$\omega_{0,j}(x)=\omega_{0,k}(x)$ on $U_j\bigcap U_k$,
so $\omega_0$ is globally defined. The lemma follows.
\end{proof}

We only need to consider the case $n_-=n_+$. We recall that if $n_-=n_+$ then $\Box^{(q)}_b$ has closed range. In view of the
proof of Theorem~\ref{t:szegomain}, we have the following

\begin{thm} \label{t:szegomain-nonorientable}
Let $(n_-, n_+)$ be the signature of the Levi form. We assume that $q=n_-=n_+$. Put
$\hat\Sigma=\set{(x, \xi)\in T^*(X)\setminus0;\, \xi=\lambda\omega_0(x), \lambda\neq0}$,
where $\omega_0$ is the locally unique real $1$ form determined up to sign by
$\norm{\omega_0}=1$,
\[\omega_0\bot(\Lambda^{0,1}T^*(X)\oplus\Lambda^{1, 0}T^*(X)).\]
Then $\pi^{(q)}:H^s_{{\rm loc\,}}(X;\, \Lambda^{0, q}T^*(X))\To H^s_{{\rm loc\,}}(X;\, \Lambda^{0, q}T^*(X))$ is continuous,
for all $s\in\Real$ and ${\rm WF}'(K_{\pi^{(q)}})={\rm diag\,}(\hat{\Sigma}\times\hat{\Sigma})$,
where
\[{\rm WF}'(K_{\pi^{(q)}})=\set{(x, \xi, y, \eta)\in T^*(X)\times T^*(X);\, (x, \xi, y, -\eta)\in{\rm WF}(K_{\pi^{(q)}})}.\]
Here ${\rm WF}(K_{\pi^{(q)}})$ is the wave front set of $K_{\pi^{(q)}}$ in the sense of H\"{o}rmander~\cite{Hor71}.

For every local coordinate patch $U$,
we fix a $\omega_0$ on $U$. We
define
\begin{align*}
&\Sigma^+=\set{(x, \xi)\in T^*(U)\setminus0;\, \xi=\lambda\omega_0(x), \lambda>0},  \\
&\Sigma^-=\set{(x, \xi)\in T^*(U)\setminus0;\, \xi=\lambda\omega_0(x), \lambda<0}.
\end{align*}
We have $K_{\pi^{(q)}}=K_{\pi^{(q)}_+}+K_{\pi^{(q)}_-}$ on $U\times U$,
where $K_{\pi^+}(x, y)$ satisfies
\begin{align*}
K_{\pi^{(q)}_+}(x, y)\equiv\int^{\infty}_{0} e^{i\phi_+(x, y)t}s_+(x, y, t)dt\ \ \mbox{on}\ U\times U
\end{align*}
with
\[s_+(x, y, t)\in S^{n-1}_{1, 0}(U\times U\times]0, \infty[;\, \mathscr L(\Lambda^{0,q}T^*_y(X), \Lambda^{0,q}T^*_x(X))),\]
\[s_+(x, y, t)\sim\sum^\infty_{j=0}s^j_+(x, y)t^{n-1-j}\]
in the symbol space $S^{n-1}_{1, 0}(U\times U\times]0, \infty[;\, \mathscr L(\Lambda^{0,q}T^*_y(X), \Lambda^{0,q}T^*_x(X)))$,
where
\[s^j_+(x, y)\in C^\infty(U\times U;\, \mathscr L(\Lambda^{0, q}T^*_y(X), \Lambda^{0, q}T^*_x(X))),\ j=0,1,\ldots,\]
\[\phi_+(x, y)\in C^\infty(U\times U),\ \ {\rm Im\,}\phi_+(x, y)\geq0,\]
\[\phi_+(x, x)=0,\ \ \phi_+(x, y)\neq0\ \ \mbox{if}\ \ x\neq y,\]
\[d_x\phi_+\neq0,\ \ d_y\phi_+\neq0\ \ \mbox{where}\ \ {\rm Im\,}\phi_+=0,\]
\[d_x\phi_+(x, y)|_{x=y}=\omega_0(x),\ \ d_y\phi_+(x, y)|_{x=y}=-\omega_0(x),\]
\[\phi_+(x, y)=-\ol\phi_+(y, x).\]
Moreover, $\phi_+(x, y)$ satisfies (\ref{e:t-final*****}). Similarly,
\begin{align*}
K_{\pi^{(q)}_-}(x, y)\equiv\int^{\infty}_{0} e^{i\phi_-(x, y)t}s_-(x, y, t)dt
\end{align*}
with
\[s_-(x, y, t)\sim\sum^\infty_{j=0}s^j_-(x, y)t^{n-1-j}\]
in the symbol space $S^{n-1}_{1, 0}(U\times U\times]0, \infty[;\, \mathscr L(\Lambda^{0,q}T^*_y(X), \Lambda^{0,q}T^*_x(X)))$,
where
\[s^j_-(x, y)\in C^\infty(U\times U;\, \mathscr L(\Lambda^{0, q}T^*_y(X), \Lambda^{0, q}T^*_x(X))),\ j=0,1,\ldots,\]
\[\phi_-(x, y)=-\ol\phi_+(x, y).\]
\end{thm}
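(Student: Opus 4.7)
The plan is to reduce to the orientable setting as much as possible, and to observe that the bulk of the construction developed in Sections 3--8 is purely local and therefore insensitive to global orientability. First I would invoke Lemma~\ref{l:no1} to exclude the case $n_-\neq n_+$: in that case $X$ is automatically orientable and Theorem~\ref{t:szegomain} applies verbatim. So from now on the only case to handle is $q=n_-=n_+$. In particular $q-1$ and $q+1$ differ from $n_-,n_+$, so conditions $Y(q-1)$ and $Y(q+1)$ hold at every point of $X$; the hypoellipticity and closed-range arguments used in Proposition~\ref{p:he-bealsgreiner} are local in character and carry over to the non-orientable case, so $\Box^{(q)}_b$ still has closed range and $\pi^{(q)}$ is still given by the formula (\ref{e:he-pi}).

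Next I would fix a coordinate patch $U\subset X$ on which a choice of $\omega_0$ exists (unique up to sign), and carry out on $U$ exactly the constructions of Sections~4--7. Every ingredient there---the principal symbol $p_0$, the subprincipal symbol, the characteristic manifold, the phase $\psi(t,x,\eta)$ and its limit $\psi(\infty,x,\eta)$, the transport amplitudes $a_j(\infty,x,\eta)$, the local parametrices $S_j$ and $G_j$, and the identities (\ref{e:s-1})---is defined from local data on $U$ together with the locally chosen $\omega_0$. No global orientability is used in these arguments. Covering $X$ by such patches $X_j$ and gluing with a partition of unity $\chi_j$ as in (\ref{e:0707302028}) gives operators $S$, $G$ on $X$ satisfying (\ref{e:0709231507}). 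The remaining part of Section~7, yielding $\pi^{(q)}\equiv S$ modulo smoothing, used only the fact that $\Box^{(q)}_b$ has closed range and the formal manipulations of Lemma~\ref{l:s-1}--Lemma~\ref{l:s-3}, and goes through without change.

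For the global wave front set identity, the essential observation is that the set
\[\hat\Sigma=\{(x,\xi)\in T^*(X)\setminus 0;\ \xi=\lambda\omega_0(x),\ \lambda\neq 0\}\]
is invariant under the local sign ambiguity $\omega_0\mapsto-\omega_0$: the ambiguity merely permutes the two local components $\Sigma^+$ and $\Sigma^-$. Hence $\hat\Sigma$ is a well-defined global closed conic subset of $T^*(X)\setminus 0$. The local representations coming from Theorem~\ref{t:szegomain} on each $X_j$ give ${\rm WF}'(K_{\pi^{(q)}})\bigcap(T^*(X_j)\times T^*(X_j))={\rm diag\,}((\hat\Sigma\bigcap T^*(X_j))\times(\hat\Sigma\bigcap T^*(X_j)))$, and patching over $j$ yields the global identity. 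The Sobolev continuity $\pi^{(q)}:H^s_{\rm loc}\to H^s_{\rm loc}$ follows from the local statements combined with the partition of unity, since the local parametrices $S_j\in L^0_{\frac{1}{2},\frac{1}{2}}$ and the Calderon--Vaillancourt theorem (Proposition~\ref{p:he-calderon}) give boundedness on each patch.

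The local decomposition $K_{\pi^{(q)}}\equiv K_{\pi^{(q)}_+}+K_{\pi^{(q)}_-}$ on $U\times U$ together with all the stated properties of $\phi_\pm$ and $s_\pm$---including (\ref{e:t-final*****}) and $\phi_-=-\ol\phi_+$---are a direct application of the local construction in Sections~8 and 9 on $U$, once $\omega_0$ is fixed on $U$. The one point that needs a brief check---and which I expect to be the main conceptual obstacle---is the consistency of these local decompositions under a change of sign of $\omega_0$ on overlaps: flipping $\omega_0\to-\omega_0$ swaps the roles of $\Sigma^+$ and $\Sigma^-$, hence also swaps $K_{\pi^{(q)}_+}$ and $K_{\pi^{(q)}_-}$ and swaps $\phi_+$ with $-\ol\phi_+=\phi_-$; since the statement is about the sum $K_{\pi^{(q)}_+}+K_{\pi^{(q)}_-}$, which equals the (unambiguous) local kernel of $\pi^{(q)}$, this swap is harmless and the global conclusion is consistent.
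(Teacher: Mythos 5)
Your proposal is correct and takes essentially the same route the paper takes: the paper's proof of Theorem~\ref{t:szegomain-nonorientable} is just the two-line observation preceding it (invoke Lemma~\ref{l:no1} to reduce to $n_-=n_+$, recall closed range, then refer to "the proof of Theorem~\ref{t:szegomain}"), and your write-up supplies the details the paper leaves implicit—most usefully the explicit check that the decomposition $K_{\pi^{(q)}}=K_{\pi^{(q)}_+}+K_{\pi^{(q)}_-}$ is only locally defined but that a sign change $\omega_0\mapsto-\omega_0$ swaps the two summands (and swaps $\phi_+\leftrightarrow\phi_-$) while leaving the sum and the global set $\hat\Sigma$ unchanged, so the statement is consistent across overlaps.
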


\newpage
\addcontentsline{toc}{part}{Part II:\ \ On the singularities of the Bergman projection for $(0, q)$ forms}
\part*{\mbox{} \\ \mbox{} \\ \mbox{} \mbox{} \\ \mbox{} \\ \mbox{} \mbox{} \\ \mbox{} \\ \mbox{}
\quad\quad\quad\quad\quad\quad\quad\quad Part {\rm II\,} \\
\mbox{} \\ \mbox{} \\ \mbox{} \\
\quad\quad On the singularities of the Bergman\\ \quad\quad projection for $(0, q)$ forms}
\mbox{}\newpage


\setcounter{section}{0}
\section{Introduction and statement of the main results}

In this paper, we assume that all manifolds are paracompact. (For the precise definition, see page $156$ of Kelley~\cite{K75}.)
Let $M$ be a relatively compact open subset with $C^\infty$ boundary $\Gamma$ of a
complex manifold $M'$ of dimension $n$ with a smooth Hermitian metric $(\ |\ )$ on its holomorphic tangent bundle. (See (\ref{Be:0712261513}).)
The Hermitian metric induces a Hermitian metric on the bundle of $(0, q)$ forms of $M'$
(see the discussion after (\ref{Be:0712261513}) and section $2$)
and a positive density $(dM')$(see (\ref{Be:0803031608})).
Let $\Box$ be the $\dbar$-Neumann Laplacian on $M$ (see Folland-Kohn~\cite{FK72} or (\ref{Be:0710301300})) and let $\Box^{(q)}$
denote the restriction to $(0, q)$ forms. For $p\in\Gamma$, let $L_p$ be the Levi form of $\Gamma$ at $p$
(see (\ref{Be:0710301253}) or Definition~\ref{Bd:BK-Leviform}). Given $q$, $0\leq q\leq n-1$, the Levi form
is said to satisfy condition $Z(q)$ at $p\in \Gamma$ if it has at least $n-q$ positive or at least $q+1$
negative eigenvalues. When condition $Z(q)$ holds at each point of $\Gamma$, Kohn's $L^2$ estimates give the
hypoellipicity with loss of one dervative for the solutions
of $\Box^{(q)}u=f$. (See \cite{FK72} or Theorem~\ref{Bt:0710301305}.)
The Bergman projection is the
orthogonal projection onto the kernel of $\Box^{(q)}$ in the $L^2$ space. When condition $Z(q)$ fails at some point of $\Gamma$, one is
interested in the Bergman projection on the level of $(0, q)$ forms. When $q=0$ and the Levi form is positive definite,
the existence of the complete asymptotic expansion of the singularities of the
Bergman projection was
obtained by Fefferman~\cite{Fer74} on the diagonal and subsequently by Boutet de Monvel-Sj\"{o}strand~(see \cite{BS76}) in complete generality.
If $q=n-1$ and the Levi form is
negative definite, H\"{o}rmander~\cite{Hor04} obtained the corresponding asymptotics for the Bergman projection in the distribution sense. We have
been influenced by these works.

We now start to formulate the main results. First, we introduce some standard notations. Let $\Omega$ be a
$C^\infty$ manifold equipped with a smooth density of integration.
We let $T(\Omega)$ and $T^*(\Omega)$ denote the tangent bundle of $\Omega$ and the cotangent bundle of $\Omega$ respectively.
The complexified tangent bundle of $\Omega$ and the complexified cotangent bundle of $\Omega$ will be denoted by $\Complex T(\Omega)$
and $\Complex T^*(\Omega)$ respectively. We write $\seq{\ ,}$ to denote the pointwise duality between $T(\Omega)$ and $T^*(\Omega)$.
We extend $\seq{\ ,}$ bilinearly to $\Complex T(\Omega)\times\Complex T^*(\Omega)$. Let $E$ be a $C^\infty$ vector
bundle over $\Omega$. The fiber of $E$ at $x\in\Omega$ will be denoted by $E_x$.
Let $Y\subset\subset\Omega$ be an open set. The spaces of
smooth sections of $E$ over $Y$ and distribution sections of $E$ over $Y$ will be denoted by $C^\infty(Y;\, E)$ and $\mathscr D'(Y;\, E)$ respectively.
Let $\mathscr E'(Y;\, E)$ be the subspace of $\mathscr D'(Y;\, E)$ of sections with compact support in $Y$.
For $s\in\Real$, we let $H^s(Y;\, E)$ denote the Sobolev space
of order $s$ of sections of $E$ over $Y$. Put
\[H^s_{\rm loc\,}(Y;\, E)=\set{u\in\mathscr D'(Y;\, E);\, \varphi u\in H^s(Y;\, E),\;
      \forall\varphi\in C^\infty_0(Y)}\]
and $H^s_{\rm comp\,}(Y;\, E)=H^s_{\rm loc}(Y;\, E)\bigcap\mathscr E'(Y;\, E)$.

Let $F$ be a $C^\infty$ vector bundle over $M'$. Let $C^\infty(\ol M;\, F)$, $\mathscr D'(\ol M;\, F)$, $H^s(\ol M;\, F)$ denote the spaces of
restrictions to $M$ of elements in $C^\infty(M';\, F)$, $\mathscr D'(M';\, F)$ and $H^s(M';\, F)$ respectively. Let
$C^\infty_0(M;\, F)$ be the subspace of $C^\infty(\ol M;\, F)$ of sections with compact support in $M$.

Let $\Lambda^{1,0}T(M')$ and $\Lambda^{0,1}T(M')$ be the holomorphic tangent bundle of $M'$ and the
anti-holomorphic tangent boundle of $M'$ respectively. (See (\ref{Be:0803312107}).)
In local coordinates $z=(z_1,\ldots,z_n)$, we represent the Hermitian metric on $\Lambda^{1,0}T(M')$ by
\begin{align} \label{Be:0712261513}
&(u\ |\ v)=g(u, \ol v),\ \ u, v\in\Lambda^{1,0}T(M'), \nonumber \\
&g=\sum^{n}_{j, k=1}g_{j, k}(z)dz_j\otimes d\ol z_k,
\end{align}
where $g_{j, k}(z)=\ol g_{k, j}(z)\in C^\infty,\ j, k=1,\ldots,n,$ and $\left(g_{j, k}(z)\right)^n_{j, k=1}$ is positive definite at each point.
We extend the Hermitian metric $(\ |\ )$ to $\Complex T(M')$ in a natural way by requiring $\Lambda^{1,0}T(M')$ to be orthogonal to
$\Lambda^{0,1}T(M')$ and satisfy $\ol{(u\ |\ v)}=(\ol u\ |\ \ol v)$, $u, v\in\Lambda^{0, 1}T(M')$.

The Hermitian metric $(\ |\ )$ on $\Complex T(M)$ induces, by duality,
a Hermitian metric on $\Complex T^*(M)$ that we shall also denote by $(\ |\ )$. (See (\ref{Be:CR-Gamma2}).) For $q\in\Pstint$,
let $\Lambda^{0, q}T^*(M')$ be the bundle of $(0, q)$ forms of $M'$. (See (\ref{Be:0710301340}).)
The Hermitian metric $(\ |\ )$ on $\Complex T^*(M')$ induces a Hermitian metric on
$\Lambda^{0, q}T^*(M')$ also denoted by $(\ |\ )$. (See (\ref{Be:0710301345}).)

Let $r\in C^\infty(M')$ be a defining
function of $\Gamma$ such that $r$ is real, $r=0$ on $\Gamma$, $r<0$ on $M$ and $dr\neq0$ near $\Gamma$.
From now on, we take a defining function $r$ so that $\norm{dr}=1$ on $\Gamma$.

The Hermitian metric $(\ |\ )$ on $\Complex T(M')$ induces a Hermitian metric $(\ |\ )$ on $\Complex T(\Gamma)$.
For $z\in\Gamma$, we identify $\Complex T^*_z(\Gamma)$ with the space
\begin{equation} \label{Be:0803111619}
\set{u\in\Complex T^*_z(M');\, (u\ |\ dr)=0}.
\end{equation}

We associate to the Hermitian metric $\sum^{n}_{j, k=1}g_{j, k}(z)dz_j\otimes d\ol z_k$ a real $(1, 1)$ form ( see page $144$ of Kodaira~\cite{K86})
$\omega=i\sum^n_{j, k=1}g_{j, k}dz_j\wedge d\ol z_k$.
Let
\begin{equation} \label{Be:0803031608}
dM'=\frac{\omega^n}{n!}
\end{equation}
be the volume element and let $(\ |\ )_M$ be the inner product on the space $C^\infty(\ol M;\, \Lambda^{0,q}T^*(M'))$
defined by
\begin{equation} \label{Be:0803032229}
(f\ |\ h)_M=\int_M(f\ |\ h)(dM')=\int_M(f\ |\ h)\frac{\omega^n}{n!},\ \ f, h\in C^\infty(\ol M;\, \Lambda^{0,q}T^*(M')).
\end{equation}
Similarly, we take $(d\Gamma)$ as the induced volume form on $\Gamma$
and let $(\ |\ )_\Gamma$ be the inner product on $C^\infty(\Gamma;\, \Lambda^{0,q}T^*(M'))$
defined by
\begin{equation} \label{Be:0710312307}
(f\ |\ g)_\Gamma=\int_\Gamma(f\ |\ g)d\Gamma,\ \ f,g\in C^\infty(\Gamma;\, \Lambda^{0,q}T^*(M')).
\end{equation}

Let $\dbar:C^\infty(M';\, \Lambda^{0,q}T^*(M'))\To C^\infty(M';\, \Lambda^{0,q+1}T^*(M'))$
be the part of the exterior differential operator which maps forms of type $(0,q)$ to forms of
type $(0,q+1)$ and we denote by
$\ol{\pr_f}^*: C^\infty(M';\, \Lambda^{0,q+1}T^*(M'))\To C^\infty(M';\, \Lambda^{0,q}T^*(M'))$
the formal adjoint of $\dbar$. That is
\[(\dbar f\ |\ h)_{M'}=(f\ |\ \ol{\pr_f}^*h)_{M'},\]
$f\in C^\infty_0(M';\, \Lambda^{0,q}T^*(M'))$, $h\in C^\infty(M';\, \Lambda^{0,q+1}T^*(M'))$,
where $(\ |\ )_{M'}$ is defined by
$(g\ |\ k)_{M'}=\int_{M'}(g\ |\ k)(dM')$, $g, k\in C^\infty_0(M';\, \Lambda^{0,q}T^*(M'))$.
We shall also use the notation $\dbar$ for the closure in $L^2$ of the $\dbar$ operator, initially defined on
$C^\infty(\ol M;\, \Lambda^{0,q}T^*(M'))$ and $\dbar^*$  for the Hilbert space adjoint of $\dbar$.

The $\dbar$-Neumann Laplacian on $(0, q)$ forms is then the self-adjoint operator in the space $L^2(M;\, \Lambda^{0,q}T^*(M'))$
(see chapter ${\rm I\,}$ of~\cite{FK72})
\begin{equation} \label{Be:0710301300}
\Box^{(q)}=\ol{\pr}\ \ol{\pr}^*+\dbar^*\dbar.
\end{equation}
We notice that
\begin{align} \label{Be:0710301635}
{\rm Dom\,}\Box^{(q)}&=\{u\in L^2(M;\, \Lambda^{0,q}T^*(M'));\, u\in{\rm Dom\,}\dbar^*\bigcap{\rm Dom\,}\dbar,\nonumber \\
&\quad\dbar^*u\in{\rm Dom\,}\dbar,\ \dbar u\in{\rm Dom\,}\dbar^*\}
\end{align}
and $C^\infty(\ol M;\, \Lambda^{0,q}T^*(M'))\bigcap{\rm Dom\,}\Box^{(q)}$ is dense in ${\rm Dom\,}\Box^{(q)}$ for the norm
\[u\in{\rm Dom\,}\Box^{(q)}\To \norm{u}+\norm{\dbar u}+\norm{\ol{\pr}^*u}.\]
(See also page $14$ of~\cite{FK72}.)

Let
$\Box^{(q)}_f=\dbar\ \ol{\pr_f}^*+\ol{\pr_f}^*\dbar: C^\infty(M';\, \Lambda^{0,q}T^*(M'))\To C^\infty(M';\, \Lambda^{0,q}T^*(M'))$
denote the complex Laplace-Beltrami operator on $(0, q)$ forms and denote by $\sigma_{\Box^{(q)}_f}$ the principal symbol of $\Box^{(q)}_f$.

Let $\frac{\pr}{\pr r}$ be the dual vector of $dr$. That is
$(u\ |\ \frac{\pr}{\pr r})=\seq{u, dr}$,
for all $u\in\Complex T(M')$. Put
\begin{equation} \label{Be:0710301720}
\omega_0=J^t(dr),
\end{equation}
where $J^t$ is the complex structure map for the cotangent bundle.
(See (\ref{Be:0710301715}).)

Let $\Lambda^{1,0}T(\Gamma)$ and $\Lambda^{0,1}T(\Gamma)$ be the holomorphic tangent bundle of $\Gamma$ and
the anti-holomorphic tangent bundle of $\Gamma$ respectively. (See (\ref{Be:0803312225}).)
The Levi form $L_p(Z, \ol W)$, $p\in\Gamma$, $Z$, $W\in\Lambda^{1,0}T_p(\Gamma)$,
is the Hermitian quadratic form on $\Lambda^{1,0}T_p(\Gamma)$ defined as follows:
\begin{equation} \label{Be:0710301253} \begin{split}
&\mbox{For any $Z$, $W\in \Lambda^{1,0}T_p(\Gamma)$, pick $\Td Z$, $\Td W\in
C^\infty(\Gamma;\, \Lambda^{1,0}T(\Gamma))$ that satisfy}  \\
&\mbox{$\Td Z(p)=Z$, $\Td W(p)=W$. Then }
L_p(Z,\ol W)=\frac{1}{2i}\seq{[\Td Z\ ,\ol{\Td W}](p)\ ,
\omega_0(p)}.
\end{split}
\end{equation}
The eigenvalues of the Levi form at $p\in \Gamma$ are the eigenvalues of the Hermitian form $L_p$ with
respect to the inner product $(\ |\ )$ on $\Lambda^{1, 0}T_p(\Gamma)$. If the Levi form is non-degenerate at $p\in \Gamma$,
let $(n_-,n_+)$, $n_-+n_+=n-1$, be the signature of $L_p$. Then $Z(q)$ holds at $p$ if and only if $q\neq n_-$.

We recall the H\"{o}rmander symbol spaces

\begin{defn} \label{Bd:0712101500}
Let $m\in\Real$. Let $U$ be an open set in $M'\times M'$.
\[S^{m}_{1, 0}(U\times]0, \infty[;\, \mathscr L(\Lambda^{0,q}T^*_y(M'), \Lambda^{0,q}T^*_x(M')))\]
is the space of all $a(x, y, t)\in C^\infty(U\times]0, \infty[;\, \mathscr L(\Lambda^{0,q}T^*_y(M'), \Lambda^{0,q}T^*_x(M')))$
such that for all
compact sets $K\subset U$ and all $\alpha\in\Pstint^{2n}$, $\beta\in\Pstint^{2n}$, $\gamma\in\Pstint$, there is a
constant $c>0$ such that
$\abs{\pr^\alpha_x\pr^\beta_y\pr^\gamma_t a(x, y, t)}\leq c(1+\abs{t})^{m-\abs{\gamma}}$,
$(x, y, t)\in K\times]0, \infty[$.
$S^m_{1, 0}$ is called the space of symbols of order $m$ type $(1, 0)$. We write $S^{-\infty}_{1, 0}=\bigcap S^m_{1, 0}$.

Let $S^{m}_{1, 0}(U\bigcap(\ol M\times\ol M)\times]0, \infty[;\, \mathscr L(\Lambda^{0,q}T^*_w(M'), \Lambda^{0,q}T^*_z(M')))$ denote the space of
restrictions to $U\bigcap(M\times M)\times]0, \infty[$ of elements in
\[S^{m}_{1, 0}(U\times]0, \infty[;\, \mathscr L(\Lambda^{0,q}T^*_w(M'), \Lambda^{0,q}T^*_z(M'))).\]
\end{defn}

Let
\[a_j\in S^{m_j}_{1, 0}(U\bigcap(\ol M\times\ol M)\times]0, \infty[;\, \mathscr L(\Lambda^{0,q}T^*_w(M'), \Lambda^{0,q}T^*_z(M'))),\ \ j=0,1,2,\ldots,\]
with $m_j\searrow -\infty$, $j\To \infty$.
Then there exists
\[a\in S^{m_0}_{1, 0}(U\bigcap(\ol M\times\ol M)\times]0, \infty[;\, \mathscr L(\Lambda^{0,q}T^*_w(M'), \Lambda^{0,q}T^*_z(M')))\]
such that
\[a-\sum_{0\leq j<k}a_j\in S^{m_k}_{1, 0}(U\bigcap(\ol M\times\ol M)\times]0, \infty[;\, \mathscr L(\Lambda^{0,q}T^*_w(M'), \Lambda^{0,q}T^*_z(M'))),\]
for every $k\in\Pstint$. (See Proposition~$1.8$ of Grigis-Sj\"{o}strand \cite{GS94} or H\"{o}rmander \cite{Hor71}.)

If $a$ and $a_j$ have the properties above, we write
\[a\sim\sum^\infty_{j=0}a_j\ \ {\rm in\,}\ \
S^{m_0}_{1, 0}(U\bigcap(\ol M\times\ol M)\times[0, \infty[;\, \mathscr L(\Lambda^{0,q}T^*_w(M'), \Lambda^{0,q}T^*_z(M'))).\]

Let
\[\Pi^{(q)}: L^2(M;\, \Lambda^{0,q}T^*(M'))\To {\rm Ker\,}\Box^{(q)}\]
be the Bergman projection, i.e.
the orthogonal projection onto the kernel of $\Box^{(q)}$. Let
$K_{\Pi^{(q)}}(z, w)\in\mathscr D'(M\times M;\, \mathscr L(\Lambda^{0,q}T^*_w(M'),\Lambda^{0,q}T^*_z(M')))$
be the distribution kernel of $\Pi^{(q)}$.

Let $X$ and $Y$ be $C^\infty$ vector bundles over $M'$. Let
\[C, D: C^\infty_0(M;\, X)\To \mathscr D'(M;\, Y)\]
with distribution kernels
$K_C(z, w), K_D(z, w)\in\mathscr D'(M\times M;\, \mathscr L(X_w, Y_z))$.
We write
$C\equiv D$ mod $C^\infty(U\bigcap(\ol M\times\ol M))$
if $K_C(z, w)=K_D(z, w)+F(z, w)$,
where $F(z, w)|_U\in C^\infty(U\bigcap(\ol M\times\ol M);\, \mathscr L(X_w, Y_z))$ and $U$ is an open set in $M'\times M'$.

Given $q$, $0\leq q\leq n-1$. Put
\begin{equation} \label{Be:0803111559}
\Gamma_q=\set{z\in\Gamma;\, Z(q)\ \mbox{fails at}\ z}.
\end{equation}
If the Levi form is non-degenerate at each point of $\Gamma$, then $\Gamma_q$ is a union of connected components of $\Gamma$.

The main result of this work is the following

\begin{thm} \label{Bt:i-BP-Bergmanmain}
Let $M$ be a relatively compact open subset with $C^\infty$ boundary $\Gamma$ of a complex analytic
manifold $M'$ of dimension $n$.
We assume that the Levi form is non-degenerate at each point of\, $\Gamma$. Let $q$, $0\leq q\leq n-1$.
Suppose that $Z(q)$ fails at some point of\, $\Gamma$ and that $Z(q-1)$ and $Z(q+1)$ hold at each point of\, $\Gamma$.
Then
\[K_{\Pi^{(q)}}(z, w)\in
C^\infty(\ol M\times\ol M\setminus{\rm diag\,}(\Gamma_q\times\Gamma_q);\, \mathscr L(\Lambda^{0,q}T^*_w(M'),\Lambda^{0,q}T^*_z(M'))).\]
Moreover, in a neighborhood $U$ of ${\rm diag\,}(\Gamma_q\times\Gamma_q)$, $K_{\Pi^{(q)}}(z, w)$ satisfies
\begin{equation}
K_{\Pi^{(q)}}(z, w)\equiv\int^\infty_0e^{i\phi(z, w)t}b(z, w, t)dt\ \ {\rm mod\,}\ \ C^\infty(U\bigcap(\ol M\times\ol M))
\end{equation}
(for the precise meaning of the oscillatory integral $\int^\infty_0e^{i\phi(z, w)t}b(z, w, t)dt$, see Remark~\ref{Br:0712111922} below)
with
\[b(z, w, t)\in S^{n}_{1, 0}(U\bigcap(\ol M\times\ol M)\times]0, \infty[;\, \mathscr L(\Lambda^{0,q}T^*_w(M'), \Lambda^{0,q}T^*_z(M'))),\]
\[b(z, w, t)\sim\sum^\infty_{j=0}b_j(z, w)t^{n-j}\]
in the space $S^{n}_{1, 0}(U\bigcap(\ol M\times\ol M)\times]0, \infty[;\, \mathscr L(\Lambda^{0,q}T^*_w(M'), \Lambda^{0,q}T^*_z(M')))$,
\[b_0(z, z)\neq0,\ z\in\Gamma_q,\]
where
$b_j(z, w)\in C^\infty(U\bigcap(\ol M\times\ol M);\, \mathscr L(\Lambda^{0,q}T^*_w(M'), \Lambda^{0,q}T^*_z(M')))$, $j=0,1,\ldots$,
and
\begin{align}
&\phi(z, w)\in C^\infty(U\bigcap(\ol M\times\ol M)),\ \ {\rm Im\,}\phi\geq0, \label{Be:0711180902} \\
&\phi(z, z)=0,\ \ z\in \Gamma_q,\ \ \phi(z, w)\neq0\ \ \mbox{if}\ \ (z, w)\notin{\rm diag\,}(\Gamma_q\times\Gamma_q), \label{Be:0711180903} \\
&{\rm Im\,}\phi(z, w)>0\ \ \mbox{if}\ \ (z, w)\notin\Gamma\times\Gamma, \label{Be:0711180905} \\
&\phi(z, w)=-\ol\phi(w, z) \label{Be:0711180906}.
\end{align}
For $p\in\Gamma_q$, we have
\begin{align} \label{Be:i0711142325}
&\sigma_{\Box^{(q)}_f}(z, d_z\phi(z, w)) \ \ \mbox{vanishes to infinite order at}\ \ z=p, \nonumber \\
&\quad\mbox{$(z, w)$ is in some neighborhood of\, $(p, p)$ in $M'$}.
\end{align}

For $z=w$, $z\in\Gamma_q$, we have
$d_z\phi=-\omega_0-idr$, $d_w\phi=\omega_0-idr$.

Moreover, we have $\phi(z, w)=\phi_-(z, w)$ if $z, w\in\Gamma_q$,
where $\phi_-(z, w)\in C^\infty(\Gamma_q\times\Gamma_q)$ is the phase appearing in the
description of the Szeg\"{o} projection in part ${\rm I\,}$ (see also Theorem~\ref{Bt:0801090850} below).
More properties of the phase $\phi(z, w)$ will be given in Theorem~\ref{Bt:0711041300}.
\end{thm}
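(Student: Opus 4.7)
The strategy, as signposted in the introduction, is to reduce the Bergman projection to a boundary problem via the Poisson operator and then mimic the analysis of Part I. First I would set up the reduction: for $Z(q\pm 1)$ holding and the Levi form nondegenerate, classical results from Boutet de Monvel and Folland--Kohn give a Poisson operator $P$ solving the Dirichlet problem for $\Box^{(q)}_f$ with elliptic boundary conditions on the orthogonal complement of the $\dbar$-Neumann boundary conditions. Writing any element of $\mathrm{Ker}\,\Box^{(q)}$ as the Poisson extension of its boundary trace (modulo a smoothing operator), one identifies $\Pi^{(q)}$ with $P\circ\widetilde{S}\circ P^*$, up to $C^\infty$ error, where $\widetilde{S}$ is the orthogonal projection onto the boundary traces of elements of $\mathrm{Ker}\,\Box^{(q)}$. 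Since $P$ is a Fourier integral operator with real phase mapping into the interior with explicit symbol calculus, the singularities of $K_{\Pi^{(q)}}$ will be controlled by $\widetilde{S}$; off the diagonal of $\Gamma_q\times\Gamma_q$ the wave front calculus combined with the hypoellipticity of $\Box^{(q)}$ under $Z(q\pm 1)$ will yield the smoothness statement.

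Next I would construct the boundary operator. The natural candidate is a second-order operator $\widehat{\Box}^{(q)}$ on $\Gamma$ built from the tangential Cauchy--Riemann complex and the Dirichlet-to-Neumann operator associated to $\Box^{(q)}_f$, designed so that $\mathrm{Ker}\,\widehat{\Box}^{(q)}$ coincides with the trace space appearing above (modulo smoothing). I would verify that $\widehat{\Box}^{(q)}$ has scalar principal symbol vanishing exactly on the appropriate half $\hat\Sigma$ of the characteristic conormal of $\Gamma$ (the half where the Poisson operator is nontrivial), that $\hat\Sigma$ is symplectic, and that the subprincipal symbol on $\hat\Sigma$ has the same kind of positivity as in Part I except on $\Gamma_q$, where an analogue of the $Y(q)$ condition fails. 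Then I would apply the heat equation method of Menikoff--Sj\"ostrand verbatim as in Part I: solve the complex eikonal equation $\partial_t\psi-i\hat p_0(x,\psi'_x)=O(|\mathrm{Im}\,\psi|^\infty)$, solve the transport equations using the intertwining identities $\dbar_b\widehat{\Box}^{(q)}=\widehat{\Box}^{(q+1)}\dbar_b$ and its adjoint to obtain exponential decay of the amplitudes away from $\Gamma_q$, and take the limit $t\to\infty$ to produce the approximate projector $\widetilde{S}$ as an oscillatory integral with phase $\psi(\infty,x,\eta)$ on $\Gamma$.

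Having obtained $\widetilde{S}$ on the boundary, the next step is composition with the Poisson operator. The kernel of $P$ is a Fourier integral distribution whose real phase $\rho$ parametrizes the flow from a boundary covector into the interior along the characteristic variety of $\Box^{(q)}_f$; the key calculation is the composition $P\circ\widetilde{S}\circ P^*$, which by stationary phase in the fiber variables will produce an oscillatory integral of the form $\int_0^\infty e^{i\phi(z,w)t}b(z,w,t)\,dt$ on $\ol M\times\ol M$. The resulting phase $\phi$ is obtained by adding the Poisson phase contributions to the boundary phase, and its restriction to $\Gamma_q\times\Gamma_q$ equals (up to an equivalence of Melin--Sj\"ostrand phases) the Szeg\"o phase $\phi_-$ of Part I, because the boundary oscillatory integral is the same object up to the cut-off choice that picks out $\hat\Sigma$. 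The normal derivatives $d_z\phi(p,p)=-\omega_0-idr$ and the symmetry $\phi(z,w)=-\ol\phi(w,z)$ follow from the boundary conditions $d_x\phi_-(p,p)=\omega_0$ combined with the explicit form of $\rho$ at $\Gamma$, and the positivity $\mathrm{Im}\,\phi>0$ off $\Gamma\times\Gamma$ reflects the exponential decay of $P$ into the interior. The condition \eqref{Be:i0711142325} expresses the fact that $\Box^{(q)}_f$ annihilates the Poisson extension to infinite order.

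The main obstacle is the construction and analysis of the boundary operator $\widehat{\Box}^{(q)}$: unlike the Kohn Laplacian of Part I, which is intrinsic to a CR manifold, here $\widehat{\Box}^{(q)}$ must be manufactured from the Dirichlet-to-Neumann operator of a non-elliptic boundary problem, and one must check that it admits an intertwining identity with $\dbar_b$ that is good enough to carry the exponential decay argument through. Once that is in place, the rest is bookkeeping: applying the established FIO calculus, identifying the critical points in the composition with $P$, and verifying the phase properties \eqref{Be:0711180902}--\eqref{Be:0711180906} together with the non-vanishing of $b_0$ on the diagonal (which reduces, via the same $S^2\equiv S$ argument as in Part I, to a rank-one projection calculation in the fibers of $\Lambda^{0,q}T^*(M')$).
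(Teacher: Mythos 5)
Your overall strategy matches the paper's: reduce the Bergman projection to a boundary problem via a Poisson operator, apply the Menikoff--Sj\"ostrand heat equation method as in Part I to an artificial boundary Laplacian, and then compose back with the Poisson operator to get the oscillatory integral form. However, there is a genuine gap in your central step, and you flag it yourself without resolving it: the intertwining identity you propose, $\dbar_b\widehat{\Box}^{(q)}=\widehat{\Box}^{(q+1)}\dbar_b$, does \emph{not} hold for a boundary Laplacian built from the Dirichlet-to-Neumann map. The exponential-decay argument of Part I fails without an exact (or smoothing-error) intertwining, so this is not bookkeeping. The paper circumvents this by a careful choice: it defines $\ol{\pr_\beta}=T\gamma\dbar\Td P$, where $\Td P$ is the Poisson operator for a perturbed $\Td\Box^{(q)}_f=\Box^{(q)}_f+K^{(q)}$ (chosen so the Dirichlet problem is invertible), and $T$ is the orthogonal projection onto $\mathrm{Ker}\,(\dbar r)^{\wedge,*}$ taken with respect to the inner product $[u\,|\,v]=(\Td Pu\,|\,\Td Pv)_M$. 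This construction forces $\ol{\pr_\beta}^2=0$ exactly (Lemma 6.1), so $\Box^{(q)}_\beta=\ol{\pr_\beta}^\dagger\ol{\pr_\beta}+\ol{\pr_\beta}\ol{\pr_\beta}^\dagger$ automatically intertwines with $\ol{\pr_\beta}$ and $\ol{\pr_\beta}^\dagger$, and the heat-equation argument of Part I applies verbatim. Without identifying this precise operator and inner product, your outline leaves the linchpin undone.

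Two more minor but real inaccuracies. First, your reduction formula $\Pi^{(q)}\approx P\circ\widetilde{S}\circ P^*$ is missing the normalizing factors: the paper gets $\Pi^{(q)}\equiv\Td PB_-T(\Td P^*\Td P)^{-1}\Td P^*$ mod $C^\infty$, and both the projection $T$ (needed to pass from all of $\Lambda^{0,q}T^*(M')|_\Gamma$ to boundary $(0,q)$ forms, where $B_-$ acts) and $(\Td P^*\Td P)^{-1}$ are essential; your $P^*$ is not a one-sided inverse of $P$. Second, describing $\Td P$ as a Fourier integral operator with real phase is misleading; it is a Poisson operator in Boutet de Monvel's transmission calculus, and the single complex phase $\phi(z,w)$ on $\ol M\times\ol M$ only emerges after composing $\Td P$ with the boundary projector $B_-$ via a WKB extension of the boundary phase $\phi_-$ in the normal direction, solving $q_0(z,\Td\phi'_z)=O(r^\infty)$ (Propositions 8.8 and 8.10 of Part II). The identification of the critical value in a fiber stationary-phase argument is not the mechanism used; the phase extension is direct.
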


\begin{rem} \label{Br:0712111922}
Let $\phi$ and $b(z, w, t)$ be as in Theorem~\ref{Bt:i-BP-Bergmanmain}. Let
$y=(y_1,\ldots,y_{2n-1})$
be local coordinates on $\Gamma$ and extend $y_1,\ldots,y_{2n-1}$ to real smooth functions in some neighborhood of $\Gamma$.
We work with local coordinates
\[w=(y_1,\ldots,y_{2n-1},r)\]
defined on some neighborhood $U$ of $p\in\Gamma_q$.
Let $u\in C^\infty_0(U;\, \Lambda^{0,q}T^*(M'))$. Choose a cut-off function $\chi(t)\in C^\infty(\Real)$
so that $\chi(t)=1$ when $\abs{t}<1$ and $\chi(t)=0$ when $\abs{t}>2$. Set
\[(B_\eps u)(z)=\int\int^\infty_0e^{i\phi(z, w)t}b(z, w, t)\chi(\eps t)u(w)dtdw.\]
Since $d_y\phi\neq0$ where ${\rm Im\,}\phi=0$ (see (\ref{Be:0803271451})),
we can integrate by parts in $y$ and $t$ and obtain
$\lim_{\eps\To0}(B_\eps u)(z)\in C^\infty(\ol M;\, \Lambda^{0, q}T^*(M'))$.
This means that
$B=\lim_{\eps\To0}B_\eps: C^\infty(\ol M;\, \Lambda^{0,q}T^*(M'))\To C^\infty(\ol M;\, \Lambda^{0, q}T^*(M'))$
is continuous. We write $B(z, w)$ to denote the distribution kernel of $B$. Formally,
$B(z, w)=\int^\infty_0e^{i\phi(z, w)t}b(z, w, t)dt$.
\end{rem}

From (\ref{Be:i0711142325}) and Remark $1.6$ of part ${\rm I\,}$ it follows that

\begin{thm} \label{Bt:0711041300}
Under the assumptions of\, Theorem~\ref{Bt:i-BP-Bergmanmain}, let $p\in\Gamma_q$.
We choose local complex analytic coordinates
$z=(z_1,\ldots,z_n)$, $z_j=x_{2j-1}+ix_{2j}$, $j=1,\ldots,n$,
vanishing at $p$ such that the metric on $\Lambda^{1,0}T(M')$ is
$\sum^n_{j=1}dz_j\otimes d\ol z_j$ at $p$
and $r(z)=\sqrt{2}{\rm Im\,}z_n+\sum^{n-1}_{j=1}\lambda_j\abs{z_j}^2+O(\abs{z}^3)$,
where $\lambda_j$, $j=1,\ldots,n-1$, are the eigenvalues of $L_p$. (This is always possible. See Lemma $3.2$ of~\cite{Hor04}.)
We also write $w=(w_1,\ldots,w_n)$, $w_j=y_{2j-1}+iy_{2j}$, $j=1,\ldots,n$.
Then, we can take $\phi(z, w)$ so that
\begin{align} \label{Be:0711180900}
\phi(z, w)&=-\sqrt{2}x_{2n-1}+\sqrt{2}y_{2n-1}-ir(z)\Bigr(1+\sum^{2n-1}_{j=1}a_jx_j+\frac{1}{2}a_{2n}x_{2n}\Bigr)\nonumber \\
&\quad-ir(w)\Bigr(1+\sum^{2n-1}_{j=1}\ol a_jy_j+\frac{1}{2}\ol a_{2n}y_{2n}\Bigr)+i\sum^{n-1}_{j=1}\abs{\lambda_j}\abs{z_j-w_j}^2 \nonumber \\
&\quad+\sum^{n-1}_{j=1}i\lambda_j(\ol z_jw_j-z_j\ol w_j)+O\Bigr(\abs{(z, w)}^3\Bigr)
\end{align}
in some neighborhood of\, $(p, p)$ in $M'\times M'$, where
$a_j=\frac{1}{2}\frac{\pr\sigma_{\Box^{(q)}_f}}{\pr x_j}(p, -\omega_0(p)-idr(p))$, $j=1,\ldots,2n$.
\end{thm}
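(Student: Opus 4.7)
The plan is to pin down all second-order Taylor coefficients of $\phi$ at $(p,p)$, since (\ref{Be:0711180900}) is claimed only modulo $O(|(z,w)|^3)$. Three independent inputs are available: the boundary data $\phi(p,p)=0$, $d_z\phi|_{z=w=p}=-\omega_0(p)-idr(p)$ and $d_w\phi|_{z=w=p}=\omega_0(p)-idr(p)$ furnished by Theorem~\ref{Bt:i-BP-Bergmanmain}; the identification $\phi|_{\Gamma_q\times\Gamma_q}=\phi_-$, which connects the tangential part of $\phi$ to the Szeg\"o phase of Part~I; and the infinite-order vanishing of $\sigma_{\Box_f^{(q)}}(z,d_z\phi(z,w))$ at $z=p$ from (\ref{Be:i0711142325}). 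The symmetry (\ref{Be:0711180906}) will then couple the $r(z)$- and $r(w)$-coefficients by complex conjugation.

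Fixing the complex coordinates of the statement, we have $dr(p)=\sqrt{2}\,dx_{2n}$ and hence $\omega_0(p)=J^t(dr)(p)=\sqrt{2}\,dx_{2n-1}$, while the Hermitian metric is Euclidean at~$p$. The value and first differentials of $\phi$ at $(p,p)$ then force the constant and linear Taylor terms to be $-\sqrt{2}x_{2n-1}+\sqrt{2}y_{2n-1}-ir(z)-ir(w)$, after recognising that $-idr(p)=-i\sqrt{2}\,dx_{2n}$ is precisely the linear part of $-ir(z)$ in these coordinates. Next I would extract the purely tangential second-order terms of $\phi$ at $(p,p)$ --- those not carrying a factor $r(z)$ or $r(w)$ --- using that $\phi|_{\Gamma_q\times\Gamma_q}=\phi_-=-\overline{\phi_+}$. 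Theorem~1.5 and Remark~1.6 of Part~I yield the explicit second-order expansion of $\phi_+$ along $\Gamma\times\Gamma$ in an analogous coordinate system, and substituting the local parametrization $x_{2n}=-\tfrac{1}{\sqrt 2}\sum\lambda_j|z_j|^2+O(|z|^3)$ of $\Gamma$ near $p$ produces exactly the tangential quadratic piece $i\sum_j|\lambda_j|\,|z_j-w_j|^2+\sum_j i\lambda_j(\bar z_jw_j-z_j\bar w_j)$ appearing in (\ref{Be:0711180900}).

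It remains to pin down the quadratic terms containing $r(z)$, which by (\ref{Be:0711180906}) also determine those containing $r(w)$. For this I would differentiate the eikonal condition $\sigma_{\Box_f^{(q)}}(z,d_z\phi(z,w))\equiv 0$ modulo $|z-p|^\infty$ once in $z$ at $w=p$. Writing $\xi_0:=-\omega_0(p)-idr(p)$, one first checks that $\sigma_{\Box_f^{(q)}}(p,\xi_0)=0$ since its dual-metric norm squared equals $|\omega_0|^2-|dr|^2+2i(\omega_0|dr)=0$. The chain rule then yields
\[\frac{\partial\sigma_{\Box_f^{(q)}}}{\partial x_j}(p,\xi_0)+\sum_{k=1}^{2n}\frac{\partial\sigma_{\Box_f^{(q)}}}{\partial\xi_k}(p,\xi_0)\,\frac{\partial^2\phi}{\partial x_j\,\partial x_k}(p,p)=0,\qquad j=1,\ldots,2n.\]
The tangential components of the Hessian of $\phi$ at $(p,p)$ are already known from the previous step, so this linear system determines the remaining mixed $(x_j,x_{2n})$ components. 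Since the metric is Euclidean at~$p$, $\partial\sigma_{\Box_f^{(q)}}/\partial\xi(p,\xi_0)$ is a real multiple of $\xi_0$, and solving produces the announced coefficients $a_j=\tfrac12\,\partial\sigma_{\Box_f^{(q)}}/\partial x_j(p,\xi_0)$; the extra factor $\tfrac12$ in front of $a_{2n}x_{2n}$ in (\ref{Be:0711180900}) reflects that $r$ and $x_{2n}$ are linearly dependent at~$p$ (with ratio $\sqrt 2$), so that the $x_{2n}r(z)$ contribution must be symmetrised to avoid double counting.

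The hard part will be the linear-algebra bookkeeping in the last step: one must track carefully which linear combinations of second partials of $\phi$ are actually constrained by the eikonal condition, respect the non-independence of $r$ and $x_{2n}$, and impose the symmetry (\ref{Be:0711180906}) that relates $r(z)$- and $r(w)$-type coefficients through complex conjugation. Once this is settled, assembling the contributions of the three steps yields (\ref{Be:0711180900}) and completes the proof.
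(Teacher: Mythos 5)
The proposal is correct and follows essentially the paper's route: the paper's own justification is simply ``from (\ref{Be:i0711142325}) and Remark 1.6 of part I it follows that,'' and you fill in exactly those two inputs together with the boundary data of Theorem~\ref{Bt:i-BP-Bergmanmain} and the reality constraint (\ref{Be:0711180906}). Your decomposition into (i) linear terms from $\phi(p,p)=0$ and $d_z\phi|_{z=w=p}=-\omega_0-idr$, (ii) tangential Hessian from $\phi|_{\Gamma_q\times\Gamma_q}=\phi_-$ via Theorem 1.5/Remark 1.6 of Part I, (iii) normal Hessian from differentiating the eikonal condition, and (iv) $w$-side terms by conjugation-symmetry, is the intended argument. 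Two details worth spelling out when you actually carry out the bookkeeping: first, when you import the tangential Hessian from Part~I you must check that the coefficients $c_j$ appearing in (\ref{e:i-t-final*****}) vanish in the complex analytic coordinates of the present statement (they do, because $r=\sqrt 2\,\textup{Im}\,z_n+\sum\lambda_j|z_j|^2+O^3$ forces the CR frame $U_j=\partial_{z_j}-\bigl(\partial_{z_j}r/\partial_{z_n}r\bigr)\partial_{z_n}$ to have no linear $x_{2n-1}$-coefficient); and second, the relation between the \emph{intrinsic} Hessian of $\phi_-$ on $\Gamma\times\Gamma$ and the \emph{ambient} Hessian $\partial^2\phi/\partial x_j\partial x_k(p,p)$ entering the eikonal system involves the correction $\bigl(\partial\phi/\partial x_{2n}\bigr)(p,p)\cdot\partial^2 x_{2n}/\partial x_j\partial x_k$ coming from the constraint $r=0$; since $\partial\phi/\partial x_{2n}(p,p)=-i\sqrt 2$ is fixed by step (i) and $\partial^2 x_{2n}/\partial x_j\partial x_k$ is fixed by the normal form of $r$, this correction is computable, but your phrase ``substituting the local parametrization'' should be interpreted carefully to keep the two kinds of second derivative distinct.
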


We have the following corollary of Theorem~\ref{Bt:i-BP-Bergmanmain}

\begin{cor} \label{Bi-c:co1}
Under the assumptions of\, Theorem~\ref{Bt:i-BP-Bergmanmain} and let $U$ be a small neighborhood of\, ${\rm diag\,}(\Gamma_q\times\Gamma_q)$.
Then there exist smooth functions
$F, G\in C^\infty(U\bigcap(\ol M\times\ol M));\,
\mathscr L(\Lambda^{0,q}T^*_w(M'), \Lambda^{0,q}T^*_z(M')))$
such that
\[K_{\Pi^{(q)}}=F(-i(\phi(z, w)+i0))^{-n-1}+G\log(-i(\phi(z, w)+i0)).\]
Moreover, we have
\begin{align} \label{Bi-e:000}
&F=\sum^{n}_{j=0}(n-j)!b_j(z, w)(-i\phi(z, w))^j+f(z, w)(\phi(z, w))^{n+1},\nonumber \\
&G\equiv\sum^\infty_{j=0}\frac{(-1)^{j+1}}{j!}b_{n+j+1}(z, w)(-i\phi(z, w))^j\ \ {\rm mod\,}\ \ C^\infty(U\bigcap(\ol M\times\ol M))
\end{align}
where
$f(z, w)\in C^\infty(U\bigcap(\ol M\times\ol M);\, \mathscr L(\Lambda^{0,q}T^*_w(M'), \Lambda^{0,q}T^*_z(M')))$.
\end{cor}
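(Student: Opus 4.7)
The plan is to unfold the oscillatory integral representation provided by Theorem~\ref{Bt:i-BP-Bergmanmain} and evaluate it term by term using the classical identities already invoked in formula~(0) of Section~9 of Part I, namely
\[
\int_0^\infty e^{-tx} t^m\,dt=
\begin{cases} m!\,x^{-m-1}, & m\in\integer,\ m\geq 0,\\
\dfrac{(-1)^m}{(-m-1)!}\,x^{-m-1}\Bigl(\log x + c - \sum_{l=1}^{-m-1}\tfrac{1}{l}\Bigr), & m\in\integer,\ m<0,
\end{cases}
\]
applied with $x=-i(\phi(z,w)+i0)$. Starting from $K_{\Pi^{(q)}}(z,w)\equiv\int_0^\infty e^{i\phi t}b(z,w,t)\,dt$ modulo $C^\infty(U\cap(\ol M\times\ol M))$ with $b\sim\sum_{j=0}^\infty b_j(z,w)t^{n-j}$ in $S^n_{1,0}$, the strategy is to split the expansion at $j=n$ and recognize the two resulting contributions as the $F(-i(\phi+i0))^{-n-1}$ and $G\log(-i(\phi+i0))$ pieces.

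Concretely, for each $0\leq j\leq n$ the first identity yields $(n-j)!\,b_j(z,w)(-i(\phi+i0))^{-(n-j)-1}=(n-j)!\,b_j(z,w)(-i\phi)^j\cdot(-i(\phi+i0))^{-n-1}$; the sum over $j=0,\ldots,n$ is precisely the polynomial part of $F$ in (\ref{Bi-e:000}). For $j\geq n+1$, set $k=j-n-1\geq 0$; the second identity gives
\[
\int_0^\infty e^{i\phi t}b_{n+k+1}(z,w)t^{-k-1}\,dt=\frac{(-1)^{k+1}}{k!}b_{n+k+1}(-i\phi)^k\,\log(-i(\phi+i0))+R_k(z,w),
\]
where $R_k(z,w)=\frac{(-1)^{k+1}}{k!}b_{n+k+1}(-i\phi)^k(c-\sum_{l=1}^k 1/l)$ is smooth on $U\cap(\ol M\times\ol M)$. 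The log-terms collect into $G\log(-i(\phi+i0))$ with $G$ having the claimed asymptotic expansion. Each smooth $R_k$ can be reabsorbed into the $f(z,w)\phi^{n+1}$ summand of $F$: for any smooth $h(z,w)$, the distribution $i^{n+1}h\,\phi^{n+1}\cdot(-i(\phi+i0))^{-n-1}$ equals $h$, so one can solve backwards for $f$ so that its contribution accounts for $\sum_k R_k$ plus any smooth remainder from truncating $b$.

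To make the formal term-by-term computation rigorous, I would carry out a Borel-type summation in two stages. First, since the series $\sum_{k=0}^\infty \frac{(-1)^{k+1}}{k!}b_{n+k+1}(z,w)(-i\phi)^k$ may diverge, I construct by the standard Borel procedure a genuine $G\in C^\infty(U\cap(\ol M\times\ol M);\mathscr L(\cdots))$ whose Taylor expansion on $\set{\phi=0}$ matches this series; the symbol $\equiv$ then records equality modulo $C^\infty$. Second, for the tail of the expansion of $b$, truncating at order $N$ leaves a remainder $r_N\in S^{n-N-1}_{1,0}$, and $\int_0^\infty e^{i\phi t}r_N(z,w,t)\,dt$ is of class $C^k$ on $U\cap(\ol M\times\ol M)$ for $N$ large, by integration by parts in $t$ exploiting ${\rm Im\,}\phi\geq 0$ and the non-vanishing of $d\phi$ where ${\rm Im\,}\phi=0$ (precisely as in Remark~\ref{Br:0712111922}); letting $N\to\infty$ makes the total remainder smooth and absorbable into $f\phi^{n+1}$ and into the $C^\infty$ ambiguity in $G$. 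The main obstacle is the careful bookkeeping ensuring uniform smoothness up to the boundary $\Gamma$, rather than merely on the open set $M\times M$; since $\phi$ and the symbol $b$ are defined and estimated on $U\cap(\ol M\times\ol M)$ by hypothesis, every estimate in the Borel summation and remainder analysis propagates to $\Gamma$, yielding the stated identity modulo $C^\infty(U\cap(\ol M\times\ol M))$.
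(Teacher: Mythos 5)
Your proof is correct and follows exactly the route the paper intends: the paper gives no explicit proof of this corollary but points to the Laplace-transform identities (\ref{e:0}) and the regularization (\ref{e:0809061310}) in Part I, Section 9, which are precisely what you unfold. Your accounting of the constant terms $R_k$ coming from the $m<0$ identity, their absorption into the $f\phi^{n+1}$ term (using $h\phi^{n+1}(-i(\phi+i0))^{-n-1}=i^{n+1}h$), and the Borel construction of $G$ with prescribed Taylor expansion at $\{\phi=0\}$ together with the $C^k$-regularity of the truncation remainder up to $\Gamma$, are all sound and are the right way to make the formal manipulations rigorous.
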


If $w\in\Lambda^{0,1}T^*_z(M')$, let $w^{\wedge, *}: \Lambda^{0,q+1}T^*_z(M')\To \Lambda^{0,q}T^*_z(M')$
be the adjoint of left exterior multiplication
$w^\wedge: \Lambda^{0,q}T^*_z(M')\To \Lambda^{0,q+1}T^*_z(M')$.
That is,
\begin{equation} \label{Be:0710312317}
(w^\wedge u\ |\ v)=(u\ |\ w^{\wedge, *}v),
\end{equation}
for all $u\in\Lambda^{0,q}T^*_z(M')$, $v\in\Lambda^{0,q+1}T^*_z(M')$.
Notice that $w^{\wedge, *}$ depends anti-linearly on $w$.

Let $\Lambda^{0,1}T^*(\Gamma)$ be the bundle of boundary $(0, 1)$ forms. (See (\ref{Be:0803312226}) and (\ref{Be:0801241459}).)

\begin{prop} \label{Bp:i-BP-leading}
Under the assumptions of\, Theorem~\ref{Bt:i-BP-Bergmanmain}, let $p\in\Gamma_q$, $q=n_-$. Let
$U_1(z),\ldots,U_{n-1}(z)$
be an orthonormal frame of $\Lambda^{1,0}T_z(\Gamma)$, $z\in\Gamma$, for which
the Levi form is diagonalized at $p$. Let $e_j(z)$, $j=1,\ldots,n-1$
denote the basis of $\Lambda^{0,1}T^*_z(\Gamma)$, $z\in\Gamma$, which is dual to $\ol U_j(z)$, $j=1,\ldots,n-1$. Let
$\lambda_j(z)$, $j=1,\ldots,n-1$ be the eigenvalues of the Levi form $L_z$, $z\in\Gamma$. We assume that
$\lambda_j(p)<0$ if $1\leq j\leq n_-$. Then
\begin{equation} \label{Be:0711171545}
F(p, p)=n!\abs{\lambda_1(p)}\cdots
\abs{\lambda_{n-1}(p)}\pi^{-n}
2\Bigr(\prod_{j=1}^{j=n_-}e_j(p)^\wedge e_j^{\wedge, *}(p)\Bigr)\circ(\dbar r(p))^{\wedge, *}(\dbar r(p))^\wedge,
\end{equation}
where $F$ is as in Corollary~\ref{Bi-c:co1}.
\end{prop}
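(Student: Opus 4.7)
The plan is to determine $b_0(p,p)=F(p,p)/n!$ (using $\phi(p,p)=0$ in~(\ref{Bi-e:000})) by combining three structural properties of $\Pi^{(q)}$: self-adjointness $(\Pi^{(q)})^*\equiv\Pi^{(q)}$, idempotence $\Pi^{(q)}\circ\Pi^{(q)}\equiv\Pi^{(q)}$, and the equation $\Box^{(q)}\Pi^{(q)}\equiv0$ together with the $\dbar$-Neumann boundary conditions inherent to ${\rm Dom\,}\Box^{(q)}$. This parallels the computation of the leading Szeg\"{o} coefficient in Part~I (Proposition~\ref{p:leading1}); since $\phi|_{\Gamma_q\times\Gamma_q}=\phi_-$ by Theorem~\ref{Bt:i-BP-Bergmanmain}, a substantial amount of the tangential algebra reduces directly to Part~I.

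First, self-adjointness together with $\phi(z,w)=-\ol\phi(w,z)$ and (\ref{Bi-e:000}) yields $b_0(z,w)^*=b_0(w,z)$ on ${\rm diag\,}(\Gamma_q\times\Gamma_q)$, so $b_0(p,p)$ is Hermitian. Next, I use idempotence:
\begin{equation*}
K_{\Pi^{(q)}}\circ K_{\Pi^{(q)}}(p,p)\equiv\int_0^{\infty}\!\!\int_0^{\infty}\!\!\int_M e^{it\phi(p,v)+is\phi(v,p)}\,b(p,v,t)\,b(v,p,s)\,dM'(v)\,ds\,dt.
\end{equation*}
Substituting $s=t\sigma$ and analysing $\Phi(v,\sigma):=\phi(p,v)+\sigma\phi(v,p)$, the only zero of ${\rm Im\,}\Phi$ occurs at $v=p$, $\sigma=1$, $v\in\Gamma_q$; and since $\pr_v{\rm Im\,}\Phi|_{v=p,\sigma=1}=-2\,dr(p)\neq0$, this is a \emph{boundary} minimum rather than an interior critical point. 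Decomposing $v=(v',r)$ with $v'$ near $p$ on $\Gamma$ and $r\leq 0$, the transverse $r$-integral is, to leading order, $\int_{-\infty}^{0}e^{-2tr}\,dr=1/(2t)$ after a complex-contour deformation in the spirit of Melin--Sj\"{o}strand; the remaining $(v',\sigma)$-integration is a $2n$-dimensional stationary phase whose tangential Hessian is exactly the one appearing in the Szeg\"{o} calculation, with determinant $2^{4n-3}|\lambda_1|^2\cdots|\lambda_{n-1}|^2$ as in (\ref{e:070404-1}). Tracking powers of $t$ (amplitudes contribute $t^{2n}$, the substitution $s=t\sigma$ gives $t$, the tangential stationary phase gives $t^{-n}$, and the transverse integral gives $t^{-1}$) and matching the leading $t^n$-coefficient against $K_{\Pi^{(q)}}$ itself produces
\begin{equation*}
\alpha_n\,b_0(p,p)\circ b_0(p,p)=b_0(p,p),\qquad \alpha_n=\pi^n\bigl(|\lambda_1(p)|\cdots|\lambda_{n-1}(p)|\bigr)^{-1}.
\end{equation*}
Combined with the Hermitian property of $b_0(p,p)$, this means $\alpha_n^{-1}b_0(p,p)$ is the orthogonal projection onto $V:={\rm Range\,}b_0(p,p)\subset\Lambda^{0,q}T^*_p(M')$.

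To identify $V$, I use $\Box^{(q)}\Pi^{(q)}\equiv 0$. Substituting the oscillatory integral and reading the top symbol equation yields $\sigma_{\Box^{(q)}_f}(p,d_z\phi(p,p))\,b_0(p,p)=0$, where $d_z\phi(p,p)=-\omega_0(p)-i\,dr(p)$. The membership $\Pi^{(q)}u\in{\rm Dom\,}\dbar^*$ forces the first $\dbar$-Neumann boundary condition $(\ol{\pr}r(p))^{\wedge,*}b_0(p,p)=0$, so $V\subset\ker(\ol{\pr}r(p))^{\wedge,*}$. Combined with the tangential null-space analysis (which reproduces the one in the proof of Proposition~\ref{p:leading1} verbatim because $\phi|_{\Gamma_q\times\Gamma_q}=\phi_-$), $V$ is the one-dimensional image of $\prod_{j=1}^{n_-}e_j(p)^\wedge e_j(p)^{\wedge,*}$ restricted to $\ker(\ol{\pr}r(p))^{\wedge,*}$. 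Since the orthogonal projection onto $\ker(\ol{\pr}r(p))^{\wedge,*}$ equals $\norm{\ol{\pr}r(p)}^{-2}(\ol{\pr}r(p))^{\wedge,*}(\ol{\pr}r(p))^\wedge$ with $\norm{\ol{\pr}r(p)}^2=\tfrac12$, the projection onto $V$ is
\begin{equation*}
2\Bigl(\prod_{j=1}^{n_-}e_j(p)^\wedge e_j(p)^{\wedge,*}\Bigr)\circ(\ol{\pr}r(p))^{\wedge,*}(\ol{\pr}r(p))^\wedge.
\end{equation*}
Substituting into $b_0(p,p)=\alpha_n^{-1}\cdot(\text{projection onto }V)$ and multiplying by $n!$ produces (\ref{Be:0711171545}).

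The main obstacle is the boundary stationary phase in the second paragraph: controlling the transverse $r$-integration requires a careful complex contour deformation compatible with ${\rm Im\,}\phi\geq 0$, and the various pre-factors (Hessian determinant, volume element $dM'$ versus induced boundary measure, and the constant $-2i=\pr_r\phi(p,v)|_{v=p}$) must all be tracked with precision through the Melin--Sj\"{o}strand framework so that $\alpha_n$ emerges with the stated value. Once this is pinned down, the identification of $V$ and the final arithmetic follow from Part~I and the algebra of the boundary-symbol operators $(\ol{\pr}r)^{\wedge,*}$ and $e_j^\wedge$.
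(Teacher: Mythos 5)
Your proposal takes a genuinely different route from the paper, modeled directly on the Szeg\"o argument of Part~I (Proposition~\ref{p:leading1}): extract a projection identity for $b_0(p,p)$ from the idempotence $\Pi^{(q)}\circ\Pi^{(q)}\equiv\Pi^{(q)}$ via a boundary stationary-phase analysis, then identify the range using the $\dbar$-Neumann boundary condition and a tangential null-space argument. The paper does something much more economical. It has already established, in Proposition~\ref{Bp:0801131851}, the operator identity $\Pi^{(q)}\equiv\Td PB_-T(\Td P^*\Td P)^{-1}\Td P^*$ modulo smoothing, and then it simply evaluates the principal-symbol composition on the diagonal: since $\sigma_{(\Td P^*\Td P)^{-1}}=\sigma_{2\sqrt{-\La_\Gamma}}$, $\sigma_T=2(\dbar r)^{\wedge,*}(\dbar r)^\wedge$, and $(\phi_-)'_y(x,x)=\omega_0(x)$ has unit length, this yields the clean relation $a_0(x,x)=4\,b_0(x,x)\,(\dbar r(x))^{\wedge,*}(\dbar r(x))^\wedge$ of~(\ref{Be:0711131222}), where $b_0$ is the Szeg\"o leading symbol from Proposition~\ref{Bp:i-leading1}. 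Multiplying by $n!$ then gives~(\ref{Be:0711171545}). This sidesteps entirely the two steps that carry the real weight in your sketch.

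Those two steps constitute genuine gaps in your proposal. First, the boundary stationary phase is only power-counted, not carried out: the critical set lies on the corner $v=p\in\Gamma$, $r=0$, with transverse gradient $-2i\,dr$, so the $r$-integral is a Laplace-type integral on a half-line and must be matched against the tangential Melin--Sj\"{o}strand expansion with a consistent choice of volume density $dM'(v)$ relative to the induced surface measure; you acknowledge this but do not pin down how the factor $\alpha_n=\pi^n/(|\lambda_1|\cdots|\lambda_{n-1}|)$ emerges, and a lost factor of two at this stage would propagate directly into~(\ref{Be:0711171545}). Second, and more seriously, the claim that the tangential null-space analysis ``reproduces the one in the proof of Proposition~\ref{p:leading1} verbatim because $\phi|_{\Gamma_q\times\Gamma_q}=\phi_-$'' is not justified. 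In Part~I that analysis rests on the transport equations for the Kohn Laplacian $\Box^{(q)}_b$ and the fact that as the heat parameter $t\to\infty$ the leading symbol is driven into $\mathscr N_p(p^s_0+\frac12\Td{\rm tr\,}F)$. In the Bergman setting the interior transport equations are those of $\Box^{(q)}_f$, a different operator, and the reduction to a boundary transport problem is exactly what the Poisson-operator machinery ($\Td P$, $\Box^{(q)}_\beta$, $\ol{\pr_\beta}$) is built to supply. Coincidence of phases does not by itself imply coincidence of the symbol constraints, and without an argument replacing the paper's composition identity you have no characterization of the one-dimensional range $V$ beyond idempotence and Hermiticity, which admit projections onto any subspace of $\ker(\dbar r(p))^{\wedge,*}$. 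If you wish to pursue your route, you should derive the tangential constraint from $\Box^{(q)}\Pi^{(q)}\equiv0$ together with the asymptotics of $\Box^{(q)}_f(e^{i\phi t}b)$ along $\Gamma$, which will inevitably reconstruct a portion of the paper's Poisson-operator reduction.
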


In the rest of this section, we outline the proof of Theorem~\ref{Bt:i-BP-Bergmanmain}. We assume that the Levi form is non-degenerate at each point of
$\Gamma$. We pause and recall a general fact of distribution theory.
(See H\"{o}rmander \cite{Hor03}.) Let $E$, $F$ be $C^\infty$ vector
bundles over $C^\infty$ manifolds $G$ and $H$ respectively. We take smooth densities of integration on $G$ and $H$ respectively.
If $A: C^\infty_0(G;\, E)\To \mathscr D'(H;\, F)$ is continuous,  we write $K_A(x, y)$ or $A(x, y)$ to denote the distribution kernel of $A$.
Then the following two statements are equivalent
\begin{enumerate}
\item $A$ is continuous: $\mathscr E'(G;\, E)\To C^\infty(H;\, F)$,
\item $K_A\in C^\infty(H\times G;\, \mathscr L(E_y, F_x))$.
\end{enumerate}
If $A$ satisfies (a) or (b), we say that $A$ is smoothing. Let
$B: C^\infty_0(G;\, E)\To \mathscr D'(H;\, F)$. We write $A\equiv B$ if $A-B$ is a smoothing operator.

Let $\gamma$ denote the operator of restriction to the boundary $\Gamma$. Let us consider the map
\begin{align} \label{Be:08090715350}
F^{(q)}:H^2(\ol M;\, \Lambda^{0,q}T^*(M' ))&\To H^0(\ol M;\, \Lambda^{0,q}T^*(M' ))\oplus H^{\frac{3}{2}}(\Gamma;\, \Lambda^{0,q}T^*(M')), \nonumber \\
&u\To (\Box^{(q)}_fu, \gamma u).
\end{align}
It is well-known that ${\rm dim\,}{\rm Ker\,} F^{(q)}<\infty$ and
${\rm Ker\,}F^{(q)}\subset C^\infty(\ol M;\,\Lambda^{0,q}T^*(M' ))$. Let
\begin{equation} \label{Be:0809071557}
K^{(q)}:H^2(\ol M;\, \Lambda^{0,q}T^*(M' ))\To{\rm Ker\,}F^{(q)}
\end{equation}
be the orthogonal projection with respect to $(\ |\ )_M$. Then,
\begin{equation} \label{Be:0809071607}
K^{(q)}\in C^\infty(\ol M\times\ol M;\, \mathscr L(\Lambda^{0,q}T^*(M'),\Lambda^{0,q}T^*(M'))).
\end{equation}
Put
\begin{equation} \label{Be:0809071603}
\Td\Box^{(q)}_f=\Box^{(q)}_f+K^{(q)}
\end{equation}
and consider the map
\begin{align} \label{Be:0809071609}
\Td F^{(q)}:H^2(\ol M;\, \Lambda^{0,q}T^*(M' ))&\To H^0(\ol M;\, \Lambda^{0,q}T^*(M' ))\oplus H^{\frac{3}{2}}(\Gamma;\, \Lambda^{0,q}T^*(M')), \nonumber \\
&u\To (\Td\Box^{(q)}_fu, \gamma u).
\end{align}
We can check that $\Td F^{(q)}$ is injective (see section $4$). Let
\begin{equation} \label{Be:0711021200}
\Td P:C^\infty(\Gamma;\, \Lambda^{0,q}T^*(M'))\To C^\infty(\ol M;\, \Lambda^{0,q}T^*(M'))
\end{equation}
be the Poisson operator for $\Td\Box^{(q)}_f$ which is well-defined since (\ref{Be:0809071609}) is injective.
It is well-known that $\Td P$ extends continuously
\[\Td P:H^{s}(\Gamma;\, \Lambda^{0,q}T^*(M'))\To H^{s+\frac{1}{2}}(\ol M;\, \Lambda^{0,q}T^*(M')),\ \ \forall\ s\in\Real.\]
(See page $29$ of Boutet de Monvel~\cite{Bou71}.) Let
\[\Td P^*:\mathscr E'(\ol M;\, \Lambda^{0,q}T^*(M'))\To\mathscr D'(\Gamma;\, \Lambda^{0,q}T^*(M'))\]
be the operator defined by
$(\Td P^*u\ |\ v)_\Gamma=(u\ |\ \Td Pv)_M$, $u\in\mathscr E'(\ol M;\, \Lambda^{0,q}T^*(M'))$, $v\in C^\infty(\Gamma;\, \Lambda^{0,q}T^*(M'))$.
It is well-known (see page $30$ of~\cite{Bou71}) that $\Td P^*$ is continuous:
$\Td P^*:L^2(M;\, \Lambda^{0,q}T^*(M'))\To H^{\frac{1}{2}}(\Gamma;\, \Lambda^{0,q}T^*(M'))$
and
\[\Td P^*:C^\infty(\ol M;\, \Lambda^{0,q}T^*(M'))\To C^\infty(\Gamma;\, \Lambda^{0,q}T^*(M')).\]

We use the inner product $[\ |\ ]$ on $H^{-\frac{1}{2}}(\Gamma;\, \Lambda^{0,q}T^*(M'))$ defined as follows:
\[[u\ |\ v]=(\Td Pu\ |\ \Td Pv)_M,\]
where $u$, $v\in H^{-\frac{1}{2}}(\Gamma;\, \Lambda^{0,q}T^*(M'))$. We consider $(\dbar r)^{\wedge, *}$ as an operator
\[(\dbar r)^{\wedge, *}:H^{-\frac{1}{2}}(\Gamma;\, \Lambda^{0,q}T^*(M'))\To H^{-\frac{1}{2}}(\Gamma;\, \Lambda^{0,q-1}T^*(M')).\]
Note that $(\dbar r)^{\wedge, *}$ is the pointwise adjoint of $\dbar r$ with respect to $(\ |\ )$. Let
\begin{equation} \label{Be:i0712302117}
T: H^{-\frac{1}{2}}(\Gamma;\, \Lambda^{0,q}T^*(M'))\To {\rm Ker\,}(\dbar r)^{\wedge, *}
\end{equation}
be the orthogonal projection onto ${\rm Ker\,}(\dbar r)^{\wedge, *}$ with respect to $[\ |\ ]$. That is,
if $u\in H^{-\frac{1}{2}}(\Gamma;\, \Lambda^{0,q}T^*(M'))$, then
$(\dbar r)^{\wedge, *}Tu=0$ and
$[(I-T)u\ |\ g]=0$, for all $g\in{\rm Ker\,}(\dbar r)^{\wedge, *}$.
In section $4$, we will show that $T$ is a classical pseudodifferential operator of order $0$ with principal symbol
$2(\dbar r)^{\wedge, *}(\dbar r)^\wedge$.
For $q\in\Pstint$, let $\Lambda^{0, q}T^*(\Gamma)$ be the bundle of boundary $(0, q)$ forms. (See (\ref{Be:0801241459}).)
If $u\in C^\infty(\Gamma;\, \Lambda^{0,q}T^*(M'))$, then $u\in{\rm Ker\,}(\dbar r)^{\wedge, *}$ if and only if
$u\in C^\infty(\Gamma;\, \Lambda^{0,q}T^*(\Gamma))$. Put
\begin{equation} \label{Be:i0712302232}
\ol{\pr_\beta}=T\gamma\dbar\Td P: C^\infty(\Gamma;\, \Lambda^{0,q}T^*(\Gamma))\To C^\infty(\Gamma;\, \Lambda^{0,q+1}T^*(\Gamma)).
\end{equation}
$\ol{\pr_\beta}$ is a classical
pseudo\-differential operator of order one from boundary $(0, q)$ forms to boundary $(0, q+1)$ forms. It is easy to see that
$\ol{\pr_\beta}=\ol{\pr_b}$+lower order terms,
where $\ol{\pr_b}$ is the tangential Cauchy-Riemann operator. (See~\cite{FK72} or section $6$.) In section $6$, we will show that
$(\ol{\pr_\beta})^2=0$.
Let
\[\ol{\pr_\beta}^\dagger: C^\infty(\Gamma;\, \Lambda^{0,q+1}T^*(\Gamma))\To C^\infty(\Gamma;\, \Lambda^{0,q}T^*(\Gamma))\]
be the formal adjoint of $\ol{\pr_\beta}$ with respect to $[\ |\ ]$.
$\ol{\pr_\beta}^\dagger$ is a classical
pseudodifferential operator of order one from boundary $(0, q+1)$ forms to boundary $(0, q)$ forms. In section $6$, we will show that
$\ol{\pr_\beta}^\dagger=\gamma\ol{\pr_f}^*\Td P$.

Put
\[\Box^{(q)}_\beta=\ol{\pr_\beta}\ \ol{\pr_\beta}^\dagger+\ol{\pr_\beta}^\dagger\ol{\pr_\beta}:
C^\infty(\Gamma;\, \Lambda^{0,q}T^*(\Gamma))\To C^\infty(\Gamma;\, \Lambda^{0,q}T^*(\Gamma)).\]
For simplicity, we assume that $\Gamma=\Gamma_q$, $\Gamma_q\neq\Gamma_{n-1-q}$. ($\Gamma_q$ is given by (\ref{Be:0803111559}).)
We can repeat the method of part ${\rm I\,}$ (see section $7$) to construct
\[A\in L^{-1}_{\frac{1}{2},
\frac{1}{2}}(\Gamma;\, \Lambda^{0,q}T^*(\Gamma),\Lambda^{0,q}T^*(\Gamma)),\ \  B\in L^{0}_{\frac{1}{2},
\frac{1}{2}}(\Gamma;\, \Lambda^{0,q}T^*(\Gamma),\Lambda^{0,q}T^*(\Gamma))\]
such that $A\Box^{(q)}_\beta+B\equiv B+\Box^{(q)}_\beta A\equiv I$,
$\ol{\pr_\beta} B\equiv0$, $\ol{\pr_\beta}^\dagger B\equiv0$,
and $B\equiv B^\dagger\equiv B^2$,
where $L^m_{\frac{1}{2}, \frac{1}{2}}$ is the space of pseudodifferential operators of order $m$ type $(\frac{1}{2}, \frac{1}{2})$
(see Definition~\ref{Bd:ss-pseudomore}) and $B^\dagger$ is the formal adjoint of $B$ with respect to $[\ |\ ]$. Moreover, $K_{B}(x, y)$ satisfies
$K_{B}(x, y)\equiv\int^{\infty}_{0} e^{i\phi_-(x, y)t}b(x, y, t)dt$,
where $\phi_-(x, y)$ and $b(x, y, t)$ are as in Theorem~\ref{Bt:0801090850}.
In section $8$, we will show that
\[\Pi^{(q)}\equiv\Td PBT(\Td P^*\Td P)^{-1}\Td P^*\ \ {\rm mod\,} C^\infty(\ol M\times\ol M)\]
and
$\Td PBT(\Td P^*\Td P)^{-1}\Td P^*(z, w)\equiv\int^\infty_0e^{i\phi(z, w)t}b(z, w, t)dt$ mod $C^\infty(\ol M\times\ol M)$,
where $\phi(z, w)$ and $b(z, w, t)$ are as in Theorem~\ref{Bt:i-BP-Bergmanmain}.


\section{Terminology and notations, a review}

In this section, we will review some standard terminology in complex geometry.
For more details on the subject, see Kodaira~\cite{K86}.

Let $E$ be a finite dimensional vector space with a complex structure $J$. By definition, a complex structure $J$ is a $\Real$-linear map
$J:E\To E$
that satisfies $J^2=-I$.
Let $\Complex E$ be the complexification of $E$. That is,
$\Complex E=\set{u+iv;\, u, v\in E}$.
Any vector in $\Complex E$ can be written $f=u+iv$
where $u$, $v\in E$ and any $\Real$-linear map between real vector spaces can be extended to a $\Complex$-linear map between the complexifications,
simply by putting $Tf=Tu+iTv$.
In particular, we can extend $J$ to a $\Complex$-linear map
$J:\Complex E\To\Complex E$.
Clearly, it still holds that $J^2=-I$. This implies that we have a decomposition as a direct sum
$\Complex E=\Lambda^{1,0}E\oplus\Lambda^{0,1}E$
where $Ju=iu$ if $u\in\Lambda^{1,0}E$ and
$Ju=-iu$ if $u\in\Lambda^{0,1}E$.

Let us now return to our original situation where $E=T_p(M')$, $p\in M'$. Given holomorphic coordinates
$z_j=x_{2j-1}+ix_{2j}$, $j=1,\ldots,n$,
we get a basis for $T_p(M')$
$\frac{\pr}{\pr x_1},\frac{\pr}{\pr x_2},\ldots,\frac{\pr}{\pr x_{2n-1}},\frac{\pr}{\pr x_{2n}}$.
The complex structure $J$ on $T_p(M')$ is defined by
\begin{equation} \label{Be:0710301713}
 \left\{ \begin{split}
& J(\frac{\displaystyle\partial}{\displaystyle\partial x_{2j-1}})=
     \frac{\displaystyle\partial}{\displaystyle\partial x_{2j}},\ j=1,\ldots,n,   \\
& J(\frac{\displaystyle\partial}{\displaystyle\partial x_{2j}})=
      -\frac{\displaystyle\partial}{\displaystyle\partial x_{2j-1}},\ j=1,\ldots,n. \end{split} \right.
\end{equation}
$J$ does not depend on the choice of holomorphic coordinates.

The complex structure map $J^t: T^*_p(M')\To T^*_p(M')$,
for the cotangent space
is defined as the adjoint of $J$, that is $\seq{Ju\ ,\nu}=\seq{u\ ,J^t\nu}$, $u\in T_p(M')$,
$v\in T^*_p(M')$. We have
\begin{equation} \label{Be:0710301715}
 \left\{ \begin{array}{l}
J^t(dx_{2j-1})=-dx_{2j},\ j=1,\ldots,n,  \\
J^t(dx_{2j})=dx_{2j-1},\ j=1,\ldots,n.
\end{array}\right.
\end{equation}
We can now apply our previous discussion of complex structures on real vector spaces to $T_p(M')$ and $T^*_p(M')$. We then get
decompositions
\begin{align} \label{Be:0803312107}
&\Complex T_p(M')=\Lambda^{1,0}T_p(M')\oplus\Lambda^{0,1}T_p(M'), \nonumber \\
&\Complex T^*_p(M')=\Lambda^{1,0}T^*_p(M')\oplus\Lambda^{0,1}T^*_p(M').
\end{align}
For $u\in\Lambda^{1,0}T_p(M')$, $\nu\in\Lambda^{0,1}T^*_p(M')$,
$-i\seq{u, \nu}=\seq{u, J^t\nu}=\seq{Ju, \nu}=i\seq{u, \nu}$.
Thus, $\seq{u, \nu}=0$.

For $p$, $q\in\Pstint$, the bundle of $(p, q)$ forms of $M'$ is given by
\begin{equation} \label{Be:0710301340}
\Lambda^{p, q}T^*(M')=\Lambda^p(\Lambda^{1,0}T^*(M'))\wedge\Lambda^q(\Lambda^{0,1}T^*(M')).
\end{equation}
That is, the fiber of $\Lambda^{p, q}T^*(M')$ at $z\in M'$ is the
vector space $\Lambda^p(\Lambda^{1,0}T^*_z(M'))\wedge\Lambda^q(\Lambda^{0,1}T^*_z(M'))$ of all finite sums of
$W_1\wedge\cdots\wedge W_p\wedge V_1\wedge\cdots\wedge V_q$,
where
$W_k\in\Lambda^{1, 0}T^*_z(M')$, $k=1,\ldots,p$, $V_j\in\Lambda^{0, 1}T^*_z(M')$, $j=1,\ldots,q$.
Here $\wedge$ denotes the wedge product.

We recall that if $\left(g_{j, k}\right)_{1\leq j,k\leq n}$ is a smooth positive definite Hermitian matrix then
the $(1, 1)$ tensor form $g=\sum^n_{j, k=1}g_{j, k}dz_j\otimes d\ol z_k$ can be viewed as a Hermitian metric $(\ |\ )$ on $\Complex T(M')$
in the following way:
\begin{align*}
&\frac{1}{2}(g(u, \ol v)+\ol{g(v, \ol u)})=(u\ |\ v)=g(u, \ol v),\ \ u, v\in\Lambda^{1,0}T(M'), \\
&(u\ |\ w)=0,\ \ u\in\Lambda^{1, 0}T(M'),\ w\in\Lambda^{0,1}T(M'), \\
&\ol{(u\ |\ v)}=(\ol u\ |\ \ol v),\ u, v\in\Lambda^{0, 1}T(M').
\end{align*}
We can check that $(Ju\ |\ Jv)=(u\ |\ v)$, $u, v\in\Complex T(M')$.
For $t$, $s\in T(M')$, we write
$t=\frac{1}{2}(u+\ol u)$, $s=\frac{1}{2}(v+\ol v)$, $u, v\in\Lambda^{1,0}T(M')$.
Then, $(t\ |\ s)=\frac{1}{4}(u\ |\ v)+\frac{1}{4}\ol{(u\ |\ v)}=\frac{1}{2}{\rm Re\,}(u\ |\ v)$
is real. Thus, the Hermitian metric $g$ induces a $J$-invariant Riemannian metric $(\ |\ )$ on $T(M')$.

The Hermitian metric $(\ |\ )$ on $\Complex T(M)$ induces, by duality,
a Hermitian metric on $\Complex T^*(M)$ that we
shall also denote by $(\ |\ )$ in the following way. For a given point $z\in M'$, let $A$ be the anti-linear map
$A:\Complex T_z(M')\To\Complex T^*_z(M')$
defined by
\begin{equation} \label{Be:CR-Gamma1}
(u\ |\ v)=\seq{u, Av},\ \  u, v\in\Complex T_z(M').
\end{equation}
Since $(\ |\ )$ and $\seq{, }$ are real, $A$ maps $T_z(M')$ to $T^*_z(M')$. A simple computation shows that
$J^tAJ=A$, $JA^{-1}J^t=A^{-1}$.
In particular (since $A$ is anti-linear),
\begin{equation} \label{Be:0712131135}
A\Lambda^{1, 0}T_z(M')=\Lambda^{1, 0}T^*_z(M'),\ A\Lambda^{0, 1}T_z(M')=\Lambda^{0, 1}T^*_z(M').
\end{equation}
For $\omega$, $\mu\in\Complex T^*_z(M')$, we put
\begin{equation} \label{Be:CR-Gamma2}
(\omega\ |\ \mu)=(A^{-1}\mu\ |\ A^{-1}\omega).
\end{equation}
We have
$(\omega\ |\ \mu)=0$ if $\omega\in\Lambda^{1, 0}T^*_z(M')$, $\mu\in\Lambda^{0, 1}T^*_z(M')$.

The Hermitian metric $(\ |\ )$ on $\Lambda^{p,q}T^*(M')$
is defined by
\begin{align} \label{Be:0710301345}
&(w_1\wedge\cdots\wedge w_p\wedge u_{1}\wedge\cdots\wedge u_q\ |\ t_1\wedge\cdots\wedge t_p\wedge
v_{1}\wedge\cdots\wedge v_q) \nonumber \\
&\quad=\det\left((w_{j}\ |\ t_{k})\right)_
{1\leq j,k\leq p}\det\left((u_{j}\ |\ v_{k})\right)_
{1\leq j,k\leq q},\nonumber \\
&\quad u_j, v_k\in \Lambda^{0,1}T^*(M'),\ j, k=1,\ldots,q,\ w_j, t_k\in \Lambda^{1,0}T^*(M'),\ j, k=1,\ldots,p,
\end{align}
and we extend the definition to arbitrary $(p, q)$ forms by sesqui-linearity.

The Hermitian metric $(\ |\ )$ on $\Complex T(M')$ induces a Hermitian metric $(\ |\ )$ on $\Complex T(\Gamma)$.
For $p\in\Gamma$, we have $T_p(\Gamma)=\set{u\in T_p(M');\, \seq{u, dr}=(u\ |\ \frac{\pr}{\pr r})=0}$,
where $\frac{\pr}{\pr r}=A^{-1}dr$. Here $A$ is as in (\ref{Be:CR-Gamma1}).

Put $\mathscr C_p=T_p(\Gamma)\bigcap JT_p(\Gamma)$, $\mathscr C^*_p=T^*_p(\Gamma)\bigcap J^tT^*_p(\Gamma)$
and
\begin{equation} \label{Be:0710301722}
Y=J(\frac{\pr}{\pr r}).
\end{equation}
Here we identify $\Complex T^*_p(\Gamma)$ with the space
\[\set{u\in\Complex T^*_p(M');\, \seq{u, \frac{\pr}{\pr r}}=(u\ |\ dr)=0}.\]
We have
$\mathscr C_p=\set{u\in T_p(\Gamma);\, \seq{u, \omega_0(p)}=0}$, $\mathscr C^*_p=\set{u\in T^*_p(\Gamma);\, \seq{u, Y(p)}=0}$.
($\omega_0$ is given by (\ref{Be:0710301720}).)
Note that ${\rm dim\,}_{\Real}\mathscr C_p={\rm dim\,}_{\Real}\mathscr C^*_p=2n-2$.
As before, we have
$\Complex\mathscr C_p=\Lambda^{1,0}T_p(\Gamma)\oplus\Lambda^{0,1}T_p(\Gamma)$
and
$\Complex\mathscr C^*_p=\Lambda^{1,0}T^*_p(\Gamma)\oplus\Lambda^{0,1}T^*_p(\Gamma)$,
where
\begin{align} \label{Be:0803312225}
&Ju=iu\ \ \mbox{if}\ \ u\in\Lambda^{1,0}T_p(\Gamma),\nonumber \\
&Ju=-iu\ \ \mbox{if}\ \ u\in\Lambda^{0,1}T_p(\Gamma)
\end{align}
and
\begin{align} \label{Be:0803312226}
&J^t\mu=i\mu\ \ \mbox{if}\ \ \mu\in\Lambda^{1,0}T^*_p(\Gamma),\nonumber \\
&J^t\mu=-i\mu\ \ \mbox{if}\ \ \mu\in\Lambda^{0,1}T^*_p(\Gamma).
\end{align}
We have the orthogonal decompositions with respect to $(\ |\ )$
\[\Complex T_p(\Gamma)=\Lambda^{1,0}T_p(\Gamma)\oplus\Lambda^{0,1}T_p(\Gamma)\oplus\set{\lambda Y(p);\, \lambda\in\Complex},\]
\[\Complex T^*_p(\Gamma)=\Lambda^{1,0}T^*_p(\Gamma)\oplus\Lambda^{0,1}T^*_p(\Gamma)\oplus\set{\lambda\omega_0(p);\, \lambda\in\Complex}.\]

We notice that
$J(iY+\frac{\pr}{\pr r})=J\Bigr(iJ(\frac{\pr}{\pr r})+\frac{\pr}{\pr r}\Bigr)=-i(iY+\frac{\pr}{\pr r})$.
Thus, $iY+\frac{\pr}{\pr r}\in\Lambda^{0,1}T(M')$. Near $\Gamma$, put
\begin{equation} \label{Be:0711012320}
T^{*,0,1}_z=\set{u\in\Lambda^{0,1}T^*_z(M');\, (u\ |\ \dbar r(z))=0}
\end{equation}
and
\begin{equation} \label{Be:0804010844}
T^{0,1}_z=\set{u\in\Lambda^{0,1}T_z(M');\, (u\ |\ (iY+\frac{\pr}{\pr r})(z))=0}.
\end{equation}
We have the orthogonal decompositions with respect to $(\ |\ )$
\begin{equation} \label{Be:0804010847}
\Lambda^{0,1}T^*_z(M')=T^{*,0,1}_z\oplus\set{\lambda(\dbar r)(z);\, \lambda\in\Complex},
\end{equation}
\begin{equation} \label{Be:0804010848}
\Lambda^{0,1}T_z(M')=T^{0,1}_z\oplus\set{\lambda(iY+\frac{\pr}{\pr r})(z);\, \lambda\in\Complex}.
\end{equation}
Note that $T^{*,0,1}_z=\Lambda^{0,1}T^*_z(\Gamma)$, $T^{0,1}_z=\Lambda^{0,1}T_z(\Gamma)$, $z\in\Gamma$.

For $q\in\Pstint$, the bundle of boundary $(0, q)$ forms is given by
\begin{equation} \label{Be:0804010812}
\Lambda^{0, q}T^*(\Gamma)=\Lambda^q(\Lambda^{0,1}T^*(\Gamma)).
\end{equation}
Note that
\begin{equation} \label{Be:0801241459}
\Lambda^{0,q}T^*_z(\Gamma)=\set{u\in\Lambda^{0,q}T^*_z(M');\, (u\ |\ \dbar r(z)\wedge g)=0,\ \
                  \forall g\in\Lambda^{0,q-1}T^*_z(M')}.
\end{equation}

We recall the following

\begin{defn} \label{Bd:BK-Leviform}
For $p\in\Gamma$,
the Levi form $L_p(Z, \ol W)$, $Z$, $W\in\Lambda^{1,0}T_p(\Gamma)$, is the Hermitian quadratic form on $\Lambda^{1,0}T_p(\Gamma)$ defined as follows:
\begin{equation} \label{Be:070502-*} \begin{split}
&\mbox{For any $Z$, $W\in \Lambda^{1,0}T_p(\Gamma)$, pick $\Td Z$, $\Td W\in
C^\infty(\Gamma;\, \Lambda^{1,0}T(\Gamma))$ that satisfy}  \\
&\mbox{$\Td Z(p)=Z$, $\Td W(p)=W$. Then }
L_p(Z,\ol W)=\frac{1}{2i}\seq{[\Td Z\ ,\ol{\Td W}](p)\ ,
\omega_0(p)}.
\end{split}
\end{equation}
Here $[\Td Z\ ,\ol{\Td W}]=\Td Z\ol{\Td W}-
\ol{\Td W}\Td Z$
denotes the commutator of $\Td Z$ and $\ol{\Td W}$.
\end{defn}

It is easy to see that the definition of the Levi form $L_p$ is independent
of the choices of $\Td Z$ and $\Td W$. We give it in detail for the convenience of
the reader (see the discussion before Lemma~$2.4$ of part ${\rm I\,}$)

\begin{lem} \label{Bl:d-Leviform}
Let $\Td Z, \Td W\in C^\infty(\Gamma;\, \Lambda^{1,0}T(\Gamma))$. We have
\begin{equation} \label{Be:*}
\frac{1}{2i}\seq{[\Td Z\ ,\ol{\Td W}](p)\ ,\omega_0(p)}=-\frac{1}{2i}\seq{\Td Z(p)\wedge\ol{\Td W}(p), d\omega_0(p)}.
\end{equation}
\end{lem}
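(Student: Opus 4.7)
The plan is to apply Cartan's formula for the exterior derivative of a $1$-form,
\[
\seq{V_1\wedge V_2,\, d\alpha}=V_1(\seq{V_2,\alpha})-V_2(\seq{V_1,\alpha})-\seq{[V_1,V_2],\alpha},
\]
which is exactly the identity recalled as (\ref{e:CR-1.3add}) in Part~I. Taking $\alpha=\omega_0$, $V_1=\Td Z$ and $V_2=\ol{\Td W}$, both smooth vector fields on $\Gamma$, it yields
\[
\seq{\Td Z(p)\wedge\ol{\Td W}(p),\, d\omega_0(p)}
=\Td Z\bigl(\seq{\ol{\Td W},\omega_0}\bigr)(p)-\ol{\Td W}\bigl(\seq{\Td Z,\omega_0}\bigr)(p)-\seq{[\Td Z,\ol{\Td W}](p),\omega_0(p)}.
\]
So the lemma will reduce to showing that the first two terms on the right-hand side vanish identically on $\Gamma$, and then multiplying by $-\tfrac{1}{2i}$.

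To see that $\seq{\Td Z,\omega_0}\equiv 0$ on $\Gamma$ (and likewise for $\ol{\Td W}$), I would use the very definition $\omega_0=J^t(dr)$ together with the duality $\seq{Ju,\nu}=\seq{u,J^t\nu}$. For any $\Td Z\in C^\infty(\Gamma;\Lambda^{1,0}T(\Gamma))$, tangency to $\Gamma$ gives $\seq{\Td Z,dr}=0$, and the defining property $J\Td Z=i\Td Z$ of $(1,0)$ vectors yields
\[
\seq{\Td Z,\omega_0}=\seq{\Td Z,J^t dr}=\seq{J\Td Z,dr}=i\seq{\Td Z,dr}=0.
\]
Exactly the same computation with $J\ol{\Td W}=-i\ol{\Td W}$ gives $\seq{\ol{\Td W},\omega_0}\equiv 0$ on $\Gamma$. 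Since these two functions vanish identically on $\Gamma$, their derivatives along the tangential vector fields $\Td Z$ and $\ol{\Td W}$ also vanish.

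Substituting these two vanishings into Cartan's formula leaves
\[
\seq{\Td Z(p)\wedge\ol{\Td W}(p),\, d\omega_0(p)}=-\seq{[\Td Z,\ol{\Td W}](p),\omega_0(p)},
\]
and dividing by $-2i$ gives precisely (\ref{Be:*}). No step is really an obstacle here; the only point requiring care is confirming that $\omega_0$, defined on all of $M'$ via $J^t(dr)$, still annihilates $(1,0)$ and $(0,1)$ vectors that are tangent to $\Gamma$, which is the elementary computation above. As a side benefit, the same identity simultaneously verifies the well-definedness of $L_p(Z,\ol W)$ claimed implicitly in Definition~\ref{Bd:BK-Leviform}, since the right-hand side of (\ref{Be:*}) depends only on the pointwise values $\Td Z(p)$ and $\ol{\Td W}(p)$.
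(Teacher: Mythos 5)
Your proof is correct and is exactly the argument the paper has in mind: Part~I points to the Cartan identity (\ref{e:CR-1.3add}) for this lemma, and the vanishing of $\seq{\Td Z,\omega_0}$ and $\seq{\ol{\Td W},\omega_0}$ on $\Gamma$ is precisely the fact that $\omega_0$ annihilates $\Lambda^{1,0}T(\Gamma)\oplus\Lambda^{0,1}T(\Gamma)$, as recorded in the orthogonal decomposition of $\Complex T_p(\Gamma)$ in section~2. Your $J^t$-based derivation of that annihilation is a clean, self-contained substitute for simply quoting that decomposition, but the two routes are equivalent and the remainder of the computation is identical.
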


\begin{defn} \label{Bd:0710301255}
The eigenvalues of the Levi form at $p\in \Gamma$ are the eigenvalues of the Hermitian form $L_p$ with
respect to the inner product $(\ |\ )$ on $\Lambda^{1, 0}T_p(\Gamma)$.
\end{defn}


\section{The $\dbar$-Neumann problem, a review}

In this section, we will
give a brief discussion of the $\dbar$-Neumann problem in a setting appropriate for our purpose. General
references for this section are the books by H\"{o}rmander~\cite{Hor90},~\cite{FK72} and
Chen-Shaw~\cite{CS01}.

As in section $1$, let $M$ be a relatively compact open subset with smooth boundary $\Gamma$ of a
complex manifold $M'$ of dimension $n$ with a smooth Hermitian metric
on its holomorphic tangent bundle. We will use the same notations as before.
We have the following (see page $13$ of~\cite{FK72}, for the proof)

\begin{lem} \label{Bl:BP-integral}
For all $f\in C^\infty(\ol M;\, \Lambda^{0,q}T^*(M'))$, $g\in C^\infty(\ol M;\, \Lambda^{0,q+1}T^*(M'))$,
\begin{equation} \label{Be:0710312315}
(\dbar f\ |\ g)_M=(f\ |\ \ol{\pr_f}^*g)_M+(\gamma f\ |\ \gamma(\dbar r)^{\wedge, *}g)_\Gamma,
\end{equation}
where $(\dbar r)^{\wedge, *}$ is defined by (\ref{Be:0710312317}).
We recall that $\ol{\pr_f}^*$ is the formal adjoint of $\dbar$ and $\gamma$ is the operator of restriction to the boundary $\Gamma$.
\end{lem}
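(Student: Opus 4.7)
The identity is a standard integration by parts, so the plan is to produce a $(2n-1)$-form $\alpha(f,g)$ on $M'$ whose exterior derivative represents the difference $((\dbar f\ |\ g)-(f\ |\ \ol{\pr_f}^*g))\,(dM')$, and then read off the boundary contribution by Stokes.

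First, I would reduce to a local statement by a partition of unity, and near each boundary point choose a local orthonormal frame $\ol\omega_1,\ldots,\ol\omega_n$ of $\Lambda^{0,1}T^*(M')$ adapted to the boundary, namely so that $\ol\omega_n=\dbar r$ on $\Gamma$ (possible since $\norm{dr}=1$ on $\Gamma$). Let $\omega_j$ be the dual frame of $\Lambda^{1,0}T^*(M')$ and write $f=\sum_J f_J\ol\omega^J$, $g=\sum_K g_K\ol\omega^K$. Using the Hodge-type isomorphism sending a $(0,r)$ form $u$ to the $(n,n-r)$ form $\#u$ characterized by $u\wedge\ol{\#v}=(u\ |\ v)\,dM'$, a direct computation gives
\begin{equation*}
\dbar\bigl(f\wedge\ol{\#g}\bigr)=(\dbar f\ |\ g)\,dM'-(f\ |\ \ol{\pr_f}^*g)\,dM',
\end{equation*}
up to a sign that one absorbs into the definition of $\#$; the key point is that $\pr(\#g)$ is proportional to $\#(\ol{\pr_f}^*g)$, which is the standard formula relating $\ol{\pr_f}^*$ to the Hodge star. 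Because $f\wedge\ol{\#g}$ is an $(n,n-1)$-form, its full exterior derivative coincides with $\dbar$, so this is an identity of top degree forms on $M'$.

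Second, I would apply Stokes' theorem to $\alpha(f,g):=f\wedge\ol{\#g}$ on $M$ to conclude
\begin{equation*}
(\dbar f\ |\ g)_M-(f\ |\ \ol{\pr_f}^*g)_M=\int_{\Gamma}\iota^*\bigl(f\wedge\ol{\#g}\bigr),
\end{equation*}
where $\iota:\Gamma\hookrightarrow M'$ is the inclusion. To identify the right-hand side with $(\gamma f\ |\ \gamma(\dbar r)^{\wedge,*}g)_\Gamma$, I would evaluate $\iota^*(f\wedge\ol{\#g})$ in the adapted frame. Only components of $g$ containing $\ol\omega_n=\dbar r$ survive the pullback: writing $g=\dbar r\wedge h+g^{\tan}$ with $h\in\Lambda^{0,q}T^*(M')$ and $g^{\tan}$ tangential, the definition (\ref{Be:0710312317}) of $w^{\wedge,*}$ yields $(\dbar r)^{\wedge,*}g=h$ on $\Gamma$, and a short computation in the frame shows
\begin{equation*}
\iota^*\bigl(f\wedge\ol{\#g}\bigr)=(\gamma f\ |\ \gamma(\dbar r)^{\wedge,*}g)\,(d\Gamma),
\end{equation*}
since $\iota^*(\omega_n\wedge\ol{\#g})$ produces precisely the volume element of the induced metric on $\Gamma$ paired with the normal component $h=(\dbar r)^{\wedge,*}g$ of $g$.

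The only delicate step is the last one: keeping track of the signs and orientation conventions in passing from the Hodge star $\#$ to the induced boundary volume $(d\Gamma)$ and making sure that the normal component of $g$ picked up by $\iota^*$ is exactly $(\dbar r)^{\wedge,*}g$ and not its conjugate or a multiple thereof. This is a purely algebraic check in the adapted frame, and once carried out at one point it globalizes via the partition of unity.
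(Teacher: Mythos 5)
Your strategy — exhibit an $(n,n-1)$-form whose $\dbar$-derivative equals $(\dbar f\,|\,g)\,dM'-(f\,|\,\ol{\pr_f}^*g)\,dM'$ and then apply Stokes on $M$ — is the right one and is essentially the argument in Folland--Kohn to which the paper defers (the paper records no proof of its own for this lemma, so there is nothing to compare against directly beyond that reference).

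There is, however, a concrete normalization slip at the outset that is not a mere sign and that breaks the boundary computation as written. You claim an orthonormal frame $\ol\omega_1,\ldots,\ol\omega_n$ of $\Lambda^{0,1}T^*(M')$ may be chosen with $\ol\omega_n=\dbar r$ on $\Gamma$, ``possible since $\norm{dr}=1$.'' But $dr=\pr r+\dbar r$ with $\pr r\perp\dbar r$ and $\ol{\pr r}=\dbar r$, so $\norm{dr}=1$ forces $\norm{\dbar r}=\norm{\pr r}=1/\sqrt 2$ on $\Gamma$ — the paper records exactly this in (\ref{Be:0801082257}) and normalizes to $t_n=\dbar r/\norm{\dbar r}$ accordingly. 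Thus $\dbar r$ is not a unit vector and cannot itself be a frame element; you need $\ol\omega_n=\sqrt 2\,\dbar r$. This error propagates: writing $g=\dbar r\wedge h+g^{\tan}$ with $h$, $g^{\tan}$ tangential, the identity $(\dbar r)^\wedge(\dbar r)^{\wedge,*}+(\dbar r)^{\wedge,*}(\dbar r)^\wedge=\norm{\dbar r}^2=\tfrac12$ on $\Gamma$ gives $(\dbar r)^{\wedge,*}g=\tfrac12 h$, not $h$ as you assert. As a result the boundary term you compute does not land on $(\gamma f\,|\,\gamma(\dbar r)^{\wedge,*}g)_\Gamma$ without also tracking the compensating factors of $\sqrt 2$ hidden in the $\#$-operator and in the relation between $dM'$, $dr$, and $d\Gamma$; this is exactly the bookkeeping you wave at in the last paragraph, but the specific slip above means that bookkeeping is not optional — without it the argument produces a spurious constant. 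The overall scheme is sound and the identity is of course correct, but the normalization of $\dbar r$ must be fixed for the frame computation to close.
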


We recall that
\begin{align} \label{Be:0710301602}
{\rm Dom\,}\dbar&=\{u\in L^2(M;\, \Lambda^{0,q}T^*(M'));\, \mbox{there exist}\ u_j\in C^\infty(\ol M;\, \Lambda^{0,q}T^*(M')),\nonumber \\
&\quad j=1,2,\ldots,
\mbox{and}\ v\in L^2(M;\, \Lambda^{0, q+1}T^*(M')),\ \mbox{such that} \nonumber \\
&\quad u_j\To u\ \mbox{in}\ L^2(M;\, \Lambda^{0,q}T^*(M')),
j\To\infty,\ \mbox{and}\nonumber \\
&\quad \dbar u_j\To v\ \mbox{in}\ L^2(M;\, \Lambda^{0,q+1}T^*(M')),\ j\To\infty\}.
\end{align}
We write $\dbar u=v$.

The Hilbert space adjoint $\dbar^*$ of $\dbar$ is defined on the domain of $\dbar^*$ consisting of all
$f\in L^2(M;\, \Lambda^{0,q+1}T^*(M'))$ such that for some constant $c>0$,
$\abs{(f\ |\ \dbar g)_M}\leq c\norm{g}$, for all $g\in C^\infty(\ol M;\, \Lambda^{0,q}T^*(M'))$.
For such a $f$,
\[g\To(f\ |\ \dbar g)_M\]
extends to a bounded anti-linear functional on  $L^2(M;\, \Lambda^{0,q}T^*(M'))$ so
\[(f\ |\ \dbar g)_M=(\Td f\ |\ g)_M\]
for some $\Td f\in L^2(M;\, \Lambda^{0, q}T^*(M'))$. We have $\dbar^*f=\Td f$.

From Lemma~\ref{Bl:BP-integral}, it follows that
\begin{equation} \label{Be:0710312325}
{\rm Dom\,}\dbar^*\bigcap C^\infty(\ol M;\, \Lambda^{0,q}T^*(M'))=
\set{u\in C^\infty(\ol M;\, \Lambda^{0,q}T^*(M'));\, \gamma(\dbar r)^{\wedge, *}u=0}
\end{equation}
and
\begin{equation} \label{Be:0710312326}
\dbar^*=\ol{\pr_f}^*\ \ \mbox{on}\ {\rm Dom\,}\dbar^*\bigcap C^\infty(\ol M;\, \Lambda^{0,q}T^*(M')).
\end{equation}

The $\dbar$-Neumann Laplacian on $(0, q)$ forms is then the operator in the space $L^2(M;\, \Lambda^{0,q}T^*(M'))$
\[\Box^{(q)}=\dbar\ \dbar^*+\dbar^*\dbar.\]
We notice that $\Box^{(q)}$ is self-adjoint. (See chapter ${\rm I\,}$ of~\cite{FK72}.)
We have
\begin{align*}
{\rm Dom\,}\Box^{(q)}&=\{u\in L^2(M;\, \Lambda^{0,q}T^*(M'));\, u\in{\rm Dom\,}\dbar^*\bigcap{\rm Dom\,}\dbar,\\
&\quad\dbar^*u\in{\rm Dom\,}\dbar, \dbar u\in{\rm Dom\,}\dbar^*\}.
\end{align*}
Put $D^{(q)}={\rm Dom\,}\Box^{(q)}\bigcap C^\infty(\ol M\ ;\Lambda^{0,q}T^*(M'))$.
From (\ref{Be:0710312325}), we have
\begin{equation} \label{Be:010312335}
D^{(q)}=\set{u\in C^\infty(\ol M;\, \Lambda^{0,q+1}T^*(M'));\, \gamma(\dbar r)^{\wedge, *}u=0,\ \gamma(\dbar r)^{\wedge, *}\dbar u=0}.
\end{equation}
In view of (\ref{Be:0801241459}), we see that $u\in D^{(q)}$ if and only if
$\gamma u\in C^\infty(\Gamma;\, \Lambda^{0,q}T^*(\Gamma))$ and $\gamma\dbar u\in C^\infty(\Gamma;\, \Lambda^{0,q+1}T^*(\Gamma))$.
We have the following

\begin{lem} \label{Bl:0710312345}
Let $q\geq1$. For every $u\in{\rm Dom\,}\dbar^*\bigcap C^\infty(\ol M;\, \Lambda^{0,q+1}T^*(M'))$, we have
\[\dbar^*u\in{\rm Dom\,}\dbar^*\bigcap C^\infty(\ol M;\, \Lambda^{0,q}T^*(M')).\]
\end{lem}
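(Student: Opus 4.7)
The plan is to reduce the statement to checking a single boundary identity, via two applications of Lemma~\ref{Bl:BP-integral} and the trivial identity $(\ol{\pr_f}^*)^2=0$. Since $u\in C^\infty(\ol M;\Lambda^{0,q+1}T^*(M'))\cap{\rm Dom\,}\dbar^*$, (\ref{Be:0710312325}) gives $\gamma(\dbar r)^{\wedge,*}u=0$, and (\ref{Be:0710312326}) gives $\dbar^*u=\ol{\pr_f}^*u\in C^\infty(\ol M;\Lambda^{0,q}T^*(M'))$. In view of (\ref{Be:0710312325}) applied one degree lower (this is where $q\geq1$ enters, so that $(\dbar r)^{\wedge,*}$ can still act nontrivially), the only thing left to prove is
\[\gamma(\dbar r)^{\wedge,*}\ol{\pr_f}^*u=0\quad\mbox{on }\Gamma.\]

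First I would show that, for every $g\in C^\infty(\ol M;\Lambda^{0,q}T^*(M'))$, the boundary condition on $u$ makes $\ol{\pr_f}^*u$ behave as a genuine (not just formal) adjoint: by Lemma~\ref{Bl:BP-integral} applied with $f=g$ and the datum $u$, and then by conjugation,
\[(u\mid \dbar g)_M=(\ol{\pr_f}^*u\mid g)_M+(\gamma(\dbar r)^{\wedge,*}u\mid\gamma g)_\Gamma=(\ol{\pr_f}^*u\mid g)_M.\]
Now specialize to $g=\dbar h$ with $h\in C^\infty(\ol M;\Lambda^{0,q-1}T^*(M'))$. The left hand side becomes $(u\mid \dbar^2 h)_M=0$, so
\[(\ol{\pr_f}^*u\mid \dbar h)_M=0\quad\mbox{for every }h\in C^\infty(\ol M;\Lambda^{0,q-1}T^*(M')).\]

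Next I would reinterpret the left hand side via a second use of Lemma~\ref{Bl:BP-integral}, this time with $f=h$ and $g=\ol{\pr_f}^*u$: after conjugation this reads
\[(\ol{\pr_f}^*u\mid \dbar h)_M=(\ol{\pr_f}^*\ol{\pr_f}^*u\mid h)_M+(\gamma(\dbar r)^{\wedge,*}\ol{\pr_f}^*u\mid\gamma h)_\Gamma.\]
Because $\dbar^2=0$ as a differential operator, taking formal adjoints gives $(\ol{\pr_f}^*)^2=0$, so the first term on the right vanishes. Combining with the previous paragraph yields
\[(\gamma(\dbar r)^{\wedge,*}\ol{\pr_f}^*u\mid \gamma h)_\Gamma=0\quad\mbox{for every }h\in C^\infty(\ol M;\Lambda^{0,q-1}T^*(M')).\]
Since the restriction map $\gamma$ from $C^\infty(\ol M;\Lambda^{0,q-1}T^*(M'))$ onto $C^\infty(\Gamma;\Lambda^{0,q-1}T^*(M'))$ is surjective, $\gamma h$ can be taken equal to any prescribed smooth $(0,q-1)$ form on $\Gamma$; a partition of unity argument and the non-degeneracy of $(\,\cdot\mid\cdot\,)$ then force $\gamma(\dbar r)^{\wedge,*}\ol{\pr_f}^*u=0$, as desired.

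There is no real obstacle; the only point to check carefully is that the hypothesis $q\geq1$ is used precisely where $(\dbar r)^{\wedge,*}$ needs to produce a $(0,q-1)$ form so that the second application of Lemma~\ref{Bl:BP-integral} makes sense, and that smoothness of $\ol{\pr_f}^*u$ up to $\Gamma$ is automatic because $\ol{\pr_f}^*$ is a first-order differential operator on $M'$.
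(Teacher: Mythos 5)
Your proof is correct and takes essentially the same route as the paper: both arguments boil down to two applications of Lemma~\ref{Bl:BP-integral} together with $(\ol{\pr_f}^*)^2=0$, reducing to the vanishing of a boundary pairing $(\gamma(\dbar r)^{\wedge,*}\dbar^*u\mid\gamma g)_\Gamma$ for all $g$; the paper simply starts from $0=(\ol{\pr_f}^*\dbar^*u\mid g)_M$ and expands, whereas you build up to the same identity from the other direction. The extra observations you include (where $q\geq1$ is used, surjectivity of $\gamma$, smoothness of $\ol{\pr_f}^*u$) are all accurate.
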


\begin{proof}
Let
\[u\in{\rm Dom\,}\dbar^*\bigcap C^\infty(\ol M;\, \Lambda^{0,q+1}T^*(M')).\]
For $g\in C^\infty(\ol M;\, \Lambda^{0,q-1}T^*(M'))$, we have
\begin{align*}
0=(\ol{\pr_f}^*\dbar^*u\ |\ g)_M&=(\dbar^*u\ |\ \dbar g)_M-(\gamma(\dbar r)^{\wedge, *}\dbar^*u\ |\ \gamma g)_\Gamma \\
&=(u\ |\ \dbar\dbar g)_M-(\gamma(\dbar r)^{\wedge, *}\dbar^*u\ |\ \gamma g)_\Gamma \\
&=-(\gamma(\dbar r)^{\wedge, *}\dbar^*u\ |\ \gamma g)_\Gamma.
\end{align*}
Here we used (\ref{Be:0710312315}). Thus,
$\gamma(\dbar r)^{\wedge, *}\dbar^*u=0$.
The lemma follows.
\end{proof}

\begin{defn} \label{Bd:BP-Neumann}
The boundary conditions
\[\gamma(\dbar r)^{\wedge, *}u=0, \ \ \gamma(\dbar r)^{\wedge, *}\dbar u=0,\ \  u\in C^\infty(\ol M\ ,\Lambda^{0,q}T^*(M'))\]
are called $\dbar$-Neumann boundary conditions.
\end{defn}

\begin{defn} \label{Bd:BK-problem}
The $\dbar$-Neumann problem in $M$ is the problem of finding, given a form
$f\in C^\infty(\ol M;\, \Lambda^{0,q}T^*(M'))$, another form $u\in D^{(q)}$ verifying
$\Box^{(q)}u=f$.
\end{defn}

\begin{defn} \label{BK-Zofqmore}
Given $q$, $0\leq q\leq n-1$. The Levi form
is said to satisfy condition $Z(q)$ at $p\in \Gamma$ if it has at least $n-q$ positive or at least $q+1$
negative eigenvalues. If the Levi form is non-degenerate at $p\in \Gamma$,
let $(n_-,n_+)$, $n_-+n_+=n-1$, be the signature of $L_p$. Then $Z(q)$ holds at $p$ if and only if $q\neq n_-$.
\end{defn}

The following classical results are due to Kohn. For the proofs, see~\cite{FK72}.

\begin{thm} \label{Bt:0710301305}
We assume that $Z(q)$ holds at each point of\, $\Gamma$. Then ${\rm Ker\,}\Box^{(q)}$ is a finite dimensional
subspace of $C^\infty(\ol M;\, \Lambda^{0,q}T^*(M'))$, $\Box^{(q)}$ has closed range and $\Pi^{(q)}$ is a smoothing operator. That is, the
distribution kernel
\[K_{\Pi^{(q)}}(z, w)\in C^\infty(\ol M\times\ol M;\, \mathscr L(\Lambda^{0,q}T^*_w(M'),\Lambda^{0,q}T^*_z(M'))).\]

Moreover, there exists an continuous operator
\[N^{(q)}:L^2(M;\, \Lambda^{0,q}T^*(M'))\To{\rm Dom\,}\Box^{(q)}\]
such that $N^{(q)}\Box^{(q)}+\Pi^{(q)}=I$ on ${\rm Dom\,}\Box^{(q)}$,
\[\Box^{(q)}N^{(q)}+\Pi^{(q)}=I\]
on $L^2(M;\, \Lambda^{0,q}T^*(M'))$.
Furthermore,
\[N^{(q)}\Bigr (C^\infty(\ol M;\, \Lambda^{0,q}T^*(M'))\Bigr)\subset C^\infty(\ol M;\, \Lambda^{0,q}T^*(M'))\]
and for each $s\in\Real$ and
all $f\in C^\infty(\ol M;\, \Lambda^{0,q}T^*(M'))$, there is a constant $c>0$, such that
$\norm{N^{(q)}f}_{s+1}\leq c\norm{f}_s$
where $\norm{\ }_s$ denotes any of the equivalent norms defining $H^s(\ol M;\, \Lambda^{0, q}T^*(M'))$.
\end{thm}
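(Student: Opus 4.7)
The plan is to derive everything from Kohn's basic subelliptic $\frac{1}{2}$-estimate under condition $Z(q)$, which is the heart of the matter. Set $Q(u,v) = (\dbar u\ |\ \dbar v)_M + (\dbar^* u\ |\ \dbar^* v)_M$ on ${\rm Dom\,}\dbar \cap {\rm Dom\,}\dbar^*$ restricted to $(0,q)$ forms. Kohn's estimate, established via integration by parts in suitable boundary charts combined with Morrey's trick (exploiting that under $Z(q)$ at each point $p\in\Gamma$ the Levi form on $(0,q)$ forms has enough positive/negative directions to dominate the bad boundary term arising from $(\dbar r)^{\wedge,*}$), reads
\begin{equation*}
\|u\|_{1/2}^2 \leq C\bigl(Q(u,u) + \|u\|_0^2\bigr),\qquad u \in {\rm Dom\,}\dbar\cap{\rm Dom\,}\dbar^*.
\end{equation*}
This is the input I would quote (or re-derive via the standard Kohn-Morrey-H\"ormander identity on $D^{(q)}$ and density). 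All subsequent statements are formal consequences plus a bootstrap.

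Finite dimensionality and closed range would come first. If $u_j \in {\rm Ker\,}\Box^{(q)}$ is bounded in $L^2$, then $Q(u_j,u_j)=0$, so the estimate forces $\|u_j\|_{1/2} \leq C$; by Rellich compactness a subsequence converges in $L^2$, so the unit ball of ${\rm Ker\,}\Box^{(q)}$ is compact and ${\rm dim}\,{\rm Ker\,}\Box^{(q)} < \infty$. On the orthogonal complement $({\rm Ker\,}\Box^{(q)})^\perp \cap {\rm Dom\,}\dbar \cap {\rm Dom\,}\dbar^*$ one removes the $\|u\|_0^2$ term by a standard contradiction argument (using compactness and the fact that any $L^2$ limit of elements in that complement satisfying $Q(u,u)=0$ must lie in ${\rm Ker\,}\Box^{(q)}$), obtaining
\begin{equation*}
\|u\|_0^2 \leq C' Q(u,u) \leq C'' \|\Box^{(q)} u\|_0 \cdot \|u\|_0,
\end{equation*}
hence $\|u\|_0 \leq C''\|\Box^{(q)} u\|_0$ on $({\rm Ker\,}\Box^{(q)})^\perp\cap{\rm Dom\,}\Box^{(q)}$. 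This gives closed range of $\Box^{(q)}$ and allows me to define the partial inverse $N^{(q)}$ by $N^{(q)}f := u$, where $u\in{\rm Dom\,}\Box^{(q)} \cap ({\rm Ker\,}\Box^{(q)})^\perp$ is the unique solution of $\Box^{(q)}u = (I-\Pi^{(q)})f$ and $N^{(q)}\Pi^{(q)} = 0$. The identities $N^{(q)}\Box^{(q)}+\Pi^{(q)}=I$ on ${\rm Dom\,}\Box^{(q)}$ and $\Box^{(q)}N^{(q)}+\Pi^{(q)}=I$ on $L^2$ are then tautological.

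The hard part is the regularity bound $\|N^{(q)}f\|_{s+1} \leq c_s \|f\|_s$ and the smoothness of elements of ${\rm Ker\,}\Box^{(q)}$. Here I would run Kohn's hypoellipticity-with-loss-of-one-derivative argument: working in a boundary collar, introduce tangential Friedrichs mollifiers $S_\delta$, apply the $\frac{1}{2}$-estimate to $S_\delta u$ after carefully commuting $S_\delta$ past $\dbar$ and $\dbar^*$ (the commutators preserve the $\dbar$-Neumann boundary condition because they are purely tangential), iterate to obtain tangential regularity $\Lambda^s_{{\rm tan}} u \in H^{1/2}$, then use the elliptic system for $\Box^{(q)}$ in the normal direction — which recovers normal derivatives from tangential ones and $\Box^{(q)}u$ — to upgrade to the full $H^{s+1}$ estimate. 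Applied to $u\in{\rm Ker\,}\Box^{(q)}$ (where $Q(u,u)=0$ is already known to give $H^{1/2}$ regularity), this iteration yields $u\in\bigcap_s H^s = C^\infty(\ol M;\,\Lambda^{0,q}T^*(M'))$.

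Finally, smoothness of $\Pi^{(q)}$ is automatic: picking an $L^2$-orthonormal basis $\phi_1,\ldots,\phi_N$ of ${\rm Ker\,}\Box^{(q)}$, which is finite and consists of smooth forms by the previous step, we have
\begin{equation*}
\Pi^{(q)}f(z) = \sum_{j=1}^N (f\ |\ \phi_j)_M\, \phi_j(z),
\end{equation*}
whose Schwartz kernel $\sum_j \phi_j(z)\otimes\overline{\phi_j(w)}$ is patently smooth on $\ol M \times \ol M$. The main obstacle throughout is the tangential-Friedrichs bootstrap for $N^{(q)}$ near $\Gamma$; everything else is elementary Hilbert space theory once the subelliptic estimate is in hand, which is exactly why I would refer the technical implementation to~\cite{FK72}.
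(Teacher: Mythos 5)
Your outline is exactly the Folland–Kohn argument that the paper itself invokes by citation (the paper offers no proof of its own, only the pointer to~\cite{FK72}): the subelliptic $\tfrac12$-estimate under $Z(q)$, Rellich compactness for finite-dimensionality and closed range, the partial inverse from Hilbert-space theory, and the tangential-mollifier bootstrap combined with normal ellipticity for regularity. The proposal is correct and takes essentially the same approach as the source the paper relies on.
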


\begin{thm} \label{Bt:0710301306}
Suppose that $Z(q)$ fails at some point of\, $\Gamma$ and that $Z(q-1)$ and $Z(q+1)$ hold at each point of\, $\Gamma$. Then,
\begin{equation} \label{Be:0809102247}
\Pi^{(q)}u=(I-\dbar N^{(q-1)}\dbar^*-\dbar^* N^{(q+1)}\dbar)u,\ u\in{\rm Dom\,}\dbar^*\bigcap C^\infty(\ol M;\, \Lambda^{0,q}T^*(M')),
\end{equation}
where $N^{(q+1)}$ and $N^{(q-1)}$ are as in Theorem~\ref{Bt:0710301305}. In particular,
\[\Pi^{(q)}:{\rm Dom\,}\dbar^*\bigcap C^\infty(\ol M;\, \Lambda^{0,q}T^*(M'))\To D^{(q)}.\]
\end{thm}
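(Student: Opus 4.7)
Set $Su := u - \dbar N^{(q-1)}\dbar^* u - \dbar^*N^{(q+1)}\dbar u$. The strategy is the classical Hodge-style one: show first that $Su$ is harmonic and lies in the appropriate domain, and then that $u - Su$ is orthogonal to $\mathrm{Ker}\,\Box^{(q)}$; uniqueness of the orthogonal projection then forces $Su = \Pi^{(q)}u$. The regularity of $Su$ is automatic once everything is set up correctly: since $u\in C^\infty(\ol M;\Lambda^{0,q}T^*(M'))$ with $\gamma(\dbar r)^{\wedge,*}u=0$, we have $\dbar u\in C^\infty$ and $\dbar^* u=\ol{\pr_f}^*u\in C^\infty$ by (3.4); then Theorem~3.1 applied to the $Z(q\pm1)$ neighbors gives $N^{(q\pm 1)}\dbar^{(*)} u \in C^\infty(\ol M)$, and in fact in $D^{(q\pm1)}$. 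In particular $\dbar N^{(q-1)}\dbar^* u$ inherits the boundary condition $\gamma(\dbar r)^{\wedge,*}(\cdot)=0$ from the definition of $D^{(q-1)}$, and Lemma~3.3 gives $\dbar^* N^{(q+1)}\dbar u\in \mathrm{Dom}\,\dbar^*$. Hence $Su\in C^\infty(\ol M;\Lambda^{0,q}T^*(M'))\cap \mathrm{Dom}\,\dbar^*$.

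Next I would verify $\dbar Su=0$. Writing $\dbar Su=\dbar u - \dbar\dbar^* N^{(q+1)}\dbar u$ (the other term drops by $\dbar^2=0$) and substituting $\dbar\dbar^*=\Box^{(q+1)}-\dbar^*\dbar$ on $N^{(q+1)}\dbar u\in\mathrm{Dom}\,\Box^{(q+1)}$, we get
\[
\dbar Su = \dbar u - (I-\Pi^{(q+1)})\dbar u + \dbar^*\dbar N^{(q+1)}\dbar u = \Pi^{(q+1)}\dbar u + \dbar^*\dbar N^{(q+1)}\dbar u.
\]
The first summand is zero because $\dbar u\perp\mathrm{Ker}\,\Box^{(q+1)}$: any $h\in\mathrm{Ker}\,\Box^{(q+1)}$ satisfies $\dbar^* h=0$, so $(\dbar u,h)_M=(u,\dbar^* h)_M=0$. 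For the second, setting $w:=N^{(q+1)}\dbar u$, we compute $\dbar\Box^{(q+1)}w=\dbar\dbar u-\dbar\Pi^{(q+1)}\dbar u=0$, i.e.\ $\dbar\dbar^*(\dbar w)=0$, and since $\dbar w\in\mathrm{Dom}\,\dbar^*$ we conclude $\|\dbar^*\dbar w\|^2=(\dbar\dbar^*\dbar w,\dbar w)_M=0$. An entirely symmetric argument, using $\dbar^* u\perp\mathrm{Ker}\,\Box^{(q-1)}$ and $(\dbar^*)^2=0$, yields $\dbar^* Su=0$. Combined with $Su\in\mathrm{Dom}\,\dbar^*$ and the boundary condition $\gamma(\dbar r)^{\wedge,*}\dbar Su=0$ (which is trivial since $\dbar Su=0$), this gives $Su\in D^{(q)}$ and $\Box^{(q)}Su=0$.

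Finally, for $h\in\mathrm{Ker}\,\Box^{(q)}$ we have $\dbar h=0$ and $\dbar^* h=0$, so
\[
(u-Su,h)_M = (\dbar N^{(q-1)}\dbar^* u,h)_M + (\dbar^* N^{(q+1)}\dbar u,h)_M = (N^{(q-1)}\dbar^* u,\dbar^* h)_M + (N^{(q+1)}\dbar u,\dbar h)_M = 0,
\]
so $u-Su\perp\mathrm{Ker}\,\Box^{(q)}$. Since $Su\in\mathrm{Ker}\,\Box^{(q)}$, the defining property of the orthogonal projection yields $Su=\Pi^{(q)}u$, which is the desired formula. The ``$\Pi^{(q)}:\mathrm{Dom}\,\dbar^*\cap C^\infty(\ol M;\Lambda^{0,q}T^*(M'))\to D^{(q)}$'' conclusion is then just a restatement of $Su\in D^{(q)}$.

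The main obstacle will be the bookkeeping of domains and boundary conditions: one must verify that each intermediate form $N^{(q\pm1)}\dbar^{(*)}u$, $\dbar N^{(q-1)}\dbar^* u$, $\dbar^* N^{(q+1)}\dbar u$, $\dbar w$, etc.\ lies in the domain needed for the next operation, so that the formal manipulations $\dbar\dbar^*=\Box-\dbar^*\dbar$ and the integrations by parts are legitimate. This is precisely where Lemma~3.3 (preservation of $\mathrm{Dom}\,\dbar^*$ under $\dbar^*$ on smooth forms), the $\dbar$-Neumann boundary conditions built into $D^{(q\pm1)}$, and the regularity statement in Theorem~3.1 combine; no deeper input is needed.
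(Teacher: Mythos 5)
The paper does not prove this theorem itself; it cites Folland--Kohn [FK72] for it. Your reconstruction is the standard Hodge-style argument, and the overall structure (define $Su$, show $Su\in\mathrm{Ker}\,\Box^{(q)}$, show $u-Su\perp\mathrm{Ker}\,\Box^{(q)}$, invoke uniqueness of the orthogonal decomposition) is exactly right. Steps~1 and~2 and the final orthogonality computation are correct as written.

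However, the claim that $\dbar^* Su=0$ follows by ``an entirely symmetric argument'' hides a genuine gap. Your Step~2 integration by parts $\|\dbar^*\dbar w\|^2=(\dbar\dbar^*\dbar w,\dbar w)_M$ uses the relation $(\dbar a,b)=(a,\dbar^* b)$ with $a=\dbar^*\dbar w$ and $b=\dbar w$, and the required condition $b=\dbar w\in\mathrm{Dom}\,\dbar^*$ is \emph{free}: it is exactly the second $\dbar$-Neumann boundary condition built into $w\in D^{(q+1)}$. The literal mirror image for $v:=N^{(q-1)}\dbar^* u$ would be $\|\dbar\dbar^* v\|^2=(\dbar^*\dbar\dbar^* v,\dbar^* v)_M$, i.e.\ $(\dbar^* c,d)=(c,\dbar d)$ with $c=\dbar\dbar^* v$ and $d=\dbar^* v$; but now the nontrivial condition sits on $c$, and $\dbar\dbar^* v\in\mathrm{Dom}\,\dbar^*$ is \emph{not} automatic. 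The domain $\mathrm{Dom}\,\dbar^*$ (for smooth forms: $\gamma(\dbar r)^{\wedge,*}(\cdot)=0$) is not preserved by $\dbar$, and neither $v\in D^{(q-1)}$ nor Lemma~3.3 gives $\gamma(\dbar r)^{\wedge,*}\dbar\dbar^* v=0$. Concretely, if you run the Green's formula (Lemma~3.1 of the paper) on $(\dbar\dbar^* v,\dbar\dbar^* v)_M$ you are left with the boundary term $\bigl(\gamma\dbar^* v\,\big|\,\gamma(\dbar r)^{\wedge,*}\dbar\dbar^* v\bigr)_\Gamma$, which is not obviously zero, since both entries lie in $\Lambda^{0,q-2}T^*(\Gamma)$.

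The conclusion $\dbar\dbar^* v=0$ is nonetheless true, but you must split differently so that the $\mathrm{Dom}\,\dbar^*$ condition always falls on a form where Lemma~3.3 delivers it. Use $\dbar\dbar^* v=\Box^{(q-1)}v-\dbar^*\dbar v=\dbar^* u-\dbar^*\dbar v$ and pair:
\[
\|\dbar\dbar^* v\|^2=(\dbar(\dbar^* v),\,\dbar^* u)_M-(\dbar(\dbar^* v),\,\dbar^*\dbar v)_M.
\]
In the first term integrate by parts onto $\dbar^* u$, which is in $\mathrm{Dom}\,\dbar^*$ by Lemma~3.3 applied to $u$, giving $(\dbar^* v,\dbar^*\dbar^* u)_M=0$; in the second integrate by parts onto $\dbar^*\dbar v$, which is in $\mathrm{Dom}\,\dbar^*$ by Lemma~3.3 applied to $\dbar v$ (legitimate since $\dbar v\in\mathrm{Dom}\,\dbar^*\cap C^\infty$ from $v\in D^{(q-1)}$), giving $(\dbar^* v,\dbar^*\dbar^*\dbar v)_M=0$. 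This is the piece of domain bookkeeping you correctly anticipated as ``the main obstacle'' but did not resolve: the $\dbar$--$\dbar^*$ roles are not symmetric here because only $\mathrm{Dom}\,\dbar^*$, not $\mathrm{Dom}\,\dbar$, carries a boundary condition.
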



\section{The operator $T$}

First, we claim that $\Td F^{(q)}$ is injective, where $\Td F^{(q)}$ is given by (\ref{Be:0809071609}).
If $u\in{\rm Ker\,}\Td F^{(q)}$, then $u\in C^\infty(\ol M;\, \Lambda^{0,q}T^*(M' ))$, $\Td\Box^{(q)}_fu=0$
and $\gamma u=0$.
We can check that
\begin{align*}
(\Td\Box^{(q)}_fu\ |\ u)_M&=(\Box^{(q)}_fu\ |\ u)_M+(K^{(q)}u\ |\ u)_M \\
&=(\dbar u\ |\ \dbar u)_M+(\ol{\pr}^*_fu\ |\ \ol{\pr}^*_fu)_M+(K^{(q)}u\ |\ K^{(q)}u)_M=0.
\end{align*}
Thus, $u\in {\rm Ker\,}F^{(q)}\bigcap {\rm Ker\,}K^{(q)}$ ($F^{(q)}$ is given by (\ref{Be:08090715350})). We get $u=0$. Hence,
$\Td F^{(q)}$ is injective.
The Poisson operator
\[\Td P: C^\infty(\Gamma;\, \Lambda^{0,q}T^*(M'))\To C^\infty(\ol M;\, \Lambda^{0,q}T^*(M'))\]
of $\Td\Box^{(q)}_f$ is well-defined. That is, if
$u\in C^\infty(\Gamma;\, \Lambda^{0,q}T^*(M'))$,
then
\[\Td Pu\in C^\infty(\ol M;\, \Lambda^{0,q}T^*(M')),\ \ \Td\Box^{(q)}_f\Td Pu=0,\ \
\gamma\Td Pu=u.\]
Moreover, if $v\in C^\infty(\ol M;\, \Lambda^{0,q}T^*(M'))$ and $\Td\Box^{(q)}_fv=0$, then
$v=\Td P\gamma v$. Furthermore, it is straight forward to see that
\begin{equation} \label{Be:0809081416}
\dbar\Td P u=\Td P\gamma\dbar\Td Pu,\ \ \ol{\pr}^*_f\Td Pu=\Td P\gamma\ol{\pr}^*_f\Td Pu,\ \ u\in C^\infty(\Gamma;\, \Lambda^{0,q}T^*(M')).
\end{equation}

We recall that (see section $1$) $\Td P$ extends continuously
\[\Td P:H^{s}(\Gamma;\, \Lambda^{0,q}T^*(M'))\To H^{s+\frac{1}{2}}(\ol M;\, \Lambda^{0,q}T^*(M')),\ \ \forall\ s\in\Real.\]
As in section $1$, let $\Td P^*:\mathscr E'(\ol M;\, \Lambda^{0,q}T^*(M'))\To\mathscr D'(\Gamma;\, \Lambda^{0,q}T^*(M'))$
be the operator defined by
\[(\Td P^*u\ |\ v)_\Gamma=(u\ |\ \Td Pv)_M,\]
$u\in\mathscr E'(\ol M;\, \Lambda^{0,q}T^*(M'))$, $v\in C^\infty(\Gamma;\, \Lambda^{0,q}T^*(M'))$.
We recall that (see section $1$) $\Td P^*$ is continuous:
\[\Td P^*:L^2(M;\, \Lambda^{0,q}T^*(M'))\To H^{\frac{1}{2}}(\Gamma;\, \Lambda^{0,q}T^*(M'))\]
and $\Td P^*:C^\infty(\ol M;\, \Lambda^{0,q}T^*(M'))\To C^\infty(\Gamma;\, \Lambda^{0,q}T^*(M'))$.

Let $L$ be a classical pseudodifferential operator on a $C^\infty$ manifold. From now on, we let $\sigma_L$ denote the principal symbol of $L$.
The operator
\[\Td P^*\Td P:C^\infty(\Gamma;\, \Lambda^{0,q}T^*(M'))\To C^\infty(\Gamma;\, \Lambda^{0,q}T^*(M'))\]
is a classical elliptic pseudodifferential operator of order $-1$ and invertible (since $\Td P$ is injective). (See Boutet de Monvel~\cite{Bou66}.)
Let $\La_\Gamma$ be the real Laplacian on $\Gamma$ and
let $\sqrt{-\La_\Gamma}$ be the square root of $-\La_\Gamma$. It is well-known (see~\cite{Bou66}) that
\begin{equation} \label{Be:0801041932}
\sigma_{\Td P^*\Td P}=\sigma_{(2\sqrt{-\La_\Gamma})^{-1}}.
\end{equation}
Let $(\Td P^*\Td P)^{-1}:C^\infty(\Gamma;\, \Lambda^{0,q}T^*(M'))\To C^\infty(\Gamma;\, \Lambda^{0,q}T^*(M'))$
be the inverse of $\Td P^*\Td P$. $(\Td P^*\Td P)^{-1}$ is a classical elliptic pseudodifferential operator of order $1$ with scalar principal symbol. We have
\begin{equation} \label{Be:0711021310}
\sigma_{(\Td P^*\Td P)^{-1}}=\sigma_{2\sqrt{-\La_\Gamma}}.
\end{equation}

\begin{defn} \label{Bd:0711021315}
The Neumann operator $\mathscr N^{(q)}$ is the operator on the space $C^\infty(\Gamma;\, \Lambda^{0,q}T^*(M'))$ defined as follows:
\[\mathscr N^{(q)}f=\gamma\frac{\pr}{\pr r}\Td Pf,\ \ f\in C^\infty(\Gamma;\, \Lambda^{0,q}T^*(M')).\]
\end{defn}

The following is well-known (see page $95$ of Greiner-Stein~\cite{GS77})

\begin{lem} \label{Bl:0711021315}
$\mathscr N^{(q)}:C^\infty(\Gamma;\, \Lambda^{0,q}T^*(M'))\To C^\infty(\Gamma;\, \Lambda^{0,q}T^*(M'))$
is a classical elliptic pseudodifferential operator of order $1$ with scalar principal symbol and we have
\begin{equation} \label{Be:0711021317}
\sigma_{\mathscr N^{(q)}}=\sigma_{\sqrt{-\La_\Gamma}}.
\end{equation}
\end{lem}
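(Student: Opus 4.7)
The plan is to reduce the statement to a scalar, constant-coefficient model problem at each boundary point and then quote the classical theory of elliptic boundary value problems (Boutet de Monvel's transmission calculus, as presented in Greiner--Stein~\cite{GS77}). The key observation is that the principal symbol of $\tilde\Box^{(q)}_f$ is scalar: since $K^{(q)}$ is smoothing by (\ref{Be:0809071607}), the full principal symbol of $\tilde\Box^{(q)}_f=\Box^{(q)}_f+K^{(q)}$ equals that of $\Box^{(q)}_f=\dbar\,\ol{\pr_f}^*+\ol{\pr_f}^*\dbar$. A direct symbol computation shows that $\sigma_{\Box^{(q)}_f}(z,\xi)=\tfrac12\abs{\xi}^2\,\mathrm{Id}_{\Lambda^{0,q}T^*_z(M')}$, so $\tilde\Box^{(q)}_f$ is an elliptic second order differential operator with scalar principal symbol agreeing with that of the real Laplacian on $M'$ (up to the factor $\tfrac12$).

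First I would verify that the Dirichlet problem for $\tilde\Box^{(q)}_f$ fits into Boutet de Monvel's framework: by the injectivity of $\tilde F^{(q)}$ established at the beginning of the section, the Dirichlet boundary condition $\gamma u=f$ is a well-posed elliptic boundary condition in the sense of Lopatinski--Shapiro (this condition is automatic for elliptic operators with scalar principal symbol that agrees with a Riemannian Laplacian). Consequently $\tilde P$ is a classical Poisson operator of order $0$ in the transmission calculus, and its normal derivative at the boundary
\[
\mathscr N^{(q)}=\gamma\frac{\pr}{\pr r}\tilde P
\]
is, by the standard composition rules for the transmission calculus, a classical pseudodifferential operator of order $1$ on $\Gamma$ acting on sections of $\Lambda^{0,q}T^*(M')|_\Gamma$.

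Next I would compute the principal symbol by freezing coefficients at a point $p\in\Gamma$ and passing to the half-space model. Choose local coordinates in which $r$ is the normal coordinate, the metric is Euclidean at $p$, and $\Gamma$ becomes $\set{r=0}$. At the principal symbol level the equation $\tilde\Box^{(q)}_f u=0$ becomes the scalar model equation
\[
\Bigl(-\frac{\pr^2}{\pr r^2}+\abs{\xi'}^2\Bigr)\hat u(r,\xi')=0,\qquad r<0,
\]
on each component of the form bundle, where $\xi'$ is the covariable dual to the tangential coordinates. The bounded solution with boundary value $\hat f(\xi')$ is $\hat u(r,\xi')=e^{\abs{\xi'}r}\hat f(\xi')$, so $\frac{\pr}{\pr r}\hat u(0,\xi')=\abs{\xi'}\hat f(\xi')$, which shows
\[
\sigma_{\mathscr N^{(q)}}(z,\xi')=\abs{\xi'}\,\mathrm{Id}=\sigma_{\sqrt{-\La_\Gamma}}(z,\xi').
\]
In particular the principal symbol is scalar and elliptic, giving both the ellipticity of $\mathscr N^{(q)}$ and the identification (\ref{Be:0711021317}).

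The main obstacle in this plan is the first step, namely the careful verification that $\tilde\Box^{(q)}_f$ really has scalar principal symbol $\tfrac12\abs{\xi}^2\,\mathrm{Id}$ on the form bundle; this is a well-known computation but relies on writing $\Box^{(q)}_f$ in a local orthonormal frame of $(0,1)$ forms and observing that the non-scalar contributions of $\dbar$ and $\ol{\pr_f}^*$ cancel at the symbol level, exactly as for the Hodge Laplacian. Once this is in hand, the rest of the argument is an application of general elliptic boundary value theory and does not require anything specific to the complex structure.
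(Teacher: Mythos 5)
The paper gives no proof of this lemma, merely citing Greiner--Stein page 95, so you are supplying the argument that the citation is meant to cover. Your proposal is correct and is precisely the standard argument: reduce to the constant-coefficient half-space model, solve the ODE $(-\pr_r^2+\abs{\xi'}^2)\hat u=0$ on $r<0$ with bounded behavior as $r\To-\infty$, and read off the symbol $\abs{\xi'}$ from the outgoing normal derivative at $r=0$. Your symbol computation for $\Box^{(q)}_f$ is also consistent with the paper's local formula (\ref{Be:BK-har.070613.1})--(\ref{Be:BK-har.070613.3}), which gives $q_0(z,\zeta)=\tfrac12\sum h^{j,k}\zeta_j\zeta_k$ and in particular $q_0(x,\zeta)=\tfrac12(\theta^2-\sigma_{\La_\Gamma}(x,\xi))$ on $\Gamma$. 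One small bookkeeping point worth separating out: the injectivity of $\Td F^{(q)}$ is what makes $\Td P$ \emph{well-defined} (no kernel), whereas the Lopatinski--Shapiro/Shapiro--Lopatinski condition is a purely symbol-level property that holds automatically for the Dirichlet problem for any second-order elliptic operator with scalar real principal symbol, as your parenthetical already notes; these two facts serve different purposes and should not be conflated as you do in the first half of that sentence.
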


We use the inner product $[\ |\ ]$ on $H^{-\frac{1}{2}}(\Gamma;\, \Lambda^{0,q}T^*(M'))$ defined as follows:
\begin{equation} \label{Be:0801042209}
[u\ |\ v]=(\Td Pu\ |\ \Td Pv)_M=(\Td P^*\Td Pu\ |\ v)_\Gamma,
\end{equation}
where $u$, $v\in H^{-\frac{1}{2}}(\Gamma;\, \Lambda^{0,q}T^*(M'))$. We consider $(\dbar r)^{\wedge, *}$ as an operator
\[(\dbar r)^{\wedge, *}:H^{-\frac{1}{2}}(\Gamma;\, \Lambda^{0,q}T^*(M'))\To H^{-\frac{1}{2}}(\Gamma;\, \Lambda^{0,q-1}T^*(M')).\]
Let
\begin{equation} \label{Be:0712302117}
T: H^{-\frac{1}{2}}(\Gamma;\, \Lambda^{0,q}T^*(M'))\To {\rm Ker\,}(\dbar r)^{\wedge, *}=H^{-\frac{1}{2}}(\Gamma;\, \Lambda^{0,q}T^*(\Gamma))
\end{equation}
be the orthogonal projection onto ${\rm Ker\,}(\dbar r)^{\wedge, *}$ with respect to $[\ |\ ]$. That is,
if $u\in H^{-\frac{1}{2}}(\Gamma;\, \Lambda^{0,q}T^*(M'))$, then
$(\dbar r)^{\wedge, *}Tu=0$
and $[(I-T)u\ |\ g]=0,\ \ \forall\ g\in{\rm Ker\,}(\dbar r)^{\wedge, *}$.

\begin{lem} \label{Bl:0712302115}
$T$ is a classical pseudodifferential operator of order $0$ with principal symbol
$2(\dbar r)^{\wedge, *}(\dbar r)^\wedge$.
Moreover,
\begin{equation} \label{Be:0801201632}
I-T=(\Td P^*\Td P)^{-1}(\dbar r)^\wedge R,
\end{equation}
where $R:C^\infty(\Gamma;\, \Lambda^{0,q}T^*(M'))\To C^\infty(\Gamma;\, \Lambda^{0,q-1}T^*(M'))$
is a classical pseudodifferential operator of order $-1$.
\end{lem}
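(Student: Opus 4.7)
The plan is to exploit the characterization of $T$ as an $[\,\cdot\,|\,\cdot\,]$-orthogonal projection and convert it into a system of elliptic pseudodifferential equations on $\Gamma$. First, I observe the pointwise orthogonal decomposition
\[\Lambda^{0,q}T^*(M')|_\Gamma=\Lambda^{0,q}T^*(\Gamma)\oplus(\dbar r)^\wedge\Lambda^{0,q-1}T^*(\Gamma)\]
with respect to $(\ |\ )$, together with the pointwise anticommutation
\[(\dbar r)^\wedge(\dbar r)^{\wedge,*}+(\dbar r)^{\wedge,*}(\dbar r)^\wedge=\norm{\dbar r}^2\,I=\tfrac{1}{2}I.\]
For $u\in H^{-\frac{1}{2}}(\Gamma;\,\Lambda^{0,q}T^*(M'))$, the defining properties of $T$ read: $(\dbar r)^{\wedge,*}(I-T)u=(\dbar r)^{\wedge,*}u$ (since $(\dbar r)^{\wedge,*}Tu=0$), and $(\Td P^*\Td P(I-T)u\,|\,g)_\Gamma=0$ for every $g\in C^\infty(\Gamma;\,\Lambda^{0,q}T^*(\Gamma))$. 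The second condition forces $\Td P^*\Td P(I-T)u$ to be pointwise orthogonal to $\Lambda^{0,q}T^*(\Gamma)$, hence $\Td P^*\Td P(I-T)u=(\dbar r)^\wedge h$ for a unique $h\in H^{\frac{1}{2}}(\Gamma;\,\Lambda^{0,q-1}T^*(\Gamma))$. Applying $(\Td P^*\Td P)^{-1}$ and then $(\dbar r)^{\wedge,*}$ produces
\[(I-T)u=(\Td P^*\Td P)^{-1}(\dbar r)^\wedge h,\qquad Qh=(\dbar r)^{\wedge,*}u,\]
where $Q:=(\dbar r)^{\wedge,*}(\Td P^*\Td P)^{-1}(\dbar r)^\wedge$.

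Next I would analyse $Q$ as an operator on sections of $\Lambda^{0,q-1}T^*(\Gamma)$. As a composition of classical pseudodifferential operators, $Q$ is classical of order $1$ with principal symbol $\sigma_{(\Td P^*\Td P)^{-1}}(\dbar r)^{\wedge,*}(\dbar r)^\wedge$; on $\Lambda^{0,q-1}T^*(\Gamma)=\ker(\dbar r)^{\wedge,*}$ the pointwise factor equals $\tfrac{1}{2}I$, so by (\ref{Be:0711021310}) its principal symbol is $\sigma_{\sqrt{-\La_\Gamma}}$, which is nonvanishing outside the zero section. Thus $Q$ is elliptic on this subbundle. Moreover, since $(\Td P^*\Td P)^{-1}$ is formally self-adjoint and positive (because $(\Td P^*\Td Pu\,|\,u)_\Gamma=\norm{\Td Pu}^2_M$ vanishes only for $u=0$), and $(\dbar r)^\wedge$ and $(\dbar r)^{\wedge,*}$ are pointwise adjoints, $Q$ is formally self-adjoint with respect to $(\ |\ )_\Gamma$. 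And $Qh=0$ implies $((\Td P^*\Td P)^{-1}(\dbar r)^\wedge h\,|\,(\dbar r)^\wedge h)_\Gamma=0$, so $(\dbar r)^\wedge h=0$ by positivity, and then $h=0$ because $(\dbar r)^{\wedge,*}(\dbar r)^\wedge=\tfrac{1}{2}I$ on $\Lambda^{0,q-1}T^*(\Gamma)$. Being elliptic, self-adjoint, and injective on the compact manifold $\Gamma$, $Q$ is a bijection with $Q^{-1}$ a classical pseudodifferential operator of order $-1$ on $\Lambda^{0,q-1}T^*(\Gamma)$.

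Setting $R:=Q^{-1}(\dbar r)^{\wedge,*}$, which is a classical pseudodifferential operator of order $-1$ sending $C^\infty(\Gamma;\,\Lambda^{0,q}T^*(M'))$ into $C^\infty(\Gamma;\,\Lambda^{0,q-1}T^*(\Gamma))$, the first paragraph gives $h=Ru$ and the identity
\[I-T=(\Td P^*\Td P)^{-1}(\dbar r)^\wedge R\]
of (\ref{Be:0801201632}). Counting orders $1+0+(-1)=0$ shows that $I-T$, and hence $T$, is a classical pseudodifferential operator of order $0$. For the principal symbol,
\[\sigma_{I-T}=\sigma_{(\Td P^*\Td P)^{-1}}(\dbar r)^\wedge\sigma_{Q^{-1}}(\dbar r)^{\wedge,*}=2(\dbar r)^\wedge(\dbar r)^{\wedge,*},\]
and the anticommutation relation above then gives $\sigma_T=I-2(\dbar r)^\wedge(\dbar r)^{\wedge,*}=2(\dbar r)^{\wedge,*}(\dbar r)^\wedge$, as claimed.

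The main obstacle is demonstrating that $Q$ is genuinely invertible and not merely admits a parametrix: this is what promotes the representation of $I-T$ from an equality modulo smoothing operators to an exact identity, and this is precisely the step that uses the positivity of $\Td P^*\Td P$.
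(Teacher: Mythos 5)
Your proof is correct, and it takes a genuinely different route from the paper's. The paper builds an elliptic \emph{Hodge-type Laplacian} on the full bundle $\Lambda^{0,q}T^*(M')$,
\[E=2(\dbar r)^{\wedge, *}\bigl((\dbar r)^{\wedge, *}\bigl)^\dagger+2\bigl((\dbar r)^{\wedge, *}\bigl)^\dagger(\dbar r)^{\wedge, *},\]
with scalar principal symbol $I$, then uses its partial inverse $N$ and the (smoothing) projection $G$ onto $\ker E$ to produce a candidate $\Td T=2(\dbar r)^{\wedge, *}((\dbar r)^{\wedge, *})^\dagger N+G$, verifies that $\Td T$ satisfies the defining properties of the $[\,\cdot\,|\,\cdot\,]$-orthogonal projection, and concludes $\Td T=T$; the formula (\ref{Be:0801201632}) then follows with $R=2(\Td P^*\Td P)(\dbar r)^{\wedge,*}N$. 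You instead translate the two defining conditions of $T$ directly into the single elliptic equation $Qh=(\dbar r)^{\wedge,*}u$ on the smaller bundle $\Lambda^{0,q-1}T^*(\Gamma)$, with $Q=(\dbar r)^{\wedge,*}(\Td P^*\Td P)^{-1}(\dbar r)^\wedge$, and then pay for the directness by having to prove $Q$ is genuinely invertible (elliptic $+$ formally self-adjoint $+$ injective via the positivity of $\Td P^*\Td P$), rather than merely admitting a parametrix. What the paper's approach buys is that it never needs $E$ invertible (the kernel is just absorbed into the smoothing operator $G$ and then disappears because $\Td T$ is shown to be the \emph{exact} projection); what your approach buys is that the formula for $I-T$ drops out of the construction rather than being read off after a separate uniqueness argument. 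Both yield the same principal symbol and order count, and your derivation that $\sigma_{I-T}=2(\dbar r)^\wedge(\dbar r)^{\wedge,*}$, hence $\sigma_T=2(\dbar r)^{\wedge,*}(\dbar r)^\wedge$ via the anticommutation relation $2(\dbar r)^\wedge(\dbar r)^{\wedge,*}+2(\dbar r)^{\wedge,*}(\dbar r)^\wedge=I$ on $\Gamma$, is correct.
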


\begin{proof}
Let $E=2(\dbar r)^{\wedge, *}\bigl((\dbar r)^{\wedge, *}\bigl)^\dagger+2\bigl((\dbar r)^{\wedge, *}\bigl)^\dagger(\dbar r)^{\wedge, *}$,
\[E: H^{-\frac{1}{2}}(\Gamma;\, \Lambda^{0,q}T^*(M'))\To H^{-\frac{1}{2}}(\Gamma;\, \Lambda^{0,q}T^*(M')),\]
where $\bigl((\dbar r)^{\wedge, *}\bigl)^\dagger$ is the formal adjoint of $(\dbar r)^{\wedge, *}$ with respect to $[\ |\ ]$. That is,
\[[(\dbar r)^{\wedge, *}u\ |\ v]=[u\ |\ \bigl((\dbar r)^{\wedge, *}\bigl)^\dagger v],\]
$u\in H^{-\frac{1}{2}}(\Gamma;\, \Lambda^{0,q}T^*(M'))$, $v\in H^{-\frac{1}{2}}(\Gamma;\, \Lambda^{0,q-1}T^*(M'))$.
We can check that
\begin{equation} \label{Be:0712302130}
\bigl((\dbar r)^{\wedge, *}\bigl)^\dagger=(\Td P^*\Td P)^{-1}(\dbar r)^\wedge(\Td P^*\Td P).
\end{equation}
Thus, the principal symbol of $E$ is
$2(\dbar r)^{\wedge, *}(\dbar r)^\wedge+2(\dbar r)^\wedge(\dbar r)^{\wedge, *}$.
Since
\[\norm{dr}=1=(\norm{\bar\pr r}^2+\norm{\pr r}^2)^{\frac{1}{2}}\]
on $\Gamma$, we have
\begin{equation} \label{Be:0801082257}
\norm{\bar\pr r}^2=\norm{\pr r}^2=\frac{1}{2}\ \ \mbox{on}\ \ \Gamma.
\end{equation}
From this, we can check that
\[2(\dbar r)^{\wedge, *}(\dbar r)^\wedge+2(\dbar r)^\wedge(\dbar r)^{\wedge, *}=
I:H^{-\frac{1}{2}}(\Gamma;\, \Lambda^{0,q}T^*(M'))\To H^{-\frac{1}{2}}(\Gamma;\, \Lambda^{0,q}T^*(M')),\]
where $I$ is the identity map.
$E$ is a classical elliptic pseudodifferential operator with principal symbol $I$. Then $\dim{\rm Ker\,}E<\infty$. Let $G$ be the orthogonal
projection onto ${\rm Ker\,}E$ and $N$ be the partial inverse. Then $G$ is a
smoothing operator and $N$ is a classical elliptic pseudodifferential operator of order $0$
with principal symbol $I$ (up to some smoothing operator). We have
\begin{equation} \label{Be:0712302144}
EN+G=2\Bigr((\dbar r)^{\wedge, *}\bigl((\dbar r)^{\wedge, *}\bigl)^\dagger+
2\bigl((\dbar r)^{\wedge, *}\bigl)^\dagger(\dbar r)^{\wedge, *}\Bigr)N+G=I
\end{equation}
on $H^{-\frac{1}{2}}(\Gamma;\, \Lambda^{0,q}T^*(M'))$. Put
$\Td T=2(\dbar r)^{\wedge, *}\bigl((\dbar r)^{\wedge, *}\bigl)^\dagger N+G$.
Note that
\[{\rm Ker\,}E=\set{u\in
H^{-\frac{1}{2}}(\Gamma;\, \Lambda^{0,q}T^*(M'));\, (\dbar r)^{\wedge, *}u=0,\ \bigl((\dbar r)^{\wedge, *}\bigl)^\dagger u=0}.\]
From this and $(\dbar r)^{\wedge, *}\circ(\dbar r)^{\wedge, *}=0$, we see that
\[\Td Tg\in{\rm Ker\,}(\dbar r)^{\wedge, *},\ \ g\in H^{-\frac{1}{2}}(\Gamma;\, \Lambda^{0,q}T^*(M')).\]
From (\ref{Be:0712302144}), we have
$I-\Td T=2\bigl((\dbar r)^{\wedge, *}\bigl)^\dagger(\dbar r)^{\wedge, *}N$
and
\begin{align*}
[(I-\Td T)g\ |\ u]&=[2\bigl((\dbar r)^{\wedge, *}\bigl)^\dagger(\dbar r)^{\wedge, *}Ng\ |\ u] \\
&=[2(\dbar r)^{\wedge, *}Ng\ |\ (\dbar r)^{\wedge, *}u] \\
&=0,\ \ u\in{\rm Ker\,}(\dbar r)^{\wedge, *}, g\in H^{-\frac{1}{2}}(\Gamma;\, \Lambda^{0,q}T^*(M')).
\end{align*}
Thus, $g=\Td Tg+(I-\Td T)g$
is the orthogonal decomposition with respect to $[\ |\ ]$. Hence,
$\Td T=T$.
The lemma follows.
\end{proof}

Now, we assume that $Z(q)$ fails at some point of\, $\Gamma$ and that $Z(q-1)$ and $Z(q+1)$ hold at each point of\, $\Gamma$. Put
\begin{equation} \label{Be:0809092110}
\Td\Pi^{(q)}=\Pi^{(q)}(I-K^{(q)}),
\end{equation}
where $K^{(q)}$ is as in (\ref{Be:0809071557}). It is straight forward to see that
\begin{equation} \label{Be:0809071558}
\Td\Pi^{(q)}=\Pi^{(q)}-K^{(q)}=(I-K^{(q)})\Pi^{(q)}=\Td P\gamma\Td\Pi^{(q)}
\end{equation}
and $(\Td\Pi^{(q)})^2=\Td\Pi^{(q)},\ \ (\Td\Pi^{(q)})^*=\Td\Pi^{(q)}$,
where $(\Td\Pi^{(q)})^*$ is the formal adjoint of $\Td\Pi^{(q)}$ with respect to $(\ |\ )_M$.

\begin{prop} \label{Bp:0801042202}
We assume that $Z(q)$ fails at some point of\, $\Gamma$ and that $Z(q-1)$ and $Z(q+1)$ hold at each point of\, $\Gamma$. Then,
\begin{align} \label{Be:0801042303}
\Td\Pi^{(q)}u&=\Td\Pi^{(q)}\Td PT(\Td P^*\Td P)^{-1}\Td P^*u \nonumber \\
&=(I-K^{(q)})(I-\dbar N^{(q-1)}\dbar^*-\dbar^* N^{(q+1)}\dbar)\Td PT(\Td P^*\Td P)^{-1}\Td P^*u,\nonumber \\
&u\in C^\infty(\ol M;\, \Lambda^{0,q}T^*(M')),
\end{align}
where $N^{(q+1)}$, $N^{(q-1)}$ are as in Theorem~\ref{Bt:0710301305}, $T$ is as in (\ref{Be:0712302117}).
In particular,
\[\Td\Pi^{(q)}:C^\infty(\ol M;\, \Lambda^{0,q}T^*(M'))\To D^{(q)}.\]
\end{prop}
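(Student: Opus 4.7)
Set $\Pi_0 := \Td P\, T\,(\Td P^*\Td P)^{-1}\Td P^*$ and
\[H^0 := \set{v\in{\rm Ran\,}\Td P;\, \gamma v\in{\rm Ker\,}(\dbar r)^{\wedge,*}}.\]
The strategy is first to identify $\Pi_0$ as the $L^2(M)$-orthogonal projection onto $H^0$, then to show that ${\rm Ran\,}\Td\Pi^{(q)}\subset H^0$; the displayed equalities will follow by a duality argument together with Theorem~\ref{Bt:0710301306}.

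Idempotency $\Pi_0^2=\Pi_0$ is immediate from $T^2=T$ and $(\Td P^*\Td P)^{-1}\Td P^*\Td P=I$. Self-adjointness in $L^2(M)$ is the delicate point. Since $T$ is the $[\,\cdot\,|\,\cdot\,]=((\Td P^*\Td P)\,\cdot\,|\,\cdot\,)_\Gamma$-orthogonal projection, taking $L^2(\Gamma)$-adjoints in the identity $[Ta\,|\,b]=[a\,|\,Tb]$ yields $T^*_{L^2(\Gamma)}=(\Td P^*\Td P)\,T\,(\Td P^*\Td P)^{-1}$. Substituting this into the $L^2(M)$-adjoint of $\Pi_0$ makes the factors collapse and produces $\Pi_0^*=\Pi_0$. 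For the range, $\gamma\Pi_0 u = T(\Td P^*\Td P)^{-1}\Td P^*u\in{\rm Ker\,}(\dbar r)^{\wedge,*}$ yields ${\rm Ran\,}\Pi_0\subset H^0$; conversely if $v=\Td P w\in H^0$ then $Tw=w$ and a short calculation gives $\Pi_0 v=v$.

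For ${\rm Ran\,}\Td\Pi^{(q)}\subset H^0$, an energy identity based on Lemma~\ref{Bl:BP-integral} shows that any $u\in C^\infty(\ol M;\,\Lambda^{0,q}T^*(M'))$ with $\Box^{(q)}_fu=0$ and $\gamma u=0$ must satisfy $\dbar u=\ol{\pr_f}^*u=0$; hence ${\rm Ker\,}F^{(q)}\subset{\rm Ker\,}\Box^{(q)}\cap D^{(q)}$ and therefore $K^{(q)}\Pi^{(q)}=K^{(q)}$, confirming the decomposition $\Td\Pi^{(q)}=\Pi^{(q)}-K^{(q)}$ already recorded as (\ref{Be:0809071558}). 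Applying $\Td\Box^{(q)}_f$ annihilates the right-hand side, so ${\rm Ran\,}\Td\Pi^{(q)}\subset{\rm Ran\,}\Td P$; and since $\Pi^{(q)}u\in{\rm Dom\,}\dbar^*$ while $\gamma K^{(q)}u=0$, the boundary condition $\gamma(\dbar r)^{\wedge,*}\Td\Pi^{(q)}u=0$ holds as well. Hence $\Td\Pi^{(q)}u\in H^0$.

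With both facts in hand, $u-\Pi_0 u$ is $L^2(M)$-orthogonal to $H^0$ and therefore to ${\rm Ran\,}\Td\Pi^{(q)}$, so $\Td\Pi^{(q)}(u-\Pi_0u)=0$, which is the first displayed equality. For the second, the smoothness of $\Td P^*$ together with the pseudodifferential character of $(\Td P^*\Td P)^{-1}$ and $T$ (Lemma~\ref{Bl:0712302115}) and the regularity of $\Td P$ imply $\Pi_0 u\in C^\infty(\ol M;\,\Lambda^{0,q}T^*(M'))$; the identity $\gamma\Pi_0 u\in{\rm Ker\,}(\dbar r)^{\wedge,*}$ then places $\Pi_0 u$ in ${\rm Dom\,}\dbar^*$. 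Theorem~\ref{Bt:0710301306} now applies to $\Pi_0 u$ and yields the bracketed representation of $\Pi^{(q)}\Pi_0u$; multiplying on the left by $(I-K^{(q)})$ gives the second equality. The concluding claim $\Td\Pi^{(q)}:C^\infty(\ol M;\,\Lambda^{0,q}T^*(M'))\to D^{(q)}$ combines $\Pi^{(q)}({\rm Dom\,}\dbar^*\cap C^\infty(\ol M))\subset D^{(q)}$ from Theorem~\ref{Bt:0710301306} with ${\rm Ker\,}F^{(q)}\subset D^{(q)}$. The main technical hurdle is the self-adjointness conjugation identity for $T$: the entire construction is rigged so that the $(\Td P^*\Td P)^{-1}$ and $T$ factors combine into a genuine $L^2(M)$-orthogonal projection.
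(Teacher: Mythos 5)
Your proposal is correct and follows essentially the same duality/pairing argument as the paper. The paper's proof pairs $\Td\Pi^{(q)}\Td PT(\Td P^*\Td P)^{-1}\Td P^*u$ directly against $\Td\Pi^{(q)}v$ for $v\in{\rm Dom\,}\dbar^*\cap C^\infty(\ol M)$ and uses that $\Td\Pi^{(q)}v=\Td P\gamma\Td\Pi^{(q)}v$ with $\gamma\Td\Pi^{(q)}v\in{\rm Ker\,}(\dbar r)^{\wedge,*}$ to let the $T$ and $\Td P(\Td P^*\Td P)^{-1}\Td P^*$ factors drop out one at a time, then finishes by density and self-adjointness of $\Td\Pi^{(q)}$. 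You organize the same ingredients differently — first establishing that $\Pi_0=\Td PT(\Td P^*\Td P)^{-1}\Td P^*$ is the $L^2(M)$-orthogonal projection onto $H^0=\{v\in{\rm Ran\,}\Td P:\gamma v\in{\rm Ker\,}(\dbar r)^{\wedge,*}\}$ via the conjugation identity $T^*_{L^2(\Gamma)}=(\Td P^*\Td P)T(\Td P^*\Td P)^{-1}$, then observing ${\rm Ran\,}\Td\Pi^{(q)}\subset H^0$ (on a dense subspace) and concluding by orthogonality. This is a cleaner conceptual packaging of the same computation, and buys you an explicit identification of the role $\Pi_0$ plays, at the cost of an extra algebraic lemma the paper avoids by keeping everything in-line. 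One small point you should spell out: the inclusion $\Td\Pi^{(q)}u\in H^0$ is established only for $u\in{\rm Dom\,}\dbar^*\cap C^\infty(\ol M)$ (since that is the hypothesis under which Theorem~\ref{Bt:0710301306} places $\Pi^{(q)}u$ in $D^{(q)}$), and the passage to general $u$ requires the density-plus-continuity step that the paper performs explicitly; your sentence ``therefore to ${\rm Ran\,}\Td\Pi^{(q)}$'' elides this.
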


\begin{proof}
Let $v\in{\rm Dom\,}\dbar^*\bigcap C^\infty(\ol M;\, \Lambda^{0,q}T^*(M'))$. From Theorem~\ref{Bt:0710301306} and (\ref{Be:0809071558}),
we see that $\Td\Pi^{(q)}v\in D^{(q)}$ and $\Td\Pi^{(q)}v=\Td P\gamma\Td\Pi^{(q)}v$.
Note that
\[(f-\Td P(\Td P^*\Td P)^{-1}\Td P^*f\ |\ \Td P\gamma g)_M=0,\ \ f, g\in C^\infty(\ol M;\, \Lambda^{0,q}T^*(M')).\]
We have
\begin{align} \label{Be:0801051031}
&(\Td\Pi^{(q)}\Td PT(\Td P^*\Td P)^{-1}\Td P^*u\ |\ \Td\Pi^{(q)}v)_M \nonumber\\
&\quad=(\Td PT(\Td P^*\Td P)^{-1}\Td P^*u\ |\ \Td \Pi^{(q)}v)_M \nonumber \\
&\quad=(\Td PT(\Td P^*\Td P)^{-1}\Td P^*u\ |\ \Td P\gamma\Td\Pi^{(q)}v)_M \nonumber \\
&\quad=(\Td P(\Td P^*\Td P)^{-1}\Td P^*u\ |\ \Td P\gamma\Td\Pi^{(q)}v)_M \nonumber \\
&\quad=(u\ |\ \Td\Pi^{(q)}v)_M.
\end{align}
Thus,
\[(u-\Td\Pi^{(q)}\Td PT(\Td P^*\Td P)^{-1}\Td P^*u\ |\ \Td\Pi^{(q)}v)_M=0.\]
Since ${\rm Dom\,}\dbar^*\bigcap C^\infty(\ol M;\, \Lambda^{0,q}T^*(M'))$ is dense in $L^2(M;\, \Lambda^{0,q}T^*(M'))$, we get
$(u-\Td\Pi^{(q)}\Td PT(\Td P^*\Td P)^{-1}\Td P^*u\ |\ \Td\Pi^{(q)}v)_M=0$,
for all $v\in L^2(M;\, \Lambda^{0,q}T^*(M'))$.
Thus, $\Td\Pi^{(q)}u=\Td\Pi^{(q)}\Td PT(\Td P^*\Td P)^{-1}\Td P^*u$.

Since
\[\Td PT(\Td P^*\Td P)^{-1}\Td P^*u\in{\rm Dom\,}\dbar^*\bigcap C^\infty(\ol M;\, \Lambda^{0,q}T^*(M')),\]
from (\ref{Be:0809102247}) and (\ref{Be:0809071558}), we get the last identity in (\ref{Be:0801042303}).
The proposition follows.
\end{proof}


\section{The principal symbols of $\gamma\dbar \Td P$ and $\gamma\ol{\pr_f}^*\Td P$}

First, we compute the principal symbols of $\dbar$ and $\ol{\pr_f}^*$.
For each point $z_0\in\Gamma$, we can choose an orthonormal frame
$t_1(z),\ldots,t_{n-1}(z)$
for $T^{*,0,1}_z$ varying smoothly with $z$ in a neighborhood of $z_0$, where $T^{*0,1}_z$ is defined by (\ref{Be:0711012320}).
Then (see (\ref{Be:0804010847}))
$t_1(z),\ldots,t_{n-1}(z),t_n(z):=\frac{\dbar r(z)}{\norm{\dbar r(z)}}$
is an orthonormal frame for $\Lambda^{0,1}T^*_z(M')$. Let
\[T_1(z),\ldots,T_{n-1}(z),T_n(z)\]
denote the basis of $\Lambda^{0,1}T_z(M')$ which is dual to
$t_1(z),\ldots,t_n(z)$.
We have (see (\ref{Be:0804010848})) $T_n=\frac{iY+\frac{\pr}{\pr r}}{\norm{iY+\frac{\pr}{\pr r}}}$.
Note that
\begin{equation} \label{Be:0711012340}
T_1(z),\ldots,T_{n-1}(z)\ \
\mbox{is an orthonormal frame for}\ \Lambda^{0,1}T_z(\Gamma),\ z\in\Gamma,
\end{equation}
and
\begin{equation} \label{Be:0711012345}
t_1(z),\ldots, t_{n-1}(z)\ \
\mbox{is an orthonormal frame for}\ \Lambda^{0,1}T^*_z(\Gamma),\ z\in\Gamma.
\end{equation}

We have
$\dbar f=\Bigr(\sum^{n}_{j=1}t_j^\wedge T_j\Bigr)f,\ f\in C^\infty(M')$.
If
\[f(z)t_{j_1}(z)\wedge\cdots\wedge t_{j_q}(z)\in C^\infty(M';\, \Lambda^{0,q}T^*(M'))\]
is a typical term in a general $(0,q)$ form, we have
\begin{align*}
\dbar f&=\sum^{n}_{j=1}(T_jf)t^\wedge_jt_{j_1}\wedge\cdots\wedge t_{j_q} \\
&\quad+\sum^q_{k=1}(-1)^{k-1}f(z)t_{j_1}\wedge\cdots\wedge(\dbar t_{j_k})\wedge\cdots\wedge t_{j_q}.
\end{align*}
So for the given orthonormal frame we have
\begin{align} \label{Be:0711011159}
\dbar&=\sum^n_{j=1}t_j^\wedge\circ T_j+\mbox{lower order terms} \nonumber \\
&=\sum^{n-1}_{j=1}t_j^\wedge\circ T_j+
\frac{(\dbar r)^\wedge}{\norm{\dbar r}}\circ\frac{iY+\frac{\pr}{\pr r}}{\norm{iY+\frac{\pr}{\pr r}}}+\mbox{lower order terms}
\end{align}
and correspondingly
\begin{equation} \label{Be:0711020000}
\ol{\pr_f}^*=\sum^{n-1}_{j=1}t_j^{\wedge, *}\circ T^*_j+
\frac{(\dbar r)^{\wedge, *}}{\norm{\dbar r}}\circ\frac{iY-\frac{\pr}{\pr r}}{\norm{iY+\frac{\pr}{\pr r}}}+\mbox{lower order terms}.
\end{equation}

We consider
\[\gamma\dbar \Td P:C^\infty(\Gamma;\, \Lambda^{0,q}T^*(M'))\To C^\infty(\Gamma;\, \Lambda^{0,q+1}T^*(M'))\]
and
\[\gamma\ol{\pr_f}^*\Td P:C^\infty(\Gamma;\, \Lambda^{0,q+1}T^*(M'))\To C^\infty(\Gamma;\, \Lambda^{0,q}T^*(M')).\]
$\gamma\dbar \Td P$ and $\gamma\ol{\pr_f}^*\Td P$ are classical pseudodifferential operators of order $1$.
From (\ref{Be:0801082257}), we know that
$\norm{\dbar r}=\frac{1}{\sqrt{2}}$ on $\Gamma$.
We can check that
$\norm{iY+\frac{\pr}{\pr r}}=\sqrt{2}$ on $\Gamma$.
Combining this with (\ref{Be:0711011159}), (\ref{Be:0711020000}) and (\ref{Be:0711021317}), we get
\begin{equation} \label{Be:0801052149}
\gamma\dbar \Td P=\sum^{n-1}_{j=1}t^\wedge_j\circ T_j+(\dbar r)^\wedge\circ(iY+\sqrt{-\La_\Gamma})+\mbox{lower order terms}
\end{equation}
and
\begin{equation} \label{Be:0801052150}
\gamma\ol{\pr_f}^*\Td P=\sum^{n-1}_{j=1}t^{\wedge, *}_j\circ T^*_j+(\dbar r)^{\wedge, *}\circ(iY-\sqrt{-\La_\Gamma})+
\mbox{lower order terms}.
\end{equation}
From Lemma~\ref{Bl:0710312345}, it follows that
\begin{equation} \label{Be:0801051840}
\gamma\ol{\pr_f}^*\Td P:C^\infty(\Gamma;\, \Lambda^{0,q+1}T^*(\Gamma))\To C^\infty(\Gamma;\, \Lambda^{0,q}T^*(\Gamma)).
\end{equation}
Put
\begin{align} \label{Be:BP-4.1}
& \Sigma^+=\set{(x, \lambda\omega_0(x))\in T^*(\Gamma)\smallsetminus 0;\, \lambda>0},\nonumber   \\
& \Sigma^-=\set{(x, \lambda \omega_0(x))\in T^*(\Gamma)\smallsetminus 0;\, \lambda<0}.
\end{align}
We recall that $\omega_0=J^t(dr)$. In section $8$, we need the following

\begin{prop} \label{Bp:0801102222}
The map
$\gamma(\dbar r)^{\wedge, *}\dbar \Td P:C^\infty(\Gamma;\, \Lambda^{0,q}T^*(\Gamma))\To C^\infty(\Gamma;\, \Lambda^{0,q}T^*(\Gamma))$
is a classical
pseudodifferential operator of order one from boundary $(0, q)$ forms to boundary $(0, q)$ forms and we have
\begin{equation} \label{Be:0801082320}
\gamma(\dbar r)^{\wedge, *}\dbar \Td P=\frac{1}{2}(iY+\sqrt{-\La_\Gamma})+\mbox{lower order terms}.
\end{equation}
In particular, it is elliptic outside\, $\Sigma^-$.
\end{prop}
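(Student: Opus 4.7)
The plan is to start from the principal-symbol expansion (5.3) of $\gamma\dbar\Td P$ and compose it on the left with the zero-order bundle map $(\dbar r)^{\wedge,*}$. The key algebraic input is the graded anticommutation relation
\[
(\dbar r)^{\wedge,*}\alpha^\wedge+\alpha^\wedge(\dbar r)^{\wedge,*}=(\alpha\ |\ \dbar r)\,\mathrm{Id},\qquad\alpha\in\Lambda^{0,1}T^*(M'),
\]
which is immediate from the definition of $(\dbar r)^{\wedge,*}$ as the pointwise adjoint of $(\dbar r)^\wedge$ with respect to $(\ |\ )$.

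I would apply this identity to each piece of (5.3). For the tangential terms, with $\alpha=t_j$ and $1\leq j\leq n-1$, the orthogonality of the frame gives $(t_j\ |\ \dbar r)=0$, hence $(\dbar r)^{\wedge,*}t_j^\wedge=-t_j^\wedge(\dbar r)^{\wedge,*}$. Acting on $u\in C^\infty(\Gamma;\Lambda^{0,q}T^*(\Gamma))$, which satisfies $(\dbar r)^{\wedge,*}u=0$ by (2.14), the whole sum $(\dbar r)^{\wedge,*}\sum_{j=1}^{n-1}t_j^\wedge T_j$ therefore contributes nothing at principal order. For the normal term, with $\alpha=\dbar r$ and using $\norm{\dbar r}^2=\tfrac12$ on $\Gamma$ (from (5.10)), the relation becomes $(\dbar r)^{\wedge,*}(\dbar r)^\wedge=\tfrac12-(\dbar r)^\wedge(\dbar r)^{\wedge,*}$, whose second summand again annihilates $u$. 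Combining the two computations yields
\[
\gamma(\dbar r)^{\wedge,*}\dbar\Td P\,u=\tfrac12(iY+\sqrt{-\La_\Gamma})u\quad\text{modulo lower order},
\]
which is the claimed principal symbol. That the output lies in $C^\infty(\Gamma;\Lambda^{0,q}T^*(\Gamma))$ is automatic from $((\dbar r)^{\wedge,*})^2=0$.

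For ellipticity outside $\Sigma^-$ I would analyse the zero set of the principal symbol $p(z,\xi)=\tfrac12\bigl(\sigma_{iY}(z,\xi)+|\xi|_g\bigr)$ on $T^*(\Gamma)\setminus 0$. Since $Y$ is a real vector field, $\sigma_{iY}(z,\xi)=-\seq{Y(z),\xi}$, so $p(z,\xi)=0$ forces $\seq{Y,\xi}=|\xi|_g$. Because $\norm{Y}_g=1$, Cauchy--Schwarz then forces $\xi$ to be a nonnegative multiple of the metric dual of $Y$; and the direct pairing
\[
\seq{Y,-\omega_0}=\seq{J(\pr/\pr r),-J^t(dr)}=-\seq{J^2(\pr/\pr r),dr}=1=\norm{Y}_g\norm{\omega_0}_g
\]
identifies this metric dual with $-\omega_0$. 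Hence the vanishing locus of $p$ in $T^*(\Gamma)\setminus 0$ is exactly $\Sigma^-$, so the operator is elliptic on its complement.

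The computation itself is essentially algebra on the fibres once (5.3) is in hand, and the only mildly delicate point is the order bookkeeping: the lower-order remainders in (5.3) remain lower-order after left composition with the order-zero bundle map $(\dbar r)^{\wedge,*}$, which is immediate by order count. I expect no serious obstacle in carrying the plan through.
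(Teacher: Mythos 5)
Your proof is correct and takes essentially the same route as the paper: both rest on the principal-symbol expansion of $\gamma\dbar\Td P$, the anticommutation relations $(\dbar r)^{\wedge,*}t_j^\wedge+t_j^\wedge(\dbar r)^{\wedge,*}=0$ and $(\dbar r)^{\wedge,*}(\dbar r)^\wedge+(\dbar r)^\wedge(\dbar r)^{\wedge,*}=\tfrac12$, together with the vanishing of $(\dbar r)^{\wedge,*}$ on $\Lambda^{0,q}T^*(\Gamma)$. The only cosmetic difference is that the paper first adds the zero operator $\gamma\dbar\Td P(\dbar r)^{\wedge,*}$ so it can symmetrize before invoking the anticommutators, whereas you apply them directly and let $(\dbar r)^{\wedge,*}u=0$ kill the unwanted terms.
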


\begin{proof}
Note that
\begin{equation} \label{Be:0801082317}
\gamma(\dbar r)^{\wedge, *}\dbar \Td P=\gamma(\dbar r)^{\wedge, *}\dbar \Td P+\gamma\dbar \Td P(\dbar r)^{\wedge, *}
\end{equation}
on the space $C^\infty(\Gamma;\, \Lambda^{0,q}T^*(\Gamma))$. From (\ref{Be:0801052149}), we have
\begin{align*}
\gamma(\dbar r)^{\wedge, *}\dbar \Td P
&=\sum^{n-1}_{j=1}\Bigr((\dbar r)^{\wedge, *}t^\wedge_j\Bigr)\circ T_j+
\Bigr((\dbar r)^{\wedge, *}(\dbar r)^\wedge\Bigr)\circ(iY+\sqrt{-\La_\Gamma})\\
&\quad+\mbox{lower order terms}
\end{align*}
and
\begin{align*}
\gamma\dbar \Td P(\dbar r)^{\wedge, *}
&=\sum^{n-1}_{j=1}\Bigr(t^\wedge_j(\dbar r)^{\wedge, *}\Bigr)\circ T_j+
\Bigr((\dbar r)^\wedge(\dbar r)^{\wedge, *}\Bigr)\circ(iY+\sqrt{-\La_\Gamma})\\
&\quad+\mbox{lower order terms}.
\end{align*}
Thus,
\begin{align} \label{Be:0801082316}
\gamma(\dbar r)^{\wedge, *}\dbar \Td P+\gamma\dbar \Td P(\dbar r)^{\wedge, *}
&=\sum^{n-1}_{j=1}\Bigr(t^\wedge_j(\dbar r)^{\wedge, *}+(\dbar r)^{\wedge, *}t^\wedge_j\Bigr)\circ T_j \nonumber \\
&\quad+\Bigr((\dbar r)^\wedge(\dbar r)^{\wedge, *}+(\dbar r)^{\wedge, *}(\dbar r)^\wedge\Bigr)\circ(iY+\sqrt{-\La_\Gamma})\nonumber\\
&\quad+\mbox{lower order terms}.
\end{align}
Note that
\begin{equation} \label{Be:0804030849}
t^\wedge_j(\dbar r)^{\wedge, *}+(\dbar r)^{\wedge, *}t^\wedge_j=0,\ \ j=1,\ldots,n-1,
\end{equation}
and
\begin{equation} \label{Be:0804030850}
(\dbar r)^\wedge(\dbar r)^{\wedge, *}+(\dbar r)^{\wedge, *}(\dbar r)^\wedge=\frac{1}{2}.
\end{equation}
Combining this with (\ref{Be:0801082316}) and (\ref{Be:0801082317}), we get (\ref{Be:0801082320}).

Note that
$\sigma_{iY+\sqrt{-\La_\Gamma}}(x, \xi)=-\seq{Y, \xi}+\norm{\xi}=\norm{\xi}+(\omega_0\ |\ \xi)\geq0$
with equality precisely when $\xi=-\lambda\omega_0$, $\lambda>0$.
The proposition follows.
\end{proof}

For $z\in\Gamma$, put
\begin{equation} \label{Be:0801052157}
I^{0,q}T^*_z(M')=\set{u\in\Lambda^{0,q}T^*_z(M');\, u=(\dbar r)^\wedge g,\ g\in\Lambda^{0,q-1}T^*_z(M')}.
\end{equation}
$I^{0,q}T^*_z(M)$ is orthogonal to $\Lambda^{0,q}T^*_z(\Gamma)$.
In section $7$, we need the following

\begin{prop} \label{Bp:071231323}
The operator
\[\gamma(\dbar r)^{\wedge}\ol{\pr_f}^*\Td P(\Td P^*\Td P)^{-1}: C^\infty(\Gamma;\, I^{0,q}T^*(M'))\To C^\infty(\Gamma;\, I^{0,q}T^*(M'))\]
is a classical pseudodifferential operator of order one,
\begin{equation} \label{Be:0801082351}
\gamma(\dbar r)^{\wedge}\ol{\pr_f}^*\Td P(\Td P^*\Td P)^{-1}=(iY-\sqrt{-\La_\Gamma})\sqrt{-\La_\Gamma}+\mbox{lower order terms}.
\end{equation}
It is elliptic outside\, $\Sigma^+$.
\end{prop}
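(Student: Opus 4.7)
\emph{Strategy.} I will work entirely at the level of principal symbols, combining formula (5.5) for the principal symbol of $\gamma\ol{\pr_f}^*\Td P$ with (4.2)--(4.3), which give the scalar principal symbol $\sigma_{2\sqrt{-\La_\Gamma}}$ of $(\Td P^*\Td P)^{-1}$, and the pointwise identities (5.12) and (5.13) governing the wedge operators $(\dbar r)^\wedge$, $(\dbar r)^{\wedge,*}$, $t_j^\wedge$, $t_j^{\wedge,*}$. The key mechanism is that on the subbundle $I^{0,q}T^*(M')$ the ``tangential'' contribution $\sum_j t_j^{\wedge,*}T_j^*$ from (5.5) collapses once composed with $(\dbar r)^\wedge$, leaving only the normal part to survive.

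Since $(\dbar r)^\wedge$ is a pointwise bundle map it commutes with the boundary restriction, giving $\gamma(\dbar r)^\wedge\ol{\pr_f}^*\Td P=(\dbar r)^\wedge\gamma\ol{\pr_f}^*\Td P$. Taking adjoints in (5.12) yields $t_j^{\wedge,*}(\dbar r)^\wedge+(\dbar r)^\wedge t_j^{\wedge,*}=0$, so for any $v=(\dbar r)^\wedge g\in I^{0,q}T^*_z(M')$ I get
\[
(\dbar r)^\wedge t_j^{\wedge,*}v=-t_j^{\wedge,*}(\dbar r)^\wedge(\dbar r)^\wedge g=0,
\]
killing the first sum in (5.5) at the principal level. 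Rewriting (5.13) as $(\dbar r)^{\wedge,*}(\dbar r)^\wedge=\tfrac{1}{2}-(\dbar r)^\wedge(\dbar r)^{\wedge,*}$ gives $(\dbar r)^\wedge(\dbar r)^{\wedge,*}v=\tfrac{1}{2}v$, so on $I^{0,q}T^*(M')$ the principal symbol of $(\dbar r)^\wedge\gamma\ol{\pr_f}^*\Td P$ is the scalar $\tfrac{1}{2}\sigma_{iY-\sqrt{-\La_\Gamma}}$ times the identity.

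Composing now with $(\Td P^*\Td P)^{-1}$, whose principal symbol is scalar and equal to $\sigma_{2\sqrt{-\La_\Gamma}}$ by (4.3), scalarity ensures the composition preserves $I^{0,q}T^*(M')$ modulo lower order, so the operator maps $C^\infty(\Gamma;\,I^{0,q}T^*(M'))$ into itself with principal symbol
\[
\tfrac{1}{2}\sigma_{iY-\sqrt{-\La_\Gamma}}\cdot\sigma_{2\sqrt{-\La_\Gamma}}=\sigma_{(iY-\sqrt{-\La_\Gamma})\sqrt{-\La_\Gamma}},
\]
which is (5.19). For ellipticity outside $\Sigma^+$, the scalar symbol equals $\bigl((\omega_0\ |\ \xi)-\norm{\xi}\bigr)\norm{\xi}$; for $\xi\neq 0$ this vanishes iff $(\omega_0\ |\ \xi)=\norm{\xi}$, which by the Cauchy--Schwarz equality case (and $\norm{\omega_0}=1$) forces $\xi=\lambda\omega_0$ with $\lambda>0$, i.e.\ $(x,\xi)\in\Sigma^+$.

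No serious obstacle arises; the argument closely parallels the proof of Proposition 5.1. The only subtle point is that the intermediate operators $\ol{\pr_f}^*\Td P$ and $(\Td P^*\Td P)^{-1}$ need not individually preserve $I^{0,q}T^*(M')$, but this is harmless: the terminal wedge with $\dbar r$ automatically puts the output in $I^{0,q}T^*(M')$, and the scalarity of $\sigma_{(\Td P^*\Td P)^{-1}}$ means that at the leading order this factor acts by a scalar multiple of the identity on any fiber, in particular on $I^{0,q}T^*(M')$, so principal symbol computation and restriction to $I^{0,q}$ commute.
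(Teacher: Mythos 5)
Your proof is correct, and it arrives at the symbol formula by exactly the same anticommutator identities $t_j^{\wedge,*}(\dbar r)^\wedge+(\dbar r)^\wedge t_j^{\wedge,*}=0$ and $(\dbar r)^{\wedge,*}(\dbar r)^\wedge+(\dbar r)^\wedge(\dbar r)^{\wedge,*}=\tfrac{1}{2}$ that the paper uses. The packaging differs slightly: the paper first adds the operator $\gamma\ol{\pr_f}^*\Td P(\Td P^*\Td P)^{-1}(\dbar r)^\wedge$, which vanishes identically on $C^\infty(\Gamma;\,I^{0,q}T^*(M'))$, and then computes the principal symbol of the symmetrized sum on all of $\Lambda^{0,q}T^*(M')$ — the anticommutators appear automatically, yielding a scalar symbol on the whole bundle, so that restriction to $I^{0,q}$ is trivial and the question of whether the intermediate operators preserve $I^{0,q}$ never arises. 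You instead restrict pointwise to $v=(\dbar r)^\wedge g$ from the start, using $(\dbar r)^\wedge(\dbar r)^\wedge=0$ twice, and then have to append the (correct) remark about scalarity of $\sigma_{(\Td P^*\Td P)^{-1}}$ to justify that restriction and principal-symbol composition commute. Both arguments compute the same thing; the symmetrization trick just sidesteps the bookkeeping issue your final paragraph has to address. The ellipticity argument via the Cauchy--Schwarz equality case is the one the paper uses for the companion Proposition 5.1.
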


\begin{proof}
Note that
\begin{equation} \label{Be:0801082330}
\gamma(\dbar r)^{\wedge}\ol{\pr_f}^*\Td P(\Td P^*\Td P)^{-1}=\gamma(\dbar r)^{\wedge}\ol{\pr_f}^*\Td P(\Td P^*\Td P)^{-1}+
\gamma\ol{\pr_f}^*\Td P(\Td P^*\Td P)^{-1}(\dbar r)^{\wedge}
\end{equation}
on the space $C^\infty(\Gamma;\, I^{0,q}T^*(\Gamma))$. From (\ref{Be:0801052150}) and (\ref{Be:0711021310}), we have
\begin{align} \label{Be:0801082344}
&\gamma(\dbar r)^{\wedge}\ol{\pr_f}^*\Td P(\Td P^*\Td P)^{-1}=
\sum^{n-1}_{j=1}\Bigr((\dbar r)^{\wedge}t^{\wedge, *}_j\Bigr)\circ\Bigr(T^*_j\circ2\sqrt{-\La_\Gamma}\Bigr)\nonumber \\
&\quad+\Bigr((\dbar r)^{\wedge}(\dbar r)^{\wedge, *}\Bigr)\circ\Bigr((iY-\sqrt{-\La_\Gamma})\circ2\sqrt{-\La_\Gamma}\Bigr)
+\mbox{lower order terms}
\end{align}
and
\begin{align} \label{Be:0801082343}
&\gamma\ol{\pr_f}^*\Td P(\Td P^*\Td P)^{-1}(\dbar r)^{\wedge}=
\sum^{n-1}_{j=1}\Bigr(t^{\wedge, *}_j(\dbar r)^{\wedge}\Bigr)\circ\Bigr(T^*_j\circ2\sqrt{-\La_\Gamma}\Bigr)\nonumber \\
&\quad+\Bigr((\dbar r)^{\wedge, *}(\dbar r)^{\wedge}\Bigr)\circ\Bigr((iY-\sqrt{-\La_\Gamma})\circ2\sqrt{-\La_\Gamma}\Bigr)
+\mbox{lower order terms}.
\end{align}
Thus,
\begin{align} \label{Be:0801082345}
&\gamma(\dbar r)^{\wedge}\ol{\pr_f}^*\Td P(\Td P^*\Td P)^{-1}+
\gamma\ol{\pr_f}^*\Td P(\Td P^*\Td P)^{-1}(\dbar r)^{\wedge} \nonumber \\
&\quad=\sum^{n-1}_{j=1}\Bigr(t^{\wedge, *}_j(\dbar r)^{\wedge}+(\dbar r)^{\wedge}t^{\wedge, *}_j\Bigr)\circ\Bigr(T^*_j\circ2\sqrt{-\La_\Gamma}\Bigr)
 \nonumber \\
&\quad+\Bigr((\dbar r)^{\wedge, *}(\dbar r)^{\wedge}+(\dbar r)^{\wedge}(\dbar r)^{\wedge, *}\Bigr)
\circ\Bigr((iY-\sqrt{-\La_\Gamma})\circ2\sqrt{-\La_\Gamma}\Bigr)\nonumber \\
&\quad+\mbox{lower order terms}.
\end{align}
Combining this with (\ref{Be:0801082330}), (\ref{Be:0804030849}) and (\ref{Be:0804030850}), we get (\ref{Be:0801082351}).
The proposition follows.
\end{proof}


\section{The operator $\Box^{(q)}_\beta$}

Put
\begin{equation} \label{Be:0712302232}
\ol{\pr_\beta}=T\gamma\dbar \Td P: C^\infty(\Gamma;\, \Lambda^{0,q}T^*(\Gamma))\To C^\infty(\Gamma;\, \Lambda^{0,q+1}T^*(\Gamma)).
\end{equation}
We recall that (see (\ref{Be:0712302117})) the orthogonal projection $T$ onto ${\rm Ker\,}(\dbar r)^{\wedge, *}$ with respect to
$[\ |\ ]$ is a classical pseudodifferential operator of order $0$ with principal symbol
$2(\dbar r)^{\wedge, *}(\dbar r)^\wedge$.
(See Lemma~\ref{Bl:0712302115}.)
$\ol{\pr_\beta}$ is a classical
pseudodifferential operator of order one from boundary $(0, q)$ forms to boundary $(0, q+1)$ forms.

\begin{lem} \label{Bl:0711302204}
We have $(\ol{\pr_\beta})^2=0$.
\end{lem}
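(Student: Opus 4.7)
The plan is to test $(\ol{\pr_\beta})^2 f$ against an arbitrary $v\in C^\infty(\Gamma;\Lambda^{0,q+2}T^*(\Gamma))$ in the inner product $[\,\cdot\mid\cdot\,]$ defined in (\ref{Be:0801042209}). Because $\gamma\Td P=I$, the Poisson operator $\Td P$ is injective, hence $[w\mid w]=\|\Td Pw\|^2_M$ is a positive definite inner product on $C^\infty(\Gamma;\Lambda^{0,q+2}T^*(\Gamma))$, and it suffices to show $[(\ol{\pr_\beta})^2 f\mid v]=0$. The overall strategy is to convert $[\,\cdot\mid\cdot\,]$ into $(\,\cdot\mid\cdot\,)_M$ via the Poisson identities (\ref{Be:0809081416}), use the self-adjointness of the orthogonal projection $T$ with respect to $[\,\cdot\mid\cdot\,]$, and then apply the integration-by-parts formula of Lemma~\ref{Bl:BP-integral} twice, invoking $\dbar^2=0$ and hence $(\ol{\pr_f}^*)^2=0$ for the formal adjoint.

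Concretely, since $Tv=v$ and $T$ is self-adjoint, I would first rewrite
\[
[(\ol{\pr_\beta})^2 f\mid v]=[\gamma\dbar\Td P(\ol{\pr_\beta}f)\mid v]=(\Td P\gamma\dbar\Td P(\ol{\pr_\beta}f)\mid\Td Pv)_M=(\dbar\Td P(\ol{\pr_\beta}f)\mid\Td Pv)_M,
\]
the last equality using $\dbar\Td P=\Td P\gamma\dbar\Td P$ from (\ref{Be:0809081416}). Lemma~\ref{Bl:BP-integral} then produces the boundary contribution $(\ol{\pr_\beta}f\mid\gamma(\dbar r)^{\wedge,*}\Td Pv)_\Gamma$, which vanishes because $(\dbar r)^{\wedge,*}$ commutes with restriction to $\Gamma$ and $v$ is already a boundary $(0,q+2)$ form, so $\gamma(\dbar r)^{\wedge,*}\Td Pv=(\dbar r)^{\wedge,*}v=0$. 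What remains is $(\Td P\ol{\pr_\beta}f\mid\ol{\pr_f}^*\Td Pv)_M$; applying the second half of (\ref{Be:0809081416}) to rewrite $\ol{\pr_f}^*\Td Pv=\Td P\gamma\ol{\pr_f}^*\Td Pv$, this becomes $[\ol{\pr_\beta}f\mid\gamma\ol{\pr_f}^*\Td Pv]$. Here the key input is (\ref{Be:0801051840}): $\gamma\ol{\pr_f}^*\Td Pv\in C^\infty(\Gamma;\Lambda^{0,q+1}T^*(\Gamma))=\operatorname{Ker}(\dbar r)^{\wedge,*}$, so $T$ acts on it as the identity; using self-adjointness of $T$ once more removes the outer $T$ from $\ol{\pr_\beta}f=T\gamma\dbar\Td Pf$, leaving $[\gamma\dbar\Td Pf\mid\gamma\ol{\pr_f}^*\Td Pv]=(\dbar\Td Pf\mid\ol{\pr_f}^*\Td Pv)_M$ after one more use of (\ref{Be:0809081416}). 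A second application of Lemma~\ref{Bl:BP-integral} splits this as $(\Td Pf\mid(\ol{\pr_f}^*)^2\Td Pv)_M+(f\mid(\dbar r)^{\wedge,*}\gamma\ol{\pr_f}^*\Td Pv)_\Gamma$; the interior term vanishes by $(\ol{\pr_f}^*)^2=0$, and the boundary term vanishes for exactly the same reason as before, since $\gamma\ol{\pr_f}^*\Td Pv$ is a boundary form by (\ref{Be:0801051840}).

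The main obstacle is not any deep estimate but the careful bookkeeping of which forms live as genuine boundary forms on $\Gamma$ (elements of $\Lambda^{0,\cdot}T^*(\Gamma)$) and which live in $\Lambda^{0,\cdot}T^*(M')|_\Gamma$ or on $\ol M$, so that the identity $(\dbar r)^{\wedge,*}u=0$ can be legitimately invoked at the two crucial points—once directly on $v$ and once on $\gamma\ol{\pr_f}^*\Td Pv$ via (\ref{Be:0801051840}). Once this bookkeeping is in place, the whole argument rests on the self-adjointness of $T$, the two Poisson identities (\ref{Be:0809081416}), Lemma~\ref{Bl:BP-integral}, and the integrability conditions $\dbar^2=(\ol{\pr_f}^*)^2=0$.
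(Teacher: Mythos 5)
Your argument is correct and all the ingredients you call on (self-adjointness of $T$ for $[\,\cdot\mid\cdot\,]$, the Poisson identities (\ref{Be:0809081416}), the integration-by-parts formula in Lemma~\ref{Bl:BP-integral}, and (\ref{Be:0801051840}) to recognize $\gamma\ol{\pr_f}^*\Td Pv$ as a boundary form) are exactly the right ones, and you apply them carefully; the two boundary terms do vanish for the reasons you give. The organization, however, differs from the paper's. The paper's proof isolates the single sesquilinear identity $[T\gamma\dbar\Td P(I-T)\gamma\dbar\Td Pu\mid v]=0$ — one integration by parts, one passage to $\gamma\ol{\pr_f}^*\Td Pv\in\mathrm{Ker}\,(\dbar r)^{\wedge,*}$, then orthogonality of $I-T$ — which shows the inner $T$ can be deleted, so that $(\ol{\pr_\beta})^2=T\gamma\dbar\Td P\gamma\dbar\Td P$; at that point it switches to the operator identity $\Td P\gamma\dbar\Td P=\dbar\Td P$ to get $T\gamma\dbar^2\Td P=0$ directly from $\dbar^2=0$. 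You instead stay entirely inside the duality pairing, chain two integration-by-parts steps, strip \emph{both} $T$'s by self-adjointness, and land on $(\ol{\pr_f}^*)^2=0$ rather than $\dbar^2=0$. The paper's route is marginally shorter because the ``drop the $T$'' observation does double duty, letting the second half of the argument collapse to a one-line algebraic manipulation; your route is more symmetric and avoids ever invoking $\Td P\gamma\dbar\Td P=\dbar\Td P$ as an operator equation in the final reduction, which some readers may find conceptually cleaner since it never leaves the sesquilinear framework. Both are complete and rest on the same lemmas.
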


\begin{proof}
Let $u$, $v\in C^\infty(\Gamma;\, \Lambda^{0,q}T^*(\Gamma))$. We claim that
\begin{equation} \label{Be:0712301011}
[T\gamma\dbar \Td P(I-T)\gamma\dbar \Td Pu\ |\ v]=0.
\end{equation}
We have
\begin{align*}
[T\gamma\dbar \Td P(I-T)\gamma\dbar \Td Pu\ |\ v]&=[\gamma\dbar \Td P(I-T)\gamma\dbar \Td Pu\ |\ v]
\ \ \ \ \ \ \ \ \ \ \ (\mbox{since $v\in{\rm Ker\,}(\dbar r)^{\wedge, *}$})\\
&=(\Td P\gamma\dbar \Td P(I-T)\gamma\dbar \Td Pu\ |\ \Td Pv)_M \\
&=(\dbar \Td P(I-T)\gamma\dbar \Td Pu\ |\ \Td Pv)_M \ \ \ \ \ (\mbox{here we used (\ref{Be:0809081416})})\\
&=(\Td P(I-T)\gamma\dbar \Td Pu\ |\ \ol{\pr_f}^*\Td Pv)_M\ \ \mbox{(since $\Td Pv\in{\rm Dom\,}\dbar^*$)}\\
&=[(I-T)\gamma\dbar \Td Pu\ |\ \gamma\ol{\pr_f}^*\Td Pv]\ \ (\mbox{here we used (\ref{Be:0809081416})}).
\end{align*}
From Lemma~\ref{Bl:0710312345}, we have
$\gamma\ol{\pr_f}^*\Td Pv\in{\rm Ker\,}(\dbar r)^{\wedge, *}$.
Thus,
\[[(I-T)\gamma\dbar \Td Pu\ |\ \gamma\ol{\pr_f}^*\Td Pv]=0.\]
We get (\ref{Be:0712301011}), and hence
$T\gamma\dbar \Td P\gamma\dbar \Td Pu=T\gamma\dbar \Td PT\gamma\dbar \Td Pu$, $u\in C^\infty(\Gamma;\, \Lambda^{0,q}T^*(\Gamma))$.

Now,
$(\ol{\pr_\beta})^2=T\gamma\dbar \Td PT\gamma\dbar \Td P=T\gamma\dbar \Td P\gamma\dbar \Td P=T\gamma\dbar^2\Td P=0$.
The lemma follows.
\end{proof}

We pause and recall the tangential Cauchy-Riemann operator.
For $z\in \Gamma$, let
$\pi^{0,q}_z:\Lambda^{0,q}T^*_z(M')\To \Lambda^{0,q}T^*_z(\Gamma)$
be the orthogonal projection map (with respect to $(\ |\ )$). We can check that
$\pi^{0,q}_z=2(\dbar r(z))^{\wedge, *}(\dbar r(z))^\wedge$.
For an open set $U\subset \Gamma$, the tangential
Cauchy-Riemann operator:
$\ol{\pr_b}: C^\infty(U;\, \Lambda^{0,q}T^*(\Gamma))\To C^\infty(U;\, \Lambda^{0,q+1}T^*(\Gamma))$
is now defined as follows: for any $\phi\in C^\infty(U;\, \Lambda^{0,q}T^*(\Gamma))$, let $\Td U$
be an open set in $M'$ with $\Td U\cap \Gamma=U$ and pick
$\phi_1\in C^\infty(\Td U;\, \Lambda^{0,q}T^*(M'))$ that satisfies
$\pi^{0,q}_z(\phi_1(z))=\phi(z)$, for all $z\in U$. Then $\ol{\pr_b}\phi$ is defined to be
a smooth section in $C^\infty(U;\, \Lambda^{0,q+1}T^*(\Gamma))$:
$z\To\pi^{0,q}_z(\gamma\dbar\phi_1(z))$.
It is not difficult to check that the definition of $\ol{\pr_b}$ is independent of the choice of $\phi_1$. Since
$\dbar^2=0$, we have $\ol{\pr_b}^2=0$.
Let $\ol{\pr_b}^*$ be the formal adjoint of $\ol{\pr_b}$ with respect to $(\ |\ )_\Gamma$, that is
$(\dbar_{b}f\ |\ h)_\Gamma=(f\ |\ \ol{\pr_b}^*h)_\Gamma$, $f\in C^\infty_0(U\ ;\Lambda^{0,q}T^*(\Gamma))$,
$h\in C^\infty(U\ ;\Lambda^{0,q+1}T^*(\Gamma))$.
$\ol{\pr_b}^*$ is a differential operator of order one from boundary $(0, q+1)$ forms to boundary $(0, q)$ forms and
$(\ol{\pr_b}^*)^2=0$.

From the definition of $\ol{\pr_b}$, we know that
$\ol{\pr_b}=2(\dbar r)^{\wedge, *}(\dbar r)^{\wedge}\gamma\dbar \Td P$.
Since the principal symbol of $T$ is $2(\dbar r)^{\wedge, *}(\dbar r)^{\wedge}$, it follows that
\begin{equation} \label{Be:0711021453}
\ol{\pr_\beta}=\ol{\pr_b}+\mbox{lower order terms}.
\end{equation}

Let
\begin{equation} \label{Be:0712302233}
\ol{\pr_\beta}^\dagger: C^\infty(\Gamma;\, \Lambda^{0,q+1}T^*(\Gamma))\To C^\infty(\Gamma;\, \Lambda^{0,q}T^*(\Gamma)),
\end{equation}
be the formal adjoint of $\ol{\pr_\beta}$ with respect to $[\ |\ ]$, that is
$[\ol{\pr_\beta} f\ |\ h]=[f\ |\ \ol{\pr_\beta}^\dagger h]$,
$f\in C^\infty(\Gamma;\, \Lambda^{0,q}T^*(\Gamma))$, $h\in C^\infty(\Gamma;\, \Lambda^{0,q+1}T^*(\Gamma))$.
$\ol{\pr_\beta}^\dagger$ is a classical
pseudodifferential operator of order one from boundary $(0, q+1)$ forms to boundary $(0, q)$ forms.

\begin{lem} \label{Bl:0801052333}
We have $\ol{\pr_\beta}^\dagger=\gamma\ol{\pr_f}^*\Td P$.
\end{lem}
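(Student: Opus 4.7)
The strategy is a direct integration-by-parts computation combining the defining identity of $[\,\cdot\,|\,\cdot\,]$, the reproducing property \eqref{Be:0809081416} of the Poisson operator $\Td P$, and Lemma~\ref{Bl:BP-integral}. Fix $f\in C^\infty(\Gamma;\,\Lambda^{0,q}T^*(\Gamma))$ and $h\in C^\infty(\Gamma;\,\Lambda^{0,q+1}T^*(\Gamma))$. Since $h\in{\rm Ker\,}(\dbar r)^{\wedge,*}$ we have $Th=h$, and because $T$ is self-adjoint with respect to $[\,\cdot\,|\,\cdot\,]$,
\[
[\ol{\pr_\beta}f\,|\,h]=[T\gamma\dbar\Td Pf\,|\,h]=[\gamma\dbar\Td Pf\,|\,h].
\]
Unfolding the definition of $[\,\cdot\,|\,\cdot\,]$ and applying the first identity in \eqref{Be:0809081416}, i.e.\ $\Td P\gamma\dbar\Td Pf=\dbar\Td Pf$, gives
\[
[\gamma\dbar\Td Pf\,|\,h]=(\Td P\gamma\dbar\Td Pf\,|\,\Td Ph)_M=(\dbar\Td Pf\,|\,\Td Ph)_M.
\]

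Next, I would apply Lemma~\ref{Bl:BP-integral} to integrate by parts:
\[
(\dbar\Td Pf\,|\,\Td Ph)_M=(\Td Pf\,|\,\ol{\pr_f}^*\Td Ph)_M+(\gamma\Td Pf\,|\,\gamma(\dbar r)^{\wedge,*}\Td Ph)_\Gamma.
\]
The boundary term vanishes: the operator $(\dbar r)^{\wedge,*}$ is pointwise multiplication (antilinear contraction with $\dbar r$ extended to $M'$), so it commutes with the trace map $\gamma$, whence $\gamma(\dbar r)^{\wedge,*}\Td Ph=(\dbar r)^{\wedge,*}\gamma\Td Ph=(\dbar r)^{\wedge,*}h=0$ by \eqref{Be:0801241459}. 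Using the second identity in \eqref{Be:0809081416}, namely $\ol{\pr_f}^*\Td Ph=\Td P\gamma\ol{\pr_f}^*\Td Ph$, the remaining interior term becomes
\[
(\Td Pf\,|\,\Td P\gamma\ol{\pr_f}^*\Td Ph)_M=[f\,|\,\gamma\ol{\pr_f}^*\Td Ph].
\]
Combining these equalities yields $[\ol{\pr_\beta}f\,|\,h]=[f\,|\,\gamma\ol{\pr_f}^*\Td Ph]$, which is the desired identification modulo one consistency check.

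The one point requiring justification is that $\gamma\ol{\pr_f}^*\Td P$ actually maps into boundary $(0,q)$ forms, so that it is a legitimate candidate for $\ol{\pr_\beta}^\dagger$. Since $\gamma\Td Ph=h\in{\rm Ker\,}(\dbar r)^{\wedge,*}$ we have $\Td Ph\in{\rm Dom\,}\dbar^*\cap C^\infty(\ol M;\Lambda^{0,q+1}T^*(M'))$; by Lemma~\ref{Bl:0710312345} the form $\ol{\pr_f}^*\Td Ph$ again lies in ${\rm Dom\,}\dbar^*\cap C^\infty$, which in view of \eqref{Be:0710312325} means $\gamma(\dbar r)^{\wedge,*}\ol{\pr_f}^*\Td Ph=0$, and once again using that $(\dbar r)^{\wedge,*}$ commutes with $\gamma$ we get $(\dbar r)^{\wedge,*}\gamma\ol{\pr_f}^*\Td Ph=0$, i.e.\ $\gamma\ol{\pr_f}^*\Td Ph\in C^\infty(\Gamma;\Lambda^{0,q}T^*(\Gamma))$. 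I do not anticipate a serious obstacle here; the only subtle step is the vanishing of the boundary term, which rests on the pointwise nature of $(\dbar r)^{\wedge,*}$ and is essentially the reason the formal adjoint on the boundary is computed by pairing against $\Td P$.
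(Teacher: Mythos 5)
Your proof is correct and follows the same path as the paper's: definition of $[\,\cdot\,|\,\cdot\,]$, self-adjointness of $T$, the reproducing property (\ref{Be:0809081416}), and the integration-by-parts formula of Lemma~\ref{Bl:BP-integral} with the boundary term killed by $h\in{\rm Ker\,}(\dbar r)^{\wedge,*}$. You are merely more explicit than the paper about the vanishing of the boundary term and about the fact (already noted in (\ref{Be:0801051840})) that $\gamma\ol{\pr_f}^*\Td P$ maps boundary $(0,q+1)$ forms to boundary $(0,q)$ forms.
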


\begin{proof}
Let $u\in C^\infty(\Gamma;\, \Lambda^{0,q}T^*(\Gamma))$, $v\in C^\infty(\Gamma;\, \Lambda^{0,q+1}T^*(\Gamma))$. We have
\begin{align*}
[\ol{\pr_\beta} u\ |\ v]&=[T\gamma\dbar \Td Pu\ |\ v]=[\gamma\dbar \Td Pu\ |\ v] \\
&=(\Td P\gamma\dbar \Td Pu\ |\ \Td Pv)_M=(\dbar \Td Pu\ |\ \Td Pv)_M \\
&=(\Td Pu\ |\ \ol{\pr_f}^*\Td Pv)_M=[u\ |\ \gamma\ol{\pr_f}^*\Td Pv],
\end{align*}
and the lemma follows.
\end{proof}

\begin{rem} \label{Br:0801061451}
We can check that on boundary $(0, q)$ forms, we have
\begin{equation} \label{Be:0801061454}
\ol{\pr_\beta}^\dagger=\gamma\ol{\pr_f}^*\Td P=\ol{\pr_b}^*+\mbox{lower order terms}.
\end{equation}
\end{rem}

Set
\begin{equation} \label{Be:BP-2.2}
\Box^{(q)}_\beta=\ol{\pr_\beta}^\dagger\ol{\pr_\beta}+\ol{\pr_\beta}\ \ol{\pr_\beta}^\dagger:
\mathscr D'(\Gamma;\, \Lambda^{0,q}T^*(\Gamma))\To\mathscr D'(\Gamma;\, \Lambda^{0,q}T^*(\Gamma)).
\end{equation}
$\Box^{(q)}_\beta$ is a classical
pseudodifferential operator of order two from boundary $(0, q)$ forms to boundary $(0, q)$ forms. We recall that the
Kohn Laplacian on $\Gamma$ is given by
$\Box^{(q)}_b=\ol{\pr_b}\ \ol{\pr_b}^*+\ol{\pr_b}^*\ol{\pr_b}:
\mathscr D'(\Gamma;\, \Lambda^{0,q}T^*(\Gamma))\To\mathscr D'(\Gamma;\, \Lambda^{0,q}T^*(\Gamma))$.
From (\ref{Be:0711021453}) and (\ref{Be:0801061454}), we see that
$\sigma_{\Box^{(q)}_b}=\sigma_{\Box^{(q)}_\beta}$
and the characteristic manifold of $\Box^{(q)}_\beta$ is
$\Sigma=\Sigma^+\bigcup\Sigma^-$,
where $\Sigma^+$, $\Sigma^-$ are given by (\ref{Be:BP-4.1}).
(See section $3$ of part ${\rm I\,}$.) Moreover, $\sigma_{\Box^{(q)}_\beta}$ vanishes to second order on $\Sigma$ and we have
\begin{equation} \label{Be:0801061515}
\Box^{(q)}_\beta=\Box^{(q)}_b+L_1,
\end{equation}
where $L_1$ is a classical
pseudodifferential operator of order one with
\begin{equation} \label{Be:0801061516}
\sigma_{L_1}=0\ \ \mbox{at each point of\, $\Sigma$}.
\end{equation}

The following is well-known (see the proof of Lemma $3.1$ of part ${\rm I\,}$)

\begin{lem} \label{Bl:0711021655}
$\Sigma$ is a symplectic submanifold of $T^*(\Gamma)$ if and only if the Levi form is non-degenerate at each point of\, $\Gamma$.
(For the precise definition of symplectic manifold, see chapter ${\rm XVIII\,}$ of H\"{o}rmander~\cite{Hor85}.)
\end{lem}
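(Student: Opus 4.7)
The plan is to mimic the proof of Lemma~3.1 of part~I, the only novelty being that $\Gamma$ is now a real hypersurface in $M'$ rather than an abstract CR manifold; but $\Gamma$ inherits an integrable CR structure from the ambient complex manifold, so the same computation goes through verbatim. First I would localize: on an open set $U\subset\Gamma$ pick an orthonormal frame $Z_1,\ldots,Z_{n-1}$ of $\Lambda^{1,0}T(\Gamma)$ (which exists by (\ref{Be:0711012340})) and let $q_j\in C^\infty(T^*(U))$ be the principal symbol of $Z_j$. Since $\omega_0$ is orthogonal to $\Lambda^{1,0}T^*(\Gamma)\oplus\Lambda^{0,1}T^*(\Gamma)$, an element $\xi\in T^*_x(\Gamma)\setminus 0$ is proportional to $\omega_0(x)$ exactly when $\seq{Z_j(x),\xi}=\seq{\ol Z_j(x),\xi}=0$ for all $j$; hence
\[
\Sigma=\set{(x,\xi)\in T^*(U)\setminus 0;\, q_j(x,\xi)=\ol q_j(x,\xi)=0,\ j=1,\ldots,n-1}.
\]
Since the $dq_j,d\ol q_j$ are $\Complex$-linearly independent along $\Sigma$, the $2(n-1)$ Hamilton vectors $H_{q_1},\ldots,H_{q_{n-1}},H_{\ol q_1},\ldots,H_{\ol q_{n-1}}$ form a basis at $\rho\in\Sigma$ of the symplectic orthogonal $T_\rho(\Sigma)^\bot$ of $\Complex T_\rho(\Sigma)$ inside $\Complex T_\rho(T^*\Gamma)$ with respect to the canonical two form $\sigma$.

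The main computation is to express $\sigma$ in this basis via Poisson brackets. Using $\sigma(H_f,H_g)=\set{f,g}$, I would establish two identities on $\Sigma$. First, the integrability of $\Lambda^{1,0}T(\Gamma)$ gives $[Z_j,Z_k]=\sum_l c_{jk}^l Z_l$, so passing to principal symbols $\frac{1}{i}\set{q_j,q_k}=\sum_l c_{jk}^l q_l$ vanishes on $\Sigma$; similarly $\set{\ol q_j,\ol q_k}\equiv 0$ on $\Sigma$. Second, by Definition~\ref{Bd:BK-Leviform} one has $[\ol Z_k,Z_j]=-(2i)L_p(\ol Z_k,Z_j)Y$ modulo $\Lambda^{1,0}T(\Gamma)\oplus\Lambda^{0,1}T(\Gamma)$, and passing to symbols yields
\[
\set{q_j,\ol q_k}(\rho)=\frac{2}{i}L_p(\ol Z_k,Z_j)\sigma_{iY}(\rho),\qquad \rho=(p,\lambda\omega_0(p))\in\Sigma,
\]
which is the analogue of formula (3.6) in part~I. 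This is the one genuine computation; everything else is formal.

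Combining these, the Gram matrix of $\sigma|_{T_\rho(\Sigma)^\bot}$ in the chosen basis has block form $\begin{pmatrix} 0 & A \\ -{}^tA & 0\end{pmatrix}$ with $A_{jk}=\frac{2}{i}L_p(\ol Z_k,Z_j)\sigma_{iY}(\rho)$. Since $\sigma_{iY}(\rho)=-\seq{Y(p),\lambda\omega_0(p)}=\lambda\neq 0$, this matrix is invertible iff the Hermitian matrix $(L_p(\ol Z_k,Z_j))$ is invertible, i.e.\ iff the Levi form at $p$ is non-degenerate. A standard linear-algebra fact for symplectic forms then gives that $\sigma$ is non-degenerate on $\Complex T_\rho(\Sigma)$ iff it is non-degenerate on $T_\rho(\Sigma)^\bot$, so $\Sigma$ is symplectic at $\rho$ iff $L_p$ is non-degenerate. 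Since $p\in\Gamma$ was arbitrary, the lemma follows. The only step requiring care is the Poisson bracket identity above; once that is in place the argument is purely linear-algebraic.
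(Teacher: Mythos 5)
Your proposal is correct and follows the same approach as the paper, which simply defers to the proof of Lemma~3.1 of Part~I: express $\Sigma$ as the common zero set of the symbols $q_j,\ol q_j$, take $H_{q_1},\ldots,H_{\ol q_{n-1}}$ as a basis of $T_\rho(\Sigma)^\bot$, compute the Poisson brackets to identify the Gram matrix of $\sigma$ on $T_\rho(\Sigma)^\bot$ with the Levi form, and conclude by the standard symplectic-complement criterion. You also explicitly supply the converse direction (invertibility of the Gram matrix iff non-degeneracy of $L_p$), which the Part~I lemma did not need to state, so this is, if anything, slightly more complete than the reference.
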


Let $p^s_\beta$ denote the subprincipal symbol of $\Box^{(q)}_\beta$ (invariantly defined on $\Sigma$) and let
$F_\beta(\rho)$ denote the fundamental matrix of $\sigma_{\Box^{(q)}_\beta}$ at $\rho\in\Sigma$. We write $\Td{\rm tr\,}F_\beta(\rho)$ to denote
$\sum\abs{\lambda_j}$, where $\pm i\lambda_j$ are the non-vanishing eigenvalues of $F_\beta(\rho)$. From (\ref{Be:0801061515}) and (\ref{Be:0801061516}),
we see that $p^s_\beta+\frac{1}{2}\Td{\rm tr\,}F_\beta=p^s_b+\frac{1}{2}\Td{\rm tr\,}F_b$ on $\Sigma$,
where $p^s_b$ is the subprincipal symbol of $\Box^{(q)}_b$ and
$F_b$ is the fundamental matrix of $\sigma_{\Box^{(q)}_b}$. (For the precise meanings of
subprincipal symbol and fundamental matrix, see section $3$ of part ${\rm I\,}$.) We have the following

\begin{lem} \label{Bl:0711021707}
Let $\rho=(p, \xi)\in\Sigma$. Then
\begin{equation} \label{Be:0711021710}
\frac{1}{2}\Td{\rm tr\,}F_\beta+p^s_\beta=\sum^{n-1}_{j=1}\abs{\lambda_j}\abs{\sigma_{iY}}+\Bigr(\sum^{n-1}_{j=1}L_p(\ol T_j, T_j)-
\sum^{n-1}_{j, k=1}2t^\wedge_jt^{\wedge, *}_kL_p(\ol T_k, T_j)\Bigr)\sigma_{iY}\ \ \mbox{at}\ \ \rho,
\end{equation}
where $\lambda_j$, $j=1,\ldots,n-1$, are the eigenvalues of $L_p$ and $T_j$, $t_j$, $j=1,\ldots,n-1$, are as in (\ref{Be:0711012340})
and (\ref{Be:0711012345}).
\end{lem}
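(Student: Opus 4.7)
The plan is to reduce the identity to the analogous computation for the Kohn Laplacian $\Box^{(q)}_b$ on $\Gamma$ carried out in Section~3 of Part~I. By (\ref{Be:0801061515}) and (\ref{Be:0801061516}), the difference $L_1=\Box^{(q)}_\beta-\Box^{(q)}_b$ is a classical pseudodifferential operator of order one whose principal symbol vanishes on $\Sigma$. Since $\sigma_{\Box^{(q)}_\beta}=\sigma_{\Box^{(q)}_b}$ everywhere, the fundamental matrices agree on $\Sigma$, so $\Td{\rm tr\,}F_\beta=\Td{\rm tr\,}F_b$ on $\Sigma$. Comparing order-one parts of the full symbols yields $p^s_\beta=p^s_b+\sigma_{L_1}$, and the vanishing of $\sigma_{L_1}$ on $\Sigma$ forces $p^s_\beta=p^s_b$ at every $\rho\in\Sigma$. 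It therefore suffices to prove (\ref{Be:0711021710}) with $F_\beta$, $p^s_\beta$ replaced by $F_b$, $p^s_b$.

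Next, with $t_1,\dots,t_{n-1}$ and the dual frame $T_1,\dots,T_{n-1}$ as in (\ref{Be:0711012345}) and (\ref{Be:0711012340}), I would produce the analogue on $\Gamma$ of Proposition~2.6 of Part~I, namely
\[
\Box^{(q)}_b=\sum_{j=1}^{n-1}T_j^{*}T_j+\sum_{j,k=1}^{n-1}t_j^{\wedge}t_k^{\wedge,*}\circ[T_j,T_k^{*}]+\eps(T)+\eps(T^{*})+\text{zero order}.
\]
Writing $q_j$ for the principal symbol of $T_j$, one has $\sigma_{\Box^{(q)}_b}=\sum_j\ol q_j q_j$, vanishing to precisely second order on $\Sigma$. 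The identity $[\ol T_k,T_j]=-2iL_p(\ol T_k,T_j)Y+h$, $h\in\Lambda^{1,0}T_p(\Gamma)\oplus\Lambda^{0,1}T_p(\Gamma)$, which follows from Definition~\ref{Bd:BK-Leviform} and Lemma~\ref{Bl:d-Leviform}, translates into
\[
\set{\ol q_k,q_j}=2iL_p(\ol T_k,T_j)\sigma_{iY}\quad\text{on }\Sigma.
\]

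With this in hand I would read off $F_b(\rho)$ from the block-diagonal matrix with entries $\set{q_k,\ol q_j}$ in the basis $H_{q_1},\dots,H_{q_{n-1}},H_{\ol q_1},\dots,H_{\ol q_{n-1}}$ of $T_\rho(\Sigma)^\bot$, exactly as in (3.8) of Part~I. Its non-vanishing eigenvalues are $\pm 2i\abs{\lambda_j\sigma_{iY}}$, giving the first term $\frac{1}{2}\Td{\rm tr\,}F_b=\sum_{j=1}^{n-1}\abs{\lambda_j}\abs{\sigma_{iY}}$. For the subprincipal symbol, the $T_j^{*}T_j$ summand contributes $\sum_j\frac{1}{2i}\set{\ol q_j,q_j}=\sum_j L_p(\ol T_j,T_j)\sigma_{iY}$, while the double sum contributes $-\sum_{j,k}2t_j^{\wedge}t_k^{\wedge,*}L_p(\ol T_k,T_j)\sigma_{iY}$ via $\frac{1}{i}\set{q_j,\ol q_k}=-2L_p(\ol T_k,T_j)\sigma_{iY}$; adding the two gives the bracketed expression in (\ref{Be:0711021710}).

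The only substantive obstacle is bookkeeping: one must verify that the local decomposition of $\Box^{(q)}_b$ on $\Gamma$ is structurally identical to the one used in Part~I so that the subprincipal-symbol calculation transfers verbatim, and that the sign conventions for $(T_j,t_j)$, $\omega_0$, and $Y$ employed here match those of Part~I so that the sign of $\sigma_{iY}$ in the final formula comes out correctly. Once these are checked the lemma follows immediately.
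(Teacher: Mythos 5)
Your argument coincides with the paper's: the reduction to $\Box^{(q)}_b$ via $\Box^{(q)}_\beta=\Box^{(q)}_b+L_1$ with $\sigma_{L_1}$ vanishing on $\Sigma$ is exactly the observation the paper records in (\ref{Be:0801061515})--(\ref{Be:0801061516}) just before the lemma, and the resulting Kohn-Laplacian computation on the CR boundary $\Gamma$ (decomposition into $\sum T_j^*T_j$ plus the double sum, the Poisson-bracket identity $\{\ol q_k,q_j\}=2iL_p(\ol T_k,T_j)\sigma_{iY}$, and the eigenvalues $\pm 2i\lambda_j\sigma_{iY}$ of $F_b$) is precisely Section~3 of Part~I, which the paper cites as the proof. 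Everything checks out, including the sign bookkeeping.
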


\begin{proof}
See section $3$ of part ${\rm I\,}$.
\end{proof}

It is not difficult to see that on $\Sigma$ the action of
$\frac{1}{2}\Td{\rm tr\,}F_\beta+p^s_\beta$ on boundary $(0,q)$ forms has the eigenvalues
\begin{equation} \label{Be:h-eigenmore}
\begin{split}
& \sum^{n-1}_{j=1}\abs{\lambda_j}\abs{\sigma_{iY}}+\sum_{j\notin J}\lambda_j\sigma_{iY}-\sum_{j\in J}
  \lambda_j\sigma_{iY}, \; \abs{J}=q,  \\
& J=(j_1,j_2,\ldots,j_q),\; 1\leq j_1<j_2<\cdots<j_q\leq n-1.
\end{split}
\end{equation}
(See section $3$ of part ${\rm I\,}$.)
We assume that the Levi form is non-degenerate at $p\in\Gamma$. Let $(n_-, n_+)$, $n_-+n_+=n-1$, be the signature of $L_p$.
Since $\seq{Y, \omega_0}=-1$, we have $\sigma_{iY}>0$ on $\Sigma^+$, $\sigma_{iY}<0$
on $\Sigma^-$.

Let
\[\inf\, (p^s_\beta+\frac{1}{2}\Td{\rm tr\,}F_\beta)(\rho)=\inf\set{\lambda;\, \lambda:
{\rm eigenvalue\ of\ }(p^s_\beta+\frac{1}{2}\Td{\rm tr\,}F_\beta)(\rho)},\ \rho\in\Sigma.\]
From (\ref{Be:h-eigenmore}), we see that at $(p, \omega_0(p))\in\Sigma^+$,
\begin{equation} \label{Be:h-hsiao1}
\inf\,(p^s_\beta+\frac{1}{2}\Td{\rm tr\,}F_\beta) \left\{ \begin{array}{ll}
=0,   &  q=n_+   \\
>0,   &  q\neq n_+
\end{array}\right..
\end{equation}
At $(p, -\omega_0(p))\in\Sigma^-$,
\begin{equation} \label{Be:h-hsiao2}
\inf\,(p^s_\beta+\frac{1}{2}\Td{\rm tr\,}F_\beta) \left\{ \begin{array}{ll}
=0,   &  q=n_-   \\
>0,   &  q\neq n_-
\end{array}\right..
\end{equation}

\begin{defn} \label{Bd:h-Yofq}
Given $q$, $0\leq q\leq n-1$, the Levi form is said to satisfy condition $Y(q)$ at $p\in\Gamma$
if for any $\abs{J}=q$, $J=(j_1,j_2,\ldots,j_q)$, $1\leq j_1<j_2<\cdots<j_q\leq n-1$, we have
\[\abs{\sum_{j\notin J}\lambda_j-\sum_{j\in J}\lambda_j}<\sum^{n-1}_{j=1}\abs{\lambda_j},\]
where $\lambda_j$, $j=1,\ldots,(n-1)$, are the eigenvalues of $L_p$. If the
Levi form is non-degenerate at $p$, then the condition is equivalent to $q\neq n_+$, $n_-$, where $(n_-, n_+)$, $n_-+n_+=n-1$, is the
signature of $L_p$.
\end{defn}

From now on, we assume that
\begin{ass} \label{Ba:1}
The Levi form is non-degenerate at each point of\, $\Gamma$.
\end{ass}

By classical works of Boutet de Monvel~\cite{Bou74} and Sj\"{o}strand~\cite{Sjo74}, we get
the following

\begin{prop} \label{Bp:h-HorBoutet}
$\Box^{(q)}_\beta$ is hypoelliptic
with loss of one derivative if and only if $Y(q)$ holds at each point of\, $\Gamma$.
\end{prop}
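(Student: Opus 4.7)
The plan is to reduce the proposition to the classical Boutet de Monvel--Sj\"{o}strand hypoellipticity criterion (Proposition~3.6 of Part~I, recalled there as Proposition~3.6/3.7 for matrix operators with scalar principal symbol) applied to $\Box^{(q)}_\beta$. The criterion says that a classical $\Psi$DO of order $m>1$ whose principal symbol $p_m$ is non-negative, scalar, and vanishes to exactly second order on a symplectic characteristic manifold $\Sigma$ of codimension $2d$ is hypoelliptic with loss of one derivative if and only if
\begin{equation*}
p^s_m(\rho) + \sum_{j=1}^{d}\Bigl(\tfrac{1}{2}+\alpha_j\Bigr)|\mu_j| \;\text{is invertible at every }\rho\in\Sigma \text{ for all } \alpha\in\mathbb{N}^d,
\end{equation*}
where $\pm i\mu_j$ are the eigenvalues of the fundamental matrix of $p_m$ at $\rho$.

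First I would verify the hypotheses. By (\ref{Be:0801061515})--(\ref{Be:0801061516}), $\Box^{(q)}_\beta$ has the same principal symbol and the same subprincipal symbol on $\Sigma$ as the Kohn Laplacian $\Box^{(q)}_b$ treated in Part~I; in particular $\sigma_{\Box^{(q)}_\beta} = p_0 \geq 0$ is scalar and vanishes to precisely second order on the characteristic set $\Sigma = \Sigma^+\cup\Sigma^-$, which is symplectic of codimension $2(n-1)$ by Lemma~\ref{Bl:0711021655} under Assumption~\ref{Ba:1}. Thus the criterion applies with $d=n-1$.

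Next I would identify the relevant eigenvalues. The non-vanishing eigenvalues of $F_\beta(\rho)$ are $\pm 2i|\lambda_j||\sigma_{iY}(\rho)|$ (as in (3.9) of Part~I, which carries over because the principal symbol has not changed), so $|\mu_j| = 2|\lambda_j||\sigma_{iY}|$. By Lemma~\ref{Bl:0711021707} and (\ref{Be:h-eigenmore}), $\frac{1}{2}\widetilde{\mathrm{tr}}\,F_\beta = \sum_j|\lambda_j||\sigma_{iY}|$ is a scalar, and the eigenvalues of $p^s_\beta$ on $(0,q)$ boundary forms at $\rho\in\Sigma^{\pm}$ are
\begin{equation*}
\Bigl(\sum_{j\notin J}\lambda_j - \sum_{j\in J}\lambda_j\Bigr)\sigma_{iY},\qquad |J|=q.
\end{equation*}
Therefore the eigenvalues of $p^s_\beta(\rho) + \sum_j(\tfrac12+\alpha_j)|\mu_j|$ are
\begin{equation*}
\Bigl[\sigma_{iY}\Bigl(\sum_{j\notin J}\lambda_j - \sum_{j\in J}\lambda_j\Bigr) + |\sigma_{iY}|\sum_{j=1}^{n-1}(1+2\alpha_j)|\lambda_j|\Bigr].
\end{equation*}

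Finally I would translate the invertibility condition into $Y(q)$. On $\Sigma^+$ one has $\sigma_{iY}=|\sigma_{iY}|>0$, so after dividing by $|\sigma_{iY}|$ the above quantity equals $\sum_{j\notin J}\lambda_j - \sum_{j\in J}\lambda_j + \sum_j(1+2\alpha_j)|\lambda_j|$; this is non-decreasing in each $\alpha_j\in\mathbb{N}$, so the family is everywhere nonzero for all $\alpha$ if and only if its minimum value, attained at $\alpha=0$, is strictly positive, i.e.\ $\sum_{j\in J}\lambda_j - \sum_{j\notin J}\lambda_j < \sum_j|\lambda_j|$ for every $J$ with $|J|=q$. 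The analogous computation on $\Sigma^-$ (where $\sigma_{iY}<0$) gives $\sum_{j\notin J}\lambda_j - \sum_{j\in J}\lambda_j < \sum_j|\lambda_j|$. Combining the two inequalities is precisely $|\sum_{j\notin J}\lambda_j - \sum_{j\in J}\lambda_j| < \sum_j|\lambda_j|$, which is Definition~\ref{Bd:h-Yofq}. The main (though modest) obstacle is the bookkeeping of the two signs of $\sigma_{iY}$ on $\Sigma^+$ and $\Sigma^-$, which must be tracked to see that the two halves of the criterion combine to give exactly the symmetric absolute-value inequality defining $Y(q)$; once this is done, the equivalence is immediate.
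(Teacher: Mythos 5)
Your proposal is correct and follows the same route the paper takes: verify that $\Box^{(q)}_\beta$ satisfies the hypotheses of the Boutet de Monvel--Sj\"ostrand criterion (nonnegative scalar principal symbol vanishing to exactly second order on a symplectic $\Sigma$), then use Lemma~\ref{Bl:0711021707} and (\ref{Be:h-eigenmore}) to compute the eigenvalues of $p^s_\beta + \sum_j(\tfrac12+\alpha_j)|\mu_j|$ and translate the nonvanishing condition into $Y(q)$. The paper itself does not write the deduction out; it simply says ``by classical works of Boutet de Monvel and Sj\"ostrand'' after presenting exactly the ingredients you use, so your text is essentially an expansion of the paper's intended argument. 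One small point worth making explicit in the last step: the statement ``nonzero for all $\alpha$ iff positive at $\alpha=0$'' is not automatic for an arbitrary non-decreasing integer-indexed family (it could start negative and jump over zero); it holds here because the quantity at $\alpha=0$, which is $\sum_{j\notin J,\,\lambda_j>0}2\lambda_j+\sum_{j\in J,\,\lambda_j<0}2|\lambda_j|$, is automatically $\geq 0$, so the minimum is either zero (criterion fails) or positive (criterion holds for all $\alpha$).
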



\section{The heat equation for $\Box^{(q)}_\beta$}

In this section, we will apply some results of Menikoff-Sj\"{o}strand~\cite{MS78} to construct approximate orthogonal projection for
$\Box^{(q)}_\beta$. Our presentation is essentially taken from part ${\rm I\,}$. The reader who is familiar with part ${\rm I\,}$ may go
directly to Theorem~\ref{Bt:0801090850}.

Until further notice,
we work with local coordinates $x=(x_1,x_2,\ldots,x_{2n-1})$
defined on a connected open set $\Omega\subset\Gamma$. We identify $T^*(\Omega)$ with $\Omega\times\Real^{2n-1}$.
Thus, the Levi form has constant signature on $\Omega$.
For any $C^\infty$ function $f$, we also write
$f$ to denote an almost analytic extension.
(For the precise meaning of almost analytic functions, we refer the reader to Definition~$1.1$of Melin-Sj\"{o}strand~\cite{MS74}.)
We let the full symbol of $\Box^{(q)}_\beta$ be:
\[\mbox{full symbol of }\ \Box^{(q)}_\beta\sim\sum^\infty_{j=0}q_j(x,\xi),\]
where $q_j(x,\xi)$ is positively homogeneous of order $2-j$.

First, we consider the characteristic equation for $\pr_t+\Box^{(q)}_\beta$. We look for solutions $\psi(t,x,\eta)\in C^\infty
(\ol\Real_+\times T^*(\Omega)\setminus0)$ of the problem
\begin{equation} \label{Be:c-chara}
\left\{ \begin{split}
& \frac{\displaystyle\pr\psi}{\displaystyle\pr t}-iq_0(x,\psi'_x)=
   O(\abs{{\rm Im\,}\psi}^N),\ \forall N\geq0,   \\
& \psi|_{t=0}=\seq{x\ ,\eta} \end{split}\right.
\end{equation}
with ${\rm Im\,}\psi(t, x,\eta)\geq0$.

Let $U$ be an open set in $\Real^n$ and let $f$, $g\in C^\infty(U)$. We write
$f\asymp g$
if for every compact set $K\subset U$ there is a constant $c_K>0$ such that
$f\leq c_Kg$, $g\leq c_Kf$ on $K$.
We have the following

\begin{prop} \label{Bp:c-basis}
There exists $\psi(t,x,\eta)\in C^\infty(\ol\Real_+\times T^*(\Omega)\setminus0)$ such
that ${\rm Im\,}\psi\geq 0$ with equality precisely on $(\set{0}\times T^*(\Omega)\setminus0)
\bigcup(\Real_+\times\Sigma)$ and such that {\rm (\ref{Be:c-chara})} holds where the error term is
uniform on every set of the form $[0,T]\times K$ with $T>0$ and $K\subset T^*(\Omega)\setminus0$ compact.
Furthermore,
\[\psi(t,x,\eta)=\seq{x,\eta}{\;\rm on\;}\Sigma,\; d_{x,\eta}(\psi-\seq{x,\eta})=0{\;\rm on\;}
\Sigma,\]
\[\psi(t, x, \lambda\eta)=\lambda\psi(\lambda t, x, \eta),\ \ \lambda>0,\]
\begin{equation} \label{Be:c-sj1}
{\rm Im\,}\psi(t, x,\eta)\asymp\abs{\eta}\frac{t\abs{\eta}}{1+t\abs{\eta}}{\rm dist\,}
((x, \frac{\eta}{\abs{\eta}}),\Sigma))^2,
\ \ t\geq0,\ \abs{\eta}\geq1.
\end{equation}
\end{prop}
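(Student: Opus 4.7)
The plan is to follow the Menikoff--Sj\"ostrand construction of the complex phase for $\partial_t+\Box^{(q)}_\beta$, adapting verbatim the argument already rehearsed in section 4 of part I. The starting point is that $q_0=\sigma_{\Box^{(q)}_\beta}$ is real, non-negative, vanishes exactly to second order on $\Sigma$, and by Lemma~\ref{Bl:0711021655} $\Sigma$ is a symplectic submanifold of $T^*(\Gamma)$ under Assumption~\ref{Ba:1}. Since the real Hamilton--Jacobi equation cannot be solved with ${\rm Im}\,\psi\ge 0$ when $q_0$ is only real, one passes to an almost analytic extension $\tilde q_0$ of $q_0$ to $T^*(\Omega)_{\mathbb C}$ and works with the complex Hamilton field $H_{\tilde q_0}$, which vanishes on $\Sigma$ to first order.

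First I would solve the flow of the real vector field $-iH_{\tilde q_0}+\overline{-iH_{\tilde q_0}}$, obtaining for each $t\ge 0$ a well-defined (modulo $d^\infty_\Sigma$) manifold $G_t=\{(\rho,\Phi(t,\rho))\}\subset T^*(\Omega)_{\mathbb C}\times T^*(\Omega)_{\mathbb C}$, which is independent modulo $d^\infty_\Sigma$ of the chosen almost analytic extension by the argument of Proposition~\ref{p:0709191406}. One checks that $G_t$ is a positive canonical relation in the sense of Melin--Sj\"ostrand: the relevant positivity comes from the fact that the non-zero eigenvalues of the fundamental matrix $F_\beta$ of $q_0$ on $\Sigma$ are purely imaginary and generate the eigenspaces that make the linearization of $-iH_{\tilde q_0}$ have eigenvalues with non-negative real part. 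This is exactly the content of the eigenvalue computation leading to (\ref{Be:h-eigenmore}).

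Next, one parametrizes $G_t$ by a generating function: since $G_0$ is the diagonal, parametrized by $\psi(0,x,\eta)=\langle x,\eta\rangle$, the standard generating function construction (using that the projection from $G_t$ to the base variables $(x,\eta)$ is a diffeomorphism for each $t\ge 0$ on a neighborhood of $\Sigma$) yields $\psi(t,x,\eta)\in C^\infty(\ol{\mathbb R}_+\times T^*(\Omega)\setminus 0)$ such that $G_t$ is cut out modulo $d^\infty_\Sigma$ by $\xi=\psi'_x$, $y=\psi'_\eta$. Then (\ref{Be:c-chara}) follows directly because $\Phi(t,\cdot)$ is, modulo $\abs{{\rm Im}(x,\xi)}^\infty$, the complex Hamilton flow of $\tilde q_0$, so $\partial_t\psi=-i\tilde q_0(x,\psi'_x)$ mod $\abs{{\rm Im}\,\psi}^\infty$. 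Quasi-homogeneity $\psi(t,x,\lambda\eta)=\lambda\psi(\lambda t,x,\eta)$ is inherited from the homogeneity $q_0(x,\lambda\xi)=\lambda^2 q_0(x,\xi)$ by a scaling argument. The vanishing $d_{x,\eta}(\psi-\langle x,\eta\rangle)|_\Sigma=0$ is immediate from $H_{\tilde q_0}|_\Sigma=0$, so $\Phi(t,\rho)=\rho$ on $\Sigma$.

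The main obstacle, and the heart of the argument, is the lower bound (\ref{Be:c-sj1}) for ${\rm Im}\,\psi$. The plan is to use the normal form of Proposition~3.6 of part I, in local symplectic coordinates $(\tilde x,\tilde\xi)=(\tilde x',\tilde x'',\tilde\xi',\tilde\xi'')$ near $\rho\in\Sigma$, in which $\Sigma$ becomes $\{\tilde x''=\tilde\xi''=0\}$ and $q_0\equiv i\langle A\tilde x'',\tilde\xi''\rangle$ modulo $|(\tilde x'',\tilde\xi'')|^\infty$ with $A$ having positive eigenvalues. Then the Hamilton flow is, to leading order, $\tilde x''\mapsto e^{-tA}\tilde x''$, $\tilde\xi''\mapsto e^{tA^t}\tilde\xi''$, and the generating function has the explicit form (cf.\ Remark~4.8 of part I) $\tilde\psi=\langle\tilde x',\tilde\eta'\rangle+\langle e^{-tA}\tilde x'',\tilde\eta''\rangle+$ higher order, whose imaginary part comparable to $\frac{t}{1+t}{\rm dist}((\tilde x,\tilde\eta/|\tilde\eta|),\Sigma)^2$ for $|\tilde\eta|=1$ by direct computation (positivity of $A$). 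Rescaling by quasi-homogeneity gives (\ref{Be:c-sj1}). Uniqueness up to $O(|{\rm Im}\,\psi|^N)$ and the fact that ${\rm Im}\,\psi>0$ off $(\{0\}\times\dot T^*\Omega)\cup(\mathbb R_+\times\Sigma)$ then follow, the former from the transport formalism (two solutions differ by a quantity whose $t$-derivative is $O(|{\rm Im}\,\psi|^N)$ and which vanishes at $t=0$), the latter from (\ref{Be:c-sj1}) together with ${\rm Im}\,\psi|_{t=0}=0$ only on $\{0\}\times\dot T^*\Omega$. Since every ingredient above is proved in detail in Menikoff--Sj\"ostrand and reviewed in part I, it suffices to cite those sources after outlining the almost analytic / generating function reduction.
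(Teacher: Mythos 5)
Your proposal is correct and follows the same route the paper relies on: the paper does not actually prove this proposition but refers the reader to Menikoff--Sj\"ostrand~\cite{MS78} (exactly as for the corresponding statement, Proposition~4.2, in Part~I, section~4), and your outline faithfully summarizes that construction via almost analytic extensions, the flow of $-iH_{\Td q_0}+\ol{-iH_{\Td q_0}}$, the generating function of the graph $G_t$, and the local symplectic normal form in which $q_0$ becomes $i\seq{A\Td x'',\Td\xi''}$. The only slips are two internal cross-references: the symplectic normal form is Proposition~4.6 of Part~I (not~3.6), and the explicit expansion of $\Td\psi$ appears in Remark~4.7 (not~4.8).
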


\begin{prop} \label{Bp:c-basislimit}
There exists a function $\psi(\infty,x,\eta)\in C^\infty(T^*(\Omega)\setminus0)$ with a
uniquely determined Taylor expansion at each point of\, $\Sigma$ such that
\[\begin{split}
&\mbox{For every compact set $K\subset T^*(\Omega)\setminus0$ there is a constant $c_K>0$
such that}   \\
&{\rm Im\,}\psi(\infty,x,\eta)\geq c_K\abs{\eta}({\rm dist\,}((x,\frac{\eta}{\abs{\eta}}),\Sigma))^2,  \\
&d_{x,\eta}(\psi(\infty, x, \eta)-\seq{x,\eta})=0\ {\rm on\,}\ \Sigma.
\end{split}\]
If$\ $ $\lambda\in C(T^*(\Omega)\setminus 0)$, $\lambda>0$ and $\lambda|_\Sigma<\min\abs{\lambda_j}$, where
$\pm i\abs{\lambda_j}$ are the non-vanishing eigenvalues of the fundamental matrix of\, $\Box^{(q)}_\beta$,
then the solution $\psi(t,x,\eta)$ of {\rm (\ref{Be:c-chara})} can be chosen so that for every
compact set $K\subset T^*(\Omega)\setminus0$ and all indices $\alpha$, $\beta$, $\gamma$,
there is a constant $c_{\alpha,\beta,\gamma,K}$ such that
\begin{equation} \label{Be:c-************}
\abs{\pr^\alpha_x\pr^\beta_\eta\pr^\gamma_t(\psi(t,x,\eta)-\psi(\infty,x,\eta))}
\leq c_{\alpha,\beta,\gamma,K}e^{-\lambda(x,\eta)t}{\;\rm on\;}\ol\Real_+\times K.
\end{equation}
\end{prop}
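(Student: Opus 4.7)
The plan is to reduce the statement to the analogous result for $\Box^{(q)}_b$ proved in Part~I, which is itself an application of Menikoff--Sj\"ostrand~\cite{MS78}. The crucial observation is that, as established in section~$6$, the principal symbols satisfy $\sigma_{\Box^{(q)}_\beta}=\sigma_{\Box^{(q)}_b}$, so the characteristic equation \eqref{Be:c-chara} is literally the one studied there. The subprincipal symbol, which appears in the transport equations but not in the eikonal equation, plays no role here, so nothing about $\psi(\infty,x,\eta)$ changes when passing from $\Box^{(q)}_b$ to $\Box^{(q)}_\beta$. I would therefore simply follow the argument laid out in Part~I, Proposition~\ref{Bp:c-basis}--Remark~\ref{r:c-SjoMe}.

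Concretely, I would first pass to the complex domain via an almost analytic extension of $q_0$. Since $q_0$ vanishes to second order on $\Sigma$, the Hamilton field $H_{q_0}$ vanishes on $\Sigma$ and its linearization is the fundamental matrix $F_\beta(\rho)$, whose non-vanishing eigenvalues are $\pm2i|\lambda_j|\,\sigma_{iY}(\rho)$. Using Proposition~\ref{p:c-SjoBou} I would introduce almost analytic symplectic coordinates $(\tilde x,\tilde\xi)$ in which
\[
q_0(\tilde x,\tilde\xi)=i\langle A(\tilde x,\tilde\xi)\tilde x'',\tilde\xi''\rangle+O(|(\tilde x'',\tilde\xi'')|^N),
\]
with $A(\tilde x',0,\tilde\eta',0)$ having positive eigenvalues $|\lambda_1|,\ldots,|\lambda_{n-1}|$. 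In these coordinates the eikonal equation admits a formal power series solution
\[
\tilde\psi(t,\tilde x,\tilde\eta)=\langle\tilde x',\tilde\eta'\rangle+\langle e^{-tA(\tilde x',0,\tilde\eta',0)}\tilde x'',\tilde\eta''\rangle+\sum_{j\geq2}\tilde\psi_j(t,\tilde x,\tilde\eta),
\]
with $\tilde\psi_j$ a homogeneous polynomial of degree $j$ in $(\tilde x'',\tilde\eta'')$, exactly as in Remark~\ref{r:c-SjoMe}.

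Next I would establish the exponential convergence $\tilde\psi(t,\cdot)\to\tilde\psi(\infty,\cdot)$ in these coordinates. At leading order, since $A$ has eigenvalues bounded below by $\min|\lambda_j|$, the quadratic piece $\langle e^{-tA}\tilde x'',\tilde\eta''\rangle$ decays like $e^{-t\min|\lambda_j|}$; picking $\lambda\in C(T^*(\Omega)\setminus0)$ with $\lambda|_\Sigma<\min|\lambda_j|$ gives the estimate \eqref{Be:c-************} for the leading term. Higher-order terms $\tilde\psi_j$ satisfy inhomogeneous linear transport equations along $H_{q_0}$ whose right-hand sides decay exponentially at rate $\min|\lambda_j|$; solving them by the variation of constants formula and invoking the boundedness of $e^{-t\mathscr A}$ on polynomial spaces (Lemma~\ref{l:hf-BerSjo}) yields inductively the same exponential decay bounds for all derivatives. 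Defining $\tilde\psi(\infty,\tilde x,\tilde\eta)$ as the resulting Borel sum of these limit terms and transferring back to $(x,\eta)$ produces $\psi(\infty,x,\eta)\in C^\infty(T^*(\Omega)\setminus0)$ with a uniquely determined Taylor expansion at $\Sigma$; homogeneity of degree one follows from the quasi-homogeneity of $\psi(t,\cdot)$ by letting $t\to\infty$. The lower bound on ${\rm Im}\,\psi(\infty,x,\eta)$ is obtained by passing to the limit in \eqref{Be:c-sj1}, using that the quadratic form $\frac{1}{2i}\partial_{\tilde x''\tilde\eta''}^2\tilde\psi(\infty)$ is strictly positive on $\Sigma$ in the transverse directions.

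The main technical point is the inductive exponential estimate on $\tilde\psi_j$: one needs that solving the inhomogeneous transport equation does not destroy the exponential decay rate, which is exactly where the strict inequality $\lambda|_\Sigma<\min|\lambda_j|$ is used to absorb the polynomial-in-$t$ factors that arise when $\mathscr A$ has eigenvalues equal to those of $A$. The other delicate point, uniqueness of the Taylor expansion of $\psi(\infty,\cdot)$ on $\Sigma$, is forced by the linearized eikonal equation at $\Sigma$: once the $t=\infty$ limit and the normalization $d_{x,\eta}(\psi(\infty)-\langle x,\eta\rangle)=0$ on $\Sigma$ are prescribed, each successive jet is uniquely determined. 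Both points are already handled in~\cite{MS78} and repeated verbatim in section~$4$ of Part~I, so no new argument is required.
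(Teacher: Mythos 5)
Your proposal is correct and takes the same approach as the paper: the paper itself offers no proof of Proposition~\ref{Bp:c-basislimit}, stating only \emph{``For the proofs of Proposition~\ref{Bp:c-basis}, Proposition~\ref{Bp:c-basislimit}, we refer the reader to~\cite{MS78},''} and your reduction to the Part~I case via the coincidence $\sigma_{\Box^{(q)}_\beta}=\sigma_{\Box^{(q)}_b}$ (so that the eikonal equation, the fundamental matrix $F_\beta=F_b$, and hence $\min|\lambda_j|$ all agree) is exactly the intended route, followed by the Menikoff--Sj\"ostrand normal-form argument already recorded in Proposition~\ref{p:c-SjoBou} and Remark~\ref{r:c-SjoMe} of Part~I.
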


For the proofs of Proposition~\ref{Bp:c-basis}, Proposition~\ref{Bp:c-basislimit},
we refer the reader to~\cite{MS78}.

\begin{defn} \label{Bd:hf-heatquasi}
We will say that $a\in C^\infty(\ol\Real_+\times T^*(\Omega))$ is quasi-homogeneous of
degree $j$ if $a(t,x,\lambda\eta)=\lambda^ja(\lambda t,x,\eta)$ for all $\lambda>0$.
\end{defn}

We consider the problem
\begin{equation} \label{Be:hf-heat}
\left\{ \begin{array}{ll}
(\pr_t+\Box^{(q)}_\beta)u(t,x)=0  & {\rm in\;}\Real_+\times\Omega  \\
u(0,x)=v(x) \end{array}\right..
\end{equation}
We shall start by making only a formal construction. We look for an approximate solution of
(\ref{Be:hf-heat}) of the form $u(t,x)=A(t)v(x)$,
\begin{equation} \label{Be:hf-fourierheat}
A(t)v(x)=\frac{1}{(2\pi)^{2n-1}}\int\!\int e^{i(\psi(t,x,\eta)-\seq{y,\eta})}a(t,x,\eta)v(y)dyd\eta
\end{equation}
where formally
\[a(t,x,\eta)\sim\sum^\infty_{j=0}a_j(t,x,\eta),\]
$a_j(t, x, \eta)\in C^\infty(\ol\Real_+\times T^*(\Omega);\,
\mathscr L(\Lambda^{0,q}T^*(\Gamma), \Lambda^{0,q}T^*(\Gamma)))$, $a_j(t,x,\eta)$ is a quasi-homogeneous function of degree $-j$.

We apply $\pr_t+\Box^{(q)}_\beta$ formally
under the integral in (\ref{Be:hf-fourierheat}) and then introduce the asymptotic expansion of
$\Box^{(q)}_\beta(ae^{i\psi})$. Setting $(\pr_t+\Box^{(q)}_\beta)(ae^{i\psi})\sim 0$ and regrouping
the terms according to the degree of quasi-homogeneity. We obtain the transport equations
\begin{equation} \label{Be:hf-heattransport}
\left\{ \begin{array}{ll}
 T(t,x,\eta,\pr_t,\pr_x)a_0=O(\abs{{\rm Im\,}\psi}^N),\; \forall N   \\
 T(t,x,\eta,\pr_t,\pr_x)a_j+l_j(t,x,\eta,a_0,\ldots,a_{j-1})= O(\abs{{\rm Im\,}\psi}^N),\; \forall N.
 \end{array}\right.
\end{equation}
Here
\[T(t,x,\eta,\pr_t,\pr_x)=\pr_t-i\sum^{2n-1}_{j=1}\frac{\pr q_0}{\pr\xi_j}(x,\psi'_x)\frac{\pr}{\pr x_j}+q(t,x,\eta)\]
where
\[q(t,x,\eta)=q_1(x,\psi'_x)+\frac{1}{2i}\sum^{2n-1}_{j,k=1}\frac{\pr^2q_0(x,\psi'_x)}
    {\pr\xi_j\pr\xi_k}\frac{\pr^2\psi(t,x,\eta)}{\pr x_j\pr x_k}\]
and $l_j(t, x, \eta)$ is a linear differential operator acting on $a_0,a_1,\ldots,a_{j-1}$.
We can repeat the method of part ${\rm I\,}$ (see Proposition~$5.6$ of part ${\rm I\,}$) to get the following

\begin{prop} \label{Bp:0711101910}
Let $(n_-, n_+)$, $n_-+n_+=n-1$, be the signature of the Levi form on $\Omega$. We can find solutions
\[a_j(t,x,\eta)\in
C^\infty(\ol\Real_+\times T^*(\Omega);\,
\mathscr L(\Lambda^{0,q}T^*(\Gamma), \Lambda^{0,q}T^*(\Gamma))),\ \ j=0, 1,\ldots,\]
of the system (\ref{Be:hf-heattransport}) with
$a_0(0, x, \eta)=I$, $a_j(0, x, \eta)=0$ when $j>0$,
where $a_j(t,x,\eta)$ is a quasi-homogeneous function of degree $-j$, such that $a_j$ has unique Taylor expansions
on $\Sigma$. Moreover, we can find
\[a_j(\infty,x,\eta)\in C^\infty(T^*(\Omega);\,
\mathscr L(\Lambda^{0,q}T^*(\Gamma), \Lambda^{0,q}T^*(\Gamma))),\ \ j=0,1,\ldots,\]
where $a_j(\infty,x,\eta)$ is a positively homogeneous function of degree $-j$, $\varepsilon_0>0$ such that for
all indices $\alpha$, $\beta$, $\gamma$, $j$, every compact set $K\subset\Omega$,  there
exists $c>0$, such that
\begin{equation} \label{Be:hf-Sjoestimate}
\abs{\pr^\gamma_t\pr^\alpha_x\pr^\beta_\eta(a_j(t,x,\eta)-a_j(\infty,x,\eta))}\leq
ce^{-\varepsilon_0t\abs{\eta}}(1+\abs{\eta})^{-j-\abs{\beta}+\gamma}
\end{equation}
on $\ol\Real_+\times\Bigr((K\times\Real^{2n-1})\bigcap\Sigma\Bigr)$, $\abs{\eta}\geq 1$.

Furthermore, for all $j=0,1,\ldots$,
\begin{equation} \label{Be:hf-******************}
\left\{ \begin{array}{ll}
\textup{all derivatives of}\ a_j(\infty, x, \eta)\ \textup{vanish at}\ \ \Sigma^+ & \textup{if}\ \ q\neq n_+    \\
\textup{all derivatives of}\ a_j(\infty, x, \eta)\ \textup{vanish at}\ \ \Sigma^- & \textup{if}\ \ q\neq n_-
\end{array}\right.
\end{equation}
and
\begin{equation} \label{Be:0801061733}
\left\{ \begin{array}{ll}
a_0(\infty, x, \eta)\neq0\ \textup{at each point of}\ \ \Sigma^+ & \textup{if}\ \ q=n_+    \\
a_0(\infty, x, \eta)\neq0\ \textup{at each point of}\ \ \Sigma^- & \textup{if}\ \ q=n_-
\end{array}\right..
\end{equation}
\end{prop}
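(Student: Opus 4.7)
\textbf{Proof proposal for Proposition~\ref{Bp:0711101910}.} The plan is to imitate the argument used to prove Proposition~5.6 of part~\textrm{I}, replacing $\Box^{(q)}_b$ by $\Box^{(q)}_\beta$ throughout and exploiting the compatibility relations $(\ol{\pr_\beta})^2=0$ and Lemma~\ref{Bl:0801052333}. First, observe that by (\ref{Be:0801061515})--(\ref{Be:0801061516}) and the identity $p^s_\beta+\tfrac12\Td{\rm tr\,}F_\beta=p^s_b+\tfrac12\Td{\rm tr\,}F_b$ on $\Sigma$, the transport equations (\ref{Be:hf-heattransport}) for $\Box^{(q)}_\beta$ agree to infinite order on $\Sigma$ with those for $\Box^{(q)}_b$ studied in part~\textrm{I}. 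Using the symplectic coordinates of Proposition~4.9 of part~\textrm{I} adapted to $\Sigma$ (which is symplectic by Lemma~\ref{Bl:0711021655}), I would diagonalize the fundamental matrix, write the transport operator in the form
\[
\Td T=\pr_t+\langle A(\Td x',0,\Td\eta',0)\Td x'',\pr_{\Td x''}\rangle+\tfrac12\Td{\rm tr\,}F_\beta+p^s_\beta+Q,
\]
and solve inductively in the order of the Taylor expansion at $\Sigma$, using the Berman--Sj\"ostrand lemma (Lemma~5.5 of part~\textrm{I}) to invert the resulting ODE on each homogeneity slice. This yields $a_j(t,x,\eta)$ with unique Taylor expansion at $\Sigma$, together with uniform subexponential growth, exactly as in part~\textrm{I}.

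Next, to upgrade the subexponential bound to the exponentially convergent statement (\ref{Be:hf-Sjoestimate}), I would invoke the Berman--Sj\"ostrand trick. The key inputs are the intertwining identities
\[
\ol{\pr_\beta}\,\Box^{(q)}_\beta=\Box^{(q+1)}_\beta\,\ol{\pr_\beta},\qquad
\ol{\pr_\beta}^\dagger\,\Box^{(q)}_\beta=\Box^{(q-1)}_\beta\,\ol{\pr_\beta}^\dagger,
\]
which follow immediately from $(\ol{\pr_\beta})^2=0$ (Lemma~\ref{Bl:0711302204}) and its adjoint. Setting $\ol{\pr_\beta}(e^{i\psi}a)=e^{i\psi}\hat a$ and $\ol{\pr_\beta}^\dagger(e^{i\psi}a)=e^{i\psi}\Td a$ (interpreted as asymptotic expansions at $\Sigma$), the intertwining identities force $\hat a$ and $\Td a$ to solve the transport equations for $\Box^{(q+1)}_\beta$ and $\Box^{(q-1)}_\beta$, respectively. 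By hypothesis $q=n_\pm$ is exceptional for degree $q$, but $q\pm 1$ is not; hence on the relevant component of $\Sigma$ the subprincipal invariant $\tfrac12\Td{\rm tr\,}F_\beta+p^s_\beta$ is strictly positive on $(0,q\pm 1)$ forms, and the construction at degrees $q\pm 1$ produces $\hat a$, $\Td a$ decaying exponentially in $t$. Since $\Box^{(q)}_\beta(e^{i\psi}a)=\ol{\pr_\beta}(e^{i\psi}\hat a)+\ol{\pr_\beta}^\dagger(e^{i\psi}\Td a)$ and $\pr_t\psi$ decays exponentially, so does $\pr_ta$, yielding the limit $a_j(\infty,x,\eta)$ with the estimate (\ref{Be:hf-Sjoestimate}).

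The vanishing assertion (\ref{Be:hf-******************}) is then automatic: on the component of $\Sigma$ where $q\neq n_\pm$, the eigenvalues (\ref{Be:h-eigenmore}) of $p^s_\beta+\tfrac12\Td{\rm tr\,}F_\beta$ are bounded below by a positive constant, so the solution $a_j(t,x,\eta)$ itself decays exponentially there (Proposition~5.3 of part~\textrm{I} adapted to $\Box^{(q)}_\beta$), forcing all derivatives of its limit $a_j(\infty,x,\eta)$ to vanish on that component. Finally, for the non-vanishing (\ref{Be:0801061733}), at a point $\rho\in\Sigma^+$ with $q=n_+$ (resp. $\rho\in\Sigma^-$ with $q=n_-$) the zero eigenspace of $p^s_\beta+\tfrac12\Td{\rm tr\,}F_\beta$ acting on $\Lambda^{0,q}T^*(\Gamma)$ is one-dimensional, spanned by $\prod_{\lambda_j>0}t_j^\wedge t_j^{\wedge,*}$ (resp. $\prod_{\lambda_j<0}t_j^\wedge t_j^{\wedge,*}$) by (\ref{Be:h-hsiao1})--(\ref{Be:h-hsiao2}) and Lemma~\ref{Bl:0711021707}; solving the leading transport equation with initial condition $I$ shows that $a_0(\infty,\rho)$ is the orthogonal projection onto this eigenspace, which is manifestly non-zero.

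The main obstacle is the Berman--Sj\"ostrand step: unlike in part~\textrm{I}, where $\dbar_b$ and $\ol{\pr_b}^*$ are honest differential operators, here $\ol{\pr_\beta}$ and $\ol{\pr_\beta}^\dagger$ are only classical pseudodifferential operators of order one. To make sense of $\ol{\pr_\beta}(e^{i\psi}a)=e^{i\psi}\hat a$ asymptotically, I would compose in the sense of Fourier integral operators with complex phase, using the asymptotic formula of Melin--Sj\"ostrand (compare Proposition~5.4 of part~\textrm{I}), and verify that the resulting $\hat a$ is a quasi-homogeneous symbol of the expected degree whose Taylor expansion at $\Sigma$ is governed by the principal and subprincipal symbols of $\ol{\pr_\beta}$ alone. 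Because we are always working modulo infinite order at $\Sigma$, only finitely many terms of the symbol of $\ol{\pr_\beta}$ intervene at each step, and the argument of part~\textrm{I} goes through without essential change.
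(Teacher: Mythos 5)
Your proposal is correct and takes essentially the same approach as the paper, which itself simply states ``we can repeat the method of part~\textrm{I} (see Proposition~5.6 of part~\textrm{I})'' and supplies no further details; your write-up effectively carries out that repetition, using the same Berman--Sj\"ostrand intertwining trick via $(\ol{\pr_\beta})^2=0$, the same reduction to the eigenvalues of $p^s_\beta+\frac{1}{2}\Td{\rm tr\,}F_\beta$, and the same description of $a_0(\infty,\cdot)$ as the projection onto the one-dimensional null space. You also correctly flag the one genuine subtlety in the transfer from Part~\textrm{I} --- that $\ol{\pr_\beta}$ and $\ol{\pr_\beta}^\dagger$ are first-order pseudodifferential operators, not differential operators, so the asymptotic expansion $\ol{\pr_\beta}(e^{i\psi}a)=e^{i\psi}\hat a$ must be taken in the Melin--Sj\"ostrand sense --- which is exactly what Proposition~\ref{Bp:he-basismore} in the paper provides.
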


\begin{defn} \label{Bd:ss-heatgeneral}
Let $r(x,\eta)$ be a non-negative real continuous function on $T^*(\Omega)$. We assume that
$r(x,\eta)$ is positively homogeneous of degree 1, that is, $r(x,\lambda\eta)=\lambda r(x,\eta)$, for
$\lambda\geq1$, $\abs{\eta}\geq 1$. For $0\leq q\leq n-1$ and $k\in\Real$, we say that
\[a\in\hat S^k_r(\ol\Real_+\times T^*(\Omega);\, \mathscr L(\Lambda^{0,q}T^*(\Gamma), \Lambda^{0,q}T^*(\Gamma)))\]
if $a\in C^\infty(\ol\Real_+\times T^*(\Omega);\, \mathscr L(\Lambda^{0,q}T^*(\Gamma), \Lambda^{0,q}T^*(\Gamma)))$
and for all indices $\alpha$, $\beta$, $\gamma$, every compact set $K\subset\Omega$ and every
$\varepsilon>0$, there exists a constant $c>0$ such that
\[\abs{\pr^\gamma_t\pr^\alpha_x\pr^\beta_\eta a(t,x,\eta)}\leq
 ce^{t(-r(x,\eta)+\varepsilon\abs{\eta})}(1+\abs{\eta})^{k+\gamma-\abs{\beta}},\;x\in K,\;\abs{\eta}\geq 1.\]
\end{defn}

\begin{rem} \label{Br:ss-heatclass}
It is easy to see that we have the following properties:
 \begin{enumerate}
  \item If $a\in\hat S^k_{r_1}$, $b\in\hat S^l_{r_2}$ then $ab\in\hat S^{k+l}_{r_1+r_2}$,
        $a+b\in\hat S^{\max(k,l)}_{\min(r_1,r_2)}$.
  \item If $a\in\hat S^k_r$ then $\pr^\gamma_t\pr^\alpha_x\pr^\beta_\eta a\in
        \hat S^{k-\abs{\beta}+\gamma}_r$.
  \item If $a_j\in\hat S^{k_j}_r$, $j=0,1,2,\ldots$ and $k_j\searrow -\infty$ as $j\To\infty$,
        then there exists $a\in\hat S^{k_0}_r$ such that $a-\sum^{v-1}_0a_j\in
        \hat S^{k_v}_r$, for all $v=1,2,\ldots$. Moreover, if $\hat S^{-\infty}_r$ denotes $\bigcap_{k\in\Real}\hat S^k_r$
        then $a$ is unique modulo $\hat S^{-\infty}_r$.
 \end{enumerate}

If $a$ and $a_j$ have the properties of $(c)$, we write
\[a\sim\sum^\infty_0a_j\ \ \textup{in the symbol space}\ \hat S^{k_0}_r.\]
\end{rem}

From Proposition~\ref{Bp:0711101910} and the standard Borel construction,
we get the following

\begin{prop} \label{Bp:ss-SBmore}
Let $(n_-, n_+)$, $n_-+n_+=n-1$, be the signature of the Levi form on $\Omega$.
We can find solutions
\[a_j(t,x,\eta)\in
C^\infty(\ol\Real_+\times T^*(\Omega);\,
\mathscr L(\Lambda^{0,q}T^*(\Gamma), \Lambda^{0,q}T^*(\Gamma))),\ \ j=0, 1,\ldots,\]
of the system (\ref{Be:hf-heattransport}) with
$a_0(0, x, \eta)=I$, $a_j(0, x, \eta)=0$ when $j>0$,
where $a_j(t,x,\eta)$ is a quasi-homogeneous function of degree $-j$, such that for some $r>0$ as in
Definition~\ref{Bd:ss-heatgeneral},
\[a_j(t,x,\eta)-a_j(\infty,x,\eta)\in\hat S^{-j}_r(\ol\Real_+\times T^*(\Omega);\,
\mathscr L(\Lambda^{0,q}T^*(\Gamma), \Lambda^{0,q}T^*(\Gamma))),\ \ j=0, 1,\ldots,\]
where
\[a_j(\infty,x,\eta)\in C^\infty(T^*(\Omega);\,
\mathscr L(\Lambda^{0,q}T^*(\Gamma), \Lambda^{0,q}T^*(\Gamma))),\ \ j=0, 1,\ldots,\]
and $a_j(\infty, x, \eta)$ is a positively homogeneous function of degree $-j$.

Furthermore, for all $j=0,1,\ldots$,
\[ \left\{ \begin{array}{ll}
a_j(\infty, x, \eta)=0\ \ \mbox{in a conic neighborhood of}\ \ \Sigma^+, & {\rm if\ } q\neq n_+,  \\
a_j(\infty ,x,\eta)=0\ \ \mbox{in a conic neighborhood of}\ \ \Sigma^-, & {\rm if\ } q\neq n_-.
\end{array}\right.\]
\end{prop}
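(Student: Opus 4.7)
The plan is to bootstrap from Proposition~\ref{Bp:0711101910}, which already provides $a_j(t,x,\eta)$ and $a_j(\infty,x,\eta)$ satisfying the pointwise exponential decay (\ref{Be:hf-Sjoestimate}) together with the derivative-vanishing (\ref{Be:hf-******************}), and to package this data into a genuine symbol statement via a Borel-type construction, following the model of Proposition~5.5 and Proposition~6.4 of part~I. The key observation is that the transport equations (\ref{Be:hf-heattransport}) only determine $a_j(t,x,\eta)$ and $a_j(\infty,x,\eta)$ through their Taylor expansions along $\Sigma$: any two representatives differ by a function vanishing to infinite order on $\Sigma$. This freedom is exactly what is needed to upgrade the estimates from $\Sigma$ to a full conic neighborhood.

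First I would fix the representatives produced by Proposition~\ref{Bp:0711101910} and form their almost analytic extensions from $\Sigma$, exactly as in the proof of Proposition~1.2 of~\cite{MS78}. Applying (\ref{Be:hf-Sjoestimate}) to all derivatives of $a_j-a_j(\infty)$ along $\Sigma$, a Taylor expansion with remainder then yields the estimate
\[
\bigl|\pr^\gamma_t\pr^\alpha_x\pr^\beta_\eta\bigl(a_j(t,x,\eta)-a_j(\infty,x,\eta)\bigr)\bigr|\leq c\,e^{-\varepsilon_0 t|\eta|}(1+|\eta|)^{-j-|\beta|+\gamma}
\]
throughout a conic neighborhood $U$ of $\Sigma$, uniformly in $t\geq 0$ and for $|\eta|\geq 1$. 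Outside a slightly larger conic neighborhood I would redefine the representatives so that $a_j(t,x,\eta)=a_j(\infty,x,\eta)$ holds identically, glued in smoothly via a homogeneous cut-off; by construction the difference vanishes in that region, so the estimate above holds trivially there. Choosing a continuous weight $r(x,\eta)$, positively homogeneous of degree one with $r>0$ on the conic neighborhood where the exponential factor is needed and $r\equiv 0$ outside, this places $a_j-a_j(\infty)$ in $\hat S^{-j}_r$ as required.

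The vanishing assertions for $a_j(\infty)$ are obtained by the same mechanism: since Proposition~\ref{Bp:0711101910} guarantees that all derivatives of $a_j(\infty)$ vanish on $\Sigma^+$ when $q\neq n_+$ (respectively on $\Sigma^-$ when $q\neq n_-$), I may modify $a_j(\infty)$ off $\Sigma$ to infinite order so that it vanishes identically on a genuine conic neighborhood of the corresponding component, and compensate by modifying $a_j(t,\cdot)$ correspondingly to infinite order on $\Sigma$. A standard Borel summation in $j$, using cut-offs $\chi_j(\eta)$ supported in $|\eta|\geq R_j$ with $R_j\nearrow\infty$ sufficiently fast, then produces representatives $a_j$ and $a_j(\infty)$ satisfying both the symbol estimate and the conic vanishing simultaneously. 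The main technical hurdle is to keep the almost analytic extensions, the gluings, and the Borel cut-offs compatible uniformly in $j$, so that the resulting countable family can be asymptotically summed inside $\hat S^0_r$ without losing the exponential decay in $t$; this is the standard Menikoff--Sjöstrand bookkeeping already carried out in part~I, and once it is set up the proposition follows at once.
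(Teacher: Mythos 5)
Your proposal follows exactly the route the paper indicates: Proposition~\ref{Bp:0711101910} together with ``the standard Borel construction.''  The core idea is right — the transport equations determine $a_j$ and $a_j(\infty)$ only through their Taylor jets on $\Sigma$, so one is free to re-choose representatives off $\Sigma$ to achieve the full $\hat S^{-j}_r$ estimate and to make $a_j(\infty)$ vanish identically in a conic neighborhood of the appropriate component $\Sigma^\pm$.

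Two small points of hygiene.  First, bounds on the derivatives of $a_j(t,\cdot)-a_j(\infty,\cdot)$ \emph{restricted to $\Sigma$} do not by themselves propagate to a conic neighborhood by ``Taylor expansion with remainder'' — the remainder of the given representative is uncontrolled off $\Sigma$.  What is actually used is a Borel/Whitney extension: one re-builds $a_j(t,\cdot)$ and $a_j(\infty,\cdot)$ from the jet data on $\Sigma$ (as in an almost analytic extension with respect to the weight $\mathrm{dist}(\cdot,\Sigma)$), so that the estimate (\ref{Be:hf-Sjoestimate}) is inherited by construction in a neighborhood; your phrase ``form their almost analytic extensions from $\Sigma$'' is the right tool, and the ``Taylor expansion with remainder'' wording should be read as a property of those new extensions, not of the original representatives.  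Second, the ``Borel summation in $j$ with cut-offs $\chi_j(\eta)$ supported in $|\eta|\ge R_j$'' is the device for forming the asymptotic sum $a\sim\sum_j a_j$; it is not required to prove this proposition, which makes a claim about each individual $a_j$, and that summation only enters later in the section.  Neither point breaks your argument, but the second paragraph would be cleaner with the $j$-summation removed.
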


\begin{rem} \label{Br:0801062134}
Let
$b(t,x,\eta)\in\hat S^k_r(\ol\Real_+\times T^*(\Omega);\, \mathscr L(\Lambda^{0,q}T^*(\Gamma),\Lambda^{0,q}T^*(\Gamma)))$
with $r>0$.
We assume that $b(t,x,\eta)=0$ when $\abs{\eta}\leq 1$. Let $\chi\in C^\infty_0(\Real^{2n-1})$ be equal to $1$ near the origin. Put
$B_\eps(x, y)=\int\Bigr(\int^\infty_0e^{i(\psi(t,x,\eta)-\seq{y,\eta})}b(t,x,\eta)dt\Bigr)\chi(\eps\eta)d\eta$.
For $u\in C^\infty_0(\Omega;\, \Lambda^{0,q}T^*(\Gamma))$, we can show that
\[\lim_{\eps\To0}(\int B_{\eps}(x, y)u(y)dy)\in C^\infty(\Omega;\, \Lambda^{0,q}T^*(\Gamma))\]
and
\begin{align*}
B: C^\infty_0(\Omega;\, \Lambda^{0,q}T^*(\Gamma))&\To C^\infty(\Omega;\, \Lambda^{0,q}T^*(\Gamma)) \\
u&\To\lim_{\eps\To0}(\int B_{\eps}(x, y)u(y)dy)
\end{align*}
is continuous. Formally,
\[B(x, y)=\int\Bigr(\int^\infty_0e^{i(\psi(t,x,\eta)-\seq{y,\eta})}b(t,x,\eta)dt\Bigr)d\eta.\]
Moreover, $B$ has a unique continuous extension:
\[B: \mathcal E'(\Omega;\, \Lambda^{0,q}T^*(\Gamma))\To\mathcal D'(\Omega;\, \Lambda^{0,q}T^*(\Gamma))\]
and
$B(x,y)\in C^\infty(\Omega\times\Omega\smallsetminus{\rm diag\,}(\Omega\times\Omega);\,
\mathscr L(\Lambda^{0,q}T^*(\Gamma),\Lambda^{0,q}T^*(\Gamma)))$.
For the details, we refer the reader to Proposition~$6.6$ of part ${\rm I\,}$.
\end{rem}

\begin{rem} \label{Br:ss-oscillmore}
Let
$a(t, x, \eta)\in \hat S^k_0(\ol\Real_+\times T^*(\Omega);\,
\mathscr L(\Lambda^{0,q}T^*(\Gamma),\Lambda^{0,q}T^*(\Gamma)))$.
We assume $a(t, x, \eta)=0$, if $\abs{\eta}\leq 1$ and
\[a(t, x, \eta)-a(\infty, x, \eta)\in \hat S^{k}_r(\ol\Real_+\times T^*(\Omega);\,
\mathscr L(\Lambda^{0,q}T^*(\Gamma),\Lambda^{0,q}T^*(\Gamma)))\]
with $r>0$, where
$a(\infty, x, \eta)\in C^\infty(T^*(\Omega);\,
\mathscr L(\Lambda^{0,q}T^*(\Gamma),\Lambda^{0,q}T^*(\Gamma)))$.
Then we can also define
\begin{equation} \label{Be:0801062201}
A(x, y)=\int\Bigr(\int^{\infty}_0\Bigr(e^{i(\psi(t,x,\eta)-\seq{y,\eta})}a(t,x,\eta)-e^{i(\psi(\infty,x,\eta)-\seq{y,\eta})}
a(\infty,x,\eta)\Bigr)dt\Bigr)d\eta
\end{equation}
as an oscillatory integral by the following formula:
\[A(x, y)=\int\!\!\Bigr(\int^{\infty}_0\!\! e^{i(\psi(t,x,\eta)-
\seq{y,\eta})}(-t)(i\psi^{'}_t(t,x,\eta)a(t,x,\eta)+a'_t(t,x,\eta))dt\Bigr)d\eta.\]
We notice that
$(-t)(i\psi'_t(t,x,\eta)a(t,x,\eta)+a'_t(t,x,\eta))\in\hat S^{k+1}_r$, $r>0$.
\end{rem}

We recall the following

\begin{defn} \label{Bd:ss-symbol}
Let $k\in\Real$. $S^k_{\frac{1}{2},\frac{1}{2}}(T^*(\Omega);\, \mathscr L(\Lambda^{0,q}T^*(\Gamma),\Lambda^{0,q}T^*(\Gamma)))$
is the space of all
$a\in C^\infty(T^*(\Omega);\, \mathscr L(\Lambda^{0,q}T^*(\Gamma),\Lambda^{0,q}T^*(\Gamma)))$
such that for every
compact sets $K\subset\Omega$ and all $\alpha\in\Pstint^{2n-1}$, $\beta\in\Pstint^{2n-1}$, there is a constant
$c_{\alpha,\beta,K}>0$ such that
\[\abs{\pr^\alpha_x\pr^\beta_\xi a(x,\xi)}\leq c_{\alpha,\beta,K}(1+\abs{\xi})^{k-\frac{\abs{\beta}}{2}+
\frac{\abs{\alpha}}{2}},\]
$(x,\xi)\in T^*(\Omega)$, $x\in K$.
$S^k_{\frac{1}{2},\frac{1}{2}}$ is called the space of symbols of order $k$ type
$(\frac{1}{2},\frac{1}{2})$.
\end{defn}

\begin{defn} \label{Bd:ss-pseudomore}
Let $k\in\Real$. A pseudodifferential operator of order $k$ type
$(\frac{1}{2},\frac{1}{2})$ from sections of $\Lambda^{0,q}T^*(\Gamma)$ to sections of $\Lambda^{0,q}T^*(\Gamma)$
is a continuous linear map
$A:C^\infty_0(\Omega;\, \Lambda^{0,q}T^*(\Gamma))\To\mathscr D'(\Omega;\, \Lambda^{0,q}T^*(\Gamma))$
such that the distribution kernel of $A$ is
\[K_A=A(x, y)=\frac{1}{(2\pi)^{2n-1}}\int e^{i\seq{x-y,\xi}}a(x, \xi)d\xi\]
with $a\in S^k_{\frac{1}{2},\frac{1}{2}}(T^*(\Omega);\, \mathscr L(\Lambda^{0,q}T^*(\Gamma),\Lambda^{0,q}T^*(\Gamma)))$.
We call $a(x, \xi)$ the symbol of $A$.
We shall write
$L^k_{\frac{1}{2},\frac{1}{2}}(\Omega;\, \Lambda^{0,q}T^*(\Gamma),\Lambda^{0,q}T^*(\Gamma))$
to denote the space of
pseudodifferential operators of order $k$ type $(\frac{1}{2},\frac{1}{2})$ from sections of
$\Lambda^{0,q}T^*(\Gamma)$ to sections of $\Lambda^{0,q}T^*(\Gamma)$.
\end{defn}

We recall the following classical proposition of Calderon-Vaillancourt. (For a proof, see~\cite{Hor85}.)

\begin{prop} \label{Bp:he-calderon}
If $A\in L^k_{\frac{1}{2},\frac{1}{2}}(\Omega;\, \Lambda^{0,q}T^*(\Gamma),\Lambda^{0,q}T^*(\Gamma))$.
Then, for every $s\in\Real$, $A$ is continuous:
$A:H^s_{\rm comp}(\Omega;\, \Lambda^{0,q}T^*(\Gamma))\To H^{s-k}_{\rm loc}(\Omega;\, \Lambda^{0,q}T^*(\Gamma))$.
\end{prop}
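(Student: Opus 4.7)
The plan is to follow the classical Calderon--Vaillancourt strategy. Since the statement is local, matrix-valued, and parametrized by a vector bundle, I would first reduce to the scalar operator case on an open set in $\Real^{2n-1}$: choose a local trivialization of $\Lambda^{0,q}T^*(\Gamma)$ over a small coordinate chart in $\Omega$, so that $A$ becomes a finite matrix of scalar operators of type $(\tfrac12,\tfrac12)$; each entry inherits symbol estimates of the required form. The continuity statement with compact/local supports then reduces to showing that if $\varphi,\psi\in C^\infty_0(\Omega)$, the composition $\varphi A \psi$ is bounded $H^s(\Real^{2n-1})\to H^{s-k}(\Real^{2n-1})$.

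Next I would reduce to the case $k=0$ and $s=0$, i.e.\ $L^2$-boundedness. For this, let $\Lambda^s=(1-\Delta)^{s/2}$ be the standard Bessel potential. Then $\Lambda^{s-k}\circ (\varphi A\psi)\circ \Lambda^{-s}$ has symbol in $S^0_{1/2,1/2}$ because the class $S^m_{1/2,1/2}$ is closed under composition with classical pseudodifferential operators (the composition formula $a\#b\sim \sum \tfrac{1}{\alpha!}\partial_\xi^\alpha a\,D_x^\alpha b$ converges asymptotically in $S^{m_1+m_2}_{1/2,1/2}$ whenever one factor is classical, because each derivative gains $\tfrac12$ in $\xi$ and loses $\tfrac12$ in $\xi$---the net loss is $0$). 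So it suffices to prove: every $A$ with symbol $a\in S^0_{1/2,1/2}$ is bounded on $L^2(\Real^{2n-1})$.

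The heart of the argument is a Cotlar--Stein almost orthogonality estimate. I would fix a dyadic-type partition of unity on the frequency side adapted to the $(1/2,1/2)$ scaling: pick $\chi\in C^\infty_0(\Real^{2n-1})$ with $\sum_{j\in\Z^{2n-1}} \chi(\xi-j)^2\equiv 1$, then for $\lambda\ge 1$ use the scaled partition $\chi_{j,\lambda}(\xi)=\chi\bigl(\lambda^{-1/2}(\xi-\lambda^{1/2} j)\bigr)$ on balls of radius $\lambda^{1/2}$, which matches the anisotropy of the symbol class. Correspondingly, cover physical space by unit cubes. Decompose $a=\sum_{j,k} a_{j,k}$ where each $a_{j,k}$ is supported in a box of size $1\times \lambda^{1/2}$ in $(x,\xi)$. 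Each piece $A_{j,k}$ is uniformly bounded on $L^2$ by a direct kernel estimate (Schur test applied to the phase-space localized symbol; integration by parts in $\xi$ using $(1+|x-y|^2)^{-N}$ and the $S^0_{1/2,1/2}$ estimates gains factors $(1+\lambda^{1/2}|x-y|)^{-N}$). Then one checks the almost-orthogonality bounds $\|A_{j,k}^*A_{j',k'}\|\le C\,\omega(j-j',k-k')^2$ and $\|A_{j,k}A_{j',k'}^*\|\le C\,\omega(j-j',k-k')^2$ with $\sum \omega<\infty$; here the cancellation comes from integration by parts exploiting disjoint supports in either $x$ or $\xi$, and the balance between $S^0_{1/2,1/2}$-losses and gains is exactly what makes the sum convergent (this is the reason the exponent $1/2$ is critical).

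The main obstacle will be carrying out the almost-orthogonality estimates cleanly: the $(1/2,1/2)$ case lies precisely at the borderline where the standard Calder\'on--Vaillancourt proof for $S^0_{\rho,\delta}$ with $0\le \delta<\rho\le 1$ degenerates, so the gain in each integration by parts is not strictly better than the loss. One must therefore use finitely many derivatives and a Schur-type summation in which the decay $\omega(j-j',k-k')$ is polynomial of a sufficiently high but finite order; tracking exactly how many derivatives of $a$ need to be controlled to obtain a summable kernel (the so-called Calder\'on--Vaillancourt constant) is the delicate technical point. Once this is done, Cotlar--Stein yields the $L^2$-bound, and unwinding the reductions above delivers the stated continuity $H^s_{\mathrm{comp}}\to H^{s-k}_{\mathrm{loc}}$ for every $s\in\Real$.
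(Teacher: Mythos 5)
The paper does not give a proof of this proposition at all: it is stated as a classical result of Calder\'on--Vaillancourt and the reader is simply referred to chapter~XVIII of H\"ormander's book \cite{Hor85}. Your proposal therefore goes considerably further than the paper by actually outlining an argument, and the route you take---reduction to the scalar case by trivialization, reduction to $L^2$-boundedness of an $S^0_{1/2,1/2}$ operator via composition with Bessel potentials, then a Cotlar--Stein almost-orthogonality estimate---is indeed one of the standard textbook proofs of the Calder\'on--Vaillancourt theorem, so at the level of strategy there is no divergence from the cited source.

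Two points in your sketch are, however, imprecise. First, the justification that $\Lambda^{s-k}(\varphi A\psi)\Lambda^{-s}$ has symbol in $S^0_{1/2,1/2}$ because ``each derivative gains $\tfrac12$ in $\xi$ and loses $\tfrac12$ in $\xi$---the net loss is $0$'' is misleading. If both factors were of type $(\tfrac12,\tfrac12)$ the terms of the composition expansion would all be of the same order and the asymptotic series would \emph{not} be summable; it is precisely because the Bessel potentials are classical (type $(1,0)$) that $\partial_\xi^\alpha a\, D_x^\alpha b$ drops by $\tfrac{|\alpha|}{2}$ orders and the expansion converges in $S^{m_1+m_2}_{1/2,1/2}$. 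You reach the right conclusion but for a reason that would fail in the general $(\tfrac12,\tfrac12)$-by-$(\tfrac12,\tfrac12)$ case. Second, your frequency decomposition leaves $\lambda$ as a dangling parameter: tiling all of $\xi$-space by boxes of a fixed side $\lambda^{1/2}$ does not match the anisotropy of the class. The scale must vary with the local frequency (e.g.\ boxes of side $\sim 2^{j/2}$ inside the dyadic shell $|\xi|\sim 2^j$), and the almost-orthogonality bounds for $A_{j,k}^*A_{j',k'}$ and $A_{j,k}A_{j',k'}^*$ must then be shown uniform in $j$. Neither issue is fatal---both are standard to repair---but since the paper disposes of the proposition by citation, the burden of making these details precise falls entirely on you.
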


We have the following

\begin{prop} \label{Bp:ss-pseudo}
Let
$a(t, x, \eta)\in \hat S^k_0(\ol\Real_+\times T^*(\Omega);\,
\mathscr L(\Lambda^{0,q}T^*(\Gamma),\Lambda^{0,q}T^*(\Gamma)))$.
We assume $a(t, x, \eta)=0$, if\, $\abs{\eta}\leq 1$ and
\[a(t, x, \eta)-a(\infty, x, \eta)\in \hat S^{k}_r(\ol\Real_+\times T^*(\Omega);\,
\mathscr L(\Lambda^{0,q}T^*(\Gamma),\Lambda^{0,q}T^*(\Gamma)))\]
with $r>0$, where
$a(\infty, x, \eta)\in C^\infty(T^*(\Omega);\,
\mathscr L(\Lambda^{0,q}T^*(\Gamma),\Lambda^{0,q}T^*(\Gamma)))$.
Let
\begin{align*}
A(x, y)=\frac{1}{(2\pi)^{2n-1}}\int\Bigl(\int^{\infty}_0 &\Bigl(e^{i(\psi(t,x,\eta)-\seq{y,\eta})}a(t,x,\eta)-  \\
                                                &e^{i(\psi(\infty,x,\eta)-\seq{y,\eta})}a(\infty,x,\eta)\Bigl)dt\Bigl)d\eta
\end{align*}
be as in (\ref{Be:0801062201}). Then
$A\in L^{k-1}_{\frac{1}{2},
\frac{1}{2}}(\Omega;\, \Lambda^{0,q}T^*(\Gamma),\Lambda^{0,q}T^*(\Gamma))$
with symbol
\[q(x,\eta)=\int^{\infty}_0\Bigr(e^{i(\psi(t,x,\eta)-\seq{x,\eta})}a(t,x,\eta)-
e^{i(\psi(\infty,x,\eta)-\seq{x,\eta})}a(\infty,x,\eta)\Bigr)dt\]
in $S^{k-1}_{\frac{1}{2},\frac{1}{2}}(T^*(\Omega);\, \mathscr L(\Lambda^{0,q}T^*(\Gamma),\Lambda^{0,q}T^*(\Gamma)))$.
\end{prop}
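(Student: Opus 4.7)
The plan is to mirror the argument for the corresponding statement in part I (Proposition 6.10 there), whose proof carries over essentially verbatim once the two symbolic ingredients — an exponential-decay piece and a phase-difference piece — are handled separately.

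First I would reduce the claim to a symbol estimate for
\[
q(x,\eta)=\int_0^{\infty}\!\Bigl(e^{i(\psi(t,x,\eta)-\langle x,\eta\rangle)}a(t,x,\eta)-e^{i(\psi(\infty,x,\eta)-\langle x,\eta\rangle)}a(\infty,x,\eta)\Bigr)dt.
\]
For $u\in C^\infty_0(\Omega;\Lambda^{0,q}T^*(\Gamma))$, I would regularize the oscillatory integral defining $A$ by a cut-off $\chi(\varepsilon\eta)$ (with $\chi\in C^\infty_0(\Real^{2n-1})$ equal to $1$ near $0$), writing
$\psi(t,x,\eta)-\langle y,\eta\rangle=(\psi(t,x,\eta)-\langle x,\eta\rangle)+\langle x-y,\eta\rangle$ and likewise for $\psi(\infty)$, so that the $y$-integration factors out. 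A Fubini and the definition of $q$ then give
\[
(A_{\varepsilon}u)(x)=\frac{1}{(2\pi)^{2n-1}}\int e^{i\langle x-y,\eta\rangle}q(x,\eta)\chi(\varepsilon\eta)u(y)\,dy\,d\eta.
\]
Once $q\in S^{k-1}_{\frac{1}{2},\frac{1}{2}}$ is established, standard Calder\'on–Vaillancourt theory (Proposition~\ref{Bp:he-calderon}) shows that the $\varepsilon\to 0$ limit exists and equals the canonical pseudodifferential operator with symbol $q$, which is exactly the claim.

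The core of the proof is therefore the symbol estimate for $q$, which I would split as $q=q_1+q_2$ with
\[
q_1(x,\eta)=\int_0^\infty e^{i(\psi(t,x,\eta)-\langle x,\eta\rangle)}\bigl(a(t,x,\eta)-a(\infty,x,\eta)\bigr)dt,
\]
\[
q_2(x,\eta)=\int_0^\infty\!\Bigl(e^{i(\psi(t,x,\eta)-\langle x,\eta\rangle)}-e^{i(\psi(\infty,x,\eta)-\langle x,\eta\rangle)}\Bigr)a(\infty,x,\eta)\,dt.
\]
For $q_1$, I would combine the exponential decay of $a(t)-a(\infty)$ in $\hat S^k_r$ with $r>0$ with the pointwise bound on $\pr^\alpha_x\pr^\beta_\eta e^{i(\psi(t)-\langle x,\eta\rangle)}$ from the analogue of Lemma 6.14 in part I (i.e.\ the $(\frac12,\frac12)$-type bound with an $e^{-\operatorname{Im}\psi}$ factor), exactly as in the proof of Lemma 6.16 of part I; integration in $t$ then produces the extra factor $\abs{\eta}^{-1}$ and yields $q_1\in S^{k-1}_{\frac12,\frac12}$. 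For $q_2$, I would use the identity from Remark~\ref{Br:ss-oscillmore} to rewrite
\[
q_2(x,\eta)=\int_0^\infty e^{i(\psi(t,x,\eta)-\langle x,\eta\rangle)}(-t)i\psi'_t(t,x,\eta)a(\infty,x,\eta)\,dt,
\]
and then invoke the estimate on $t\psi'_t$ (the analogue of Lemma 6.15 of part I, giving exponential decay in $t\abs{\eta}$) together with the same $(\frac12,\frac12)$ bound on $e^{i(\psi-\langle x,\eta\rangle)}$ to conclude $q_2\in S^{k-1}_{\frac12,\frac12}$ by the argument of Lemma 6.17 of part I.

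The expected main obstacle is the verification that $q_2$ has the right symbol class. The difficulty is that $a(\infty,x,\eta)$ is only a classical symbol of order $k$, so the gain of one derivative must come entirely from the exponential decay of $t\psi'_t$ compensating the factor $t$; moreover, each derivative in $x$ or $\eta$ falling on the oscillatory factor costs a power of $\abs{\eta}^{1/2}$ (this is exactly how the $(\frac12,\frac12)$ class appears rather than the classical $(1,0)$ class), so the Leibniz bookkeeping has to be carried out carefully to check that the singular $(1+\operatorname{Im}\psi)^{(\abs\alpha+\abs\beta)/2}$ weights are absorbed by $e^{-\operatorname{Im}\psi}$ uniformly in $t$. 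Everything else is a routine transcription of the part I argument to the setting of $\Box^{(q)}_\beta$ on $\Gamma$.
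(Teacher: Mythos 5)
Your proposal is correct and follows the same route as the paper: the paper's own proof is simply the citation ``See Lemma~$6.14$ and Lemma~$6.16$ of part I,'' which are exactly the estimates you reconstruct for the two pieces $q_1$ and $q_2$ of your decomposition (with Remark~$6.17$ of part~I giving the combination). Your lemma numbering in part~I is slightly shifted (the $e^{-\operatorname{Im}\psi}$ bound on derivatives of $e^{i(\psi-\langle x,\eta\rangle)}$ is Lemma~$6.13$, the $t\psi'_t$ estimate is Lemma~$6.12$, the $q_1$-type conclusion is Lemma~$6.14$, and the $q_2$-type conclusion is Lemma~$6.16$), but the mathematical structure you describe matches the intended argument exactly.
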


\begin{proof}
See Lemma~$6.14$ and Lemma~$6.16$ of part ${\rm I\,}$.
\end{proof}

From now on, we write
\[\frac{1}{(2\pi)^{2n-1}}\int\Bigl(\int^{\infty}_0\Bigl(e^{i(\psi(t,x,\eta)-\seq{y,\eta})}a(t,x,\eta)-e^{i(\psi(\infty,x,\eta)-\seq{y,\eta})}
a(\infty,x,\eta)\Bigl)dt\Bigl)d\eta \]
to denote the kernel of pseudodifferential operator of order $k-1$ type
$(\frac{1}{2},\frac{1}{2})$ from sections of $\Lambda^{0,q}T^*(\Gamma)$ to sections of
$\Lambda^{0,q}T^*(\Gamma)$.
Here $a(t, x, \eta)$, $a(\infty, x, \eta)$ are as in Proposition~\ref{Bp:ss-pseudo}.

The following is essentially well-known (See page $72$ of ~\cite{MS78}.)

\begin{prop} \label{Bp:he-basismore}
Let $Q$ be a properly supported pseudodifferential operator on $\Omega$ of order
$k>0$ with classical symbol $q(x, \xi)\in C^\infty(T^*(\Omega))$. Let
\[b(t,x,\eta)\in\hat S^m_0(\ol\Real_+\times T^*(\Omega);\, \mathscr L(\Lambda^{0,q}T^*(\Gamma),\Lambda^{0,q}T^*(\Gamma))).\]
We assume that $b(t, x, \eta)=0$ when $\abs{\eta}\leq 1$ and that
\[b(t, x, \eta)-b(\infty, x, \eta)\in
\hat S^{m}_r(\ol\Real_+\times T^*(\Omega);\, \mathscr L(\Lambda^{0,q}T^*(\Gamma),\Lambda^{0,q}T^*(\Gamma)))\]
with $r>0$, where
$b(\infty, x, \eta)\in C^\infty(T^*(\Omega);\, \mathscr L(\Lambda^{0,q}T^*(\Gamma),\Lambda^{0,q}T^*(\Gamma)))$
is a classical symbol of order $m$. Then,
\begin{equation} \label{Be:0801121719}
Q(e^{i(\psi(t,x,\eta)-\seq{y,\eta})}b(t,x,\eta))=e^{i(\psi(t,x,\eta)-\seq{y,\eta})}c(t, x, \eta)+d(t, x, \eta),
\end{equation}
where
$c(t, x, \eta)\in\hat S^{k+m}_0(\ol\Real_+\times T^*(\Omega);\, \mathscr L(\Lambda^{0,q}T^*(\Gamma),\Lambda^{0,q}T^*(\Gamma)))$,
\[c(t, x, \eta)\sim\sum_{\alpha}\frac{1}{\alpha !}q^{(\alpha)}(x,\psi'_x(t,x,\eta))(R_\alpha(\psi,D_x)b)\]
in the symbol space $\hat S^{k+m}_0(\ol\Real_+\times T^*(\Omega);\, \mathscr L(\Lambda^{0,q}T^*(\Gamma), \Lambda^{0,q}T^*(\Gamma)))$,
\[c(t, x, \eta)-c(\infty, x, \eta)\in
\hat S^{k+m}_r(\ol\Real_+\times T^*(\Omega);\, \mathscr L(\Lambda^{0,q}T^*(\Gamma),\Lambda^{0,q}T^*(\Gamma))),\ \ r>0,\]
\[d(t, x, \eta)\in\hat S^{-\infty}_0(\ol\Real_+\times T^*(\Omega);\, \mathscr L(\Lambda^{0,q}T^*(\Gamma),\Lambda^{0,q}T^*(\Gamma))),\]
$d(t, x, \eta)-d(\infty, x, \eta)\in
\hat S^{-\infty}_r(\ol\Real_+\times T^*(\Omega);\, \mathscr L(\Lambda^{0,q}T^*(\Gamma),\Lambda^{0,q}T^*(\Gamma)))$, $r>0$.
Here
\[c(\infty, x, \eta)\in C^\infty(T^*(\Omega);\, \mathscr L(\Lambda^{0,q}T^*(\Gamma),\Lambda^{0,q}T^*(\Gamma)))\]
is a classical symbol of order $k+m$,
\[d(\infty, x, \eta)\in S^{-\infty}_{1, 0}(T^*(\Omega);\, \mathscr L(\Lambda^{0,q}T^*(M'),\Lambda^{0,q}T^*(M')))\]
(For the precise meaning of\, $S^{-\infty}_{1, 0}$, see Definition~\ref{Bd:0712101500}.)
and
\[R_\alpha(\psi,D_x)b=D^\alpha_y\set{e^{i\phi_2(t,x,y,\eta)}
b(t,y,\eta)}\Big|_{y=x},\]
$\phi_2(t,x,y,\eta)=(x-y)\psi'_x(t,x,\eta)-(\psi(t,x,\eta)-\psi(t,y,\eta))$.
Moreover, put
\begin{align*}
B(x, y)=\frac{1}{(2\pi)^{2n-1}}\int\Bigl(\int^{\infty}_0 &\Bigl(e^{i(\psi(t,x,\eta)-\seq{y,\eta})}b(t,x,\eta)-  \\
                                                &e^{i(\psi(\infty,x,\eta)-\seq{y,\eta})}b(\infty,x,\eta)\Bigl)dt\Bigl)d\eta, \\
C(x, y)=\frac{1}{(2\pi)^{2n-1}}\int\Bigl(\int^{\infty}_0 &\Bigl(e^{i(\psi(t,x,\eta)-\seq{y,\eta})}c(t,x,\eta)-   \\
                                                &e^{i(\psi(\infty,x,\eta)-\seq{y,\eta})}c(\infty,x,\eta)\Bigl)dt\Bigl)d\eta.
\end{align*}
We have $Q\circ B\equiv C$.
\end{prop}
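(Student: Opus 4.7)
The plan is to derive the formula by the standard Kuranishi trick combined with a formal stationary phase expansion, exactly as in Proposition~\ref{Bp:hf-asym} of part~${\rm I\,}$, and then verify that each step respects both the $\hat S^{m}_{0}/\hat S^{m}_{r}$ structure and the convergence as $t\to\infty$. Writing $Q$ as a properly supported classical pseudodifferential operator acting in the $x$ variable,
\[
Q(e^{i\psi(t,x,\eta)}b(t,x,\eta))=\frac{1}{(2\pi)^{2n-1}}\int\!\!\int e^{i\seq{x-y',\xi}}q(x,\xi)\,e^{i\psi(t,y',\eta)}b(t,y',\eta)\,dy'd\xi,
\]
I first use the identity
\[
e^{i\seq{x-y',\xi}}e^{i\psi(t,y',\eta)}=e^{i\psi(t,x,\eta)}\,e^{i\seq{x-y',\xi-\psi'_{x}(t,x,\eta)}}\,e^{i\phi_{2}(t,x,y',\eta)},
\]
with $\phi_{2}(t,x,y',\eta)=(x-y')\psi'_{x}(t,x,\eta)-(\psi(t,x,\eta)-\psi(t,y',\eta))$ vanishing to second order at $y'=x$. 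A contour deformation in $\xi$, using an almost analytic extension of the classical symbol $q(x,\xi)$ near $\xi=\psi'_{x}(t,x,\eta)$ (which is complex but with non-negative imaginary part), lets me replace $\xi$ by $\xi+\psi'_{x}$ modulo an error supported away from the stationary set; this error will feed into the $d(t,x,\eta)$ term.

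The main contribution is then $e^{i\psi(t,x,\eta)}$ times
\[
\frac{1}{(2\pi)^{2n-1}}\int\!\!\int e^{i\seq{x-y',\xi}}q(x,\xi+\psi'_{x}(t,x,\eta))\,e^{i\phi_{2}(t,x,y',\eta)}\,b(t,y',\eta)\,dy'd\xi,
\]
and I expand $q(x,\xi+\psi'_{x})\sim\sum_{\alpha}\frac{1}{\alpha!}q^{(\alpha)}(x,\psi'_{x})\xi^{\alpha}$ by Taylor's formula. Each monomial $\xi^{\alpha}$ becomes $D^{\alpha}_{y'}$ and the $(y',\xi)$ integration collapses to $D^{\alpha}_{y'}\{e^{i\phi_{2}}b(t,y',\eta)\}|_{y'=x}=R_{\alpha}(\psi,D_{x})b$, yielding the claimed asymptotic sum for $c(t,x,\eta)$. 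The symbol-class bookkeeping is essentially identical to Lemma~6.11 of part~${\rm I\,}$: since $\phi_{2}$ vanishes to second order at $y'=x$, the quasi-homogeneity degree of $\frac{1}{\alpha!}q^{(\alpha)}(x,\psi'_{x})R_{\alpha}(\psi,D_{x})b$ drops by at least $|\alpha|/2$, and combining with Definition~\ref{Bd:ss-heatgeneral} the $j$-th term lives in $\hat S^{k+m-\lceil j/2\rceil}_{0}$. A standard Borel summation across the asymptotic expansion produces $c(t,x,\eta)\in\hat S^{k+m}_{0}$. The exponential decay $\hat S^{k+m}_{r}$ with $r>0$ for the difference $c(t,x,\eta)-c(\infty,x,\eta)$ is inherited from the corresponding estimate for $b(t,x,\eta)-b(\infty,x,\eta)$ (by hypothesis) together with (\ref{Be:c-************}) for the phase $\psi$. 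Setting $t=\infty$ gives the classical symbol $c(\infty,x,\eta)$ of order $k+m$ defined by the same asymptotic sum.

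The error term $d(t,x,\eta)$ collects (i) the contribution from the region $|\xi|\gtrsim|\eta|$ where $q(x,\xi+\psi'_{x})$ is not well approximated by its Taylor expansion and (ii) the contour-deformation remainder; in both cases repeated integrations by parts in $y'$ (using $d_{y'}\seq{x-y',\xi}\ne 0$ off the diagonal) show $d\in\hat S^{-\infty}_{0}$ with $d-d(\infty)\in\hat S^{-\infty}_{r}$, $r>0$. Finally, the statement $Q\circ B\equiv C$ follows by approximating $B$ by the regularized kernels of Remark~\ref{Br:0801062134}, applying the properly supported operator $Q$ under the oscillatory integral (justified by the symbol bounds just established) and passing to the limit; the contribution of $d$ is a smoothing operator by Proposition~\ref{Bp:he-calderon} (or directly by inspection of $\hat S^{-\infty}$). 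The main obstacle is the simultaneous control required in Step~2: one must carry the exponential $t$-decay through the Taylor remainder and the non-stationary estimate while tracking the half-integer drop in quasi-homogeneous order, so that both $c-c(\infty)\in\hat S^{k+m}_{r}$ and $d-d(\infty)\in\hat S^{-\infty}_{r}$ with the \emph{same} $r>0$; this forces the almost analytic extension of $q$ to be chosen on a neighborhood of $\psi'_{x}(t,x,\eta)$ whose size is uniform in $t$ once $|\eta|\ge 1$.
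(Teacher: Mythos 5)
The paper does not reproduce a proof of this proposition; it merely refers to page~72 of Menikoff--Sj\"ostrand~\cite{MS78}. Your argument---Kuranishi trick, almost-analytic contour deformation of the classical symbol $q$, formal Taylor expansion yielding the $R_{\alpha}(\psi,D_{x})$ formula, followed by degree/quasi-homogeneity bookkeeping and a Borel summation---is the standard route that the cited reference takes for results of exactly this type, and your expansion formula, the half-integer drop $k+m-|\alpha|/2$ in quasi-homogeneous order (verified via Fa\`a di Bruno since $\phi_{2}$ vanishes to second order at $y'=x$), and the treatment of $d$ as a remainder from the non-stationary region plus the $\bar\partial q$-error are all correct. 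You also correctly locate the genuine technical work, namely that the same $r>0$ must survive both in $c-c(\infty)$ and $d-d(\infty)$, which requires the almost-analytic extension of $q$ to be taken on a conic neighborhood of $\psi'_{x}(t,x,\eta)$ whose size is uniform in $t$.

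One small inaccuracy: the parenthetical claim that $\psi'_{x}(t,x,\eta)$ has ``non-negative imaginary part'' is not literally true. Only ${\rm Im\,}\psi\geq 0$; the sign of ${\rm Im\,}\psi'_{x}$ is not controlled away from $\Sigma$ (indeed ${\rm Im\,}\psi'_{x}=0$ at $\Sigma$ and can take either sign nearby). What you actually need---and what your argument really uses---is the bound $|{\rm Im\,}\psi'_{x}|\lesssim |\eta|$ together with $|\psi'_{x}|\gtrsim|\eta|$ on the support of $b$, which keeps the deformed contour inside a conic neighborhood of $\Real^{2n-1}$ where the almost-analytic extension of $q$ is available and $\bar\partial q=O(|{\rm Im\,}\xi|^{N})$. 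Similarly, one should be explicit that ${\rm Im\,}\phi_{2}$ is only $O(|y'-x|^{2}|\eta|)$ with no sign; the cutoff $|y'-x|\leq\delta$ and the integration-by-parts argument (using $d_{y'}[\langle x-y',\xi\rangle+\phi_{2}]=-\xi+O(|y'-x||\eta|)$) are what control the potentially growing factor $e^{-{\rm Im\,}\phi_{2}}$, rather than any positivity of ${\rm Im\,}\phi_{2}$. With those two points stated precisely, the sketch matches the intended proof.
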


As in section $1$, we put
$\Gamma_q=\set{z\in\Gamma;\, Z(q)\ \mbox{fails at}\ z}$
and set
\[\Sigma^-(q)=\set{(x, \xi)\in\Sigma^-;\, Z(q)\ \mbox{fails at}\ x},\ \
\Sigma^+(q)=\set{(x, \xi)\in\Sigma^+;\, Z(q)\ \mbox{fails at}\ x}.\]
From Proposition~\ref{Bp:ss-SBmore} and Proposition~\ref{Bp:he-basismore}, we can repeat the method of part ${\rm I\,}$ to get the following

\begin{thm} \label{Bt:0801090850}
We recall that we work with the Assumption~\ref{Ba:1}.
Given $q$, $0\leq q\leq n-1$. Suppose that $Z(q)$ fails at some point of\, $\Gamma$. Then there exist
\[A\in L^{-1}_{\frac{1}{2},
\frac{1}{2}}(\Gamma;\, \Lambda^{0,q}T^*(\Gamma),\Lambda^{0,q}T^*(\Gamma)), B_-, B_+\in L^{0}_{\frac{1}{2},
\frac{1}{2}}(\Gamma;\, \Lambda^{0,q}T^*(\Gamma),\Lambda^{0,q}T^*(\Gamma))\]
such that
\begin{align} \label{Be:0801121817}
&{\rm WF\,}'(K_{B_-})={\rm diag\,}(\Sigma^-(q)\times\Sigma^-(q)),\nonumber \\
&{\rm WF\,}'(K_{B_+})={\rm diag\,}(\Sigma^+(n-1-q)\times\Sigma^+(n-1-q))
\end{align}
and
\begin{align}
& A\Box^{(q)}_\beta+B_-+B_+\equiv B_-+B_++\Box^{(q)}_\beta A\equiv I, \label{Be:0711160111} \\
&\ol{\pr_\beta} B_-\equiv0,\ \ol{\pr_\beta}^\dagger B_-\equiv0, \label{Be:0801101610} \\
&\ol{\pr_\beta} B_+\equiv0,\ \ol{\pr_\beta}^\dagger B_+\equiv0, \label{Be:0801231556} \\
& B_-\equiv B_-^\dagger\equiv B_-^2, \label{Be:0711160113} \\
& B_+\equiv B_+^\dagger\equiv B_+^2, \label{Be:0801231557}
\end{align}
where $B_-^\dagger$ and $B_+^\dagger$ are the formal adjoints of $B_-$ and $B_+$ with respect to $[\ |\ ]$ respectively and
\[{\rm WF\,}'(K_{B_-})=\set{(x, \xi, y, \eta)\in T^*(\Gamma)\times T^*(\Gamma);\, (x, \xi, y, -\eta)\in{\rm WF}(K_{B_-})}.\]
Here ${\rm WF\,}(K_{B_-})$ is the wave front set of $K_{B_-}$ in the sense of H\"{o}rmander~\cite{Hor71}. See Definition~\ref{Bd:0801202156}
for a review.

Moreover near ${\rm diag\,}(\Gamma_q\times\Gamma_q)$, $K_{B_-}(x, y)$ satisfies
\begin{align*}
K_{B_-}(x, y)\equiv\int^{\infty}_{0} e^{i\phi_-(x, y)t}b(x, y, t)dt
\end{align*}
with
\begin{align}  \label{Be:0801101603}
&b(x, y, t)\in S^{n-1}_{1, 0}(\Gamma\times\Gamma\times]0, \infty[;\, \mathscr L(\Lambda^{0,q}T^*_y(\Gamma), \Lambda^{0,q}T^*_x(\Gamma))),\nonumber \\
&b(x, y, t)\sim\sum^\infty_{j=0}b_j(x, y)t^{n-1-j}\ \ \mbox{in}\ \
S^{n-1}_{1, 0}(\Gamma\times\Gamma\times]0, \infty[;\, \mathscr L(\Lambda^{0,q}T^*_y(\Gamma), \Lambda^{0,q}T^*_x(\Gamma))),\nonumber \\
&b_0(x, x)\neq0\ \ \mbox{if}\ \ x\in\Gamma_q,
\end{align}
(A formula for $b_0(x, x)$ will be given in Proposition~\ref{Bp:i-leading1}.)
where $S^m_{1, 0}$, $m\in\Real$, is the H\"{o}rmander symbol space (see Definition~\ref{Bd:0712101500}),
\[b_j(x, y)\in C^\infty(\Gamma\times\Gamma;\, \mathscr L(\Lambda^{0, q}T^*_y(\Gamma), \Lambda^{0, q}T^*_x(\Gamma))),\ \ j=0,1,\ldots,\]
and
\begin{align}
&\phi_-(x, y)\in C^\infty(\Gamma\times\Gamma),\ \ {\rm Im\,}\phi_-(x, y)\geq0, \label{Be:0709241541} \\
&\phi_-(x, x)=0,\ \ \phi_-(x, y)\neq0\ \ \mbox{if}\ \ x\neq y, \label{Be:0709241542} \\
&d_x\phi_-\neq0,\ \ d_y\phi_-\neq0\ \ \mbox{where}\ \ {\rm Im\,}\phi_-=0, \label{Be:0803271451} \\
&d_x\phi_-(x, y)|_{x=y}=-\omega_0(x), \ \ d_y\phi_-(x, y)|_{x=y}=\omega_0(x),\label{Be:t-i-bis1} \\
&\phi_-(x, y)=-\ol\phi_-(y, x). \label{Be:0709241544}
\end{align}

Similarly, near ${\rm diag\,}(\Gamma_{n-1-q}\times\Gamma_{n-1-q})$,
\begin{align*}
K_{B_+}(x, y)\equiv\int^{\infty}_{0} e^{i\phi_+(x, y)t}c(x, y, t)dt
\end{align*}
with $c(x, y, t)\in S^{n-1}_{1, 0}(\Gamma\times\Gamma\times]0, \infty[;\, \mathscr L(\Lambda^{0,q}T^*_y(\Gamma), \Lambda^{0,q}T^*_x(\Gamma)))$,
\[c(x, y, t)\sim\sum^\infty_{j=0}c_j(x, y)t^{n-1-j}\]
in $S^{n-1}_{1, 0}(\Gamma\times\Gamma\times]0, \infty[;\, \mathscr L(\Lambda^{0,q}T^*_y(\Gamma), \Lambda^{0,q}T^*_x(\Gamma)))$,
where
\[c_j(x, y)\in C^\infty(\Gamma\times\Gamma;\, \mathscr L(\Lambda^{0, q}T^*_y(\Gamma), \Lambda^{0, q}T^*_x(\Gamma))),\ \ j=0,1,\ldots,\]
and $-\ol\phi_+(x, y)$ satifies (\ref{Be:0709241541})-(\ref{Be:0709241544}).
\end{thm}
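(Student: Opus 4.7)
My plan is to mirror the construction carried out in Part~I for the Kohn Laplacian, now applied to the pseudodifferential operator $\Box^{(q)}_\beta$ defined in~(\ref{Be:BP-2.2}). Locally, pick coordinates on an open set $\Omega\subset\Gamma$ and, using Proposition~\ref{Bp:ss-SBmore}, solve the transport equations~(\ref{Be:hf-heattransport}) with initial condition $a_0(0,x,\eta)=I$, $a_j(0,x,\eta)=0$ for $j\geq 1$, obtaining symbols $a_j(t,x,\eta)$ and their limits $a_j(\infty,x,\eta)$. Because $L$ is non-degenerate and $(n_-,n_+)$ is locally constant on $\Omega$, (\ref{Be:hf-******************}) gives two disjoint microlocal pieces: $a_j(\infty,\cdot,\cdot)$ vanishes near $\Sigma^+$ when $q\neq n_+$ and vanishes near $\Sigma^-$ when $q\neq n_-$. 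I would introduce a conic cut-off $\chi_-(x,\eta)+\chi_+(x,\eta)+(1-\chi_-(x,\eta)-\chi_+(x,\eta))$ localizing respectively near $\Sigma^-\cap\Gamma_q$, near $\Sigma^+\cap\Gamma_{n-1-q}$ (where the Levi form exchanges the roles $q\leftrightarrow n-1-q$), and away from $\Sigma$, and patch these local constructions into globally defined $a^\pm(\infty,x,\eta)\in S^0_{1,0}$ and $a^\pm(t,x,\eta)-a^\pm(\infty,x,\eta)\in \hat S^0_r$ with $r>0$. With the phase $\psi$ of Proposition~\ref{Bp:c-basislimit}, set
\begin{align*}
B_\pm(x,y)&=\frac{1}{(2\pi)^{2n-1}}\int e^{i(\psi(\infty,x,\eta)-\seq{y,\eta})}a^\pm(\infty,x,\eta)\,d\eta,\\
A(x,y)&=\sum_{\pm}\frac{1}{(2\pi)^{2n-1}}\int\!\!\int^\infty_0\bigl(e^{i(\psi(t,x,\eta)-\seq{y,\eta})}a^\pm(t,x,\eta)\\
&\qquad-e^{i(\psi(\infty,x,\eta)-\seq{y,\eta})}a^\pm(\infty,x,\eta)\bigr)(1-\chi(\eta))\,dt\,d\eta
\end{align*}
as in Proposition~\ref{Bp:ss-pseudo}, so that $A\in L^{-1}_{1/2,1/2}$ and $B_\pm\in L^0_{1/2,1/2}$ with the wavefront sets asserted in~(\ref{Be:0801121817}).

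Next, I would verify the algebraic relations (\ref{Be:0711160111})--(\ref{Be:0801231557}). Applying $\partial_t+\Box^{(q)}_\beta$ under the integral and using Proposition~\ref{Bp:he-basismore}, the transport equations force
\[(\partial_t+\Box^{(q)}_\beta)(e^{i\psi}a^\pm)\sim 0\qquad\text{mod rapidly decaying errors,}\]
which after integrating by parts in $t$ produces $A\Box^{(q)}_\beta+B_-+B_+\equiv I$ and $\Box^{(q)}_\beta B_\pm\equiv 0$. To obtain $\dbar_\beta B_\pm\equiv 0$ and $\dbar_\beta^\dagger B_\pm\equiv 0$, I would invoke the intertwining identities $\dbar_\beta\Box^{(q)}_\beta=\Box^{(q+1)}_\beta\dbar_\beta$ and $\dbar_\beta^\dagger\Box^{(q)}_\beta=\Box^{(q-1)}_\beta\dbar_\beta^\dagger$ modulo lower order; the pattern from the proof of Proposition~5.9 of Part~I shows that $\dbar_\beta(e^{i\psi}a^\pm)$ and $\dbar_\beta^\dagger(e^{i\psi}a^\pm)$ then live in the symbol classes $\hat S^1_r$ with $r>0$ on the relevant connected components, forcing their contributions modulo smoothing to vanish. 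Self-adjointness $B_\pm\equiv B_\pm^\dagger$ follows from uniqueness of such an approximate projector (as in the proof of Lemma~7.3 of Part~I: $B_\pm\equiv(A\Box^{(q)}_\beta+B_-+B_+)^\dagger B_\pm\equiv B_\pm^\dagger B_\pm$), and $B_\pm\equiv B_\pm^2$ then follows from $B_-B_+\equiv 0$, which comes from their disjoint wavefront sets.

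Finally, to obtain the asserted representation
\[K_{B_-}(x,y)\equiv\int_0^\infty e^{it\phi_-(x,y)}b(x,y,t)\,dt\]
near ${\rm diag}(\Gamma_q\times\Gamma_q)$, I would apply the Melin--Sjöstrand stationary phase procedure to the oscillatory integral $B_-(x,y)$ exactly as in Section~9 of Part~I: choose local coordinates in which $\omega_0(p)$ is the last covector direction, write $\eta=(t w,t)$ with $t>0$, and integrate out the $w$ variables. The critical point is simple because ${\rm Im}\,\psi(\infty,x,\eta)\asymp|\eta|\,{\rm dist}((x,\eta/|\eta|),\Sigma)^2$ and $a^-_0(\infty,\cdot,\cdot)\neq 0$ near $\Sigma^-\cap\Gamma_q$ by~(\ref{Be:0801061733}), which yields the full asymptotic expansion~(\ref{Be:0801101603}) and the phase $\phi_-(x,y)$ with properties (\ref{Be:0709241541})--(\ref{Be:0709241544}); the normalization $d_x\phi_-|_{x=y}=-\omega_0(x)$ comes from $\psi'_x(\infty,x,\eta)|_\Sigma=\eta$ and the fact that we are localizing to $\Sigma^-$ (so the dual variable $t$ is associated to $-\omega_0$). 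The $B_+$ statement is handled symmetrically on $\Sigma^+(n-1-q)$.

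The main obstacle I expect is the clean separation of the two pieces $B_-,B_+$: one must check that the cut-offs $\chi_\pm$ can be installed consistently with the transport equations so that $a^\pm(\infty,x,\eta)$ are genuine classical symbols whose wavefront contributions coincide with the claimed pieces of ${\rm diag}(\Sigma\times\Sigma)$, and that no cross-terms in the composition $B_- B_+$ survive modulo $C^\infty$. This requires combining the exponential decay in $\hat S^{\cdot}_r$ with the microlocal localization of $\psi(\infty,\cdot,\cdot)$ and the sign of ${\rm Im}\,\phi_\pm$; the rest of the work is a routine extension of the techniques already developed in Part~I together with the new ingredient that $\Box^{(q)}_\beta$ differs from $\Box^{(q)}_b$ only by lower order terms vanishing on $\Sigma$ (equation~(\ref{Be:0801061516})), so that the characteristic and transport theory is unchanged at leading order.
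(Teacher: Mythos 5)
Your proposal follows essentially the same strategy as the paper: construct the heat-equation symbols $a_j(t,x,\eta)$ and their limits $a_j(\infty,x,\eta)$ via Proposition~\ref{Bp:ss-SBmore}, build the parametrix pair $(A,B)$ from them, verify $B+\Box^{(q)}_\beta A\equiv I$ and $\Box^{(q)}_\beta B\equiv 0$ from the transport equations, and obtain the $\phi_\pm$ representation by Melin--Sj\"ostrand stationary phase, exactly as in Sections 7--9 of Part~I. The only organizational difference is that you install the conic cutoffs $\chi_\pm$ at the level of the symbols to define $a^\pm$ directly, whereas the paper constructs the single operator $B$ and then uses the global theory of Fourier integral operators to write $K_B\equiv K_{B_-}+K_{B_+}$; these are equivalent since the $a_j(\infty,\cdot,\cdot)$ are already microlocally concentrated near the relevant components of $\Sigma$. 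One small imprecision in your write-up: for $B_\pm\equiv B_\pm^\dagger B_\pm$ you should take the adjoint of $\Box^{(q)}_\beta A+B_-+B_+\equiv I$ (giving $A^\dagger\Box^{(q)}_\beta+B_-^\dagger+B_+^\dagger\equiv I$) rather than of $A\Box^{(q)}_\beta+B_-+B_+\equiv I$, so that the term $A^\dagger\Box^{(q)}_\beta B_\pm$ is killed by $\Box^{(q)}_\beta B_\pm\equiv 0$; the version you wrote leaves a term $\Box^{(q)}_\beta A^\dagger B_\pm$ that does not vanish directly. This is exactly how Lemma~7.3 of Part~I is phrased, so it is a transcription slip rather than a gap.
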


We only give the outline of the proof of Theorem~\ref{Bt:0801090850}.
For all the details, we refer the reader to section $7$ and section $8$ of part ${\rm I\,}$.
Let
\[a_j(t, x, \eta)\in\hat S^{-j}_0(\ol\Real_+\times T^*(\Omega);\, \mathscr L(\Lambda^{0,q}T^*(\Gamma),\Lambda^{0,q}T^*(\Gamma))),\ \
j=0,1,\ldots,\]
and
$a_j(\infty, x, \eta)\in C^\infty(T^*(\Omega);\, \mathscr L(\Lambda^{0,q}T^*(\Gamma),\Lambda^{0,q}T^*(\Gamma)))$, $j=0,1,\ldots$,
be as in Proposition~\ref{Bp:ss-SBmore}. We recall that for some $r>0$
\[a_j(t,x,\eta)-a_j(\infty,x,\eta)\in\hat S^{-j}_r(\ol\Real_+\times T^*(\Omega);\,
\mathscr L(\Lambda^{0,q}T^*(\Gamma),\Lambda^{0,q}T^*(\Gamma))),\ \ j=0,1,\ldots.\]
Let
$a(\infty, x, \eta)\sim\sum^\infty_{j=0}a_j(\infty, x, \eta)$ in $S^{0}_{1,0}(T^*(\Omega);\,
\mathscr L(\Lambda^{0,q}T^*(\Gamma),\Lambda^{0,q}T^*(\Gamma)))$.
Let
\[a(t,x,\eta)\sim\sum^\infty_{j=0}a_j(t, x, \eta)\]
in $\hat S^{0}_0(\ol\Real_+\times T^*(\Omega);\,
\mathscr L(\Lambda^{0,q}T^*(\Gamma),\Lambda^{0,q}T^*(\Gamma)))$.
We take $a(t, x, \eta)$ so that for every compact set $K\subset\Omega$ and all indices $\alpha$, $\beta$, $\gamma$, $k$,
there exists $c>0$, $c$ is independent of $t$, such that
\begin{equation} \label{Be:allerallerallergo2}
\abs{\pr^\gamma_t\pr^\alpha_x\pr^\beta_\eta(a(t, x, \eta)-\sum^k_{j=0}a_j(t, x, \eta))}
\leq c(1+\abs{\eta})^{-k-1+\gamma-\abs{\beta}},
\end{equation}
where $t\in\ol\Real_+$, $x\in K$, $\abs{\eta}\geq1$, and
\[a(t, x, \eta)-a(\infty, x, \eta)\in\hat S^0_r(\ol\Real_+\times T^*(\Omega);\,
\mathscr L(\Lambda^{0,q}T^*(\Gamma),\Lambda^{0,q}T^*(\Gamma)))\]
with $r>0$.

Choose $\chi\in C^\infty_0
(\Real^{2n-1})$ so that $\chi(\eta)=1$ when $\abs{\eta}<1$ and $\chi(\eta)=0$ when $\abs{\eta}>2$.
Set
\begin{align} \label{Be:allerimportanthome1}
A(x, y) &= \frac{1}{(2\pi)^{2n-1}}\int\Bigl(\int^{\infty}_0\Bigl(e^{i(\psi(t,x,\eta)-\seq{y,\eta})}a(t,x,\eta)-\nonumber  \\
  &\quad\quad\quad e^{i(\psi(\infty,x,\eta)-\seq{y,\eta})}a(\infty,x,\eta)\Bigr)(1-\chi(\eta))dt\Bigl)d\eta.
\end{align}
Put
\begin{equation} \label{Be:allerimportanthome2}
B(x, y)=\frac{1}{(2\pi)^{2n-1}}\int e^{i(\psi(\infty,x,\eta)-\seq{y,\eta})}a(\infty,x,\eta)d\eta.
\end{equation}
Since $a_j(t, x, \eta)$, $j=0,1,\ldots$, solve the transport equations (\ref{Be:hf-heattransport}), we can check that
$B+\Box^{(q)}_\beta A\equiv I$,
$\Box^{(q)}_\beta B\equiv0$.
From the global theory of Fourier integral operators (see~\cite{MS74}),
we get
$K_{B}\equiv K_{B_-}+K_{B_+}$,
wher $K_{B_-}$ and $K_{B_+}$ are as in Theorem~\ref{Bt:0801090850}.
By using a partition of unity we get the global result.

\begin{rem} \label{Br:0803122143}
For more properties of the phase $\phi_-(x, y)$, see Theorem $1.5$ and Remark $1.6$ of part ${\rm I\,}$.
\end{rem}

We can repeat the computation of the leading term of the Szeg\"{o} projection (see section $9$ of part ${\rm I\,}$), to
get the following

\begin{prop} \label{Bp:i-leading1}
Let $p\in\Gamma_q$, $q=n_-$. Let
$U_1(x),\ldots,U_{n-1}(x)$
be an orthonormal frame of $\Lambda^{1,0}T_x(\Gamma)$, for which
the Levi form is diagonalized at $p$. Let $e_j(x)$, $j=1,\ldots,n-1$,
denote the basis of $\Lambda^{0,1}T^*_x(\Gamma)$, which is dual to $\ol U_j(x)$, $j=1,\ldots,n-1$. Let
$\lambda_j(x)$, $j=1,\ldots,n-1$, be the eigenvalues of the Levi form $L_x$. We assume that
$\lambda_j(p)<0$ if $1\leq j\leq n_-$.
Then
\[b_0(p, p)=\frac{1}{2}\abs{\lambda_1(p)}\cdots
\abs{\lambda_{n-1}(p)}\pi^{-n}\prod_{j=1}^{j=n_-}e_j(p)^\wedge e_j(p)^{\wedge, *},\]
where $b_0$ is as in (\ref{Be:0801101603}).
\end{prop}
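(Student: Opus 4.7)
The plan is to adapt the proof of Proposition~9.3 of Part~I verbatim, with $\pi^{(q)}_+$ replaced by $B_-$, $n_+$ by $n_-$, and $\phi_+$ by $\phi_-$. Three ingredients drive the argument: (i) the idempotency $B_-\circ B_-\equiv B_-$ from (7.20); (ii) the self-adjointness $B_-\equiv B_-^\dagger$ from (7.19); and (iii) the fact, already built into the transport equations solved in Section~6, that $b_0(p,p)\Lambda^{0,q}T^*_p(\Gamma)$ lies in the one-dimensional subspace $\mathscr N_p:=\ker\bigl(p^s_\beta+\tfrac12\widetilde{\mathrm{tr}\,}F_\beta\bigr)(p,-\omega_0(p))$ acting on boundary $(0,q)$-forms, where the one-dimensionality follows from (6.12) together with the hypothesis $q=n_-$.

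First, I would compose the two kernels by stationary phase. Setting $s=t\sigma$ in the iterated integral for $K_{B_-}\circ K_{B_-}$, the combined phase $\phi(x,y,w,\sigma)=\phi_-(x,w)+\sigma\phi_-(w,y)$ has, by (7.14)--(7.15), a unique real critical point at $w=x=y=p$, $\sigma=1$. Invoking the normal form for $\phi_-$ supplied by Remark~7.8 (the Part~II echo of Theorem~1.5 of Part~I), the Hessian at that critical point is block-triangular with an off-diagonal $\sqrt{2}$ coupling the $\sigma$-direction to $\pr/\pr x_{2n-1}$ and an inner block $A$ on $\Lambda^{1,0}T_p(\Gamma)\oplus\Lambda^{0,1}T_p(\Gamma)$ whose eigenvalues are $4i|\lambda_j(p)|$, each of multiplicity two. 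A direct determinant computation (identical to (9.19) of Part~I) yields $\det\bigl(H_\phi(p)/i\bigr)=2^{4n-3}|\lambda_1(p)|^{2}\cdots|\lambda_{n-1}(p)|^{2}$, so that the stationary phase formula gives the leading amplitude of $K_{B_-}\circ K_{B_-}$ at $(p,p)$ as $2|\lambda_1(p)|^{-1}\cdots|\lambda_{n-1}(p)|^{-1}\pi^{n}\,b_0(p,p)\circ b_0(p,p)$.

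Equating with the leading amplitude of $K_{B_-}$ itself and rescaling by $I:=(\tfrac12|\lambda_1|\cdots|\lambda_{n-1}|)^{-1}\pi^{n}b_0(p,p)$ then gives $I^2=I$. Combined with $I^*=I$ (self-adjointness of the leading amplitude in the fiber metric on $\Lambda^{0,q}T^*_p(\Gamma)$; see the next paragraph) and $\mathrm{Ran}\,I\subset\mathscr N_p$ with $\dim\mathscr N_p=1$, we conclude $I$ is either $0$ or the orthogonal projection onto $\mathscr N_p$. Proposition~7.2 excludes the zero alternative. A straightforward computation, again as at the end of Section~9 of Part~I, then identifies this rank-one projection explicitly with $\prod_{j=1}^{n_-}e_j(p)^\wedge e_j(p)^{\wedge,*}$ on $\Lambda^{0,q}T^*_p(\Gamma)$, yielding the claimed formula for $b_0(p,p)$.

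The one place where the argument genuinely departs from Part~I, and which I see as the main obstacle, is the justification that self-adjointness $B_-^\dagger\equiv B_-$ with respect to the \emph{weighted} inner product $[\,\cdot\,|\,\cdot\,]$ of (4.6) still translates into pointwise self-adjointness $b_0(p,p)^*=b_0(p,p)$ in the fiber Hermitian metric. The key observation is that unwinding the definition gives $B_-^\dagger=(\Td P^*\Td P)^{-1}B_-^*\,\Td P^*\Td P$, and since both $\Td P^*\Td P$ and $(\Td P^*\Td P)^{-1}$ are classical pseudodifferential operators with \emph{scalar} principal symbols by (4.2) and (4.3), conjugation by them acts trivially on the leading amplitude on the diagonal; combined with $\phi_-(x,y)=-\ol{\phi_-(y,x)}$ this forces $b_0(p,p)^*=b_0(p,p)$, closing the argument.
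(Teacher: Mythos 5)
Your proposal is correct and follows the same route the paper takes: the paper's proof of Proposition~7.7 is simply a citation to Section~9 of Part~I, and you carry out exactly that adaptation (composition by stationary phase, the Hessian determinant $2^{4n-3}|\lambda_1|^2\cdots|\lambda_{n-1}|^2$, idempotency and self-adjointness of the leading amplitude, one-dimensionality of $\ker\bigl(p^s_\beta+\tfrac12\widetilde{\mathrm{tr}\,}F_\beta\bigr)$ on $(0,q)$-forms). Your closing observation — that $B_-^\dagger=(\Td P^*\Td P)^{-1}B_-^*(\Td P^*\Td P)$ and that the scalar principal symbols of $\Td P^*\Td P$ and its inverse, evaluated at $\mp t\omega_0(p)$, cancel on the diagonal so that $[\,\cdot\,|\,\cdot\,]$-self-adjointness still gives $b_0(p,p)^*=b_0(p,p)$ — is a correct justification of a step the paper leaves implicit.
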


In section $8$, we need the following

\begin{prop} \label{Be:0801101606}
Suppose that $Z(q)$ fails at some point of\, $\Gamma$.
Let $B_-$ be as in Theorem~\ref{Bt:0801090850}. Then,
\begin{equation} \label{Be:0801101608}
\gamma\dbar \Td PB_-\equiv 0.
\end{equation}
\end{prop}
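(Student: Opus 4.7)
The plan is to reduce the assertion to a microlocal elliptic regularity argument on $\Sigma^-$ by combining both almost-orthogonality properties of $B_-$ from Theorem~\ref{Bt:0801090850}, the identity $\dbar^2=0$, and the uniqueness of the Poisson extension for $\Td\Box_f$. First I would show that $\ol{\pr_f}^*\Td P B_-\equiv 0$ as an operator into $C^\infty(\ol M)$. By Lemma~\ref{Bl:0801052333}, the hypothesis $\ol{\pr_\beta}^\dagger B_-\equiv 0$ reads $\gamma\ol{\pr_f}^*\Td P B_-\equiv 0$, and combining this with the commutation $\ol{\pr_f}^*\Box^{(q)}_f=\Box^{(q-1)}_f\ol{\pr_f}^*$ and the fact that $\Box^{(q)}_f\Td P=-K^{(q)}\Td P$ is smoothing gives $\Td\Box^{(q-1)}_f(\ol{\pr_f}^*\Td P B_-)\equiv 0$. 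Since $\Td F^{(q-1)}$ is injective (\S4), the $\Td\Box^{(q-1)}_f$-boundary-value problem is uniquely and smoothing-preservingly solvable, yielding the claim.

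Using this and $\Box^{(q)}_f\Td P B_-\equiv 0$ we get $\ol{\pr_f}^*\dbar\Td P B_-\equiv 0$, and combined with $\dbar^2=0$ the form $v:=\dbar\Td P B_-$ satisfies $\Box^{(q+1)}_f v\equiv 0$; by Poisson uniqueness $v\equiv\Td P^{(q+1)}\gamma v$. Restricting $\ol{\pr_f}^* v\equiv 0$ to $\Gamma$ now produces
\begin{equation*}
\gamma\ol{\pr_f}^*\Td P^{(q+1)}\,h\equiv 0,\qquad h:=\gamma\dbar\Td P B_-.
\end{equation*}
Combined with $Th=\ol{\pr_\beta}B_-\equiv 0$, this should be enough to force $h\equiv 0$ symbolically.

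Indeed, $\sigma(T)=2(\dbar r)^{\wedge,*}(\dbar r)^\wedge$ is pointwise projection onto tangential forms, so $Th\equiv 0$ forces the principal symbol of $h$ to be of the form $(\dbar r)^\wedge g$ with $g$ tangential. Using (\ref{Be:0801052150}), the anticommutation $t_j^{\wedge,*}(\dbar r)^\wedge=-(\dbar r)^\wedge t_j^{\wedge,*}$, and $(\dbar r)^{\wedge,*}(\dbar r)^\wedge=\tfrac{1}{2}$ on tangential forms, the principal symbol of $\gamma\ol{\pr_f}^*\Td P^{(q+1)}$ applied to $(\dbar r)^\wedge g$ has tangential component $\tfrac{1}{2}(iY-\sqrt{-\La_\Gamma})g$. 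But on $\Sigma^-$ we have $\sigma_{iY}<0$ and $\sigma_{\sqrt{-\La_\Gamma}}>0$, so $\sigma(iY-\sqrt{-\La_\Gamma})\neq 0$ on $\Sigma^-$, making this operator elliptic there. Since ${\rm WF}'(K_h)\subset{\rm diag}(\Sigma^-(q)\times\Sigma^-(q))$ by (\ref{Be:0801121817}), the principal symbol of $g$ must vanish, so the order of $h$ drops; iterating this symbolic bootstrap at every order of the amplitude, $h\equiv 0$. The hard part is this last bootstrap: one must carry the elliptic cascade through the type $(\tfrac{1}{2},\tfrac{1}{2})$ calculus of $B_-$, controlling the lower-order corrections to both $\sigma(T)$ and $\sigma(\gamma\ol{\pr_f}^*\Td P^{(q+1)})$ — essentially the same transport-equation analysis underlying the construction of $B_-$ in \S7.
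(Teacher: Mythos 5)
Your preliminary computations land you essentially where the paper does: you correctly derive (via $\ol{\pr_f}^*\Td P B_-\equiv 0$ on $\ol M$, the identity $\dbar^2=0$, the Laplace--Beltrami decomposition, and the Poisson reproducing property) that both $T\gamma\dbar\Td P B_-\equiv 0$ and $\gamma\ol{\pr_f}^*\Td P^{(q+1)}\gamma\dbar\Td P B_-\equiv 0$, which together give $\gamma\ol{\pr_f}^*\Td P^{(q+1)}(I-T)\,h\equiv 0$ for $h=\gamma\dbar\Td P B_-$. The route you take to get there (passing through $\Box^{(q+1)}_f v\equiv 0$ and Poisson uniqueness) is more roundabout than the paper's direct use of $\gamma(\dbar\ol{\pr_f}^*+\ol{\pr_f}^*\dbar)\Td P\equiv 0$ plus (\ref{Be:0809081416}), but it is sound.

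The gap is in your last step, and you flag it yourself. You propose an order-by-order "symbolic bootstrap" through the $(\tfrac12,\tfrac12)$ amplitude of $B_-$, which is not carried out and is exactly what the paper arranges to avoid. The missing ingredient is the structural identity
\[
I-T=(\Td P^*\Td P)^{-1}(\dbar r)^\wedge R
\]
from Lemma~\ref{Bl:0712302115}. With it, the relation $\gamma\ol{\pr_f}^*\Td P^{(q+1)}(I-T)h\equiv 0$ becomes, after multiplying on the left by $(\dbar r)^\wedge$,
\[
\gamma(\dbar r)^{\wedge}\ol{\pr_f}^*\Td P\,(\Td P^*\Td P)^{-1}\bigl((\dbar r)^\wedge R\,h\bigr)\equiv 0,
\]
where $(\dbar r)^\wedge R h$ is a kernel valued in $I^{0,q+1}T^*(M')$ with $\mathrm{WF}'$ contained in $\mathrm{diag}(\Sigma^-\times\Sigma^-)$ (inherited from $B_-$), and $\gamma(\dbar r)^{\wedge}\ol{\pr_f}^*\Td P(\Td P^*\Td P)^{-1}$ is a \emph{classical} pseudodifferential operator on $I^{0,q+1}T^*(M')$ that is elliptic near $\Sigma^-$ by Proposition~\ref{Bp:071231323}. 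A single application of microlocal elliptic regularity (Propositions~\ref{Bp:0801202219}--\ref{Bp:0801202234}) then gives $(\dbar r)^\wedge R\,h\equiv 0$, hence $(I-T)h\equiv 0$, and with $Th\equiv 0$ you are done. In short: your principal-symbol computation of $\tfrac12(iY-\sqrt{-\La_\Gamma})$ on the normal component is the right ingredient, but to promote it to a statement modulo smoothing you should factor $(I-T)$ through $(\dbar r)^\wedge R$ so the ellipticity lives in the classical calculus acting on normal forms, rather than attempting an amplitude-level cascade inside the $(\tfrac12,\tfrac12)$ operator $B_-$.
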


\begin{proof}
In view of Theorem~\ref{Bt:0801090850}, we know that
\[T\gamma\dbar \Td PB_-=\ol{\pr_\beta} B_-\equiv0,\ \ \gamma\ol{\pr_f}^*\Td PB_-=\ol{\pr_\beta}^\dagger B_-\equiv0.\]
Combining this with $\gamma(\dbar\ol{\pr_f}^*+\ol{\pr_f}^*\dbar)\Td P\equiv0$, we have
\[\gamma\ol{\pr_f}^*\Td P\gamma\dbar \Td PB_-\equiv-\gamma\dbar \Td P\gamma\ol{\pr_f}^*\Td PB_-\equiv0\]
and
\begin{equation} \label{Be:0712310012}
\gamma\ol{\pr_f}^*\Td P(I-T)\gamma\dbar \Td PB_-=\gamma\ol{\pr_f}^*\Td P\gamma\dbar \Td PB_--\gamma\ol{\pr_f}^*\Td PT\gamma\dbar \Td PB_-\equiv0.
\end{equation}
Combining this with (\ref{Be:0801201632}), we get
$\gamma\ol{\pr_f}^*\Td P(\Td P^*\Td P)^{-1}(\dbar r)^\wedge R\gamma\dbar \Td PB_-\equiv0$.
Thus,
\[\gamma(\dbar r)^{\wedge}\ol{\pr_f}^*\Td P(\Td P^*\Td P)^{-1}(\dbar r)^\wedge R\gamma\dbar \Td PB_-\equiv0.\]
In view of Proposition~\ref{Bp:071231323}, we know that
\[\gamma(\dbar r)^{\wedge}\ol{\pr_f}^*\Td P(\Td P^*\Td P)^{-1}: C^\infty(\Gamma;\, I^{0,q}T^*(M'))\To C^\infty(\Gamma;\, I^{0,q}T^*(M'))\]
is elliptic near\, $\Sigma^-$, where $I^{0,q}T^*_z(M')$ is as in (\ref{Be:0801052157}). Since
\[{\rm WF\,}'(K_{B_-})\subset{\rm diag\,}(\Sigma^-\times\Sigma^-),\]
we get $(\dbar r)^\wedge R\gamma\dbar \Td PB_-\equiv0$.
(See Proposition~\ref{Bp:0801202219} and Proposition~\ref{Bp:0801202234}.)
Thus, by (\ref{Be:0801201632}),
$(I-T)\gamma\dbar \Td PB_-\equiv0$.
The proposition follows.
\end{proof}


\section{The Bergman projection}

Given $q$, $0\leq q\leq n-1$.
In this section, we assume that $Z(q)$ fails at some point of\, $\Gamma$ and that
$Z(q-1)$ and $Z(q+1)$ hold at each point of\, $\Gamma$. In view of Proposition~\ref{Bp:0801042202}, we know that
$\Td\Pi^{(q)}:C^\infty(\ol M;\, \Lambda^{0,q}T^*(M'))\To D^{(q)}$.
Put
\begin{equation} \label{Be:0711121605}
K=\gamma\Td\Pi^{(q)}\Td P:C^\infty(\Gamma;\, \Lambda^{0,q}T^*(\Gamma))\To C^\infty(\Gamma;\, \Lambda^{0,q}T^*(\Gamma)).
\end{equation}
Let $K^\dagger$ be the formal adjoint of $K$ with respect to $[\ |\ ]$. That is,
\begin{align*}
&K^\dagger:\mathscr D'(\Gamma;\, \Lambda^{0,q}T^*(\Gamma))\To\mathscr D'(\Gamma;\, \Lambda^{0,q}T^*(\Gamma)) \\
&[K^\dagger u\ |\ v]=[u\ |\ K v],\ u\in\mathscr D'(\Gamma;\, \Lambda^{0,q}T^*(\Gamma)),\
v\in C^\infty(\Gamma;\, \Lambda^{0,q}T^*(\Gamma)).
\end{align*}

\begin{lem} \label{Bl:0711121610}
We have $K^\dagger v=K v$,
$v\in C^\infty(\Gamma;\, \Lambda^{0,q}T^*(\Gamma))$.
\end{lem}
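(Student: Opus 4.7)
The plan is to unwind the definition of $K$ and exploit two properties of $\Td\Pi^{(q)}$ recalled in section $4$: namely, the identity $\Td P\gamma\Td\Pi^{(q)} = \Td\Pi^{(q)}$ coming from $\Td\Pi^{(q)} u \in C^\infty(\ol M;\Lambda^{0,q}T^*(M'))$ solving $\Td\Box^{(q)}_f(\Td\Pi^{(q)} u) = 0$ (see (8.2) and (8.3)), and the self-adjointness $(\Td\Pi^{(q)})^* = \Td\Pi^{(q)}$ with respect to $(\ |\ )_M$.

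Concretely, for $u,v \in C^\infty(\Gamma;\Lambda^{0,q}T^*(\Gamma))$ I would start from
\[ [Ku \mid v] = [\gamma\Td\Pi^{(q)}\Td P u \mid v] = (\Td P\gamma\Td\Pi^{(q)}\Td P u \mid \Td P v)_M, \]
using the definition $[f\mid g]=(\Td Pf\mid \Td Pg)_M$. The first key step is to apply $\Td P\gamma\Td\Pi^{(q)} = \Td\Pi^{(q)}$ to the left argument, obtaining $(\Td\Pi^{(q)}\Td P u\mid \Td Pv)_M$. The second step is to move $\Td\Pi^{(q)}$ to the other side by self-adjointness with respect to $(\ |\ )_M$, giving $(\Td Pu\mid \Td\Pi^{(q)}\Td P v)_M$. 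The third step is to apply $\Td P\gamma\Td\Pi^{(q)} = \Td\Pi^{(q)}$ once more to the right argument so as to re-expose $\Td P$ and revert to the $[\ \mid\ ]$-pairing:
\[ (\Td Pu\mid \Td\Pi^{(q)}\Td P v)_M = (\Td Pu\mid \Td P\gamma\Td\Pi^{(q)}\Td Pv)_M = [u\mid Kv]. \]
This chain yields $[Ku\mid v] = [u\mid Kv]$, which is precisely the statement that $K$ is formally self-adjoint with respect to $[\ \mid\ ]$, i.e.\ $K^\dagger v = Kv$.

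I do not anticipate a real obstacle here: the argument is a three-line manipulation once (8.2) and the self-adjointness of $\Td\Pi^{(q)}$ are in hand. The only mild care needed is to ensure the pairings make sense, which is immediate since $v, Kv \in C^\infty(\Gamma;\Lambda^{0,q}T^*(\Gamma)) \subset H^{-1/2}$, so both $(\Td P(\cdot)\mid \Td P(\cdot))_M$ and $[\cdot\mid\cdot]$ are well defined and equal on these smooth inputs.
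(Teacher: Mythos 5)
Your proposal is correct and coincides with the paper's own proof: both unwind $[Ku\mid v]$ to $(\Td\Pi^{(q)}\Td Pu\mid\Td Pv)_M$ via $\Td P\gamma\Td\Pi^{(q)}=\Td\Pi^{(q)}$ (equation (\ref{Be:0809071558})), move $\Td\Pi^{(q)}$ across with its self-adjointness for $(\ |\ )_M$, and reinsert $\Td P\gamma$ to land on $[u\mid Kv]$.
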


\begin{proof}
For $u$, $v\in C^\infty(\Gamma;\, \Lambda^{0,q}T^*(\Gamma))$, we have
\begin{align*}
[K u\ |\ v]&=[\gamma\Td\Pi^{(q)}\Td Pu\ |\ v] \\
&=(\Td\Pi^{(q)}\Td Pu\ |\ \Td Pv)_M \\
&=(\Td Pu\ |\ \Td\Pi^{(q)}\Td Pv)_M  \\
&=[u\ |\ K v].
\end{align*}
Thus, $K^\dagger v=K v$.
The lemma follows.
\end{proof}

We can extend $K$ to
$\mathscr D'(\Gamma;\, \Lambda^{0,q}T^*(\Gamma))\To\mathscr D'(\Gamma;\, \Lambda^{0,q}T^*(\Gamma))$
by the following formula:
$[K u\ |\ v]=[u\ |\ K^\dagger v]$,
$u\in\mathscr D'(\Gamma;\, \Lambda^{0,q}T^*(\Gamma))$, $v\in C^\infty(\Gamma;\, \Lambda^{0,q}T^*(\Gamma))$.

\begin{lem} \label{Bl:0801102218}
Let $u\in\mathscr D'(\Gamma;\, \Lambda^{0,q}T^*(\Gamma))$. We have
${\rm WF\,}(K u)\subset\Sigma^-$.
\end{lem}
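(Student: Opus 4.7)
The plan is to extract a pseudodifferential equation for $Ku$ whose principal symbol is elliptic off $\Sigma^-$, and then invoke microlocal elliptic regularity. By Proposition~\ref{Bp:0801042202}, the range of $\Td\Pi^{(q)}$ on smooth forms lies in $D^{(q)}$, so for $u\in C^\infty(\Gamma;\Lambda^{0,q}T^*(\Gamma))$ the element $\Td\Pi^{(q)}\Td Pu$ satisfies both $\dbar$-Neumann boundary conditions $\gamma(\dbar r)^{\wedge,*}(\Td\Pi^{(q)}\Td Pu)=0$ and $\gamma(\dbar r)^{\wedge,*}\dbar(\Td\Pi^{(q)}\Td Pu)=0$.

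First I would prove the reproducing identity $\Td\Pi^{(q)}\Td Pu\equiv\Td P(Ku)$ modulo $C^\infty(\ol M;\Lambda^{0,q}T^*(M'))$. Setting $w=\Td\Pi^{(q)}\Td Pu-\Td P(Ku)$, one has $\gamma w=0$. Since $\Pi^{(q)}\Td Pu\in{\rm Ker\,}\Box^{(q)}\cap{\rm Dom\,}\dbar^*$ implies $\Box^{(q)}_f(\Pi^{(q)}\Td Pu)=0$, and $\Td\Box^{(q)}_f\Td P(Ku)=0$ gives $\Box^{(q)}_f\Td P(Ku)=-K^{(q)}\Td P(Ku)\in C^\infty$, it follows that $\Box^{(q)}_fw\in C^\infty$; the Dirichlet problem for the elliptic operator $\Box^{(q)}_f$ with vanishing trace and smooth right-hand side then yields $w\in C^\infty(\ol M)$. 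Applying $\gamma(\dbar r)^{\wedge,*}\dbar$ and invoking the second boundary condition shows that $\gamma(\dbar r)^{\wedge,*}\dbar\Td P(Ku)\in C^\infty(\Gamma;\Lambda^{0,q}T^*(\Gamma))$. Since $Ku\in{\rm Ker\,}(\dbar r)^{\wedge,*}$, this is an equation on boundary $(0,q)$ forms, and by Proposition~\ref{Bp:0801102222} the operator $\gamma(\dbar r)^{\wedge,*}\dbar\Td P$ on such forms is a classical pseudodifferential operator of order one with scalar principal symbol $\tfrac12(\norm{\xi}+(\omega_0\ |\ \xi))$, which is non-negative and vanishes precisely on $\Sigma^-$. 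Microlocal elliptic regularity thus forces ${\rm WF\,}(Ku)\subset\Sigma^-$.

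The principal technical obstacle I anticipate is extending the argument rigorously from $u\in C^\infty$, where $\Td\Pi^{(q)}\Td Pu$ lands in $D^{(q)}$ by Proposition~\ref{Bp:0801042202}, to arbitrary $u\in\mathscr D'(\Gamma;\Lambda^{0,q}T^*(\Gamma))$, for which $K$ is defined only via the duality $[Ku\ |\ v]=[u\ |\ Kv]$. I plan to handle this by working on Sobolev scales: $\Td P$ raises regularity by $\tfrac12$, the projection $\Pi^{(q)}$ and its regularized variant $\Td\Pi^{(q)}$ are well-behaved on Sobolev spaces under $Z(q\pm1)$, and the residual operators arising from $K^{(q)}$ and from the Dirichlet regularization of $w$ are genuinely smoothing between $\mathscr E'$ and $C^\infty$, so that the reproducing identity and the resulting boundary equation persist modulo $C^\infty$ at the distributional level, from which the asserted wavefront-set inclusion follows.
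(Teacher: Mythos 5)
The proposal is correct and takes essentially the same approach as the paper: you show $Ku$ solves a first-order pseudodifferential equation, namely $(\gamma(\dbar r)^{\wedge,*}\dbar\Td P)(Ku)=0$, and then invoke the ellipticity of $\gamma(\dbar r)^{\wedge,*}\dbar\Td P$ off $\Sigma^-$ from Proposition~\ref{Bp:0801102222} together with microlocal elliptic regularity. However you make two unnecessary detours. First, the ``reproducing identity'' $\Td\Pi^{(q)}\Td Pu=\Td P(Ku)$ is an exact equality, not merely an identity modulo $C^\infty$: it follows immediately from (\ref{Be:0809071558}), which states $\Td\Pi^{(q)}=\Td P\gamma\Td\Pi^{(q)}$, so $\Td\Pi^{(q)}\Td Pu=\Td P\gamma\Td\Pi^{(q)}\Td Pu=\Td P(Ku)$ for every $u\in C^\infty(\Gamma;\,\Lambda^{0,q}T^*(\Gamma))$. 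This makes the Dirichlet-problem regularization of $w$ superfluous, and the resulting boundary equation holds exactly (not just modulo $C^\infty$): since $\Td\Pi^{(q)}\Td Pu\in D^{(q)}$ by Proposition~\ref{Bp:0801042202}, one gets $\gamma(\dbar r)^{\wedge,*}\dbar\Td P(Ku)=\gamma(\dbar r)^{\wedge,*}\dbar\,\Td\Pi^{(q)}\Td Pu=0$. Second, the extension to $u\in\mathscr D'(\Gamma;\,\Lambda^{0,q}T^*(\Gamma))$ is more routine than you fear: writing $L=\gamma(\dbar r)^{\wedge,*}\dbar\Td P$, one has $LK=0$ on $C^\infty(\Gamma;\,\Lambda^{0,q}T^*(\Gamma))$; since $K$ is extended to distributions by the duality $[Ku\,|\,v]=[u\,|\,K v]$ and $K=K^\dagger$, a short computation $[L(Ku)\,|\,\psi]=[u\,|\,K L^\dagger\psi]=[u\,|\,(LK)^\dagger\psi]=0$ shows $L(Ku)=0$ for all distributions $u$, without any Sobolev bookkeeping.
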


\begin{proof}
Let $u\in\mathscr D'(\Gamma;\, \Lambda^{0,q}T^*(\Gamma))$. We have
$(\gamma(\dbar r)^{\wedge, *}\dbar \Td P)(K u)=0$.
In view of Proposition~\ref{Bp:0801102222}, we know that $\gamma(\dbar r)^{\wedge, *}\dbar \Td P$
is elliptic outside\, $\Sigma^-$. The lemma follows. (See Definition~\ref{Bd:0801202156}.)
\end{proof}

\begin{lem} \label{Bl:0801102229}
Let $B_-$ be as in Theorem~\ref{Bt:0801090850}.
We have
$B_-K\equiv K B_-\equiv K$.
\end{lem}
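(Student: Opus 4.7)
The plan is to compose the parametrix identity $B_- + B_+ + \Box^{(q)}_\beta A \equiv I$ from Theorem~\ref{Bt:0801090850} with $K$ and reduce the lemma to two claims: (i) $\Box^{(q)}_\beta K \equiv 0$, and (ii) $B_+ K \equiv 0 \equiv K B_+$. Given (i), composing $A$ on the left with the smoothing operator $\Box^{(q)}_\beta K$ yields $A \Box^{(q)}_\beta K \equiv 0$ (smoothing operators compose with the properly supported pseudodifferential $A$ to give smoothing operators), and substituting $A \Box^{(q)}_\beta \equiv I - B_- - B_+$ gives $(I - B_- - B_+) K \equiv 0$, which combined with (ii) yields $K \equiv B_- K$. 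Since $K^\dagger = K$ by Lemma~\ref{Bl:0711121610} and $\Box^{(q)}_\beta$ is manifestly $[\ |\ ]$-self-adjoint from its expression $\ol{\pr_\beta}^\dagger \ol{\pr_\beta} + \ol{\pr_\beta}\ol{\pr_\beta}^\dagger$, taking $[\ |\ ]$-adjoints in (i) gives $K \Box^{(q)}_\beta \equiv 0$, and the symmetric composition with $A$ on the right yields $K \equiv K B_-$.

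The heart of the argument is (i). I will establish separately $\ol{\pr_\beta} K \equiv 0$ and $\ol{\pr_\beta}^\dagger K \equiv 0$. The key intermediate identity is $\Td P \gamma \Td\Pi^{(q)} \equiv \Td\Pi^{(q)}$ as operators on $C^\infty(\ol M; \Lambda^{0,q}T^*(M'))$. To prove it, note that for smooth $u$, $\Pi^{(q)} u$ is smooth (by Proposition~\ref{Bp:0801042202}, since $\Td\Pi^{(q)} = \Pi^{(q)} - K^{(q)}$) and lies in ${\rm Ker\,}\Box^{(q)}$, so the standard polarization forces $\dbar \Pi^{(q)} u = 0$ and $\dbar^* \Pi^{(q)} u = 0$; since $\Pi^{(q)} u \in {\rm Dom\,}\dbar^* \cap C^\infty(\ol M)$ where $\dbar^* = \ol{\pr_f}^*$, we obtain $\ol{\pr_f}^* \Pi^{(q)} u = 0$, and hence $\Box^{(q)}_f \Pi^{(q)} u = 0$. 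Consequently $\Td\Box^{(q)}_f \Td\Pi^{(q)} u = (\Box^{(q)}_f + K^{(q)})(\Pi^{(q)} - K^{(q)}) u$ is smoothing. Because the Dirichlet boundary value problem $\Td F^{(q)}$ in (\ref{Be:0809071609}) is elliptic and injective, it admits an inverse modulo $C^\infty$, forcing the difference $\Td\Pi^{(q)} u - \Td P \gamma \Td\Pi^{(q)} u$ (having zero boundary trace and smoothing $\Td\Box^{(q)}_f$-image) to be smoothing. Combined with $\dbar \Td\Pi^{(q)} \equiv -\dbar K^{(q)} \equiv 0$, this gives $\ol{\pr_\beta} K = T \gamma \dbar \Td P \gamma \Td\Pi^{(q)} \Td P \equiv T \gamma \dbar \Td\Pi^{(q)} \Td P \equiv 0$; interchanging $\dbar$ with $\ol{\pr_f}^*$ handles $\ol{\pr_\beta}^\dagger K = \gamma \ol{\pr_f}^* \Td P \gamma \Td\Pi^{(q)} \Td P \equiv 0$.

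For (ii) I will strengthen Lemma~\ref{Bl:0801102218} to ${\rm WF\,}'(K) \subset \Sigma^- \times \Sigma^-$. That lemma already supplies the first-factor inclusion. For the second factor, $K^\dagger = K$ translates into $K^* = (\Td P^* \Td P) K (\Td P^* \Td P)^{-1}$ through the definition of $[\ |\ ]$, and since $\Td P^* \Td P$ is an elliptic classical pseudodifferential operator, this conjugation preserves ${\rm WF\,}'$; hence ${\rm WF\,}'(K)$ is invariant under swapping factors, which combined with the first-factor inclusion gives ${\rm WF\,}'(K) \subset \Sigma^- \times \Sigma^-$. Theorem~\ref{Bt:0801090850} gives ${\rm WF\,}'(B_+) \subset {\rm diag\,}(\Sigma^+(n-1-q) \times \Sigma^+(n-1-q))$, so the composition rule for wave front sets together with the disjointness $\Sigma^- \cap \Sigma^+ = \emptyset$ forces ${\rm WF\,}'(K B_+) = \emptyset = {\rm WF\,}'(B_+ K)$, so both compositions are smoothing. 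The main technical obstacle is the regularity statement $\Td P \gamma \Td\Pi^{(q)} \equiv \Td\Pi^{(q)}$, which ultimately rests on the ellipticity (as opposed to the non-elliptic $\dbar$-Neumann problem for $\Box^{(q)}_f$) of the Dirichlet-type boundary value problem for $\Td\Box^{(q)}_f$.
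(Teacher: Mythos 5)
Your proposal is correct and follows the paper's overall strategy — compose the parametrix $B_- + B_+ + A\Box^{(q)}_\beta$ with $K$, kill the $\Box^{(q)}_\beta$ term, drop the $B_+$ term by a wave-front-set argument, and pass to the other side with $\dagger$-adjoints. The differences are in efficiency rather than substance. For the key step (i), the paper states $\Box^{(q)}_\beta K = 0$ as an exact identity: this follows because $\Td P\gamma\Td\Pi^{(q)} = \Td\Pi^{(q)}$ holds \emph{exactly} (this is (\ref{Be:0809071558}), a consequence of the exact computation $\Td\Box^{(q)}_f\Td\Pi^{(q)} = (\Box^{(q)}_f + K^{(q)})(\Pi^{(q)}-K^{(q)}) = 0$ together with injectivity of $\Td F^{(q)}$), while $\dbar\Td\Pi^{(q)} = 0$ and $\ol{\pr_f}^*\Td\Pi^{(q)} = 0$ hold exactly on smooth inputs. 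You rederive a weaker modulo-smoothing version through the elliptic regularity of the Dirichlet problem for $\Td\Box^{(q)}_f$; that argument is sound, but it misses that the identity is already exact, so you end up needing $A\Box^{(q)}_\beta K$ to be smoothing (a composition of a type $(\tfrac{1}{2},\tfrac{1}{2})$ parametrix with a smoothing operator) whereas the paper simply replaces $B_+$ by $I - A\Box^{(q)}_\beta - B_-$ and gets $K = (B_-+B_+)K$ as an exact identity. For step (ii), you strengthen Lemma~\ref{Bl:0801102218} to ${\rm WF\,}'(K)\subset\Sigma^-\times\Sigma^-$ so as to also kill $KB_+$ directly; the paper avoids this by establishing only $K\equiv B_-K$ from ${\rm WF\,}(Ku)\subset\Sigma^-$ and then taking the $\dagger$-adjoint of that congruence (using $K^\dagger=K$ and $B_-^\dagger\equiv B_-$). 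Your symmetrization argument for ${\rm WF\,}'(K)$ via $K^*=(\Td P^*\Td P)K(\Td P^*\Td P)^{-1}$ is valid, but it is extra machinery the paper's route does not require. In short: same skeleton, but the paper leans on exact algebraic identities already established in section $4$ where you re-prove weaker approximate versions from scratch, and the paper's adjoint trick is lighter than your double wave-front-set computation.
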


\begin{proof}
Let $A$, $B_-$ and $B_+$ be as in Theorem~\ref{Bt:0801090850}.
In view of Theorem~\ref{Bt:0801090850}, we have
\[B_-+B_++A\Box^{(q)}_\beta\equiv I.\]
We may replace $B_+$ by $I-A\Box^{(q)}_\beta-B_-$ and get
$B_-+B_++A\Box^{(q)}_\beta=I$.
It is easy to see that
$\Box^{(q)}_\beta K=0$.
Thus,
\begin{equation} \label{Be:s-in}
K=(B_-+B_++A\Box^{(q)}_\beta)K=(B_-+B_+)K.
\end{equation}
Let $u\in\mathscr D'(\Gamma;\, \Lambda^{0,q}T^*(\Gamma))$. From Lemma~\ref{Bl:0801102218}, we know that
${\rm WF\,}(K u)\subset\Sigma^-$.
Note that
${\rm WF\,}'(K_{B_+})\subset{\rm diag\,}(\Sigma^+\times\Sigma^+)$.
Thus, $B_+K u\in C^\infty$, so $B_+K$ is smoothing (see Lemma~\ref{Bl:0801202214} and Proposition~\ref{Bp:0801202234}) and
$(B_-+B_+)K\equiv B_-K$.
From this and (\ref{Be:s-in}), we get
$K\equiv B_-K$
and
$K=K^\dagger\equiv K^\dagger B_-^\dagger\equiv K B_-$.
The lemma follows.
\end{proof}

We pause and introduce some notations.
Let $X$ and $Y$ be $C^\infty$ vector bundles over $M'$ and $\Gamma$ respectively. Let
$C, D: C^\infty(\Gamma;\, Y)\To \mathscr D'(M;\, X)$
with distribution kernels
$K_{C}(z, y)$, $K_{D}(z, y)\in\mathscr D'(M\times\Gamma;\, \mathscr L(Y_y, X_z))$.
We write
$C\equiv D$ mod $C^\infty(\ol M\times\Gamma)$
if $K_{C}(z, y)=K_{D}(z, y)+F(z, y)$,
where $F(z, y)\in C^\infty(\ol M\times\Gamma;\, \mathscr L(Y_y, X_z))$.

\begin{lem} \label{Bl:0801231658}
We have
\begin{equation} \label{Be:0801231710}
\Td\Pi^{(q)}\Td PB_-\equiv\Td\Pi^{(q)}\Td P\ \ {\rm mod\,}C^\infty(\ol M\times\Gamma).
\end{equation}
\end{lem}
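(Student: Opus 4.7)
The plan is to reduce this to Lemma~\ref{Bl:0801102229} by using the Poisson operator factorization of $\Td\Pi^{(q)}$ and a mapping property of $\Td P$.

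First, I would recall from (\ref{Be:0809071558}) that $\Td\Pi^{(q)} = \Td P\gamma\Td\Pi^{(q)}$, which gives the factorization
\[
\Td\Pi^{(q)}\Td P = \Td P\gamma\Td\Pi^{(q)}\Td P = \Td P K,
\]
where $K = \gamma\Td\Pi^{(q)}\Td P$ is the boundary operator defined in (\ref{Be:0711121605}). This converts the problem about the operator on $\ol M \times \Gamma$ into a problem about applying $\Td P$ to a relation on $\Gamma \times \Gamma$.

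Next, I would invoke Lemma~\ref{Bl:0801102229}, which asserts $KB_- \equiv K$, that is, there exists a smoothing operator $R \colon \mathscr D'(\Gamma;\,\Lambda^{0,q}T^*(\Gamma)) \to C^\infty(\Gamma;\,\Lambda^{0,q}T^*(\Gamma))$ with $KB_- - K = R$. Applying $\Td P$ on the left yields
\[
\Td\Pi^{(q)}\Td P B_- - \Td\Pi^{(q)}\Td P = \Td P K B_- - \Td P K = \Td P R,
\]
so it suffices to show that $\Td P R$ has distribution kernel in $C^\infty(\ol M \times \Gamma;\, \mathscr L(\Lambda^{0,q}T^*_y(\Gamma), \Lambda^{0,q}T^*_z(M')))$.

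The main (and only non-formal) point is therefore this mapping property: if $R$ has kernel $R(y', y) \in C^\infty(\Gamma \times \Gamma)$, then the kernel of $\Td P R$ is
\[
(\Td P R)(z, y) = \bigl(\Td P\, R(\cdot, y)\bigr)(z),
\]
and since $y \mapsto R(\cdot, y)$ takes values smoothly in $C^\infty(\Gamma)$ while $\Td P \colon C^\infty(\Gamma) \to C^\infty(\ol M)$ is continuous (as recalled in section~$1$), the composition $(z, y) \mapsto (\Td P R(\cdot, y))(z)$ lies in $C^\infty(\ol M \times \Gamma)$. This is essentially a routine check with parameter-dependent smooth functions, and I do not expect any obstacle beyond verifying that the continuity of $\Td P$ extends jointly in the parameter $y$; this is standard and follows from the kernel of $\Td P$ being smooth up to the boundary away from the diagonal in $\Gamma$ together with Sobolev continuity $\Td P \colon H^s(\Gamma) \to H^{s+1/2}(\ol M)$ for all $s$.
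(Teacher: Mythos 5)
Your proof is correct and follows essentially the same route as the paper: factor $\Td\Pi^{(q)}\Td P=\Td PK$ via (\ref{Be:0809071558}), invoke Lemma~\ref{Bl:0801102229} to get $K\equiv KB_-$, and apply $\Td P$ on the left. The paper leaves the mapping property of $\Td P$ on smoothing kernels implicit, while you spell it out, but the argument is the same.
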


\begin{proof}
From Lemma~\ref{Bl:0801102229}, we have
$K=\gamma\Td\Pi^{(q)}\Td P\equiv KB_-=\gamma\Td\Pi^{(q)}\Td PB_-$.
Thus,
$\Td\Pi^{(q)}\Td P=\Td P\gamma\Td\Pi^{(q)}\Td P\equiv \Td P\gamma\Td\Pi^{(q)}\Td PB_-\ \ {\rm mod\,}C^\infty(\ol M\times\Gamma)$.
We get (\ref{Be:0801231710}).
\end{proof}

Put
\begin{equation} \label{Be:0801131847}
Q=\Td PB_-T(\Td P^*\Td P)^{-1}\Td P^*:C^\infty(\ol M;\, \Lambda^{0,q}T^*(M'))\To C^\infty(\ol M;\, \Lambda^{0,q}T^*(M')),
\end{equation}
where $T$ is as in (\ref{Be:0712302117}).

\begin{prop} \label{Bp:0801131851}
We have
$Q\equiv\Td\Pi^{(q)}\ \ {\rm mod\,}C^\infty(\ol M\times\ol M)$.
From (\ref{Be:0809071558}), it follows that
$Q\equiv\Pi^{(q)}\ \ {\rm mod\,}C^\infty(\ol M\times\ol M)$.
\end{prop}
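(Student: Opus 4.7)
The plan is to show $Q - \Td\Pi^{(q)}$ has a smooth Schwartz kernel on $\ol M\times\ol M$ by sandwiching both operators against the common factor $T(\Td P^*\Td P)^{-1}\Td P^*$. Proposition~\ref{Bp:0801042202} gives the reproducing identity $\Td\Pi^{(q)}=\Td\Pi^{(q)}\Td P\,T(\Td P^*\Td P)^{-1}\Td P^*$ on $C^\infty(\ol M;\,\Lambda^{0,q}T^*(M'))$, while Lemma~\ref{Bl:0801231658} says $\Td\Pi^{(q)}\Td P B_-\equiv\Td\Pi^{(q)}\Td P$ mod $C^\infty(\ol M\times\Gamma)$. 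Composing this equivalence on the right with the continuous map $T(\Td P^*\Td P)^{-1}\Td P^*\colon C^\infty(\ol M)\to C^\infty(\Gamma)$ converts a $C^\infty(\ol M\times\Gamma)$ error into a $C^\infty(\ol M\times\ol M)$ error, so
\[\Td\Pi^{(q)}\equiv \Td\Pi^{(q)}\Td P B_-\,T(\Td P^*\Td P)^{-1}\Td P^*\pmod{C^\infty(\ol M\times\ol M)}.\]

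The key step is to establish the operator equivalence
\[\Td\Pi^{(q)}\Td P B_-\equiv \Td P B_-\pmod{C^\infty(\ol M\times\Gamma)}, \quad (\star)\]
since composing $(\star)$ on the right with $T(\Td P^*\Td P)^{-1}\Td P^*$ and combining with the previous display yields $\Td\Pi^{(q)}\equiv Q$ mod $C^\infty(\ol M\times\ol M)$, which is what we want. For $(\star)$, fix $v\in C^\infty(\Gamma;\,\Lambda^{0,q}T^*(\Gamma))$ and set $u=\Td P B_- v$; because $B_-v$ is a boundary $(0,q)$-form, it lies in ${\rm Ker\,}(\dbar r)^{\wedge,*}$, so $\gamma(\dbar r)^{\wedge,*}u=(\dbar r)^{\wedge,*}B_- v=0$ by (\ref{Be:0710312325}), and hence $u\in{\rm Dom\,}\dbar^*\cap C^\infty(\ol M;\,\Lambda^{0,q}T^*(M'))$. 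Proposition~\ref{Bp:0801042202} then applies to give
\[\Td\Pi^{(q)}u = (I-K^{(q)})\Bigl(u-\dbar N^{(q-1)}\dbar^*u-\dbar^*N^{(q+1)}\dbar u\Bigr).\]

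It remains to verify that the three operators $v\mapsto K^{(q)}\Td P B_- v$, $v\mapsto \dbar \Td P B_- v$, and $v\mapsto\dbar^*\Td P B_- v$ are all smoothing from $C^\infty(\Gamma)$ into $C^\infty(\ol M)$. The first is automatic from (\ref{Be:0809071607}), since $K^{(q)}$ has a $C^\infty$ kernel. For the second, the reproducing property (\ref{Be:0809081416}) gives $\dbar \Td P B_- = \Td P\,\gamma\dbar\Td P B_-$, and $\gamma\dbar\Td P B_-\equiv 0$ by Proposition~\ref{Be:0801101606}; the $N^{(q+1)}$-term is thus smoothing as well by Theorem~\ref{Bt:0710301305}. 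For the third, on ${\rm Dom\,}\dbar^*\cap C^\infty$ we have $\dbar^*=\ol{\pr_f}^*$, so (\ref{Be:0809081416}) and Lemma~\ref{Bl:0801052333} yield $\dbar^*\Td P B_- v = \Td P\,\gamma\ol{\pr_f}^*\Td P B_- v = \Td P\,\ol{\pr_\beta}^\dagger B_- v$, and $\ol{\pr_\beta}^\dagger B_-\equiv 0$ by Theorem~\ref{Bt:0801090850}. Combining these smoothing statements with the $K^{(q)}$-calculation proves $(\star)$, which finishes the proposition. The final assertion $Q\equiv\Pi^{(q)}$ is then immediate from (\ref{Be:0809071558}) and (\ref{Be:0809071607}).

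There is no serious obstacle; the only point that requires attention is verifying that at each composition the target function-space scale is such that a smoothing factor really is preserved, which uses the continuity statements on $\Td P$, $\Td P^*$, $T$, $(\Td P^*\Td P)^{-1}$ and $N^{(q\pm 1)}$ recorded in sections $3$ and $4$.
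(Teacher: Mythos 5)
Your argument is correct and follows essentially the same route as the paper: both use Lemma~\ref{Bl:0801231658} together with Proposition~\ref{Bp:0801042202} to get $\Td\Pi^{(q)}Q\equiv\Td\Pi^{(q)}$, and both use Theorem~\ref{Bt:0710301306} together with $\gamma\dbar\Td PB_-\equiv0$ (Proposition~\ref{Be:0801101606}), $\ol{\pr_\beta}^\dagger B_-\equiv0$, Lemma~\ref{Bl:0801052333} and the smoothness of $K^{(q)}$ to get $\Td\Pi^{(q)}Q\equiv Q$. The only cosmetic difference is that you verify the $\dbar$- and $\dbar^*$-conditions on $\Td PB_-$ and then compose with $T(\Td P^*\Td P)^{-1}\Td P^*$, whereas the paper works with $Q$ directly; this is the same computation.
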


\begin{proof}
We have
\begin{equation} \label{Be:0801132144}
\Td\Pi^{(q)}Q=\Td\Pi^{(q)}\Td PB_-T(\Td P^*\Td P)^{-1}\Td P^*\equiv\Td\Pi^{(q)}\Td PT(\Td P^*\Td P)^{-1}\Td P^*\ \ {\rm mod\,}C^\infty(\ol M\times\ol M).
\end{equation}
Here we used (\ref{Be:0801231710}).
From (\ref{Be:0801132144}) and the first part of (\ref{Be:0801042303}), we get
\begin{equation} \label{Be:0801132148}
\Td\Pi^{(q)}Q\equiv\Td\Pi^{(q)}\ \ {\rm mod\,}C^\infty(\ol M\times\ol M).
\end{equation}
From Theorem~\ref{Bt:0710301306}, we have
$\Td\Pi^{(q)}Q=(I-K^{(q)})(I-\dbar^*N^{(q+1)}\dbar-\dbar N^{(q-1)}\dbar^*)Q$,
where $N^{(q+1)}$ and $N^{(q-1)}$ are as in Theorem~\ref{Bt:0710301305}. From (\ref{Be:0801101608}), (\ref{Be:0801101610}) and
Lemma~\ref{Bl:0801052333}, we see that
$\dbar Q\equiv 0$, $\dbar^*Q\equiv0$ mod $C^\infty(\ol M\times\ol M)$.
Thus,
$\Td\Pi^{(q)}Q=(I-K^{(q)})(I-\dbar^*N^{(q+1)}\dbar-\dbar N^{(q-1)}\dbar^*)Q\equiv Q$ mod $C^\infty(\ol M\times\ol M)$.
From this and (\ref{Be:0801132148}), the proposition follows.
\end{proof}

Let $x=(x_1,\ldots,x_{2n-1})$ be a system of local coordinates on $\Gamma$ and extend the functions
$x_1,\ldots,x_{2n-1}$
to real smooth functions in some neighborhood of $\Gamma$. We write
$(\xi_1,\ldots,\xi_{2n-1}, \theta)$
to denote the dual variables of $(x, r)$. We write
$z=(x_1,\ldots,x_{2n-1}, r)$, $x=(x_1,\ldots,x_{2n-1},0)$,
$\xi=(\xi_1,\ldots, \xi_{2n-1})$, $\zeta=(\xi, \theta)$.
Until further notice, we work with the local coordinates $z=(x, r)$ defined on some neighborhood of $p\in\Gamma$.

We represent the Riemannian metric on $T(M')$ by
\[h=\sum^{2n}_{j,k=1}h_{j,k}(z)dx_j\otimes dx_k,\ \ dx_{2n}=dr,\]
where $h_{j,k}(z)=h_{k,j}(z)$, $j$, $k=1,\ldots,n$, and $\left(h_{j,k}(z)\right)_{1\leq j,k\leq 2n}$ is positive definite at each point of $M'$. Put
$\left(h_{j,k}(z)\right)^{-1}_{1\leq j,k\leq 2n}=\left(h^{j,k}(z)\right)_{1\leq j,k\leq 2n}$.
It is well-known (see page $99$ of Morrow-Kodaira~\cite{MK71}) that
\begin{equation} \label{Be:BK-har.070613.1}
\Box^{(q)}_f=-\frac{1}{2}\Bigl(h^{2n,2n}(z)\frac{\pr^2}{\pr r^2}+2\sum^{2n-1}_{j=1}h^{2n,j}(z)\frac{\pr^2}{\pr r\pr x_j}+
T(r)\Bigl)+\mbox{lower order terms},
\end{equation}
where
\begin{equation} \label{Be:0804051017}
T(r)=\sum^{2n-1}_{j,k=1}h^{j,k}(z)\frac{\pr^2}{\pr x_j\pr x_k}.
\end{equation}
Note that $T(0)=\triangle_\Gamma$+lower order terms
and
\[h^{2n,2n}(x)=1,\ \ h^{2n,j}(x)=0,\ \ j=1,\ldots,2n-1.\]
We let the full symbol of $\Td\Box^{(q)}_f$ be:
\[\mbox{full symbol of }\ \Td\Box^{(q)}_f=\sum^2_{j=0}q_j(z,\zeta)\]
where $q_j(z,\zeta)$ is a homogeneous polynomial of order $2-j$ in $\zeta$ (we recall that $\Td\Box^{(q)}_f=\Box^{(q)}_f+K^{(q)}$).
We have the following

\begin{prop} \label{Bp:BK-WKB1}
Let $\phi_-\in C^\infty(\Gamma\times\Gamma)$ be as in Theorem~\ref{Bt:0801090850}. Then, in
some neighborhood $U$ of ${\rm diag\,}(\Gamma_q\times\Gamma_q)$ in $M'\times M'$,
there exists a smooth function
$\Td\phi(z, y)\in C^\infty((\ol M\times\Gamma)\bigcap U)$
such that
\begin{align}  \label{Be:0803122151}
&\Td\phi(x, y)=\phi_-(x, y),\ \ {\rm Im\,}\Td\phi\geq0, \nonumber  \\
& d_z\Td\phi\neq0,\ d_y\Td\phi\neq0\ \ \mbox{where ${\rm Im\,}\Td\phi=0$}, \nonumber \\
&{\rm Im\,}\Td\phi>0\ \ \mbox{if}\ \ r\neq0,
\end{align}
and $q_0(z, \Td\phi'_z)$
vanishes to infinite order on $r=0$. We write $\frac{\pr}{\pr r(z)}$ to denote $\frac{\pr}{\pr r}$ acting in the $z$ variables. We have
\begin{equation} \label{Be:0711062350}
\frac{\pr}{\pr r(z)}\Td\phi(z, y)|_{r=0}=-i\sqrt{-\sigma_{\La_\Gamma}(x,  (\phi_-)'_x)}
\end{equation}
in some neighborhood of $x=y$, where ${\rm Re\,}\sqrt{-\sigma_{\La_\Gamma}(x, (\phi_-)'_x)}>0$.
\end{prop}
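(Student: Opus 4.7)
The idea is a standard WKB-type recursive construction in powers of $r$. I would first solve the boundary-level eikonal equation to fix $(\Td\phi)'_r|_{r=0}$, then determine the higher $r$-derivatives of $\Td\phi$ along $r = 0$ recursively from the eikonal equation, and finally invoke Borel's theorem to assemble a smooth $\Td\phi$.

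From (\ref{Be:BK-har.070613.1}) and the normalizations $h^{2n,2n}(x) = 1$, $h^{2n,j}(x) = 0$ for $j < 2n$ valid at $r = 0$, the principal symbol of $\Td\Box^{(q)}_f$ restricted to the boundary takes the form
\[q_0(x, \xi, \theta) = \tfrac{1}{2}\bigl(\theta^2 - \sigma_{\La_\Gamma}(x, \xi)\bigr),\]
because $\sigma_{T(0)}(x, \xi)$ coincides with $\sigma_{\La_\Gamma}(x, \xi)$ up to a sign absorbed into the formula. Setting $\xi = (\phi_-)'_x(x, y)$ and $\theta = (\Td\phi)'_r(x, 0, y)$, the boundary eikonal equation reduces to the quadratic $\theta^2 = \sigma_{\La_\Gamma}(x, (\phi_-)'_x)$. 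At the diagonal in $\Gamma_q \times \Gamma_q$ we have $(\phi_-)'_x = -\omega_0$ by (\ref{Be:t-i-bis1}), so $-\sigma_{\La_\Gamma}(x, -\omega_0) = \norm{\omega_0}^2 > 0$. By continuity, $-\sigma_{\La_\Gamma}(x, (\phi_-)'_x)$ remains in a small neighborhood of the positive real axis throughout a neighborhood of the diagonal, and thus the branch $\sqrt{-\sigma_{\La_\Gamma}(x, (\phi_-)'_x)}$ with positive real part is smooth and unambiguous. The two roots $\theta = \pm i\sqrt{-\sigma_{\La_\Gamma}(x, (\phi_-)'_x)}$ are distinguished by the required positivity: since $M = \{r < 0\}$, the linearization $\Td\phi \approx \phi_- + r\theta + O(r^2)$ forces ${\rm Im\,}\theta < 0$ in order that ${\rm Im\,}\Td\phi > 0$ for $r < 0$; this selects $\theta = -i\sqrt{-\sigma_{\La_\Gamma}(x, (\phi_-)'_x)}$, giving (\ref{Be:0711062350}).

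Next, write a formal expansion $\Td\phi(z, y) \sim \sum_{k \geq 0}\frac{r^k}{k!}\phi_k(x, y)$ with $\phi_0 = \phi_-$ and $\phi_1 = -i\sqrt{-\sigma_{\La_\Gamma}(x, (\phi_-)'_x)}$ already fixed. Substituting into $q_0(z, (\Td\phi)'_z)$ and differentiating $k$ times in $r$ at $r = 0$, one sees that for $k \geq 1$ the resulting equation is a smooth polynomial expression in $\phi_0, \ldots, \phi_{k+1}$ in which the highest-index unknown $\phi_{k+1}$ enters only linearly, with coefficient
\[\pr_\theta q_0(x, (\phi_-)'_x, \phi_1) = \phi_1.\]
Since $\phi_1|_{x = y \in \Gamma_q} = -i\norm{\omega_0(x)} \neq 0$, we may shrink $U$ so that $\phi_1$ is nonzero throughout $U \bigcap (\Gamma \times \Gamma)$; then each $\phi_{k+1}$ is uniquely and smoothly determined in terms of the previously constructed $\phi_0, \ldots, \phi_k$ and the $r$-derivatives of the full symbol of $\Td\Box^{(q)}_f$ at $r = 0$. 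Borel's theorem then supplies $\Td\phi \in C^\infty((\ol M \times \Gamma) \bigcap U)$ realizing this formal series in $r$, and by construction $q_0(z, (\Td\phi)'_z)$ vanishes to infinite order on $r = 0$.

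Finally, for the positivity statement in (\ref{Be:0803122151}), the Taylor expansion in $r$ gives
\[{\rm Im\,}\Td\phi(z, y) = {\rm Im\,}\phi_-(x, y) + r\,{\rm Im\,}\phi_1(x, y) + O(r^2),\]
and ${\rm Im\,}\phi_1 = -{\rm Re\,}\sqrt{-\sigma_{\La_\Gamma}(x, (\phi_-)'_x)} < 0$ on $U \bigcap (\Gamma \times \Gamma)$ by the sign convention. Combined with ${\rm Im\,}\phi_- \geq 0$, this yields ${\rm Im\,}\Td\phi > 0$ for $r < 0$ after a further shrinking of $U$ if necessary. The conditions $d_z\Td\phi \neq 0$ and $d_y\Td\phi \neq 0$ where ${\rm Im\,}\Td\phi = 0$ (which on $U$ forces $r = 0$) follow from (\ref{Be:0803271451}) for $\phi_-$ together with $\phi_1 \neq 0$. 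The main technical obstacle is the sign determination and the verification that $-\sigma_{\La_\Gamma}(x, (\phi_-)'_x)$ stays in a simply-connected region avoiding the branch cut of the square root, so that the principal branch with positive real part can be chosen smoothly; once that has been secured, the WKB recursion and Borel summation are routine.
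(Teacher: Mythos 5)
Your proof is correct and follows essentially the same route as the paper: both Taylor-expand $\Td\phi$ in powers of $r$, use the eikonal equation $q_0=0$ at $r=0$ to fix $\phi_1=\Td\phi'_r|_{r=0}=-i\sqrt{-\sigma_{\La_\Gamma}(x,(\phi_-)'_x)}$ with the branch ${\rm Re\,}\sqrt{\cdot}>0$ chosen because $(\phi_-)'_x|_{x=y}=-\omega_0$ is real, and then solve the higher transport-type equations recursively in $r$, where $\phi_{k+1}$ appears linearly with nonvanishing coefficient $\pr_\theta q_0|_{r=0}=\phi_1$; the smooth $\Td\phi$ is then assembled by Borel summation. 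Your write-up is actually a bit more explicit than the paper's, in that you spell out the sign selection via the requirement ${\rm Im\,}\Td\phi>0$ for $r<0$ and verify the positivity ${\rm Im\,}\Td\phi>0$ for $r\neq0$ and the nonvanishing of $d_z\Td\phi$, $d_y\Td\phi$ from ${\rm Im\,}\phi_1<0$ together with (\ref{Be:0803271451}), points the paper leaves implicit; these verifications are straightforward and correct.
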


\begin{proof}
From (\ref{Be:BK-har.070613.1}) and (\ref{Be:0804051017}),
we have
\begin{align} \label{Be:BK-har.070613.3}
&q_0(z,\zeta)=\frac{1}{2}h^{2n,2n}(z)\theta^2+\sum^{2n-1}_{j=1}h^{2n,j}(z)\theta\xi_j+g(z, \xi),\nonumber \\
&g(x, \xi)=-\frac{1}{2}\sigma_{\La_\Gamma},
\end{align}
where $g(z, \xi)$ is the principal symbol of $-\frac{1}{2}T(r)$.

We consider the Taylor expansion of $q_0(z, \zeta)$ with respect to $r$,
\begin{equation} \label{Be:BK-WKB***}
q_0(z, \zeta)=\frac{1}{2}\theta^2-\frac{1}{2}\sigma_{\La_\Gamma}+\sum^\infty_{j=1}g_j(x, \xi)r^j+\sum^\infty_{j=1}s_j(x, \zeta)\theta r^j.
\end{equation}

We introduce the Taylor expansion of $\Td\phi(z, y)$ with respect to $r$,
\[\Td\phi(z, y)=\phi_-(x, y)+\sum^\infty_1\phi_j(x, y)r^j.\]
Let $\phi_1(x, y)=-i\sqrt{-\sigma_{\La_\Gamma}(x, (\phi_-)'_x)}$.
Since $(\phi_-)'_x|_{x=y}=-\omega_0(x)$ is real,
we choose the branch of $\sqrt{-\sigma_{\La_\Gamma}(x, (\phi_-)'_x)}$ so that
${\rm Re\,}\sqrt{-\sigma_{\La_\Gamma}(x, (\phi_-)'_x)}>0$
in some neighborhood of $x=y$, $r=0$.
Put $\Td\phi_1(z, y)=\phi_-(x, y)+r\phi_1(x, y)$.
We have $q_0(z, (\Td\phi_1)'_z)=O(r)$.
Similarly, we can find $\phi_2(x, y)$ so that
$q_0(z, (\Td\phi_2)'_z)=O(r^2)$,
where $\Td\phi_2(z, y)=\phi_-(x, y)+r\phi_1(x, y)+r^2\phi_2(x, y)$.
Continuing in this way we get the phase  $\Td\phi(z, y)$ such that
$\Td\phi(x, y)=\phi_-(x, y)$
and $q_0(z, \Td\phi'_z)$
vanishes to infinite order on $r=0$.
The proposition follows.
\end{proof}

\begin{rem} \label{Br:0801141650}
Let $\Td\phi(z, y)$ be as in Proposition~\ref{Bp:BK-WKB1} and let
\[d(z, y, t)\in S^{m}_{1, 0}(\ol M\times\Gamma\times]0, \infty[;\, \mathscr L(\Lambda^{0,q}T^*_y(M'), \Lambda^{0,q}T^*_z(M')))\]
with support in some neighborhood of ${\rm diag\,}(\Gamma_q\times\Gamma_q)$. (For the meaning of the space
$S^{m}_{1, 0}(\ol M\times\Gamma\times]0, \infty[;\, \mathscr L(\Lambda^{0,q}T^*_y(M'), \Lambda^{0,q}T^*_z(M')))$,
see Definition~\ref{Bd:0712101500}.)
Choose a cut-off function $\chi(t)\in C^\infty(\Real)$
so that $\chi(t)=1$ when $\abs{t}<1$ and $\chi(t)=0$ when $\abs{t}>2$. For all $u\in C^\infty(\Gamma;\, \Lambda^{0,q}T^*(M'))$, set
\[(D_\eps u)(z)=\int\int^\infty_0e^{i\Td\phi(z, y)t}d(z, y, t)\chi(\eps t)u(y)dtdy.\]
Since ${\rm Im\,}\Td\phi\geq0$ and $d_y\Td\phi\neq0$ where ${\rm Im\,}\Td\phi=0$, we can integrate by parts in $y$, $t$ and obtain
$\lim_{\eps\To0}(D_\eps u)(z)\in C^\infty(\ol M;\, \Lambda^{0, q}T^*(M'))$.
This means that
$D=\lim_{\eps\To0}D_\eps: C^\infty(\Gamma;\, \Lambda^{0,q}T^*(M'))\To C^\infty(\ol M;\, \Lambda^{0, q}T^*(M'))$
is continuous. Formally,
\[D(z, y)=\int^\infty_0e^{i\Td\phi(z, y)t}d(z, y, t)dt.\]
\end{rem}

\begin{prop} \label{Bp:BK-WKB2}
Let
\[B_-(x, y)=\int^\infty_0e^{i\phi_-(x, y)t}b(x, y, t)dt\]
be as in Theorem~\ref{Bt:0801090850}. We have
\[\Td PB_-(z, y)\equiv\int^\infty_0e^{i\Td\phi(z, y)t}\Td b(z, y, t)dt \ \ {\rm mod\,}C^\infty(\ol M\times\Gamma)\]
with
$\Td b(z, y, t)\in S^{n-1}_{1, 0}(\ol M\times\Gamma\times]0, \infty[;\, \mathscr L(\Lambda^{0,q}T^*_y(\Gamma), \Lambda^{0,q}T^*_z(M')))$,
\[\Td b(z, y, t)\sim\sum^\infty_{j=0}\Td b_j(z, y)t^{n-1-j}\]
in $S^{n-1}_{1, 0}(\ol M\times\Gamma\times]0, \infty[;\, \mathscr L(\Lambda^{0,q}T^*_y(\Gamma), \Lambda^{0,q}T^*_z(M')))$,
where
\[\Td b_j(z, y)\in C^\infty(\ol M\times\Gamma;\, \mathscr L(\Lambda^{0,q}T^*_y(\Gamma), \Lambda^{0,q}T^*_z(M'))),\ \ j=0,1,\ldots.\]
\end{prop}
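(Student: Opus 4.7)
The plan is to construct $\Td b(z,y,t)$ explicitly by a WKB-style ansatz, verify that the resulting oscillatory integral $I(z,y)=\int_0^\infty e^{i\Td\phi(z,y)t}\Td b(z,y,t)dt$ approximately solves the Poisson problem $\Td\Box^{(q)}_f I \equiv 0$ with $\gamma I\equiv B_-$, and then invoke uniqueness of the Poisson operator modulo smoothing to conclude $\Td PB_-\equiv I$ mod $C^\infty(\ol M\times\Gamma)$.

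First I would substitute the ansatz into $\Td\Box^{(q)}_f$ using the standard stationary phase-type expansion
\[
\Td\Box^{(q)}_f\bigl(e^{i\Td\phi t}\Td b\bigr)=e^{i\Td\phi t}\Bigl(q_0(z,\Td\phi'_z)t^2\Td b+t\,L_1\Td b+L_0\Td b\Bigr),
\]
where $L_1$ is a first order differential operator in $z$ (the transport operator), and $L_0$ is a second order operator. By Proposition 8.4 the principal term $q_0(z,\Td\phi'_z)$ vanishes to infinite order on $r=0$. Crucially, $\frac{\pr\Td\phi}{\pr r}|_{r=0}=-i\sqrt{-\sigma_{\La_\Gamma}(x,(\phi_-)'_x)}$ has strictly positive real part near $x=y$, so $\abs{e^{i\Td\phi(z,y)t}}\lesssim e^{-c\,r\,t}$ for some $c>0$. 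Therefore the contribution of $q_0(z,\Td\phi'_z)t^2\Td b$ can be absorbed: writing $q_0(z,\Td\phi'_z)=O(r^N)$ for any $N$ gives $r^N t^2 e^{-crt}=O(t^{2-N})$, producing an error lying in $S^{-\infty}_{1,0}$ after integration.

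Next I would solve the transport equations $L_1\Td b_j+(\text{lower order from }\Td b_0,\ldots,\Td b_{j-1})=0$ with initial data $\Td b_j(x,y)=b_j(x,y)$ on $r=0$. Since $L_1$ has a nonvanishing $\partial_r$-component (coming from $\frac{\pr\Td\phi}{\pr r}|_{r=0}\neq0$), these are non-characteristic ODEs in $r$ along rays transverse to $\Gamma$, and can be integrated smoothly in a collar neighborhood of $\Gamma$. Taking $\Td b\sim\sum \Td b_j\,t^{n-1-j}$ by Borel summation produces a symbol in $S^{n-1}_{1,0}(\ol M\times\Gamma\times]0,\infty[)$ with support in a small neighborhood of ${\rm diag}(\Gamma_q\times\Gamma_q)$ (using a cut-off multiplied into $b$, which introduces only a smoothing error by Remark 8.8 applied off the diagonal where $d_y\Td\phi\neq 0$). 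Then $\Td\Box^{(q)}_f I\in C^\infty(\ol M\times\Gamma)$ and $\gamma I=\int_0^\infty e^{i\phi_-(x,y)t}b(x,y,t)dt\equiv B_-$ mod smoothing.

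Finally I would apply uniqueness. The operator $\Td F^{(q)}$ of (1.12) is injective with smooth Poisson inverse $\Td P$, and for $\Td\Box^{(q)}_f$ one has a two-sided parametrix for the boundary problem (standard elliptic theory of boundary value problems for $\dbar$-Neumann type operators, cf.\ Boutet de Monvel), so any operator $R:C^\infty(\Gamma;\Lambda^{0,q}T^*(M'))\to\mathscr D'(\ol M;\Lambda^{0,q}T^*(M'))$ satisfying $\Td\Box^{(q)}_f R\equiv 0$ and $\gamma R\equiv 0$ mod $C^\infty$ is itself smoothing. Applying this to $R=\Td PB_--I$ gives $\Td PB_-\equiv I$ mod $C^\infty(\ol M\times\Gamma)$. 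The main technical obstacle will be step two: controlling $q_0(z,\Td\phi'_z)t^2\Td b$ as an oscillatory integral, which requires a careful bookkeeping of the exponential damping from $e^{i\Td\phi t}$ against the polynomial growth in $t$, precisely in the spirit of the estimates for $\hat S^k_r$ symbols used in part I.
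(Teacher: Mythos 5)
Your proof is correct and follows essentially the same route as the paper: construct $\Td b$ by a WKB ansatz solving the transport equations to infinite order on $r=0$ (where the non-characteristic condition comes from $\Td\phi'_r|_{r=0}\neq 0$), absorb the $q_0$-term using ${\rm Im}\,\Td\phi\gtrsim |r|$ near the diagonal (the paper instead integrates by parts using $\Td\phi\neq0$ off $\set{x=y,\ r=0}$, but this amounts to the same estimate), and invoke the elliptic boundary-value parametrix $G\Td\Box^{(q)}_f+\Td P\gamma=I$ to identify $\Td PB_-$ with the constructed integral modulo $C^\infty(\ol M\times\Gamma)$. One point to tighten: the transport operator has complex coefficients (and tangential $\pr_{x_j}$ terms with complex coefficients), so it cannot literally be ``integrated smoothly as an ODE in $r$''; the correct and standard move, which the paper makes explicit, is to solve the transport equations as formal power series in $r$ to infinite order and then Borel-sum.
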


\begin{proof}
Put
$b(x, y, t)\sim\sum^\infty_{j=0}b_j(x, y)t^{n-1-j}$
and formally set
\[\Td b(z, y, t)\sim\sum^\infty_{j=0}\Td b_j(z, y)t^{n-1-j}.\]
We notice that
$B_-(x, y)\in
C^\infty(\Gamma\times\Gamma\setminus{\rm diag\,}(\Gamma_q\times\Gamma_q);\, \mathscr L(\Lambda^{0,q}T^*_y(\Gamma), \Lambda^{0,q}T^*_z(\Gamma)))$.
For simplicity, we may assume that $b(x, y, t)=0$ outside some small neighborhood of ${\rm diag\,}(\Gamma_q\times\Gamma_q)\times\ol\Real_+$.
Put $\Td\Box^{(q)}_f(\Td b(z, y, t)e^{i\Td\phi t})=\Td c(z, y, t)e^{i\Td\phi t}$.
From (\ref{Be:0709241542}) and (\ref{Be:0803122151}), we know that near ${\rm diag\,}(\Gamma_q\times\Gamma_q)$,
$\Td\phi(z, y)=0$ if and only if $x=y$, $r=0$. From this
observation, we see that if $\Td c(z, y, t)$ vanishes to infinite order on
${\rm diag\,}(\Gamma_q\times\Gamma_q)\times\ol\Real_+$, we can integrate by parts and obtain
\[\lim_{\eps\To 0}\int^\infty_0e^{i\Td\phi t}\Td c(z, y, t)\chi(\eps t)dt\equiv 0\ \ {\rm mod\,}C^\infty(\ol M\times\Gamma),\]
where $\chi(t)$ is as in Remark~\ref{Br:0801141650}. Thus, we only need to consider the Taylor expansion of
$\Td b(z, y, t)$ on $x=y$, $r=0$.
We introduce the asymptotic expansion of
$\Td\Box^{(q)}_f(\Td be^{i\Td\phi t})$. Setting
$\Td\Box^{(q)}_f(\Td be^{i\Td\phi t})\sim 0$ and
regrouping the terms according to the degree of homogeneity. We obtain the transport equations
\begin{equation} \label{Be:BK-heattransport}
\left\{ \begin{array}{ll}
 T(z, y,\pr_z)\Td b_0(z, y)=0  \\
 T(z, y,\pr_z)\Td b_j(z, y)+l_j(z,y,\Td b_0(z, y),\ldots,\Td b_{j-1}(z, y))=0,\ j=1,2,\ldots.
 \end{array}\right.
\end{equation}
Here
\begin{align*}
T(z, y, \pr_z)&=-i\sum^{2n-1}_{j=1}\frac{\pr q_0}{\pr\xi_j}(z,
\Td\phi'_z)\frac{\pr}{\pr x_j}-
i\frac{\pr q_0}{\pr\theta}(z, \Td\phi'_z)\frac{\pr}{\pr r}  \\
                            &\quad+R(z, y),
\end{align*}
where
\[R(z, y)=q_{1}(z, \Td\phi'_z)+\frac{1}{2i}\sum^{2n}_{j,k=1}\frac{\partial^2
q_0(z, \Td\phi'_z)}{\partial\xi_j\partial\xi_k}\frac{\partial^2\Td\phi}{\partial x_j\partial x_k},\ \ x_{2n}=r,\ \ \xi_{2n}=\theta,\]
and $l_j$ is a linear differential operator acting on $\Td b_0(z, y),\ldots,\Td b_{j-1}(z, y)$.

We introduce the Taylor expansion of $\Td b_0(z, y)$ with respect to $r$,
\[\Td b_0(z, y)=b_0(x, y)+\sum^\infty_1b^j_0(x, y)r^j.\]
Since $\frac{\pr q_0}{\pr\theta}|_{r=0}=\theta$
and
$\Td\phi'_r|_{r=0}=-i\sqrt{-\sigma_{\La_\Gamma}(x, (\phi_-)'_x)}$,
we have $\frac{\pr q_0}{\pr\theta}(z, \Td\phi'_z)|_{r=0}\neq0$
in some neighborhood of $x=y$.
Thus, we can find $b^1_0(x, y)r$  such that
\[T(z, y, \pr_z)(b_0(x, y)+b^1_0(x, y)r)=O(\abs{r})\]
in some neighborhood of $r=0$, $x=y$.
We can repeat the procedure above to find $b^2_0(x, y)$ such that
$T(z, y, \pr_z)(b_0(x, y)+\sum^2_{k=1}b^k_0(x, y)r^k)=O(\abs{r}^{2})$
in some neighborhood of $r=0$, $x=y$.
Continuing in this way we solve the first transport equation to infinite order at $r=0$, $x=y$.

For the second transport equation, we can repeat the method above to solve the second transport equation to infinite order
at $r=0$, $x=y$. Continuing in this way we solve (\ref{Be:BK-heattransport}) to infinite order at $r=0$, $x=y$.

Put $\Td B(z, y)=\int^\infty_0e^{i\Td\phi(z, y)t}\Td b(z, y, t)dt$.
From the construction above, we see that
\begin{equation} \label{Be:0804051421}
\Td\Box^{(q)}_f\Td B\equiv0\ \ {\rm mod\,}C^\infty(\ol M\times\Gamma),\ \ \gamma\Td B\equiv B_-.
\end{equation}
It is well-known (see chapter XX of~\cite{Hor85}) that there exists
\[G:C^\infty(\ol M;\, \Lambda^{0,q}T^*(M'))\To C^\infty(\ol M;\, \Lambda^{0,q}T^*(M'))\]
such that
\begin{equation} \label{Be:0804051422}
G\Td\Box^{(q)}_f+\Td P\gamma=I\ \ \mbox{on}\ \ C^\infty(\ol M;\, \Lambda^{0,q}T^*(M')).
\end{equation}
From this and (\ref{Be:0804051421}), we have
$\Td B=(G\Td\Box^{(q)}_f+\Td P\gamma)\Td B\equiv \Td PB_-$ mod $C^\infty(\ol M\times\Gamma)$.
The proposition follows.
\end{proof}

From Proposition~\ref{Bp:BK-WKB2}, we have
\[C(z, y):=\Td PB_-T(\Td P^*\Td P)^{-1}(z, y)\equiv\int^\infty_0e^{i\Td\phi(z, y)t}c(z, y, t)dt \ \ {\rm mod\,}C^\infty(\ol M\times\Gamma)\]
with $c(z, y, t)\in S^{n}_{1, 0}(\ol M\times\Gamma\times]0, \infty[;\, \mathscr L(\Lambda^{0,q}T^*_y(M'), \Lambda^{0,q}T^*_z(M')))$,
\[c(z, y, t)\sim\sum^\infty_{j=0}c_j(z, y)t^{n-j}\]
in the space $S^{n}_{1, 0}(\ol M\times\Gamma\times]0, \infty[;\, \mathscr L(\Lambda^{0,q}T^*_y(M'), \Lambda^{0,q}T^*_z(M')))$.
Let
\[C^*:C^\infty(\ol M;\, \Lambda^{0,q}T^*(M'))\To\mathscr D'(\Gamma;\, \Lambda^{0,q}T^*(M'))\]
be the operator defined by
$(C^*u\ |\ v)_\Gamma=(u\ |\ Cv)_M$, $u\in C^\infty(\ol M;\, \Lambda^{0,q}T^*(M'))$,
$v\in C^\infty(\Gamma;\, \Lambda^{0,q}T^*(M'))$.
The distribution kernel of $C^*$ is
\begin{equation} \label{Be:0804062203}
C^*(y, z)\equiv\int^\infty_0e^{-i\ol{\Td\phi}(z, y)t}c^*(y, z, t)dt\ \ {\rm mod\,}C^\infty(\Gamma\times\ol M)
\end{equation}
where
\begin{align*}
&c^*(y, z, t)\in S^{n}_{1, 0}(\Gamma\times\ol M\times]0, \infty[;\, \mathscr L(\Lambda^{0,q}T^*_z(M'), \Lambda^{0,q}T^*_y(M'))), \\
&(c^*(y, z, t)\mu\ |\ \nu)=(\mu\ |\ c(z, y, t)\nu),\ \ \mu\in\Lambda^{0,q}T^*_z(M'),\ \nu\in\Lambda^{0,q}T^*_y(M'), \\
&c^*(y, z, t)\sim\sum^\infty_{j=0}c^*_j(y, z)t^{n-j}
\end{align*}
in $S^{n}_{1, 0}(\Gamma\times\ol M\times]0, \infty[;\, \mathscr L(\Lambda^{0,q}T^*_z(M'), \Lambda^{0,q}T^*_y(M')))$. The integral
(\ref{Be:0804062203}) is defined as follows: Let $u\in C^\infty(\ol M;\, \Lambda^{0,q}T^*(M'))$. Set
\[(C^*_\eps u)(y)=\int\int^\infty_0e^{-i\ol{\Td\phi(z, y)}t}c^*(y, z, t)\chi(\eps t)u(z)dtdz,\]
where $\chi$ is as in Remark~\ref{Br:0801141650}.
Since $d_x\Td\phi\neq0$ where ${\rm Im\,}\Td\phi=0$,
we can integrate by parts in $x$ and $t$ and obtain
$\lim_{\eps\To0}(C^*_\eps u)(y)\in C^\infty(\Gamma;\, \Lambda^{0, q}T^*(M'))$.
This means that
$C^*=\lim_{\eps\To0}C^*_\eps: C^\infty(\ol M;\, \Lambda^{0,q}T^*(M'))\To C^\infty(\Gamma;\, \Lambda^{0, q}T^*(M'))$
is continuous.

We also write $w=(y_1,\dots,y_{2n-1},r)$.
We can repeat the proof
of Proposition~\ref{Bp:BK-WKB1} to find
$\phi(z, w)\in C^\infty(\ol M\times\ol M)$
such that $\phi(z, y)=\Td\phi(z, y)$,
${\rm Im\,}\phi\geq0$,
${\rm Im\,}\phi>0$ if $(z, w)\notin\Gamma\times\Gamma$
and $q_0(w, -\ol\phi'_w)$
vanishes to infinite order on $r=0$. Since $\phi_-(x, y)=-\ol\phi_-(y, x)$, we can take $\phi(z, w)$ so that
$\phi(z, w)=-\ol\phi(w, z)$.
As in the proof of Proposition~\ref{Bp:BK-WKB2}, we can find
\begin{align*}
&a^*(w, z, t)\in S^{n}_{1, 0}(\ol M\times\ol M\times[0, \infty[;\, \mathscr L(\Lambda^{0,q}T^*_z(M'), \Lambda^{0,q}T^*_w(M'))), \\
&a^*(w, z, t)\sim\sum^\infty_{j=0}a^*_j(w, z)t^{n-j}
\end{align*}
in $S^{n}_{1, 0}(\ol M\times\ol M\times]0, \infty[;\, \mathscr L(\Lambda^{0,q}T^*_z(M'), \Lambda^{0,q}T^*_w(M')))$, such that
\[a^*(y, z, t)=c^*(y, z, t)\]
and $\Td\Box^{(q)}_f(a^*(w, z, t)e^{-i\ol\phi(z, w)t})$
vanishes to infinite order on ${\rm diag\,}(\Gamma_q\times\Gamma_q)\times\ol\Real_+$. From (\ref{Be:0804051422}), we have
$\Td PC^*(w, z)\equiv\int^\infty_0 e^{-i\ol\phi(z, w)t}a^*(w, z, t)dt\ \ {\rm mod\,}C^\infty(\ol M\times\ol M)$.
Thus,
\begin{align*}
&C\Td P^*(z, w)\equiv\int^\infty_0 e^{i\phi(z, w)t}a(z, w, t)dt\ \ {\rm mod\,}C^\infty(\ol M\times\ol M),\\
&a(z, w, t)\in S^{n}_{1, 0}(\ol M\times\ol M\times[0, \infty[;\, \mathscr L(\Lambda^{0,q}T^*_w(M'), \Lambda^{0,q}T^*_z(M'))), \\
&a(z, w, t)\sim\sum^\infty_{j=0}a_j(z, w)t^{n-j}
\end{align*}
in the space $S^{n}_{1, 0}(\ol M\times\ol M\times]0, \infty[;\, \mathscr L(\Lambda^{0,q}T^*_w(M'), \Lambda^{0,q}T^*_z(M')))$.
Note that $C\Td P^*=\Td PB_-T(\Td P^*\Td P)^{-1}\Td P^*$.
From this and Proposition~\ref{Bp:0801131851}, we get the main result of this work

\begin{thm} \label{Bt:0801141751}
Given $q$, $0\leq q\leq n-1$.
Suppose that $Z(q)$ fails at some point of\, $\Gamma$ and that $Z(q-1)$ and $Z(q+1)$ hold at each point of\, $\Gamma$.
Then
\[K_{\Pi^{(q)}}(z, w)\in
C^\infty(\ol M\times\ol M\setminus{\rm diag\,}(\Gamma_q\times\Gamma_q);\, \mathscr L(\Lambda^{0,q}T^*_w(M'),\Lambda^{0,q}T^*_z(M'))).\]
Moreover, in a neighborhood $U$ of ${\rm diag\,}(\Gamma_q\times\Gamma_q)$, $K_{\Pi^{(q)}}(z, w)$ satisfies
\begin{equation}
K_{\Pi^{(q)}}(z, w)\equiv\int^\infty_0e^{i\phi(z, w)t}a(z, w, t)dt\ \ {\rm mod\,}\ \ C^\infty(U\bigcap(\ol M\times\ol M))
\end{equation}
with
$a(z, w, t)\in S^{n}_{1, 0}(U\bigcap(\ol M\times\ol M)\times]0, \infty[;\, \mathscr L(\Lambda^{0,q}T^*_w(M'), \Lambda^{0,q}T^*_z(M')))$,
\[a(z, w, t)\sim\sum^\infty_{j=0}a_j(z, w)t^{n-j}\]
in the space $S^{n}_{1, 0}(U\bigcap(\ol M\times\ol M)\times]0, \infty[;\, \mathscr L(\Lambda^{0,q}T^*_w(M'), \Lambda^{0,q}T^*_z(M')))$,
\[a_0(z, z)\neq0,\ z\in\Gamma_q,\]
where
$a_j(z, w)\in C^\infty(U\bigcap(\ol M\times\ol M);\, \mathscr L(\Lambda^{0,q}T^*_w(M'), \Lambda^{0,q}T^*_z(M')))$, $j=0,1,\ldots$,
and
\begin{align}
&\phi(z, w)\in C^\infty(U\bigcap(\ol M\times\ol M)), \ \ {\rm Im\,}\phi\geq0, \label{Be:0711180902bis} \\
&\phi(z, z)=0,\ \ z\in \Gamma_q,\ \ \phi(z, w)\neq0\ \ \mbox{if}\ \ (z, w)\notin{\rm diag\,}(\Gamma_q\times\Gamma_q), \label{Be:0711180903bis} \\
&{\rm Im\,}\phi(z, w)>0\ \ \mbox{if}\ \ (z, w)\notin\Gamma\times\Gamma, \label{Be:0711180905bis} \\
&\phi(z, w)=-\ol\phi(w, z) \label{Be:0711180906bis}.
\end{align}
For $p\in\Gamma_q$, we have $\sigma_{\Box^{(q)}_f}(z, d_z\phi(z, w))$ vanishes to infinite order at $z=p$, where
$(z, w)$ is in some neighborhood of\, $(p, p)$ in $M'$.

For $z=w$, $z\in\Gamma_q$, we have $d_z\phi=-\omega_0-idr$, $d_w\phi=\omega_0-idr$.
\end{thm}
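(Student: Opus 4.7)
\textbf{Proof plan for Theorem \ref{Bt:0801141751}.} The plan is to assemble the operator-level identity of Proposition~\ref{Bp:0801131851} with the WKB constructions of Propositions~\ref{Bp:BK-WKB1} and~\ref{Bp:BK-WKB2}, and then symmetrize in $(z,w)$. First I would invoke Proposition~\ref{Bp:0801131851} to replace $\Pi^{(q)}$ modulo $C^\infty(\ol M\times\ol M)$ by
\[
Q=\Td P\,B_-\,T(\Td P^*\Td P)^{-1}\Td P^*,
\]
so that everything reduces to analyzing the kernel of $Q$. The wave front set relation ${\rm WF}'(K_{B_-})={\rm diag}(\Sigma^-(q)\times\Sigma^-(q))$ from Theorem~\ref{Bt:0801090850}, together with the continuity properties of $\Td P$ and $\Td P^*$ (which are Poisson and its adjoint, hence map into $C^\infty$ off their base points), propagates to show that $K_Q(z,w)$ is smooth off ${\rm diag}(\Gamma_q\times\Gamma_q)$.

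Next I would apply Proposition~\ref{Bp:BK-WKB2} to get a Fourier integral expression
\[
\Td P\,B_-(z,y)\equiv\int_0^\infty e^{i\Td\phi(z,y)t}\Td b(z,y,t)\,dt \pmod{C^\infty(\ol M\times\Gamma)},
\]
with $\Td\phi$ as in Proposition~\ref{Bp:BK-WKB1}. Composing on the right with the pseudodifferential operator $T(\Td P^*\Td P)^{-1}$ preserves the form of the integral (the order of the amplitude goes up by one since $(\Td P^*\Td P)^{-1}$ has order $1$), giving
\[
\Td P\,B_-\,T(\Td P^*\Td P)^{-1}(z,y)\equiv\int_0^\infty e^{i\Td\phi(z,y)t}c(z,y,t)\,dt \pmod{C^\infty(\ol M\times\Gamma)},
\]
with $c\in S^n_{1,0}$. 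To extend this to the interior in $w$, I would repeat the WKB construction of Proposition~\ref{Bp:BK-WKB1} on the $w$-side: solve the eikonal equation $q_0(w,-\ol\phi'_w)=O(r(w)^\infty)$ starting from the boundary value $\Td\phi(z,y)$, and then solve the transport equations starting from $c^*(y,z,t)$ (the formal adjoint amplitude), so that the resulting phase $\phi(z,w)$ is an almost analytic extension of $\Td\phi$ in $w$ with $\Td\Box^{(q)}_f$ (acting in $w$) annihilating the integrand to infinite order along $\Gamma_q$. Using Proposition~\ref{Bp:BK-WKB2} applied to $C\Td P^*$ then yields the representation claimed in the theorem, with amplitude $a(z,w,t)\sim\sum a_j(z,w)t^{n-j}$.

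To arrange (\ref{Be:0711180906bis}), that is $\phi(z,w)=-\ol\phi(w,z)$, I would use the fact that $\phi_-(x,y)=-\ol\phi_-(y,x)$ from Theorem~\ref{Bt:0801090850} to see that $\phi(z,w)$ and $-\ol\phi(w,z)$ both solve the eikonal problem in $z$ and in $w$ with the same boundary data $\phi_-$, so they agree to infinite order on ${\rm diag}(\Gamma_q\times\Gamma_q)$. One may then replace $\phi$ by the symmetric representative $\tfrac12(\phi(z,w)-\ol\phi(w,z))$ without changing the class of the integral (cf.\ the corresponding arguments in part~I). The properties (\ref{Be:0711180902bis})--(\ref{Be:0711180905bis}) transport from the boundary phase $\phi_-$: non-negativity of ${\rm Im\,}\Td\phi$ was built in at Proposition~\ref{Bp:BK-WKB1}, and strict positivity for $r\neq0$ follows from (\ref{Be:0803122151}); vanishing on ${\rm diag}(\Gamma_q\times\Gamma_q)$ is inherited from $\phi_-(x,x)=0$. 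The values $d_z\phi=-\omega_0-idr$ and $d_w\phi=\omega_0-idr$ at diagonal points of $\Gamma_q$ come from combining (\ref{Be:t-i-bis1}) with (\ref{Be:0711062350}) and its symmetric analogue.

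The main obstacle will be the step of extending the phase $\Td\phi(z,y)$ (defined on $\ol M\times\Gamma$) to a phase $\phi(z,w)$ on $\ol M\times\ol M$ that simultaneously satisfies the eikonal equation in both variables and the symmetry $\phi(z,w)=-\ol\phi(w,z)$; the iterative solution in $r(w)$ must be compatible with the one already done in $r(z)$, and the amplitude constructed on the $w$-side via transport equations must match, modulo infinite-order vanishing on the diagonal, the amplitude $c(z,y,t)$ obtained on the $y$-side. Controlling this compatibility, and establishing $a_0(z,z)\neq0$ on $\Gamma_q$ (which will be traced back via the transport equations to $b_0(x,x)\neq0$ from Theorem~\ref{Bt:0801090850} and the ellipticity of $(\Td P^*\Td P)^{-1}$ and of the principal symbol of $T$ on ${\rm Ker}(\dbar r)^{\wedge,*}$), is the main technical content. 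Once these are in place, the representation and all the listed phase properties follow.
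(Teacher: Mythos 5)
Your proposal is correct and follows essentially the same route as the paper: replace $\Pi^{(q)}$ by $Q=\Td P B_- T(\Td P^*\Td P)^{-1}\Td P^*$ via Proposition~\ref{Bp:0801131851}, use Propositions~\ref{Bp:BK-WKB1}--\ref{Bp:BK-WKB2} to write $\Td P B_- T(\Td P^*\Td P)^{-1}$ as a Fourier integral operator $C$ with phase $\Td\phi(z,y)$, then extend the phase and amplitude in the $w$-variable by the same eikonal/transport iteration applied through the adjoint $C^*$ and $\Td P C^*$, and finally symmetrize using $\phi_-(x,y)=-\ol\phi_-(y,x)$. You also correctly locate the technical content (compatibility of the two $r$-expansions and matching of the $w$-side transport data with $c^*$) and correctly trace $a_0(z,z)\neq 0$ and the diagonal differentials back to $b_0(x,x)\neq 0$, (\ref{Be:t-i-bis1}) and (\ref{Be:0711062350}).
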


As before, we put $B_-(x, y)\equiv\int^\infty_0 e^{i\phi_-(x, y)t}b(x, y, t)dt$,
\[b(x, y, t)\sim\sum^\infty_{j=0}b_j(x, y)t^{n-1-j}\]
and
\[K_{\Pi^{(q)}}(z, w)\equiv\int^\infty_0e^{i\phi(z, w)t}a(z, w, t)dt,\]
$a(z, w, t)\sim\sum^\infty_{j=0}a_j(z, w)t^{n-j}$.
Since
\[\Pi^{(q)}\equiv \Td PB_-T(\Td P^*\Td P)^{-1}\Td P^*,\]
$(\Td P^*\Td P)^{-1}=2\sqrt{-\La_\Gamma}$+lower order terms and
\[T=2(\dbar r)^{\wedge, *}(\dbar r)^\wedge+\mbox{lower order terms},\]
we have
\[a_0(x, x)=2\sigma_{\sqrt{-\La_\Gamma}}(x, (\phi_-)'_y(x, x))b_0(x, x)2(\dbar r(x))^{\wedge, *}(\dbar r(x))^\wedge ,\ \ x\in\Gamma.\]
Since $(\phi_-)'_y(x, x)=\omega_0(x)$ and $\norm{\omega_0}=1$ on $\Gamma$, it follows that
\begin{equation} \label{Be:0711131222}
a_0(x, x)=4b_0(x, x)(\dbar r(x))^{\wedge, *}(\dbar r(x))^\wedge.
\end{equation}
From this and Proposition~\ref{Bp:i-leading1}, we get the following

\begin{prop} \label{Bp:0711170831}
Under the assumptions of Theorem~\ref{Bt:0801141751}, let $p\in\Gamma_q$, $q=n_-$. Let
$U_1(z),\ldots,U_{n-1}(z)$
be an orthonormal frame of $\Lambda^{1,0}T_z(\Gamma)$, $z\in\Gamma$, for which
the Levi form is diagonalized at $p$. Let $e_j(z)$, $j=1,\ldots,n-1$
denote the basis of $\Lambda^{0,1}T^*_z(\Gamma)$, $z\in\Gamma$, which is dual to $\ol U_j(z)$, $j=1,\ldots,n-1$. Let
$\lambda_j(z)$, $j=1,\ldots,n-1$ be the eigenvalues of the Levi form $L_z$, $z\in\Gamma$. We assume that
$\lambda_j(p)<0$ if $1\leq j\leq n_-$.
Then
\begin{equation} \label{Be:0711171545bis}
a_0(p, p)=\abs{\lambda_1(p)}\cdots
\abs{\lambda_{n-1}(p)}\pi^{-n}
2\Bigr(\prod_{j=1}^{j=n_-}e_j(p)^\wedge e_j^{\wedge, *}(p)\Bigr)\circ(\dbar r(p))^{\wedge, *}(\dbar r(p))^\wedge.
\end{equation}
\end{prop}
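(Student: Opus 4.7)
The plan is to combine two ingredients already assembled in the excerpt: the identity~(\ref{Be:0711131222}), which expresses the boundary value $a_0(x,x)$ in terms of $b_0(x,x)$, and Proposition~\ref{Bp:i-leading1}, which provides an explicit formula for $b_0(p,p)$ at points where $q=n_-$. The proposition is essentially a bookkeeping step; no new PDE analysis is required beyond what has already been done for the Szeg\"o-type projection $B_-$.

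First, I would recall from the paragraph preceding the statement that the composition
\[
\Pi^{(q)}\equiv\Td P\,B_-\,T\,(\Td P^{*}\Td P)^{-1}\,\Td P^{*}
\]
combined with the symbol computations $(\Td P^{*}\Td P)^{-1}=2\sqrt{-\La_\Gamma}+\text{lower order}$ and $T=2(\dbar r)^{\wedge,*}(\dbar r)^\wedge+\text{lower order}$, together with $(\phi_-)'_y(x,x)=\omega_0(x)$ and $\norm{\omega_0}=1$ on $\Gamma$ (so that $2\,\sigma_{\sqrt{-\La_\Gamma}}(x,(\phi_-)'_y(x,x))=2$), yields the identity
\[
a_0(x,x)=4\,b_0(x,x)\,(\dbar r(x))^{\wedge,*}(\dbar r(x))^\wedge,\qquad x\in\Gamma,
\]
which is precisely (\ref{Be:0711131222}).

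Second, I would apply Proposition~\ref{Bp:i-leading1} at the chosen point $p\in\Gamma_q$ with $q=n_-$ and with the diagonalizing orthonormal frame $U_1,\ldots,U_{n-1}$ described in the statement. This gives
\[
b_0(p,p)=\tfrac{1}{2}\abs{\lambda_1(p)}\cdots\abs{\lambda_{n-1}(p)}\,\pi^{-n}\prod_{j=1}^{n_-}e_j(p)^\wedge e_j(p)^{\wedge,*}.
\]
Substituting this into the boundary identity above, the scalar prefactor becomes $4\cdot\tfrac{1}{2}=2$, and the operator ordering matches: in (\ref{Be:0711131222}) the factor $(\dbar r)^{\wedge,*}(\dbar r)^\wedge$ is applied after $b_0$, i.e.\ sits on the right, exactly as in the claimed formula~(\ref{Be:0711171545bis}).

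There is no substantive obstacle at this stage: the nontrivial analytic input, namely the identification of the leading coefficient of $B_-$ with the orthogonal projection onto $\mathscr N_p(p^s_\beta+\tfrac{1}{2}\Td{\rm tr\,}F_\beta)$ (which in the chosen frame is $\prod_{j=1}^{n_-}e_j^\wedge e_j^{\wedge,*}$), was already carried out in Proposition~\ref{Bp:i-leading1} by transporting the Szeg\"o computation of section~$9$ of Part~I. The only point worth double-checking in the writeup is the absence of extra scalar factors from the metric on $\Lambda^{0,q}T^*(M')$ relative to the metric on $\Lambda^{0,q}T^*(\Gamma)$ when identifying $b_0(p,p)$ as the symbol of $B_-$ acting on boundary forms; once one verifies that $(\dbar r)^{\wedge,*}(\dbar r)^\wedge$ is indeed the full orthogonal projection from $\Lambda^{0,q}T^*(M')$ onto $\Lambda^{0,q}T^*(\Gamma)$ (up to the normalization fixed by $\norm{\dbar r}^2=1/2$ on $\Gamma$, which is exactly what produces the coefficient $4$ in (\ref{Be:0711131222})), the formula follows immediately.
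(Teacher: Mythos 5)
Your proposal is correct and follows exactly the paper's route: first combine the composition $\Pi^{(q)}\equiv\Td P\,B_-\,T\,(\Td P^*\Td P)^{-1}\Td P^*$ with the principal symbols of $T$ and $(\Td P^*\Td P)^{-1}$ and the normalization $\norm{\omega_0}=1$ to obtain (\ref{Be:0711131222}), then substitute the leading coefficient from Proposition~\ref{Bp:i-leading1}. The arithmetic $4\cdot\tfrac12=2$ and the operator ordering match, so nothing is missing.
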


\newpage

\addcontentsline{toc}{part}{\textsf Appendices}
\part*{\mbox{} \\ \mbox{} \\ \mbox{} \mbox{} \\ \mbox{} \\ \mbox{} \mbox{} \\ \mbox{} \\ \mbox{}
\quad\quad\quad\quad\quad\quad\quad Appendices}
\mbox{}
\newpage

\appendix

\section{Almost analytic manifolds, functions and vector fields}

We will give a brief discussion of almost analytic manifolds,
functions and vector fields in a setting
appropriate for our purpose. For more details on the subject, see~\cite{MS74}.

Let $W\subset\Complex^n$ be an open set and let $f\in C^\infty(W)$. In this section, we will use the following notations:
$\dbar f=\sum^n_{j=1}\frac{\pr f}{\pr\ol z_j}d\ol z_j$ and $\pr f=\sum^n_{j=1}\frac{\pr f}{\pr z_j}dz_j$.

\begin{defn} \label{d:c-almost}
Let $W\subset\Complex^n$ be an open set and let $\phi(z)$ be a positive continuous function on $W$.
If $f\in C^\infty(W)$, we say that $f$ is almost analytic with respect to the weight function
$\phi$ if, given any compact subset $K$ of $W$ and any integer $N\geq 0$, there is a constant
$c>0$ such that
$\abs{\dbar f(z)}\leq c\phi(z)^N$, $\forall z\in K$.
When $\phi(z)=\abs{{\rm Im\,}z}$ we simply say that $f$ is almost analytic.
\end{defn}

\begin{defn} \label{d:c-aleq}
Let $f_1,f_2\in C^\infty(W)$ with $W$, $\phi$ as above. We say that $f_1$ and $f_2$ are equivalent
with respect to the weight function $\phi$ if, given any compact subset $K$ of $W$ and any integer
$N>0$, there is a constant $c>0$ such that
$\abs{(f_1-f_2)(z)}\leq c\phi(z)^N$, $\forall z\in K$.
When $\phi(z)=\abs{{\rm Im\,}z}$ we simply say that they are equivalent and we write
$f_1\sim f_2$.
\end{defn}

The following proposition is due to H\"{o}rmander. For a proof, see~\cite{MS74}.

\begin{prop} \label{p:c-almost}
Let $W\subset\Complex^n$ be an open set and let $W_\Real=W\bigcap\Real^n$.
If $f\in C^\infty(W_\Real)$ then $f$ has an almost analytic extension, uniquely determined up to
equivalence.
\end{prop}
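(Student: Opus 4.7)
The plan is to treat existence and uniqueness separately, with the existence part following a Borel-type construction and uniqueness following from a propagation argument using the almost analyticity condition.

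For existence, I would first reduce to a local problem using a $C^\infty$ partition of unity subordinate to a cover of $W_\Real$ by polydiscs in $W$. Having localized, I take a cut-off $\chi\in C^\infty_0(\Real^n)$ equal to $1$ near $0$, and define
\[\Td f(x+iy)=\sum_{\alpha\in\Pstint^n}\frac{(iy)^\alpha}{\alpha!}\pr^\alpha_xf(x)\chi(\lambda_\alpha y),\]
where the constants $\lambda_\alpha>0$ are chosen to grow fast enough that the series and each formal term-by-term derivative converge uniformly on compact subsets (standard Borel device: $\lambda_\alpha$ is required to be large compared with bounds on the first several derivatives of $\pr^\alpha_x f$ and of $\chi$). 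A direct computation using $\pr_{\ol z_j}=\frac{1}{2}(\pr_{x_j}+i\pr_{y_j})$ shows that formally
\[\dbar\Td f=\sum_{\alpha}\frac{(iy)^\alpha}{\alpha!}\pr^\alpha_xf(x)\,\dbar\chi(\lambda_\alpha y),\]
so the $x$-derivatives of $f$ are effectively shifted out by one order, and the $y$-factor $(iy)^\alpha$ ensures that $\dbar\Td f$ vanishes to infinite order on $W_\Real$.

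For uniqueness up to equivalence, suppose $f_1$, $f_2$ are both almost analytic extensions of $f$, and set $g=f_1-f_2$. Then $g$ vanishes on $W_\Real$ and $\dbar g$ vanishes to infinite order on $W_\Real$. I claim that $\pr^\alpha_yg(x,0)=0$ for every multiindex $\alpha$, which I will prove by induction on $\abs{\alpha}$. The base case $\alpha=0$ is the assumption $g|_{y=0}=0$. For the inductive step, write $\alpha=\beta+e_j$ and use the identity $\pr_{y_j}=-2i\pr_{\ol z_j}+i\pr_{x_j}$ to get
\[\pr^\alpha_yg=\pr^\beta_y\bigl(-2i\pr_{\ol z_j}g+i\pr_{x_j}g\bigr).\]
Restricting to $y=0$, the second summand becomes $i\pr_{x_j}(\pr^\beta_yg)|_{y=0}=0$ by the inductive hypothesis, and the first summand is a $y$-derivative of a component of $\dbar g$ evaluated on $W_\Real$, which vanishes by almost analyticity of $g$. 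Applying Taylor's theorem in $y$ then yields $\abs{g(x+iy)}\leq c_{N,K}\abs{y}^N$ on any compact $K$ and any $N$, which is exactly the equivalence $f_1\sim f_2$.

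The only step requiring real care is the Borel construction for existence: one must specify the $\lambda_\alpha$ so that both the pointwise sum and each of its derivatives converge, while still retaining the factor $(iy)^\alpha$ in every term that survives near $W_\Real$. This is standard but notationally delicate; the uniqueness argument, by contrast, is a clean induction and requires no additional choices.
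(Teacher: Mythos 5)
The paper does not give its own argument for this proposition; it simply cites Melin--Sj\"ostrand \cite{MS74}, where the proof is by exactly the Borel-type construction you describe. Your overall plan is therefore the standard one and both halves of your argument are sound: existence via $\Td f=\sum_\alpha\frac{(iy)^\alpha}{\alpha!}\pr^\alpha_x f(x)\chi(\lambda_\alpha y)$ with rapidly growing $\lambda_\alpha$, and uniqueness via an induction on $\abs{\alpha}$ showing that all $y$-derivatives of $g=f_1-f_2$ vanish on $W_\Real$. The uniqueness step is written carefully and is correct as stated, including the observation that the inductive hypothesis lets you discard the $i\pr_{x_j}\pr^\beta_y g$ term and that the bound $\abs{\dbar g}\leq c_{N,K}\abs{{\rm Im\,}z}^N$ forces all $y$-derivatives of $\dbar g$ to vanish at $y=0$.

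One inaccuracy in the existence sketch worth flagging: the displayed identity $\dbar\Td f=\sum_\alpha\frac{(iy)^\alpha}{\alpha!}\pr^\alpha_x f\,\dbar\chi(\lambda_\alpha y)$ is not exact, because the cut-offs vary with $\alpha$, so the telescoping in the $\pr_{\ol z_j}\bigl[\frac{(iy)^\alpha}{\alpha!}\pr^\alpha_x f\bigr]$ terms is only exact where all the relevant $\chi(\lambda_\alpha y)$ equal $1$; there are leftover terms proportional to $\chi(\lambda_\alpha y)-\chi(\lambda_{\alpha+e_j}y)$. This does not spoil the argument, since those differences also vanish identically near $y=0$. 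Relatedly, the reason $\dbar\Td f$ vanishes to infinite order on $W_\Real$ is not really the $(iy)^\alpha$ factor (the $\alpha=0$ term $f(x)\,\dbar\chi(\lambda_0y)$ has none), but rather that every surviving contribution involves a derivative of a cut-off and is therefore supported in a set of the form $\{c_1/\lambda_\alpha\leq\abs{y}\leq c_2/\lambda_\alpha\}$, with $\lambda_\alpha\to\infty$ chosen so that the terms that can be nonzero for small $\abs{y}$ are of high order and carry correspondingly small bounds. The $(iy)^\alpha$ factor is what makes the series and its derivatives converge. With these two points repaired, your proof is a correct rendering of the Melin--Sj\"ostrand argument.
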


\begin{defn} \label{d:c-am}
Let $U$ be an open subset of $\Complex^n$ and let $\Lambda$ be a $C^\infty$ submanifold of codimension $2k$ of $U$.
We say that $\Lambda$ is an almost analytic manifold
if for every point $z_0$ of $\Lambda\bigcap\Real^n$ there exist an open
neighborhood $V$ of $z_0$ in $U$ and $k$ complex $C^{\infty}$ almost analytic functions
$f_1,\ldots,f_k$ defined on $V$ such that
\[\begin{split}
&\mbox{$\Lambda\bigcap V$ is defined by the equations $f_1(z)=\cdots=f_k(z)=0$,}   \\
&\mbox{$\pr f_1,\ldots,\pr f_k$ are linearly independent over $\Complex$ at every point of $V$}.
\end{split}\]
\end{defn}

\begin{defn} \label{d:c-ame}
Let $\Lambda_1$ and $\Lambda_2$ be two $C^\infty$ closed submanifolds of an open set $U\subset\Complex^n$.
We say that
$\Lambda_1$ and $\Lambda_2$ are equivalent (and we write $\Lambda_1\sim\Lambda_2$) if they have the same
intersection with $\Real^n$ and the same dimension and if for every open set $V\subset\subset U$ and
$N\in\mathbb N$ we
have
${\rm dist\,}(z, \Lambda_2)\leq c_{N,V}\abs{{\rm Im\,}z}^N$, $z\in V\bigcap\Lambda_1$, $c_{N,V}>0$.
\end{defn}

It is trivial that $\sim$ is an equivalence relation and that $\Lambda_1$ and $\Lambda_2$ are tangential to
infinite order in the real points when $\Lambda_1\sim \Lambda_2$.

In section $8$ of part ${\rm I\,}$, we need the following

\begin{prop} \label{p:0709201613}
Let $W$ be an open neighborhood of the origin in $\Complex^{n+m}$. Let $f_j(z, w)$, $g_j(z, w)$,
$j=1,\ldots,n$, be almost analytic functions on $W$ with $f_j(0, 0)=0$, $g_j(0, 0)=0$, $j=1,\ldots,n$,
${\rm det\,}\left(\frac{\pr f_j(0, 0)}{\pr z_k}\right)^n_{j,k=1}\neq0$
and $\pr g_1,\ldots,\pr g_n$ are linearly independent over $\Complex$ at the origin. Let
\[\Lambda_1=\set{(z, w)\in W;\, f_1(z, w)=\ldots=f_n(z, w)=0}\]
and
$\Lambda_2=\set{(z, w)\in W;\, g_1(z, w)=\ldots=g_n(z, w)=0}$.
If $\Lambda_1$ coincides to infinite order with $\Lambda_2$ at $(0, 0)$ we can then find
$a_{j, k}(z, w)\in C^\infty$ in a neighborhood of $(0, 0)$, $j, k=1,\ldots,n$, with
${\rm det\,}\left(a_{j, k}(0, 0)\right)^n_{j, k=1}\neq0$
so that $g_j-\sum^n_{k=1}a_{j, k}f_k$ vanishes to infinite order at $(0, 0)$ in $\Complex^{n+m}$, for all $j$.
\end{prop}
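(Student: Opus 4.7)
The plan is to straighten out $\Lambda_1$ via a smooth change of coordinates, Taylor-expand each $g_j$ in the new $\Lambda_1$-transverse variable, and show that the $0$-th order remainder vanishes to infinite order at the origin. For the straightening, I would invoke the smooth inverse function theorem for the map $F:\Complex^{n+m}\To\Complex^{n+m}$, $(z,w)\mapsto(f_1(z,w),\ldots,f_n(z,w),w)$. Almost analyticity of the $f_j$ forces the antiholomorphic derivatives $\pr f_j/\pr\ol z_k$ and $\pr f_j/\pr\ol w_l$ to vanish at the real point $(0,0)$, so the real Jacobian determinant of $F$ at $(0,0)$ equals $\abs{\det(\pr f_j/\pr z_k)(0,0)}^2\neq0$ by hypothesis. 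Hence $F$ is a local $C^\infty$-diffeomorphism admitting a local smooth inverse $z=\Psi(\zeta,w)$ with $\Psi(0,0)=0$; in the new coordinates $(\zeta,w)$, $\Lambda_1$ is exactly $\{\zeta=0\}$.

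Next I would set $G_j(\zeta,w):=g_j(\Psi(\zeta,w),w)$ and apply the fundamental theorem of calculus in the $\zeta$ variable:
\[
G_j(\zeta,w)=G_j(0,w)+\sum^n_{k=1}\zeta_k\int^1_0\frac{\pr G_j}{\pr\zeta_k}(t\zeta,w)\,dt.
\]
Substituting $\zeta=f(z,w)$ and setting $a_{j,k}(z,w):=\int^1_0(\pr G_j/\pr\zeta_k)(tf(z,w),w)\,dt$ produces $C^\infty$ coefficients with
\[
g_j(z,w)-\sum^n_{k=1}a_{j,k}(z,w)f_k(z,w)=G_j(0,w).
\]
For the invertibility of $(a_{j,k}(0,0))$, differentiating the identity $g_j=G_j(f,w)$ in $z_l$ at the origin yields the matrix factorization $\bigl((\pr g_j/\pr z_l)(0,0)\bigr)_{j,l}=\bigl(a_{j,k}(0,0)\bigr)_{j,k}\cdot\bigl((\pr f_k/\pr z_l)(0,0)\bigr)_{k,l}$; the right factor is invertible by hypothesis, and the left side is invertible because the tangent of $\Lambda_2$ at the origin equals the tangent of $\Lambda_1$ there, which, by the graph description $w\mapsto(\Psi(0,w),w)$ of $\Lambda_1$, is transverse to the $z$-direction. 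Hence $\det(a_{j,k}(0,0))\neq0$.

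The hard part will be showing that $G_j(0,w)$ vanishes to infinite order at $w=0$. The parameterization $w\mapsto(\Psi(0,w),w)$ lands in $\Lambda_1$ (since $f(\Psi(0,w),w)=0$ by construction of $\Psi$), so $G_j(0,w)$ is exactly the pullback of $g_j|_{\Lambda_1}$ through this parameterization. Reading the hypothesis ``$\Lambda_1$ coincides to infinite order with $\Lambda_2$ at $(0,0)$'' as the identity of the formal germs of the two manifolds at the origin — equivalently, any smooth function vanishing on $\Lambda_2$ has all its derivatives along $\Lambda_1$ vanishing at $(0,0)$ — and using $g_j|_{\Lambda_2}\equiv0$, one concludes that all $w$-derivatives of $G_j(0,w)$ at $w=0$ are zero. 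Since $G_j(0,w)$ does not depend on $z$, it vanishes to infinite order at $(0,0)\in\Complex^{n+m}$ as a function of $(z,w)$, completing the argument.
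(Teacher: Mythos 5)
Your proof has a genuine gap at the fundamental-theorem-of-calculus step. The formula
\[
G_j(\zeta,w)=G_j(0,w)+\sum^n_{k=1}\zeta_k\int^1_0\frac{\pr G_j}{\pr\zeta_k}(t\zeta,w)\,dt
\]
is correct only when $G_j$ is holomorphic in $\zeta$; for a general $C^\infty$ function on $\Complex^n\times\Complex^m$ the radial fundamental theorem of calculus also produces the antiholomorphic terms $\sum_k\ol\zeta_k\int^1_0(\pr G_j/\pr\ol\zeta_k)(t\zeta,w)\,dt$. Since $G_j=g_j\circ\Psi$ is merely a composition of almost analytic maps, not a holomorphic map, $\pr G_j/\pr\ol\zeta_k\not\equiv0$ and these terms cannot be dropped without justification. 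After substituting $\zeta=f(z,w)$ they produce extra contributions of the form $\ol{f_k(z,w)}\,B_{j,k}(z,w)$, which are not of the required shape $a_{j,k}f_k$. The omission is precisely where the almost-analyticity of $f_j$ and $g_j$ has to be invoked: $\Psi$, being the inverse of the almost analytic local diffeomorphism $F$, is itself almost analytic, and a chain-rule computation then shows that $\pr G_j/\pr\ol\zeta_k$ vanishes to infinite order at $(\zeta,w)=(0,0)$, so the omitted terms may be absorbed into the error. You need to make this observation explicit; as written the argument is incomplete. (The same point is quietly used in your invertibility computation, where $\pr\ol{f_k}/\pr z_l(0,0)=0$ is needed so the $\ol{f_k}$ contributions do not disturb the matrix factorization.)

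Beyond this, your route differs from the paper's in a substantive way worth noting. The paper first restricts everything to the real slice $\{y=v=0\}$, divides there via the Malgrange preparation theorem — so that no $\dbar$-corrections arise, because the variables are real — deduces from the tangency hypothesis that the remainder $r_j(u)$ vanishes to infinite order, and only then takes almost analytic extensions of the coefficients $a_{j,k}(x,u)$ back into $\Complex^{n+m}$, using almost-analyticity of $f_j,g_j,a_{j,k}$ to propagate the infinite-order vanishing off the real slice. Your proof works in $\Complex^{n+m}$ from the outset, which is conceptually cleaner but forces you to control the $\dbar$-errors directly. Your remaining steps — the Jacobian computation for $F$, the argument that $G_j(0,w)$ vanishes to infinite order, and the tangent-space argument for $\det(a_{j,k}(0,0))\neq0$ — are all sound.
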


\begin{proof}
We write $z=x+iy$, $w=u+iv$, where $x$, $y\in\Real^n$, $u$, $v\in\Real^m$. From the Malgrange preparation theorem
(see Theorem~$7.57$ of~\cite{Hor03}), it follows that
$g_j(x, u)=\sum^n_{k=1}a_{j,k}(x, u)f_k(x, u)+r(u)$, $j=1,\ldots,n$,
in a real neighborhood of $(0, 0)$, where
$a_{j, k}(x, u)\in C^\infty$ in a real neighborhood of $(0, 0)$, $j, k=1,\ldots,n$. Since
$\Lambda_1$ coincides to infinite order with $\Lambda_2$ at $(0, 0)$, it follows that $r(u)$ vanishes to infinite order
at $0$. Since $f_k(0, 0)=0$, $k=1,\ldots,n$, we have
$dg_j(0, 0)=\sum^n_{k=1}a_{j,k}(0, 0)df_k(0, 0)$, $j=1,\ldots,n$.
Hence
${\rm det\,}\left(a_{j, k}(0, 0)\right)^n_{j,k=1}\neq0$.
Let $a_{j, k}(z, w)$ be an almost analytic extension of $a_{j, k}(x, u)$ to a complex neighborhood of $(0, 0)$, where
$j$, $k=1,\ldots,n$. Then
$g_j(z, w)-\sum^n_{k=1}a_{j, k}(z, w)f_k(z, w)$
also vanishes to infinite order at $(0, 0)$, for all $j$.
\end{proof}

\begin{lem} \label{l:0709152203}
Let\, $\Omega$, $\Omega_\kappa$ be open sets in $\Real^n$. Let
$\kappa: \Omega\To \Omega_\kappa$
be a diffeomorphism. Let\, $\Omega^\Complex$ and $\Omega^\Complex_\kappa$ be open sets in $\Complex^n$ with
$\Omega^\Complex\bigcap\Real^n=\Omega$, $\Omega^\Complex_\kappa\bigcap\Real^n=\Omega_\kappa$.
Let $\Td\kappa: \Omega^\Complex\To \Omega^\Complex_\kappa$
be an almost analytic extension of\, $\kappa$. We take $\Omega^\Complex$ and $\Omega^\Complex_\kappa$ so
that $\Td\kappa$ is a diffeomorphism. If\, $\Lambda$ is an almost analytic manifold of\, $\Omega^\Complex$,
then $\Td\kappa(\Lambda)$ is an almost analytic manifold of\, $\Omega^\Complex_\kappa$. Moreover, if\,
$\hat\kappa: \Omega^\Complex\To \Omega^\Complex_\kappa$
is another almost analytic extension of\, $\kappa$ and $\hat\kappa$ is a diffeomorphism, then
$\hat\kappa(\Lambda)\sim\Td\kappa(\Lambda)$.
Furthermore, if\, $\Lambda_1\sim\Lambda_2$, then
$\Td\kappa(\Lambda_1)\sim\Td\kappa(\Lambda_2)$,
where $\Lambda_1$ and $\Lambda_2$ are almost analytic manifolds of\, $\Omega^\Complex$.
\end{lem}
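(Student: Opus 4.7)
My plan is to verify the three assertions in turn, with all estimates being local near the real set, which is the only place where the almost analyticity conditions have content.

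First I would establish the auxiliary fact that $\Td\kappa^{-1}:\Omega^\Complex_\kappa\To\Omega^\Complex$ is again almost analytic. Since $\Td\kappa$ restricts to the real diffeomorphism $\kappa$ on $\Real^n$ and is almost analytic, its complex Jacobian agrees to infinite order on $\Real^n$ with the real Jacobian $D\kappa$, which is invertible. Writing the defining identity $\Td\kappa(\Td\kappa^{-1}(\zeta))=\zeta$ and applying $\dbar_\zeta$, one sees that $\dbar_\zeta \Td\kappa^{-1}$ vanishes to infinite order on $\Real^n$, and so is rapidly decreasing in $\abs{{\rm Im\,}\zeta}$. In the same spirit, a Taylor expansion at the real points combined with the real non-degeneracy of $D\kappa$ yields the two-sided estimate
\[
c^{-1}\abs{{\rm Im\,}z}\leq \abs{{\rm Im\,}\Td\kappa(z)}\leq c\abs{{\rm Im\,}z}
\]
on any compact set, and this will be the workhorse for translating imaginary-part estimates back and forth under $\Td\kappa$.

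For part (i), I would take a real point $z_0\in\Lambda\bigcap\Real^n$ and local almost analytic defining functions $f_1,\ldots,f_k$ of $\Lambda$ on a neighborhood $V$ of $z_0$, and then propose $g_j:=f_j\circ\Td\kappa^{-1}$ as defining functions of $\Td\kappa(\Lambda)$ near $\Td\kappa(z_0)=\kappa(z_0)$. Almost analyticity of $g_j$ follows from the chain rule since both $f_j$ and $\Td\kappa^{-1}$ are almost analytic and the two-sided imaginary-part estimate above lets one absorb $\abs{{\rm Im\,}\Td\kappa^{-1}(\zeta)}^N$ into $\abs{{\rm Im\,}\zeta}^N$. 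The $\Complex$-linear independence of $\pr g_1,\ldots,\pr g_k$ at points of $\Td\kappa(V)$ follows, at the real points, from the chain rule together with the non-degeneracy of the real Jacobian of $\kappa$, and then persists in a neighborhood by continuity.

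For parts (ii) and (iii), I would appeal to Proposition~\ref{p:c-almost}: two almost analytic extensions $\Td\kappa$ and $\hat\kappa$ of the same real map $\kappa$ are equivalent in the sense of Definition~\ref{d:c-aleq}, so $\abs{\Td\kappa(z)-\hat\kappa(z)}\leq c_{N,V}\abs{{\rm Im\,}z}^N$ on every $V\subset\subset\Omega^\Complex$. For part (ii), given $\zeta=\Td\kappa(w)\in\Td\kappa(\Lambda)\cap V$, the point $\hat\kappa(w)$ lies in $\hat\kappa(\Lambda)$ and ${\rm dist\,}(\zeta,\hat\kappa(\Lambda))\leq\abs{\Td\kappa(w)-\hat\kappa(w)}\leq c_N\abs{{\rm Im\,}w}^N\leq c'_N\abs{{\rm Im\,}\zeta}^N$ by the workhorse estimate; the reverse inequality is symmetric, giving $\hat\kappa(\Lambda)\sim\Td\kappa(\Lambda)$. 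For part (iii), if $\Lambda_1\sim\Lambda_2$ and $\zeta=\Td\kappa(w)\in\Td\kappa(\Lambda_1)\cap V$, pick $w'\in\Lambda_2$ with $\abs{w-w'}\leq c_N\abs{{\rm Im\,}w}^N$; then $\Td\kappa(w')\in\Td\kappa(\Lambda_2)$ and, using that $\Td\kappa$ is locally Lipschitz together with the workhorse estimate, ${\rm dist\,}(\zeta,\Td\kappa(\Lambda_2))\leq C\abs{w-w'}\leq c'_N\abs{{\rm Im\,}\zeta}^N$.

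The main obstacle, and the only place where real work is needed, is establishing the two-sided comparison $\abs{{\rm Im\,}\Td\kappa(z)}\asymp\abs{{\rm Im\,}z}$ in a uniform way on compacts, because every subsequent estimate is of the form "difference of almost analytic objects, measured against $\abs{{\rm Im\,}\zeta}^N$'' and therefore needs the imaginary parts on the source and target sides to be quantitatively interchangeable.
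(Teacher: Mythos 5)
The paper states this lemma without proof (it is standard material from Melin--Sj\"ostrand~\cite{MS74}, to which the appendix defers), so there is no in-paper argument to compare against. Your proposal is correct and follows what is essentially the unique reasonable route: it identifies the key two-sided comparison $\abs{{\rm Im\,}\Td\kappa(z)}\asymp\abs{{\rm Im\,}z}$ as the one nontrivial input, establishes it from the Taylor expansion $\Td\kappa(x+iy)=\kappa(x)+iD\kappa(x)y+O(\abs{y}^2)$ at real points together with the invertibility of $D\kappa$, and then uses it to translate $O(\abs{{\rm Im\,}\cdot}^N)$ estimates between source and target. The almost analyticity of $\Td\kappa^{-1}$ via solving $\frac{\pr\Td\kappa}{\pr z}\,\dbar_\zeta\Td\kappa^{-1}=-\frac{\pr\Td\kappa}{\pr\ol z}\,\ol{\pr_\zeta\Td\kappa^{-1}}$, the choice of $g_j=f_j\circ\Td\kappa^{-1}$ as local defining functions, the chain-rule argument for their almost analyticity, and the applications of Proposition~\ref{p:c-almost} for parts~(ii) and~(iii) are all sound. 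The only small point worth making explicit is that in part~(i) the chain rule for $\dbar(f_j\circ\Td\kappa^{-1})$ produces two kinds of terms --- one controlled by $\dbar\Td\kappa^{-1}$ directly, and one controlled by $(\dbar f_j)\circ\Td\kappa^{-1}$, which is where the workhorse estimate is used to convert $\abs{{\rm Im\,}\Td\kappa^{-1}(\zeta)}^N$ into $\abs{{\rm Im\,}\zeta}^N$; you mention this absorption, but it would be worth displaying the two terms to make the structure transparent.
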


We shall now generalize the notion of almost analytic manifolds. We have the following

\begin{defn} \label{d:0709152255}
Let $X$ be a $n$ dimensional real paracompact $C^\infty$ manifold. An almost analytic manifold $\Lambda$
associated to $X$ is given by
 \begin{enumerate}
 \item A locally closed set $\Lambda_\Real$. (Locally closed means that every point of $\Lambda_\Real$ has
an neighborhood $\omega$ in $X$ such that $\Lambda_\Real\bigcap\omega$ is closed in $\omega$.)
 \item A covering of $\Lambda_\Real$ by open coordinate patches
$\kappa_\alpha:X\supset X_\alpha\To\Omega_\alpha\subset\Real^n$, $\alpha\in J$
and almost analytic manifolds $\Lambda_\alpha\subset\Omega^\Complex_\alpha$ with
$\Lambda_{\alpha}\bigcap\Real^n:=\Lambda_{\alpha\Real}=\kappa_\alpha(X_\alpha\bigcap\Lambda_\Real)$.
Here $\Omega^\Complex_\alpha\subset\Complex^n$
is some open set with $\Omega^\Complex_\alpha\bigcap\Real^n=\Omega_\alpha$ and the $\Lambda_\alpha$ shall satisfy
the following compatibility conditions: If
$\kappa_{\beta\alpha}=\kappa_\beta\circ\kappa^{-1}_\alpha:
\kappa_\alpha(X_\alpha\bigcap X_\beta)\To\kappa_\beta(X_\alpha\bigcap X_\beta)$
and if $\Td\kappa_{\beta\alpha}$ is an almost analytic extension of $\kappa_{\beta\alpha}$, then
$\Td\kappa_{\beta\alpha}(\Lambda_\alpha)$ and $\Lambda_\beta$ are equivalent near all points of
$\kappa_\beta(X_\alpha\bigcap X_\alpha\bigcap\Lambda_\Real)$.
 \end{enumerate}
The $\Lambda_\alpha$ are called local representatives of $\Lambda$ and we shall say that two almost analytic manifolds
$\Lambda$, $\Lambda'$ associated to $X$ are equivalent (and we write $\Lambda\sim\Lambda'$) if
$\Lambda_\Real=\Lambda'_\Real$ and if the corresponding local representatives are equivalent as in $(b)$.
\end{defn}

Similarly we extend the notion of almost analytic functions and equivalence of almost analytic functions.

\begin{defn} \label{d:0709101418}
Let $W$ be an open subset of $\Complex^n$ and let $V$ be a complex $C^\infty$ vector field on $W$. We say that
$V$ is almost analytic if $V(f)$ is almost analytic and $V(\ol f)\sim0$ for all almost analytic functions $f$ on $W$.
\end{defn}

We identify $\Complex^n$ with $\Real^{2n}$. We shall denote the real coordinates
by $x_j$, $y_j$, $j=1,\ldots,n$, and the complex coordinates by $z_j=x_j+iy_j$, $j=1,\ldots,n$.

\begin{defn} \label{d:0709101411}
Let $W$ be an open subset of $\Complex^n$ and let
$U=\sum^n_{j=1}a_j(z)\frac{\pr}{\pr z_j}+\sum^n_{j=1}b_j(z)\frac{\pr}{\pr\ol z_j}$,
$V=\sum^n_{j=1}c_j(z)\frac{\pr}{\pr z_j}+\sum^n_{j=1}d_j(z)\frac{\pr}{\pr\ol z_j}$,
be complex $C^\infty$ vector fields on $W$, where
$a_j(z), b_j(z), c_j(z), d_j(z)\in C^\infty(W),\ j=1,\ldots,n$.
We say that $U$ and $V$ are equivalent if
$a_j(z)-c_j(z)\sim0,\ b_j(z)-d_j(z)\sim0$
for all $j$.
If $U$ and $V$ are equivalent, we write
$U\sim V$.
\end{defn}

Clearly $U$ is almost analytic if and only if
$U\sim\sum^n_{j=1}a_j(z)\frac{\pr}{\pr z_j}$,
where $a_j$, $j=1,\ldots,n$, are almost analytic. We have the following easy lemma

\begin{lem} \label{l:0709161519}
Let $W$ be an open subset of $\Complex^n$ and let $V$ be an almost analytic vector field on $W$. Then
$V(f)\sim(V+\ol V)(f)$ for all almost analytic functions $f$.
If $U$ is a real vector field on $W$ and
$U(f)\sim V(f)$ for all almost analytic functions $f$,
then $U\sim V+\ol V$.
\end{lem}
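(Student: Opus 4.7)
\medskip

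\noindent\textbf{Proof plan for Lemma~\ref{l:0709161519}.} Both assertions are local and reduce to unwinding the definitions, so I would work in complex coordinates $z_1,\ldots,z_n$ on $W$ and express every object in terms of $\partial/\partial z_j$ and $\partial/\partial\ol z_j$. Write
\[
V=\sum_{j=1}^n a_j(z)\frac{\pr}{\pr z_j}+\sum_{j=1}^n b_j(z)\frac{\pr}{\pr\ol z_j}.
\]
The hypothesis that $V$ is almost analytic will be used through its effect on the coordinate functions: applying $V$ to $z_k$ (which is almost analytic) gives $a_k$, and the requirement $V(z_k)$ almost analytic forces $a_k$ almost analytic; applying $V$ to $\ol z_k$ gives $b_k$, and the requirement $V(\ol z_k)\sim 0$ forces $b_k\sim 0$. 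Thus modulo $\sim$ one may replace $V$ by $\sum a_j\,\pr/\pr z_j$ with each $a_j$ almost analytic.

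\medskip

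\noindent For the first assertion, I would then compute, for an almost analytic $f$,
\[
(V+\ol V)(f)=\sum_j (a_j+\ol{b_j})\frac{\pr f}{\pr z_j}+\sum_j (b_j+\ol{a_j})\frac{\pr f}{\pr\ol z_j}.
\]
Since $b_j\sim 0$ we have $\ol{b_j}\sim 0$, and since $f$ is almost analytic $\pr f/\pr\ol z_j\sim 0$; using that products of smooth functions with quantities $\sim 0$ are again $\sim 0$ (which is immediate from Definition~\ref{d:c-aleq}), the right-hand side is equivalent to $\sum_j a_j\,\pr f/\pr z_j$, which in turn is $\sim V(f)$ because $b_j\,\pr f/\pr\ol z_j\sim 0$.

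\medskip

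\noindent For the second assertion, the key observation is that a real smooth vector field on $W\subset\Complex^n$, expressed in the basis $\pr/\pr z_j,\pr/\pr\ol z_j$, must have the form
\[
U=\sum_{j=1}^n c_j(z)\frac{\pr}{\pr z_j}+\sum_{j=1}^n \ol{c_j(z)}\frac{\pr}{\pr\ol z_j},
\]
which follows from the change-of-basis formulas $\pr/\pr x_j=\pr/\pr z_j+\pr/\pr\ol z_j$ and $\pr/\pr y_j=i(\pr/\pr z_j-\pr/\pr\ol z_j)$ and the reality of the coefficients in the basis $\pr/\pr x_j,\pr/\pr y_j$. I would then test the assumed equivalence $U(f)\sim V(f)$ on the almost analytic (in fact holomorphic) functions $f=z_k$: this yields $c_k=U(z_k)\sim V(z_k)=a_k$ for each $k$, hence also $\ol{c_k}\sim\ol{a_k}$. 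Comparing with the expression $V+\ol V\sim\sum a_j\,\pr/\pr z_j+\sum\ol{a_j}\,\pr/\pr\ol z_j$ derived in the first step, Definition~\ref{d:0709101411} gives $U\sim V+\ol V$.

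\medskip

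\noindent There is no real obstacle here; the only subtlety is administrative, namely checking that the equivalence relation $\sim$ on coefficients is compatible with $C^\infty$-multiplication, so that working coefficient-by-coefficient is legitimate. This is direct from the definition of equivalence with weight $\abs{\mathrm{Im}\,z}$, since smooth functions are locally bounded together with their derivatives. Thus the entire argument is a bookkeeping exercise once one exploits the two probing functions $z_k$ (to read off holomorphic coefficients) and $\ol z_k$ (to confirm the antiholomorphic ones are $\sim 0$).
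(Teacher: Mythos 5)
Your proposal is correct, and in fact the paper offers no proof of this lemma at all (it is labeled an "easy lemma" and stated without argument), so there is nothing to compare against. Your route — reading the coefficients $a_k = V(z_k)$ and $b_k = V(\ol z_k)$ off the probing functions $z_k$, $\ol z_k$, using the almost-analyticity hypothesis to conclude $b_k \sim 0$, and then doing the coefficient bookkeeping with the facts that $\partial f/\partial\ol z_j \sim 0$ for $f$ almost analytic, that $\sim$ is preserved under conjugation, and that a real vector field has conjugate-symmetric coefficients — is the natural and essentially the only way to argue this, and all the steps check out against Definitions~\ref{d:c-aleq}, \ref{d:0709101418} and~\ref{d:0709101411}.
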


\begin{prop} \label{p:0709161529}
Let $W$ be an open subset of $\Complex^n$ and let $\Sigma$ be a $C^\infty$ closed submanifold of\, $\Real^n$.
Let $V$ be an almost analytic vector field on $W$. We assume that
$V=0$ on $\Sigma$.
Let $\Phi(t, \rho)$ be the $V+\ol V$ flow. Let $U$ be a real vector field on $W$ such that
$U\sim V+\ol V$.
Let $\hat\Phi(t, \rho)$ be the $U$ flow. Then, for every compact set $K\subset W$, $N\geq0$, there is a
$c_{N,K}(t)>0$,
such that
$\abs{\Phi(t, \rho)-\hat\Phi(t, \rho)}\leq c_{N,K}(t){\rm dist\,}(\rho, \Sigma)^N$, $\rho\in K$.
\end{prop}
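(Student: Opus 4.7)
My plan is to reduce the proposition to a Gronwall estimate, with the decay in $\mathrm{dist}(\rho,\Sigma)^N$ coming from two separate sources: the vanishing to infinite order of $U-(V+\ol V)$ on $\Real^n$, and the fact that both flows leave $\Sigma$ pointwise fixed. First I would observe that since $V$ is almost analytic and $\Sigma\subset\Real^n$, the hypothesis $V=0$ on $\Sigma$ implies $\ol V=0$ on $\Sigma$, so the real vector field $V+\ol V$ vanishes on $\Sigma$. Because $U\sim V+\ol V$, the real vector field $R:=U-(V+\ol V)$ has coefficients vanishing to infinite order on $\Real^n$ (by Definition~\ref{d:0709101411}), so in particular $R=0$ on $\Sigma$, and therefore $U$ also vanishes on $\Sigma$. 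Consequently every point of $\Sigma$ is a stationary point of both $\Phi$ and $\hat\Phi$.

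The next step is to control how trajectories leave $\Sigma$. Fix a compact set $K\subset W$; by standard ODE continuous dependence (really Gronwall applied to the variation $\rho\mapsto\Phi(t,\rho)$), there exist $T>0$ and $L>0$ such that for $|t|\leq T$, $\rho\in K$, and $\sigma\in\Sigma$ near $\rho$,
\[
\abs{\Phi(t,\rho)-\Phi(t,\sigma)}\leq e^{Lt}\abs{\rho-\sigma},\qquad \abs{\hat\Phi(t,\rho)-\hat\Phi(t,\sigma)}\leq e^{Lt}\abs{\rho-\sigma}.
\]
Choosing $\sigma$ to be the nearest point of $\Sigma$ to $\rho$ and using $\Phi(t,\sigma)=\hat\Phi(t,\sigma)=\sigma$ gives
\[
\mathrm{dist}(\hat\Phi(t,\rho),\Sigma)\leq e^{Lt}\,\mathrm{dist}(\rho,\Sigma),
\]
and similarly for $\Phi$. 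Since $\Sigma\subset\Real^n$, we have $\abs{\mathrm{Im}\,z}\leq\mathrm{dist}(z,\Sigma)$, so by the bound on $R$ there is for every $N$ a constant $c_N$ with
\[
\abs{R(\hat\Phi(t,\rho))}\leq c_N\abs{\mathrm{Im}\,\hat\Phi(t,\rho)}^N\leq c_N e^{NLt}\,\mathrm{dist}(\rho,\Sigma)^N.
\]

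Finally, set $\Psi(t)=\Phi(t,\rho)-\hat\Phi(t,\rho)$. Then
\[
\Psi'(t)=(V+\ol V)(\Phi(t,\rho))-U(\hat\Phi(t,\rho))=\bigl[(V+\ol V)(\Phi)-(V+\ol V)(\hat\Phi)\bigr]-R(\hat\Phi(t,\rho)).
\]
On a compact neighborhood of the trajectories (which exists, shrinking $T$ if necessary), $V+\ol V$ is Lipschitz, so the first bracket is bounded by $L'\abs{\Psi(t)}$, and the second by $c_N'\mathrm{dist}(\rho,\Sigma)^N$. Since $\Psi(0)=0$, Gronwall's inequality gives $\abs{\Psi(t)}\leq c_{N,K}(t)\,\mathrm{dist}(\rho,\Sigma)^N$, which is the desired estimate.

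The only subtle point is the invariance estimate $\mathrm{dist}(\hat\Phi(t,\rho),\Sigma)\leq e^{Lt}\mathrm{dist}(\rho,\Sigma)$: one must avoid appealing to tangency of $U$ to $\Sigma$ (which we do not have as a smooth submanifold statement) and instead exploit that every $\sigma\in\Sigma$ is a fixed point, reducing the bound to ordinary Lipschitz dependence on initial data. Everything else is then bookkeeping with Gronwall.
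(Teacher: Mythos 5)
Your proof is correct but takes a genuinely different route from the paper's. The paper reduces the claim in two lines to the standard fact that, at a stationary point $x_0$ of a smooth vector field $Z$, all spatial derivatives $\pr^\alpha_x\bigl(\exp(tZ)(x)\bigr)\big|_{x=x_0}$ are determined by the derivatives $(\pr^\beta_x Z)(x_0)$: since $U$ and $V+\ol V$ both vanish on $\Sigma$ and have the same infinite jet at each point of $\Sigma$, the flows $\Phi(t,\cdot)$ and $\hat\Phi(t,\cdot)$ have the same Taylor expansion at each point of $\Sigma$, and the stated estimate is exactly the statement that their difference vanishes to infinite order there. Your approach avoids quoting that fact as a black box and instead runs an explicit Gronwall argument: after checking that both flows fix $\Sigma$ pointwise, you propagate ${\rm dist\,}(\cdot,\Sigma)$ along the $\hat\Phi$-flow by Lipschitz dependence on initial data, use $\abs{{\rm Im\,}z}\le{\rm dist\,}(z,\Sigma)$ to bound the forcing term $R(\hat\Phi(t,\rho))$ by ${\rm dist\,}(\rho,\Sigma)^N$, and finally apply Gronwall to $\Psi=\Phi-\hat\Phi$. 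The paper's version is shorter and more conceptual; yours is more self-contained and carries explicit time-dependence in the constants. Two minor remarks. First, the implication ``$V=0$ on $\Sigma$ implies $\ol V=0$ on $\Sigma$'' uses neither almost analyticity nor $\Sigma\subset\Real^n$: it holds at any point where all coefficients of $V$ vanish, by conjugating. Second, as in the paper, the argument tacitly requires the flows to be defined on a time interval uniformly over a compact neighborhood of $K$; the paper handles this by remarking that only Taylor expansions at $\Sigma$ matter and assuming global existence for convenience, and your restriction to $\abs{t}\le T$ plays the same role.
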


\begin{proof}
We have the following well-known fact: If $Z$ is a smooth vector field on an open set $\Omega\subset\Real^n$ with $Z(x_0)=0$,
$\Psi(t, x)={\rm exp\,}(tZ)(x)$, then $\Psi(t, x_0)=x_0$ and $\pr^\alpha_x\Psi(t, x)|_{x=x_0}$, $\alpha\in\Pstint^n$, only depend on
$(\pr^\beta_xZ)(x_0)$, $\beta\in\Pstint^n$.
In our situation, we therefore have that $\Phi(t, \rho)$, $\hat\Phi(t, \rho)$ have the same Taylor expansion at every
point of $\Sigma$.
\end{proof}

The following proposition is useful (see section $2$ of~\cite{MS74})

\begin{prop} \label{p:0709171438}
Assume that $f(x, w)$ is a $C^\infty$ complex function in a neighborhood of $(0, 0)$
in $\Real^{n+m}$ and that
\begin{equation} \label{e:0709171048}
{\rm Im\,}f\geq0,\ \ {\rm Im\,}f(0, 0)=0,\ \ f'_x(0, 0)=0,\ \ {\rm det\,}f''_{xx}(0, 0)\neq0.
\end{equation}
Let $\Td f(z, w)$, $z=x+iy$, $w\in\Complex^m$, denote an almost analytic extension of $f$ to a complex neighborhood
of\, $(0, 0)$ and let $z(w)$ denote the solution of $\frac{\pr\Td f}{\pr z}(z(w), w)=0$
in a neighborhood of\, $0$ in $\Complex^m$. Then,
\begin{equation} \label{e:0709201408}
\frac{\pr}{\pr w}(\Td f(z(w), w))-\frac{\pr}{\pr w}\Td f(z, w)|_{z=z(w)},\ \ w\ \mbox{is real},
\end{equation}
vanishes to infinite order at $0\in\Real^m$.
Moreover, there is a constant $c>0$ such that near the origin we have
\begin{equation} \label{e:0709171222}
{\rm Im\,}\Td f(z(w), w)\geq c\abs{{\rm Im\,}z(w)}^2,\ w\in\Real^m
\end{equation}
and
\begin{equation} \label{e:0709171223}
{\rm Im\,}\Td f(z(w), w)\geq c\inf_{x\in\Omega}\Bigr({\rm Im\,}f(x, w)+\abs{d_xf(x, w)}^2\Bigr),\ w\in\Real^m,
\end{equation}
where $\Omega$ is some open set of the origin in $\Real^n$.

We call $\Td f(z(w), w)$ the corresponding critical value.
\end{prop}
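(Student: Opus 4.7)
The plan is to prove (1) by direct chain-rule computation exploiting almost analyticity, to prove (2) by a real Taylor expansion of ${\rm Im\,}\tilde f$ around the real-variable critical point $(a(w), b(w))$ (with $z(w) = a(w) + ib(w)$) of the $C^\infty$ function $(x, y) \mapsto \tilde f(x+iy, w)$, and to deduce (3) from (2) by choosing $x = a(w)$ in the infimum.

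For (1), the implicit function theorem applied to $\pr_z\Td f(z, w) = 0$ with invertible derivative $\pr_z^2\Td f(0,0) = f''_{xx}(0,0)$ produces a smooth (in fact, almost analytic in $w$) solution $z(w)$ with $z(0) = 0$, so $\abs{{\rm Im\,}z(w)} = O(\abs{w})$ for real $w$. Differentiating $\Td f(z(w), w)$ in the real variable $w_j$ via the chain rule (with $\zeta$ denoting the complex extension of $w$) produces four contributions coming from $\pr_z$, $\pr_{\bar z}$, $\pr_{\zeta_j}$, $\pr_{\bar\zeta_j}$. The $\pr_z\Td f$ term vanishes by the critical equation, while
\[
\abs{\pr_{\bar z}\Td f(z(w), w)} + \abs{\pr_{\bar\zeta_j}\Td f(z(w), w)} \leq c_N(\abs{{\rm Im\,}z(w)}^2 + \abs{{\rm Im\,}w}^2)^{N/2} = O(\abs{w}^N)
\]
for every $N$, by the almost analyticity of $\Td f$ and the reality of $w$. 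This is exactly the infinite-order vanishing claim (\ref{e:0709201408}).

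For (2), note that $\pr_z = \tfrac{1}{2}(\pr_x - i\pr_y)$ turns the critical equation into $\nabla_{x,y}\Td f(a(w), b(w), w) = 0$, so $(a(w), b(w))$ is a genuine real critical point of $(x, y) \mapsto \Td f(x + iy, w)$. Almost holomorphicity of $\Td f$ in $z$ forces ${\rm Im\,}\Td f$ to be almost harmonic in $(x, y)$, so the $(y,y)$-block of its real Hessian at the critical point equals the negative of the $(x,x)$-block, modulo $O(\abs{{\rm Im\,}z(w)}^\infty)$. A second-order Taylor expansion of ${\rm Im\,}\Td f$ about $(a(w), b(w))$ evaluated at $(x, y) = (a(w), 0)$ therefore gives
\[
{\rm Im\,}f(a(w), w) = {\rm Im\,}\Td f(z(w), w) - \tfrac{1}{2}\seq{{\rm Im\,}f''_{xx}(0,0)\,b(w), b(w)} + O(\abs{b(w)}^3 + \abs{w}\abs{b(w)}^2).
\]
Both ${\rm Im\,}f(a(w), w) \geq 0$ and ${\rm Im\,}f''_{xx}(0, 0) \geq 0$ (the latter because $(0, 0)$ minimizes ${\rm Im\,}f$ on $\set{w = 0}$), so rearranging yields ${\rm Im\,}\Td f(z(w), w) \geq \tfrac{1}{2}\seq{{\rm Im\,}f''_{xx}(0,0) b(w), b(w)} - O(\abs{b}^3)$. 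If ${\rm Im\,}f''_{xx}(0,0)$ is positive-definite we are done; otherwise one exploits the global hypothesis ${\rm Im\,}f \geq 0$, which makes the full $(n+m)\times(n+m)$ Hessian of ${\rm Im\,}f$ at the origin PSD, and combines this with the non-degeneracy $\det f''_{xx}(0, 0) \neq 0$ to show that the component of $b(w)$ in $\ker{\rm Im\,}f''_{xx}(0, 0)$ is of strictly higher order in $\abs{w}$, so the quadratic form still dominates $\abs{b(w)}^2$.

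For (3), observe that at real points the almost analyticity of $\Td f$ yields $f'_x(a(w), w) = \pr_z\Td f(a(w), w)$. Expanding $\pr_z\Td f$ from $z(w)$ to $a(w)$ and using $\pr_z\Td f(z(w), w) = 0$ one obtains $f'_x(a(w), w) = -i\,f''_{xx}(0,0)\,b(w) + O(\abs{b}^2)$, so $\abs{d_xf(a(w), w)}^2 \leq C\abs{b(w)}^2$. Together with (2) and the identity $\text{Im\,}f(a(w), w) \leq {\rm Im\,}\Td f(z(w), w) + O(\abs{b}^3)$, choosing $x = a(w)$ in the infimum gives $\inf_{x\in\Omega}({\rm Im\,}f(x, w) + \abs{d_xf(x, w)}^2) \leq C\,{\rm Im\,}\Td f(z(w), w)$, which is (\ref{e:0709171223}). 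The main obstacle is the degenerate case of (2): one must carefully use the interplay between the global sign condition ${\rm Im\,}f \geq 0$ and the non-degeneracy of $f''_{xx}$ — perhaps via a Morse-type normal-form reduction in the almost analytic category — to extract the estimate $\geq c\abs{b(w)}^2$ in all directions.
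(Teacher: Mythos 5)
Your proposals for (\ref{e:0709201408}) and for deducing (\ref{e:0709171223}) from (\ref{e:0709171222}) are in the right spirit, but your attempt to re-prove (\ref{e:0709171222}) from scratch has a genuine gap, and the paper itself does not attempt such a proof: it cites \cite{MS74} directly for both (\ref{e:0709201408}) and (\ref{e:0709171222}), and only supplies an argument for (\ref{e:0709171223}), using the \emph{stronger} intermediate inequality
\[
{\rm Im\,}\Td f(z(w), w)\geq c\Bigr(\inf_{|t|\leq1}{\rm Im\,}f({\rm Re\,}z(w)-t\abs{{\rm Im\,}z(w)}, w)+\abs{{\rm Im\,}z(w)}^2\Bigr)
\]
taken from the proof on p.\ 147 of \cite{MS74}, together with a Taylor expansion of $f'_x$ around $z(w)$.

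The gap in your treatment of (\ref{e:0709171222}) is the degenerate case, and your proposed fix does not close it. Your real-Taylor-plus-almost-harmonicity argument yields
\[
{\rm Im\,}\Td f(z(w), w)\ \geq\ \tfrac{1}{2}\seq{{\rm Im\,}f''_{xx}(0,0)\,b(w),\, b(w)}\ -\ O(\abs{b(w)}^3 + \abs{w}\abs{b(w)}^2),
\]
where $b(w)={\rm Im\,}z(w)$. When ${\rm Im\,}f''_{xx}(0,0)$ is positive definite this gives the claim, but when it has a kernel $V_0$ the quadratic form does not control $\abs{b(w)}^2$. Even granting your heuristic that the $V_0$-component of $b(w)$ is of strictly higher order (which you can check is consistent with ${\rm Im\,}f\geq 0$ via positivity of the full Hessian: ${\rm Im\,}f''_{xw}(0,0)^Tv=0$ for $v\in V_0$), this does not rescue the inequality. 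Take the fully degenerate case ${\rm Im\,}f''_{xx}(0,0)=0$: then the sign condition forces $b(w)=O(\abs{w}^2)$, the quadratic term is identically zero, the error term is $O(\abs{w}^5)$, and your inequality degenerates to ${\rm Im\,}\Td f(z(w),w)\geq -O(\abs{w}^5)$ --- which does not dominate $\abs{b(w)}^2 = O(\abs{w}^4)$. The missing positivity to order $\abs{b}^2$ must come from higher-order structure of ${\rm Im\,}f$, and extracting it is precisely the subtle content of Theorem $2.3$ of \cite{MS74}; a naive Morse-type reduction in the almost analytic category is not enough, because you must propagate the global sign condition ${\rm Im\,}f\geq0$ on $\Real^n$ across the complexification, which is done in \cite{MS74} by a careful argument that is not a second-order Taylor computation. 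In short: the passage (\ref{e:0709171222}) $\Rightarrow$ (\ref{e:0709171223}) that you (and the paper) use is sound; the re-derivation of (\ref{e:0709171222}) is not, and the paper itself deliberately does not attempt it.
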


\begin{proof}
For a proof of (\ref{e:0709171222}), see~\cite{MS74}. We only prove (\ref{e:0709171223}).
In view of the proof of (\ref{e:0709171222}) (see p.$147$ of~\cite{MS74}), we see that
\begin{equation} \label{e:0710271153}
{\rm Im\,}\Td f(z(w), w)\geq c\Bigr(\inf_{t\in\Real^n, \abs{t}\leq1}{\rm Im\,}f({\rm Re\,}z(w)-t\abs{{\rm Im\,}z(w)}, w)+\abs{{\rm Im\,}z(w)}^2\Bigr)
\end{equation}
for $w$ small, where $w$ is real and $c$ is a positive constant.
Using the almost analyticity, we get by Taylor's formula:
\begin{equation} \label{e:0709171352}
f'_x(x, w)=f''_{zz}(z(w), w)(x-z(w))+O(\abs{x-z(w)}^2+\abs{{\rm Im\,}z(w)}^2)
\end{equation}
for $x$, $w$ small, where $w$ is real. Since $f''_{zz}$ is invertible near the origin, we have that when $w\in\Real^m$ is close to the origin,
\[\abs{{\rm Im\,}z(w)}^2\geq c\abs{f'_x({\rm Re\,}z(w)-t\abs{{\rm Im\,}z(w)}, w)}^2\]
for all $t\in\Real^n$, $\abs{t}\leq1$, where $c$ is a positive constant. From this and (\ref{e:0710271153}),
we get (\ref{e:0709171223}).
\end{proof}

In the following, we let $z=z(w)$ be the point defined as above.
We recall the stationary phase formula of Melin and Sj\"{o}strand

\begin{prop} \label{p:0709171439}
Let $f(x, w)$ be as in Proposition~\ref{p:0709171438}. Then there are neighborhoods $U$ and $V$ of the origin in
$\Real^n$ and $\Real^m$ respectively and differential operators $C_{f, j}$ in $x$ of order $\leq2j$ which are $C^\infty$
functions of $w\in V$ such that
\begin{align} \label{e:0709171445}
&\abs{\int\!\!e^{itf}udx-\!\!\Bigr({\rm det\,}\left(\frac{t\Td f''_{zz}(z(w), w)}{2\pi i}\right)
\Bigr)^{-\frac{1}{2}}\!\!\!e^{it\Td f(z(w), w)}
\sum^{N-1}_0(C_{f, j}\Td u)(z(w), w)t^{-j}}\nonumber \\
&\quad\leq c_Nt^{-N-\frac{n}{2}},\ \ t\geq1,
\end{align}
where $u\in C^\infty_0(U\times V)$.
Here $\Td f$ and $\Td u$ are almost analytic extensions of\, $f$ and $u$ respectively. The function
$\Bigr({\rm det\,}\left(\frac{t\Td f''_{zz}(z(w), w)}{2\pi i}\right)\Bigr)^{-\frac{1}{2}}$
is the branch of the square root of
$\Bigr({\rm det\,}\left(\frac{t\Td f''_{zz}(z(w), w)}{2\pi i}\right)\Bigr)^{-1}$
which is continuously deformed into $1$ under the homotopy
$s\in [0, 1]\To i^{-1}(1-s)\Td f''_{zz}(z(w), w)+sI\in {\rm GL\,}(n, \Complex)$.
\end{prop}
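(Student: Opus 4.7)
The plan is to establish this complex stationary phase expansion by deforming the real contour of integration into $\Complex^n$ so that it passes through the complex critical point $z(w)$, and then applying the classical stationary phase method for phases whose quadratic part is nondegenerate but complex with nonnegative imaginary part. Concretely, take almost analytic extensions $\Td f(z, w)$ and $\Td u(z, w)$ with $\Td u$ supported in a small complex neighborhood of ${\rm supp\,}u$. Choose a cutoff $\chi\in C^\infty_0(\Real^n)$ equal to $1$ on a neighborhood of the $x$-projection of ${\rm supp\,}u$ and define, for $w$ near $0$, the contour $\gamma_w:\Real^n\To\Complex^n$ by $\gamma_w(x)=x+i\chi(x)\,{\rm Im\,}z(w)$. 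Then $\gamma_w$ depends smoothly on $w$, coincides with $\Real^n$ outside ${\rm supp\,}\chi$, and passes through $z(w)$.

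Next I would apply Stokes' theorem on the chain swept out by the linear homotopy from $\Real^n$ to $\gamma_w$ to write
\begin{equation*}
\int_{\Real^n}e^{itf(x,w)}u(x,w)\,dx=\int_{\gamma_w}e^{it\Td f(z,w)}\Td u(z,w)\,dz+R(t,w),
\end{equation*}
where $R(t,w)$ is the integral of $\dbar(e^{it\Td f}\Td u)=e^{it\Td f}(it(\dbar\Td f)\Td u+\dbar\Td u)$ over the swept region. Since $\Td f$ and $\Td u$ are almost analytic, $\dbar\Td f$ and $\dbar\Td u$ vanish to infinite order on $\Real^n$; combined with the bound ${\rm Im\,}\Td f(z,w)\geq{\rm Im\,}f(x,w)-C_N\abs{{\rm Im\,}z}^N\geq-C_N\abs{{\rm Im\,}z}^N$ obtained from ${\rm Im\,}f\geq0$ and Taylor's theorem, this forces $\abs{R(t,w)}=O(t^{-N})$ for every $N$. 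On $\gamma_w$, parametrized by $\Real^n$, one Taylor-expands $\Td f$ at $z(w)$, noting that the linear term vanishes and the quadratic Hessian $\Td f''_{zz}(z(w),w)$ is nondegenerate with nonnegative imaginary part in the relevant directions (by (\ref{e:0709171222}) and the hypothesis). Changing variables to $\zeta=z-z(w)$, the remaining integral has the form $e^{it\Td f(z(w),w)}$ times a Gaussian-type integral $\int e^{it\langle A\zeta,\zeta\rangle/2}\Td u_1(\zeta,w)\,d\zeta$ with $A=\Td f''_{zz}(z(w),w)$; the classical asymptotic expansion for such Gaussian integrals with complex symmetric $A$ and ${\rm Im\,}A\geq0$ (see H\"ormander~\cite{Hor03}, chapter VII.7) yields the leading factor $({\rm det\,}(tA/(2\pi i)))^{-1/2}$ with the stated branch, and determines the $C_{f,j}$ as differential operators in $x$ of order $\leq 2j$ obtained by expanding the cubic and higher remainder of $\Td f$ and combining with Taylor coefficients of $\Td u_1$.

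The main obstacle will be making the contour deformation in the Stokes step quantitatively uniform in $t$ and $w$: the factor $t$ in $t(\dbar\Td f)$ competes with the infinite-order vanishing of $\dbar\Td f$ on $\Real^n$, so one needs to balance, for each $N$, the estimate $\abs{\dbar\Td f(z,w)}\leq C_N\abs{{\rm Im\,}z}^N$ against $\abs{e^{it\Td f}}\leq e^{C_Nt\abs{{\rm Im\,}z}^N}$ on a sufficiently thin tube around $\Real^n$ (say $\abs{{\rm Im\,}z}\leq c/t^{1/2}$ after a rescaling), and then integrate in the transverse direction. Finally, Proposition~\ref{p:0709171438}, in particular (\ref{e:0709201408}), guarantees that the critical value $\Td f(z(w),w)$ and its $w$-derivatives are independent modulo $O(t^{-\infty})$ of the choice of almost analytic extension, so the asymptotic expansion is well-defined; the fact that each $C_{f,j}$ is a polynomial in $\partial_x$ of order $\leq 2j$ follows by counting powers of $\zeta$ in the Gaussian moment calculation.
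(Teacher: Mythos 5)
Your overall strategy — deform the real integration contour through the complex critical point $z(w)$, control the error via Stokes, and reduce to a complex Gaussian integral at the critical point — is indeed the Melin--Sj\"ostrand approach to which the paper refers (there is no self-contained proof in the paper). However, the Stokes step as written does not close, and the gap is concrete. The claimed bound ${\rm Im\,}\Td f(z,w)\geq{\rm Im\,}f(x,w)-C_N\abs{{\rm Im\,}z}^N$ is false: writing $z=x+iy$ and Taylor expanding the almost analytic extension in $y$ gives
\begin{equation*}
{\rm Im\,}\Td f(x+iy,w)={\rm Im\,}f(x,w)+\langle y,{\rm Re\,}f'_x(x,w)\rangle+O(\abs{y}^2),
\end{equation*}
so ${\rm Im\,}\Td f$ can dip below ${\rm Im\,}f$ to \emph{first} order in $\abs{{\rm Im\,}z}$ (not order $N$) wherever ${\rm Re\,}f'_x\neq0$. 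Your proposed remedy (a thin tube $\abs{{\rm Im\,}z}\leq ct^{-1/2}$) does not rescue this, because with a first-order loss one still only gets $\abs{e^{it\Td f}}\leq e^{Ct^{1/2}\sup\abs{{\rm Re\,}f'_x}}$, which is not polynomially bounded. A second, related omission is that the terminal Gaussian integral $\int e^{it\langle A\zeta,\zeta\rangle/2}\Td u_1\,d\zeta$ requires a positivity of $A=\Td f''_{zz}(z(w),w)$ along the chosen real slice that is not automatic from hypothesis (\ref{e:0709171048}) alone.

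Both issues are exactly what the normal-form machinery of Melin--Sj\"ostrand (positive Lagrangian manifolds, Proposition~\ref{p:c-mesj} and Definition~\ref{d:c-mesj} in the appendix) is designed to supply: after a change of variables adapted to $\Lambda_{\Td f}$, the phase becomes a nondegenerate quadratic with the right positivity, one gets the improved lower bound ${\rm Im\,}\Td f\geq-C\abs{{\rm Im\,}z}^2$ on the sweep, and one observes via (\ref{e:0709171222}) that when $\abs{{\rm Im\,}z(w)}\gg t^{-1/2}$ every term is already $O(t^{-\infty})$, so only the genuinely thin-tube regime remains. Without invoking this structure, the crude contour $\gamma_w(x)=x+i\chi(x){\rm Im\,}z(w)$ does not carry the estimates you need. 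I would either reduce to the normal form first, or at minimum replace the incorrect order-$N$ lower bound on ${\rm Im\,}\Td f$ with the correct quadratic one and carry out the case split in $\abs{{\rm Im\,}z(w)}$.
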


We need the following asymptotic formula (see~\cite{MS74})

\begin{prop} \label{p:0709171623}
Let $P$ be a classical pseudodifferential operator on $\Real^n$. Let $\varphi(x)\in C^\infty(\Real^n)$ satisfy ${\rm Im\,}\varphi\geq0$
and $d\varphi\neq0$ where ${\rm Im\,}\varphi=0$. Let $u(x)\in C^\infty_0(\Real^n)$.
If $p(x, \xi)$ is the full symbol of $P$, then
\begin{equation} \label{e:0709171647}
P(e^{it\varphi(x)}u(x))\sim
e^{it\varphi(x)}\sum_{\alpha}\frac{1}{\alpha !}
\Td p^{(\alpha)}(x,t\varphi'_x(x))\frac{1}{\alpha!}D^\alpha_y(u(y)e^{it\rho(x, y)})|_{y=x}
\end{equation}
with asymptotic convergence in $S^m_{0, 1}(\Real^n\times\Real_+)$
(for the precise meaning of the space $S^m_{0, 1}$, see page $5$ of~\cite{GS94}), where
$\Td p$ is an almost analytic extension of $p$ and
$\rho(x, y)=\varphi(y)-\varphi(x)-\seq{y-x, \varphi'_x(x)}$.
\end{prop}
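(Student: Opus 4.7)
The plan is to reduce the claim to a Taylor expansion argument after putting the phase in a convenient form, and then to use Fourier inversion to identify the formal series. First I would write $P$ as an oscillatory integral,
\[
Pv(x) = (2\pi)^{-n}\int\!\!\int e^{i\seq{x-y,\eta}}p(x,\eta)v(y)\,dy\,d\eta,
\]
apply it to $v(y)=e^{it\varphi(y)}u(y)$, and use the definition of $\rho$ to split
\[
\seq{x-y,\eta}+t\varphi(y)=t\varphi(x)+\seq{x-y,\eta-t\varphi'_x(x)}+t\rho(x,y).
\]
Pulling out $e^{it\varphi(x)}$, the problem becomes one of finding an asymptotic expansion in $t$ of the reduced amplitude
\[
q(x,t):= (2\pi)^{-n}\int\!\!\int e^{i\seq{x-y,\eta-t\varphi'_x(x)}+it\rho(x,y)}p(x,\eta)u(y)\,dy\,d\eta.
\]

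Second, I would carry out the shift $\eta\mapsto\eta+t\varphi'_x(x)$. When $\varphi'_x(x)$ is real this is a literal change of variables; in general $t\varphi'_x(x)$ lies in $\Complex^n$, so I would work with the almost analytic extension $\Td p(x,\zeta)$ and use Stokes' theorem to deform the contour $\Real^n_\eta$ to $\Real^n_\eta-t\,{\rm Im}\,\varphi'_x(x)$. The boundary contribution vanishes in view of the compact support of $u$ together with integration by parts in $y$ in the region where $\eta$ is large, and the integral of $\bar\pr\Td p$ over the intermediate region is controlled by Definition~\ref{d:c-almost}: by almost analyticity of $\Td p$ with respect to ${\rm dist}(\cdot,\Real^n)$ together with the lower bound ${\rm Im}\,(t\rho(x,y))+\seq{x-y,-t\,{\rm Im}\,\varphi'_x(x)}\geq -C|y-x|^2 t\,|{\rm Im}\,\varphi'_x(x)|$ and completing the square, this $\bar\pr$-contribution is $O(t^{-N})$ for every $N$, hence goes into the $S^{-\infty}_{0,1}$ remainder. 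After the shift, $q(x,t)$ becomes
\[
q(x,t)\equiv (2\pi)^{-n}\int\!\!\int e^{i\seq{x-y,\eta}+it\rho(x,y)}\Td p(x,\eta+t\varphi'_x(x))u(y)\,dy\,d\eta.
\]

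Third, I would Taylor expand $\Td p(x,\eta+t\varphi'_x(x))$ in the second variable about $t\varphi'_x(x)$, to order $N$, writing
\[
\Td p(x,t\varphi'_x(x)+\eta)=\sum_{|\alpha|<N}\frac{\eta^\alpha}{\alpha!}\Td p^{(\alpha)}(x,t\varphi'_x(x))+r_N(x,\eta,t),
\]
and for each main term use the identity $\eta^\alpha e^{i\seq{x-y,\eta}}=(-D_y)^\alpha e^{i\seq{x-y,\eta}}$, integration by parts in $y$, and Fourier inversion $\int\!\!\int e^{i\seq{x-y,\eta}}f(y)\,dy\,d\eta=(2\pi)^n f(x)$ to obtain
\[
q(x,t)\equiv\sum_{|\alpha|<N}\frac{1}{\alpha!}\Td p^{(\alpha)}(x,t\varphi'_x(x))D_y^\alpha\bigl(u(y)e^{it\rho(x,y)}\bigr)\big|_{y=x}+\mbox{remainder}.
\]
Each summand lies in an appropriate $S^{m_\alpha}_{0,1}$ because $\Td p^{(\alpha)}(x,t\varphi'_x(x))$ is homogeneous of order $m-|\alpha|$ in $t$, while each $D_y$ acting on $e^{it\rho(x,y)}$ produces a factor of $t$ times $\pr_y\rho$, which vanishes to second order at $y=x$, giving a net gain of a half in the order, as befits $S^m_{0,1}$. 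The remainder from $r_N$ is estimated by the usual argument: on the set where $|\eta|\leq t^{1/2}$ the Taylor bound $|r_N|\lesssim |\eta|^N(1+|\eta|+t)^{m-N}$ combined with the rapid decay of $e^{it\rho}$ off $y=x$ (coming from ${\rm Im}\,\rho\geq c|y-x|^2\,{\rm Im}\,\varphi''$ plus ${\rm Im}\,\varphi\geq 0$) gives a symbol of order $m-N/2$; on $|\eta|\geq t^{1/2}$ repeated integration by parts in $y$ using the operator $|\eta|^{-2}\seq{\eta,D_y}$ yields arbitrarily fast decay.

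The main obstacle is the contour deformation in Step 2, where one must simultaneously keep track of symbol bounds on $\Td p$ as $\zeta$ moves off $\Real^n$ and exploit the sign of ${\rm Im}\,(t\rho(x,y)+\seq{x-y,\zeta-\eta})$ to absorb the displacement into genuine exponential decay. Once this is handled, Steps 3 and 4 are purely formal manipulations, and the asymptotic convergence in $S^m_{0,1}(\Real^n\times\Real_+)$ follows from the standard Borel summation and remainder estimates recorded in the Grigis--Sj\"{o}strand reference.
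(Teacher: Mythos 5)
The paper does not actually prove this proposition; it is stated with a citation to Melin--Sj\"ostrand \cite{MS74} (``We need the following asymptotic formula (see~\cite{MS74})''), so there is no in-paper proof to compare against. Taken on its own terms, your overall plan --- write $P$ as an oscillatory integral, split off $e^{it\varphi(x)}$, shift $\eta$ by $t\varphi'_x(x)$, Taylor expand the amplitude, and recover the formal series by integration by parts and Fourier inversion --- is the right skeleton and is essentially how the result is proved in \cite{MS74}. You have also correctly identified that the shift of $\eta$ into $\Complex^n$ is the sensitive step.

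That step, however, has a genuine gap. Writing out the imaginary part of the shifted phase gives exactly
\[
{\rm Im}\bigl(t\rho(x,y)\bigr)-t\seq{x-y,{\rm Im\,}\varphi'_x(x)}=t\,{\rm Im\,}\varphi(y)-t\,{\rm Im\,}\varphi(x),
\]
so the lower bound you state, $-C\abs{y-x}^2t\abs{{\rm Im\,}\varphi'_x(x)}$, is not what the left-hand side equals, and in any case both sides are negative: a negative lower bound does not produce the exponential decay you then try to extract by ``completing the square''. (There is no quadratic form in the Stokes variable to complete against.) The estimate that actually makes the complex shift harmless is the Glaeser/Malgrange inequality for nonnegative $C^2$ functions, $\abs{{\rm Im\,}\varphi'_x(x)}^2\leq C\,{\rm Im\,}\varphi(x)$ locally; this is what converts $\bigl(t\abs{{\rm Im\,}\varphi'_x(x)}\bigr)^N(1+t)^{m-N}$ into a quantity of order $t^{m-N/2}$ once the exponential factor $e^{-t\,{\rm Im}\,\varphi(x)}$ is taken into account, and you never invoke it. Without it the $\bar\partial$-term in the Stokes argument is simply not under control. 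Two further slips worth flagging: $\partial_y\rho(x,y)=\varphi'_y(y)-\varphi'_x(x)$ vanishes to \emph{first}, not second, order at $y=x$ (it is $\rho$ itself that vanishes to second order, and that is what produces the gain of a half order per derivative); and your appeal to ``${\rm Im}\,\rho\geq c\abs{y-x}^2\,{\rm Im}\,\varphi''$'' is not available, since ${\rm Im}\,\varphi''(x)$ need not be positive semidefinite where ${\rm Im}\,\varphi(x)>0$. The rapid decay in $\eta$ in the remainder estimate should come purely from $y$-integrations by parts against the factor $e^{i\seq{x-y,\eta}}$ with $u$ compactly supported, not from any decay of $e^{it\rho}$.
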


\begin{defn} \label{d:m-non-degeneratephase}
The $C^\infty$ function $\varphi(x, \theta)$ defined in an open conic set $V\subset\Real^n\times\Real^N\setminus0$
is called a  non-degenerate complex phase function if
\begin{enumerate}
 \item $d\varphi\neq0$.
 \item $\varphi(x, \theta)$ is positively homogeneous of degree $1$.
 \item Put $C=\set{(x, \theta)\in V;\, \varphi'_\theta(x, \theta)=0}$.
The differentials
$d(\frac{\pr\varphi}{\pr\theta_j})$, $j=1,\ldots,N$, are linearly independent over the complex numbers on $C$.
\item ${\rm Im\,}\varphi\geq0$.
 \end{enumerate}
\end{defn}

Let $\phi(x,\theta)$ be a non-degenerate complex phase function in a conic open subset $\Gamma$ of
$\Real^n\times\dot\Real^N$. Let
$C_\phi=\set{(x,\theta)\in \Gamma;\, \phi'_\theta(x,\theta)=0}$.
By Euler's homogeneity relation, we have $\phi(x,\theta)=\theta.\phi'_\theta(x,\theta)=0$ on $C_\phi$ and
therefore ${\rm Im\,}\phi$ vanishes on $C_\phi$. So does $d({\rm Im\,}\phi)$, for otherwise there would be a
change of sign of ${\rm Im\,}\phi$. Thus, $\set{(x, \phi'_x(x, \theta));\, (x, \theta)\in C_\phi}\subset\Real^n\times\dot{\Real^n}$.

Let $\Td\phi$ be an almost analytic extension of $\phi$ in a conic open set
$\Td\Gamma\subset\Complex^n\times\dot\Complex^N$, $\Td\Gamma\bigcap(\Real^n\times\dot\Real^N)=\Gamma$.
We can choose $\Td\phi$ such that $\Td\phi$
is homogeneous of degree 1. Set
$\Td\phi'_{\Td\theta}=(\pr_{\Td\theta_1}\Td\phi,\ldots,\pr_{\Td\theta_N}\Td\phi)$
and
$\Td\phi'_{\Td x}=(\pr_{\Td x_1}\Td\phi,\ldots,\pr_{\Td x_n}\Td\phi)$.
Let
\begin{align}
C_{\Td\phi}       &= \set{(\Td x, \Td\theta)\in \Td\Gamma;\,
                            \Td\phi'_{\Td\theta}(\Td x, \Td\theta)=0},   \\
\Lambda_{\Td\phi} &= \set{(\Td x,\Td\phi'_{\Td x}(\Td x, \Td\theta));\,
                            (\Td x, \Td\theta)\in C_{\Td\phi}}. \label{e:0710281327}
\end{align}
$\Lambda_{\Td\phi}$ is an almost analytic manifold and
\[(\Lambda_{\Td\phi})_\Real=
\Lambda_{\Td\phi}\bigcap(\Real^n\times\dot\Real^n)=\set{(x, \phi'_x(x, \theta));\, (x, \theta)\in C_\phi}.\]
Let $\Td\phi_1$ be another almost analytic extension of $\phi$ in $\Td\Gamma$. We have
$\Lambda_{\Td\phi}\sim\Lambda_{\Td\phi_1}$.
(For the details, see~\cite{MS74}.) Moreover, we have the following

\begin{prop} \label{p:c-mesj}
For every point $(x_0,\xi_0)\in (\Lambda_{\Td\phi})_\Real$ and after suitable change of local $x$ coordinates,
$\Lambda_{\Td\phi}$ is equivalent to a manifold
$-\Td x=\frac{\pr h(\Td\xi)}{\pr \Td\xi},\ \Td\xi\in \Complex^N$
in some open neighborhood of $(x_0,\xi_0)$, where $h$ is an almost analytic function and ${\rm Im\,}h\geq0$ on
$\Real^N$ with equality at $\xi_0$.
\end{prop}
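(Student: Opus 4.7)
The proposition is a normalization statement: we want to present $\Lambda_{\Td\phi}$ locally as a graph $-\Td x = h'(\Td\xi)$ over the cotangent fiber, with $h$ almost analytic and ${\rm Im\,}h\geq 0$. The plan is to first arrange the graph condition by a linear change of $x$-coordinates, then integrate the Lagrangean condition to extract $h$, and finally to recover the positivity of $h$ from the critical value formula applied to the phase $\phi$.

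First I would pick $(x_0,\theta_0)\in C_\phi$ with $\xi_0=\phi'_x(x_0,\theta_0)$ and analyze the tangent space $T_{(x_0,\xi_0)}\Lambda_{\Td\phi}$ inside $\Complex T^*_{x_0}(\Real^n)$. Because ${\rm Im\,}\phi\geq 0$ attains its minimum $0$ on $C_\phi\bigcap\Real^{n+N}$, a direct computation from the defining formula \eqref{e:0710281327} shows that this tangent space is a positive Lagrangean subspace with respect to the canonical symplectic form $\sigma=\sum d\xi_j\wedge dx_j$. A standard linear algebra lemma for positive Lagrangeans (e.g.\ H\"ormander Ch.\ XXI) then produces a real linear change of $x$-coordinates centered at $x_0$ making this subspace transversal to the fiber $\{x=x_0\}$. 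Passing to an almost analytic extension and applying an almost analytic implicit function theorem in the spirit of Proposition~\ref{p:0709201613} yields almost analytic functions $g_1,\ldots,g_n$ defined in a complex neighborhood of $\xi_0$ such that $\Lambda_{\Td\phi}$ is equivalent to $\{-\Td x=g(\Td\xi)\}$ near $(x_0,\xi_0)$.

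Next I would produce the generating function $h$ from the Lagrangean condition. Since $\Lambda_{\Td\phi}$ is Lagrangean in the almost analytic sense, the pullback of $\sum\Td\xi_j\,d\Td x_j$ to $\Lambda_{\Td\phi}$ vanishes modulo functions vanishing to infinite order on the reals. After pulling back along $\Td x=-g(\Td\xi)$, this becomes the closedness (mod $d^\infty_\Real$) of the almost analytic $1$-form $\sum g_j(\Td\xi)\,d\Td\xi_j$. An almost analytic Poincar\'e lemma then gives $g_j\equiv\pr h/\pr\Td\xi_j$ for some almost analytic $h(\Td\xi)$, so that $\Lambda_{\Td\phi}$ is equivalent to $\{-\Td x=\pr h/\pr\Td\xi\}$.

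Finally, for the positivity, I would interpret $h$ as the critical value, in the joint variables $(x,\theta)$, of $F(x,\theta;\xi):=\phi(x,\theta)-\seq{x,\xi}$ with $\xi$ as real parameter. The critical point equations $\phi'_x=\xi$, $\phi'_\theta=0$ cut out precisely $\Lambda_\phi$, and the graph condition achieved in the first step translates into non-degeneracy of the Hessian $F''_{(x,\theta)(x,\theta)}$ at the real critical point $(x_0,\theta_0)$ over $\xi_0$. Proposition~\ref{p:0709171438} applied to $F(\cdot,\cdot;\xi)$ with the real parameter $\xi$ then yields simultaneously $\pr h/\pr\xi=-x(\xi)$ by stationarity, giving the identification with the $h$ constructed in the previous paragraph, and ${\rm Im\,}h(\xi)\geq 0$ for real $\xi$ near $\xi_0$; equality at $\xi_0$ follows from ${\rm Im\,}F(x_0,\theta_0;\xi_0)=0$ (indeed $\phi(x_0,\theta_0)=0$ by Euler's identity $\phi=\theta\cdot\phi'_\theta$ on $C_\phi$, and $\seq{x_0,\xi_0}$ is real). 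I expect the main obstacle to be the first step, namely proving that a \emph{real} linear change of the $x$-coordinates suffices to put the positive Lagrangean tangent space into graph position, rather than having to resort to a general complex canonical transformation; the remaining pieces are essentially bookkeeping in the almost analytic category.
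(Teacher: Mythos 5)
The paper does not actually prove Proposition~\ref{p:c-mesj}; the sentence just before it (``For the details, see~\cite{MS74}'') signals that it is being quoted from Melin--Sj\"{o}strand, so there is no in-text argument to compare your proposal against. On the merits, the skeleton of your argument is sound and your final step is the right engine: applying Proposition~\ref{p:0709171438} to $F(x,\theta;\xi)=\phi(x,\theta)-\seq{x,\xi}$ with $\xi$ as a real parameter simultaneously produces the generating function $h$ with $\pr_\xi h=-x(\xi)$ via (\ref{e:0709201408}) and the positivity ${\rm Im\,}h\geq0$ via (\ref{e:0709171222}); your separate almost analytic Poincar\'{e}-lemma step is then redundant, since the critical value already \emph{is} the generating function once the Hessian non-degeneracy in $(x,\theta)$ has been arranged.

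The genuine gap is precisely the one you flag. A real change of base coordinates $x\mapsto\kappa(x)$ lifts to $(x,\xi)\mapsto(\kappa(x),{}^t\kappa'(x)^{-1}\xi)$, which preserves the fiber $\{\delta x=0\}$, so the subspace $T_\rho\Lambda_{\Td\phi}\cap\{\delta x=0\}$ is an invariant that no change of $x$-variables can remove. There are positive Lagrangeans arising from non-degenerate complex phases for which this intersection is nontrivial: $\phi(x,\theta)=x_1\theta$, $\theta\in\dot\Real$, gives $\Lambda_\phi=N^*\{x_1=0\}=\{(0,x_2,\ldots,x_n;\xi_1,0,\ldots,0)\}$, which lies in $\{\xi_2=\cdots=\xi_n=0\}$ and hence is nowhere a graph over $\xi$ in any $x$-coordinate system. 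The normalization actually required (and hinted at in Definition~\ref{d:c-mesj}, whose phrase ``for some real symplectic coordinate $(x,\xi)$'' is broader than a base change) is a real \emph{symplectic} transformation that may mix $x$ and $\xi$, e.g.\ a partial Legendre transform replacing $(x_j,\xi_j)$ by $(\xi_j,-x_j)$ on the problematic block; such a transformation can always put a positive Lagrangean plane in graph position over the new $\xi$-fiber. Without that extra freedom your Step~1 fails, and with it the Hessian non-degeneracy on which Step~4 depends. (Also, ``$\Td\xi\in\Complex^N$'' in the statement is presumably a misprint for $\Complex^n$, since $\Lambda_{\Td\phi}$ has complex dimension $n$.)
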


\begin{defn} \label{d:c-mesj}
An almost analytic manifold satisfying the conditions of Proposition~\ref{p:c-mesj} at every real
point for some real symplectic coordinate $(x,\xi)$ is called a positive Lagrangean manifold.
\end{defn}

Let $\varphi$ and $\psi$ be non-degenerate phase functions
defined in small conic neighborhoods of $(x_0,\theta_0)\in\Real^n\times\dot\Real^N$ and $(x_0,w_0)\in
\Real^n\times\dot\Real^M$ respectively. We assume that $\varphi'_\theta(x_0, \theta_0)=0$, $\psi'_w(x_0, w_0)=0$
and that
$\varphi'_x(x_0, \theta_0)=\psi'_x(x_0, w_0)=\xi_0$,
where the last equation is a definition. Put $\lambda_0=(x_0, \xi_0)$. Let $S^m_{{\rm cl\,}}(\Real^n\times\Real^N)$ denot the space of classical symbols of
order $m$ on $\Real^n\times\Real^N$ (see page $35$ of~\cite{GS94}, for the precise meaning).
We have the following definition

\begin{defn} \label{d:0709171814}
We say that $\varphi$ and $\psi$ are equivalent
at $\lambda_0$ for classical symbols if there is a conic neighborhood $\Lambda$ of $(x_0, \theta_0)$ such that for every distribution
$A=\int e^{i\varphi(x,\theta)}a(x,\theta)d\theta$,
where $a(x, \theta)\in S^m_{{\rm cl\,}}(\Real^n\times\Real^N)$ with support in $\Lambda$,
there exists
$b(x,w)\in S^{m+\frac{N-M}{2}}_{{\rm cl\,}}(\Real^n\times\Real^M)$
with support in a conic neighborhood of $(x_0, \omega_0)$ such that
$A-B\in C^\infty$,
where $B=\int e^{i\psi(x,w)}b(x,w)dw$ and vise versa.
\end{defn}

The global theory of Fourier integral operators with complex phase is the following

\begin{prop} \label{p:c-mesjmore}
Let $\varphi$ and $\psi$ be non-degenerate phase functions
defined in small conic neighborhoods of $(x_0,\theta_0)\in\Real^n\times\dot\Real^N$ and $(x_0,w_0)\in
\Real^n\times\dot\Real^M$ respectively. We assume that $\varphi'_\theta(x_0, \theta_0)=0$, $\psi'_w(x_0, w_0)=0$
and that
\[\varphi'_x(x_0, \theta_0)=\psi'_x(x_0, w_0)=\xi_0.\]
Then $\varphi$ and $\psi$ are equivalent
at $(x_0, \xi_0)$ for classical symbols if and only if $\Lambda_{\Td\varphi}$ and $\Lambda_{\Td\psi}$
are equivalent in some neighborhood of\, $(x_0,\xi_0)$, where $\Td\varphi$ and $\Td\psi$ are almost analytic extensions
of $\varphi$ and $\psi$ respectively and $\Lambda_{\Td\varphi}$, $\Lambda_{\Td\psi}$ are as in (\ref{e:0710281327}).
\end{prop}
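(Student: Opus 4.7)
The plan is to establish both implications via reduction to a common normal form. First I would apply Proposition~\ref{p:c-mesj} to transform both $\Lambda_{\Td\varphi}$ and $\Lambda_{\Td\psi}$ into positive Lagrangean manifolds of the form $-\Td x = \pr h/\pr\Td\xi$ in suitably chosen symplectic coordinates $(x,\xi)$ near $\lambda_0 = (x_0,\xi_0)$. Under such coordinates, the equivalence $\Lambda_{\Td\varphi} \sim \Lambda_{\Td\psi}$ translates directly into $h_\varphi \sim h_\psi$ in the sense of almost analytic equivalence (Definition~\ref{d:c-aleq}), and each phase becomes equivalent to a canonical model $\seq{x,\eta} + h(\eta)$ with $\eta\in\Complex^n$.

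For the direction ``equivalence of Lagrangeans implies equivalence for classical symbols,'' I would show that given $A=\int e^{i\varphi(x,\theta)}a(x,\theta)d\theta$ with $a\in S^m_{\rm cl}$ supported in a small conic neighborhood of $(x_0,\theta_0)$, one can construct $b(x,w)\in S^{m+(N-M)/2}_{\rm cl}$ with $A-B\in C^\infty$. The construction proceeds in three stages: (i) reduce $\varphi$ to the canonical form $\seq{x,\eta}+h_\varphi(\eta)$ by an almost analytic change of the fiber variable $\theta \leftrightarrow \eta$ and absorption of the Jacobian into the amplitude, where Proposition~\ref{p:0709171438} guarantees that the critical value and tangential derivatives are well-defined modulo infinite-order error; (ii) if $N\neq M$, bridge the two models by appending or integrating out Gaussian fiber variables $e^{it\seq{\zeta,\zeta}}$ and applying the stationary phase formula Proposition~\ref{p:0709171439}, which contributes the normalizing factor $(2\pi/t)^{(N-M)/2}$ into the new symbol; (iii) solve the transport-type identities for the coefficients of $b$ by Borel summation, using (\ref{e:0709171223}) to ensure all residual terms vanish to infinite order on $C_\varphi$ and therefore integrate to $C^\infty$ functions via integration by parts in the fiber variable. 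Symmetry yields the converse mapping $b\mapsto a$.

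For the direction ``equivalence for classical symbols implies equivalence of Lagrangeans,'' I would argue contrapositively. If $\Lambda_{\Td\varphi}\not\sim\Lambda_{\Td\psi}$ near $(x_0,\xi_0)$, then either their real loci differ microlocally, in which case the wave front sets of generic oscillatory integrals disagree and the equivalence of phases fails immediately; or the real loci agree but the Lagrangeans differ at some finite order off the real axis. In the latter case I would test with a family of classical symbols $a_j(x,\theta)$ concentrated near a single real critical point and with amplitudes designed to probe successive terms in the Taylor expansion of $\Td\varphi$ at that point. By the stationary phase expansion (Proposition~\ref{p:0709171439}), the asymptotic coefficients of $A$ at the critical value encode $\Td\varphi''_{zz}(z(w),w)$ and its almost analytic derivatives, hence encode $\Lambda_{\Td\varphi}$ to infinite order. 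If these coefficients could also be realized by some $B$ built from $\psi$, then $\Lambda_{\Td\psi}$ would have the same infinite-order jet at that real point, a contradiction.

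The main obstacle will be stage (iii) of the forward direction: ensuring that the amplitude $b$ constructed by Borel summation really lies in the classical symbol class $S^{m+(N-M)/2}_{\rm cl}$ rather than merely in $S^m_{\rho,\delta}$ for some $\rho<1$, and that the equivalence of almost analytic Lagrangeans (defined up to infinite-order vanishing on the real locus) transfers cleanly to an $A\equiv B$ relation modulo $C^\infty$. This requires the precise Proposition~\ref{p:0709161529}--style bookkeeping to compare two almost analytic extensions of the same real data, combined with the non-degenerate lower bound ${\rm Im\,}\Td\varphi(z(w),w)\geq c\,|{\rm Im\,}z(w)|^2$ to kill off-diagonal contributions through integration by parts.
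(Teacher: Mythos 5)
The paper does not prove Proposition~\ref{p:c-mesjmore}: Appendix A is explicitly a review ("For more details on the subject, see~\cite{MS74}"), and Remark~1.6 identifies this statement as Theorem~$4.2$ of Melin--Sj\"ostrand~\cite{MS74}. So there is no in-paper proof against which to compare your attempt.

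That said, your sketch is a fair outline of the route actually taken in~\cite{MS74}: reduce both Lagrangeans to the normal form $-\Td x=\pr h/\pr\Td\xi$ via Proposition~\ref{p:c-mesj}, compare $h_\varphi$ with $h_\psi$ up to almost-analytic equivalence, and use the stationary phase formula to pass between phases with different numbers of fiber variables. Two places where your sketch is thinner than what a full proof requires. First, in stage~(i) the change of fiber variable $\theta\leftrightarrow\eta$ and the absorption of the Jacobian must be shown to preserve the \emph{classical} symbol class $S^m_{\rm cl}$ term by term in the asymptotic expansion; this is exactly the worry you flag in stage~(iii), but it is not really "the main obstacle" --- it is handled in~\cite{MS74} by expanding the change of variables to infinite order at $C_\varphi$ using almost analyticity and the lower bound~(\ref{e:0709171222}), and the residual terms are $O(|{\rm Im\,}z(w)|^\infty)$, hence contribute $C^\infty$ errors after integration by parts. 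Second, for the converse direction your contrapositive argument is correct in spirit, but "testing with a family of symbols to probe successive Taylor terms" needs to be made quantitative: what one actually uses is that the full asymptotic expansion in Proposition~\ref{p:0709171439}, applied to a suitable family of amplitudes, recovers the infinite-order jet of $\Lambda_{\Td\varphi}$ at any real point of $(\Lambda_{\Td\varphi})_\Real$, so agreement of kernels mod~$C^\infty$ forces agreement of jets, which is precisely the equivalence of Definition~\ref{d:c-ame}. If you carry those two points through, the argument is complete and matches the cited source.
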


\section{The wave front set of a distribution, a review}

We will give a brief discussion of wave front set in a setting
appropriate for our purpose. For more details on the subject, see~\cite{Hor85},
~\cite{Hor03} and~\cite{GS94}. Our presentation is
essentially taken from~\cite{GS94}. For all the proofs of this section, we refer the reader to chapter $7$ of~\cite{GS94},
chapter ${\rm VIII\,}$ of~\cite{Hor85} and chapter ${\rm XVIII\,}$ of~\cite{Hor03}.

We will asume the reader is familiar with some basic notions and facts of microlocal analysis such as:
H\"{o}rmander symbol spaces, pseudodifferential operators. Nevertheless we recall briefly some of this notions.

Let $\Omega\subset\Real^n$ be an open set. From now on, we write $x^\alpha=x^{\alpha_1}_1\cdots
x^{\alpha_n}_n$, $\pr^\alpha_x=\pr^{\alpha_1}_{x_1}\cdots \pr^{\alpha_n}_{x_n}$,
$D^\alpha_x=D^{\alpha_1}_{x_1}\cdots D^{\alpha_n}_{x_n}$ and $\abs{\alpha}=\alpha_1+\cdots+\alpha_n$,
where $x=(x_1,\ldots,x_n)$, $D_{x_j}=-i\pr_{x_j}$. We have the following

\begin{defn} \label{Bd:m-symbol}
Let $m\in\Real$. $S^m_{1, 0}(\Omega\times\Real^N)$ is the space of all $a\in C^\infty(\Omega\times\Real^N)$ such that for all
compact sets $K\subset\Omega$ and all $\alpha\in\Pstint^n$, $\beta\in\Pstint^N$, there is a
constant $c>0$ such that
\[\abs{\pr^\alpha_x\pr^\beta_\xi a(x,\xi)}\leq c(1+\abs{\xi})^{m-\abs{\beta}},
\; (x,\xi)\in K\times\Real^N.\]
$S^m_{1,0}$ is called the space of symbols of order $m$. We write $S^{-\infty}_{1,0}=\bigcap S^m_{1, 0}$,
$S^\infty_{1, 0}=\bigcup S^m_{1, 0}$.
\end{defn}

In order to simplify the discussion of composition of some operators,
it is convenient to introduce the notion of properly supported operators. Let $C$ be a closed
subset of $Y\times Z$. We say that $C$ is proper if the two projections
\begin{align*}
& \Pi_y: (y, z)\in C\To y\in Y  \\
& \Pi_z: (y, z)\in C\To z\in Z
\end{align*}
are proper, that is the inverse image of every compact subset of $Y$ and $Z$ respectively is
compact.

A continuous linear operator $A: C^\infty_0(Z)\To\mathscr D'(Y)$
is said to be properly supported if ${\rm supp\,}K_A\subset Y\times Z$ is proper. If $A$ is
properly supported, then $A$ is continuous $C^\infty_0(Z)\To\mathscr E'(Y)$
and $A$ has a unique continuous extension $C^\infty(Z)\To\mathscr D'(Y)$.

\begin{defn} \label{Bd:ss-pseudo}
Let $k\in\Real$. A pseudodifferential operator of order $k$ type
$(1, 0)$ is a continuous linear map $A:C^\infty_0(\Omega)\To\mathscr D'(\Omega)$
such that the distribution kernel of $A$ is
\[K_A=A(x, y)=\frac{1}{(2\pi)^n}\int e^{i\seq{x-y,\xi}}a(x, \xi)d\xi\]
with $a\in S^k_{1, 0}(T^*(\Omega))$. We shall write $L^k_{1, 0}(\Omega)$
to denote the space of pseudodifferential operators of order $k$ type $(1, 0)$.
\end{defn}

\begin{defn} \label{Bd:m-cha}
Let
\[A(x, y)=\frac{1}{(2\pi)^n}\int e^{i(x-y)\xi}a(x, \xi)d\xi\in L^m_{1, 0}(\Omega),\]
$a\in S^m_{1, 0}(T^*(\Omega))$. Then $A$ is
said to be elliptic at $(x_0,\xi_0)\in T^*(\Omega)\smallsetminus 0$ if
$ab-1\in S^{-1}_{1, 0}(T^*(\Omega))$
in a conic neighborhood of $(x_0,\xi_0)$ for some $b\in S^{-m}_{1, 0}(T^*(\Omega))$.
\end{defn}

From now on, all pseudofifferential operators in this section will be assumed properly supported.

\begin{defn} \label{Bd:0801202156}
Let $u\in\mathscr D'(\Omega)$, $(x_0, \xi_0)\in T^*(\Omega)\setminus0$. We say that $u$ is $C^\infty$ near
$(x_0, \xi_0)$ if there exists $A\in L^0_{1, 0}(\Omega)$ elliptic at $(x_0, \xi_0)$, such that $Au\in C^\infty(\Omega)$. We
let ${\rm WF\,}(u)$ be the set of points in $T^*(\Omega)\setminus0$, where $u$ is not $C^\infty$.
\end{defn}

\begin{lem} \label{Bl:0801202214}
Let $u\in\mathscr D'(\Omega)$. Then $u\in C^\infty(\Omega)$ if and only if\, ${\rm WF\,}(u)=\emptyset$.
\end{lem}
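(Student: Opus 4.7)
The forward direction, $u\in C^\infty(\Omega)\Rightarrow{\rm WF\,}(u)=\emptyset$, is immediate: the identity operator belongs to $L^0_{1,0}(\Omega)$ and is elliptic at every $(x,\xi)\in T^*(\Omega)\setminus 0$, and $Iu=u\in C^\infty(\Omega)$, so by Definition B.2 $u$ is $C^\infty$ near every point of $T^*(\Omega)\setminus 0$.

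For the converse, the plan is to fix an arbitrary $x_0\in\Omega$ and show that $u$ is smooth in a neighborhood of $x_0$; this then yields $u\in C^\infty(\Omega)$. First I would exploit the compactness of the unit sphere $S^{n-1}\subset\Real^n$. By assumption, for every $\xi\in S^{n-1}$ there exists a properly supported $A_\xi\in L^0_{1,0}(\Omega)$ elliptic at $(x_0,\xi)$ with $A_\xi u\in C^\infty(\Omega)$. Since the ellipticity condition in Definition B.2 is open in a conic neighborhood of $(x_0,\xi)$, a standard finite subcovering argument yields $\xi_1,\ldots,\xi_N\in S^{n-1}$ together with open conic neighborhoods whose union covers $\set{x_0}\times(\Real^n\setminus 0)$, and such that $A_{\xi_j}$ is elliptic on the $j$-th cone.

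Next I would form the operator
\[
B=\sum_{j=1}^N A_{\xi_j}^*A_{\xi_j}\in L^0_{1,0}(\Omega),
\]
whose principal symbol at $(x_0,\xi)$ equals $\sum_j\abs{\sigma_{A_{\xi_j}}(x_0,\xi)}^2$. By construction this is strictly positive for every $\xi\neq 0$, so $B$ is elliptic at $(x_0,\xi)$ for all $\xi\neq 0$, and by continuity of the principal symbol $B$ remains elliptic on $U\times(\Real^n\setminus 0)$ for some open neighborhood $U$ of $x_0$. The classical parametrix construction for elliptic pseudodifferential operators then produces $E\in L^0_{1,0}(\Omega)$ (properly supported) such that $EB-I$ is smoothing when restricted to $U$.

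Finally I would observe that since each $A_{\xi_j}u\in C^\infty(\Omega)$ and the (properly supported) adjoint $A_{\xi_j}^*$ maps $C^\infty(\Omega)$ into $C^\infty(\Omega)$, we get $Bu=\sum_j A_{\xi_j}^*(A_{\xi_j}u)\in C^\infty(\Omega)$. Hence $u=EBu-(EB-I)u\in C^\infty(U)$, so $u$ is smooth near $x_0$, as required. The only real technical content is the finite-cover-plus-parametrix step; the main obstacle would be arranging proper supports so that all compositions are well-defined, but this is handled routinely since the excerpt has already agreed to work exclusively with properly supported operators.
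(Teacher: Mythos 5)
Your argument is correct, and it is essentially the standard proof of this fact. Note, however, that the paper itself does not prove this lemma: Appendix B opens by explicitly referring the reader to chapter 7 of Grigis--Sj\"ostrand, chapter VIII of H\"ormander's \emph{Analysis of Linear Partial Differential Operators I}, and chapter XVIII of volume III for all proofs in the section, so there is no in-paper argument to compare against. The compactness-of-$S^{n-1}$ plus $B=\sum_j A_{\xi_j}^*A_{\xi_j}$ plus parametrix route you take is precisely the one found in those references.

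Two small remarks on the write-up. First, the cross-reference is slightly off: the ellipticity condition you invoke is the one in Definition~\ref{Bd:m-cha}, while the definition of ${\rm WF}$ is Definition~\ref{Bd:0801202156}; you conflate them under the label ``Definition B.2.'' Second, when you pass from positivity of $\sum_j\abs{\sigma_{A_{\xi_j}}(x_0,\xi)}^2$ to ellipticity of $B$ near $x_0$, it is worth being explicit that ellipticity in the sense of Definition~\ref{Bd:m-cha} is a statement about the full symbol for large $\abs{\xi}$ in a cone, not just a pointwise condition on a leading term (the class $L^0_{1,0}$ is not assumed classical); the argument still goes through because the sum of $\abs{a_j}^2$ is bounded below by a positive constant for $\abs{\xi}$ large near $\set{x_0}\times S^{n-1}$, and the remainder coming from the composition calculus is $O(\abs{\xi}^{-1})$. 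With those cosmetic adjustments the proof is complete.
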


Let $K\in\mathscr D'(\Omega\times\Omega)$. Put
${\rm WF\,}'(K)=\set{(x, \xi, y, \eta);\, (x, \xi, y, -\eta)\in{\rm WF\,}(K)}$.

\begin{prop} \label{Bp:0801202219}
Let
$A=\frac{1}{(2\pi)^n}\int e^{i(x-y)\xi}a(x, \xi)d\xi\in L^m_{1, 0}(\Omega)$, $a\in S^m_{1, 0}(T^*(\Omega))$.
Let $\Lambda$ be the smallest closed cone in $T^*(\Omega)\setminus0$ such that for any $\chi\in C^\infty(T^*(\Omega))$,
$\chi(x, \lambda\xi)=\chi(x, \xi)$ and $\chi=0$ in some conic neighborhood of\, $\Lambda$, we have
$\chi a\in S^{-\infty}_{1, 0}(T^*(\Omega))$.
Then ${\rm WF\,}'(K_A)={\rm diag\,}(\Lambda\times\Lambda)$.
Moreover, let $u\in\mathscr D'(\Omega)$. Then,
${\rm WF\,}(Au)\subset\Lambda\bigcap{\rm WF\,}(u)$.
\end{prop}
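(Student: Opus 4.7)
The statement splits into two parts: a characterization of $\mathrm{WF}'(K_A)$ in terms of the essential support $\Lambda$, and a microlocal bound on $\mathrm{WF}(Au)$. I would handle them in sequence.

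\textbf{The kernel wave front set.} First I would show the easy containment $\mathrm{WF}'(K_A) \subseteq \mathrm{diag}(T^*\Omega \times T^*\Omega)$ by noting that $K_A$ is conormal to $\{x=y\}$: in
\[K_A(x,y) = (2\pi)^{-n}\!\int e^{i(x-y)\cdot\xi}a(x,\xi)\,d\xi,\]
the operator $L = -i(x-y)\cdot \partial_\xi/|x-y|^2$ preserves the phase, so integration by parts with $L^N$ on open sets where $x\neq y$ produces arbitrary smoothness, while at diagonal points it restricts the covector to the conormal direction $(\xi,-\xi)$. To conclude $(x_0,\xi_0,x_0,\xi_0)\in\mathrm{WF}'(K_A)\iff (x_0,\xi_0)\in\Lambda$, I would argue as follows. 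If $(x_0,\xi_0)\notin\Lambda$, pick a homogeneous $\chi\in C^\infty$ of degree $0$ with $\chi\equiv 1$ on a conic neighborhood of $(x_0,\xi_0)$ and supported in a conic set disjoint from $\Lambda$; by the defining property of $\Lambda$, $\chi a\in S^{-\infty}_{1,0}$. Split $a=\chi a+(1-\chi)a$: the first summand yields a smoothing kernel, and for the second, direct integration by parts in $\xi$ (using the $(x-y)$-factor, with the amplitude $(1-\chi)$ vanishing near $(x_0,\xi_0)$) shows the kernel is smooth at $(x_0,x_0;\xi_0,-\xi_0)$. For the converse, suppose $(x_0,\xi_0)\in\Lambda$ but the kernel is microlocally smooth at $(x_0,\xi_0,x_0,-\xi_0)$; I would contradict minimality. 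The change of variables $(x,y)\mapsto(x,z=x-y)$ realizes $K_A(x,x-z)$ as the inverse $\xi$-Fourier transform of $a(x,\xi)$, and microlocal smoothness of this distribution at $(x_0,0;0,\xi_0)$ translates, via the standard Fourier characterization of the wave front set, into $\psi a\in S^{-\infty}_{1,0}$ for every homogeneous $\psi$ supported in a sufficiently small conic neighborhood $W$ of $(x_0,\xi_0)$. Then $\Lambda'=\Lambda\setminus W$ still satisfies the defining property of $\Lambda$ (any cutoff $\chi$ vanishing near $\Lambda'$ splits as $\chi\psi+\chi(1-\psi)$, with $\chi\psi a\in S^{-\infty}$ by the above and $\chi(1-\psi)a\in S^{-\infty}$ since $\chi(1-\psi)$ vanishes near $\Lambda$), contradicting the minimality of $\Lambda$.

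\textbf{Microlocal bound on} $\mathrm{WF}(Au)$. The containment $\mathrm{WF}(Au)\subseteq\mathrm{WF}(u)$ is the standard microlocality of pseudodifferential operators: for $(x_0,\xi_0)\notin\mathrm{WF}(u)$, pick $B\in L^0_{1,0}(\Omega)$ elliptic at $(x_0,\xi_0)$ with $Bu\in C^\infty$, construct a microlocal parametrix $B'$ with $B'B\equiv I$ near $(x_0,\xi_0)$, and choose $\phi\in L^0_{1,0}(\Omega)$ elliptic at $(x_0,\xi_0)$ with microsupport small; writing $\phi Au=(\phi AB')(Bu)+\phi A(I-B'B)u$ exhibits both terms as smooth. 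For $\mathrm{WF}(Au)\subseteq\Lambda$, pick $(x_0,\xi_0)\notin\Lambda$ and $\chi$ as above with $\chi\equiv 1$ near $(x_0,\xi_0)$ and $\chi a\in S^{-\infty}_{1,0}$. The composition formula gives $\chi(x,D)\circ A$ a full symbol asymptotic to $\sum_\alpha\frac{1}{\alpha!}\partial_\xi^\alpha\chi\cdot D_x^\alpha a$; every term is supported in $\mathrm{supp}\,\chi$, where $a$ and all its derivatives are in $S^{-\infty}_{1,0}$, so $\chi(x,D)\circ A$ is smoothing. Since $\chi(x,D)$ is elliptic at $(x_0,\xi_0)$, $Au$ is smooth there.

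\textbf{Main obstacle.} The most delicate step is the converse direction in Part 1, namely deducing the symbolic property $\psi a\in S^{-\infty}_{1,0}$ from the microlocal smoothness of $K_A$ at the corresponding diagonal point. This requires the explicit pullback by $(x,y)\mapsto(x,x-y)$ to present $K_A$ as the Fourier transform of $a$ in $\xi$, careful tracking of how wave front sets transform under this diffeomorphism, and the classical Fourier characterization of the wave front set (rapid decay of $\widehat{\varphi\widetilde{K}}$ in conic directions for $\varphi\in C_0^\infty$ localized near the base point). Everything else is a routine application of the pseudodifferential calculus and integration by parts.
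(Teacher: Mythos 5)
The paper itself gives no proof of this proposition: Appendix B is explicitly a review section, and the text at its opening refers the reader to Grigis--Sj\"ostrand and H\"ormander for all proofs. So there is no in-paper argument to compare against. Your proposal is the standard textbook proof that those references contain, and as a piece of mathematics it is sound: conormality of $K_A$ to the diagonal via the operator $L = -i(x-y)\cdot\partial_\xi/|x-y|^2$; the forward inclusion by splitting $a = \chi a + (1-\chi)a$ with $\chi a \in S^{-\infty}_{1,0}$; the reverse inclusion by the change of variables $(x,y)\mapsto(x,x-z)$, the Fourier characterization of $\mathrm{WF}$, and a minimality contradiction; and $\mathrm{WF}(Au)\subset\Lambda\cap\mathrm{WF}(u)$ via microlocality plus the composition expansion for $\chi(x,D)\circ A$. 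You correctly identify the only genuinely delicate point (passing from microlocal smoothness of $K_A$ at the diagonal to $\psi a\in S^{-\infty}_{1,0}$, derivative estimates included).

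Two small things worth making explicit if you were to write this out in full. First, in the minimality contradiction, when you split a cutoff $\chi$ vanishing near $\Lambda\setminus W'$ as $\chi\psi + \chi(1-\psi)$, you need $\psi\equiv 1$ on an open conic $W'$ whose closure is contained in $W$; then $\chi\psi a \in S^{-\infty}_{1,0}$ by the local smoothness, and $\chi(1-\psi)$ vanishes on the open set $N_1\cup W'$ which does cover $\Lambda$ (points of $\Lambda\cap\partial W'$ lie in $\Lambda\setminus W'$ and hence in $N_1$). Second, in the composition step, "$a$ and all its derivatives are in $S^{-\infty}_{1,0}$ on $\mathrm{supp}\,\chi$" should be justified by taking a second cutoff $\tilde\chi$ with $\tilde\chi\equiv 1$ on $\mathrm{supp}\,\chi$, still supported away from $\Lambda$; then $\tilde\chi a\in S^{-\infty}_{1,0}$, and since $a=\tilde\chi a$ on a conic neighborhood of $\mathrm{supp}\,\chi$, the $x$-derivatives $D_x^\alpha a$ inherit the $S^{-\infty}_{1,0}$ decay on $\mathrm{supp}\,\chi$. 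You also implicitly use that $A$ is properly supported (so that $\chi(x,D)\circ A$ is well-defined on $\mathscr D'(\Omega)$), which is a standing assumption of this appendix stated just before Definition~\ref{Bd:0801202156}. None of this alters the structure of your argument.
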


\begin{prop} \label{Bp:0801202234}
Let $\mathscr K:C^\infty_0(\Omega)\To\mathscr D'(\Omega)$
with distribution kernel $K\in\mathscr D'(\Omega\times\Omega)$. We assume that
${\rm WF\,}'(K)\subset\set{(x, \xi, x, \xi);\, (x, \xi)\in T^*(\Omega)\setminus0}$.
Then, there is a unique way of defining $\mathscr Ku$ for every $u\in\mathscr E'(\Omega)$ so that the map
$u\in\mathscr E'(\Omega)\To\mathscr K u\in\mathscr D'(\Omega)$
is continuous. Moreover, we have
\begin{align*}
{\rm WF\,}(\mathscr Ku)\subset&\{(x, \xi);\, (x, \xi, x, \xi)\in{\rm WF\,}'(K)\\
&\mbox{for some}\ \  (x, \xi)\in{\rm WF\,}(u)\}.
\end{align*}
\end{prop}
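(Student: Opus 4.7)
This is a standard result in microlocal analysis, essentially a special case of the theorem on the action of distribution kernels on distributions with controlled wave front sets (Theorem~$8.2.13$ of~\cite{Hor03}). My plan is to follow the standard three step argument, exploiting the fact that the diagonal condition ${\rm WF\,}'(K)\subset\set{(x,\xi,x,\xi)}$ makes $\mathscr K$ behave like a pseudodifferential operator at the microlocal level; in particular no element of ${\rm WF\,}(K)$ can have a zero covector in either factor.

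First I would construct the extension by duality. Given $u\in\mathscr E'(\Omega)$ and $\phi\in C^\infty_0(\Omega)$, the tensor product $\phi\otimes u$ is a compactly supported distribution on $\Omega\times\Omega$ whose wave front set is contained in $\set{(x,0,y,\eta);\, x\in\mathrm{supp}\,\phi,\ (y,\eta)\in{\rm WF\,}(u)}$; in particular the first covector is always zero, while by hypothesis every $(x,\xi,y,\eta)\in{\rm WF\,}(K)$ has $\xi\neq 0$. Consequently no $(x,\xi,y,\eta)\in{\rm WF\,}(K)$ satisfies $(x,-\xi,y,-\eta)\in{\rm WF\,}(\phi\otimes u)$, and H\"ormander's product theorem (Theorem~$8.2.10$ of~\cite{Hor03}) produces a well-defined compactly supported distribution $K\cdot(\phi\otimes u)$ on $\Omega\times\Omega$. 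I would then set
\[
\seq{\mathscr K u,\phi}:=\seq{K\cdot(\phi\otimes u),1},
\]
which is linear and sequentially continuous in both $u$ and $\phi$ in the topology provided by the product theorem, and which agrees with the original $\seq{\mathscr K u,\phi}$ when $u\in C^\infty_0(\Omega)$. Uniqueness then follows from the density of $C^\infty_0(\Omega)$ in $\mathscr E'(\Omega)$: any continuous extension is determined by its restriction to $C^\infty_0(\Omega)$, so two continuous extensions agreeing on $C^\infty_0(\Omega)$ must coincide.

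Finally I would establish the wave front set bound. Let $(x_0,\xi_0)$ lie outside the right-hand side, so either $(x_0,\xi_0)\notin{\rm WF\,}(u)$, or $(x_0,\xi_0,x_0,\xi_0)\notin{\rm WF\,}'(K)$; in each case the plan is to produce a properly supported $A\in L^0_{1,0}(\Omega)$ elliptic at $(x_0,\xi_0)$ with $A\mathscr K u\in C^\infty$ near $x_0$. In the first case, choose a properly supported $B\in L^0_{1,0}(\Omega)$ elliptic at $(x_0,\xi_0)$ with $Bu\in C^\infty$, and decompose $\mathscr K u=\mathscr K Bu+\mathscr K(I-B)u$; after shrinking the microsupport of $A$ both terms become smoothing near $x_0$ by the construction of the previous paragraph. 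In the second case, shrink the microsupport $V$ of $A$ so that ${\rm diag\,}(V\times V)\cap{\rm WF\,}'(K)=\emptyset$ and then use the composition theorem for wave front sets (Theorem~$8.2.14$ of~\cite{Hor03}) applied to the kernel of $A\circ\mathscr K$ to conclude that the composed kernel has empty wave front set over $V\times\Omega$, so $A\mathscr K u\in C^\infty$ for every $u\in\mathscr E'(\Omega)$. The main obstacle will be this last composition step: since $K$ is only a general distribution, tracking the wave front set of $K_A\circ K$ has to be done via the general composition formula, and the diagonal hypothesis on ${\rm WF\,}'(K)$ is precisely what makes the resulting inclusion collapse to the form stated in the proposition.
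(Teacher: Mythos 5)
The paper does not actually prove this proposition; the appendix in which it appears opens with the note that all proofs in the section are referred to chapter 7 of Grigis--Sj\"ostrand, chapter VIII of~\cite{Hor85}, and~\cite{Hor03}. Your argument is precisely the standard H\"ormander proof (Theorem~$8.2.13$ of~\cite{Hor03}, built on the product theorem~$8.2.10$ and composition theorem~$8.2.14$), so it matches the route the paper points to. Two cosmetic remarks that do not affect correctness: in your first case you should take $B$ with symbol identically $1$ (not merely elliptic) in a conic neighborhood of $(x_0,\xi_0)$ so that $(I-B)u$ genuinely omits $(x_0,\xi_0)$ from its wave front set, and in the second case ``over $V\times\Omega$'' should read ``over $U\times\Omega$'' for a spatial neighborhood $U$ of $x_0$, since $V$ is a conic set in the cotangent bundle.
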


\addcontentsline{toc}{section}{References}

\end{document}